\newtheorem{art}[subsection]{}
\newcommand{\T}{\bT}
\newcommand{\kcirc}{{ K^\circ}}
\newcommand{\ktilde}{{ \td{K}}}
\newcommand{\varphitrop}{{ \varphi_{\rm trop}}}
\newcommand{\torus}{\bT}
\let\id=\Id
\edef\csname\next S\endcsname{\noexpand\csname c\next\endcsname}%
\edef\csname\next cal\endcsname{\noexpand\csname c\next\endcsname}%
\edef\csname\next F\endcsname{\noexpand\csname f\next\endcsname}%
\def\Ocal{\mathscr O}
\def\OS{\mathscr O}
\def\LS{\mathfrak L}
\def\rG{\rm G}
\def\cAps{\cA_{\rm ps}}
\def\cAsm{\cA_{\rm sm}}
\def\cAsmc{\cA_{{\rm sm},c}}
\def\cDsm{\cD^{{\rm sm}}}
\def\bGm{\bG_{\rm m}}
\let\Linear=\bL
\let\divisor=\div
\newcommand{\metr}{\|\phantom{n}\|}
\newcommand{\absval}{|\phantom{n}|}
\newcommand{\z}{{(\Z,\Gamma)}}
\newcommand{\q}{{(\Q,\sqrt\Gamma)}}
\newcommand\secref[1]{\ref{#1}}
 \thanks{W.~Gubler and P.~Jell 
	were supported by the collaborative research 
	center SFB 1085 \emph{Higher Invariants---Interactions between Arithmetic Geometry and Global Analysis} funded by the Deutsche Forschungs\-gemeinschaft.}
\title{Forms on Berkovich spaces based on harmonic tropicalizations}
\author[W.~Gubler]{Walter Gubler}
\address{W. Gubler, Mathematik, Universit{\"a}t
	Regensburg, 93040 Regensburg, Germany}
\email{walter.gubler@mathematik.uni-regensburg.de}
\author[P.~Jell]{Philipp Jell}
\address{P. Jell, Mathematik, Universit{\"a}t
Regensburg, 93040 Regensburg, Germany}
\email{philipp.jell@mathematik.uni-regensburg.de}
\author[J.~Rabinoff]{Joseph Rabinoff}
\address{J. Rabinoff, Department of Mathematics, Trinity College of Arts and Sciences, Duke University, Durham, NC 27708, USA}
\email{jdr@math.duke.edu}
\begin{document}
\begin{abstract}
We introduce tropical skeletons for Berkovich spaces based on results of Duc\-ros. Then we study harmonic functions on good strictly analytic spaces over a non-trivially valued non-Archimedean field.
Chambert-Loir and Ducros introduced bigraded sheaves of smooth real-valued differential forms on Berkovich spaces by pulling back Lagerberg forms with respect to tropicalization maps.  We give a new approach in which we allow pullback by more general \defi{harmonic} tropicalizations to get a larger sheaf of differential forms with essentially the same properties, but with a better cohomological behavior.  A crucial ingredient is that tropical varieties arising from harmonic tropicalization maps are balanced.
\end{abstract}

\keywords{{weighted metrized graphs, Lagerberg forms,  harmonic functions, Berkovich spaces, tropical Dolbeault cohomology}}
\subjclass{{Primary 14G40; Secondary 31C05, 32P05, 32U05}}

\maketitle
\tableofcontents

\section{Introduction}  \label{section: introduction}

Arakelov theory is an arithmetic intersection theory on arithmetic varieties. The contributions of the Archimedean places are described analytically, while  intersection theory on models over valuation rings is used for the non-Archimedean places. It is an old dream to handle all places simultaneously in an analytic manner. This would have the advantage of replacing the models, which sometimes do not exist in sufficient generality, by the underlying Berkovich analytic spaces.

It is here that tropical geometry becomes very helpful. Lagerberg \cite{Lagerberg} has introduced
smooth $(p,q)$-forms  on $\R^n$  given in terms of coordinates
by
$$\alpha = \sum_{|I|=p, |J|=q}  \alpha_{IJ} \d'x_{i_1} \wedge \dots
\wedge \d'x_{i_p} \wedge \d''x_{j_1} \wedge \dots \wedge \d''x_{j_q}
$$
where $I$ (resp.\ $J$) consists of $i_1 < \dots < i_p$
(resp.\ $j_1 < \dots < j_q$) and $\alpha_{IJ}$ are $C^\infty$-functions on $\R^n$.
The smooth Lagerberg forms give rise to a bigraded differential sheaf $\cA^{\bullet,\bullet}$ of $\R$-algebras on $\R^n$ with alternating product $\wedge$ and with natural differentials $\d'$ and $\d''$ (see~\secref{Lagerberg forms}).
They are the analogues of the differential operators $\partial$ and
$\bar{\partial}$ in complex analysis.

One defines smooth Lagerberg forms on a weighted rational polyhedral complex $(\Pi,m)$ in $\R^n$ by restriction (see~\secref{Lagerberg forms}).  This is analogous to the definition of differential forms on singular complex analytic spaces. In applications, $(\Pi,m)$ will be a tropical variety. One has integrals and boundary integrals over $(\Pi,m)$ satisfying the formula of Stokes.

From now on, we consider a good, $d$-dimensional strictly analytic Berkovich space $X$ over a non-trivially valued non-Archimedean field $K$. For the basic notions used from non-archimedean geometry, we refer to \S \ref{Non-Archimedean geometry}. For a compact analytic subdomain $W$ of $X$, a  \emph{smooth tropicalization map} $F\colon W \to \R^n$ is given by $F(x)=(-\log|f_i(x)|)_{i=1,\dots,n}$ for invertible analytic functions $f_i$ on $W$. Chambert-Loir and Ducros \cite{CLD} used smooth tropicalization maps to lift smooth Lagerberg forms to $X$. In this way, they obtained a bigraded differential sheaf $\cA_{\rm sm}^{\bullet,\bullet}$ of $\R$-algebras on $X$ with alternating product $\wedge$ and with natural differentials $\d'$ and $\d''$ (see~\secref{Smooth forms and currents}).

This opens the door to the dream mentioned above. Many results hold as expected from complex geometry. Chambert-Loir and Ducros proved that there are analogues of smooth partitions of unity, of the formula of Stokes and of the Poincar\'e-Lelong formula. The $\d''$-operator induces an analogue of the Dolbeault complex and we can introduce the tropical Dolbeault cohomology by
\begin{equation} \label{intro: smooth Dolbeault cohomology}
H_{\rm sm}^{p,q}(X) \coloneqq \ker(\cA_{\rm sm}^{p,q}(X)\stackrel{\d''}{\longrightarrow} \cA_{\rm sm}^{p,q+1}(X))/\d''(\cA_{\rm sm}^{p,q-1}(X)).
\end{equation}
It was shown in \cite{Jell} that the sheaves $\cA_{\rm sm}^{p,q}$ satisfy a local Poincar\'e Lemma. If $X=Y^\an$ for a separated smooth scheme $Y$ of finite type over $K$, then Liu \cite{Liu} showed that there is an analogue of the cycle class map. However, it was realized in \cite{JellDuality} that the cohomology groups $H_{\rm sm}^{1,1}(X)$ can be infinite dimensional and that Poincar\'e duality can fail for smooth projective curves $X$ over $K$. The main problem can be already found in Thuillier's thesis \cite{Thuillier} and is a result of the fact that \emph{not all harmonic functions on curves are smooth}, as explained in Wanner's master thesis \cite{Wanner}.

It was suggested in \cite{JellOberwolfach} that we can overcome these problems by using more general tropicalization maps based on harmonic functions. The present paper gives the foundations of such a theory. With this approach, we will get a larger bigraded
differential sheaf $\cA^{\bullet,\bullet}$ of $\R$-algebras on $X$ consisting of \emph{weakly smooth forms}. The weakly smooth forms have all the nice properties of smooth forms, but they have a better cohomological behavior.

The notation and prerequisites for this paper are explained in Section~\ref{section: prerequisites}. We will use smooth forms and currents introduced  in \cite{CLD}. Some of their definitions are recalled later as a special case of our more general construction.

\subsection{Tropical Skeletons and Tropical Multiplicities} \label{intro:tropical skeletons and tropical multiplicities}
In Section~\ref{sec: tropical skeletons}, we assume that $X$ is compact and of pure dimension $d$, and we consider a morphism $\varphi \colon X \to \bT^\an$, where $\bT = \bGm^n$ is an $n$-dimensional split torus over $K$. In case $d=n$, Ducros \cite{ducros12:squelettes_modeles} has defined the skeleton $S_\varphi(X)$ as the preimage of the canonical skeleton of $\bT^\an$, and he has shown that the skeleton has the structure of a piecewise linear space  only depending on its underlying set and the analytic structure of $X$. Using Temkin's graded reductions, we will generalize this construction and define a \defi{tropical skeleton} $S_\varphi(X)$ for arbitrary $n$. By Proposition~\ref{prop:tropical.skeleton}, it is a closed subset of $X$ consisting only of Abhyankar points, and it depends only on the smooth tropicalization map $F \colon X \to \R^n$ induced by $\varphi$. We will see in Remark~\ref{rem: piecewise linear structure on tropical skeleton} that the tropical skeleton $S_\varphi(X)$ is a piecewise linear space of dimension at most $d$ closely related to the tropical variety $F(X)$. Indeed, the restriction of $F$ induces a surjective piecewise linear map $S_\varphi(X) \to F(X)$ with finite fibers. We will show in Remark~\ref{rem:trop.skel.GRW} that $S_\varphi(X)$ coincides with the tropical skeleton from \cite{gubler_rabinoff_werner16:skeleton_tropical} studied in the algebraic case.
In Section~\ref{sec: tropical multiplicities}, we will define and study positive \defi{tropical multiplicities $m_\varphi(X,x)$} at $x \in S_\varphi(X)$ and $m_\varphi(X,\omega)$ at $\omega \in F(X)$. The latter generalize the classical tropical multiplicities studied in tropical geometry.

\subsection{Formal and Semipositive Metrics}
Let $L$ be a line bundle on a paracompact good strictly analytic space $X$ over $K$. We consider a formal $\kcirc$-model $(\fX,\fL)$ of $(X,L)$, i.e.~an admissible formal scheme $\fX$ with generic fiber $X$ and a line bundle $\fL$ on $\fX$ with $\fL|_X=L$. Then we have an \defi{associated formal metric} $\metr_\fL$ on $L$ characterized by $\|s\|_\fL \equiv 1$ for any trivializing local section $s$ of $\fL$ (see~\cite[Section~2]{gubler_martin19:zhangs_metrics} for details). If we can choose $\fL$ such that the restriction of $\fL$ to the special fiber $\fX_s$ is nef, then we call $\metr_\fL$ \defi{semipositive}.

Let $L$ be a line bundle on a separated good strictly analytic space $X$ over $K$. We assume that the  boundary $\partial X$ is empty, which holds for example if $X$ is algebraic. The \defi{first Chern current} $c_1(L, \metr)$ of a continuous metric $\metr$ on $L$ is the current on $X$ defined locally by $\d'\d''[-\log\|s\|]$, where $s$ is a local trivializing section of $L$ and $[f]$ denotes the current associated to a continuous function $f$. As in complex analysis, this does not depend on the choice of $s$ and hence defines a current on $X$ (see~\cite[6.4.1]{chambert_ducros12:forms_courants}). We call such a current \defi{positive} if it is non-negative on compactly supported smooth positive forms (see~\secref{Smooth forms and currents}).
For $i \in \N$, let $\metr_i$ be a metric of $L$ such that for every $x \in X$ there is a strictly affinoid neighbourhood $W$ and $n_W  \in \N_{>0}$ such that the restriction of $\metr_i^{\otimes n_W}$ to $W$ is a semipositive formal metric. As formal metrics are continuous, we conclude that $\metr_i$ is a continuous metric on $L$.

\begin{thm} \label{semipositive and psh}
	Under the above assumptions, assume that the metrics $\metr_i$ converge uniformly to the metric $\metr$ of $L$. Then the first Chern current $c_1(L,\metr)$ is positive.
\end{thm}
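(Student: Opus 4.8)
The plan is to reduce to a local statement and then reduce further to the well-known implication "semipositive formal metric $\Rightarrow$ positive first Chern current", using the fact that positivity of a current is a local property and is preserved under uniform limits.

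First I would note that positivity of $c_1(L,\metr)$ is local on $X$: a current is positive if it is non-negative on all compactly supported smooth positive forms, and such forms can be taken with support in an arbitrarily small neighborhood, so it suffices to check positivity near each point $x \in X$. Fix such an $x$ and choose a strictly affinoid neighborhood $W$ together with $n_W \in \N_{>0}$ so that $\metr_i^{\otimes n_W}|_W$ is a semipositive formal metric for every $i$; after replacing $L$ by $L^{\otimes n_W}$ and $\metr$ by $\metr^{\otimes n_W}$ (which multiplies $c_1$ by $n_W>0$ and hence does not affect positivity) and shrinking, I may assume each $\metr_i|_W$ is itself a semipositive formal metric. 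On a slightly smaller affinoid neighborhood I can also trivialize $L$ by a section $s$, so that $c_1(L,\metr_i) = \d'\d''[-\log\|s\|_{\metr_i}]$ there, and similarly for $\metr$.

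Second, I would invoke the known case: for a semipositive formal metric $\metr_i$ on an affinoid $W$, the current $c_1(L,\metr_i)$ is positive. This is the content of the work of Chambert--Loir and Ducros relating formal metrics to piecewise linear functions on the associated skeleton, together with the fact that nefness of $\fL|_{\fX_s}$ translates into convexity/balancing properties that make the corresponding Lagerberg form $\d'\d''$ of the metric function positive; concretely, $-\log\|s\|_{\metr_i}$ pulls back from a convex piecewise linear function on a polyhedral complex (the tropicalization/skeleton), and the $\d'\d''$ of a convex PL function is a positive Lagerberg current. (Here the balancing of tropical varieties arising from the relevant tropicalizations — a theme of this paper — is what guarantees the relevant integration-by-parts/positivity computations go through.)

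Third, I would pass to the limit. Since $\metr_i \to \metr$ uniformly, the functions $u_i := -\log\|s\|_{\metr_i}$ converge uniformly to $u := -\log\|s\|_{\metr}$ on $W$, hence the associated currents $[u_i] \to [u]$ in the weak sense (pairing against a fixed compactly supported smooth form $\eta$, the difference $\int (u_i-u)\,\eta$ is bounded by $\|u_i-u\|_\infty \int |\eta| \to 0$, the latter integral being finite because $\eta$ is a compactly supported smooth form, which has measure-theoretic bounded density against the tropical integration). Because $\d'$ and $\d''$ are continuous for the weak topology on currents, $c_1(L,\metr_i) = \d'\d''[u_i] \to \d'\d''[u] = c_1(L,\metr)$ weakly. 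Positivity is preserved under weak limits: if $\langle c_1(L,\metr_i), \eta\rangle \ge 0$ for all compactly supported smooth positive $\eta$ and each $i$, then the same holds for the limit. This gives positivity of $c_1(L,\metr)$ near $x$, and since $x$ was arbitrary, on all of $X$.

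The main obstacle is the second step: making precise and citable the statement that a semipositive formal metric has positive first Chern current in the present Berkovich-analytic (not necessarily algebraic) setting, including the identification of $-\log\|s\|_{\metr_i}$ with the pullback of a convex PL function under a smooth tropicalization and the verification that $\d'\d''$ of such a function is a positive current — this is exactly where the balancedness results of the paper, and the careful comparison between formal models, skeletons, and tropicalizations, have to be brought to bear. The limiting argument and the localization are comparatively routine once the right continuity statements for currents under uniform convergence of metrics are in place.
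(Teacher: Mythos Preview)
Your overall architecture---localize, reduce to the case of a semipositive formal metric, then pass to the uniform limit---is correct and matches the paper. The limiting argument in your third step is fine.

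The genuine gap is in your second step. You assert that nefness of $\fL|_{\fX_s}$ ``translates into convexity'' of the piecewise linear function $-\log\|s\|$, so that it is the pullback of a convex PL function on a polyhedral complex. This is not true in general: a nef line bundle need not be semiample, and correspondingly a semipositive PL function need not locally have the form $\max\{-\log|f_1|,\ldots,-\log|f_n|\}$. The result of Chambert--Loir and Ducros you have in mind (their Corollaire~6.3.2) only establishes psh-approachability for the latter class, called \emph{locally Fubini--Study} (LFS) functions, which corresponds to \emph{semiample} residue rather than merely nef. So there is a real gap between ``semipositive formal metric'' and ``positive first Chern current'' that is not covered by CLD.

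The paper closes this gap with Proposition~\ref{prop:approx.semipositive}: a $\Q$-PL function semipositive at an interior point $x$ is a uniform limit, on a neighbourhood of $x$, of $\Q$-PL LFS functions. The proof is the classical Kleiman-type trick transported to Temkin's reduction of germs: one finds a PL function $h'$ whose residue is ample on a projective model of $\red(X,x)$ (using Chow's lemma), and then $h+\epsilon h'$ has ample, hence semiample, residue for all $\epsilon>0$, so it is LFS; letting $\epsilon\to 0$ gives the approximation. Once this is in hand, your limit argument applies twice (once to pass from LFS to semipositive, once more to pass from the $\metr_i$ to $\metr$), which is exactly how the paper proceeds in Theorem~\ref{semipositive implies psh} and Remark~\ref{formulation for metrics}.
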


Such  metrics play an important role for the contributions of non-Archimedean places in Arakelov theory and were originally introduced by Zhang \cite{zhang95} in an algebraic framework. We will prove Theorem~\ref{semipositive and psh} in Remark~\ref{formulation for metrics} based on the results from Sections~\ref{section: pl functions}--\ref{section: pl and harmonic functions}.

\subsection{Piecewise Linear and Harmonic Functions}
In Section~\ref{section: pl functions}, we will study piecewise $\Lambda$-linear functions on a good strictly analytic space $X$ over $K$ for a subgroup $\Lambda$ of $\R$.  Roughly, these are piecewise linear functions with slopes contained in $\Lambda$.

\begin{prop} \label{sheaf of piecewise linear functions}
Assume that $\Lambda = \Z$ or that $\Lambda$ is a divisible group containing $1$.
\begin{enumerate}
	\item \label{G-sheaf}
	The piecewise $\Lambda$-linear functions form a sheaf of $\Lambda$-modules on the $\rG$-topology of $X$, which is generated by the sheaf of piecewise $\Z$-linear functions (Remark~\ref{sec:PL.remarks}).
	\item \label{pull-back}
	Piecewise $\Lambda$-linearity is stable under pull-back (Lemma~\ref{lem:pl.basic.props}).
	\item \label{model}
	If $X$ is paracompact, then $h\colon X \to \R$ is piecewise $\Z$-linear if and only if there is  a formal $\kcirc$-model $(\fX,\fL)$ of $(X,\sO_X)$ with $h=-\log \|1\|_\fL$ (see~\secref{model theorem}).
	\item \label{analytic sheaf}
	Property \eqref{G-sheaf} holds also with respect to the analytic topology of $X$ (Proposition~\ref{prop:Lpl.analytic.nbhd}).
	\item \label{base extension}
	Piecewise  $\Lambda$-linearity is stable under extension $K'/K$ of the ground field (Lemma~\ref{lem:pl.basic.props}).
	\item \label{Galois}
	If $K'/K$  is normal in \eqref{base extension},  then base change gives precisely the ${\rm Aut}(K'/K)$-invariant piecewise $\Lambda$-linear functions on  $X\hat\otimes_K K'$ (Corollary~\ref{pl infinite Galois extensions}).
	\item \label{tropical}
	A real function $h \colon X \to \R$ is piecewise $\Z$-linear if and only if $h$ is locally the composition of a smooth tropicalization map with a piecewise $\z$-linear function on the corresponding tropical variety (Proposition~\ref{local description of piecewise linear}).

\end{enumerate}
Properties \eqref{G-sheaf}, \eqref{pull-back} and \eqref{model} characterize piecewise $\Lambda$-linear functions. We need the assumption $1 \in \Lambda$ to ensure that the piecewise $\Z$-linear functions are piecewise $\Lambda$-linear, which is used in~\eqref{G-sheaf}.
\end{prop}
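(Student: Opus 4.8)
The proposition summarizes the main structural facts about piecewise $\Lambda$-linear functions developed throughout Section~\ref{section: pl functions}, and the plan is to organize the proof around the local normal form in \eqref{tropical}. Working in the $\rG$-topology, a piecewise $\Z$-linear function is to be recognized as one that is locally of the form $g\circ F$ for a smooth tropicalization map $F\colon W\to\R^n$ and a piecewise $\z$-linear function $g$ on the tropical variety $F(W)\subseteq\R^n$. Granting this local normal form, the sheaf property and the $\Lambda$-module structure in \eqref{G-sheaf} become formal: one passes to a common refinement of two $\rG$-covers and uses that sums, $\Lambda$-multiples and restrictions of piecewise $\z$-linear functions on polyhedral complexes remain of the same kind. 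Items \eqref{pull-back} and \eqref{base extension} then follow because pulling back the invertible functions $f_i$ defining $F$ along a morphism $\psi$, respectively along $X\hat\otimes_K K'\to X$, again yields invertible functions, so $F\circ\psi$ is a smooth tropicalization map with $F(\psi^{-1}(W))\subseteq F(W)$ and $(g\circ F)\circ\psi=(g|_{F(\psi^{-1}(W))})\circ(F\circ\psi)$. The generation statement in \eqref{G-sheaf} for divisible $\Lambda\ni 1$ reduces to the elementary fact that a piecewise $\Lambda$-linear function on a polyhedral complex has locally only finitely many slopes and can therefore be written locally as a finite $\Lambda$-linear combination of piecewise $\Z$-linear functions, e.g.\ by triangulating and suitably rescaling the tent functions attached to the vertices.

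For \eqref{analytic sheaf} the point is that piecewise $\Lambda$-linearity, though a priori a $\rG$-local condition tested on affinoid subdomains, may be tested on analytic open neighborhoods; this is the content of Proposition~\ref{prop:Lpl.analytic.nbhd}, whose proof should exhibit around each point of an affinoid $W$ an analytic neighborhood already carrying the local tropicalization datum of $h$, after which \eqref{G-sheaf} applies verbatim for the analytic topology. The model-theoretic description \eqref{model} is the technical heart. One implication is classical: for a formal $\kcirc$-model $(\fX,\fL)$ of $(X,\sO_X)$, on the affinoid attached to a formal affine trivializing $\fL$ the function $-\log\|1\|_\fL$ equals $-\log$ of the absolute value of the unit transforming the global trivialization of $\fL|_X=\sO_X$ into the local one, hence is piecewise $\Z$-linear. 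Conversely, given a piecewise $\Z$-linear $h$ on a paracompact $X$, one chooses a locally finite $\rG$-cover on whose members $h$ is realized by invertible analytic functions, constructs an admissible formal model carrying a line bundle that records these functions on each member, and glues; the gluing over a non-compact base is the delicate part and relies on Gubler's correspondence between formal metrics and piecewise linear functions.

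The Galois descent \eqref{Galois} proceeds in two steps. When $K'/K$ is finite Galois with group $G$, a $G$-invariant piecewise $\Lambda$-linear function on $X\hat\otimes_K K'$ descends as a continuous function on $X$, since $X$ is topologically the quotient, and it is again piecewise $\Lambda$-linear, which one sees either by averaging the local tropicalization data over $G$ or, via \eqref{model}, by descending a $G$-equivariant formal model. For infinite normal $K'/K$ one observes that a piecewise $\Lambda$-linear function on $X\hat\otimes_K K'$ is locally cut out by finitely many analytic functions, hence is already defined over a finite subextension over which $G$-invariance can be arranged, reducing to the finite case; this is Corollary~\ref{pl infinite Galois extensions}. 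Finally, the characterization by \eqref{G-sheaf}, \eqref{pull-back}, \eqref{model}: every good strictly analytic space has a $\rG$-cover by affinoid, hence paracompact, subspaces, so \eqref{model} fixes the piecewise $\Z$-linear functions on each member, \eqref{G-sheaf} fixes them on $X$ and identifies the piecewise $\Lambda$-linear sheaf as the $\Lambda$-module they generate, and \eqref{pull-back} guarantees independence of the chosen cover.

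I expect the two real obstacles to be the construction-and-gluing implication of \eqref{model} over a paracompact (non-compact) base and the reduction in \eqref{analytic sheaf} from $\rG$-neighborhoods to genuine analytic neighborhoods; the finiteness-of-data reduction underlying the infinite-Galois part of \eqref{Galois} is a close third.
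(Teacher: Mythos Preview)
Your plan is broadly sound as an outline, but it diverges from the paper's logical order and, more importantly, your sketches of the two genuinely hard items miss the specific mechanisms the paper uses.

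First, organizing everything around item~\eqref{tropical} is unnecessary and circular in spirit. In the paper, items~\eqref{G-sheaf}, \eqref{pull-back}, \eqref{base extension}, and the $\rG$-local generation statement are immediate from Definition~\ref{piecewise linear function}: a function is $\Lambda$-PL if on a $\rG$-cover it equals $\sum_j\lambda_{ij}\log|f_{ij}|$, so the $\log|f_{ij}|$ already exhibit it as a $\Lambda$-combination of $\Z$-PL functions, pull-back and base change preserve this form, and common refinements give the sheaf property. No tent functions, no triangulations, no appeal to \eqref{tropical} are needed. Conversely, \eqref{tropical} is proved later (Proposition~\ref{local description of piecewise linear}) and relies on the Gerritzen--Grauert theorem to reduce to Weierstrass domains and make all defining functions invertible; you do not mention this, and it is the real content of that item.

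For \eqref{analytic sheaf}, your description ``exhibit around each point an analytic neighborhood already carrying the local tropicalization datum'' is too vague. The paper's proof of Proposition~\ref{prop:Lpl.analytic.nbhd} has a precise mechanism: use Lemma~\ref{lem:lambda.assume.li} to arrange $h|_{X_i}=\sum_k\lambda_k h_{ik}$ with $\lambda_1,\dots,\lambda_m$ $\Q$-linearly independent, then exploit that the tropical germ at $x$ is a $(\Z,\Gamma)$-polytopal set to deduce that on overlaps the $h_{ik}$ and $h_{jk}$ differ by constants in $\sqrt\Gamma$. This rationality-plus-independence argument is what lets you glue the $\Z$-PL pieces over a genuine analytic neighbourhood; without it you only have a $\rG$-local decomposition.

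For \eqref{Galois}, neither ``averaging the local tropicalization data'' nor ``descending a $G$-equivariant formal model'' is how the paper proceeds, and both would require additional justification. The paper instead proves the stronger Proposition~\ref{prop:pl.finite.flat}: for any finite flat surjection $f\colon X'\to X$, piecewise $\Lambda$-linearity descends. The tool is the \emph{norm} of line bundles $N\colon M(\fX')_\Lambda\to M(\fX)_\Lambda$ along a finite flat extension of formal models, together with the multiplicity formula $\|1(x)\|_{N(\fL')}=\prod_{y\mapsto x}\|1(y)\|_{\fL'}^{m_y}$, which yields $h_{N(\fL')}=n\cdot h$ and hence $h=h_{n^{-1}N(\fL')}$ using divisibility of $\Lambda$. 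The infinite case then reduces to a finite subextension by a limit argument (Corollary~\ref{cor:pl.basechange}). Your averaging idea does not obviously produce $\Z$-PL building blocks, since a $G$-average of functions of the form $\log|f|$ need not be of that form.

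Your treatment of \eqref{model} and of the final characterization clause is fine and matches the paper.
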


In Section~\ref{sec:reduction-germs}, we  recall Temkin's reduction of germs, then we define the residue of a piecewise $\Lambda$-linear function at a point $x \in X$ as a canonical invertible sheaf on the reduction of the germ at $x$, following  \cite[Section~6]{chambert_ducros12:forms_courants}. This is an important tool used
in Section~\ref{section: pl and harmonic functions}, where we will explain the notion of harmonic functions. For non-Archimedean curves, they were investigated in Thuillier's thesis \cite{Thuillier}.
In any dimension, we can describe harmonic functions on good strictly analytic spaces as follows.

\begin{prop} \label{into:harmonic functions}
Assume that $\Lambda = \Z$ or that $\Lambda$ is a divisible group containing $1$.
\begin{enumerate}
	\item \label{harmonic are piecewise linear}
	Every $\Lambda$-harmonic function is piecewise $\Lambda$-linear (Definition~\ref{definition harmonic}).
	\item \label{sheaf of harmonic functions}
	The $\Lambda$-harmonic functions form a sheaf of $\Lambda$-modules generated by the sheaf of $\Z$-harmonic functions (Remark~\ref{sec:harmonic.remarks}, Proposition~\ref{prop: equivalence of harmonicity definitions}).
	\item \label{harmonic pullback}
	The  $\Lambda$-harmonic functions are stable under pull-back (Proposition~\ref{prop:harmonic.properties}).
	\item \label{integral harmonic}
	A function $h \colon X \to \R$ is $\Z$-harmonic if and only if every $x \in X$ has a strictly affinoid neighbourhood $W$ and a formal $\kcirc$-model $(\fW,\fL)$ of $(W,\sO_W)$ with $h|_W=-\log\|1\|_\fL$ such that $\fL$ restricts to a numerically trivial line bundle on $\fX_s$ (Proposition~\ref{prop:def.semipositive}).
	\item \label{rationality of harmonic}
	An $\R$-harmonic function is $\Lambda$-harmonic if and only if it is piecewise $\Lambda$-linear (see~\secref{sec:harmonic.remarks}).
	\item \label{harmonic and basechange}
	The $\Lambda$-harmonic functions are stable under base  extension $K'/K$ (Proposition~\ref{prop:harmonic.properties}).
	\item \label{Galois invariance harmonic}
	If $K'/K$  is normal in \eqref{harmonic and basechange},  then base change gives precisely the ${\rm Aut}(K'/K)$-invariant  $\Lambda$-harmonic functions on  $X\hat\otimes_K K'$ (Proposition~\ref{infinite Galois extensions}).
	\item  \label{curves}
	On a rig-smooth curve, a function is $\R$-harmonic if and only if it is harmonic in the sense of Thuillier (Proposition~\ref{Thuillier's harmonic}).
\end{enumerate}
Properties  \eqref{sheaf of harmonic functions}, \eqref{harmonic pullback} and  \eqref{integral harmonic} characterize $\Lambda$-harmonic functions.
\end{prop}

{In a subsequent paper, we will show that the maximum principle holds on $X \setminus \partial X$ for $\Lambda$-harmonic functions.}

\subsection{Piecewise Linear and Harmonic Tropicalizations} For a compact strictly analytic domain $W$ of $X$ of pure dimension $d$, we say that $h\colon W \to \R^n$ is a \defi{piecewise linear (resp.~harmonic) tropicalization} if the induced coordinate functions $h_i \colon W \to \R$ are piecewise $\Z$-linear (resp.~$\Z$-harmonic).
We will show in Section~\ref{section: weights for pl tropicalizations} that for every piecewise linear tropicalization $h\colon W \to \R^n$  we get \emph{canonical tropical weights} on the $d$-dimensional part $\Trop_h(W)_d \coloneqq h(W)_d$ of the \defi{tropical variety}  $\Trop_h(W) \coloneqq h(W)$. The latter is a union of rational polytopes in $\R^n$ of dimension at most $d$. The main idea is that piecewise linear tropicalizations are locally covered by smooth tropicalization maps for which we can use the tropical weights from Section~\ref{sec: tropical multiplicities}.
Note that smooth tropicalization maps are in particular harmonic tropicalizations; in this case the tropical multiplicities agree with the  ones from~\secref{intro:tropical skeletons and tropical multiplicities}. For a piecewise linear tropicalization $h$, we will define an associated \defi{$d$-skeleton $S_h(W)_d$} as a canonical compact piecewise linear space in $W$ associated to $h$, and we will endow its $d$-dimensional faces with canonical positive tropical weights such that
\begin{equation} \label{d-skeleton vs tropical variety}
h _*(S_h(W)_d) =\Trop_h(W)_d
\end{equation}
as an identity of weighted polytopal complexes in $\R^n$.
We will show that the underlying weighted polytopal complexes satisfy a functoriality property generalizing the well-known Sturmfels--Tevelev formula from tropical geometry.

\subsection{The Balancing Condition} It turns out that the key property of the smooth tropicalization maps used in~\cite{CLD} is that tropical varietes are balanced outside the image of the  boundary; this is more or less equivalent to the formula of Stokes. It is now crucial to extend this to harmonic tropicalizations.

\begin{thm} \label{intro: canonical tropical weights}
	Let $h\colon W \to \R^n$ be a harmonic tropicalization on a compact strictly analytic subdomain $W$ of $X$ of pure dimension $d$. Then the canonical tropical weights  satisfy the balancing condition in every $(d-1)$-dimensional face of $\Trop_h(W)$  not contained in $h({S_h(W)_d\cap}\partial W)$.
\end{thm}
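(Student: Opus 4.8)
The plan is to reduce the balancing condition for harmonic tropicalizations to the already-established balancing condition for smooth tropicalization maps, using the local structure described in Section~\ref{section: weights for pl tropicalizations}. Fix a $(d-1)$-dimensional face $\tau$ of $\Trop_h(W)$ not contained in $h(\partial W)$, and pick a relative interior point $\omega \in \tau$. The balancing condition at $\tau$ is a statement about the tropical weights of the $d$-dimensional faces $\sigma_1,\dots,\sigma_r$ of $\Trop_h(W)$ adjacent to $\tau$, together with their primitive normal directions. Since these weights are defined by pushing forward the canonical positive weights on the $d$-skeleton $S_h(W)_d$ along $h$, as in~\eqref{d-skeleton vs tropical variety}, it suffices to prove a balancing statement on $S_h(W)_d$ near the points in the fiber $h^{-1}(\omega)\cap S_h(W)_d$, and then apply $h_*$.

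First I would localize: by Proposition~\ref{into:harmonic functions}\eqref{harmonic are piecewise linear} each coordinate $h_i$ is piecewise $\Z$-linear, and by the main idea recalled in Section~\ref{section: weights for pl tropicalizations}, a piecewise linear tropicalization is locally covered by smooth tropicalization maps: for each point $x$ in the relevant fiber there is a compact strictly analytic neighbourhood $W_x$ and a smooth tropicalization map $F_x\colon W_x \to \R^{n_x}$ together with a piecewise $\z$-linear map $\psi_x$ on $\Trop_{F_x}(W_x)$ with $h|_{W_x} = \psi_x \circ F_x$. One must check that the canonical tropical weights of $h$ near $\omega$ are exactly the push-forwards under $\psi_x$ of the tropical multiplicities $m_{F_x}(W_x,-)$ from Section~\ref{sec: tropical multiplicities}; this is essentially the definition of the canonical weights, so it should follow from the construction in Section~\ref{section: weights for pl tropicalizations}. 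Crucially, since $\tau \not\subseteq h(\partial W)$ and $\partial W_x$ is contained in $\partial W$ together with the "new" boundary created by cutting out $W_x$, one arranges $W_x$ (shrinking if necessary) so that the portion of $\Trop_{F_x}(W_x)$ lying over a neighbourhood of $\tau$ avoids the image of $\partial W_x$.

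The key input is then that smooth tropicalization maps give balanced tropical cycles away from the image of the boundary — this is the fact flagged in the paper as "more or less equivalent to the formula of Stokes" and established for the tropical multiplicities $m_\varphi(X,\omega)$ in Section~\ref{sec: tropical multiplicities}. So $\Trop_{F_x}(W_x)$, weighted by $m_{F_x}$, is balanced in codimension one outside $F_x(\partial W_x)$. Balancing is preserved under push-forward along integral affine (piecewise $\z$-linear) maps — this is exactly the content of the functoriality/Sturmfels--Tevelev-type formula referenced at the end of Section~\ref{section: weights for pl tropicalizations} — so $\psi_{x*}(\Trop_{F_x}(W_x))$ is balanced at $\tau$. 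Summing the local contributions over a finite cover of the fiber $h^{-1}(\omega)\cap S_h(W)_d$ (which is finite since $h$ restricted to $S_h(W)_d$ has finite fibers, by the analogue of Remark~\ref{rem: piecewise linear structure on tropical skeleton}) and using that the weight of each $\sigma_j$ is the sum of the corresponding local weights, one obtains the balancing condition for $h$ at $\tau$.

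I expect the main obstacle to be the bookkeeping in the localization step: ensuring that the local smooth tropicalizations $F_x$ can be chosen compatibly enough that their push-forwards actually glue to recover the global canonical weights of $h$ near $\tau$, and — more delicately — that shrinking $W_x$ to control the boundary does not introduce spurious $(d-1)$-dimensional faces in $\Trop_{F_x}(W_x)$ mapping onto $\tau$ that would violate balancing. This is where one needs the finiteness of fibers of $h|_{S_h(W)_d}$ and a careful choice of the neighbourhoods so that the "cutting" boundary $\partial W_x \setminus \partial W$ maps away from $\omega$; the precise argument presumably relies on the structure of the tropical skeleton from Section~\ref{sec: tropical skeletons} and the identity~\eqref{d-skeleton vs tropical variety} to match weights face by face. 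Once the local picture is pinned down, the balancing itself is a formal consequence of balancing for smooth tropicalizations plus functoriality of push-forward.
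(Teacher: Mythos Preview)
There is a genuine gap in your argument, and it lies exactly at the step where you invoke push-forward. You claim that ``balancing is preserved under push-forward along integral affine (piecewise $\z$-linear) maps,'' attributing this to the Sturmfels--Tevelev-type formula. This is false: push-forward along a \emph{global} $\Z$-linear map preserves balancing (this is~\cite[Proposition~2.25]{GKM}), but push-forward along a merely \emph{piecewise} $\z$-linear map does not. A simple counterexample: take the standard balanced tropical line in $\R^2$ and push forward along $\psi(x,y)=\min(x,0)$; the result is the half-line $\R_{\leq 0}$, which fails balancing at the origin. The Sturmfels--Tevelev formula (Proposition~\ref{prop:sturmfels.tevelev}) computes the push-forward but says nothing about its balancing.

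Because of this, your argument would ``prove'' balancing for the tropicalization attached to \emph{any} PL tropicalization, not just harmonic ones --- and that is certainly false. You never use harmonicity in a substantive way. The paper's proof handles exactly this point: after locally writing $h = L\circ F$ with $F$ smooth and $L$ piecewise $\z$-linear, it does \emph{not} push forward directly along $L$. Instead it passes to the \emph{graph} $\Gamma_{F(W,x)_d}(L)\subset\R^m\times\R^n$. By Proposition~\ref{purely tropical statement} and Corollary~\ref{linear covering of tropical cycle}, this graph is balanced precisely when $F(W,x)_d$ is balanced \emph{and} each coordinate $\phi_i$ of $L$ has vanishing tropical Weil divisor on $F(W,x)_d$. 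The first holds by the smooth case (Lemma~\ref{balancing for smooth germs}); the second is where harmonicity enters, via Proposition~\ref{harmonic functions and smooth trop}, whose proof uses Corollary~\ref{Chern current of numerically trivial} (hence Theorem~\ref{semipositive and psh}) together with the tropical Poincar\'e--Lelong formula. Once the graph is known to be balanced, one projects to $\R^n$ via the \emph{linear} map $p_2$, and balancing is preserved. Your localization and bookkeeping are fine; what is missing is this graph trick and the input $\divisor(\phi_i)=0$, without which the push-forward step simply does not go through.
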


The balancing condition will be explained in Section~\ref{section: balancing condition}, and Theorem~\ref{intro: canonical tropical weights} will be proven in Theorem~\ref{thm balancing} by first showing a local version for a germ $(X,x)$ at $x \in X$. The balancing theorem generalizes the well-known fact from tropical geometry that tropical varieties of closed subschemes of a split torus over $K$ are balanced. Note that if $W=X$ is algebraic, no boundary $\partial W$ occurs. Theorem~\ref{intro: canonical tropical weights} is also a generalization of \cite[Theorem~3.6.1]{CLD}, where the statement is shown for smooth tropicalization maps. We use this to show  first a local version for smooth germs of tropicalization maps, then we will use that every harmonic tropicalization is locally covered by a smooth tropicalization map, and finally we will employ a trick with the graph to descend the balancing condition. Here, Theorem~\ref{semipositive and psh}
 and the tropical Poincar\'e--Lelong formula are applied.

\subsection{Weakly Smooth Forms}
In Section~\ref{section: differential forms}, we will introduce weakly smooth forms on good strictly analytic spaces over $K$.
The idea is to lift Lagerberg forms by using harmonic tropicalizations, in analogy with the definition of smooth forms~\cite{CLD}. The properties of weakly smooth forms are summarized as follows:

\begin{thm} \label{intro: characterization weakly smooth}
There is a bigraded differential sheaf $\AS^{\bullet,\bullet}$ of $\R$-algebras on $X$ with an alternating product $\wedge$ and differentials $\d',\d''$ whose elements are called weakly smooth forms. These sheaves satisfy the following properties:
\begin{enumerate}
	\item \label{functoriality of pull-back}
	For a morphism $f\colon X' \to X$, there is a homomorphism $f^*\colon \cA_X \to  f_*\cA_{X'}$ of sheaves of bigraded differential $\R$-algebras. This pull-back is functorial (see~\secref{functoriality}).
	\item \label{harmonic lift}
	If $X$ is compact with a harmonic tropicalization map $h\colon X \to \R^n$, there is an injective homomorphism $h^*\colon \cA^{\bullet,\bullet}(h(X))\to \cA^{\bullet,\bullet}(X)$ of bigraded differential $\R$-algebras lifting smooth Lagerberg forms on the tropical variety $h(X)$ to $X$ (Proposition~\ref{harmonic lift to weakly smooth forms}).
    \item \label{functoriality of harmonic lift}
    If $X,h$ are as in \eqref{harmonic lift} and $f\colon X' \to X$ is a morphism of compact good strictly analytic spaces over $K$, then $(h\circ f)^*=f^* \circ h^*$ (Proposition~\ref{harmonic lift to weakly smooth forms}).
    \item \label{locally given by lifts}
    For $\omega \in \cA(X)$, any $x \in X$ has a strictly affinoid neighbourhood $W$ with a harmonic tropicalization $h \colon W \to \R^n$ such that $\omega|_W=h^*(\alpha)$ for some $\alpha \in \cA(h(W))$ (see~\secref{piecewise smooth forms}).
    \item \label{relation to smooth forms}
    The sheaf $\cA_{\rm sm}^{\bullet,\bullet}$ of smooth forms on $X$ introduced by Chambert-Loir and Ducros is a subsheaf of bigraded differential $\R$-algebras of $\cA^{\bullet,\bullet}$ (Proposition~\ref{subsheaves of forms}).
    \item \label{characterization of harmonic functions}
    The elements of $\cA^{0,0}(X)$ are continuous functions on $X$. A real function $f$ on $X$ is $\R$-harmonic if and only if $f\in \cA^{0,0}(X)$ and $\d'\d''f=0$ (Proposition~\ref{weakly smooth harmonic functions}).
\end{enumerate}
The sheaves  $\cA^{\bullet,\bullet}$ of bigraded differential $\R$-algebras are characterized up to unique isomorphism by properties \eqref{functoriality of pull-back}, \eqref{harmonic lift}, \eqref{functoriality of harmonic lift} and \eqref{locally given by lifts}.
\end{thm}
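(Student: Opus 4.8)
The plan is to read Theorem~\ref{intro: characterization weakly smooth} as a recognition statement. First one constructs a candidate bigraded sheaf $\cA^{\bullet,\bullet}$ by lifting smooth Lagerberg forms along harmonic tropicalizations, in direct analogy with the construction of $\cA_{\rm sm}^{\bullet,\bullet}$ by Chambert-Loir and Ducros but with smooth tropicalization maps replaced by harmonic ones. Then one verifies properties \eqref{functoriality of pull-back}--\eqref{characterization of harmonic functions} by invoking the results of the later sections cited in the statement. Finally one proves the uniqueness clause by a universal-property argument resting on \eqref{harmonic lift} and \eqref{locally given by lifts}. I will concentrate on the coherence of the construction and on the uniqueness, as the individual properties are the content of the cited propositions.

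\emph{Construction.} Given a strictly affinoid neighbourhood $W$ carrying a harmonic tropicalization $h\colon W\to\R^n$ and a smooth Lagerberg form $\alpha\in\cA^{p,q}(\Trop_h(W))$ on the tropical variety, one has a model form $h^*\alpha$; one then declares $\omega\in\cA^{p,q}(U)$, for open $U\subseteq X$, to be a family agreeing in such an affinoid neighbourhood of each point with a model form, where two data $(W,h,\alpha)$ and $(W',h',\alpha')$ are identified near a point when, after passing to the combined harmonic tropicalization $g\coloneqq(h,h')$ on $W\cap W'$, the Lagerberg forms $p^*\alpha$ and $p'^*\alpha'$ agree on $\Trop_g(W\cap W')$, $p$ and $p'$ denoting the coordinate projections onto $\Trop_h$ and $\Trop_{h'}$. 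That $g$ is again a harmonic tropicalization, that harmonic tropicalizations restrict and pull back along morphisms to harmonic tropicalizations, and that $\Z$-harmonic functions form a sheaf, are items \eqref{harmonic are piecewise linear}, \eqref{sheaf of harmonic functions} and \eqref{harmonic pullback} of Proposition~\ref{into:harmonic functions} together with Proposition~\ref{sheaf of piecewise linear functions}; this turns the above into an equivalence relation and the resulting presheaf into a sheaf. The bigraded $\R$-algebra structure, the product $\wedge$, and the differentials $\d',\d''$ are induced on representatives from the corresponding operations on Lagerberg forms and are well defined because those operations commute with pull-back along integral affine maps of polyhedral complexes.

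\emph{The listed properties.} Property \eqref{functoriality of pull-back} follows from pull-back stability of harmonic functions (item \eqref{harmonic pullback} of Proposition~\ref{into:harmonic functions}) together with functoriality of Lagerberg pull-back; properties \eqref{harmonic lift} and \eqref{functoriality of harmonic lift} are Proposition~\ref{harmonic lift to weakly smooth forms}; \eqref{locally given by lifts} is immediate from the definition; \eqref{relation to smooth forms} is Proposition~\ref{subsheaves of forms} (every smooth tropicalization map is harmonic); and \eqref{characterization of harmonic functions} is Proposition~\ref{weakly smooth harmonic functions}. The nontrivial inputs here, all granted by the excerpt, are the injectivity of the harmonic lift $h^*$ in \eqref{harmonic lift} (resting on the surjectivity of $W$ onto $\Trop_h(W)$ and on the structure of the tropical skeleton) and, for \eqref{characterization of harmonic functions}, the balancing Theorem~\ref{intro: canonical tropical weights} together with Theorem~\ref{semipositive and psh} and the tropical Poincar\'e--Lelong formula: the easy half of \eqref{characterization of harmonic functions} is that an $\R$-harmonic $f$ is locally a one-coordinate harmonic tropicalization, so $f=h^*(x)\in\cA^{0,0}$ and $\d'\d'' f=h^*(\d'\d'' x)=0$, while for the converse one uses balancing to deduce harmonicity of $f$ from $\d'\d''$-closedness of the Lagerberg form that lifts it.

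\emph{Uniqueness.} Let $\cB^{\bullet,\bullet}$ be another bigraded differential sheaf of $\R$-algebras equipped with data satisfying \eqref{functoriality of pull-back}, \eqref{harmonic lift}, \eqref{functoriality of harmonic lift} and \eqref{locally given by lifts}. For $\omega\in\cA^{p,q}(U)$, use \eqref{locally given by lifts} for $\cA$ to write $\omega|_W=h^*_{\cA}(\alpha)$ locally, and set $\Phi(\omega)|_W\coloneqq h^*_{\cB}(\alpha)$. The crucial point is the identity $h^*=g^*\circ p^*$, valid in either theory: I would take as part of the meaning of ``lift'' in \eqref{harmonic lift} that $h^*$ restricts on $\cA^{0,0}$ to composition with $h$, and, since $h^*$ is a morphism of differential $\R$-algebras, both sides then agree on functions and on the $\d',\d''$ of coordinate functions, which generate $\cA^{\bullet,\bullet}(\Trop_h(W))$. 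Hence if also $\omega|_W=h'^*_{\cA}(\alpha')$, then $g^*_{\cA}(p^*\alpha-p'^*\alpha')=0$, and injectivity of $g^*_{\cA}$ from \eqref{harmonic lift} gives $p^*\alpha=p'^*\alpha'$, whence $h^*_{\cB}\alpha=g^*_{\cB}p^*\alpha=g^*_{\cB}p'^*\alpha'=h'^*_{\cB}\alpha'$; so $\Phi$ is well defined. Checking on representatives shows $\Phi$ respects $\wedge,\d',\d''$, property \eqref{functoriality of pull-back} for the two theories shows it commutes with pull-back, the symmetric construction provides a two-sided inverse, and $\Phi$ is the unique such isomorphism because \eqref{locally given by lifts} forces its value on every $h^*_{\cA}\alpha$. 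I expect the main obstacle to be exactly this circle: establishing the compatibility $h^*=g^*\circ p^*$ with the coordinate projections abstractly from the axioms (rather than from the construction) requires the lift to be a lift in the strong sense just used, while establishing the injectivity of the harmonic lift is where the genuine work lies — it is not formal and, via the properties feeding the construction, ultimately rests on the balancing theorem.
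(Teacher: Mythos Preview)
Your construction and uniqueness argument are essentially the paper's approach: the sheaf is built from preforms $(h,\alpha)$ modulo the equivalence via combined tropicalizations, and uniqueness follows from the local presentation~\eqref{locally given by lifts} together with injectivity of the lift. Your identification of the compatibility $h^*=g^*\circ p^*$ as the delicate point in uniqueness is apt, and your resolution via generation of Lagerberg forms by smooth functions under $\d',\d''$ is the right one.

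However, you misplace the role of the balancing theorem in two spots. First, the injectivity of $h^*$ in~\eqref{harmonic lift} does \emph{not} rest on balancing: it is the elementary Lemma~\ref{sec:ps.form.on.complex}, which only uses that $p_1\colon(h\times h')(\bar U)\to h(\bar U)$ is surjective, so that vanishing of $p_1^*\alpha$ forces $\alpha=0$ on images of polytopes covering $h(\bar U)$. Second, the converse direction of~\eqref{characterization of harmonic functions} does not use balancing either. The paper's argument (Proposition~\ref{weakly smooth harmonic functions}) is: write $f=g\circ h$ locally; from $\d'\d''f=0$ and injectivity of $h^*$ deduce $\d'\d''(g|_{h(X)})=0$; hence $g$ is affine on every face of $h(X)$; then the first-order Taylor expansion of $g$ at $h(x)$ shows $f=g(h(x))+\sum a_i h_i$ near $x$, which is $\R$-harmonic as an $\R$-linear combination of the $\Z$-harmonic coordinates $h_i$. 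The balancing theorem enters the paper later, for Stokes' formula and the vanishing of boundary integrals, not for the construction of $\cA^{\bullet,\bullet}$ or for~\eqref{characterization of harmonic functions}.

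One minor point on the easy half of~\eqref{characterization of harmonic functions}: an $\R$-harmonic $f$ is not itself a harmonic tropicalization (those are $\Z$-harmonic by definition); you need Proposition~\ref{prop: equivalence of harmonicity definitions} to write it locally as an $\R$-linear combination of $\Z$-harmonic functions first.
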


\subsection{Integration and Strong Currents}
In Section~\ref{section: integration and currents}, we give a theory of integration for weakly smooth forms. Since weakly smooth forms are piecewise smooth, this can be easily deduced from the integration theory for smooth forms in \cite{CLD} and the inclusion-exclusion formula. We will prove  the formulae of \defi{Stokes} (Theorem~\ref{theorem of Stokes})
and \defi{Green} (Theorem~\ref{theorem of Green}) for weakly smooth forms.

In Section~\ref{section: strong currents}, we will introduce strong currents on a pure dimensional separated strictly analytic space $X$ with no boundary as a topologically dual notion of compactly supported weakly smooth forms. As the  boundary of $X$ is empty, the balancing condition in Theorem~\ref{intro: canonical tropical weights} yields that the boundary integral in the formula of Stokes vanishes, and hence
we can embed the space of weakly smooth forms into the space of strong currents. We will show that the \defi{Poincar\'e--Lelong formula} holds in the sense of strong currents (Theorem~\ref{Poincare-Lelong}).

\subsection{Dolbeault Cohomology and Curves}
The Dolbeault complex $0 \to \AS^{p,0}(X) \to \dots \to \AS^{p,d}(X) \to 0$ for $\d''$ induces the \defi{Dolbeault cohomology groups} $H^{p,q}(X)$, now using the sheaf of weakly smooth forms $\AS$ and not the sheaf $\AS_{\rm sm}$ of smooth forms as in \eqref{intro: smooth Dolbeault cohomology}. We will show in Section~\ref{section: Dolbeault cohomology} that this new Dolbeault cohomology satisfies also a \defi{local Poincar\'e Lemma} (Theorem~\ref{Poincare Lemma in the stalks}) and we will give an analogue of \emph{Liu's cycle class map} for this new Dolbeault cohomology with the same properties as before (Theorem~\ref{Liu's cycle class theorm}).

To see that $H^{p,q}(X)$ is better behaved than $H_{\rm sm}^{p,q}(X)$ from \eqref{intro: smooth Dolbeault cohomology}, we consider a compact rig-smooth curve $X$. Then
 $H^{p,q}(X)$ has the (finite) dimension  expected from the case of compact Riemann surfaces, and Poincar\'e duality holds if $X$ is proper. This will be shown in our companion paper \cite{GJR2}. As a special case, we mention here the following result:

\begin{thm} \label{intro: Hodge diamond}
Let $Y$ be a smooth projective curve over $K$ and let $b$ the first Betti number of $X=Y^\an$. Then  $\dim(H^{p,q}(X))$ is equal to $1$ (resp.~$b$) if $p+q \in \{0,2\}$ (resp.~if $p+q=1$).
\end{thm}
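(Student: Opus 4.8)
The plan is to compute all the Dolbeault cohomology groups $H^{p,q}(X)$ directly from the companion paper's results on rig-smooth curves, so the main work is to extract the numerics from the ``expected dimension'' statement for $X = Y^\an$ with $Y$ a smooth projective curve. First I would record that for a compact rig-smooth curve, the Dolbeault complex lives only in bidegrees $(p,q)$ with $0 \le p,q \le 1$, so there are only nine potentially nonzero groups. By the results announced in \cite{GJR2} — finiteness of $H^{p,q}(X)$ with the dimensions expected from compact Riemann surfaces, together with Poincaré duality since $Y$ is proper — it suffices to pin down $H^{0,0}(X)$ and $H^{0,1}(X)$, as the remaining groups follow by the symmetry $H^{p,q}(X) \cong H^{d-p,d-q}(X)^\vee$ with $d=1$, and by the analogue of Liu's cycle class map relating $H^{1,1}(X)$ to algebraic cycles.

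Next I would identify $H^{0,0}(X)$. By Theorem~\ref{intro: characterization weakly smooth}\eqref{characterization of harmonic functions}, the elements of $\cA^{0,0}(X)$ are continuous functions, and $\ker(\d''\colon \cA^{0,0}(X) \to \cA^{0,1}(X))$ consists of the $\R$-harmonic functions $f$ on $X$ with $\d' f = 0$ (equivalently, by applying $\d'$ and using $\d'\d'' f = 0$, the locally constant ones — here one uses that $X$ is connected since $Y$ is). Since $X$ is compact and connected, the maximum principle (Proposition~\ref{into:harmonic functions}\eqref{local maximum principle}, with $\partial X = \emptyset$ as $Y$ is proper) forces such functions to be constant, so $H^{0,0}(X) = \R$ has dimension $1$. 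Poincaré duality then gives $\dim H^{1,1}(X) = 1$, consistent with a fundamental class from the cycle class map.

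For the middle degree I would use that $H^{0,1}(X) = \cA^{0,1}(X)/\d''\cA^{0,0}(X)$, and that the companion paper \cite{GJR2} identifies this with a tropical/graph-theoretic $H^1$ whose dimension is the first Betti number $b$ of $X$ — concretely, the first Betti number of the skeleton, which for $X = Y^\an$ with $Y$ smooth projective equals the genus-type quantity I am calling $b$. Indeed this is where the improvement over $H_{\rm sm}^{p,q}$ enters: allowing harmonic (not merely smooth) tropicalizations makes enough closed $(0,1)$-forms exact, and produces exactly $b$ independent classes coming from harmonic $1$-forms on the metrized skeleton. By Poincaré duality $\dim H^{1,0}(X) = \dim H^{0,1}(X) = b$ as well, completing all bidegrees: $1,1$ on the corners $p+q \in \{0,2\}$ and $b,b$ on the anti-diagonal $p+q = 1$.

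The main obstacle is that essentially all of the substantive content — finite-dimensionality, Poincaré duality, and above all the identification of $H^{0,1}(X)$ with a $b$-dimensional space of harmonic forms on the skeleton — is deferred to \cite{GJR2}; the present proof is really a bookkeeping argument assembling those inputs. If one wanted a self-contained argument one would need the curve-case Poincaré Lemma (Theorem~\ref{Poincare Lemma in the stalks}) to resolve the constant sheaf by weakly smooth forms, giving $H^{p,q}(X) \cong H^q(X, \Omega^p_{\rm trop})$ for suitable tropical sheaves $\Omega^p_{\rm trop}$, and then compute these via the skeleton; but here I would simply cite \cite{GJR2} for the curve case and deduce Theorem~\ref{intro: Hodge diamond} as the stated special case, using the maximum principle and the cycle class map to nail down the corner terms.
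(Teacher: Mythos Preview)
The paper does not prove this theorem here; it is deferred to the companion paper \cite{GJR2}, with only a strategy sketched after the statement. That strategy differs from yours: the paper proposes to (i) define $(p,q)$-forms and Dolbeault cohomology on weighted metric graphs with boundary, (ii) show that after passing to a finite Galois extension $K'/K$ over which $X' = X\otimes_K K'$ admits a strictly semistable model, one has a canonical isomorphism $H^{p,q}(X')\cong H^{p,q}(S(\fX'))$ with the skeleton, and (iii) recover $H^{p,q}(X)$ by Galois descent via Corollary~\ref{Galois action on cohomology}. This handles all four bidegrees uniformly. Your route is piecemeal: maximum principle for $(0,0)$, a citation of \cite{GJR2} for $(0,1)$, and Poincar\'e duality (itself a result of \cite{GJR2}) for $(1,0)$ and $(1,1)$. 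You never mention the Galois descent step, which the paper flags as essential precisely because semistable reduction is only available after base change.

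You also overlook that Corollary~\ref{identification with singular cohomology} in \emph{this} paper already gives $H^{0,q}(X)\cong H^q_{\mathrm{sing}}(X,\R)$ for paracompact $X$. Since $b$ is by definition the first Betti number of $X$, this yields $\dim H^{0,0}(X)=1$ and $\dim H^{0,1}(X)=b$ immediately, with no need for the maximum principle or any appeal to \cite{GJR2}. Thus half of the theorem is already available from the present paper; the genuine input from \cite{GJR2} is for $p=1$, and the paper's sketched mechanism for that is the skeleton comparison plus Galois descent, not Poincar\'e duality. (Minor slip: with $0\le p,q\le 1$ there are four bidegrees, not nine.)
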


The idea is to introduce $(p,q)$-forms on a weighted metric graph with boundary and to compute the Dolbeault cohomology first in this combinatorial setting. We will then show that if $K'/K$ is a finite Galois extension such that $X' \coloneq X\tensor_K K'$ admits a strictly semistable model, then $H^{p,q}(X')$ is canonically isomorphic to the Dolbeault cohomology of its associated skeleton.  By a Galois descent argument, the theorem follows.
Although Theorem~\ref{intro: Hodge diamond} will be proven in \cite{GJR2}, it relies on the foundations developed here. In particular, we will use the results about the Galois action on weakly smooth forms and on Dolbeault cohomology discussed in Section~\ref{section: Galois action}.

In Section~\ref{section: results for curves}, we will restrict ourselves to the case of a rig-smooth curve. We will show that our notion of harmonic functions agrees with Thuillier's notion from \cite{Thuillier}. If the curve has a strictly semistable model, then we will compare  harmonic functions on the curve to harmonic functions on the skeleton. At the end, we will give a regularization result (Theorem~\ref{regularization theorem}) for subharmonic functions on a rig-smooth curve generalizing a theorem of Wanner.

\subsection{Canonical Tropicalizations for Abelian Varieties}
In Section~\ref{section: canonical tropicalization of abelian varieties}, we will show that for an abelian variety $A$ over $K$, the canonical tropicalization map $\trop \colon A^\an \to N_\R$ is a harmonic tropicalization. Here, we denote by $N$ the cocharacter lattice of the torus $\bT$ involved in Raynaud's uniformization theory for $A$
. In Section~\ref{abelian threefold}, we will give an example of an abelian threefold with mixed reduction such that the canonical tropicalization is not a smooth tropicalization map. This shows that weakly smooth forms arise very naturally for abelian varieties.

Let $M_\R=\Hom(N,\R)$ and $\Lambda^{p,q} M_\R \coloneqq \Lambda^p M_\R \otimes_\R \Lambda^q M_R$ be the bigraded exterior algebra of $M_\R$. Then there is a canonical injective homomorphism $\Lambda^{p,q} M_\R \to H^{p,q}(A)$ for all $p,q \in \N$. This was shown by Miriam Prechtel in her thesis \cite{prechtel23} based on results of this paper. We do not know if this is an isomorphism as in the complex case.

\subsection{Acknowledgments}
We thank the participants of the Oberseminar Arakelov Theory in the summer term 2021 for  useful comments about an earlier draft of the paper and Andreas Mihatsch for his remarks on the first arXiv version and the referee for the suggestions to improve the presentation of the paper.

\section{Prerequisites} \label{section: prerequisites}

\subsection{General Notation and Conventions} \label{general notation and conventions}
The set of natural numbers $\N$ includes $0$. A set-theoretic inclusion $S \subset T$ allows equality.  For an additive subgroup $\Lambda\subset\R$ we let $\sqrt\Lambda$ denote its divisible hull. If $P$ is any abelian group, we let $P_\Lambda \coloneqq P \otimes_\Z \Lambda$ be the tensor product with $\Lambda$.

Rings are always assumed to be commutative and with $1$.
For a ring $A$, the group of invertible elements is denoted by $A^\times$.
A \defi{variety} over a field $F$ is an integral scheme which is of finite type and separated over $\Spec\, F$.

A space is \defi{compact} if it is quasi-compact and Hausdorff. A \defi{paracompact space} is a Hausdorff space such that every open covering has a locally finite open refinement.

\subsection{Non-Archimedean Fields} \label{nonArch fields}
Throughout the whole paper, we fix a field $K$ that is complete with respect to a non-trivial non-Archimedean complete absolute value denoted $\absval\colon K\to \R$.  We will call this a \defi{non-trivially valued non-Archimedean field}.  A \defi{non-Archimedean extension field} is a non-Archimedean field $K'$ equipped with an isometric embedding $K\inject K'$.

We will make use of Temkin's theory of graded residue fields introduced in~\cite{temkin04:local_properties_II}; see also  \cite[2.2]{ducros14:structur_des_courbes_analytiq} and \cite[Section~0]{ducros12:squelettes_modeles}.
We use $\td K$ to denote the ordinary residue field and $\td K^\bullet$ for the graded residue field.  Beware that Temkin and Ducros use $\td K^1$ for the ordinary residue field and $\td K$ for the graded residue field.

\medskip\noindent
We will use the following notation concerning non-Archimedean fields. \\[1mm]
\null\begin{tabular}{rl}
  \makebox[1.5cm][r]{\hfil$K$} & A non-Archimedean field. \\
  $\absval$ & $\colon K\to\R$, the absolute value on $K$. \\
  $v(\scdot)$ & $=-\log|\scdot|$, the associated valuation. \\
  $K^\circ$ & The valuation ring of $K$. \\
  $K^{\circ\circ}$ & The maximal ideal in $K^\circ$. \\
  $\td K$ & $= K^\circ / K^{\circ\circ}$, the residue field of $K$. \\
  $\Gamma$ & $= v(K^\times)$, the value group of $K$. \\
  $\td K^{\leq r}$ & $= \{x\in K\mid |x|\leq r\}$ for $r\in\R$. \\
  $\td K^{<r}$ & $= \{x\in K\mid |x|< r\}$ for $r\in\R$. \\
  $\td K^r$ & $= \td K^{\leq r}/\td K^{<r}$. \\
  $\td K^\bullet$ & $= \Dsum_{r\in\R}\td K^r$, the graded residue field. \\
  $e(L/K)$ & $= [|L^\times|: |K^\times|]$, the ramification index of a field extension.  \\
  $f(L/K)$ & $= [\td L : \td K]$, the residue degree.  \\
\end{tabular}
\medskip

\noindent
We have
\begin{equation}\label{eq:n.ef}
  [\td L^\bullet:\td K^\bullet] = e(L/K)f(L/K)
\end{equation}
by \cite[2.2.27.1]{ducros14:structur_des_courbes_analytiq}. 
The left hand side is defined as the dimension of $\td L^\bullet$ as a graded $\td K^\bullet$-vector space, which is the cardinality of any basis as in ordinary linear algebra  \cite[2.2.16]{ducros14:structur_des_courbes_analytiq}.

\subsection{Non-Archimedean geometry}  \label{Non-Archimedean geometry}
We will use Berkovich analytic spaces  as defined in~\cite{Berkovichetale}. We are mainly concerned with good analytic spaces $X$ over a non-trivially valued non-Archimedean field $K$. These are the analytic spaces considered in~\cite{BerkovichSpectral}, so this book is our main reference for non-Archimedean geometry. Here, \defi{good} means that every $x \in V$ has a neighbourhood isomorphic to the Berkovich spectrum $\sM(\sA)$ of an affinoid algebra $\sA$.

When considering strictly analytic spaces, we will always use the $\rG$-topology  generated by the strictly affinoid domains \cite[1.3]{berkovic93:etale_cohomology}. This is most natural for comparison to rigid analytic geometry: see~\cite[1.6]{Berkovichetale}.

\medskip\noindent
We will use the following notation concerning Berkovich analytic spaces.  Let $X$ be an analytic space and let $\sA$ be a $K$-affinoid algebra.
\null\begin{longtable}{rl}
    \makebox[1.5cm][r]{\hfil$
      \sM(\sA)$} & The Berkovich spectrum of the affinoid algebra $\sA$. \\
  $\sA^\circ$ & The subring of power-bounded elements of $\sA$. \\
  $\sA^{\circ\circ}$ & The ideal of topologically nilpotent elements of $\sA^\circ$. \\
  $\td\sA$ & $= \sA^\circ/\sA^{\circ\circ}$, the canonical reduction of $\sA$. \\
  $\Ocal_X$ & The sheaf of analytic functions on a good analytic space $X$. \\
  $\kappa(x)$ & The residue field $\sO_{X,x}/\fm_{X,x}$ at a point $x\in X$. \\
  $\sH(x)$ & $=\kappa(x)^\wedge$, the completed residue field at a point $x\in X$. \\
  $\chi_x$ & $\colon\sA\to\sH(x)$, the canonical homomorphism for $x\in\sM(\sA)$. \\
  $\td\chi_x$ & $\colon\td\sA\to\td\sH(x)$, the reduction of $\chi_x$. \\
  $\td\chi_x^\bullet$ & $\colon\sA\to\td\sH(x)^\bullet$, the composition of $\chi_x$ with $\sH(x)\to\td\sH(x)^\bullet$. \\
        $\del X$ & The \defi{boundary} of $X$~\cite[2.5]{BerkovichSpectral}. \\
  $\Int(X)$ & $= X \setminus \del X$, the \defi{interior} of $X$. \\
  $\del(X/S)$ & The \defi{relative boundary} of a morphism $X\to S$. \\
  $\Int(X/S)$ & $= X\setminus\del(X/S)$, the \defi{relative interior} of $X\to S$. \\
        $\sX^\an$ & The analytification of a $K$-scheme $\sX$ of finite type~\cite[3.4]{BerkovichSpectral}. \\
     \end{longtable}

Note that $\sX^\an$ is good strictly analytic space over $K$ with empty boundary.

\subsection{Formal geometry and models}  \label{Formal geometry and models}
We will make extensive use of admissible formal models of analytic spaces.  We refer the reader to~\cite{Bosch_lectures,bosch_lutkeboh93:formal_rigid_geometry_I} for the definitions. 

\medskip\noindent
We will use the following notation concerning admissible formal schemes.  Let $\fX$ be an admissible formal $K^\circ$-scheme, i.e., a flat formal $K^\circ$-scheme locally of topological finite type such that $\fX$ has a locally finite atlas by affine formal schemes.

\null\begin{tabular}{rl}
  \makebox[1.5cm][r]{\hfil$\fX_\eta$} & The \defi{generic fiber} of $\fX$, a  strictly $K$-analytic space. \\
  $\fX_s$ & The \defi{special fiber} of $\fX$, a $\td K$-scheme locally of finite type. \\
  $\red_\fX$ & $\colon\fX_\eta\to\fX_s$, the functorial \defi{reduction map}. \\
\end{tabular}

\medskip
An affine admissible formal $K^\circ$-scheme is the formal spectrum $\Spf(A)$ of a  $K^\circ$-algebra $A$ which is topologically of finite type and flat over $K^\circ$. 
The generic fiber of $\Spf(A)$ is defined as the spectrum of the $K$-affinoid algebra $A\tensor_{K^\circ}K$; this construction globalizes under the above assumptions to give the generic fiber of $\fX$.
If $\fX$ is separated, then it follows from our above finiteness assumption on the atlas and from \cite[Theorem~1.6.1]{berkovic93:etale_cohomology} that the generic fiber $\fX_\eta$  is a paracompact strictly analytic space. Below we will see a converse.

Let $X$ be a good  paracompact strictly analytic space over $K$.
A \defi{formal $\kcirc$-model $\fX$ of $X$} is an admissible formal $\kcirc$-scheme $\fX$
with an identification $\fX_\eta = X$.   We say that a formal $\kcirc$-model $\fX'$ \defi{dominates} $\fX$ if there exists a morphism $\fX'\to\fX$ inducing the identity on generic fibers.  A theorem of Raynaud states that the set of isomorphism classes of $\kcirc$-models of $X$ is non-empty and filtered with respect to the dominance relation: see~\cite[8.4]{Bosch_lectures} and~\cite[2.4]{gubler-martin}. A paracompact space is Hausdorff, and as $X$ is also good, it follows that $X$ is a separated Berkovich space \cite[Proposition~3.1.5]{berkovic90:analytic_geometry}.  Hence we see that every formal $\kcirc$-model of $X$ is separated by using \cite[Proposition~4.7]{bosch_lutkeboh93:formal_rigid_geometry_I}.

If $K$ is algebraically closed and $X$ is reduced, then for any formal $\kcirc$-model $\fX$ of $X$ there is a dominating $\kcirc$-model $\fX'$ of $X$ with reduced special fiber and with $\fX' \to \fX$ finite. There is a minimal $\kcirc$-model with these properties. It is given over a formal affine open subset $\Spf(A)$ by $\Spf((A\otimes_\kcirc K)^\circ)$.  See~\cite[1.10]{gubler98:local_heights_subvariet} and~\cite[Proof of Proposition~3.5]{gubler_martin19:zhangs_metrics}.

A formal $\kcirc$-model $\fX$ of $X$ can be modified by an \defi{admissible formal blowing up}. This is a dominating morphism $\pi\colon \fX' \to \fX$ obtained by a blowing up a center supported in $\fX_s$. See~\cite[8.2]{Bosch_lectures} for details.

Let $L$ be a line bundle on $X$, i.e., a locally free $\Ocal_X$-module of rank $1$.  A \defi{formal $\kcirc$-model $(\fX,\LS)$ of $(X,L)$} is a line bundle $\LS$ on a formal $\kcirc$-model $\fX$ of $X$ together with an identification $\LS|_X = L$.

\subsection{Polyhedral geometry} \label{Polyhedral geometry}
In this subsection, we collect the notions from convex geometry that we will use throughout the paper.  We suggest that the reader browse through this subsection in a first read-through to become familiar with our conventions regarding polyhedra, polytopes, polyhedral complexes, subcomplexes, and fans, as there are several competing definitions used in the literature. At a later stage, one might come back to this subsection if needed.

Usually, we have the following setup for convex geometry.\\[1mm]
\null\begin{tabular}{rl}
  \makebox[1.5cm][r]{\hfil$N$} & A \defi{lattice}, i.e.\ a free $\Z$-module of finite rank. \\
   $N_\R$ & $= N\tensor\R$, the ambient real vector space. \\
   $N_\Gamma$ & $= N\tensor\Gamma$ for an additive subgroup $\Gamma\subset\R$. \\
   $M$ & $=\Hom(N,\Z)$, the dual lattice. \\
   $M_\R$ & $=\Hom(N,\R)$, the dual vector space. \\
   $M_\Gamma$ & $=\Hom(N,\Gamma)$ for an additive subgroup $\Gamma\subset\R$. \\
   $\angles{\scdot,\scdot}$ & $\colon M_\R\times N_\R\to\R$, the evaluation pairing. \\
\end{tabular}

\medskip
Now we recall some notions from convex geometry in~$N_\R$.  These notions are often introduced in the special case of the lattice $\Z^n$ inside $\R^n$.  We fix a subring $R$ of $\R$ and an $R$-submodule $\Gamma$ of $\R$. In applications, we use this mainly for $R=\Z$ and $\Gamma = v(K^\times)$.

An \defi{affine function} on $N_\R$ is a function of the form the form $u+c$ with $u \in M_\R$ and $c \in \R$. We say that the affine function is \defi{$(R,\Gamma)$-linear} if $u \in M_\R$ and $c \in \Gamma$.  For another lattice $N'$, we say that a function $L\colon N_\R' \to N_\R$ is \defi{$(R,\Gamma)$-linear} if $f \circ L$ is  $(R,\Gamma)$-linear on $N_\R'$ for each $(R,\Gamma)$-linear function $f\colon N_\R \to \R$.

In the above and what follows, we abbreviate the prefix ``$(R,\R)$-'' (in which no condition is required of the translation) simply by ``$R$-''

A \defi{polyhedron} $\sigma$ in $N_\R$ is a finite intersection of half-spaces $H_i \coloneqq \{f_i \geq 0\}$ for affine functions $f_i$ on $N_\R$. If we can choose all $f_i$ to be $(R,\Gamma)$-linear, then we say that $\sigma$ is an $(R,\Gamma)$-polyhedron.  A \defi{polytope} is a bounded polyhedron.
A  \defi{face} of a polyhedron $\sigma$ is the intersection of $\sigma$ with the boundary of a half-space $H \supset \sigma$. We also allow $\sigma$ and $\emptyset$ as faces of $\sigma$.

A \defi{polyhedral complex} in $N_\R$ is a finite collection $\Pi$ of polyhedra such that for any $\sigma \in \Pi$, all faces of $\sigma$ are contained in $\Pi$, and such that for $\sigma, \rho \in \Pi$, the intersection $\sigma \cap \rho$ is a face of both $\sigma$ and $\rho$.  A \defi{$(R,\Gamma)$-polyhedral complex} is a polyhedral complex consisting of $(R,\Gamma)$-polyhedra.
If all $\sigma \in \Pi$ are polytopes, then $\Pi$ is called a \defi{polytopal complex}. The \defi{support} $|\Pi|\coloneqq \bigcup_{\sigma \in \Pi} \sigma$ of a polytopal complex $\Pi$ is called a \defi{polytopal set}.

A \defi{subcomplex} of $\Pi$ is a polyhedral complex $\Pi' \subset \Pi$ 
A \defi{subdivision} of $\Pi$ is a polyhedral complex $\Pi''$ with $|\Pi|=|\Pi''|$, such that each $\sigma \in \Pi$ is a union of polyhedra in $\Pi''$.

A \defi{(rational) fan} in $N_\R$ is a $(\Z,\{0\})$-polyhedral complex. It consists of $\Z$-polyhedral cones centered at $0$. For a $\Z$-polyhedral complex $\Pi$ in $N_\R$ and $\omega \in |\Pi|$, we define the \defi{star around $\omega$} as $\{\sigma \in \Pi \mid \omega \in \sigma\}$. By translation to the origin, the star generates a fan in $N_\R$.

For a $\Z$-polyhedron $\sigma$ in $N_\R$, let $\Linear_\sigma$ be the $\R$-linear subspace of $N_\R$ generated by $\{\omega_1-\omega_2 \mid \omega_1,\omega_2 \in \sigma\}$. Note that $\Linear_\sigma$ is generated by the saturated lattice $N_\sigma \coloneqq N \cap \Linear_\sigma$.

A \defi{weighted polyhedral complex of dimension $d$} is a polyhedral complex $\Pi$ in $N_\R$ such that any maximal $\sigma \in \Pi$ is $d$-dimensional and comes with a \defi{weight} $m_\sigma \in \Z$. We consider  weighted $(R,\Gamma)$-polyhedral complexes $(\Pi,m)$ and $(\Pi',m')$ to be equivalent if we can replace them by $(R,\Gamma)$-subdivisions such that they agree outside the weight zero part. More precisely, this means that there is a subdivision $\Pi''$ of $\Pi$ such that for every $d$-dimensional face $\sigma$ of $\Pi''$ contained in some $\Delta \in \Pi$ with $m_\Delta \neq 0$, there exists $\Delta' \in \Pi'$ with $\sigma \subset \Delta'$ and $m_{\Delta'}=m_\Delta$, and the same holds with the role of $\Pi$ and $\Pi'$ reversed.
Equivalence classes of weighted $(R,\Gamma)$-polyhedral complexes form an abelian group by adding the weights of a joint $(R,\Gamma)$-subdivision. The zero element is given by $0 \coloneqq (N_\R,0)$.

An \defi{abstract polyhedral complex} is a finite collection $\Pi$ of polyhedra satisfying the same conditions as above, where each $\sigma \in \Pi$ is a full-dimensional polyhedron in a real vector space $\R^{n_\sigma}$, but where we do not require that all polyhedra be contained in the same ambient space $\R^n$. We call $\Pi$ an \defi{abstract $(R,\Gamma)$-polyhedral complex} if for a face $\tau$ of $\sigma \in \Pi$, the induced affine map $\R^{n_\tau}\to \R^{n_\sigma}$ is $(R,\Gamma)$-linear.  An \defi{$(R,\Gamma)$-piecewise linear space} is the support of an abstract polyhedral complex, and the \defi{$(R,\Gamma)$-piecewise linear structure} is given by the polyhedra of $\Pi$, well-defined up to $(R,\Gamma)$-subdivison.

A function $L\colon S' \to S$ between piecewise $(R,\Gamma)$-linear spaces $S,S'$ is a \defi{piecewise $(R,\Gamma)$-linear map} if there exist polyhedral complexes $\Pi, \Pi'$ with $|\Pi|=S,|\Pi'|=S'$ such that for every $\rho \in \Pi'$, there is $\sigma \in \Pi$ with $L(\rho)=\sigma$, and $L|_\rho$ induces an $(R,\Gamma)$-linear map $\R^{n_{\rho}} \to \R^{n_\sigma}$.

We are mainly concerned with piecewise $(\Z,\Gamma)$-linear maps $f\colon|\Pi| \to \R$ on a weighted $\z$-polyhedral complex $(\Pi,m)$ of dimension $d$. Such functions play the role of rational functions in tropical intersection theory. We have an associated \defi{tropical Weil divisor} $\divisor(f)$ which is a $(d-1)$-dimensional weighted $\z$-polyhedral  subcomplex of a subdivision of $\Pi$.  See~\cite[Section~6]{AllermannRau}.

Let $S$ be a piecewise $(\Z,\Gamma)$-linear space. A weighted abstract $(\Z,\Gamma)$-polyhedral complex of dimension $d$ is given by endowing the $d$-dimensional faces $\Delta$ of an abstract polyhedral complex $\Pi$ defining $S$ with weights $m_\Delta \in \Z$. The resulting weighted polyhedral complex is denoted by $(\Pi_d,m)$. As above,  weighted abstract polyhedral complexes are \defi{equivalent} if they agree after subdivision. For a piecewise $(\Z,\Gamma)$-linear map $L \colon S \to S'$ of piecewise $(\Z,\Gamma)$-linear spaces,  the \defi{push-forward} $L_*(\Pi_d,m)$ is a weighted $(\Z,\Gamma)$-polyhedral complex  of dimension $d$ on $S'$, well-defined up to equivalence. See \cite[Section~7]{allermann_rau10:tropical_intersection_thy}.

\subsection{Tori and tropicalizations} \label{Tropical geometry}
Let $\torus$ be an $n$-dimensional split torus over the non-Archimedean field $K$, and let $N$ be the cocharacter lattice of $\torus$.  Then the \defi{tropicalization map} is the function $\trop\colon \torus^\an \to N_\R$ defined as follows.  For $u$ in the character lattice $M = \Hom(N,\Z)$, let $\chi^u$ be the corresponding character of $\torus = \Spec(K[M])$. Then the image $\trop(x)\in N_\R$ of $x \in\torus^{\an}$ is characterized by $$\angles{u,\trop(x)}  = -\log|\chi^u(x)|$$
for all $u \in M$. If we choose toric coordinates $x_1,\dots,x_n$ on $\torus$ identifying $N$ with $\Z^n$, then $\trop(x) = (-\log|x_1(x)|,\ldots,-\log|x_n(x)|)$ under the identification $N_\R\cong\R^n$.  Working analytically has the advantage that $\trop$ is a continuous proper map.

There is a canonical section $\iota\colon N_\R \to\bT^\an = \Spec(K[M])^\an$ of the tropicalization map $\trop\colon \bT^\an \to N_\R$ defined by
\begin{equation}\label{eq:skeleton.norm}
 -\log\bigg\|\sum_{u\in M}a_u\xi^u\bigg\|_{\iota(\omega)} = \min\bigl\{ v(a_u) +
 \angles{u,\omega}\mid a_u\neq 0 \bigr\}.
\end{equation}
Its image is the \defi{canonical skeleton} $S(\bT)$ of $\bT$, defined in~\cite[6.3]{BerkovichSpectral}.

Given an analytic space $X$, a morphism $\phi\colon X\to\bT^\an$ is called a \defi{moment map}.  We let $\phi_{\trop} = \trop\circ\phi\colon X\to N_\R$ be the corresponding \defi{smooth tropicalization map}.

\medskip\noindent
We will use the following notations concerning tropical geometry and tori. \\[1mm]
\null\begin{tabular}{rl}
  \makebox[1.5cm][r]{$\bT$} & A split torus over $K$. \\
  $M$ & The character lattice of $\bT$. \\
  $N$ & The cocharacter lattice of $\bT$. \\
  $\xi^u$ & The character of $\bT$ corresponding to $u\in M$. \\
  $\trop$ & $\colon \bT^\an\to N_\R$, the tropicalization map. \\
  $\iota$ & $\colon N_\R\to\bT^\an$, the section of $\trop$. \\
  $S(\bT)$ & $= \iota(N_\R)$, the \defi{canonical skeleton} of $\bT$. \\
  $\phi_{\trop}$ & $= \trop\circ\phi$, the smooth tropicalization map associated to $\phi\colon X\to\bT^\an$.
\end{tabular}

\subsection{Lagerberg forms} \label{Lagerberg forms} Let $N$ be a lattice of rank $n$. Lagerberg \cite{Lagerberg} has introduced the sheaf
$\cA^{p,q}\coloneqq\cA^p \otimes_{C^\infty} \cA^q$ of $(p,q)$-forms on $N_\R$, where on the right we use the sheaf $\cA^\bullet$ of usual smooth differential forms on $N_\R$. Such $(p,q)$-forms are called \defi{smooth Lagerberg forms}. As in complex geometry, the smooth Lagerberg forms give rise to a bigraded differential sheaf $\cA^{\bullet,\bullet}$ of $\R$-algebras with an alternating product $\wedge$ and natural differentials $\d'\colon\cA^{p,q} \to \cA^{p+1,q}$ and $\d''\colon\cA^{p,q} \to\cA^{p,q+1}$.

The \emph{Lagerberg involution} is the unique involution $J$ of the sheaf  $\cA$ of $\R$-algebras which leaves the smooth functions fixed and which satisfies $J\d'=\d''J$. It maps $\cA^{p,q}$ isomorphically onto $\cA^{q,p}$.
For an open subset $\Omega$ of $N_\R$, we call $\omega \in \cA^{p,p}(\Omega)$ a \defi{positive Lagerberg form} if there are $\alpha_1, \dots, \alpha_m \in \cA^{p,0}(\Omega)$ and smooth functions $f_j \colon \Omega \to \R_{\geq 0}$ with
$$\omega = \sum_{j=1}^m (-1)^{\frac{p(p-1)}{2}} f_j \cdot \alpha_j \wedge J\alpha_j.$$
Smooth positive Lagerberg forms are closed under products. We refer to \cite[Section~2]{burgos-gubler-jell-kuennemann1} for a detailed discussion.

For the following generalization, we refer to \cite[Section~3]{Gubler}.
We fix a weighted $\Z$-polyhedral complex $\CS$ of dimension $d$ in $N_\R$ with underlying piecewise $\Z$-linear space $S$.
The sheaf $\cA_S^{\bullet,\bullet}=\cA^{\bullet,\bullet}$ of smooth Lagerberg forms on $S$ is given on $\Omega \cap S$ for an open subset $\Omega$ of $N_\R$ by restricting the smooth Lagerberg forms on $\Omega$ to $\Omega \cap S$.
This means precisely that we identify $\omega, \omega' \in \cA(\Omega)$ if and and only if $\omega$ and $\omega'$ restrict to the same Lagerberg form on $\relint(\Delta) \cap \Omega$ for each face $\Delta$ of $\CS$. By restriction, the Lagerberg involution $J$ is well-defined on smooth Lagerberg forms on $S$.
There are functorial pull-backs $L^*\colon L_*\cA_{S'}^{\bullet,\bullet}\to \cA_S^{\bullet,\bullet}$ for piecewise $\Z$-linear maps $L\colon S' \to S$ of piecewise $\Z$-linear spaces.

An element $\omega$ of $\cA_S^{p,p}$ is  \defi{symmetric} if $J\omega = (-1)^p \omega$. We call a smooth Lagerberg form on $S$ of type $(p,p)$ \defi{positive} if it is the restriction of a positive smooth Lagerberg form on $N_\R$. A positive Lagerberg form is symmetric.
The positive Lagerberg forms are obviously closed under products.
It follows from \cite[Corollary~2.2.5]{burgos-gubler-jell-kuennemann1} that every symmetric smooth Lagerberg form on $S$ is the difference of two positive smooth Lagerberg forms.

We denote by $\cA_c(\CS)$ the space of compactly supported forms.
The underlying integral structure of $S$ gives well-defined integrals
$\int_\CS \omega$ and boundary integrals%
\footnote{Note that there is a sign mistake in the definition of the boundary integral in~\cite{gubler16:forms_currents}; see the erratum in \cite{gubler13:forms_currents_analytif_algebraic_variety}.}
 $\int_{\partial\CS}\eta$
 for  $\omega \in \cA_c^{d,d}(S)$ and  $\eta \in \cA_c^{d,d-1}(S)$.  Stokes' Theorem holds with respect to $\d''$.   We refer to \cite[3.2]{gubler_kunneman:tropical_arakelov} for the definition of \defi{Lagerberg currents} on $\Omega \cap S$ as certain linear functionals on $\cA_c(\Omega \cap S)$. The space of Lagerberg currents on $\Omega \cap S$ of type $(p,q)$ is denoted by $\cD_{p,q}(\Omega \cap S)$, which means they act on compactly supported Lagerberg forms of type $(p,q)$.  We call  $T \in \cD_{p,p}(\Omega \cap S)$ \defi{symmetric} if $TJ=(-1)^pT$.
 A symmetric Lagerberg current $T \in \cD_{p,p}(\Omega \cap S)$ is called \defi{positive} if $T(\omega) \geq 0$ for all positive $\omega \in \cA_c^{p,p}(\Omega \cap S)$.

Note that any piecewise $\Z$-linear function $f\colon S \to \R$ gives rise to a unique Lagerberg current  $[f]$ acting on $\omega \in\cA_c^{d,d}(S)$ by $\langle \omega, [f] \rangle = \int_{\CS} f \, \omega$. The \emph{tropical Poincar\'e--Lelong formula}~\cite[Theorem~0.1]{GK} states that $\d'\d''[f]$ is the current of integration over the tropical Weil divisor $\divisor(f)$.
We refer to the summary below for more details.

\medskip\noindent
Here is a summary of the notations we will use concerning Lagerberg forms.  Let $\cC$ be a $d$-dimensional weighted $\Z$-polyhedral complex with support $S$. \\[1mm]
\null\begin{tabular}{rl}
  \makebox[1.5cm][r]{\hfil$\cA^{\bullet,\bullet}$} &
    \begin{minipage}[t]{0.8\linewidth}
      The sheaf of differential bigraded algebras of Lagerberg forms on $S$.
    \end{minipage} \\
  $\cA^{\bullet,\bullet}_c$ & The cosheaf of compactly supported forms. \\
  $\wedge$ & The alternating product on $\cA^{\bullet,\bullet}$. \\
  $\d'$ & $\colon\cA^{p,q}\to\cA^{p+1,q}$, the first differential on $\cA^{\bullet,\bullet}$. \\
  $\d''$ & $\colon\cA^{p,q}\to\cA^{p,q+1}$, the second differential on $\cA^{\bullet,\bullet}$. \\
  $\int_\cC$ & $\colon\cA_c^{d,d}(S)\to\R$, the integral. \\
  $\int_{\del\cC}$ & $\colon\cA_c^{d,d-1}(S)\to\R$, the boundary integral. \\
  $[f]$ & The Lagerberg current of a piecewise $\Z$-linear function $f\colon S\to\R$. \\
   \makebox[1.5cm][r]{\hfil$\cD_{\bullet,\bullet}$} &
  \begin{minipage}[t]{0.8\linewidth}
  	The bigraded sheaf of Lagerberg currents on $S$.
  \end{minipage} \\
       $\d'$ & $\colon\cD_{p,q}\to\cD_{p-1,q}$, given by  $\angles{\d'T,\omega} = (-1)^{p+q+1}\angles{T,\d'\omega}$  for $\omega \in \cA_c^{p-1,q}$. \\
       $\d''$ & $\colon\cD_{p,q}\to\cD_{p,q-1}$, given by $\angles{\d''T,\omega} = (-1)^{p+q+1}\angles{T,\d''\omega}$ for $\omega \in \cA_c^{p,q-1}$. \\
\end{tabular}

\subsection{Smooth forms and currents} \label{Smooth forms and currents}
Chambert-Loir and Ducros~\cite{CLD} have introduced a sheaf of bigraded differential $\R$-algebras $\cA_\sm^{\bullet,\bullet}$ of $(p,q)$-forms on a Berkovich space $X$ over $K$.  The differentials are denoted by $\d',\d''$; they behave like the differential operators $\partial$ and $\bar\partial$ in complex geometry.  In this paper, the elements of $\cA_\sm^{p,q}$ will be called \defi{smooth $(p,q)$-forms}. There is a functorial pull-back with respect to morphisms of analytic spaces.

Let $X$ be a compact analytic space with a moment map  $\phi\colon X\to\bGm^{n,\an}$, let $S = \phi_{\trop}(X)$, so that $S$ is a piecewise $\Z$-linear space.  There is a natural homomorphism $\phi_{\trop}^* \colon \cA_S^{\bullet,\bullet} \to \cA_{{\rm sm}}^{\bullet,\bullet}$ of differential $\R$-algebras. The Lagerberg involution $J$ is also defined on the sheaf $\cA_{{\rm sm}}$.
We call $\omega \in \cAsm^{p,p}(X)$ \defi{symmetric} if $J\omega=(-1)^p\omega$.
A form $\omega \in \cAsm^{p,p}(X)$ is called \defi{positive} if every $x \in X$ has a neighborhood $W$ which is a compact analytic domain admitting a moment map  $\phi\colon W\to\bGm^{n,\an}$ such that
$\omega|_W=\phi_{\trop}^*(\alpha)$ for a positive Lagerberg form $\alpha \in \cA^{p,p}(h(W))$.
Elements of $\cAsm^{0,0}$ are functions on open subsets of $X$ which we call \defi{smooth functions}. For details, we refer to~\cite[Section~3]{CLD}.

We will frequently use the fact that smooth partitions of unity exist on a good paracompact analytic space~\cite[Proposition~3.3.6]{chambert_ducros12:forms_courants}.

If $X$ is a topologically separated analytic space over $K$ with $\partial X = \emptyset$, then it is shown in~\cite[Section~4]{CLD} that for every open subset $U$ of $X$, the compactly supported smooth $(p,q)$-forms on $U$ form a locally convex space $\cA_{{\rm sm},c}^{p,q}(U)$; its dual is called the space of \defi{currents} and is denoted by $\cDsm_{p,q}(U)$.  There are differential operatiors $\d'\colon\cDsm_{p,q}\to\cDsm_{p-1,q}$ and $\d''\colon\cDsm_{p,q}\to\cDsm_{p,q-1}$ which are defined by $\angles{\d'T,\alpha} = (-1)^{p+q+1}\angles{T,\d'\alpha}$ and $\angles{\d''T,\alpha} = (-1)^{p+q+1}\angles{T,\d''\alpha}$, respectively.  The signs are chosen so that $\d'[\alpha]=[\d'\alpha]$ and $\d''[\alpha]=[\d''\alpha]$, where $\alpha\in\cA_{\sm,c}^{\bullet,\bullet}$ and $[\alpha]$ is the associated current of integration.
We call $T \in \cDsm_{p,p}(X)$  \defi{symmetric} if $TJ=(-1)^pT$. A symmetric current $T \in \cDsm_{p,p}(X)$ is called \defi{positive} if $T(\omega)\geq 0$ for all positive forms $\omega \in \cA_{{\rm sm},c}^{p,q}(X)$.  See~\cite[Sections~5.3, 5.4]{chambert_ducros12:forms_courants}.

\medskip\noindent
Here is a summary of the notations we will use concerning smooth forms.   \\[1mm]
\null\begin{tabular}{rl}
  \makebox[1.5cm][r]{\hfil$\cA_\sm^{\bullet,\bullet}$} &
    \begin{minipage}[t]{0.8\linewidth}
      The sheaf of differential bigraded algebras of smooth forms on an analytic space as introduced in~\cite{chambert_ducros12:forms_courants}.
    \end{minipage} \\
  $\wedge$ & The alternating product on $\cA_\sm^{\bullet,\bullet}$. \\
  $\cDsm_{\bullet,\bullet}$ & The sheaf of currents. \\
       $\d',\d''$ & The differentials on $\cA_\sm^{\bullet,\bullet}$ and on $\cDsm_{\bullet,\bullet}$. \\
\end{tabular}

\section{Tropical skeletons} \label{sec: tropical skeletons}

Let $X$ be a compact strictly  analytic space over $K$ of pure dimension $d$.  We consider a moment map $\varphi\colon X \to \bT^{\an}$ to a split torus $\bT$ over $K$, and we let $N$ and $M$ be the cocharacter and character lattices of $\bT$, respectively.  Note that the moment map $\phi$ is equivalent to a homomorphism $M\to\Gamma(X,\sO_X^\times)$. We will use results of Ducros \cite{ducros12:squelettes_modeles} to attach to $\varphi$ a tropical skeleton in $X$ which generalizes the algebraic case considered in \cite{gubler_rabinoff_werner16:skeleton_tropical}.

In fact, the assumption that $X$ is a strictly analytic space was done only for simplicity of notation and can be easily generalized to $\Gamma$-strictly analytic space for any subgroup $\Gamma$ of $\R$ containing the value group of $K$ as defined in \cite[0.20]{ducros12:squelettes_modeles}.

\begin{defn}\label{def:tropical.variety}
	The \defi{tropical variety} of $\phi\colon X\to\bT^\an$ is the image $\phi_{\trop}(X)\subset N_\R$.
\end{defn}

\begin{art}[Polyhedral Structure on Tropicalizations]\label{sec:trop.is.polyhedral}
	A result of Berkovich, which might be seen as an  analytic version of the Bieri--Groves theorem, shows that that the tropical variety $\varphitrop(X)$ is the support of a $\z$-polytopal complex $\Pi$ in $N_\R$ of dimension at most $d$.  A theorem of Ducros~\cite[Theorem~3.2]{Ducros} shows that we may choose $\Pi$  such  that $\varphitrop(\partial X)$ is contained in a subcomplex of dimension at most $d-1$.   If $X$ is strictly affinoid, then $\varphitrop(\partial X)$ is equal to the support of such a subcomplex.
\end{art}

\begin{art}[Tropicalizations of Germs]\label{sec:ducros.germs}
	Ducros~\cite[Theorem~3.4]{Ducros} has also shown that for every $x \in X$, there is a compact strictly analytic neighbourhood $V(x)$ of $x$ such that for all compact strictly analytic neighbourhoods $W \subset V(x)$ of $x$, the germs of the tropical varieties $\varphitrop(W)$ and $\varphitrop(V(x))$ agree at $\varphitrop(x)$.
\end{art}

\begin{defn}\label{def:germ.of.smooth.trop}
	The \defi{tropical variety $\phi_{\trop}(X,x)$ of a germ $(X,x)$} is the germ of $\phi_{\trop}(V(x))$ at $\phi_{\trop}(x)$.  The \defi{tropical dimension of $(X,x)$} is $\dim\phi_{\trop}(X,x)$.
\end{defn}

\begin{art}\label{sec:germ.smooth.trop.properties}
	The tropical variety of a germ $(X,x)$ is the germ of a fan at $\varphitrop(x)$.  This germ may be strictly smaller than the germ of $\varphitrop(X)$ at $\varphitrop(x)$. For example, the tropical variety of the germ of $X$ at a rigid point is always a point.  More generally, the tropical dimension of $(X,x)$ is bounded in Theorem~\ref{thm:trop.dim.reduction} below.
\end{art}

In the following, we will use graded residue fields introduced in~\secref{nonArch fields}.  Recall from~\secref{Non-Archimedean geometry} that $\td\chi_x^\bullet\colon\sA\to\td\sH(x)^\bullet$ is the canonical homomorphism to the graded reduction at a point $x\in\sM(\sA)$.  This extends in an obvious way to $\sO_{X,x}$.

\begin{notn}\label{notn:tildaMx}
	Let $\td K^\bullet(\td M(x))$ denote the graded subfield of the graded residue field $\td\sH(x)^\bullet$ generated over $\td K^\bullet$ by $\td M(x) = \{\td\chi_x^\bullet(\phi^*u)\mid u\in M\}$.  We set
	\[ d_\phi(X,x) \coloneq \trdeg\bigl(\td K^\bullet(\td M(x))/\td K^\bullet\bigr). \]
\end{notn}

The transcendence degree of an extension of graded fields is defined in the same way as ordinary fields; see~\cite[Section~0]{ducros12:squelettes_modeles}.  Alternatively, it can be defined as follows.

\begin{rem} \label{Abhyankar inequality}
	We note that we have
	$$\trdeg\bigl( \td \sH(x)^\bullet/\td K^\bullet \bigr)= \dim_\Q \left( |\sH(x)^\times|_\Q / |K^\times|_\Q \right) + \trdeg\bigl(\td \sH(x)/\td K  \bigr) \leq \dim_x(X) = d$$
	as shown in~\cite[0.12]{Ducros} and by \cite[1.4.6]{ducros18:families}.	If equality is satisfied, then $x$ is called an \defi{Abhyankar point}. In any case, we get $d_\varphi(X,x) \leq d$.
\end{rem}

\begin{rem} \label{connection to usual residue field}
  If $\omega = \phi_{\trop}(x)\in N_\Gamma$ then we choose a lift $\lambda\colon M\to K^\times$ of $\angles{\scdot,\omega}\colon M\to\Gamma$, and we let $\td K(\td M(x))$ denote the subfield of $\td\sH(x)$ generated over $\td K$ by $\{\td\chi_x(\phi^*(u/\lambda(u)))\mid u\in M\}$; this does not depend on the choice of $\lambda$.  In this case we can compute $d_\phi(X,x)$ using just classical residue fields by~\cite[0.12]{Ducros}:
	\[ d_\phi(X,x) = \trdeg\bigl( \td K(\td M(x))/\td K \bigr). \]
\end{rem}

\begin{eg}\label{eg:res.field.skeleton.point}
  Choose a basis $u_1,\ldots,u_n$ for $M$.  Let $\xi$ be a point of the canonical skeleton $S(\bT)$, and let $r_i = |u_i(\xi)|$ for $i=1,\ldots,n$.  Then the homomorphism  $\td K^\bullet(t_1/r_1,\ldots,t_n/r_n)\to\td\sH(\xi)^\bullet$ sending $t_i\mapsto\td\chi_\xi^\bullet(u_i)$ is an isomorphism by~\cite[2.2.46.6]{ducros14:structur_des_courbes_analytiq} or~\cite[0.14]{Ducros}.  In other words, $\td\sH(\xi)^\bullet$ is a purely transcendental extension of $\td K^\bullet$ of transcendence degre $n$.  Moreover, if $\phi\colon X\to\bT^\an$ is a moment map and $\phi(x) = \xi$, then $\td K^\bullet(\td M(x))$ is the image of $\td\sH(\xi)^\bullet\to\td\sH(x)^\bullet$.
\end{eg}

The next theorem of  Ducros will be crucial.

\begin{thm}[{\cite[Theorem~3.4]{ducros12:squelettes_modeles}}]%
	\label{thm:trop.dim.reduction}
	We have $\dim\phi_{\trop}(X,x)\leq d_\phi(X,x)$, with equality if $x\in\Int(X)$, in which case $\phi_{\trop}(X,x)$ is pure dimensional.  Moreover, if $x\in\Int(X)$ and $\dim(\bT) = d_\phi(X,x)$, then $\phi_{\trop}(X,x) = (N_\R,\phi_{\trop}(x))$.
\end{thm}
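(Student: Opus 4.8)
The plan is to translate the statement into algebraic geometry over Temkin's graded residue field via the reduction of the germ $(X,x)$, and then read off the dimension of $\varphitrop(X,x)$ from that reduction. By \secref{sec:ducros.germs} we may replace $X$ by a small compact strictly analytic neighbourhood of $x$, and after shrinking assume $X=\sM(\cA)$ is strictly affinoid, so that $\varphitrop(X,x)$ is literally the germ at $\omega\coloneqq\varphitrop(x)$ of the polytopal set $\varphitrop(X)$, which by \secref{sec:trop.is.polyhedral} is the support of a $\z$-polytopal complex of dimension $\le d$. Fix a basis $u_1,\dots,u_n$ of $M$, so $\varphitrop(y)=(-\log|u_i(y)|)_i$ and $\td M(x)=\{\td\chi_x^\bullet(u_i)\}_i$. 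Following \cite{temkin04:local_properties_II} (see also \cite[Section~0]{Ducros}), the germ $(X,x)$ has a graded reduction $\widetilde{X}_x$, a scheme of finite type over $\td\sH(x)^\bullet$, and $\varphi$ reduces to a morphism $\widetilde\varphi_x$ from $\widetilde{X}_x$ to the graded reduction of the germ $(\bT^{\an},\varphi(x))$; by the computation recalled in Example~\ref{eg:res.field.skeleton.point} the latter is an open subscheme of a split graded torus over $\td K^\bullet$, and the graded subfield generated by the images of $u_1,\dots,u_n$ is precisely $\td K^\bullet(\td M(x))$. Hence the Zariski closure $Z$ of the image of the distinguished point of $\widetilde{X}_x$ under $\widetilde\varphi_x$ has function field $\td K^\bullet(\td M(x))$, so $\dim Z=\trdeg\bigl(\td K^\bullet(\td M(x))/\td K^\bullet\bigr)=d_\varphi(X,x)$.

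\textbf{The inequality.} The germ $\varphitrop(X,x)$ — a germ of a fan at $\omega$ by \secref{sec:germ.smooth.trop.properties} — is governed by $Z$: a nonzero homogeneous element of $\td K^\bullet(\td M(x))$, i.e.\ a graded polynomial relation among the $\td\chi_x^\bullet(u_i)$, shows that $\omega$, together with every tropical point realised by a point of $X$ near $x$, lies on the corresponding tropical hypersurface of that relation. Carrying this out for generators of the ideal of $Z$ exhibits $\varphitrop(X,x)$ as (the germ of) a subfan of a fan structure on $\Trop(Z)$; since $\dim\Trop(Z)=\dim Z=d_\varphi(X,x)$ by the graded Bieri--Groves theorem applied to $Z$, we get $\dim\varphitrop(X,x)\le d_\varphi(X,x)$. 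This half is soft: it uses only the definition of $\td\chi_x^\bullet$ and elementary non-Archimedean estimates, and no hypothesis on $x$.

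\textbf{Equality on the interior, purity, and the last assertion.} For $x\in\Int(X)$ one invokes Temkin's structure theory: the reduction $\widetilde{X}_x$ is equidimensional of dimension $\dim_x X=d$, and — this is the technical core — the relative-interior hypothesis guarantees that no direction of $Z$ is lost, i.e.\ $\widetilde\varphi_x$ is, near the point of interest, flat (or at least open) enough that $\varphitrop(X,x)$ realises the full germ of $\Trop(Z)$ at $\omega$, with $Z$ equidimensional of dimension $d_\varphi(X,x)$. This yields simultaneously $\dim\varphitrop(X,x)=d_\varphi(X,x)$ and the pure-dimensionality of $\varphitrop(X,x)$. The final assertion is then a corollary: if in addition $\dim\bT=d_\varphi(X,x)$, then $Z$ is a closed subvariety of the graded torus reduction of the same dimension as that torus, hence $Z$ is the whole open part of the torus, so $\Trop(Z)$ equals $N_\R$ near $\omega$, and the equality $\varphitrop(X,x)=\Trop(Z)$ of germs gives $\varphitrop(X,x)=(N_\R,\omega)$.

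\textbf{Main obstacle.} The only genuinely hard point is the ``realisation'' statement for interior points: that passing from the algebraic closure $Z$ of the reduced moment map to the honest germ $\varphitrop(X,x)$ loses no dimension when $x\in\Int(X)$. This is exactly where the hypothesis $x\in\Int(X)$ is used, and it fails at boundary points — already for the Gauss point of a one-point annulus $\sM(K\langle T,T^{\pm1}\rangle)$ with $\varphi=T$, where $Z$ is one-dimensional but $\varphitrop(X,x)=\{0\}$. Establishing it requires Temkin's detailed analysis of reductions of germs together with the relative-interior condition; everything else (the upper bound, the deduction of the $\dim\bT=d_\varphi(X,x)$ case, and the transcendence-degree bookkeeping via Remark~\ref{Abhyankar inequality} and Remark~\ref{connection to usual residue field}) is formal once the graded-reduction dictionary is in place.
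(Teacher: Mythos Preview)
The paper does not prove this theorem: it is quoted verbatim as a result of Ducros, with the citation \cite[Theorem~3.4]{ducros12:squelettes_modeles}, and is used as a black box throughout Sections~\ref{sec: tropical skeletons} and~\ref{sec: tropical multiplicities}. There is therefore no proof in the paper to compare your proposal against.

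That said, your sketch is a reasonable outline of the argument in Ducros's paper, and you are honest about where the content lies. A few remarks. First, your reduction object $\widetilde{X}_x$ is not quite what appears in the literature: Temkin's reduction of a germ (as recalled in~\secref{sec:temkin.reduction.germs}) is a Riemann--Zariski-type space over $\td K$, not a scheme over $\td\sH(x)^\bullet$; Ducros works instead with the graded reduction of an affinoid algebra and with the germ of the tropicalization directly. Second, your identification of the ``main obstacle'' is correct, but the way you phrase the resolution (``flat (or at least open) enough'') is vague; Ducros's actual argument for the equality and purity on $\Int(X)$ is an induction on dimension using fibrations by relative dimension one and the structure of analytic curves, not a flatness statement for $\widetilde\varphi_x$. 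Third, your boundary counterexample is slightly off: the annulus $\{|T|=1\}$ has the Gauss point as an \emph{interior} point of that affinoid, and $\varphitrop$ of a neighbourhood is indeed $\{0\}$, but then $d_\varphi=0$ as well since $\td\chi_x^\bullet(T)$ has degree $1$ and the tropicalization is a point; a genuine boundary example where the inequality is strict requires a bit more care.

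In short: your proposal is a plausible high-level roadmap, but the paper itself defers entirely to Ducros for the proof, and the step you flag as hard is indeed the substance of Ducros's Theorem~3.4.
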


The following definition makes sense for any $K$-analytic space, although we will confine ourselves to compact spaces in the results that follow.

\begin{defn}\label{def:trop.skel}
	The \defi{tropical skeleton of $\phi\colon X\to\bT^\an$} is the subset
	\[ S_\phi(X) = \bigl\{ x\in X\bigm| d_\phi(X,x) = \dim_x(X) = d \bigr\}. \]
\end{defn}

By Theorem~\ref{thm:trop.dim.reduction}, if $\dim\phi_{\trop}(X,x)=d$ then $x\in S_\phi(X)$, and if $x\in S_\phi(X)\setminus\del X$ then $\dim\phi_{\trop}(X,x)=d$.

The following properties are immediate consequences of various results of Ducros.

\begin{prop}\label{prop:tropical.skeleton}
  The tropical skeleton satisfies the following properties.
  \begin{enumerate}
  \item\label{item:trop.skeleton.abhyankar}
    If $x\in S_\phi(X)$, then $x$ is an Abhyankar point of $X$.
  \item\label{item:trop.skeleton.subdomain}
    If $U\subset X$ is a compact strictly analytic domain in $X$, then $S_\phi(U) = S_\phi(X)\cap U$.
  \item\label{item:trop.skeleton.closed}
    The tropical skeleton $S_\phi(X)$ is closed in $X$.
  \item\label{item:trop.skeleton.samedim}
    If $\dim(\bT)=d = \dim(X)$ then $S_\phi(X) = \phi\inv(S(\bT))$.
  \item\label{item:trop.skeleton.basechange}
    Let $K'/K$ be a non-Archimedean extension field, let $X' = X\hat\tensor_K K'$, let $\pi\colon X'\to X$ be the structure morphism, and let $\phi' = \phi\hat\tensor_K K'\colon X'\to\bT^\an\hat\tensor_K{K'}$.  Then $\pi(S_{\phi'}(X')) = S_\phi(X)$, and for any $x \in S_\varphi(X)$, the  Shilov boundary of $\pi^{-1}(x)$ is finite and is equal to $\pi^{-1}(x) \cap S_{\varphi'}(X')$.
  \item \label{item:algebraic extension}
    If $K'/K$ is the completion of an algebraic extension, then $S_{\phi'}(X')=\pi^{-1}(S_\varphi(X))$.
  \item\label{item:trop.skeleton.1units}
    If $\phi'\colon X\to\bT$ satisfies $\phi_{\trop}=\phi'_{\trop}$, then $S_\phi(X)\cap\Int(X) = S_{\phi'}(X)\cap\Int(X)$.
  \item\label{item:trop.skeleton.morphism}
    Let $f\colon X'\to X$ be a morphism of compact strictly $K$-analytic spaces of pure dimension $d$, and let $\phi' = \phi\circ f\colon X'\to\bT^\an$.  Then $S_{\phi'}(X') = f\inv(S_\phi(X))$.
  \end{enumerate}
\end{prop}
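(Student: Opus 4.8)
The plan is to derive all eight items from the definition of $d_\phi(X,x)$ in~\ref{notn:tildaMx} together with the results of Ducros recalled above, resting on two elementary observations. First, $d_\phi(X,x)=\trdeg\bigl(\td K^\bullet(\td M(x))/\td K^\bullet\bigr)$ depends only on the point $x$, on its graded residue field $\td\sH(x)^\bullet$, and on the images $\td\chi_x^\bullet(\phi^*u)$ of the coordinate characters, hence on $\phi$ only through its germ at $x$. Second, $\dim_x(X)$ is likewise a local invariant. With these, several items are purely formal.

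For~\eqref{item:trop.skeleton.abhyankar}: if $x\in S_\phi(X)$ then $d_\phi(X,x)=\dim_x(X)=d$, and squeezing this against the chain $d_\phi(X,x)\le\trdeg(\td\sH(x)^\bullet/\td K^\bullet)\le\dim_x(X)$ from Remark~\ref{Abhyankar inequality} forces $\trdeg(\td\sH(x)^\bullet/\td K^\bullet)=\dim_x(X)$, the Abhyankar property. For~\eqref{item:trop.skeleton.subdomain}, \eqref{item:algebraic extension} and~\eqref{item:trop.skeleton.morphism} I would use that passing to a compact strictly analytic domain $U\subseteq X$, base-changing along a completed algebraic extension $K'/K$, or composing with a morphism $f\colon X'\to X$ each induce an \emph{injective} homomorphism of graded residue fields $\td\sH(f(x'))^\bullet\hookrightarrow\td\sH(x')^\bullet$ --- injective because every homomorphism of graded fields is --- carrying $\td M(f(x'))$ onto $\td M(x')$ and hence $\td K^\bullet(\td M(f(x')))$ isomorphically onto $\td K^\bullet(\td M(x'))$; in the algebraic base-change case one further uses that $\td{K'}^\bullet/\td K^\bullet$ is algebraic, so transcendence degrees over $\td{K'}^\bullet$ and over $\td K^\bullet$ agree. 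Together with the invariance of the local dimension (and, for~\eqref{item:trop.skeleton.subdomain}, with Remark~\ref{Abhyankar inequality}, which shows that a point of $U$ with $d_\phi=d$ has full local dimension there), the three set identities follow point by point. For~\eqref{item:trop.skeleton.1units} I would restrict to $\Int(X)$, where Theorem~\ref{thm:trop.dim.reduction} gives $d_\phi(X,x)=\dim\phi_{\trop}(X,x)$, and note that the germ $\phi_{\trop}(X,x)$ --- being, by~\ref{sec:ducros.germs} and Definition~\ref{def:germ.of.smooth.trop}, the germ of $\phi_{\trop}(V(x))$ --- depends on $\phi$ only through $\phi_{\trop}$; hence $\phi_{\trop}=\phi'_{\trop}$ forces $S_\phi(X)\cap\Int(X)=S_{\phi'}(X)\cap\Int(X)$.

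The remaining three items carry the real content. For~\eqref{item:trop.skeleton.samedim} (so $\dim\bT=d=\dim X$), the residue-field argument of~\eqref{item:trop.skeleton.morphism} applied to $\phi$ itself gives $S_\phi(X)=\phi^{-1}\bigl(S_{\id}(\bT^\an)\bigr)$, where $S_{\id}(\bT^\an)$ denotes the tropical skeleton of the identity moment map on $\bT^\an$; since $\bT^\an$ is of pure dimension $n=d$, it remains to see that $\xi\in\bT^\an$ satisfies $d_{\id}(\bT^\an,\xi)=n$ precisely when $\xi\in S(\bT)$, which combines Example~\ref{eg:res.field.skeleton.point} with the fact, due to Ducros \cite[0.14]{Ducros}, that no point off the canonical skeleton attains this maximal transcendence degree. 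For~\eqref{item:trop.skeleton.closed} I would reduce to $X$ strictly affinoid by~\eqref{item:trop.skeleton.subdomain}, so that $\partial X$ is finite, and then use that $x\mapsto\dim\phi_{\trop}(X,x)$ is upper semicontinuous on $\Int(X)$: for $y$ close to $x$ inside a neighbourhood on which the germ of $\phi_{\trop}$ at $\phi_{\trop}(x)$ has stabilised (as in~\ref{sec:ducros.germs}), $\phi_{\trop}(X,y)$ is the germ at $\phi_{\trop}(y)$ of a representative of $\phi_{\trop}(X,x)$. Via Theorem~\ref{thm:trop.dim.reduction} this gives $\overline{S_\phi(X)\cap\Int(X)}\subseteq S_\phi(X)$ (limit points in $\partial X$ being handled by the same estimate), and since $S_\phi(X)\cap\partial X$ is finite, $S_\phi(X)$ is closed. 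For~\eqref{item:trop.skeleton.basechange} I would use that base change along $K'/K$ cannot increase the relevant transcendence degrees, so $d_{\phi'}(X',x')\le d_\phi(X,\pi(x'))$ for $x'$ over $x$, with equality exactly for the $x'$ over the Shilov boundary of the fibre $\pi^{-1}(x)=\sM(\sH(x)\hat\otimes_K K')$, which is finite once $x\in S_\phi(X)$; this is Ducros's base-change analysis of skeletons, extending the algebraic case of~\cite{gubler_rabinoff_werner16:skeleton_tropical}.

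The main obstacle is the boundary analysis in~\eqref{item:trop.skeleton.closed} and the finiteness of the Shilov boundary of the fibres in~\eqref{item:trop.skeleton.basechange}: these are exactly the points at which one leaves behind formal manipulations of transcendence degrees and must invoke Ducros's deeper structure theory. The real work in writing out the proof is therefore organisational --- checking that our $d_\phi$-based Definition~\ref{def:trop.skel} agrees with Ducros's description of the skeleton as a preimage when $\dim\bT=d$, and performing the reductions (to the strictly affinoid setting via~\eqref{item:trop.skeleton.subdomain}, and where convenient to the case $\dim\bT=d$, by composing $\phi$ with a linear projection onto a $d$-dimensional torus that is generic with respect to the finitely many cells of $\phi_{\trop}(X)$ near the relevant point, which leaves $d_\phi$ unchanged in a neighbourhood) that place us where Ducros's results apply directly.
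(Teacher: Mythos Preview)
Your proposal is essentially correct and covers all eight items. For items~\eqref{item:trop.skeleton.abhyankar}, \eqref{item:trop.skeleton.subdomain}, \eqref{item:trop.skeleton.samedim}, \eqref{item:trop.skeleton.basechange}, and~\eqref{item:trop.skeleton.morphism} your argument is the same as the paper's, modulo phrasing. In three places you take a genuinely different route.

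For~\eqref{item:trop.skeleton.closed} the paper does not reduce to strictly affinoid spaces or treat boundary points separately. Instead it fixes a basis $f_1,\ldots,f_n$ of $M$ and, for each subset $I\subset\{1,\ldots,n\}$ of size $d$, considers the locus $U_I$ where $\td\chi_x^\bullet(f_i)_{i\in I}$ are algebraically dependent over $\td K^\bullet$; Lemma~\ref{lem:alg.dep.open} shows each $U_I$ is open (it is the complement of the preimage of a canonical skeleton), and $X\setminus S_\phi(X)=\bigcap_I U_I$. This handles interior and boundary points uniformly. Your germ-based upper-semicontinuity argument is valid, but the step ``limit points in $\partial X$ being handled by the same estimate'' needs more care than you indicate: at $x\in\partial X$ one only has $\dim\phi_{\trop}(X,x)\le d_\phi(X,x)$, so you must also shrink the neighbourhood so that $x$ is its only boundary point (possible since the Shilov boundary is finite and $X$ is Hausdorff) before the germ estimate applies to the remaining points.

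For~\eqref{item:algebraic extension} the paper does not argue directly with transcendence degrees; it deduces the statement from~\eqref{item:trop.skeleton.basechange} together with~\cite[Corollary~1.3.6]{berkovic90:analytic_geometry}, which forces the fibres of $\pi$ to coincide with their Shilov boundaries. Your direct argument via algebraicity of $\td{K'}^\bullet/\td K^\bullet$ is correct and more self-contained.

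For~\eqref{item:trop.skeleton.1units} the paper introduces the auxiliary moment map $\psi=\phi^{-1}\phi'$, observes $\psi_{\trop}(X)=\{0\}$ and hence $d_\psi(X,x)=0$ on $\Int(X)$ by Theorem~\ref{thm:trop.dim.reduction}, and concludes that the generators $\td\chi_x^\bullet(f_i)$ and $\td\chi_x^\bullet(f_i')$ differ by elements algebraic over $\td K^\bullet$. Your observation that $d_\phi(X,x)=\dim\phi_{\trop}(X,x)$ on $\Int(X)$ depends only on $\phi_{\trop}$ is a cleaner shortcut that bypasses this computation; the paper's route, on the other hand, yields the slightly finer statement that the two generating sets are algebraically equivalent, which is reused later (e.g.\ in Proposition~\ref{item:trop.mult.1units}).
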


\begin{proof}
	(\ref{item:trop.skeleton.abhyankar})~\ This follows from Remark~\ref{Abhyankar inequality}.

	(\ref{item:trop.skeleton.subdomain})~\ This is clear from the definition because $U$ and $X$ have the same completed residue fields at points of~$U$, see \cite[1.2.9]{ducros18:families}.

	(\ref{item:trop.skeleton.closed})~\ Choose a basis for $M$, and let $f_1,\ldots,f_n\in\Gamma(X,\sO_X)^\times$ be the image of this basis under $\phi^*$.  For $I = \{i_1,\ldots,i_d\}\subset\{1,2,\ldots,n\}$ of size $d$, we let
	\[ U_I = \bigl\{ x\in X\mid \td\chi_x(f_{i_1}),\ldots,\td\chi_x(f_{i_d})\in\td\sH(x)^\bullet \text{ are algebraically dependent over } \td K^\bullet \bigr\}. \]
	Then $U_I$ is open by Lemma~\ref{lem:alg.dep.open}, and $X\setminus S_\phi(X) = \bigcap_I U_I$.

	(\ref{item:trop.skeleton.samedim})~\ For $x\in S_\phi(X)$ we have $d = d_\phi(X,x) = \trdeg(\td K^\bullet(\td M(x))/\td K^\bullet)$, so $\phi(x)\in S(\bT)$ by~\cite[0.13]{Ducros}.  Conversely, let $x\in X$ and suppose that $\xi = \phi(x)$ is contained in $S(\bT)$.  Then $\trdeg(\td K^\bullet(\td M(x))/\td K^\bullet)=d$ by Example~\ref{eg:res.field.skeleton.point}.

	(\ref{item:trop.skeleton.basechange})~\ 
        Let $x\in S_\phi(X)$. Recall that $X$ has pure dimension $d$.  Choosing $d$ elements $u_1,\ldots,u_d$ of $M$ gives rise to a morphism $\psi\colon X\to\bT'$, where $\bT'$ is a $d$-dimensional torus.  By~(\ref{item:trop.skeleton.samedim}) above, the images of $u_1,\ldots,u_d$ in $\td\sH(x)^\bullet$ are algebraically independent over $\td K^\bullet$ if and only if  $x\in\psi\inv(S(\bT'))$.  We fix a choice of such $u_1,\ldots,u_d$; this is essentially what Ducros calls an ``Abhyankar presentation'' in~\cite[3.2.12.2]{ducros14:structur_des_courbes_analytiq}.  Let $\xi'\in S(\bT'\hat\tensor_K K')$ be the unique point of the skeleton above $\psi(x)\in S(\bT')$ and let $\psi' \coloneqq \psi\hat\tensor_K K'\colon X'\to\bT^\an\hat\tensor_K{K'}$.    By~\cite[Proposition~3.2.13]{ducros14:structur_des_courbes_analytiq}, the Shilov boundary of $\pi\inv(x)$ is equal to 
        $\pi\inv(x)\cap (\psi')\inv(\xi')$. In particular, the set 
        $\pi\inv(x)\cap (\psi')\inv(\xi')$ is independent of the choice of $u_1,\ldots,u_d$.  If 
        $x'\in\pi\inv(x)\cap (\psi')\inv(\xi')$, then the images of $u_1,\ldots,u_d$ in $\td\sH(x')^\bullet$ are algebraically independent over $(\td K')^\bullet$ because $\psi'(x')\in S(\bT'\hat\tensor_KK')$, so we have $x'\in S_{\phi'}(X')$ a fortiori.  
        Conversely, let $x$ be any point of $X$ such that there is a point $x'\in \pi\inv(x)\cap S_{\phi'}(X')$, then there exist $u_1,\ldots,u_d\in M$ whose images in $\td\sH(x')^\bullet$ are algebraically independent over $(\td K')^\bullet$; their images in $\td\sH(x)^\bullet$ are then algebraically independent over $\td K^\bullet$. It follows again from~(\ref{item:trop.skeleton.samedim}) that $x \in S_\varphi(X)$.
        Using this choice of $u_1,\ldots,u_d$, the above argument shows that $x'$ is in the Shilov boundary of $\pi\inv(x)$.  Since the Shilov boundary of $\pi\inv(x)=\sM(\sA\hat\otimes_K \sH(x))$ is nonempty, this also proves $\pi(S_{\phi'}(X')) = S_\phi(X)$.

    (\ref{item:algebraic extension})~\ This follows from
    (\ref{item:trop.skeleton.basechange}) and \cite[Corollary~1.3.6]{berkovic90:analytic_geometry}.

    (\ref{item:trop.skeleton.1units})~\ Let $u_1,\ldots,u_n$ be a basis for $M$, and let $f_i = \phi^*u_i$ and $f_i' = \phi'^*u_i$.  If $\psi = \phi\inv\phi'\colon X\to\bT$ then $\psi_{\trop}(X) = \{0\}$.  If $x\in\Int(X)$ then $d_\psi(X,x) = \dim\psi_{\trop}(X,x) = 0$ by Theorem~\ref{thm:trop.dim.reduction}, so $\td\chi_x^\bullet(f_i)\inv\td\chi_x^\bullet(f_i')$ is algebraic over $\td K^\bullet$ for all $i$.  Hence
	\[\trdeg(\td K^\bullet(\td\chi_x^\bullet(f_1),\ldots,\td\chi_x^\bullet(f_n))/\td K^\bullet)
	= \trdeg(\td K^\bullet(\td\chi_x^\bullet(f_1'),\ldots,\td\chi_x^\bullet(f_n'))/\td K^\bullet) \]
	for all $x\in\Int(X)$, so $x\in S_\phi(X)$ if and only if $x\in S_{\phi'}(X)$.

        (\ref{item:trop.skeleton.morphism})~\ If $f(x') = x$ then $d_{\phi'}(X',x') = d_\phi(X,x)$ because $\td K^\bullet(\td M(x')) = \td K^\bullet(\td M(x))$ as subfields of $\td\sH(x')^\bullet$.
\end{proof}

We used the following lemma in the proof of Proposition~\ref{prop:tropical.skeleton}(\ref{item:trop.skeleton.closed}).

\begin{lem}\label{lem:alg.dep.open}
	Let $X$ be a $K$-analytic space, let $f_1,\ldots,f_n\in\Gamma(X,\sO_X)^\times$, and let
	\[ U = \bigl\{ x\in X\mid \td\chi^\bullet_x(f_1),\ldots,\td\chi^\bullet_x(f_n)\in\td\sH(x)^\bullet \text{ are algebraically dependent over } \td K^\bullet \bigr\}. \]
	Then $U$ is open.
\end{lem}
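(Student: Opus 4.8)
The plan is to show that the complement of $U$ is closed, or equivalently that $U$ is a union of open neighborhoods, by producing for each $x \in U$ an explicit open neighborhood on which algebraic dependence persists. The key is that if $\td\chi^\bullet_x(f_1),\ldots,\td\chi^\bullet_x(f_n)$ are algebraically dependent over $\td K^\bullet$, then there is a nonzero polynomial relation among them with coefficients in $\td K^\bullet$. Since $\td K^\bullet = \bigoplus_{r\in\R}\td K^r$ is graded and the $\td\chi^\bullet_x(f_i)$ are homogeneous of degree $|f_i(x)|$, we may take the relation to be homogeneous; and after clearing denominators we may assume the coefficients lie in $\bigoplus_{r\le 1}\td K^{\le r}$, i.e.\ are reductions of elements of $K^\circ$. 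Thus there are $c\in K^\circ$, monomials, and a finite set of multi-indices yielding an identity $\sum_\nu \td c_\nu \prod_i \td\chi^\bullet_x(f_i)^{\nu_i} = 0$ in $\td\sH(x)^\bullet$, not all $\td c_\nu = 0$.

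The first step is to translate this into an inequality of absolute values. Fix a strictly affinoid neighborhood $V = \sM(\cA)$ of $x$ in $X$ on which all $f_i$ are invertible; replacing the $f_i$ by $f_i$ times a suitable scalar and the $c_\nu$ accordingly, one normalizes so that $|f_i(x)| = 1$, so that the homogeneous relation lives in the ordinary residue field $\td\sH(x) = \widetilde{\cH(x)}$ and the $c_\nu \in K^\circ$. Consider the analytic function $g = \sum_\nu c_\nu \prod_i f_i^{\nu_i} \in \cA$. The chosen relation says $|g(x)| < 1$ while $\max_\nu |c_\nu(x)| = \max_\nu |c_\nu| = 1$ (after normalizing the relation so that at least one coefficient is a unit). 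Now $\{y \in V : |g(y)| < 1\}$ is open, and it contains $x$; on this open set, $\td\chi^\bullet_y(g)$ vanishes in $\td\sH(y)$, hence $\sum_\nu \td c_\nu(y) \prod_i \td\chi^\bullet_y(f_i)^{\nu_i} = 0$. To conclude that this is a \emph{nontrivial} algebraic relation at $y$, one needs at least one $\td c_\nu(y) \ne 0$; this holds on the further open condition $|c_{\nu_0}(y)| = 1$ for the index $\nu_0$ with $|c_{\nu_0}| = 1$, which again is automatic since $c_{\nu_0} \in (K^\circ)^\times$ has $|c_{\nu_0}(y)| = |c_{\nu_0}| = 1$ for all $y$ (scalars have constant absolute value). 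So $W \coloneqq \{y \in V : |g(y)| < 1\}$ is an open neighborhood of $x$ contained in $U$.

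The remaining subtlety, and the main obstacle, is the normalization $|f_i(x)| = 1$: the values $|f_i(y)|$ vary with $y$, so the homogeneous-of-fixed-degree relation at $x$ does not obviously transport to $y$ where the degrees $|f_i(y)|$ have shifted. The clean way around this is to carry the grading along honestly. Rather than normalizing, keep the $c_\nu \in K^\circ$ with the graded relation $\sum_\nu \td\chi^\bullet_x(c_\nu)\,\td\chi^\bullet_x\!\big(\prod_i f_i^{\nu_i}\big) = 0$ in $\td\sH(x)^\bullet$, where each summand is homogeneous of the \emph{same} degree $\prod_i |f_i(x)|^{\nu_i}$ once we restrict to $\nu$ achieving the common degree (the relation decomposes into homogeneous pieces, so we may assume all surviving $\nu$ have equal degree under $|\cdot(x)|$). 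Set $g = \sum_\nu c_\nu \prod_i f_i^{\nu_i}$. At $x$ one has $|g(x)| < \max_\nu |c_\nu(x)\prod_i f_i(x)^{\nu_i}| =: \rho(x)$, a strict inequality of two continuous functions of $y \in V$ (both sides are continuous on the affinoid, and $\rho(y) = \max_\nu |c_\nu|\prod_i|f_i(y)|^{\nu_i}$ is continuous and nonvanishing near $x$ since each $|f_i|$ is). Hence $W \coloneqq \{y \in V : |g(y)| < \rho(y)\}$ is open and contains $x$. For $y \in W$, dividing the relation $g(y)$-piece by an element realizing $\rho(y)$ and reducing gives a nonzero graded polynomial relation among $\td\chi^\bullet_y(f_1),\ldots,\td\chi^\bullet_y(f_n)$ over $\td K^\bullet$ — nonzero because the term realizing $\rho(y)$ survives reduction. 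Thus $W \subset U$, and since $x \in U$ was arbitrary, $U$ is open. I expect the only place requiring genuine care is checking that the reduced relation on $W$ remains a genuine algebraic dependence (nontriviality after reduction), which is exactly what the strict inequality $|g| < \rho$ secures.
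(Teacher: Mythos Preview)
Your argument is correct, but the paper takes a much shorter route. The paper simply observes that the invertible functions $f_1,\ldots,f_n$ define a morphism $f\colon X\to\bG_{\rm m}^{n,\an}$, and cites a result of Ducros to the effect that $\td\chi^\bullet_x(f_1),\ldots,\td\chi^\bullet_x(f_n)$ are algebraically \emph{independent} over $\td K^\bullet$ if and only if $f(x)$ lies in the canonical skeleton $S(\bG_{\rm m}^n)$. Thus $X\setminus U = f^{-1}(S(\bG_{\rm m}^n))$, which is closed since the skeleton is closed and $f$ is continuous.

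Your approach is more hands-on and self-contained: you lift a homogeneous graded polynomial relation at $x$ to an element $g=\sum_\nu c_\nu\prod_i f_i^{\nu_i}$ with coefficients in $K$, observe that the relation is encoded by the strict inequality $|g(x)|<\rho(x)$ with $\rho(y)=\max_\nu|c_\nu|\prod_i|f_i(y)|^{\nu_i}$, and use continuity to propagate it to an open neighborhood. This avoids the external citation and makes transparent \emph{why} algebraic dependence is open (it is witnessed by a single polynomial inequality). The paper's proof, by contrast, buys a structural description of the complement $X\setminus U$ as a skeleton preimage, which is exactly what the paper needs elsewhere. A small notational slip: the common degree of the homogeneous relation is $|c_\nu|\prod_i|f_i(x)|^{\nu_i}$, not $\prod_i|f_i(x)|^{\nu_i}$ as you wrote; the $|c_\nu|$ need not all be $1$, but this does not affect the argument.
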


\begin{proof}
  Let $f\colon X\to\bGm^{n,\an}$ be the morphism induced by $(f_1,\ldots,f_n)$.  Then $X\setminus U$ is equal to the inverse image of the skeleton $S(\bGm^{n,\an})$ by~\cite[0.13]{Ducros}.  The latter set is closed, so $U$ is open.
\end{proof}

\begin{rem}\label{eg:tropical.skeleton.general}
  Suppose that $K$ is an algebraically closed field, that $\sX$ is a smooth algebraic $K$-curve, and that $\phi_{\rm alg}\colon\sX\inject\bT$ is a closed immersion.  Let $X = \sX^\an$ and $\phi = \phi_{\rm alg}^\an\colon X^\an\inject\bT^\an$, and note that $\del X=\emptyset$ by~\cite[Theorem 3.4.1]{berkovic90:analytic_geometry}.  In this case, there is a well-understood theory of skeletons of $X$ associated to semistable models of the smooth completion of $\sX$, regarded as a curve with marked points: see for instance~\cite{BPR2}.  Let $S\subset X$ be such a skeleton and let $\tau_S\colon X\to S$ be the canonical retraction map.  By~\cite[Theorem~5.15]{BPR2}, the map $\phi_{\trop}\colon X\to N_\R$ factors through $\tau_S$.  It follows that $\phi_{\trop}$ is locally constant on $X\setminus S$---see for example~\cite[Lemma~2.13]{BPR2}---so $\dim\phi_{\trop}(X,x) = 0$ for $x\notin S$.  Thus Theorem~\ref{thm:trop.dim.reduction} yields $S_\phi(X)\subset S$, and in fact, that $S_\phi(X)$ is equal to the locus of points $x\in S$ such that $\phi_{\trop}$ is not constant in a neighborhood of $x$ in $S$.  This locus can be computed explicitly using the slope formula~\cite[Theorem~5.15]{BPR2}.
\end{rem}

\begin{eg}\label{eg:tropical.skeleton.specific}
  As a specific case of Remark~\ref{eg:tropical.skeleton.general} taken from~\cite[Example~2.7]{BPR} (to which the reader can refer for details), let $p\geq 5$ be a prime, let $K = \bC_p$, and let $X=\sX^\an$ for the plane curve $\sX\subset\bGm^2$  defined by the equation
  \[ x^3y - x^2y^2 - 2xy^3 - 3x^2y + 2xy - p = 0. \]
  The closure of $\sX$  in the projective space $\bP_K^2$ is given by the homogeneous equation
  \[x^3y - x^2y^2 - 2xy^3 - 3x^2yz + 2xyz^2 - pz^4 = 0 \]
  which defines	a smooth quartic of genus $3$;
  the zeros and poles of $x$ and $y$ occur at the points $P_1 = (0:1:0),\,P_2 = (1:0:0),\,Q_1 = (2:1:0),$ and $Q_2 = (-1:1:0)$.  
  The same homogeneous equation describes a closed subscheme of $\bP_{K^\circ}^2$ which is the minimal regular proper semistable model $\sX_{\min}$ of this smooth quartic. The special fiber of  $\sX_{\min}$ is given by the equation $xy(x+y-z)(x-2y-2z)=0$. By \cite[Proposition 2.4.4]{berkovic90:analytic_geometry}, there are unique points $A,B,C,D$ with reduction to the special fiber equal to the generic points of the four lines  $x=0$, $x+y=z$, $y=0$ and $x=2y+2z$, respectively. 
  The minimal skeleton of $\sX$ is given by the edges of a tetrahedron with the vertices $A,B,C,D,$ with an additional ray emanating from each vertex in the directions of $P_1,Q_2,P_2,$ and $Q_1$, respectively. 
  Note that the skeleton is the dual graph of the special fiber of $\sX_{\min}$ with vertices corresponding to irreducible components, edges corresponding to  double points, and where the rays correspond to the marked points $P_1,P_2,Q_1,Q_2$. 
   The tropicalization $\phi_{\trop}(X)$ is a triangle with vertices at $(0,0),\,(0,1),$ and $(1,0)$, together with three rays emanating from these vertices in the directions of $(-1,-1),\,(3,-1),$ and $(-1,3)$, respectively.  The map $\phi_{\trop}$ takes $A$ to $(1,0)$, $C$ to $(0,1)$, and both $B$ and $D$ to $(0,0)$, and it takes each segment in $S$ to the line segment connecting the tropicalizations of its endpoints.  Importantly, $\phi_{\trop}$ takes the segment $BD$ to the \emph{point} $(0,0)$.  See Figure~\ref{fig:genus3.trop}.  It follows that $S_\phi(X)$ consists of $S$ minus the interior of the segment $BD$.
\end{eg}

\begin{figure}[ht]
  \centering
  \begin{tikzpicture}[>=Latex, line join=bevel, thick,
    every node/.style={inner sep=1pt}]
    \coordinate (D) at (0,0);
    \coordinate (A) at (2,0);
    \coordinate (C) at (0,2);
    \coordinate (B) at (-1,-.5);
    \path (A) node[below] {$A$}
          (B) node[above left] {$B$}
          (C) node[above right] {$C$}
          (D) node[above right] {$D$};
    \draw (C) -- (A) -- (D) -- cycle;
    \draw (A) -- (B);
    \draw[thin, dashed] (B) -- (D);
    \draw (C) -- (B);
    \draw[->] (A) -- ++(1.5,-.5) node[right] {$P_1$};
    \draw[->] (C) -- ++(-.5,1.5) node[above] {$P_2$}; 
    \draw[->] (B) -- ++(-1,-1) node[below] {$Q_1$};
    \draw[->] (D) -- ++(-1,-1) node[below] {$Q_2$};

    \draw[->] (3.2,1) to["$\phi_{\trop}$" above] ++(3,0);

    \begin{scope}[xshift=7cm]
      \coordinate (TB) at (0,0);
      \coordinate (TA) at (2,0);
      \coordinate (TC) at (0,2);
      \path (TB) node[below right] {$(0,0)$}
            (TA) node[above right] {$(1,0)$}
            (TC) node[left] {$(0,1)$}
      ;
      \draw (TA) to[{"$\scriptstyle 2$" above}] (TB);
      \draw (TB) to[{"$\scriptstyle 2$" left}] (TC);
      \draw (TC) -- (TA);
      \draw[->] (TA) -- ++(1.5,-.5) node[right] {$P_1$};
      \draw[->] (TB) to[{"$\scriptstyle 2$" above left}] ++(-1,-1) node[below] {$Q_1,Q_2$};
      \draw[->] (TC) -- ++(-.5,1.5) node[above] {$P_2$};
    \end{scope}
  \end{tikzpicture}

  \caption{\label{fig:genus3.trop}The skeleton $S$ of the curve $X$ from Example~\ref{eg:tropical.skeleton.specific}, and its tropicalization.  The dashed line segment $BD$ is included in $S$ but not in $S_\phi(X)$.  The small numbers indicate tropical multiplicities; see Example~\ref{eg:tropical.skeleton.multiplicities}.}
\end{figure}

\begin{defn}\label{def:generic.quotient}
  Let $S$ be a \emph{polyhedral set} in $N_\R$ of pure dimension $d$: in other words, $S$ is a finite union of $d$-dimensional polyhedra. 
  Let $N'$ be a free abelian group of rank $d$.  We say that a homomorphism $Q\colon N\to N'$ is \defi{generic with respect to $S$} provided that for every $\omega\in S$, the image of the germ $(S,\omega)$ under $Q_\R\colon N_\R\to N'_\R$ also has dimension $d$.  We say that $Q\colon N\to N'$ is \defi{generic} if it is generic with respect to some unspecified $S$.  We apply the above terminology also to the homomorphism of tori $q\colon\bT\to\bT'$ inducing the homomorphism $Q\colon N\to N'$ on cocharacter lattices.
\end{defn}

The polyhedral set $S$ is always the support of a finite polyhedral complex $\Pi$ of pure dimension $d$.
Then $Q$ is generic with respect to $S$ if and only if $Q_\R$ is injective on every $d$-dimensional face $\sigma\in\Pi$.  Note that a generic $Q$ must have full rank,%
\footnote{One could require $Q$ to be surjective as well, but this is not important for our purposes, since an isogeny of tori induces a bijection on skeletons.}
in which case it embeds $M' = \Hom(M,\Z)$ as a sublattice of $M$.

\begin{thm}\label{thm:tropical.skeleton.projection}
  Let $\bT'$ be a split torus of dimension $d=\dim(X)$, let $q\colon\bT\to\bT'$ be generic with respect to $\phi_{\trop}(X)$, and let $\psi = q\circ\phi$.  Then $S_\phi(X)\cap\Int(X) \subset S_\psi(X)\subset S_\phi(X)$.  Moreover, we have $S_\psi(X) = S_\phi(X)$ for generic $q$.
\end{thm}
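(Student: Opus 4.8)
The plan is to prove the chain of inclusions $S_\phi(X)\cap\Int(X) \subset S_\psi(X)\subset S_\phi(X)$ pointwise, using the transcendence-degree description of the tropical skeleton together with Theorem~\ref{thm:trop.dim.reduction}.

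For the inclusion $S_\psi(X)\subset S_\phi(X)$: if $x\in S_\psi(X)$, then $d_\psi(X,x) = d$. But $\psi^* = \phi^*\circ q^*$, so $\td M'(x) = \{\td\chi_x^\bullet(\psi^*u')\mid u'\in M'\}$ is contained in $\td M(x)$ (using that $M' \subset M$ under the dual description, or that $q^*$ maps characters of $\bT'$ to characters of $\bT$). Hence $\td K^\bullet(\td M'(x))\subset \td K^\bullet(\td M(x))$, which gives $d = d_\psi(X,x)\leq d_\phi(X,x)\leq d$ by Remark~\ref{Abhyankar inequality}, forcing $d_\phi(X,x) = d$ and $x\in S_\phi(X)$.

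For the inclusion $S_\phi(X)\cap\Int(X)\subset S_\psi(X)$: take $x\in S_\phi(X)\cap\Int(X)$. By the remark following Definition~\ref{def:trop.skel} (i.e.\ Theorem~\ref{thm:trop.dim.reduction}), $\dim\phi_{\trop}(X,x) = d$, and $\phi_{\trop}(X,x)$ is the germ of a pure $d$-dimensional fan at $\phi_{\trop}(x)$; it is supported on a union of cones each of which, after translation, lies in some $\sigma\in\Pi$ for a polyhedral structure $\Pi$ on $\phi_{\trop}(X)$ with respect to which $q$ is generic. Since $Q_\R$ is injective on each such $\sigma$, it is injective on each cone of the local fan, so $\psi_{\trop}(X,x) = Q_\R(\phi_{\trop}(X,x))$ still has dimension $d$. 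Then $d \geq d_\psi(X,x) \geq \dim\psi_{\trop}(X,x) = d$ by Theorem~\ref{thm:trop.dim.reduction} (the inequality $\dim\psi_{\trop}(X,x)\leq d_\psi(X,x)$, with equality in the interior), so $d_\psi(X,x) = d$ and hence $x\in S_\psi(X)$. Here I should be careful that $\dim X = d = \dim \bT'$, so that $S_\psi(X)$ is defined by the condition $d_\psi(X,x) = \dim_x(X) = d$, and that the tropical dimension computation is legitimate; the point is that genericity of $q$ is precisely engineered to preserve dimensions of the local tropical varieties.

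Finally, for the last assertion $S_\psi(X) = S_\phi(X)$ for generic $q$: the only points to worry about are $x\in S_\phi(X)\cap\del X$. By Proposition~\ref{prop:tropical.skeleton}(\ref{item:trop.skeleton.abhyankar}) every point of $S_\phi(X)$ is Abhyankar, so $\trdeg(\td\sH(x)^\bullet/\td K^\bullet) = d$; combined with $d_\phi(X,x) = d$ and Remark~\ref{Abhyankar inequality} this means $\td K^\bullet(\td M(x))$ already has full transcendence degree $d$ inside $\td\sH(x)^\bullet$. The set $S_\phi(X)$ is compact (closed in $X$ by Proposition~\ref{prop:tropical.skeleton}(\ref{item:trop.skeleton.closed}), $X$ compact), and $\phi_{\trop}(S_\phi(X))$ is a finite union of (relatively open) polyhedra; I expect to choose a polyhedral structure $\Pi$ fine enough that the tangent cones at all points of $S_\phi(X)$ are covered, and then choose $Q$ generic with respect to $\Pi$. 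The key point I must verify is that genericity of $Q$ on the polyhedra through $\phi_{\trop}(x)$ guarantees $\td K^\bullet(\td M'(x))$ still has transcendence degree $d$ — this should follow because $\td K^\bullet(\td M(x))$ is, by the Abhyankar condition and standard facts on graded reductions (cf.\ Example~\ref{eg:res.field.skeleton.point} and~\cite[0.12, 0.13]{Ducros}), a finitely generated extension whose ``tropical'' part is captured by a polyhedron $\sigma\ni\phi_{\trop}(x)$ on which $Q_\R$ is injective, so passing to the sublattice $M'$ with $M_\R' + \bL_\sigma^\perp = M_\R$ does not drop the transcendence degree. I expect this last step — matching the combinatorial genericity condition with non-dropping of transcendence degree of graded residue fields at boundary points — to be the main obstacle, and it likely requires invoking Ducros' comparison~\cite[0.12]{Ducros} between $d_\phi(X,x)$ and the local dimension of the tropical variety together with a careful choice of $\Pi$ adapted to $S_\phi(X)$ rather than merely to $\phi_{\trop}(X)$.
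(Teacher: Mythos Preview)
Your treatment of the two inclusions $S_\psi(X)\subset S_\phi(X)$ and $S_\phi(X)\cap\Int(X)\subset S_\psi(X)$ is correct and matches the paper's proof essentially verbatim.

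The gap is in the last paragraph, and although you flag it as the main obstacle, the approach you sketch cannot be made to work. At a boundary point $x\in S_\phi(X)\cap\del X$ the inequality $\dim\phi_{\trop}(X,x)\leq d_\phi(X,x)$ of Theorem~\ref{thm:trop.dim.reduction} can be strict: the germ $\phi_{\trop}(X,x)$ need not have dimension $d$, so genericity of $Q$ with respect to polyhedra of $\phi_{\trop}(X)$ through $\phi_{\trop}(x)$ tells you nothing about whether $\td K^\bullet(\td M'(x))$ retains transcendence degree $d$. Refining $\Pi$ does not help, since subdividing $\phi_{\trop}(X)$ produces no new $d$-dimensional directions; and adapting $\Pi$ ``to $S_\phi(X)$'' does not type-check, because $S_\phi(X)\subset X$ has no canonical realization inside $N_\R$ against which $Q$ could be generic in the sense of Definition~\ref{def:generic.quotient}.

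The paper's argument for $\del X\cap S_\phi(X)\subset S_\psi(X)$ is substantially different and more involved. Pointwise, after base change to an algebraically closed $K$ with $v(K^\times)=\R$ and translation so that $\phi_{\trop}(x)=0$, one looks at the integral $\td K$-scheme $Y=\Spec(\td K[M]/\td\fp)$, where $\td\fp=\ker(\td K[M]\to\td\sH(x))$. Since $d_\phi(X,x)=d$, this $Y$ has dimension $d$, and its tropicalization $\Trop(Y)$ over the trivially valued field $\td K$ is a pure $d$-dimensional fan in $N_\R$. Genericity of $q$ with respect to $\Trop(Y)$---not with respect to $\phi_{\trop}(X)$---is what forces $Y\to\bT'_{\td K}$ to be dominant and hence $x\in S_\psi(X)$. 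But $\Trop(Y)$ depends on $x$, so this is only a pointwise statement. To globalize, the paper inducts on $\dim(X)$ and uses Ducros' description of $\del X$ as a finite union of fibers $F^{-1}(\eta)$ of maps $F\colon X\to\bA^{1,\an}$ over skeleton points; each such fiber has pure dimension $d-1$, so the inductive hypothesis applies. Combining this with Lemma~\ref{lem:alg.dep.open}, one shows that $Y\cap S_\psi(X)$ is open and closed in $Y\cap S_\phi(X)$, reducing to the pointwise construction at one representative of each connected component.
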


\begin{proof}
  Let $M'$ be the character lattice of $\bT'$.  For $x\in X$ we have $d_\psi(X,x)\leq d_\phi(X,x)$ because $M'\subset M$, so $S_\psi(X)\subset S_\phi(X)$.  If $x\in S_\phi(X)$ and $x\notin\del X$ then Theorem~\ref{thm:trop.dim.reduction}  yields $d = \dim\phi_{\trop}(X,x)$, so $d = \dim\psi_{\trop}(X,x)$ by genericity of $q$; hence $d_\psi(X,x)=d$ and $x\in S_\psi(X)$, again by Theorem~\ref{thm:trop.dim.reduction}.  This proves the first assertion.

	It remains to show that $\del X\cap S_\phi(X) \subset S_\psi(X)$ for generic~$q$.  We induct on $\dim(X)$; when $\dim(X)=0$ there is nothing to show.  By Proposition~\ref{prop:tropical.skeleton}(\ref{item:trop.skeleton.subdomain}) we may assume that $X = \sM(\sA)$ is affinoid.  Let $K'/K$ be a non-Archimedean extension field, define $X',\pi$, and $\phi'$ as in Proposition~\ref{prop:tropical.skeleton}(\ref{item:trop.skeleton.basechange}), and let $\psi'\colon X'\to\bT'^\an\hat\tensor_K K'$ be the base change of $\psi$ (with respect to some choice of $q$).  By~Proposition~\ref{prop:tropical.skeleton}(\ref{item:trop.skeleton.basechange}), we have $\pi(S_{\psi'}(X')) = S_\psi(X)$ and $\pi(S_{\phi'}(X')) = S_\phi(X)$, so if $S_{\phi'}(X')\subset S_{\psi'}(X')$ then $S_\phi(X)\subset S_\psi(X)$.  Hence we may extend scalars to assume $K$ is algebraically closed and $v(K^\times) = \R$.  In this case, $\trdeg(\td K^\bullet(\td M(x))/\td K^\bullet) = \trdeg(\td K(\td M(x))/\td K)$ for all $x\in X$ as seen in Remark~\ref{connection to usual residue field}, and similarly for $\psi$; this means that we may work with ordinary reductions, which is important in the next paragraph.

	First we prove the pointwise version: that is, we claim that for $x\in S_\phi(X)$, we have $x\in S_\psi(X)$ for generic~$q$.  (When $x\in\Int(X)$ one can choose $q$ to be generic with respect to $\phi_{\trop}(X,x)$, but when $x\in\del X$ it is harder to see with respect to which polyhedra one must choose $q$ to be generic.)  By our assumptions on $K$, we may translate to assume $\phi_{\trop}(x) = 0$.  Then the homomorphism $K[M]\to\sH(x)$ restricts to a homomorphism $K^\circ[M]\to\sH(x)^\circ$, hence induces a homomorphism $\td K[M]\to\td\sH(x)$.  Let $\td\fp\subset\td K[M]$ be the kernel of this homomorphism, and let $B = \td K[M]/\td\fp$ and $Y = \Spec(B)$.  Then $Y$ is an integral scheme, and its function field $\td K(Y) = Q(B) = \td K(\td M(x))\subset\td \sH(x)$ has transcendence degree $d$ over $\td K$ by hypothesis, so $\dim(Y)=d$.  Let $\Trop(Y)\subset N_\R$ denote the tropicalization of $Y$, considered as a closed subscheme of the torus $\bT_{\td K}$ over the trivially valued field $\td K$.  This is a fan of pure dimension $d$.  If $q\colon\bT\to\bT'$ is generic with respect to $\Trop(Y)$ then $Y\to\bT'_{\td K}$ is a dominant morphism, so the function field $\td K(M')$ of $\bT'_{\td K}$ injects into the function field $\td K(Y)$ of $Y$ such that $\td K(Y)/\td K(M')$ is a finite extension, and hence $\trdeg(\td K(\td M'(x))/\td K) = d$; it follows that $x\in S_\psi(X)$.  This proves the pointwise claim.

	To globalize the above construction, we use a description of $\del X$ due to Ducros.  Choose $f\in\sA$ nonzero, let $F\colon X\to\bA^{1,\an}$ be the morphism determined by $f$, and let $\eta = \iota(s)\in\bGm^{\an}\subset\bA^{1,\an}$ for some $s\in\sqrt{|K^\times|}=\R$ (recall from~\secref{Tropical geometry} that $\iota\colon\R\to\bGm^{\an}$ is the section of $\trop\colon\bGm^{\an}\to\R$).   The fiber $Y = F\inv(\eta)$ is a strictly $\sH(\eta)$-affinoid space.
	Ducros~\cite[Lemme~3.1]{Ducros} observes that $\del X$ is the union of finitely many such $Y$, so it suffices to show $Y\cap S_\phi(X)\subset S_\psi(X)$ for generic $q$. Since $\eta$ is an Abhyankar point, it follows from \cite[Lemma~1.5.11]{ducros18:families} that $Y$ has pure dimension $d-1$.

	For $x \in Y$, we note that $\sH(x)$ is the same whether $x$ is regarded as a point of $X$ or of $Y$. It follows that $Y\cap S_\phi(X)\subset S_\phi(Y)$, where we abuse notation and write $\phi\colon Y\to(\bT\tensor_K\sH(\eta))^\an$ for the restriction of the base change.  By the inductive hypothesis as applied to  $Y$, if $M''\subset M$ is a generic sublattice of rank $d-1$, then for all $x\in Y\cap S_\phi(X)$ we have $\trdeg(\td\sH(\eta)(\td M''(x))/\td\sH(\eta))=d-1$.  Noting that $\td\sH(\eta) = \td K(\td f(x))$ as a subfield of $\td\sH(x)$ by Example~\ref{eg:res.field.skeleton.point}, we thus have $\trdeg(\td K(\td M''(x), \td f(x))/\td K)=d$.

	Let $M''\subset M$ be a generic sublattice of rank $d-1$ as above, let $M'\subset M$ be a sublattice of rank $d$ containing $M''$, and let $q\colon\bT\to\bT' = \Spec(K[M'])$ be the corresponding homomorphism of tori.  Let $x\in Y\cap S_\phi(X)$, and suppose that $\trdeg(\td K(\td M'(x))/\td K)=d$.  We have $\trdeg(\td\sH(x)/\td K)=d$ in any case, so $\td f(x)$ is algebraic over $\td K(\td M'(x))$.  By Lemma~\ref{lem:alg.dep.open}, there is a neighbourhood $U_x$ of $x$ such that $\td f(y)$ is algebraic over $\td K(\td M'(y))$ for all $y\in U_x$.  Since $\trdeg(\td K(\td M''(y), \td f(y))/\td K)=d$ for all $y\in Y\cap S_\phi(X)$ by the previous paragraph, this implies $\trdeg(\td K(\td M'(y))/\td K)=d$ for $y\in U_x\cap Y\cap S_\phi(X)$. For $\psi = q \circ \varphi$, this shows that $Y\cap S_\psi(X)$ is open in $Y\cap S_\phi(X)$; it is closed in $Y\cap S_\phi(X)$ by~Proposition~\ref{prop:tropical.skeleton}(\ref{item:trop.skeleton.closed}), so it is a union of connected components.  In particular, if $x\in S_\psi(X)$ for a single point $x$ of $Y\cap S_\phi(X)$, then the entire connected component of $Y\cap S_\phi(X)$ containing $x$ is also contained in $S_\psi(X)$

	Choose representatives $x_1,\ldots,x_r$ of the connected components of $Y\cap S_\phi(X)$.  By the pointwise construction, if $q$ is generic then $\{x_1,\ldots,x_r\}\subset S_\psi(X)$.  Since a generic rank-$(d-1)$ sublattice $M''$ of a generic rank-$d$ sublattice $M'$ is also generic, this shows $Y\cap S_\phi(X)\subset S_\psi(X)$ for generic $q$.
\end{proof}

\begin{cor}\label{cor:tropical.skeleton}
  The map $\phi_{\trop}|_{S_\phi(X)}\colon S_\phi(X)\to\phi_{\trop}(X)$ has finite fibers, and if $\phi_{\trop}(X)$ has dimension $d$ at $\omega$, then $\omega\in\phi_{\trop}(S_\phi(X))$.  
\end{cor}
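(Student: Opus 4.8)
The plan is to prove the two assertions separately. For the finiteness of the fibers, first choose by Theorem~\ref{thm:tropical.skeleton.projection} a split torus $\bT'$ of dimension $d$ and a generic homomorphism $q\colon\bT\to\bT'$ such that $S_\psi(X) = S_\phi(X)$, where $\psi = q\circ\phi$; write $Q\colon N\to N'$ for the induced map of cocharacter lattices, so that $\psi_{\trop} = Q_\R\circ\phi_{\trop}$. Given $\omega\in\phi_{\trop}(X)$, set $\omega' = Q_\R(\omega)$ and $\xi = \iota(\omega')\in S(\bT')$. If $x\in S_\phi(X)$ and $\phi_{\trop}(x) = \omega$, then $x\in S_\psi(X) = \psi\inv(S(\bT'))$ by Proposition~\ref{prop:tropical.skeleton}(\ref{item:trop.skeleton.samedim}) (applicable since $\dim\bT' = d = \dim X$), while $\trop(\psi(x)) = \psi_{\trop}(x) = \omega'$; as $\iota$ is a section of $\trop$ this forces $\psi(x) = \xi$, so $\phi_{\trop}\inv(\omega)\cap S_\phi(X)\subset\psi\inv(\xi)$. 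It therefore suffices to show that $\psi\inv(\xi)$ is finite. By Example~\ref{eg:res.field.skeleton.point}, $\td\sH(\xi)^\bullet$ is purely transcendental over $\td K^\bullet$ of transcendence degree $d$, so $\xi$ is an Abhyankar point of $\bT'^\an$ in the sense of Remark~\ref{Abhyankar inequality}; since $\dim_\xi\bT'^\an = d = \dim X$ and $X$ is pure dimensional, the fiber-dimension estimate over Abhyankar points used in the proof of Theorem~\ref{thm:tropical.skeleton.projection} (i.e.\ \cite[Lemma~1.5.11]{ducros18:families}) shows $\psi\inv(\xi)$ is empty or of pure dimension $0$, hence finite as a compact analytic space; one may reduce to the affinoid case beforehand using Proposition~\ref{prop:tropical.skeleton}(\ref{item:trop.skeleton.subdomain}).

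For the second assertion, fix a $\z$-polytopal complex $\Pi$ with $|\Pi| = \phi_{\trop}(X)$ as in \secref{sec:trop.is.polyhedral}, so that $\phi_{\trop}(\partial X)$ is contained in a subcomplex of dimension at most $d-1$, and let $Z_d\subset N_\R$ be the union of the $d$-dimensional cells of $\Pi$; this is exactly the locus where $\phi_{\trop}(X)$ has dimension $d$. Since $S_\phi(X)$ is closed in the compact space $X$ (Proposition~\ref{prop:tropical.skeleton}(\ref{item:trop.skeleton.closed})), its image $\phi_{\trop}(S_\phi(X))$ is compact, and as $Z_d\setminus\phi_{\trop}(\partial X)$ is dense in $Z_d$ it is enough to prove $Z_d\setminus\phi_{\trop}(\partial X)\subset\phi_{\trop}(S_\phi(X))$. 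Fix $\omega_1$ in this set; it lies in the relative interior of some $d$-dimensional $\sigma\in\Pi$, and $\phi_{\trop}\inv(\omega_1)$ is a compact subset of $\Int(X)$ since $\omega_1\notin\phi_{\trop}(\partial X)$. For each $x\in\phi_{\trop}\inv(\omega_1)$ take the germ neighbourhood $V(x)$ from \secref{sec:ducros.germs}; by compactness finitely many of them, say attached to $x_1,\dots,x_k$, cover $\phi_{\trop}\inv(\omega_1)$, and a standard compactness argument then produces a neighbourhood $\Omega$ of $\omega_1$ in $N_\R$ with $\phi_{\trop}\inv(\Omega)\subset\bigcup_j V(x_j)$. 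Comparing germs at $\omega_1$ and using Definition~\ref{def:germ.of.smooth.trop}, one obtains that the germ of $\phi_{\trop}(X)$ at $\omega_1$ equals $\bigcup_j\phi_{\trop}(X,x_j)$; this germ has dimension $d$ because it contains the germ of $\sigma$ at $\omega_1$, hence $\dim\phi_{\trop}(X,x_{j_0}) = d$ for some $j_0$. As $x_{j_0}\in\Int(X)$, Theorem~\ref{thm:trop.dim.reduction} gives $d_\phi(X,x_{j_0}) = \dim\phi_{\trop}(X,x_{j_0}) = d = \dim_{x_{j_0}}X$, so $x_{j_0}\in S_\phi(X)$ by Definition~\ref{def:trop.skel} and $\phi_{\trop}(x_{j_0}) = \omega_1\in\phi_{\trop}(S_\phi(X))$.

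The step I expect to be the main obstacle is the identification of the germ of the global tropical variety $\phi_{\trop}(X)$ at $\omega_1$ with the finite union $\bigcup_j\phi_{\trop}(X,x_j)$ of tropical varieties of germs: this needs the compactness argument controlling $\phi_{\trop}\inv$ of small neighbourhoods of $\omega_1$, and it is essential because, as noted in \secref{sec:germ.smooth.trop.properties}, the tropical variety of a germ $(X,x)$ can be strictly smaller than the germ of $\phi_{\trop}(X)$ at $\phi_{\trop}(x)$, so an arbitrary point of the fiber $\phi_{\trop}\inv(\omega_1)$ need not lie in $S_\phi(X)$ — one really must range over a finite cover of the whole fiber. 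By contrast, the first assertion is essentially formal once Theorem~\ref{thm:tropical.skeleton.projection} and the behaviour of fiber dimension over Abhyankar points are available.
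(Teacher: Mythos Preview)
Your proof is correct and follows essentially the same approach as the paper's. The only notable difference is in the second assertion: you restrict to $\omega_1\notin\phi_{\trop}(\partial X)$ so that the fiber lies in $\Int(X)$, invoke the \emph{equality} case of Theorem~\ref{thm:trop.dim.reduction}, and then close up by density, whereas the paper works directly at any $\omega$ since the unconditional inequality $\dim\phi_{\trop}(X,x)\le d_\phi(X,x)$ combined with $d_\phi(X,x)\le d$ already forces $d_\phi(X,x_{j_0})=d$ without any interior assumption---so your detour through density and $\partial X$ is correct but unnecessary.
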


\begin{proof}
  Let $\omega\in\phi_{\trop}(X)$.  Choose $q\colon\bT\to\bT'$ generic as in Theorem~\ref{thm:tropical.skeleton.projection}, so that $S_\phi(X) = S_\psi(X) = \psi\inv(S(\bT'))$ for $\psi = q\circ\phi$ by Proposition~\ref{prop:tropical.skeleton}(\ref{item:trop.skeleton.samedim}).  Let $\omega' = Q_\R(\omega)$ and let $\xi' = \iota(\omega')\in S(\bT')$.  Since $S_\psi(X) = \psi\inv(S(\bT'))$ we have $\psi\inv(\xi') = S_\psi(X)\cap\psi_{\trop}\inv(\omega')$.  A dimension argument based on the Abhyankhar inequality~\cite[Exemple~2.4.3]{CLD} shows that $\psi\inv(\xi')$ is finite, so $S_\phi(X)\cap\phi_{\trop}\inv(\omega) \subset S_\psi(X)\cap\psi_{\trop}\inv(\omega')$ is finite.

  The set $\phi_{\trop}\inv(\omega)$ is compact, so it is contained in a finite union of the interiors of compact strictly analytic domains $V(x_1),\ldots,V(x_r)$ as in~\secref{sec:ducros.germs} for $x_1,\ldots,x_r\in\phi_{\trop}\inv(\omega)$.  If $U$ is the union of the interiors of the $V(x_i)$ then $X\setminus U$ is closed and disjoint from $\phi_{\trop}\inv(\omega)$, so $\phi_{\trop}(X\setminus U)$ is closed and does not contain $\omega$; thus $\bigcup_{i=1}^r\phi_{\trop}(V(x_i))$ is a neighbourhood of $\omega$ in $\phi_{\trop}(X)$.  It follows that the germ of $\phi_{\trop}(X)$ at $\omega$ is equal to $\bigcup_{i=1}^r\phi_{\trop}(V(x_i))$, so if $\phi_{\trop}(X)$ has dimension $d$ at $\omega$ then $\dim\phi_{\trop}(V(x_i)) = d$ for some~$i$.  Thus $d_\phi(X,x_i)=d$, so $x_i\in S_\phi(X)$ by Theorem~\ref{thm:trop.dim.reduction}.
\end{proof}

\begin{rem}\label{rem:trop.skel.GRW}
	Suppose that $\phi\colon\sX\inject\bT$ is the inclusion of an equidimensional closed subscheme.   A tropical skeleton $S_{\Trop}(\sX)$ was introduced in this context in~\cite{gubler_rabinoff_werner:tropical_skeletons}, using Shilov points of fibers of tropicalization.  We claim that $S_{\phi}(\sX^\an) = S_{\Trop}(\sX)$. Indeed, Theorem~\ref{thm:tropical.skeleton.projection} and Proposition~\ref{prop:tropical.skeleton}(\ref{item:trop.skeleton.samedim}) show that $S_{\phi}(U)=S_{q \circ \varphi}(U)=(q\circ\varphi|_U)^{-1}(S(\bT'))$ for a compact strictly analytic subdomain $U$ of $\bT^\an$ and a generic homomorphism  $q:\bT \to \bT'$ to a split torus $\T'$ of dimension $d$.
	The proof of~\cite[Proposition~4.6]{gubler_rabinoff_werner:tropical_skeletons} and
	\cite[Lemma~4.4]{gubler_rabinoff_werner:tropical_skeletons} yield  $S_{\Trop}(\sX)=(q \circ\varphi)^{-1}S_{\Trop}(\bT')=(q \circ\varphi)^{-1}S(\bT')$. The claim follows from Proposition~\ref{prop:tropical.skeleton}(\ref{item:trop.skeleton.subdomain}).
\end{rem}

\begin{rem}  \label{rem: piecewise linear structure on tropical skeleton}
	By results of Ducros~\cite{Ducros}, the tropical skeleton $\phi\inv(S(\bT))$ has a canonical structure of a piecewise $\q$-linear space (called a $\q$-skeleton in \textit{ibid.}), completely determined by the underlying set and the analytic structure of $X$.

	To see that, we proceed as in the proof of Proposition~\ref{prop:tropical.skeleton}(\ref{item:trop.skeleton.closed}). Let $f_1,\ldots,f_n\in\Gamma(X,\sO_X)^\times$ be the image of a basis of $M$ under $\phi^*$.  For $I \subset\{1,2,\ldots,n\}$ of size $d$, let $\varphi_I\colon X \to \bG_{\rm m}^{d,\an}$ be the moment map given by $(f_i)_{i \in I}$. Then we have seen that
	\begin{gather*}
          S_\varphi(X)=\bigcup_I\left\{x \in X \biggm\vert
            \begin{split}
            &\td\chi_x(f_{i_1}),\ldots,\td\chi_x(f_{i_d})\in\td\sH(x)^\bullet \\
            \text{ are}&\text{ algebraically independent over } \td K^\bullet 
            \end{split}\right\}
        \end{gather*}
	and  Proposition~\ref{prop:tropical.skeleton}(\ref{item:trop.skeleton.samedim}) gives
	$$S_\varphi(X)= \bigcup_I S_{\varphi_I}(X)    =\bigcup_I \varphi_I^{-1}S(\bG_{\rm m}^d).$$
	It follows from  Ducros~\cite[Theorem~5.1]{Ducros} that $S_\varphi(X)$ is a $\q$-skeleton. The piecewise $\q$-linear structure is canonical by~\cite[4.2]{Ducros}.

    By~\cite[Theorem~5.1]{Ducros}, the map $\varphitrop\colon S_\phi(X) \to \varphitrop(X)$ is a piecewise $\q$-linear map with finite fibers.
    We conclude that there is a  unique way to endow  $S_\varphi(X)$ with the structure of a piecewise $\z$-linear space such that $\varphi_{\trop}$ restricts to a $\z$-linear isomorphism on every face.  Note that the $\z$-linear structure depends on~$\varphi_{\trop}$. 
    This is illustrated in Example~\ref{example for dependence on phi} below.
\end{rem}

\begin{rem}[The $d$-skeleton]\label{rem:d-skeleton}
  It follows from Remark~\ref{rem: piecewise linear structure on tropical skeleton} that the locus
  \[ S_\phi(X)_d \coloneq \bigl\{ x\in S_\phi(X)\mid \dim\phi_{\trop}(X,x) = d \bigr\} \]
  is the union of the $d$-dimensional faces of $S_\phi(X)$.  This is again a $\q$-skeleton, and it inherits the structure of a piecwise $\z$-linear space.  The map $S_\phi(X)_d\to\phi_{\trop}(X)$ has finite fibers and surjects onto the $d$-dimensional part of $\phi_{\trop}(X)$.  We call $S_\phi(X)_d$ the \defi{$d$-skeleton of $(X,\phi)$}.

  {Since $\dim\phi_{\trop}(X,x) = d$ for any $x\in S_\phi(X)\cap\Int(X)$ as we have seen in Theorem \ref{thm:trop.dim.reduction}, it is clear that $S_\phi(X)_d\cap\Int(X) = S_\phi(X)\cap\Int(X).$}
\end{rem}

\begin{eg}  \label{example for dependence on phi}
  We choose $X=\T^\an$ and $\varphi\colon X \to \T^\an$ the analytification of the map $\T \to \T$ given by $x \mapsto x^n$ for some non-zero $n \in \N$. Then $S_\varphi(X)=S(\T)$, which we identify with $N_\R$ using the homeomorphism $\trop\colon S(\T)\to N_\R$. Then the $\z$-linear structure of $S_\varphi(X)$ is given by the lattice $\frac{1}{n}N$ as $\varphitrop\colon S(\T) \to S(\T)$ is multiplication by $n$. We conclude that $\varphi$ induces a homeomorphism $S_\varphi(X) \to S(\T)$ which is an isomorphism of $\q$-linear spaces, but not of $\z$-linear spaces.
\end{eg}

\section{Tropical multiplicities} \label{sec: tropical multiplicities}

Let $X$ be a compact good strictly $K$-analytic space of pure dimension $d$.  We fix a moment map $\varphi\colon X \to \bT^{\an}$ to a split torus $\bT$ over $K$, and we let $N$ and $M$ be the cocharacter and character lattices of $\bT$, respectively.  In this section, we will define tropical multiplicities $m_\varphi(X,x)$ and $m_\varphi(X,\omega)$ for all $x \in X$ and all $\omega \in N_\R$. For $\omega$ in the relative interior of a $d$-dimensional face $\sigma$ of the tropical variety $\phi_{\trop}(X)$, the tropical multiplicity $m_\varphi(X,\omega)$ agrees with the tropical weight of $\sigma$ induced by the construction of Chambert-Loir and Ducros  (see Remark~\ref{rem: multiplicity of the Antoines}),
so if $\phi$ is the analytification of the embedding of a closed subvariety then we get the classical tropical weight of $\sigma$.  In this algebraic setting, we will show in Lemma~\ref{lem:mult.from.BPR} that $m_\varphi(X,x)$ agrees with the relative tropical multiplicity $m_\rel(x)$ introduced in~\cite{baker_payne_rabinoff16:tropical_curves} for all $x\in X$, and that $m_\varphi(X,\omega)$ agrees with the tropical multiplicity $m_{\Trop}(\omega)$ defined in \textit{ibid.}\ for all $\omega\in N_\R$.  As our tropical multiplicities are defined for all points of $X$ and $N_\R$, including boundary points, they provide a common generalization of these constructions.  Our definition also has the advantage that it does not depend \emph{a priori} on any choices.

Similarly as in the previous section, all results extend to $\Gamma$-strictly analytic spaces over $K$ for any subgroup $\Gamma$ of $\R$ containing the value group of $K$.

\medskip
If $A$ is an Artin local ring, its \defi{length} is defined to be the length of $A$ as a module over $A$.
We will use the tropical skeleton $S_\varphi(X)$ and the notation from Section~\ref{sec: tropical skeletons}.

\begin{defn}\label{def:trop.mult}
	Let $x\in X$.  If $K$ is algebraically closed, then the \defi{tropical multiplicity of $x$ with respect to $\phi$} is defined to be
	\[ m_\phi(X,x) =
	\begin{cases}
	\length(\sO_{X,x})\,[\td\sH(x)^\bullet:\td K^\bullet(\td M(x))] & \text{if } x \in S_\phi(X) \\
	0 & \text{otherwise.}
	\end{cases}\]
	In general, let $K'/K$ be the completion of an algebraic closure of $K$, let $X' = X\hat\tensor_K K'$ with structure morphism $\pi\colon X'\to X$, let $\bT_{K'} = \bT\tensor_K K'$, and let $\phi' = \phi\hat\tensor_K K'\colon X'\to\bT_{K'}^\an$.  Then we define
	\begin{equation}\label{eq:trop.mult.non.closed}
	m_\phi(X,x) = \sum_{\pi(x')=x}m_{\phi'}(X', x').
	\end{equation}
	For $\omega\in N_\R$ we define
	\begin{equation}\label{eq:trop.mult}
	m_\phi(X,\omega) = \sum_{\phi_{\trop}(x)=\omega} m_\phi(X,x).
	\end{equation}
\end{defn}

\begin{art}  \label{rem: tropical multiplicities}
	Some remarks:
	\begin{enumerate}
		\item\label{tropmult 1}
		Clearly $m_\phi(X,x)$ only depends on the restriction of $\phi$ to the germ $(X,x)$.
		\item\label{tropmult 2}
		If $x\in S_\phi(X)$ then the ring $\sO_{X,x}$ is Artinian by~\cite[Example~3.2.10]{ducros18:families}.  In particular, if $X$ is reduced then $\sO_{X,x}$ is a field and $\length(\sO_{X,x}) = 1$.
		\item\label{tropmult 3}
                  Suppose that $K$ is algebraically closed.  Let $x\in S_\phi(X)$.  Then $\td\sH(x)^\bullet$ is finitely generated over $\td K^\bullet$ by~\cite[3.2.12.3]{ducros14:structur_des_courbes_analytiq}.  Since
		$\trdeg\bigl(\td K^\bullet(\td M(x))/\td K^\bullet\bigr)=d=\trdeg\bigl( \td\sH(x)^\bullet/\td K^\bullet\bigr)$, we conclude that
		$\td\sH(x)^\bullet$ is algebraic over $\td K^\bullet(\td M(x))$ and we have $m_\phi(X,x) < \infty$.
		\item\label{tropmult 4}
		For general $K$, the sum~\eqref{eq:trop.mult.non.closed} is finite by
		(\ref{item:trop.skeleton.basechange}) and (\ref{item:algebraic extension}) in Proposition~\ref{prop:tropical.skeleton}.
		\item\label{tropmult 5} For general $K$, we have $m_\phi(X,x) > 0$ if and only if $x\in S_\phi(X)$ by Proposition~\ref{prop:tropical.skeleton}(\ref{item:trop.skeleton.basechange}).
		\item\label{tropmult 6}
		In Definition~\ref{def:trop.mult}, we can take $K'/K$ to be any algebraically closed non-Archimed\-ean extension field by Proposition~\ref{item:trop.mult.basechange} below.
              \item\label{tropmult 7}
                The fibers of $S_\phi(X)\to\phi_{\trop}(X)$ are finite by Corollary~\ref{cor:tropical.skeleton}, so the sum~\eqref{eq:trop.mult} is finite by~(\ref{tropmult 5}).  If $\phi_{\trop}(X)$ has dimension $d$ at a point $\omega\in\phi_{\trop}(X)$, then $m_\phi(X,\omega) > 0$ by Corollary~\ref{cor:tropical.skeleton} and~(\ref{tropmult 5}).
                \item\label{tropmult 8}
                  Let $x\in S_\phi(X)$ with $\omega = \phi_{\trop}(x)\in N_\Gamma$.  Then $\trdeg(\td K(\td M(x))/\td K) = d$ by  Remark~\ref{connection to usual residue field}, so $[|\sH(x)^\times|:|K^\times|]$ is finite.
                  If $K$ is algebraically closed, then $|K^\times|$ is divisible, so $|\sH(x)^\times|=|K^\times|$; then~\eqref{eq:n.ef} yields
                  \[ [\td\sH(x)^\bullet:\td K^\bullet(\td M(x))] = [\td\sH(x):\td K(\td M(x))], \]
                  so in this case it is not necessary to use graded residue fields.  By~(\ref{tropmult 6}) the tropical multiplicity can always be defined entirely in terms of ordinary residue fields.
                 \item\label{tropmult 9} The use of base change to the algebraic closure in the definition of $m_\varphi(X,x)$ is necessary.
                   It implies that the tropical multiplicity $m_\varphi(X,\omega)$ is invariant under base extensions of non-archimedean fields (Proposition~\ref{item:trop.mult.basechange}), and that it coincides with the multiplicities in~\cite{CLD} (Proposition~\ref{item:trop.mult.samedim}).                 	
    	\end{enumerate}
\end{art}

\begin{eg}
  We illustrate Remark~\ref{rem: tropical multiplicities}(\ref{tropmult 9}) here in a zero-dimensional example. Let $L/K$ be a finite, separable field extension. Then $X=\sM(L)$ consists of a single point $x$ given by the unique absolute value on $L$ extending the given one on $K$, and $\sO_{X,x} = \sH(x) = L$.  Define $\varphi\colon X \to \bG_{{\rm m},K}^\an = \Spec(K[t^{\pm1}])^\an$ by the $K$-algebra homomorphism $K[t^{\pm1}]\to L$ sending $t\mapsto 1$.  Then $\phi_{\trop}(X) = \{0\}$, and the tropical skeleton $S_\varphi(X)$ is equal to $X$.

  Let $K'$ be the completion of an algebraic closure of $K$,  let $X' = X\hat\tensor_K K' = \sM(L\hat\tensor_K K')$, and let $\phi' = \phi_{K'}\colon X'\to\bG_{{\rm m},K'}^\an$.  Note that $L\hat\tensor_K K' = L\tensor_K K'$ is a product of $[L:K]$ copies of $K'$, so that $X' = \{x_1',\ldots,x_{[L:K]}'\}$ is a collection of $[L:K]$ rig-points, and $\sO_{X',x_i'} = \sH(x_i') = K'$ for each $i$.  This implies $S_{\phi'}(X') = X'$.

  Let $\omega = 0\in\R = N_\R$. Using Definition \ref{def:trop.mult}, we get
                 $$m_\varphi(X,\omega)=  m_\phi(X,x)= \sum_{i=1}^{[L:K]} m_{\varphi'}(X',x_i') = \sum_{i=1}^{[L:K]} \length(\sO_{X',x_i'})\,[\widetilde{K'}{}^\bullet:\widetilde{K'}{}^\bullet]=[L:K]$$    
             and 
              $$m_\varphi(X',\omega)=  \sum_{i=1}^{[L:K]} m_{\varphi'}(X',x_i')= \sum_{i=1}^{[L:K]} \length(\sO_{X',x_i'})\,[\widetilde{K'}{}^\bullet:\widetilde{K'}{}^\bullet]=[L:K],$$
              which illustrates invariance of the tropical multiplicities under base change to $K'$.

              Let $m_\phi'(X,x)=\length(\sO_{X,x})[\td\sH(x)^\bullet:\td K^\bullet]$: this would be the definition of the multiplicity without first passing to the algebraic closure.  The identity \eqref{eq:n.ef} yields that
          \[ [\td\sH(x)^\bullet:\td K^\bullet]=[\td L^\bullet:\td K^\bullet]=e(L/K)f(L/K), \]
      where $e(L/K)$ is the ramification index and $f(L/K)$ is the classical residue degree. Recall that a non-archimedean field $K$ is called \emph{stable} if $e(F/K)f(F/K)=[F:K]$ for any finite field extension $F/K$ (the inequality $\leq$ holds in general by~\cite[Proposition~3.1.3/2]{bosch_guntzer_remmert84:non_archimed_analysis}). 
      If $K$ is a stable field, then $e(L/K)f(L/K)=[L:K]$, so $m_\phi'(X,x) = m_\phi(X,x)$, and it is not necessary to pass to the completion of an algebraic closure in this case.
  
      We take the following example of a non-archimedean field $K$ which is not stable from \cite{bosch_guntzer_remmert84:non_archimed_analysis}. Let $K'$ be the completion of an algebraic closure of $\Q_2$ and let $K$ be the topological closure of the subfield of $K'$ generated by all $2^n$-roots of $2$. Then $K$ is a non-archimedean field which has additive value group $\bigcup_n \frac{1}{2^n} \Z$. It follows that every quadratic field extension of $K$ is unramified over $K$. It is shown in the example given in \cite[\S 3.6.1]{bosch_guntzer_remmert84:non_archimed_analysis} that $L=K(\sqrt{3})$ satisfies $f(L/K)=1$, so that $K$ is not stable. For this choice of $L$, we have $m'_\phi(X,x) = e(L/K)f(L/K)=1$, but $m_\varphi(X,\omega)= [L:K] = 2$.
\end{eg}

\begin{art}[Fundamental Cycles]\label{sec:fund.cycle}
  In~\cite[2.7]{gubler-crelle}, the fundamental cycle $[X]$ of the analytic space $X$ was defined as follows.  Let $U = \sM(\sA)$ be a strictly affinoid domain in $X$.  The cycle $[\Spec(\sA)]$ of the noetherian scheme $\Spec(\sA)$ has the form $\sum_{Y'}m_{Y'}[Y']$, where the sum runs over the irreducible components $Y'$ of $\Spec(\sA)$.  This construction is compatible with pull-back with respect to flat morphisms of noetherian schemes, from which it follows that these cycles glue to give a cycle $[X]$.  This cycle has the form $[X] = \sum_Y m_Y[Y]$, the sum being taken over all irreducible components $Y$ of $X$ (see~\cite[1.5]{ducros18:families} for a definition).  In particular, there is a canonical multiplicity $m_Y$ associated to an irreducible component $Y$.
\end{art}

The next lemma relates multiplicities of irreducible components to tropical multiplicities.  As a special case, if $K$ is algebraically closed then the factor of $\length(\sO_{X,x})$ in Definition~\ref{def:trop.mult} is simply the multiplicity of the irreducible component of $X$ containing $x$.

\begin{lem}\label{lem:mult.fund.cycle}
  Let $\iota\colon X_{\red}\inject X$ be the reduction of $X$ and let $\psi = \phi\circ\iota$.  If $x\in S_\phi(X)$, then $x$ is contained in a unique irreducible component $Y$ of $X$, and $m_\phi(X,x) = m_Y\,m_\psi(X_{\red},x)$.
\end{lem}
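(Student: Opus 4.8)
The plan is to dispose of the set-theoretic part using the Artinian property of $\sO_{X,x}$, and then to reduce the numerical identity to the case where $K$ is algebraically closed, where it drops out of Definition~\ref{def:trop.mult}. Since $x\in S_\phi(X)$, the local ring $\sO_{X,x}$ is Artinian by Remark~\ref{rem: tropical multiplicities}(\ref{tropmult 2}); an Artinian local ring has a unique minimal prime, and under the correspondence between minimal primes of $\sO_{X,x}$ and irreducible components of $X$ through $x$ this gives the first assertion. I would also record that $x\in S_\psi(X_{\red})$, so that $m_\psi(X_{\red},x)$ makes sense: by Proposition~\ref{prop:tropical.skeleton}(\ref{item:trop.skeleton.morphism}) applied to the closed immersion $\iota$ one has $S_\psi(X_{\red})=\iota^{-1}(S_\phi(X))$, and $X$ and $X_{\red}$ have the same completed residue field $\sH(x)$ and the same subset $\td M(x)$, since $\chi_x$ factors through the reduction. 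In particular the index $[\td\sH(x)^\bullet:\td K^\bullet(\td M(x))]$ occurs in both $m_\phi(X,x)$ and $m_\psi(X_{\red},x)$.

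Assume first that $K$ is algebraically closed. Then $\sO_{X_{\red},x}$ is a reduced Artinian local ring, i.e.\ a field, so $\length(\sO_{X_{\red},x})=1$ and Definition~\ref{def:trop.mult} gives $m_\psi(X_{\red},x)=[\td\sH(x)^\bullet:\td K^\bullet(\td M(x))]$; comparing with $m_\phi(X,x)=\length(\sO_{X,x})\,[\td\sH(x)^\bullet:\td K^\bullet(\td M(x))]$, the claimed identity becomes $\length(\sO_{X,x})=m_Y$. To prove this I would choose a strictly affinoid neighbourhood $\sM(\cA)$ of $x$ and let $\mathfrak p$ be the point of $\Spec\cA$ underlying $x$. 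The algebraic local ring $\cA_{\mathfrak p}$ maps faithfully flatly into $\sO_{X,x}$, so $\dim\sO_{X,x}=0$ forces $\dim\cA_{\mathfrak p}=0$; hence $\mathfrak p$ is the generic point $\eta$ of the irreducible component of $\sM(\cA)$ through $x$, and $m_Y=\length(\cA_\eta)$. Filtering $\sO_{X,x}$ by powers of its nilradical $\mathcal N=\mathfrak n\sO_{X,x}$ (with $\mathfrak n$ the nilradical of $\cA$, nilpotent since $\cA$ is Noetherian), each subquotient $\mathcal N^i/\mathcal N^{i+1}$ is the stalk at $x$ of the coherent sheaf $\mathcal G^i$ on $\sM(\cA_{\red})$ attached to $\mathfrak n^i/\mathfrak n^{i+1}$; since every nonzero element of $\cA_{\red}$ becomes a unit in $\sO_{X_{\red},x}=\kappa(x)$, the stalk $\mathcal G^i_x$ is the base change of $\mathcal G^i_\eta$ along $\kappa(\eta)\hookrightarrow\kappa(x)$, whence $\dim_{\kappa(x)}\mathcal G^i_x=\dim_{\kappa(\eta)}\mathcal G^i_\eta$. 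Summing over $i$ gives $\length(\sO_{X,x})=\sum_i\dim_{\kappa(x)}\mathcal G^i_x=\sum_i\dim_{\kappa(\eta)}\mathcal G^i_\eta=\length(\cA_\eta)=m_Y$.

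For a general base field I would descend from $K'=\widehat{\overline K}$. With $\pi\colon X'=X\hat\tensor_K K'\to X$ one has $m_\phi(X,x)=\sum_{\pi(x')=x}m_{\phi'}(X',x')$ and $m_\psi(X_{\red},x)=\sum_{\pi(x')=x}m_{\psi'}(X_{\red}\hat\tensor_K K',x')$, the index set being finite by Proposition~\ref{prop:tropical.skeleton}. For each $x'$ over $x$ the components of $X'$ and of $X_{\red}\hat\tensor_K K'$ through $x'$ coincide, say $Y'$, because $X$ and $X_{\red}$ agree near $x$; and since the fundamental cycle is compatible with the flat pullback $\pi$ (see~\secref{sec:fund.cycle}), comparing $[X']$ and $[X_{\red}\hat\tensor_K K']$ near $x'$ shows that the multiplicity of $Y'$ in $X'$ equals $m_Y$ times its multiplicity in $X_{\red}\hat\tensor_K K'$. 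Substituting the algebraically closed identity into each summand then yields the formula over $K$.

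The main obstacle is the equality $\length(\sO_{X,x})=m_Y$, and inside it the comparison between the Berkovich local ring $\sO_{X,x}$ and the algebraic local ring of an affinoid chart: I am relying on faithful flatness, on the identification of the nilradical of $\sO_{X,x}$ with the ideal extended from $\cA$, and on the fact that $\dim\sO_{X,x}=0$ pins $\mathfrak p$ to the generic point of its component. These are consequences of the foundational results of Berkovich and Ducros on good analytic spaces and their excellent local rings (indeed the equality $\length(\sO_{X,x})=m_Y$ can be read off Ducros's construction of the fundamental cycle), but they are the delicate inputs; granting them, the ring-theoretic bookkeeping, as well as the base-change matching in the previous paragraph, is routine.
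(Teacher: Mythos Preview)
Your proof is correct and follows the same overall strategy as the paper: establish $\length(\sO_{X,x})=m_Y$ and then deal with the base change to the algebraically closed case. The paper's execution is a bit more economical in two places. First, for $\length(\sO_{X,x})=m_Y$ it observes (for arbitrary $K$, not just algebraically closed) that $\cA_{\fp}\to\sO_{X,x}$ is flat with $\fp\sO_{X,x}=\fm_{X,x}$ (the latter from~\cite[Example~3.2.10]{ducros18:families}) and then invokes \cite[Lemma~A.4.1]{Fulton_intersection} directly, rather than filtering by powers of the nilradical as you do; your filtration argument is essentially a hands-on proof of that lemma in this special case. Second, for the descent the paper uses the same flatness ingredient but applies it to $\sO_{X,x}\to\sO_{X',x'}$: since $\fm_{X,x}$ is the nilradical, $\sO_{X'',x''}=\sO_{X',x'}/\fm_{X,x}\sO_{X',x'}$ for $X''=X_{\red}\hat\tensor_K K'$, giving $\length(\sO_{X',x'})=m_Y\cdot\length(\sO_{X'',x''})$ in one line. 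This avoids your detour through compatibility of fundamental cycles with flat pullback and the second application of the algebraically closed case to $X_{\red}\hat\tensor_K K'$; your route works but is slightly more roundabout.
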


\begin{proof}
  Since multiplicities are local, we may pass to a neighbourhood of $x$ to assume that $X = \sM(\sA)$ is strictly affinoid.  If $x\in S_\phi(X)$ then $x$ is an Abhyankar point by Proposition~\ref{prop:tropical.skeleton}(\ref{item:trop.skeleton.abhyankar}), so it is contained in a unique irreducible component $Y$ by~\cite[Remark~1.5.9]{ducros18:families}.  Let $\fp\subset\sA$ be the kernel of the semi-norm corresponding to $x$.  Then $m_Y = \length(A_\fp)$.  The homomorphism $A_\fp\to\sO_{X,x}$ is flat by~\cite[Theorem~2.1.4]{Berkovichetale}, and $\fp\sO_{X,x}$ is the maximal ideal of $\sO_{X,x}$ by~\cite[Example~3.2.10]{ducros18:families}, so $m_Y = \length(\sO_{X,x})$ by~\cite[Lemma~A.4.1]{fulton98:intersec_theory}.

  Let $K'$ be the completion of an algebraic closure of $K$, let $X' = X\hat\tensor_K K'$, let $\pi\colon X'\to X$ be the structure morphism, let $\phi' = \phi\hat\tensor_K K'\colon X'\to\bT^\an\hat\tensor_K K'$ and let $x\in S_\phi(X)$.  We have $\pi\inv(x) \subset S_{\phi'}(X')$ by Proposition~\ref{prop:tropical.skeleton}(\ref{item:algebraic extension}).  For $x'\in\pi\inv(x)$ the map $\sO_{X,x}\to\sO_{X',x'}$ is flat by~\cite[Corollary~2.1.3]{Berkovichetale}.  Again it follows from that~\cite[Lemma~A.4.1]{fulton98:intersec_theory} that
  \[ \length(\sO_{X',x'}) = \length(\sO_{X,x})\,\length(\sO_{X',x'}/\fm_{X,x}\sO_{X',x'})
    = m_Y\,\length(\sO_{X',x'}/\fm_{X,x}\sO_{X',x'}).
  \]
  Since $\sO_{X,x}$ is Artinian, the ideal $\fm_{X,x}$ is its nilradical, so $\sO_{X_{\red},x} = \sO_{X,x}/\fm_{X,x}$.  Letting $x''$ denote the point of $X''\coloneq X_{\red}\hat\tensor_K K'$ mapping to $x'$, we thus have $\sO_{X'',x''} = \sO_{X',x'}/\fm_{X,x}\sO_{X',x'}$, so that $\length(\sO_{X',x'}) = m_Y\,\length(\sO_{X'',x''})$.  The lemma follows from this equality.
\end{proof}

We spend the rest of this section proving several important properties of the tropical multiplicity.

\begin{prop}  \label{item:trop.mult.samedim}
	Let $\varphi\colon X \to \bT^\an$ be a moment map as above and suppose that $\dim(\bT) = d$. Let $x\in S_\phi(X)$ and let $\xi = \phi(x)$.
	Then $m_\phi(X,x) = \dim_{\sH(\xi)}(\sO_{\phi\inv(\xi),x})$.  The local ring $\sO_{\bT^\an,\xi}$ is a field $\kappa(\xi)$, and if $x \in \Int(X)$, then $m_\phi(X,x) = \dim_{\kappa(\xi)}(\sO_{X,x})$.
\end{prop}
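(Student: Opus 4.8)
The plan is to reduce the general statement to the case of a finite flat morphism of schemes, where the multiplicity can be read off from the degree of the fiber. First I would use Theorem~\ref{thm:trop.dim.reduction}: since $\dim(\bT)=d=\dim(X)$ and $x\in S_\phi(X)$, by Proposition~\ref{prop:tropical.skeleton}(\ref{item:trop.skeleton.samedim}) we have $\xi=\phi(x)\in S(\bT)$, and the tropical variety of the germ $(X,x)$ is already all of $(N_\R,\phi_{\trop}(x))$ when $x\in\Int(X)$. A point $\xi\in S(\bT)$ is an Abhyankar point of $\bT^\an$ with $\trdeg(\td\sH(\xi)^\bullet/\td K^\bullet)=d=\dim(\bT)$, so the local ring $\sO_{\bT^\an,\xi}$ is a field; this is exactly the statement that $\xi$ is a "rigid-like" point of maximal dimension, and it can be extracted from~\cite[Example~3.2.10]{ducros18:families} or from the explicit description of $\sO_{\bT^\an,\xi}$ via the norm~\eqref{eq:skeleton.norm}. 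I would denote this field $\kappa(\xi)$ (note $\kappa(\xi)=\sH(\xi)$ here since the residue field at such a point is already complete, or at least the issue is harmless for the length computation).

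Next I would establish the formula $m_\phi(X,x)=\dim_{\sH(\xi)}(\sO_{\phi\inv(\xi),x})$. Here I would first treat the case $K$ algebraically closed. By definition $m_\phi(X,x)=\length(\sO_{X,x})\,[\td\sH(x)^\bullet:\td K^\bullet(\td M(x))]$, and since $\xi\in S(\bT)$ with $\phi(x)=\xi$, Example~\ref{eg:res.field.skeleton.point} identifies $\td K^\bullet(\td M(x))$ with the image of $\td\sH(\xi)^\bullet$ in $\td\sH(x)^\bullet$. The morphism $\phi$ near $x$ gives a flat local homomorphism $\sO_{\bT^\an,\xi}=\sH(\xi)\to\sO_{X,x}$ — flatness because $\bT^\an$ is smooth of dimension $d$ and $X$ is pure-dimensional of dimension $d$, so the fiber dimension argument via the Abhyankar inequality (as in Corollary~\ref{cor:tropical.skeleton}, using~\cite[Exemple~2.4.3]{CLD}) shows the fiber $\phi\inv(\xi)$ is zero-dimensional at $x$, and flatness of quasi-finite maps between regular-ish local rings follows, e.g.\ from miracle flatness. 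Then $\sO_{\phi\inv(\xi),x}=\sO_{X,x}\tensor_{\sH(\xi)}\sH(\xi)=\sO_{X,x}$, so I must show $\dim_{\sH(\xi)}\sO_{X,x}=\length(\sO_{X,x})\,[\td\sH(x):\td K(\td M(x))]$. Writing $\sO_{X,x}$ as an Artin local $\sH(\xi)$-algebra with residue field $\kappa(x)$, its length as an $\sH(\xi)$-module is $\length_{\sO_{X,x}}(\sO_{X,x})\cdot[\kappa(x):\sH(\xi)]$; and $[\kappa(x):\sH(\xi)]=[\td\sH(x):\td\sH(\xi)]$ when $K$ (hence $\sH(\xi)$, which has divisible value group) is algebraically closed, matching the graded-residue-field factor via~\eqref{eq:n.ef} and Remark~\ref{rem: tropical multiplicities}(\ref{tropmult 8}).

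For general $K$ I would descend along the base change $K'/K$ to an algebraic closure: Definition~\ref{def:trop.mult} sums $m_{\phi'}(x')$ over $x'\mapsto x$, and the local ring $\sO_{\phi\inv(\xi),x}$ base-changes compatibly (flatness of $X'\to X$, \cite[Corollary~2.1.3]{Berkovichetale}), with $\sum_{x'\mapsto x}\dim_{\sH(\xi')}\sO_{(\phi')\inv(\xi'),x'}=\dim_{\sH(\xi)}\sO_{\phi\inv(\xi),x}$ by the usual degree-is-preserved-under-flat-base-change identity (again~\cite[Lemma~A.4.1]{fulton98:intersec_theory}), since $\sH(\xi)\tensor_K K'$ has total dimension... — here one needs that $\sH(\xi')$ over $\sH(\xi)$ behaves correctly, which follows because $\xi$ is the skeleton point and base change of skeleton points is understood by Proposition~\ref{prop:tropical.skeleton}(\ref{item:trop.skeleton.basechange}). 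Finally, for the last clause, when $x\in\Int(X)$ one has $\phi\inv(\xi)$ purely zero-dimensional at $x$ with $\sH(\xi)=\kappa(\xi)=\sO_{\bT^\an,\xi}$, and $\sO_{\phi\inv(\xi),x}=\sO_{X,x}\tensor_{\sO_{\bT^\an,\xi}}\kappa(\xi)=\sO_{X,x}$ because $\sO_{\bT^\an,\xi}$ is already a field, giving $m_\phi(X,x)=\dim_{\kappa(\xi)}(\sO_{X,x})$ directly.

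**Main obstacle.** The delicate point is the flatness of $\phi$ at $x$ (equivalently, that $\sO_{\phi\inv(\xi),x}$ computes the multiplicity without correction terms). This is not automatic from $X$ being pure-dimensional — it genuinely uses that $x\in S_\phi(X)$, so that the fiber through $x$ is zero-dimensional and $\bT^\an$ is regular at $\xi$; the cleanest route is miracle flatness over the regular local ring $\sO_{\bT^\an,\xi}$, but verifying that $\sO_{\bT^\an,\xi}$ is regular (indeed a field!) of the right dimension and that $\sO_{X,x}$ is Cohen--Macaulay of dimension $d$ requires the structure theory of Abhyankar points from~\cite{ducros18:families}. A secondary subtlety is tracking the (graded vs.\ ordinary) residue field degrees through the base change to ensure the additivity in~\eqref{eq:trop.mult.non.closed} is exactly matched by additivity of $\sH(\xi)$-dimensions; Remark~\ref{rem: tropical multiplicities}(\ref{tropmult 8}) handles the graded/ordinary comparison, but one must be careful that $\xi'\mapsto\xi$ and that $\sum[\sH(\xi'):\,\cdot\,]$ works out, which is where Proposition~\ref{prop:tropical.skeleton}(\ref{item:trop.skeleton.basechange}) on Shilov boundaries of $\pi\inv(x)$ enters.
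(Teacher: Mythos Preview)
Your proposal has a genuine gap stemming from the conflation of $\kappa(\xi)$ with $\sH(\xi)$. You write ``$\sO_{\bT^\an,\xi}=\sH(\xi)$'' and then ``$\sO_{\phi\inv(\xi),x}=\sO_{X,x}\tensor_{\sH(\xi)}\sH(\xi)=\sO_{X,x}$''. But $\sO_{\bT^\an,\xi}$ is the field $\kappa(\xi)$, which is \emph{not} complete; its completion is $\sH(\xi)$. The Berkovich fiber $\phi\inv(\xi)$ is the $\sH(\xi)$-analytic space $X\times_{\bT^\an}\sM(\sH(\xi))$, so its local ring at $x$ is a base change of $\sO_{X,x}$ along $\kappa(\xi)\to\sH(\xi)$, not $\sO_{X,x}$ itself. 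Consequently the expression ``$\dim_{\sH(\xi)}\sO_{X,x}$'' is not well-posed ($\sO_{X,x}$ is only a $\kappa(\xi)$-algebra), and your length computation as written does not go through. The hedged remark ``$\kappa(\xi)=\sH(\xi)$ \ldots\ or at least the issue is harmless'' is exactly where the content lies.

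The paper's proof fills this gap with two nontrivial inputs you are missing. First, in the algebraically closed case it does \emph{not} claim $\sO_{\phi\inv(\xi),x}=\sO_{X,x}$; instead it uses Ducros's theorem that the local homomorphism $\sO_{X,x}\to\sO_{\phi\inv(\xi),x}$ is \emph{regular} (\cite[Theorem~6.3.7]{ducros18:families}), hence flat with field fiber, so that $\length(\sO_{X,x})=\length(\sO_{\phi\inv(\xi),x})$. Second, to convert $[\sH(x):\sH(\xi)]$ into the graded residue degree it invokes the \emph{stability} of $\sH(\xi)$ (\cite[4.3.14]{ducros14:structur_des_courbes_analytiq}). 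Your miracle-flatness remark is a red herring: since $\sO_{\bT^\an,\xi}$ is a field, flatness of $\sO_{X,x}$ over it is automatic; the real work is comparing $\sO_{X,x}$ with the completed-fiber ring. For the last clause ($x\in\Int(X)$), the paper again does not assert equality but proves $\sO_{\phi\inv(\xi),x}=\sO_{X,x}\tensor_{\kappa(\xi)}\sH(\xi)$ after first using interiority to make $\phi$ finite near $x$ (\cite[Prop.~3.1.4, Cor.~3.1.10]{Berkovichetale}), whence $\dim_{\kappa(\xi)}\sO_{X,x}=\dim_{\sH(\xi)}\sO_{\phi\inv(\xi),x}$. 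Your sketch for general $K$ is in the right spirit; the paper makes it precise by shrinking $X$ so that $\phi\inv(\xi)=\{x\}$ and using $\phi'^{-1}(\xi')=\phi\inv(\xi)\tensor_{\sH(\xi)}\sH(\xi')$.
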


\begin{proof}
By Proposition~\ref{prop:tropical.skeleton}(\ref{item:trop.skeleton.samedim}), we have $\xi \in S(\bT)$ and hence $\xi$ is an Abhyankar point.
Suppose first that $K$ is algebraically closed.    The fiber $\phi\inv(\xi)$ is a finite $\sH(\xi)$-analytic space by~\cite[Lemma~1.5.11]{ducros18:families}.  In particular, the residue field $\kappa(x)$ of $x$ in $\phi\inv(\xi)$ is a finite normed $\sH(\xi)$-vector space, so $\kappa(x) = \sH(x)$ is complete.  We have the equality
	\[ \dim_{\sH(\xi)}(\sO_{\phi\inv(\xi),x}) = \length(\sO_{\phi\inv(\xi),x})\,[\sH(x):\sH(\xi)] \]
	by~\cite[Lemma~A.1.3]{fulton98:intersec_theory}.  Since $x$ is an Abhyankar point of $X$, the ring $\sO_{X,x}$ is Artinian, as is $\sO_{\phi\inv(\xi),x}$ since $\dim\phi\inv(\xi)=0$.  The homomorphism of local rings $\sO_{X,x}\to\sO_{\phi\inv(\xi),x}$ is regular by~\cite[Theorem~6.3.7]{ducros18:families}, so in particular, it is flat and $\sO_{\phi\inv(\xi),x}/\fm_{X,x}\sO_{\phi\inv(\xi),x}$ is a field.  It follows from~\cite[Lemma~A.4.1]{fulton98:intersec_theory} that $\length(\sO_{X,x}) = \length(\sO_{\phi\inv(\xi),x})$.  The field $\sH(\xi)$ is \emph{stable} by  Ducros~\cite[Theorem~4.3.14]{ducros14:structur_des_courbes_analytiq}, which means  that $[\sH(x):\sH(\xi)] = [\td\sH(x)^\bullet:\td\sH(\xi)^\bullet]$ as seen in  \eqref{eq:n.ef}.  Furthermore, we have $[\td\sH(x)^\bullet:\td\sH(\xi)^\bullet] = [\td\sH(x)^\bullet:\td K^\bullet(\td M(x))]$ by Example~\ref{eg:res.field.skeleton.point}.  This proves that $m_\phi(X,x)=\dim_{\sH(\xi)}(\sO_{\phi\inv(\xi),x})$ when $K$ is algebraically closed.

	When $K$ is not algebraically closed, we use the notations of Definition~\ref{def:trop.mult} and take $K'$ to be the completion of an algebraic closure of $K$.  Let $\omega = \phi_{\trop}(x)$.  By Proposition~\ref{prop:tropical.skeleton}(\ref{item:trop.skeleton.samedim}) and Corollary~\ref{cor:tropical.skeleton}, there are finitely many points in $\phi_{\trop}\inv(\omega)\cap S_\phi(X) = \phi\inv(\xi)$; hence Proposition~\ref{prop:tropical.skeleton}(\ref{item:trop.skeleton.subdomain}) shows that
	we may shrink $X$ to assume $\phi\inv(\xi) = \{x\}$.  Let $\xi'$ be the unique point of $S(\bT\tensor_K {K'})$ mapping to $\xi$.  Any point in $\phi'^{-1}(\xi')$ maps to $\phi\inv(\xi) = \{x\}$ under $\pi$, so $\pi\inv(x)\cap S_{\phi'}(X') = \phi'^{-1}(\xi')$ using Proposition~\ref{prop:tropical.skeleton}(\ref{item:trop.skeleton.samedim}) as applied to $\varphi'$.  Thus we calculate
	\begin{equation}\label{eq:m.phi.basechange}\begin{split}
	m_\phi(X,x) &= \sum_{x'\mapsto x}m_{\phi'}(X',x')
	= \sum_{\phi'(x') = \xi'} \dim_{\sH(\xi')}(\sO_{\phi'^{-1}(\xi'),x'}) \\
	&= \dim_{\sH(\xi')}\Gamma(\phi'^{-1}(\xi'), \sO_{\phi'^{-1 }(\xi')}) \\
	&= \dim_{\sH(\xi)}\Gamma(\phi\inv(\xi), \sO_{\phi\inv(\xi)})
	= \dim_{\sH(\xi)}(\sO_{\phi\inv(\xi),x}),
	\end{split}\end{equation}
	where the fourth equality holds true because $\phi'^{-1}(\xi') = \phi\inv(\xi)\tensor_{\sH(\xi)}\sH(\xi')$.

	Since $\xi$ is contained in the canonical skeleton $S(\bT)$ by  Proposition~\ref{prop:tropical.skeleton}(\ref{item:trop.skeleton.samedim}), it follows that $\Ocal_{\bT^\an,\xi} = \kappa(\xi)$ is a field by~\cite[0.19]{Ducros}.
	As above, we know that $x$ is isolated in $\phi\inv(\xi)$. If $x \in \Int(X)$, then $\phi$ is finite at $x$ by~\cite[Proposition~1.5.5(ii) and Corollary~3.1.10]{Berkovichetale}.  By~\cite[Proposition~3.1.4]{Berkovichetale}, we can shrink $X$ so that $X = \sM(\sA)$ and $Y = \phi(X) = \sM(\sB)$ are affinoid and $X\to Y$ is finite.
	Using $\phi\inv(\xi) = \{x\}$, we get
	$\sA\tensor_\sB\kappa(\xi) = \sO_{X,x}$
	by~\cite[Lemma~2.1.6]{Berkovichetale}, and the fiber $\phi\inv(\xi)$ is equal to $\sM(\sA\hat\tensor_\sB\sH(\xi)) = \sM(\sO_{\phi\inv(\xi),x})$, so
	\[ \sO_{\phi\inv(\xi),x} = \sA\hat\tensor_\sB\sH(\xi) = \sA\tensor_\sB\sH(\xi)
	= (\sA\tensor_\sB\kappa(\xi))\tensor_{\kappa(\xi)}\sH(\xi)
	= \sO_{X,x}\tensor_{\kappa(\xi)}\sH(\xi), \]
      where the second equality
	is~\cite[Proposition~3.7.3/6]{bosch_guntzer_remmert84:non_archimed_analysis}.
	It follows that
        \[ \dim_{\kappa(\xi)}(\sO_{X,x}) = \dim_{\sH(\xi)}(\sO_{\phi\inv(\xi),x}) = m_\phi(X,x). \]
\end{proof}

Recall from~\secref{Polyhedral geometry} that if $\sigma\subset N_\R$ is a $\Z$-polyhedron of dimesion $d$, then  $\bL_\sigma$ is the subspace underlying its affine span, and $N_\sigma = N\cap\bL_\sigma$ is the induced lattice.

\begin{prop} \label{item:trop.mult.projection}
  Let $x\in\Int(X)$, and suppose that $\phi_{\trop}(X,x)$ is the germ of a  plane $\sigma$ in $N_\R$ of dimension $d$.  Let $\bT' = \bGm^d$, let $q\colon\bT\to\bT'$ be a generic homomorphism with respect to $\sigma$ in the sense of Definition~\ref{def:generic.quotient}, and let $\psi = q\circ\phi$.  Let $Q\colon N\to N'$ be the homomorphism of cocharacter lattices corresponding to $q$.  Then $x \in S_\varphi(X)$, the index $[N':Q(N_\sigma)]$ is finite, and for $\xi'=\psi(x)$ we have
		\begin{equation}\label{eq:antoines.multiplicity}
		m_\phi(X,x) = \frac 1{[N':Q(N_\sigma)]}\dim_{\kappa(\xi')}(\sO_{X,x})
		= \frac 1{[N':Q(N_\sigma)]} m_\psi(X,x).
		\end{equation}
\end{prop}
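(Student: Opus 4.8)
The plan is to peel off the formal parts of the statement and then reduce the multiplicity identity, after base change, to a statement about a translate of a subtorus. First, since $\phi_{\trop}(X,x)$ is the germ of a $d$-dimensional plane we have $\dim\phi_{\trop}(X,x)=d$, so Theorem~\ref{thm:trop.dim.reduction} (using $x\in\Int(X)$) gives $d_\phi(X,x)=d=\dim_x(X)$, hence $x\in S_\phi(X)$ by Definition~\ref{def:trop.skel}. Recall $\bL_\sigma$ and $N_\sigma=N\cap\bL_\sigma$. Genericity of $q$ means $Q_\R$ is injective on $\sigma$, so $Q_\R|_{\bL_\sigma}$ is an isomorphism onto $N'_\R$; therefore $Q(N_\sigma)$ has finite index in $N'$ and $Q_\R\colon N_\R\to N'_\R$ is onto, so $q^*$ realizes $M'$ as a rank-$d$ sublattice of $M$. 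Moreover $\psi_{\trop}(X,x)=Q_\R(\phi_{\trop}(X,x))$ is the germ of the $d$-plane $N'_\R$, so Theorem~\ref{thm:trop.dim.reduction} gives $x\in S_\psi(X)$; as $\dim(\bT')=d$ and $x\in\Int(X)$, Proposition~\ref{item:trop.mult.samedim} applied to $\psi$ shows $\sO_{\bT'^\an,\xi'}=\kappa(\xi')$ is a field and $m_\psi(X,x)=\dim_{\kappa(\xi')}(\sO_{X,x})$. This is the second equality of~\eqref{eq:antoines.multiplicity}, so it remains to prove $m_\psi(X,x)=[N':Q(N_\sigma)]\cdot m_\phi(X,x)$.

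For this I would first reduce to the case where $K$ is algebraically closed with $v(K^\times)=\R$: by Definition~\ref{def:trop.mult}, \secref{rem: tropical multiplicities}(\ref{tropmult 6}) and Proposition~\ref{item:trop.mult.basechange} we may pass to such a base change $X'=X\hat\otimes_K K'$ with structure morphism $\pi$, and by Proposition~\ref{prop:tropical.skeleton}(\ref{item:trop.skeleton.basechange}) both sides become finite sums of $m_{\phi'}(X',x')$, resp.\ $m_{\psi'}(X',x')$, over the points $x'\in\pi\inv(x)\cap S_{\phi'}(X')$, while $[N':Q(N_\sigma)]$ is unchanged. Using surjectivity of $\pi$ and stability of the interior under base change one checks that each such $x'$ lies in $\Int(X')$ and that $\phi'_{\trop}(X',x')\subseteq\phi_{\trop}(X,x)$, i.e.\ is contained in the germ of $\sigma$. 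So we may assume $K$ is algebraically closed, $v(K^\times)=\R$, and $\phi_{\trop}(X,x)\subseteq\sigma$. As $x\in S_\phi(X)$ is an Abhyankar point, $\omega\coloneqq\phi_{\trop}(x)$ lies in $N_{v(\sH(x)^\times)}\subseteq N_\R$; twisting $\phi$ by a character-valued lift of $\angles{\scdot,\omega}$ as in Remark~\ref{connection to usual residue field} we may take $\omega=0$ (so $\sigma$ becomes the linear subspace $\bL_\sigma$) and replace graded by ordinary residue fields throughout. Since $\sO_{X,x}$ is Artinian and the factor $\length(\sO_{X,x})$ in Definition~\ref{def:trop.mult} is common to $\phi$ and $\psi$, while $\td M'(x)=\{\td\chi_x(\phi^*(q^*u'))\mid u'\in M'\}\subseteq\td M(x)$, multiplicativity of field degrees gives $m_\psi(X,x)/m_\phi(X,x)=[\td K(\td M(x)):\td K(\td M'(x))]$.

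Now the heart of the argument. Let $\td\fp\subset\td K[M]$ be the kernel of $\td K[M]\to\td\sH(x)$, $\chi^u\mapsto\td\chi_x(\phi^*u)$ (defined because $\omega=0$), and set $Y=\Spec(\td K[M]/\td\fp)\subseteq\bT_{\td K}$, an integral closed subscheme with function field $\td K(\td M(x))$, hence of dimension $d=d_\phi(X,x)$. Let $\bT_\sigma\subseteq\bT$ be the split subtorus with cocharacter lattice $N_\sigma$, let $\bT''=\bT/\bT_\sigma$, and let $\bar\phi\colon X\to(\bT'')^{\an}$ be the composite with the quotient map. Then $\bar\phi_{\trop}(X,x)$ is the image of $\phi_{\trop}(X,x)\subseteq(\bL_\sigma,0)$ under $N_\R\to N_\R/\bL_\sigma$, hence the germ of a point, so since $x\in\Int(X)$ Theorem~\ref{thm:trop.dim.reduction} forces $d_{\bar\phi}(X,x)=0$. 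Concretely this means the scheme-theoretic image of $Y$ in $\bT''_{\td K}$ is a single closed point $z_0$, necessarily $\td K$-rational as $\td K$ is algebraically closed, so $Y$ lies in the fibre of $\bT_{\td K}\to\bT''_{\td K}$ over $z_0$; that fibre is a translate $g\bT_{\sigma,\td K}$ of the subtorus, the torsor $\bT_{\td K}\to\bT''_{\td K}$ being trivial over the algebraically closed field $\td K$, and since $Y$ is irreducible, closed, and of the full dimension $d$ inside it, we get $Y=g\bT_{\sigma,\td K}$. Then $q|_Y$ is the composite of a translation, the isogeny $\bT_{\sigma,\td K}\to\bT'_{\td K}$ induced by the finite-index inclusion $Q|_{N_\sigma}\colon N_\sigma\inject N'$, and a translation; hence $\overline{q(Y)}=\bT'_{\td K}$, its function field is $\td K(\td M'(x))$, and
\[ [\td K(\td M(x)):\td K(\td M'(x))]=[\td K(\bT_{\sigma,\td K}):\td K(\bT'_{\td K})]=[N':Q(N_\sigma)], \]
the last equality being the standard degree formula for an isogeny of tori. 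Combined with the preceding paragraph this yields $m_\psi(X,x)=[N':Q(N_\sigma)]\cdot m_\phi(X,x)$, and with Proposition~\ref{item:trop.mult.samedim} the proof is complete.

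The hard part will be the identification $Y=g\bT_{\sigma,\td K}$: it is exactly what makes the generic projection behave like an isogeny and reduces the ratio of multiplicities to a transparent lattice index, and it rests on applying Theorem~\ref{thm:trop.dim.reduction} to the auxiliary map $\bar\phi$. The other delicate point is the base change to algebraically closed $K$ — in particular verifying that $\phi'_{\trop}(X',x')$ still sits inside the germ of $\sigma$ and that $x'$ remains interior — but these are standard properties of Ducros's germs and of relative interiors under base change.
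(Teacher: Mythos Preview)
Your proof is correct, and the key step is the same as the paper's: apply Theorem~\ref{thm:trop.dim.reduction} to the composite with the quotient $\bT\to\bT/\bT_\sigma$ to force the residual picture into a single fibre of that quotient. The packaging, however, differs in two respects worth noting.

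First, you make a forward reference to Proposition~\ref{item:trop.mult.basechange}. To reach ordinary residue fields you base-change to an algebraically closed $K'$ with $v(K'^\times)=\R$, and since the completion of the algebraic closure need not have full value group, this genuinely requires Proposition~\ref{item:trop.mult.basechange}, which in the paper comes \emph{after} the present proposition. Its proof is logically independent (the mention of Proposition~\ref{item:trop.mult.projection} there is illustrative, not essential), so there is no circularity, but you should say so explicitly. The paper avoids the issue entirely by working with graded residue fields $\td K^\bullet(\td M(x))$ throughout and reducing only to the completion of an algebraic closure via Definition~\ref{def:trop.mult}; no translation to $\omega=0$ is needed.

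Second, the final computation is organized differently. You identify $Y$ as a coset $g\bT_{\sigma,\td K}$ and read off $[\td K(\td M(x)):\td K(\td M'(x))]$ as the degree of the isogeny $\bT_{\sigma,\td K}\to\bT'_{\td K}$ induced by $Q|_{N_\sigma}$, obtaining $[N':Q(N_\sigma)]$ directly. The paper instead factors $q$ through the intermediate torus $\bT''$ with cocharacter lattice $Q(N_\sigma)$; the step $\bT''\to\bT'$ contributes exactly $[N':Q(N_\sigma)]$ to the residue-field degree by~\cite[2.4.1]{CLD}, reducing to the case $Q(N_\sigma)=N'$. Then the conclusion $\td N_\sigma^\perp(x)\subset\td K^\bullet$ (the algebraic form of your ``$Y$ lies in one fibre'') combines with the lattice identity $M=M'+N_\sigma^\perp$ to give the \emph{equality} $\td K^\bullet(\td M(x))=\td K^\bullet(\td M'(x))$, with no degree to compute. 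Your route is more geometric and handles the general index in one stroke; the paper's stays entirely within graded-field arithmetic and keeps the logical dependencies in order.
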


\begin{proof}
By Theorem~\ref{thm:trop.dim.reduction}, we have $x \in S_\varphi(X)$.
Since $Q_\R$ is by hypothesis injective on $\sigma$ and since $N_\sigma$ and $N'$ both have rank $d$, it follows that $[N':Q(N_\sigma)] < \infty$.  The second equality in~\eqref{eq:antoines.multiplicity} holds by Proposition~\ref{item:trop.mult.samedim}.  Let $\bT''$ be the split torus with cocharacter lattice $Q(N_\sigma)$.  Then $q\colon\bT\to\bT'$ factors as a homomorphism $q'\colon\bT\to\bT''$ followed by a homomorphism $p\colon\bT''\to\bT'$ of degree $\deg(p) = [N':Q(N_\sigma)]$.  Let $\psi'=q'\circ\phi$ and let $\xi'' = \psi'(x) = q'(\xi)$.  Then $[\kappa(\xi''):\kappa(\xi')] = \deg(p) = [N':Q(N_\sigma)]$ by~\cite[2.4.1]{CLD}, so
	\[ m_{\psi'}(X,x) = \dim_{\kappa(\xi'')}(\sO_{X,x})
	= \frac{\dim_{\kappa(\xi')}(\sO_{X,x})}{[\kappa(\xi''):\kappa(\xi')]}
	= \frac{m_\psi(X,x)}{[N':Q(N_\sigma)]}
	\]
	by Proposition~\ref{item:trop.mult.samedim}.  Hence we may assume $Q(N_\sigma) = N'$.

	First assume that $K$ is algebraically closed.  Consider the annihilator $N_\sigma^\perp\subset M$ of $N_\sigma$.  Let $\bT_\sigma = \Spec(K[N_\sigma^\perp])$, let $q_\sigma\colon\bT\to\bT_\sigma$ be the homomorphism induced by the inclusion $N_\sigma^\perp\inject M$, and let $\psi_\sigma = q_\sigma\circ\phi\colon X\to\bT_\sigma$.  Unwinding the definitions, we find that $\psi_{\sigma,\trop}(X,x)$ is a single point.  Since $x\in\Int(X)$, Theorem~\ref{thm:trop.dim.reduction} implies that $\td K^\bullet(\td N_\sigma^\perp(x))$ is algebraic over $\td K^\bullet$.  Then $\td N_\sigma^\perp(x)\subset\td K^\bullet$ by~\cite[Lemme~2.3.5]{ducros14:structur_des_courbes_analytiq}.  Let $M'\subset M$ be the character lattice of $\bT'$.  Noting that
	$  (M'+N_\sigma^\perp)/N_\sigma^\perp \cong M'/(M' \cap N_\sigma^\perp)=M'$ and that
	the dual of $M'  \to M/N_\sigma^\perp$ is $N_\sigma \to N'$, we deduce
	$[M:(M'+N_\sigma^\perp)] = [N':Q(N_\sigma)]=[N':N']=1$ and hence $M = M'+N_\sigma$.  By the above,
	\[\begin{split} \frac{m_\phi(X,x)}{\length(\sO_{X,x})}
	&= [\td\sH(x)^\bullet:\td K^\bullet(\td M(x))]
	= [\td\sH(x)^\bullet:\td K^\bullet((M' + N_\sigma^\perp)^\sim(x))] \\
	&= [\td\sH(x)^\bullet:\td K^\bullet(\td M'(x))]
	= \frac{m_\psi(X,x)}{\length(\sO_{X,x})},
	\end{split}\]
	which proves $m_\phi(X,x) = m_\psi(X,x)$.

	When $K$ is not algebraically closed, we use the notations of Definition~\ref{def:trop.mult}, and we let $\psi' = \psi\hat\tensor_K K'\colon X'\to\bT'$.
	For any point $x'\in\pi\inv(x)$ we have $x'\in\Int(X') \cap S_{\varphi'}(X')$ by~\cite[Proposition~1.5.5]{berkovic93:etale_cohomology} and by~Proposition~\ref{prop:tropical.skeleton}(\ref{item:algebraic extension}).
	 Since $\phi'_{\trop} = \phi_{\trop}\circ\pi$ we have $\phi'_{\trop}(X',x')\subset\phi_{\trop}(X,x)$.  On the other hand, since $x'\in S_{\phi'}(X')$ we have $d_{\phi'}(X',x') = d$, so $\phi'_{\trop}(X',x') = \phi_{\trop}(X,x)$ is also a $d$-plane by Theorem~\ref{thm:trop.dim.reduction}.  Then
	\[ m_\phi(X,x) = \sum_{\pi(x') = x} m_{\phi'}(X',x')
	= \sum_{\pi(x') = x} m_{\psi'}(X',x')
	= m_\psi(X,x)
	\]
	by the above.
\end{proof}

\begin{prop}  \label{item:trop.mult.basechange}
Let $X' = X\hat\tensor_K K'$ for a non-Archimedean extension field $K'/K$, let $\pi\colon X'\to X$ be the structure morphism, and let $\phi' = \phi\hat\tensor_K K'\colon X'\to\bT^{\an}\hat\tensor_K K'$.  For $x\in X$ we have
\[ m_\phi(X,x) = \sum_{\pi(x')=x} m_{\phi'}(X',x'). \]
\end{prop}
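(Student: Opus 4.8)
The plan is to reduce, in several steps, to the case $\dim(\bT)=d$ with $K$ algebraically closed, where $m_\phi(X,x)$ is a fibre dimension that visibly commutes with base change. First I would reduce to the case where $K$, and hence also $K'$, is algebraically closed. Choose compatibly the completions $L=\widehat{\overline K}\subset L'=\widehat{\overline{K'}}$ of algebraic closures of $K$ and $K'$ (possible as $L'$ is algebraically closed and complete, hence contains an isometric copy of $L$); then $X\hat\tensor_K L'$ is identified with $X'\hat\tensor_{K'}L'$ and with $(X\hat\tensor_K L)\hat\tensor_L L'$, and its projection to $X$ factors through $X'$ as well as through $X_L\coloneq X\hat\tensor_K L$. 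Unwinding Definition~\ref{def:trop.mult} and regrouping the points of $X\hat\tensor_K L'$ lying over $x$ — first over $X'$, then over $X_L$ — shows that the asserted identity for $(\phi,K'/K)$ follows from the same identity for the base change $(\phi_L,L'/L)$ of algebraically closed fields. Moreover, if $x\notin S_\phi(X)$ then by Proposition~\ref{prop:tropical.skeleton}(\ref{item:trop.skeleton.basechange}) no point of $\pi\inv(x)$ lies in $S_{\phi'}(X')$, so both sides vanish by Remark~\ref{rem: tropical multiplicities}(\ref{tropmult 5}). Hence we may assume $K$ and $K'$ algebraically closed and $x\in S_\phi(X)$.

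Next I would reduce to $\dim(\bT)=d$. As $m_\phi(X,x)$ depends only on the germ $(X,x)$ (Remark~\ref{rem: tropical multiplicities}(\ref{tropmult 1})) and $S_\phi$ and $m_\phi$ are compatible with strictly affinoid subdomains, we may take $X=\sM(\cA)$. Choose $q\colon\bT\to\bT'=\bGm^d$ sufficiently generic with respect to $\phi_{\trop}(X)$, so that $S_\psi(X)=S_\phi(X)$ and $S_{\psi'}(X')=S_{\phi'}(X')$ for $\psi=q\circ\phi$ (Theorem~\ref{thm:tropical.skeleton.projection}). Comparing the defining formulae of Definition~\ref{def:trop.mult} gives $m_\psi(X,x)=c\cdot m_\phi(X,x)$ and $m_{\psi'}(X',x')=c'\cdot m_{\phi'}(X',x')$, where $c=[\td K^\bullet(\td M(x)):\td K^\bullet(\td M'(x))]$ and $c'=[\td K'^\bullet(\td M(x')):\td K'^\bullet(\td M'(x'))]$, with $M'\subset M$ the character lattice of $\bT'$; these are finite because $x\in S_\psi(X)$ and $x'\in S_{\psi'}(X')$ force $\trdeg(\td K^\bullet(\td M'(x))/\td K^\bullet)=d=\trdeg(\td K^\bullet(\td M(x))/\td K^\bullet)$. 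The key point is that this comparison factor is governed by a lattice index and is therefore unchanged by base change, so $c'=c$ for every $x'\in\pi\inv(x)$: for $x\in\Int(X)$ with $\phi_{\trop}(X,x)$ a $d$-plane $\sigma$ this is Proposition~\ref{item:trop.mult.projection} (with $c=[N':Q(N_\sigma)]$), and the general case is obtained by refining $\phi_{\trop}(X)$ and, at boundary points, slicing $\del X$ as in the proof of Theorem~\ref{thm:tropical.skeleton.projection} and inducting on $\dim(X)$. Granting this and the case $\dim(\bT)=d$ below, we get
\[ c\cdot m_\phi(X,x)=m_\psi(X,x)=\sum_{\pi(x')=x}m_{\psi'}(X',x')=c\sum_{\pi(x')=x}m_{\phi'}(X',x'), \]
and dividing by $c$ completes the reduction.

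It remains to treat $\dim(\bT)=d$ with $K,K'$ algebraically closed and $x\in S_\phi(X)$, which amounts to rerunning the computation~\eqref{eq:m.phi.basechange} from the proof of Proposition~\ref{item:trop.mult.samedim} with $K'$ in place of the algebraic closure used there. By Proposition~\ref{prop:tropical.skeleton}(\ref{item:trop.skeleton.samedim}) we have $\xi\coloneq\phi(x)\in S(\bT)$, and $\kappa(\xi)=\sO_{\bT^\an,\xi}$ is a field; let $\xi'$ be the unique point of $S(\bT\tensor_K K')$ over $\xi$. Shrinking $X$ so that $\phi\inv(\xi)=\{x\}$ — possible since $\phi\inv(\xi)=\phi_{\trop}\inv(\phi_{\trop}(x))\cap S_\phi(X)$ is finite by Corollary~\ref{cor:tropical.skeleton} — one checks $\pi\inv(x)\cap S_{\phi'}(X')=\phi'^{-1}(\xi')$, a finite $\sH(\xi')$-analytic space; applying Proposition~\ref{item:trop.mult.samedim} over $K'$ gives $\sum_{\pi(x')=x}m_{\phi'}(X',x')=\dim_{\sH(\xi')}\Gamma(\phi'^{-1}(\xi'),\sO)$. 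Since $\xi$ lies in the canonical skeleton, $\sH(\xi)$ is a stable field~\cite[Theorem~4.3.14]{ducros14:structur_des_courbes_analytiq}, so $\sH(\xi)\hat\tensor_K K'=\sH(\xi')$ and hence $\phi'^{-1}(\xi')=\phi\inv(\xi)\tensor_{\sH(\xi)}\sH(\xi')$; therefore $\dim_{\sH(\xi')}\Gamma(\phi'^{-1}(\xi'),\sO)=\dim_{\sH(\xi)}\Gamma(\phi\inv(\xi),\sO)=\dim_{\sH(\xi)}(\sO_{\phi\inv(\xi),x})=m_\phi(X,x)$, again by Proposition~\ref{item:trop.mult.samedim}.

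The main obstacle is the reduction to $\dim(\bT)=d$, that is, verifying that the comparison factor between the $\phi$- and $\psi$-multiplicities is a genuine lattice index — hence base-change invariant — even at boundary points, where the germ $\phi_{\trop}(X,x)$ may have dimension strictly less than $d$ and Proposition~\ref{item:trop.mult.projection} does not apply directly; this forces the $\del X$-slicing argument of Theorem~\ref{thm:tropical.skeleton.projection} together with an induction on $\dim(X)$. An alternative, more computational route avoids the generic projection: using flatness of $\sO_{X,x}\to\sO_{X',x'}$ (as in Lemma~\ref{lem:mult.fund.cycle}) and $\sO_{X',x'}/\fm_{X,x}\sO_{X',x'}=\sO_{\pi\inv(x),x'}$, one rephrases the claim as an identity about the graded reduction of $\sH(x)\hat\tensor_K K'$ and deduces it from the stability of $\sH(x)$ at the Abhyankar point $x$ together with the graded multiplicativity of degrees under field base change.
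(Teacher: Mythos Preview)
Your overall architecture coincides with the paper's: reduce to $K,K'$ algebraically closed and $x\in S_\phi(X)$, pick a generic $q\colon\bT\to\bT'=\bGm^d$ with $S_\psi(X)=S_\phi(X)$, and show that both $m_\psi(X,x)=\sum m_{\psi'}(X',x')$ and the ratio $m_\psi/m_\phi$ are preserved under base change.  The paper also uses Ducros' result that, after shrinking so that $\psi\inv(\xi')=\{x\}$, there is a \emph{unique} $x'\in\pi\inv(x)\cap S_{\phi'}(X')$, which streamlines the sum to a single term.

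The genuine gap is in your proof that $c=c'$, i.e.\ that
\[
[\td K^\bullet(\td M(x)):\td K^\bullet(\td M'(x))]=[\td K'^\bullet(\td M(x')):\td K'^\bullet(\td M'(x'))].
\]
You invoke Proposition~\ref{item:trop.mult.projection} to identify $c$ with a lattice index, but that proposition requires $x\in\Int(X)$ \emph{and} the germ $\phi_{\trop}(X,x)$ to be a single $d$-plane.  For an arbitrary $x\in S_\phi(X)\cap\Int(X)$ the germ is only a pure $d$-dimensional fan (Theorem~\ref{thm:trop.dim.reduction}), and no amount of ``refining $\phi_{\trop}(X)$'' changes the germ at $x$.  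Your fallback---slicing $\del X$ and inducting on $\dim X$ as in Theorem~\ref{thm:tropical.skeleton.projection}---would at best produce the equality $c=c'$ for generic points of $S_\phi(X)$, not for the specific $x$ at hand; it is not clear how to propagate a germwise numerical identity along such a slicing.

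The paper sidesteps this entirely with a short graded-field argument valid for \emph{every} $x\in S_\phi(X)$, including boundary points.  Set $E=\td K^\bullet(\td M'(x))=\td\sH(\xi')^\bullet$, $F=\td K^\bullet(\td M(x))$, and $E'=\td K'^\bullet(\td M'(x'))=\td\sH(\xi'')^\bullet$.  Since $E$ and $E'$ are purely transcendental over $\td K^\bullet$ and $\td K'^\bullet$ on the same generators (Example~\ref{eg:res.field.skeleton.point}), one has $E'=Q(E\tensor_{\td K^\bullet}\td K'^\bullet)$; because $K$ is algebraically closed, $F\tensor_{\td K^\bullet}\td K'^\bullet$ is a graded domain, hence so is its localization $F\tensor_E E'$, which is then a graded field (finite over $E'$).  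Thus $F\tensor_E E'\hookrightarrow\td\sH(x')^\bullet$ with image $\td K'^\bullet(\td M(x'))$, giving $[F:E]=[F\tensor_E E':E']=c'$.  This is the step you should replace your lattice-index/induction sketch with.

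A minor point: stability of $\sH(\xi)$ does not give $\sH(\xi)\hat\tensor_K K'=\sH(\xi')$; stability is a statement about finite extensions.  What you actually need (and what the paper uses) is the fibre base change $\phi'^{-1}(\xi')=\phi\inv(\xi)\tensor_{\sH(\xi)}\sH(\xi')$, which is a general compatibility of analytic fibres with ground-field extension, independent of stability.
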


\begin{proof}
	By Remark~\ref{rem: tropical multiplicities}(\ref{tropmult 5}),
	we may assume that $x \in S_\varphi(X)$ as otherwise all tropical multiplicities are zero.
	We reduce immediately to the case where $K$ and $K'$ are algebraically closed.  Choose a generic projection $q\colon\bT\to\bT'$ as in Theorem~\ref{thm:tropical.skeleton.projection}, so that $S_\psi(X) = S_\phi(X)$ for $\psi = q\circ\phi$.  Let $M'\subset M$ be the character lattice of $\bT'$.  Let $\bT'_{K'} = \bT'\tensor_K K'$ and let $\psi' = \psi\hat\tensor_K K'\colon X'\to\bT'^{\an}_{K'}$.  Let $\xi' = \psi(x) = q(\xi)\in S(\bT')$ and let $\xi''\in S(\bT'_{K'})$ be the unique point mapping to $\xi'$.  We shrink $X$ so that $\psi\inv(\xi') = \{x\}$ as in~Proposition~\ref{item:trop.mult.samedim}, so that $\psi'^{-1}(\xi'') = \pi\inv(x)\cap S_{\phi'}(X')$ using Proposition~\ref{prop:tropical.skeleton}(\ref{item:trop.skeleton.samedim}).

	By~\cite[Corollaire~4.3.15]{ducros14:structur_des_courbes_analytiq}, there is a unique point $x'\in\psi'^{-1}(\xi'') = \pi\inv(x)\cap S_{\phi'}(X')$.
	By Proposition~\ref{item:trop.mult.samedim} and using $\psi'^{-1}(\xi'')=\psi\inv(\xi')\tensor_{\sH(\xi')}\sH(\xi'')$ from its proof, we have
	\[ m_\psi(X,x) = \dim_{\sH(\xi')}(\sO_{\psi\inv(\xi'),x})
	= \dim_{\sH(\xi'')}(\sO_{\psi'^{-1}(\xi''),x'}) = m_{\psi'}(X',x'), \]
	so we need to show that $m_\psi(X,x)/m_\phi(X,x) = m_{\psi'}(X',x')/m_{\phi'}(X',x')$.
	This follows immediately from~ Proposition~\ref{item:trop.mult.projection} when $x\in\Int(X)$ and $\phi_{\trop}(X,x)$ is a $d$-plane (in which case both quotients are equal to $[N':Q(N_\sigma)]$); here we argue in general.  We have
	\[ \frac{m_\psi(X,x)}{m_\phi(X,x)}
	= \frac{\length(\sO_{X,x})[\td\sH(x)^\bullet:\td K^\bullet(\td M'(x))]}{
		\length(\sO_{X,x})[\td\sH(x)^\bullet:\td K^\bullet(\td M(x))]}
	= [\td K^\bullet(\td M(x)):\td K^\bullet(\td M'(x))]
	\]
	and likewise for $m_{\psi'}(X',x')/m_{\phi'}(X',x')$, so we need to show
	\begin{equation}\label{eq:finite.transcendental.basechange}
	[\td K^\bullet(\td M(x)):\td K^\bullet(\td M'(x))] =
	[\td K'^\bullet(\td M(x')):\td K'^\bullet(\td M'(x'))].
	\end{equation}
	This is an elementary exercise in graded algebra (modulo a result on geometric reducedness due to Ducros, cited below); the proof proceeds exactly as for ordinary fields.  Note that Ducros uses in \cite{ducros14:structur_des_courbes_analytiq} corpoids and anneloids instead of graded fields and graded rings, which means that he replaces the direct sum of the graded pieces by their disjoint union. This does not give any change in the used claims or arguments.

	Let $E = \td K^\bullet(\td M'(x))$, $F = \td K^\bullet(\td M(x))$, and $E' = \td K'^\bullet(\td M'(x'))$.  We claim that $F\tensor_E E'$ is a graded field.  By Example~\ref{eg:res.field.skeleton.point}, the graded fields $E = \td\sH(\xi')^\bullet$ and $E' = \td\sH(\xi'')^\bullet$ are purely transcendental extensions of $\td K^\bullet$ and $\td K'^\bullet$, respectively, with the same generators, so we have $E' = Q(E\tensor_{\td K^\bullet}\td K'^\bullet)$, where $Q(\scdot)$ denotes the quotient graded field.   By~\cite[2.2.30]{ducros14:structur_des_courbes_analytiq}, the graded ring $F\tensor_E (E\tensor_{\td K^\bullet}\td K'^\bullet) = F\tensor_{\td K^\bullet}\td K'^\bullet$ is an integral domain, so its localization $F\tensor_E Q(E\tensor_{\td K^\bullet}\td K'^\bullet) = F\tensor_E E'$ is also an integral domain.  Since $F\tensor_E E'$ is finite over the graded field $E'$ by Remark~\ref{rem: tropical multiplicities}(\ref{tropmult 3}), it is also a graded field~\cite[2.2.20.1]{ducros14:structur_des_courbes_analytiq}.

	It follows that the homomorphism $F\tensor_E E' = \td K^\bullet(\td M(x))\tensor_{\td K^\bullet(\td M'(x))}\td K'^\bullet(\td M'(x'))\to\td\sH(x')^\bullet$ is injective.  Its image is $K'^\bullet(\td M(x))$, which proves~\eqref{eq:finite.transcendental.basechange}.
	\end{proof}

\begin{prop}\label{item:trop.mult.1units}
		Let $\phi\colon X\to\bT^\an$ and $\phi'\colon X\to\bT^\an$ be  moment maps with  $\phi_{\trop}=\phi'_{\trop}$.  Then $m_\phi(X,x) = m_{\phi'}(X,x)$ for all $x\in\Int(X)$.
\end{prop}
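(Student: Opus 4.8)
The plan is to reduce the asserted equality to an equality of graded residue subfields of $\td\sH(x)^\bullet$. Write $\psi = \phi'$. First, since $\phi_{\trop} = \psi_{\trop}$, Proposition~\ref{prop:tropical.skeleton}(\ref{item:trop.skeleton.1units}) gives $S_\phi(X)\cap\Int(X) = S_\psi(X)\cap\Int(X)$, so by Remark~\ref{rem: tropical multiplicities}(\ref{tropmult 5}) both $m_\phi(X,x)$ and $m_\psi(X,x)$ vanish when $x\in\Int(X)\setminus S_\phi(X)$; hence we may assume $x\in S_\phi(X)\cap\Int(X)$. Next I would reduce to the case that $K$ is algebraically closed: taking $K'/K$ to be the completion of an algebraic closure, with $\pi\colon X' = X\hat\tensor_K K'\to X$ and base-changed moment maps $\phi_{K'},\psi_{K'}$, we have $(\phi_{K'})_{\trop} = \phi_{\trop}\circ\pi = (\psi_{K'})_{\trop}$, and each $x'\in\pi^{-1}(x)$ lies in $\Int(X')$ by~\cite[Proposition~1.5.5]{berkovic93:etale_cohomology}; so by Proposition~\ref{item:trop.mult.basechange} it suffices to prove $m_{\phi_{K'}}(X',x') = m_{\psi_{K'}}(X',x')$ for every such $x'$.

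So assume $K$ is algebraically closed and $x\in S_\phi(X)\cap\Int(X)$. By Definition~\ref{def:trop.mult}, $m_\phi(X,x) = \length(\sO_{X,x})\,[\td\sH(x)^\bullet:\td K^\bullet(\td M_\phi(x))]$, and similarly for $\psi$, where $\td M_\phi(x) = \{\td\chi_x^\bullet(\phi^*u)\mid u\in M\}$ as in Notation~\ref{notn:tildaMx}, and $\td M_\psi(x)$ is defined using $\psi$. Since the factor $\length(\sO_{X,x})$ does not depend on the moment map, it is enough to show that the graded subfields $\td K^\bullet(\td M_\phi(x))$ and $\td K^\bullet(\td M_\psi(x))$ of $\td\sH(x)^\bullet$ agree.

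To see this I would introduce the moment map $\theta\colon X\to\bT^\an$ corresponding to the homomorphism $u\mapsto\phi^*(u)\cdot\psi^*(u)^{-1}$ of $M$ into $\Gamma(X,\sO_X^\times)$ (a moment map to $\bT$ being the same datum as such a homomorphism). Then $\theta_{\trop}(y) = \phi_{\trop}(y) - \psi_{\trop}(y) = 0$ for every $y\in X$, so the tropical variety of the germ $(X,x)$ with respect to $\theta$ is $\{0\}$; as $x\in\Int(X)$, Theorem~\ref{thm:trop.dim.reduction} forces $d_\theta(X,x) = 0$, i.e.\ each $\td\chi_x^\bullet(\theta^*u)$ is algebraic over $\td K^\bullet$. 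These elements are homogeneous units of $\td\sH(x)^\bullet$, and $K$ is algebraically closed, so $\td\chi_x^\bullet(\theta^*u)\in\td K^\bullet$ for all $u\in M$ by~\cite[Lemme~2.3.5]{ducros14:structur_des_courbes_analytiq}, exactly as in the proof of Proposition~\ref{item:trop.mult.projection}. Since $\td\chi_x^\bullet$ is a ring homomorphism, $\td\chi_x^\bullet(\phi^*u) = \td\chi_x^\bullet(\theta^*u)\cdot\td\chi_x^\bullet(\psi^*u)$ with the first factor in $\td K^\bullet$, whence $\td M_\phi(x)\subset\td K^\bullet(\td M_\psi(x))$ and therefore $\td K^\bullet(\td M_\phi(x))\subseteq\td K^\bullet(\td M_\psi(x))$. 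Exchanging $\phi$ and $\psi$ (equivalently, replacing $\theta$ by the moment map for $u\mapsto\psi^*(u)\cdot\phi^*(u)^{-1}$) gives the opposite inclusion, and we are done.

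I do not anticipate a serious obstacle, as the argument is brief once the reductions are in place. The only delicate step is the passage from ``algebraic over $\td K^\bullet$'' to ``contained in $\td K^\bullet$'': this requires both that $K$ be algebraically closed and that $x\in\Int(X)$, the latter being needed so that Theorem~\ref{thm:trop.dim.reduction} yields $d_\theta(X,x) = \dim\theta_{\trop}(X,x)$ — and it is precisely here that the hypothesis $x\in\Int(X)$ in the proposition enters.
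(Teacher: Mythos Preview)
Your proof is correct and follows essentially the same route as the paper: reduce to $K$ algebraically closed and to $x\in S_\phi(X)\cap\Int(X)$, introduce the auxiliary moment map $\theta=\phi\cdot\psi^{-1}$ with $\theta_{\trop}=0$, apply Theorem~\ref{thm:trop.dim.reduction} to get $d_\theta(X,x)=0$, and then use \cite[Lemme~2.3.5]{ducros14:structur_des_courbes_analytiq} to conclude that the generators of $\td K^\bullet(\td M_\phi(x))$ and $\td K^\bullet(\td M_\psi(x))$ differ by elements of $\td K^\bullet$. The only cosmetic difference is that the paper performs the two reductions in the opposite order and simply refers back to the proof of Proposition~\ref{prop:tropical.skeleton}(\ref{item:trop.skeleton.1units}) rather than spelling out the construction of $\theta$ again.
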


\begin{proof}
	We easily reduce to the case when $K$ is algebraically closed.    Note that $S_{\phi}(X)\cap\Int(X) = S_{\phi'}(X)\cap\Int(X)$ by Proposition~\ref{prop:tropical.skeleton}(\ref{item:trop.skeleton.1units}).
	We conclude from  Remark~\ref{rem: tropical multiplicities}(\ref{tropmult 5})
	that we may assume that $x \in S_{\phi}(X)\cap\Int(X) = S_{\phi'}(X)\cap\Int(X)$ as otherwise the tropical multiplicities are zero.
	Let $u_1,\ldots,u_n$ be a basis for $M$, and let $f_i = \phi^*u_i$ and $f_i' = \phi'^*u_i$.  Using~\cite[Lemme~2.3.5]{ducros14:structur_des_courbes_analytiq}, we see as in the proof of Proposition~\ref{prop:tropical.skeleton}(\ref{item:trop.skeleton.1units}) that $\td\chi_x^\bullet(f_i)/\td\chi_x^\bullet(f_i')\in \td K^\bullet$ for all $i$.  Since $\td K^\bullet(\td M(x))$ is the field generated over $\td K^\bullet$ by $\td\chi_x^\bullet(f_1),\ldots,\td\chi_x^\bullet(f_1)$ using the moment map $\phi$, and by $\td\chi_x^\bullet(f_1'),\ldots,\td\chi_x^\bullet(f_1')$ using~$\phi'$, it is clear that $m_\phi(X,x) = m_{\phi'}(X,x)$.
\end{proof}

\begin{rem} \label{rem: multiplicity of the Antoines}
  Recall from~\secref{sec:trop.is.polyhedral} that we can choose a $(\Z,\Gamma)$-polytopal complex $\Pi$ with support  $\phi_{\trop}(X)$ such that $\phi_{\trop}(\del X)$ is contained in a subcomplex of dimension at most $d-1$.  Let $\omega \in \relint(\sigma)$ for a $d$-dimensional face $\sigma$ of $\Pi$,
and pick a homomorphism $q\colon \bT \to \bT'$ to a $d$-dimensional split torus $\bT'$ such that the corresponding homomorphism $Q\colon N \to N'$ of cocharacter lattices is injective on $\sigma$.
We consider $x \in X$ with $\varphitrop(x)=\omega$, so $x \in \Int(X)$. It follows from Theorem~\ref{thm:trop.dim.reduction} and Proposition~\ref{item:trop.mult.projection} that $x \in S_\varphi(X)$ if and only if $\varphitrop(X,x)$ is $d$-dimensional, which means here that it is the germ of $\sigma$ at $\omega$. If this is the case, then for $\psi = q \circ \varphi$, we see that  $\psi_{\trop}(X,x)$ is the germ of $N_\R'$ at $Q_\R(\omega)$, and hence
$\xi'=(q \circ \varphi)(x) \in S(\bT')$.

By \eqref{eq:antoines.multiplicity} and Remark~\ref{rem: tropical multiplicities}(\ref{tropmult 5}), we have
	\begin{equation} \label{tropical multiplicity omega}
          m_\phi(X,\omega) = \frac 1{[N':Q(N_\sigma)]}  \sum_{\substack{x \in S_\varphi(X)\\
		\varphitrop(x)=\omega}} \dim_{\kappa(\xi')}(\sO_{X,x}).
	\end{equation}
        Chambert-Loir and Ducros \cite[3.5]{chambert_ducros12:forms_courants} use that the sum on the right is constant on $\relint(\sigma)$ to define a canonical calibration on $\sigma$. This is equivalent to defining the tropical multiplicity $m_\sigma$ of $\sigma$ in $\varphitrop(X)$ by the right hand side of \eqref{tropical multiplicity omega}.  See \cite[Remark~7.6]{gubler16:forms_currents}.
\end{rem}

For convenience of the reader, we repeat here this crucial argument of Chambert-Loir and Ducros in the context of our tropical multiplicities.

\begin{prop} \label{prop: trop.mult.constant}
Let $\sigma$ be a $d$-dimensional face of $\varphitrop(X)$ with $\relint(\sigma)\cap \varphitrop(\partial X)= \emptyset$. Then the tropical multiplicity $m_\varphi(X,\omega)$ is constant in  $\omega \in \relint(\sigma)$.
\end{prop}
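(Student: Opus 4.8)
Here is how I would prove Proposition~\ref{prop: trop.mult.constant}. The plan is to reduce to a local statement, shrink $X$ and the torus so that the tropicalization becomes a single $d$-dimensional polytope over a torus of dimension exactly $d$, and then to exploit the fact that along the canonical skeleton of a $d$-dimensional torus the structure sheaf has local rings that are \emph{fields}, so that any finite map onto it is automatically flat and hence has locally constant fibre rank; this is the content of the Chambert-Loir--Ducros argument in our language.

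First I would fix $\omega_0\in\relint(\sigma)$ and aim to show $m_\varphi(X,\omega)$ is constant on a neighbourhood of $\omega_0$ inside $\relint(\sigma)$; as $\relint(\sigma)$ is convex, hence connected, this suffices. Fixing a $\z$-polytopal structure on $\varphitrop(X)$ as in~\secref{sec:trop.is.polyhedral}, and using that $\sigma$ is a face of dimension $d=\dim\varphitrop(X)$, I may pick a $\Gamma$-rational $d$-dimensional polytope $P\subset\relint(\sigma)$ with $\omega_0\in\relint(P)$. Replacing $X$ by the compact strictly analytic subdomain $V=\varphitrop\inv(P)$ of pure dimension $d$, one gets $\varphitrop(V)=P$; moreover $V\subset\Int(X)$ and $\partial V\subset\varphitrop\inv(\partial P)$, since $\varphitrop\inv(\relint P)$ is open in $X$ and $\relint(\sigma)\cap\varphitrop(\partial X)=\emptyset$, so $\varphitrop(\partial V)\subset\partial P$. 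Because tropical multiplicities depend only on germs (Remark~\ref{rem: tropical multiplicities}(\ref{tropmult 1})) it is enough to show $m_\varphi(V,\omega)$ is constant on $\relint(P)$. Thus from now on I may assume $\varphitrop(X)=P$ is a single $d$-dimensional polytope with $\varphitrop(\partial X)\subset\partial P$, and I want $m_\varphi(X,\omega)$ constant on the connected set $\relint(P)$.

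Next I would invoke formula~\eqref{tropical multiplicity omega}. Choose a split torus $\bT'$ of dimension $d$ and a homomorphism $q\colon\bT\to\bT'$ generic with respect to $P$, so that $Q_\R$ restricts to an isomorphism of the linear span of $P$ onto $N'_\R$ and $S_\psi(X)=S_\varphi(X)$ for $\psi=q\circ\varphi$ by Theorem~\ref{thm:tropical.skeleton.projection}. The prefactor $[N':Q(N_P)]\inv$ in~\eqref{tropical multiplicity omega} is independent of $\omega$, so it remains to show that $\sum_x\dim_{\kappa(\xi')}(\sO_{X,x})$ is constant, the sum over $x\in S_\varphi(X)$ with $\varphitrop(x)=\omega$ and $\xi'=\psi(x)$. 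Since $\dim\bT'=d=\dim X$ we have $S_\psi(X)=\psi\inv(S(\bT'))$ by Proposition~\ref{prop:tropical.skeleton}(\ref{item:trop.skeleton.samedim}), and since $Q_\R$ is injective on $P=\varphitrop(X)$, the condition $\varphitrop(x)=\omega$ on $x\in S_\varphi(X)=S_\psi(X)$ is equivalent to $\psi(x)=\iota(\omega')$ with $\omega'=Q_\R(\omega)$; hence $\{x\in S_\varphi(X):\varphitrop(x)=\omega\}=\psi\inv(\iota(\omega'))$, a finite set by Corollary~\ref{cor:tropical.skeleton}, on which $\xi'=\iota(\omega')$ is constant. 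Every such $x$ lies in $\Int(X)$, its $\varphitrop$-image being $\omega\in\relint(P)$, disjoint from $\varphitrop(\partial X)$; so Proposition~\ref{item:trop.mult.samedim} gives $\dim_{\kappa(\iota(\omega'))}(\sO_{X,x})=\dim_{\sH(\iota(\omega'))}(\sO_{\psi\inv(\iota(\omega')),x})$, and summing over the finite fibre the quantity to be shown constant becomes $\dim_{\sH(\iota(\omega'))}\Gamma(\psi\inv(\iota(\omega')),\sO)$, with $\omega'$ ranging over the connected set $Q_\R(\relint P)$. The heart is then: fix $\omega_0'=Q_\R(\omega_0)$ and enumerate the finite fibre $\psi\inv(\iota(\omega_0'))=\{y_1,\dots,y_s\}\subset\Int(X)$. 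Since $\psi$ is finite at each $y_j$ by~\cite[Proposition~1.5.5(ii) and Corollary~3.1.10]{Berkovichetale}, I may choose by~\cite[Proposition~3.1.4]{Berkovichetale} pairwise disjoint strictly affinoid neighbourhoods $W_j=\sM(\cA_j)$ of $y_j$ and affinoid neighbourhoods $Z_j=\sM(\cB_j)$ of $\iota(\omega_0')$ in $\bT'^\an$ with $\psi|_{W_j}\colon W_j\to Z_j$ finite, and then a $\Gamma$-rational polytope $\tilde P$ with $\omega_0'\in\relint(\tilde P)$, $\tilde P\subset Q_\R(\relint P)$, $\iota(\tilde P)$ inside the topological interior of each $Z_j$, and, using compactness of $X\setminus\bigcup_jW_j$, small enough that $\psi\inv(\iota(\omega'))\subset\bigcup_jW_j$ for all $\omega'\in\relint(\tilde P)$. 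For such $\omega'$ the point $\iota(\omega')$ is in the topological interior of $Z_j$, so $\sO_{Z_j,\iota(\omega')}=\sO_{\bT'^\an,\iota(\omega')}$, which is a field by~\cite[0.19]{Ducros}; hence the coherent $\sO_{Z_j}$-module $(\psi|_{W_j})_*\sO_{W_j}$ is flat at $\iota(\omega')$, and being of finite type it is locally free near $\iota(\omega')$. Therefore $\xi\mapsto\dim_{\sH(\xi)}(\cA_j\hat\tensor_{\cB_j}\sH(\xi))$ is locally constant along $\iota(\relint\tilde P)$, hence constant on this connected set; summing over $j$ via $\Gamma(\psi\inv(\iota(\omega')),\sO)=\bigoplus_j\cA_j\hat\tensor_{\cB_j}\sH(\iota(\omega'))$ shows $\dim_{\sH(\iota(\omega'))}\Gamma(\psi\inv(\iota(\omega')),\sO)$ is constant on $\relint(\tilde P)\ni\omega_0'$, which gives the required local constancy.

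The main obstacle, as I see it, is precisely this last step together with the bookkeeping that makes it applicable. One must first descend to a torus of dimension exactly $d$ — which is what~\eqref{tropical multiplicity omega} and a generic projection accomplish, and which is essential, since for $\dim\bT>d$ the local rings of $\bT^\an$ along its skeleton have positive dimension and the flatness argument fails — and one must arrange the neighbourhoods $W_j$, $Z_j$ and the polytope $\tilde P$ so that the fibre $\psi\inv(\iota(\omega'))$ really is captured in $\bigcup_jW_j$ and each $\iota(\omega')$ sits in the interior of some $Z_j$, where the crucial input $\sO_{\bT'^\an,\iota(\omega')}=\kappa(\iota(\omega'))$ being a field applies; the remaining verifications (finiteness of fibres, identification of fibres with $\sM(\cA_j\hat\tensor_{\cB_j}\sH(\xi))$, and local constancy of fibre rank on the locally free locus) are routine.
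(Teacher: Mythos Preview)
Your argument is correct and follows essentially the same route as the paper's proof: reduce so that $\varphitrop(X)$ is a single $d$-polytope, project generically to a torus $\bT'$ of dimension $d$, identify the sum in~\eqref{tropical multiplicity omega} with the fibre degree of $\psi$ over the skeleton point $\xi'=\iota(\omega')$, and use finiteness and flatness of $\psi$ there to obtain local constancy on the connected set $\relint(\sigma)$. The only difference is packaging: the paper invokes~\cite[2.4.3]{CLD} directly to get that $\psi$ is finite and flat over a neighbourhood of $\xi'$ and then cites~\cite[3.2.5, 3.2.6]{ducros14:structur_des_courbes_analytiq} for local constancy of the degree, whereas you reconstruct this by hand via local affinoid neighbourhoods $W_j\to Z_j$ and the fact that $\sO_{\bT'^\an,\iota(\omega')}$ is a field (one small slip: $X\setminus\bigcup_j W_j$ is open since the $W_j$ are affinoid, hence closed---use the complement of $\bigcup_j\Int(W_j)$ for the compactness step).
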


\begin{proof}
  Replacing $X$ by $\varphi^{-1}_{\trop}(\sigma)$, we may assume that $\sigma =\varphitrop(X)$. Pick $q\colon \bT \to \bT'$ as in Remark~\ref{rem: multiplicity of the Antoines}, which means here that $q$ is generic with respect to $\varphitrop(X)=\sigma$. Let $\psi = q \circ \varphi$ and let $\xi'$ be the unique point in $S(\bT')$ with $\trop(\xi')=Q_\R(\omega)$. 
  Since the points of $X$ over $\omega$ are interior  and using $\sigma =\varphitrop(X)$, it follows from Theorem~\ref{thm:tropical.skeleton.projection} that $\{x \in S_\varphi(X)\mid \varphitrop(x)=\omega\}=\{x \in S_\psi(X)\mid \psi_{\trop}(x)=Q_\R(\omega)\}$. We conclude that this set agrees with
  $\{x \in X \mid \psi(x)=\xi'\}$ as we have $S_\psi(X)=\psi^{-1}(S(\bT'))$ by Proposition~\ref{prop:tropical.skeleton}(\ref{item:trop.skeleton.samedim}) and so we get
\begin{equation} \label{degree sums}
  \sum_{\substack{x \in S_\varphi(X)\\
		\varphitrop(x)=\omega}} \dim_{\kappa(\xi')}(\sO_{X,x})
            = \sum_{\substack{x \in X\\
		\psi(x)=\xi'}} \dim_{\kappa(\xi')}(\sO_{X,x}).
\end{equation}
By~\cite[2.4.3]{CLD}, the map $\psi$ is finite and flat over a neighbourhood of $\xi'$. It follows from \cite[3.2.5]{ducros14:structur_des_courbes_analytiq}  that the right hand side of  \eqref{degree sums}     is the degree of $\psi$ over $\xi'$.   By  \cite[3.2.6]{ducros14:structur_des_courbes_analytiq}, the degree of a finite flat morphism is locally constant. Combining these facts with  \eqref{tropical multiplicity omega}  and  \eqref{degree sums}, we deduce that $m_\varphi(X,\omega)$ is constant in a neighbourhood of $\omega$. Since $\relint(\sigma)$ is connected, we get the claim.
\end{proof}

\begin{defn}\label{def:trop.mult.face}
	Choose a $(\Z,\Gamma)$-polytopal complex $\Pi$ with support $\phi_{\trop}(X)$ such that $\phi_{\trop}(\del X)$ is contained in a subcomplex of dimension at most $d-1$, as in~Remark \secref{rem: multiplicity of the Antoines}.  Let $\sigma$ be a $d$-dimensional face of $\Pi$.  Then we define the \defi{tropical weight}  of $\sigma$ by  $m_\sigma \coloneqq m_\phi(X,\sigma) \coloneq m_\phi(X,\omega)$ for any $\omega\in\relint(\sigma)$. This is well-defined by Proposition~\ref{prop: trop.mult.constant}.
\end{defn}

\begin{rem} \label{rem: tropical multiplicities constant on faces of skeleton}
We have seen in Remark~\ref{rem: piecewise linear structure on tropical skeleton} that the tropical skeleton $S_\varphi(X)$ has the structure of a piecewise $(\Z,\Gamma)$-linear space such that $\varphitrop$ induces a piecewise $(\Z,\Gamma)$-linear map to $\varphitrop(X)$ which is finite to one. Replacing $\Pi$ from Definition~\ref{def:trop.mult.face} by a suitable subdivision, this piecewise linear structure can be given by an abstract $(\Z,\Gamma)$-polytopal complex with support $S_\varphi(X)$ such that $\varphitrop$ maps every face $\Delta$ of this complex by a $(\Z,\Gamma)$-linear isomorphism onto a face $\sigma$ of $\Pi$. If $\dim(\Delta)=d$, then we claim that $m_\varphi(X,x)$ is constant in $x \in \relint(\Delta)$ and we define the \defi{tropical weight  $m_\Delta \coloneqq m_\varphi(X,\Delta) \coloneqq m_\varphi(X,x)$ in $\Delta$}.  This makes $S_\phi(X)$ into a weighted abstract $\z$-polytopal complex supported on the $d$-skeleton $S_\phi(X)_d$  (Remark~\ref{rem:d-skeleton}).

To show this, we note first that we may shrink $X$ to assume that $S_\varphi(X)= \Delta$ by using Remark~\ref{rem: tropical multiplicities}(\ref{tropmult 1}) and Proposition~\ref{prop:tropical.skeleton}(\ref{item:trop.skeleton.subdomain}). Then for $x \in \relint(\Delta)$, we have $\omega \coloneqq \varphitrop(x) \in \relint(\sigma)$ and $m_\varphi(X,x)=m_\varphi(X,\omega)$.  Hence the claim follows from Proposition~\ref{prop: trop.mult.constant}.
\end{rem}

\begin{art}[Relative Multiplicities from~\cite{BPR}]\label{sec:relative.mult}
  Suppose that $K$ is algebraically closed.  Let $\sX\inject\bT$ be a reduced closed subscheme of pure dimension $d$, let $X = \sX^\an$, and let $\phi\colon X\inject\bT^\an$ denote the inclusion.  In this situation, the following  \defi{relative multiplicity} was introduced in~\cite[Definition~4.21]{BPR}.

  Let $\omega \in \phi_{\trop}(X)$.  If $\omega\in N_\Gamma$ then $X^\omega\coloneq\phi_{\trop}\inv(\omega)$ is a strictly $K$-affinoid subdomain $\sM(\sA_\omega)\subset X$.  There are two natural admissible formal $K^\circ$-models of $X^\omega$: one is the canonical model $\fX^\omega_\can = \Spf(\sA_\omega^\circ)$, and the other is the polyhedral model $\fX^\omega = \Spf(A_\omega)$, where $A_\omega$ is the subalgebra of $\sA^\circ_\omega$ topologically generated over $K^\circ$ by $\{\chi^u/\lambda(u)\mid u\in M\}$ as in Remark~\ref{connection to usual residue field}.  There is a natural morphism $\iota\colon\fX^\omega_\can\to\fX^\omega$ which induces a finite morphism on special fibers $\iota_s\colon\fX^\omega_{\can,s}\to\fX^\omega_s$ that sends generic points to generic points.  For $x\in X$ with $\phi_{\trop}(x)=\omega$, if $x$ is a Shilov boundary point of $X^\omega$ then the reduction $\red_{\fX^\omega_\can}(x)\in\fX^\omega_{\can,s}$ is the generic point of an irreducible component $\fC_x$ of $\fX_{\can,s}$; the relative multiplicity $m_\rel(x)$ is defined to be $[\fC_x:\iota_s(\fC_x)]$, the degree of $\fC_x$ over its image in $\fX^\omega_s$.  If $x\in\phi_{\trop}\inv(\omega)$ is not a Shilov boundary point then $m_\rel(x)\coloneq0$.

If $\omega\notin N_\Gamma$ then we define $m_\rel$ on $\phi_{\trop}\inv(\omega)$ after extending scalars: choose an algebraically closed non-Archimedean extension field $K'/K$ with value group $\Gamma'$ large enough that $\omega\in N_{\Gamma'}$.  We set $X' = X\hat\tensor_K K'$, we let $\pi\colon X'\to X$ be the structure morphism, and for $x\in\phi_{\trop}\inv(\omega)$ we define $m_\rel(x) = \sum_{\pi(x')=x} m_\rel(x')$.

For $\omega\in\phi_{\trop}(X)$,   the quantity $\sum_{\phi_{\trop}(x)=\omega}m_\rel(\omega)$ is the ``classical'' tropical multiplicity $m_{\Trop}(\omega)$ as defined in terms of initial degenerations, see~\cite[Proposition~4.24]{BPR}.
\end{art}

\begin{lem}\label{lem:mult.from.BPR}
  In the situation of~\secref{sec:relative.mult}, we have $m_\rel(x) = m_\phi(X,x)$ for all $x\in X$, and $m_{\Trop}(\omega) = m_\phi(X,\omega)$ for all $\omega\in\phi_{\trop}(X)$.
\end{lem}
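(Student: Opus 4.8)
The plan is: first reduce the statement about $\omega$ to the statement about $x$, then reduce the latter to the case $\omega\coloneq\phi_{\trop}(x)\in N_\Gamma$, and finally identify both $m_\rel(x)$ and $m_\phi(X,x)$ with one and the same degree of residue field extensions, the point being that the canonical model $\fX^\omega_\can$ of $X^\omega=\phi_{\trop}\inv(\omega)$ computes $\td\sH(x)$ while the polyhedral model $\fX^\omega$ computes $\td K(\td M(x))$. For the first reduction, both $m_\phi(X,\scdot)$ and $m_{\Trop}$ are defined by summing over fibres of $\phi_{\trop}$ --- for $m_\phi$ by Definition~\ref{def:trop.mult}, and for $m_{\Trop}$ by the last sentence of~\secref{sec:relative.mult}, i.e.\ by~\cite[Proposition~4.24]{BPR} --- so $m_\rel(x)=m_\phi(X,x)$ for all $x$ implies $m_{\Trop}(\omega)=m_\phi(X,\omega)$ for all $\omega$. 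For the second reduction, if $\omega\notin N_\Gamma$ one passes to an algebraically closed non-Archimedean extension $K'/K$ with $\omega$ in the value group of $K'$; then $\sX\tensor_K K'$ is again reduced and of pure dimension $d$ (as $K$ is algebraically closed) and $X'=(\sX\tensor_K K')^\an=X\hat\tensor_K K'$, and by Proposition~\ref{item:trop.mult.basechange} together with the definition of $m_\rel$ for non-rational $\omega$ in~\secref{sec:relative.mult}, both $m_\phi(X,x)$ and $m_\rel(x)$ equal the sum over the preimages $x'$ of $x$ of the corresponding quantities for $X'$. Hence from now on $\omega\in N_\Gamma$ and $X^\omega=\sM(\cA_\omega)$ is a reduced strictly $K$-affinoid domain in $X$ of pure dimension $d$.

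The core of the argument is the following identification of residue fields. Fix $\lambda\colon M\to K^\times$ lifting $\angles{\scdot,\omega}$ as in~\secref{sec:relative.mult}, so that each $\chi^u/\lambda(u)$ has absolute value $1$ on $X^\omega$, lies in $\cA_\omega^\circ$, and the family $(\chi^u/\lambda(u))_{u\in M}$ generates $A_\omega$ topologically over $\kcirc$, hence generates $\cO(\fX^\omega_s)$ over $\td K$. Since $\cA_\omega$ is reduced and $K$ is algebraically closed, $\cO(\fX^\omega_{\can,s})=\cA_\omega^\circ/\cA_\omega^{\circ\circ}$ is reduced by the fact recalled in~\secref{Formal geometry and models}. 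For any $x\in X^\omega$, following the reduction map $\cA_\omega^\circ\to\cO(\fX^\omega_{\can,s})$ through $\chi_x\colon\cA_\omega\to\sH(x)$ shows that $\kappa(\red_{\fX^\omega_\can}(x))\subseteq\td\sH(x)$ and that the image of $\cO(\fX^\omega_s)$ in $\td\sH(x)$ is the $\td K$-subalgebra generated by the elements $\td\chi_x(\chi^u/\lambda(u))$, whose fraction field is exactly $\td K(\td M(x))$ in the sense of Remark~\ref{connection to usual residue field}. Thus $\kappa\bigl(\iota_s(\red_{\fX^\omega_\can}(x))\bigr)=\td K(\td M(x))$, where $\iota_s\colon\fX^\omega_{\can,s}\to\fX^\omega_s$ is the finite map of~\secref{sec:relative.mult}.

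Next I would match the two loci where the multiplicities are nonzero and then compute. If $x\in S_\phi(X)$ with $\phi_{\trop}(x)=\omega$, then $d=d_\phi(X,x)=\trdeg(\td K(\td M(x))/\td K)\le\trdeg(\kappa(\red(x))/\td K)=\dim\overline{\{\red(x)\}}\le\dim\fX^\omega_{\can,s}=\dim X^\omega=d$, so every inequality is an equality; in particular $\red(x)$ is the generic point $\eta_x$ of a $d$-dimensional irreducible component $\fC_x$ of the reduced scheme $\fX^\omega_{\can,s}$, and hence $x$ is the associated Shilov boundary point of $X^\omega$. Conversely, if $x$ is a Shilov point of $X^\omega$ then by~\secref{sec:relative.mult} $\red(x)=\eta_x$ is the generic point of a component $\fC_x$ and $\iota_s(\eta_x)$ is a generic point of $\fX^\omega_s$, which is pure of dimension $d$ (it is the initial degeneration of $\sX$); therefore $d_\phi(X,x)=\trdeg(\td K(\td M(x))/\td K)=\trdeg(\kappa(\iota_s(\eta_x))/\td K)=d$, so $x\in S_\phi(X)$. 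So, among points over $\omega$, lying in $S_\phi(X)$ is equivalent to being a Shilov point of $X^\omega$, and both $m_\phi(X,x)$ and $m_\rel(x)$ vanish otherwise. Finally, for such $x$: as $X$ is reduced and $\sO_{X,x}$ is Artinian (Remark~\ref{rem: tropical multiplicities}(\ref{tropmult 2})), $\length(\sO_{X,x})=1$, so $m_\phi(X,x)=[\td\sH(x)^\bullet:\td K^\bullet(\td M(x))]=[\td\sH(x):\td K(\td M(x))]$ by Remark~\ref{rem: tropical multiplicities}(\ref{tropmult 8}); and since $\fX^\omega_{\can,s}$ is reduced we have $\td\sH(x)=\kappa(\eta_x)$, while $\iota_s(\eta_x)$ is the generic point of $\iota_s(\fC_x)$ with $\kappa(\iota_s(\eta_x))=\td K(\td M(x))$ by the previous paragraph and $\fC_x\to\iota_s(\fC_x)$ is finite. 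Hence $m_\rel(x)=[\fC_x:\iota_s(\fC_x)]=[\kappa(\eta_x):\kappa(\iota_s(\eta_x))]=[\td\sH(x):\td K(\td M(x))]=m_\phi(X,x)$.

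I expect the main obstacle to be not a single deep step but the careful handling of residue fields of reductions: that $\td\sH(x)$ of a Shilov point of $X^\omega$ equals the residue field of the reduced canonical reduction at $\eta_x$, that $\td K(\td M(x))$ is the residue field of the polyhedral reduction at $\iota_s(\eta_x)$, and that replacing $\td\sH(x)$ by $\td\sH(x)^\bullet$ via Remark~\ref{rem: tropical multiplicities}(\ref{tropmult 8}) does not affect the degree --- all of which, together with the reduction to rational $\omega$ and the classical purity of the initial degeneration, follow from the results cited above and from standard non-Archimedean geometry (e.g.~\cite{BPR}).
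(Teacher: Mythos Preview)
Your proposal is correct and follows essentially the same approach as the paper's proof: reduce to the pointwise statement, base-change to make $\omega\in N_\Gamma$, identify the function field of $\iota_s(\fC_x)$ with $\td K(\td M(x))$ and that of $\fC_x$ with $\td\sH(x)$, and conclude via Remark~\ref{rem: tropical multiplicities}(\ref{tropmult 8}). The only differences are cosmetic: the paper obtains the equivalence ``Shilov point of $X^\omega$ $\Leftrightarrow$ $x\in S_\phi(X)$'' by citing Remark~\ref{rem:trop.skel.GRW}, whereas you give a direct transcendence-degree argument; and the identification $\td\sH(x)=\kappa(\eta_x)$ is not a consequence of reducedness of $\fX^\omega_{\can,s}$ as you suggest, but rather of~\cite[Proposition~2.4.4(ii)]{BerkovichSpectral} (which the paper cites explicitly).
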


\begin{proof}
  The second statement follows from the first.  Let $x\in X$ and let $\omega = \phi_{\trop}(x)$.  Replacing $X$ by an appropriate base change and using  Proposition~\ref{item:trop.mult.basechange}, we may assume $x\in N_\Gamma$.  Then $S_\phi(X)\cap X^\omega$ is the Shilov boundary of $X^\omega$ by Remark~\ref{rem:trop.skel.GRW}.  Hence we may assume $x\in S_\phi(X)\cap X^\omega$ as otherwise all tropical multiplicities are zero.  Using the notations of~\secref{sec:relative.mult}, we let $\td K(\fC_x)$ denote the function field of $\fC_x$.  The homomorphism $\sA_\omega\to\sH(x)$ induces an isomorphism $\td K(\fC_x)\isom\td\sH(x)$ by~\cite[Proposition~2.4.4(ii)]{BerkovichSpectral}.  By the definition of $A_\omega\subset\sA_\omega^\circ$, the function field of $\iota_s(\fC_x)$ is precisely the subfield $\td K(\td M(x))$ of Remark~\ref{connection to usual residue field}.  We have $\length(\sO_{X,x})=1$ because $X$ is reduced, so
  \[ m_\phi(X,x) = [\td\sH(x):\td K(\td M(x))] = [\fC_x:\iota(\fC_x)] = m_\rel(x) \]
by Remark~\ref{rem: tropical multiplicities}(\ref{tropmult 8}).
\end{proof}

\begin{eg}[Continuation of Example~\ref{eg:tropical.skeleton.specific}]
  \label{eg:tropical.skeleton.multiplicities}
  We continue with the notation in Example~\ref{eg:tropical.skeleton.specific}.  By~\cite[Theorem~5.8]{BPR}, for any point $P$ in the interior of an edge $e$ in the skeleton $S$, the relative multiplicity $m_\rel(P)$ is equal to the ``expansion factor'' of the edge~$e$ with respect to the tropicalization map $\varphitrop$.  In this situation, the expansion factor of every edge is equal to $1$, except for the edge $BD$, which has expansion factor $0$ as it is crushed to a point.  
  
  We claim that  $m_\phi(X,P)=1$ for $P\in\{A,B,C,D\}$. This can be checked directly from the definition of $m_\phi(X,P)$. We give the proof in case $P=A$ which reduces to the generic point of the line $x=0$ in the special fiber of $\sX_{\min}$.  
  By \cite[Proposition 2.4.4(ii)]{berkovic90:analytic_geometry}, we have $\td\sH(A)=\td K(y)$. Recall that $\trop(A)=(1,0)$. If $M=\Z^2$ be the character lattice of $\T=\bGm^2$, then  $\td K(\td M(A))$ is the subfield of $\td\sH(A)$ generated by the reductions of $x/p$ and of $y/1=y$. This shows $\td\sH(A)=\td K(\td M(A))$, and hence  \ref{rem: tropical multiplicities}\eqref{tropmult 8} yields  that $m_\phi(X,A)=[\td\sH(A):\td K(\td M(A))]=1$. The proof for the points $P=B,C,D$ is similar. 
  
  We conclude that $m_\rel(P) = m_\phi(X,P) =1$ for every point $P$ of $S_\phi(X)$.  
  It follows that $m_\phi(X,\omega)$ just counts the preimages of $\omega$ in $S_\phi(X)$ under $\phi_{\trop}$; these tropical multiplicities are indicated in Figure~\ref{fig:genus3.trop}.
\end{eg}

\section{Piecewise linear functions} \label{section: pl functions}

In this section we discuss piecewise linear functions on analytic spaces.  Most of these results are standard in one form or another: for instance, see~\cite{CLD,gubler-crelle,gubler-martin}.  However, the terminology in the literature is somewhat nonstandard, so it is important to fix our ideas.  We will find it useful to work slightly more generally with piecewise linear functions with slopes contained in an additive subgroup $\Lambda\subset\R$, and in some cases new arguments are needed.  In practice we will usually take $\Lambda=\Z,\Q,$ or $\R$, in which case either $\Lambda=\Z$ or $\Lambda$ is divisible.

In this section, $K$ is a non-Archimedean field with a non-trivial valuation $v$ and value group $\Gamma \coloneqq v(K^\times) \subset \R$.  We consider a good strictly $K$-analytic space $X$ and an additive subgroup $\Lambda\subset\R$. Recall from~\secref{Non-Archimedean geometry} that a strictly analytic space is \defi{good} provided that that every point has a strictly affinoid neighbourhood.  Most strictly analytic spaces occurring in practice are good: for example, any strictly affinoid space, a space with no boundary, or an analytic subdomain of the analytification of an algebraic variety. We restrict our attention to good spaces as then the coherent modules on the Berkovich space $X$ are the same as the coherent modules on the space $X_G$ endowed with the $\rG$-topology \cite[Proposition 1.3.4]{berkovic93:etale_cohomology}, which makes it reasonable to work with line bundles on $X$. 
 We always use the $\rG$-topology on $X$ generated by the strictly affinoid domains. Let $C(X)$ be the space of continuous real functions on $X$.

\begin{defn} \label{piecewise linear function}
  A function $h \colon X \to \R$ is called \defi{piecewise $\Lambda$-linear} or \defi{$\Lambda$-PL} if there exists a $\rG$-covering $\{X_i\}$ of $X$ such that for each $i$, there are finitely many $\lambda_{ij}\in\Lambda$ and  $f_{ij}\in \sO(X_i)^\times$ with  $h|_{X_i} = \sum_j\lambda_{ij}\log|f_{ij}|$.  A $\Z$-PL function is called \defi{piecewise linear} or \defi{PL}.

  We denote the group of PL functions on $X$ by $\PL(X)$, and the group of $\Lambda$-PL functions by $\PL_\Lambda(X)$.  These are subgroups of $C(X)$.
\end{defn}

\begin{art}\label{sec:PL.remarks}
  Some remarks:
  \begin{enumerate}
  \item A function $h\colon X\to\R$ is PL if and only if there exists a $\rG$-covering $\{X_i\}$ of $X$ and  $f_i\in \cO(X_i)^\times$ such that $h|_{X_i} = \log|f_i|$.
  \item The set of $\Lambda$-PL functions on $X$ forms a group since any two $\rG$-coverings $\{X_i\}$ and $\{X_j'\}$ admit the common refinement $\{X_i\cap X_j'\}$.
  \item The restriction of a $\Lambda$-PL function to a strictly analytic domain is $\Lambda$-PL.
  \item The presheaf $U\mapsto\PL_\Lambda(U)$ is a sheaf in the $\rG$-topology, and therefore in the analytic topology as well.
  \item A function $h\colon X\to\R$ is $\Lambda$-PL if and only if there exists a $\rG$-covering $\{X_i\}$ of $X$ such that for each $i$, there are finitely many $\lambda_{ij}\in\Lambda$ and $h_{ij}\in\PL(X_i)$ with $h|_{X_i} = \sum_j\lambda_{ij}h_{ij}$.
  \item A function $h\colon X\to\R$ is $\Q$-PL if and only if, locally in the analytic topology, there exists a positive integer $N$ such that $Nh$ is PL.
  \item If $\Lambda=\Z$ or $\Lambda=\Q$ then the minimum and maximum of two $\Lambda$-PL functions is again $\Lambda$-PL by~\cite[Proposition~2.12(d)]{gubler-martin}.  
  \end{enumerate}
\end{art}

\begin{eg}
  The assertion of~\secref{sec:PL.remarks}(7) is not true for $\Lambda$ strictly containing~$\Q$.   For example, if the value group of $K$ is $\Q$ and $X=\bGm^\an = \Spec(K[t^{\pm1}])^\an$ is the split torus of rank $1$, then the tropicalization of any strictly affinoid domain in $X$ is a union of compact intervals with rational vertices. We conclude that for $\Lambda=\R$, the function $h \coloneqq \pi \log|t|+1=-\pi\trop +1$ is $\Lambda$-PL, but $\max(h,0)$ is not $\Lambda$-PL as $\max(h,0)=\varphi \circ \trop$ for  $\varphi(u)=\max(-\pi u+1,0)$, and as the piecewise linear function $\varphi$ has a bend at the irrational number $1/\pi$.
\end{eg}

Here we list some basic functoriality properties of $\Lambda$-PL functions.

\begin{lem}\label{lem:pl.basic.props}
  Let $h\colon X\to\R$ be a function.
  \begin{enumerate}
  \item\label{item:pl.prop.pullback}
    Let $f\colon X'\to X$ be a morphism of good strictly $K$-analytic spaces.  If $h\in\PL_\Lambda(X)$ then $h\circ f\in\PL_\Lambda(X')$.
  \item\label{item:pl.prop.basechange}
    Let $K'/K$ be a non-Archimedean field extension, let $X' = X\hat\tensor_K K'$, and let $\pi\colon X'\to X$ be the structure morphism.  If $h\in\PL_\Lambda(X)$ then $h\circ\pi\in\PL_\Lambda(X')$.
  \item\label{item:pl.prop.reduction}
    Let $\iota\colon X_{\red}\to X$ be the inclusion of the reduction of $X$, and let $h' = h\circ\iota$.  Then $h\in\PL_\Lambda(X)$ if and only if $h'\in\PL_\Lambda(X_{\red})$.
  \end{enumerate}
\end{lem}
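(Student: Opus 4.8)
The plan is to read off all three assertions directly from Definition~\ref{piecewise linear function}, using only that invertible analytic functions pull back to invertible analytic functions, that absolute values are compatible with pull-back and base change, and that the $\rG$-topology behaves well under the operations involved. For \eqref{item:pl.prop.pullback}, I would fix a $\rG$-covering $\{X_i\}$ of $X$ together with $\lambda_{ij}\in\Lambda$ and $f_{ij}\in\sO(X_i)^\times$ such that $h|_{X_i}=\sum_j\lambda_{ij}\log|f_{ij}|$. The sets $f^{-1}(X_i)$ form a $\rG$-covering of $X'$, since the preimage of a strictly affinoid domain under a morphism of good strictly $K$-analytic spaces is a strictly analytic domain, which may be refined further by strictly affinoid domains. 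On $f^{-1}(X_i)$ the composite $f_{ij}\circ f$ lies in $\sO(f^{-1}(X_i))^\times$ and $|(f_{ij}\circ f)(x')|=|f_{ij}(f(x'))|$ for all $x'$, so $(h\circ f)|_{f^{-1}(X_i)}=\sum_j\lambda_{ij}\log|f_{ij}\circ f|$, and hence $h\circ f$ is $\Lambda$-PL.

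For \eqref{item:pl.prop.basechange}, I would run the same argument with $f^{-1}(X_i)$ replaced by $X_i\hat\tensor_K K'$: the base change of a strictly $K$-affinoid domain is a strictly $K'$-affinoid domain, the base change of a $\rG$-covering of $X$ is a $\rG$-covering of $X'=X\hat\tensor_K K'$, the pull-back $\pi^*f_{ij}$ is again invertible, and $|\pi^*f_{ij}(x')|=|f_{ij}(\pi(x'))|$ because $\sH(\pi(x'))$ embeds isometrically into $\sH(x')$. One could instead regard $\pi$ as a morphism of $K$-analytic spaces and invoke \eqref{item:pl.prop.pullback}, but spelling out the base-change version avoids any fuss with the change of ground field.

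For \eqref{item:pl.prop.reduction}, the forward implication is the special case of \eqref{item:pl.prop.pullback} for the closed immersion $\iota$. For the converse I would use three standard facts about $\iota\colon X_{\red}\hookrightarrow X$: it is a homeomorphism on underlying topological spaces and identifies strictly affinoid subdomains, hence $\rG$-coverings; for a strictly $K$-affinoid domain $U$ of $X$ the restriction $\sO(U)^\times\to\sO(U_{\red})^\times$ is surjective, since any lift of a unit is itself a unit (it differs from a unit by a nilpotent element); and a point seminorm annihilates nilpotents, so absolute values of analytic functions are unchanged under $\iota$. Then, given a $\rG$-covering of $X_{\red}$ realizing $h'$, refined so that its members are $U_{i,\red}$ for strictly affinoid domains $U_i$ of $X$ with $h'|_{U_{i,\red}}=\sum_j\lambda_{ij}\log|g_{ij}|$ and $g_{ij}\in\sO(U_{i,\red})^\times$, I would lift each $g_{ij}$ to $f_{ij}\in\sO(U_i)^\times$. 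Since $h$ and $h'$ coincide as functions on the common underlying space and $|f_{ij}(x)|=|g_{ij}(x)|$, this gives $h|_{U_i}=\sum_j\lambda_{ij}\log|f_{ij}|$, so $h\in\PL_\Lambda(X)$.

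All three statements are routine; the only points requiring a moment's care are, in \eqref{item:pl.prop.reduction}, the surjectivity of $\sO(U)^\times\to\sO(U_{\red})^\times$ and the identification of the $\rG$-topologies of $X$ and $X_{\red}$, both of which are standard facts about affinoid algebras, so I do not expect any real obstacle here.
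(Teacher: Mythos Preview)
Your proposal is correct and follows essentially the same approach as the paper: the paper dismisses \eqref{item:pl.prop.pullback} and \eqref{item:pl.prop.basechange} as ``obvious'' and for \eqref{item:pl.prop.reduction} argues exactly as you do, lifting units $g_i\in\cA_{\red}^\times$ to units $f_i\in\cA^\times$ on a strictly affinoid domain and using that $|f_i|=|g_i|$ pointwise. You have simply spelled out the details the paper leaves implicit.
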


\begin{proof}
  The first two assertions are obvious.  For the third, by~(\ref{item:pl.prop.pullback}), the only thing to show is that if $h'$ is $\Lambda$-PL then $h$ is  $\Lambda$-PL.  Let $U = \sM(\sA)\subset X$ be a strictly affinoid domain, and let $U_{\red} = \sM(\sA_{\red})\subset X_{\red}$ be its reduction.  Suppose that $h'|_{U_{\red}} = \sum\lambda_i\log|g_i|$ for $g_i\in\sA_{\red}^\times$.  If $f_i\in\sA$ lifts $g_i$ then $f_i\in\sA^\times$ is a unit, and $h|_U = \sum\lambda_i\log|f_i|$.  Since $X_{\red}$ has a $\rG$-cover by such affinoid domains, the same is true for $X$, so $h$ is $\Lambda$-PL.
\end{proof}

At this point we assume that the reader is familiar with formal geometry as recalled in~\secref{Formal geometry and models}.  We give an interpretation of  $\Lambda$-PL functions in terms of formal geometry which we will use to prove some more subtle properties of piecewise $\Lambda$-linearity.  Recall that a \defi{formal $\kcirc$-model} of a paracompact strictly $K$-analytic space $\fX$ is an admissible formal scheme $\fX$ over $\kcirc$ equipped with an identification $\fX_\eta = X$.

\medskip
A \defi{frame} of a line bundle on an open set is a nowhere vanishing section.

\begin{art}[Functions from Formal Models of $\sO_X$] \label{model function}
  We assume now that $X$ is paracompact.  This allows us to use Raynaud's theory of formal $K^\circ$-models, as in~\secref{Formal geometry and models}. We will later apply this only locally on a good, strictly analytic space, where we can always find a \emph{compact} strictly analytic neighbourhood (even a strictly affinoid neighbourhood as $X$ is good).  Hence the reader loses nothing by assuming here and in the following that $X$ is compact.  (Paracompactness has the advantage that one can apply these considerations also to the analytification of algebraic varieties over $K$.)
  
  Let $\fX$ be a formal $K^\circ$-model of $X$. The isomorphism classes of line bundles  $\fL$ on $\fX$ equipped with an identification $\fL_\eta=\sO_X$ form an abelian group denoted by $M(\fX)$.  Then $\fL \in M(\fX)$ determines a canonical metric $\metr_\fL$ on $\sO_X$, defined as follows: if $s$ is a frame of $\fL$ over a formal open subset $\fU\subset\fX$, then we set $\|s\| = 1$ on $\fU_\eta$.  This metric is well-defined because any two frames differ by a unit with constant absolute value $1$.

  Equivalently, we get a canonical function $h_\fL = -\log\|1\|_\fL\colon X\to\R$ with $-\log\|f\|_\fL = -\log|f| + h_\fL$ for an analytic function $f$ on any open subset $U$ of $X$. If $U=\fU_\eta$ for a formal open subset $\fU$ of $\fX$ and if $f\in\Gamma(\fU,\fL)\subset\Gamma(\fU_\eta,\sO_X)$ is a frame of $\fL$,
then by definition $0 = -\log|f| + h_\fL$, so $h_\fL$ is PL because $X$ has a $\rG$-cover by such~$U=\fU_\eta$.
\end{art}

\begin{art}[PL Functions are Model Functions] \label{model theorem}
  If $X$ is paracompact, then \cite[Proposition~2.10]{gubler-martin} yields  that $h\colon X\to\R$ is PL if and only if there exists a formal $K^\circ$-model $(\fX,\fL)$ of $(X,\sO_X)$ such that $h = -\log\|1\|_\fL$.
\end{art}

\begin{art}[$\Lambda$-PL Functions from Formal Models]\label{sec:equiv.pl.setup}
  For paracompact $X$,   we extend~\secref{model theorem} to the case of $\Lambda$-PL functions as follows.  Choose a formal $\kcirc$-model $\fX$ of $X$, and set $M(\fX)_\Lambda \coloneqq M(\fX) \otimes_\Z \Lambda$.  
  We have a unique homomorphism
  $h\colon M(\fX)_\Lambda \longrightarrow C(X)$, written $\fL\mapsto h_\fL$, such that $h_{\lambda\fL} = -\lambda\log \|1\|_\fL$ for all $\fL \in M(\fX)$ and $\lambda\in\Lambda$.  Note that if $\fL = \sum_{i=1}^m\lambda_i\fL_i\in M(\fX)_\Lambda$ then $h_\fL = \sum_{i=1}^m\lambda_i h_{\fL_i}\in\PL_\Lambda(X)$ by~\secref{sec:PL.remarks}(5).

  If $f\colon\fX'\to\fX$ is a morphism of formal $K^\circ$-models of $X$, then $h_{f^*\fL} = h_\fL$ for all $\fL\in M(\fX)_\Lambda$.  More generally, if $f\colon\fX'\to\fX$ is any morphism of separated admissible formal schemes
  then $h_{f^*\fL} = h_\fL\circ f_\eta$ for all $\fL\in M(\fX)_\Lambda$.
  (See~\secref{Formal geometry and models} for separatedness of formal models.)

  For $\fL \in M(\fX)_\Lambda$, we get a unique continuous metric $\metr_\fL$ on $\sO_X$  defined by $-\log \|f\|_\fL = -\log|f| + h_\fL$ for an analytic function $f$ on any open subset $U$ of $X$.
\end{art}

We want to extend the result of~\secref{model theorem} locally in the analytic topology to $\Lambda$-coefficients as in~\secref{sec:equiv.pl.setup}.  To do this we need to show that a $\Lambda$-PL function is  locally \emph{in the analytic topology} a linear combination of PL functions with $\Lambda$-coefficients.  (For our purposes, we could have taken the statement of Proposition~\ref{prop:Lpl.analytic.nbhd} as the definition of $\Lambda$-PL; we chose Definition~\ref{piecewise linear function} because it is more compatible with the terminology in the literature.)

\begin{prop}\label{prop:Lpl.analytic.nbhd}
  Suppose that $\Lambda=\Z$ or that $\Lambda$ is divisible.  A function $h\colon X\to\R$ is $\Lambda$-PL if and only if for every $x\in X$, there exists an open neighbourhood $U$ of $x$ such that $h|_U = \sum_{i=1}^m\lambda_ih_i$ for finitely many $\lambda_1,\ldots,\lambda_m\in\Lambda$ and $h_1,\ldots,h_m\in\PL(U)$.
\end{prop}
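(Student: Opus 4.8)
The plan is to prove the two implications separately. The direction ``$\Leftarrow$'' is routine: given $h|_U = \sum_{i=1}^m\lambda_ih_i$ with $h_i\in\PL(U)$, cover $U$ by strictly affinoid domains, on each of which every $h_i$ becomes, on a finer $\rG$-cover, of the form $\log|f|$ with $f$ a unit; passing to a common refinement exhibits $h$ as $\Lambda$-PL in the sense of Definition~\ref{piecewise linear function}. This uses only that $\PL$ restricts to $\PL$ on analytic domains (\secref{sec:PL.remarks}) and that $\rG$-covers admit common refinements. For ``$\Rightarrow$'' the statement is local, and both $\Lambda$-PL-ness and the conclusion are insensitive to passing to the reduction by Lemma~\ref{lem:pl.basic.props}(\ref{item:pl.prop.reduction}) together with the lifting argument in its proof; so we may assume $X=\sM(\cA)$ is a reduced strictly affinoid neighbourhood of $x$ (goodness), and it suffices to produce the decomposition on all of $X$ and then restrict to an open $U\subseteq X$ with $x\in U$. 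If $\Lambda=\Z$ there is nothing to do, since then $\Lambda$-PL is PL and $h=1\cdot h$; so assume $\Lambda$ is divisible, hence a $\Q$-vector space.

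By compactness choose a finite $\rG$-cover $\{Y_k\}_{k=1}^n$ of $X$ with $h|_{Y_k}=\sum_j\lambda_{kj}\log|f_{kj}|$, $f_{kj}\in\sO(Y_k)^\times$, $\lambda_{kj}\in\Lambda$. The finitely many $\lambda_{kj}$ lie in a finitely generated subgroup of $\Lambda$; clearing denominators inside $\Lambda$ we may pick a $\Z$-basis $\mu_1,\dots,\mu_r$ of a finitely generated subgroup $\Lambda_0\subseteq\Lambda$ with all $\lambda_{kj}\in\bigoplus_l\Z\mu_l$. Writing $\lambda_{kj}=\sum_l c_{kjl}\mu_l$ with $c_{kjl}\in\Z$ and $F_{k,l}\coloneqq\prod_j f_{kj}^{c_{kjl}}\in\sO(Y_k)^\times$, we get $h|_{Y_k}=\sum_{l=1}^r\mu_l\,g_{k,l}$ with $g_{k,l}\coloneqq\log|F_{k,l}|\in\PL(Y_k)$. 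The crucial step is to assemble these pieces. On an overlap $Y_k\cap Y_{k'}$ we have $\sum_l\mu_l(g_{k,l}-g_{k',l})=h-h=0$. Using the local structure of PL functions (on a suitable common strictly affinoid neighbourhood all $g_{k,l}$ factor through a single smooth tropicalization map as integral PL functions, cf.\ Proposition~\ref{sheaf of piecewise linear functions}\eqref{tropical}), the relation pushes forward to $N_\R$, where a $\Q$-linearly independent combination of integral PL functions vanishing forces each summand to have zero slopes; hence each $g_{k,l}-g_{k',l}$ is locally constant on $Y_k\cap Y_{k'}$, taking on each of its finitely many connected components a value in $\Gamma=v(K^\times)$ (after a harmless finite base change if one wants the value genuinely in $|K^\times|$, which descends to $K$ by Lemma~\ref{lem:pl.basic.props}\eqref{item:pl.prop.basechange} and Proposition~\ref{sheaf of piecewise linear functions}\eqref{Galois}).

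Consequently the absolute values of the transition functions $F_{k,l}/F_{k',l}$ are locally constant and lie in $|K^\times|$. Refining $\{Y_k\}$ to the generic fibers of a formal open cover of a $\kcirc$-model $\fX$ of $X$ (Raynaud), the data $\{F_{k,l}\}_k$ glues, componentwise on overlaps, to a line bundle $\fL_l\in M(\fX)$ with $h_{\fL_l}$ agreeing with $g_{k,l}$ on $Y_k$ up to an element of $\Gamma$ which can be absorbed; by \secref{model function} we have $h_{\fL_l}\in\PL(X)$, and $\sum_{l=1}^r\mu_l\,h_{\fL_l}=h$ by construction, which is the desired decomposition. I expect the main obstacle to be exactly the gluing step: the local PL representatives $g_{k,l}$ genuinely need not agree on overlaps --- only the $\Lambda$-combination $h$ is globally well defined --- so one must exploit the integrality of slopes of PL functions together with the $\Q$-linear independence of the $\mu_l$ to see that the discrepancies are locally constant, and then recognize the resulting locally constant transition data as a line bundle on a formal model so that \secref{model function} applies; the bookkeeping over connected components of overlaps and the passage between $\Gamma$ and $\sqrt\Gamma$ is the only genuinely technical point.
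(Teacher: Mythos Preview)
Your overall strategy matches the paper's: pick $\Q$-linearly independent $\mu_1,\dots,\mu_r\in\Lambda$, write $h|_{Y_k}=\sum_l\mu_l g_{k,l}$ with $g_{k,l}\in\PL(Y_k)$, and use $\Q$-linear independence plus integrality of slopes to show that the discrepancies $g_{k,l}-g_{k',l}$ are locally constant on overlaps. That part is fine, and is exactly the argument in the paper (carried out there via a tropicalization map $X_i\cap X_j\to\R^{2m}$; see also Lemma~\ref{lem:lambda.assume.li} and the proof of Lemma~\ref{lem:reduction.depends.function}).

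The genuine gap is the gluing. You assert that after passing to a formal model the data $\{F_{k,l}\}_k$ ``glues, componentwise on overlaps, to a line bundle $\fL_l\in M(\fX)$ with $h_{\fL_l}$ agreeing with $g_{k,l}$ up to an element of $\Gamma$ which can be absorbed.'' This would require constants $a_{k,l}$ with $g_{k,l}+a_{k,l}=g_{k',l}+a_{k',l}$ on every component of every overlap, i.e.\ that the \v{C}ech $1$-cocycle $\{g_{k,l}-g_{k',l}\}$ with values in the locally constant sheaf is a coboundary. Nothing you have written forces this: the overlaps $Y_k\cap Y_{k'}$ may be disconnected, and the locally constant discrepancy can take different values on different components, so no global choice of constants need exist. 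Equivalently, knowing only that $|F_{k,l}/F_{k',l}|$ is locally constant in $|K^\times|$ does not produce transition functions satisfying a cocycle condition on a formal model. Your opening reduction ``it suffices to produce the decomposition on all of $X$'' is therefore too optimistic: the decomposition genuinely exists only in a neighbourhood of $x$, and that is exactly what the proposition asserts.

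The paper resolves this by never leaving the point $x$. One shrinks so that every $X_i$ contains $x$, proves that $h_{ik}(y)-h_{ik}(x)=h_{jk}(y)-h_{jk}(x)$ on a neighbourhood $U$ of $x$, and then \emph{normalizes at $x$}: fixing one index $i$, set $c_{ijk}\coloneqq h_{ik}(x)-h_{jk}(x)$ and define $h_k$ on $U\cap X_j$ as $h_{jk}+c_{ijk}$. These visibly agree at $x$, hence agree on all of $U\cap X_j\cap X_{j'}$ by the displayed identity, so they glue to $h_k\in\PL(U)$ with $h=\sum_k\lambda_k h_k$. (The passage from $\sqrt\Gamma$ to $\Gamma$ is handled by scaling, using divisibility of $\Lambda$, not by base change.) Incidentally, your appeal to Galois descent via ``Proposition~\ref{sheaf of piecewise linear functions}\eqref{Galois}'' is circular: Corollary~\ref{pl infinite Galois extensions} rests on Corollary~\ref{cor:pl.basechange}, which rests on Proposition~\ref{prop:pl.finite.flat}, whose proof invokes the very proposition you are proving.
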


Before proving the proposition, we state the desired corollary.

\begin{cor}\label{cor:hL.is.pl}
  Suppose that $\Lambda=\Z$ or that $\Lambda$ is divisible.  A real function $h\colon X\to\R$ is  $\Lambda$-PL if and only if for every point $x\in X$, there exists a compact strictly analytic neighbourhood $U$ of $x$, a formal $K^\circ$-model $\fU$ of $U$, and $\fL\in M(\fU)_\Lambda$, such that $h|_U = h_\fL$.  In this case, such a neighbourhood $U$ can be chosen arbitrarily small.
\end{cor}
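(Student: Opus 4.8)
The plan is to deduce this directly from Proposition~\ref{prop:Lpl.analytic.nbhd}, the model-function formalism of~\secref{model function}--\secref{sec:equiv.pl.setup}, and Raynaud's theorem on the filteredness of formal models recalled in~\secref{Formal geometry and models}.

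For the ``if'' direction I would argue as follows. Suppose every $x\in X$ has a (para)compact strictly analytic neighbourhood $U$, a formal $\kcirc$-model $\fU$ of $U$, and $\fL\in M(\fU)_\Lambda$ with $h|_U=h_\fL$. Writing $\fL=\sum_{i=1}^m\lambda_i\fL_i$ with $\lambda_i\in\Lambda$ and $\fL_i\in M(\fU)$, we get $h|_U=\sum_i\lambda_ih_{\fL_i}$, where each $h_{\fL_i}$ is PL on the paracompact space $U$ by~\secref{model function}; hence $h|_U\in\PL_\Lambda(U)$ by~\secref{sec:PL.remarks}(5). Since $\PL_\Lambda$ is a sheaf in the analytic topology (\secref{sec:PL.remarks}(4)) and the chosen neighbourhoods cover $X$, it follows that $h\in\PL_\Lambda(X)$.

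For the ``only if'' direction together with the ``arbitrarily small'' clause, I would fix $x\in X$ and apply Proposition~\ref{prop:Lpl.analytic.nbhd} to obtain an open neighbourhood $U_0$ of $x$ and a representation $h|_{U_0}=\sum_{i=1}^m\lambda_ih_i$ with $\lambda_i\in\Lambda$ and $h_i\in\PL(U_0)$. Inside $U_0$ one may choose a strictly affinoid (in particular compact, hence paracompact) neighbourhood $U$ of $x$, and $U$ can be taken as small as desired. Each restriction $h_i|_U$ is PL on the paracompact space $U$, so by~\secref{model theorem} there is a formal $\kcirc$-model $(\fU_i,\fL_i)$ of $(U,\sO_U)$ with $h_i|_U=h_{\fL_i}$. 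Since the formal $\kcirc$-models of $U$ are filtered under domination, there is a single formal $\kcirc$-model $\fU$ of $U$ together with morphisms $g_i\colon\fU\to\fU_i$ inducing the identity on generic fibres. Setting $\fL=\sum_{i=1}^m\lambda_i\,g_i^*\fL_i\in M(\fU)_\Lambda$ and using the invariance $h_{g_i^*\fL_i}=h_{\fL_i}$ together with the additivity of $\fL\mapsto h_\fL$ from~\secref{sec:equiv.pl.setup}, one obtains $h_\fL=\sum_i\lambda_ih_i|_U=h|_U$.

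The only step carrying genuine content beyond the already-established results is this last passage to a common dominating formal model of $U$: it is what allows one to realize $h|_U$ as $h_\fL$ for a \emph{single} $\fL\in M(\fU)_\Lambda$ rather than as a combination over varying models. I do not expect a real obstacle here; one mostly has to be careful that the shrunken neighbourhood is both paracompact (so that~\secref{model theorem} applies) and good strictly analytic, which is precisely why a strictly affinoid neighbourhood is the convenient choice, and that the homomorphism $\fL\mapsto h_\fL$ is used compatibly with pullback along morphisms of formal models.
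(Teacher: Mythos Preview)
Your proposal is correct and follows essentially the same route as the paper: both directions invoke Proposition~\ref{prop:Lpl.analytic.nbhd} and~\secref{model theorem}, pass to a common dominating formal model via Raynaud's theorem, and use~\secref{sec:PL.remarks}(5) for the converse. The paper's proof is slightly terser (it simply ``shrinks $X$'' rather than explicitly choosing a strictly affinoid $U\subset U_0$), but the content is identical.
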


\begin{proof}
  Suppose that $h$ is  $\Lambda$-PL.  By Proposition~\ref{prop:Lpl.analytic.nbhd}, after shrinking $X$, we may assume that $h = \sum_{i=1}^m\lambda_ih_i$ for $\lambda_1,\ldots,\lambda_m\in\Lambda$ and PL functions $h_1,\ldots,h_m\colon X\to\R$. 
  Since every point of $X$ has a compact strictly analytic neighbourhood, we may also assume that $X$ is compact. By~\secref{model theorem}, for each $i$ there is a model $(\fX_i,\fL_i)$ of $(X,\sO_X)$ such that $h_i = h_{\fL_i}$.  Finding a model $\fX$ dominating all $\fX_i$, we may assume all $\fX_i = \fX$.  Then $h = h_\fL$ for $\fL = \sum_{i=1}^m\lambda_i\fL_i$.  Conversely, if $\fL = \sum_{i=1}^m\lambda_i\fL_i$ on $U$ for $\lambda_1,\ldots,\lambda_m\in\Lambda$ and $\fL_1,\ldots,\fL_m\in M(\fU)$ then $h_\fL = \sum_{i=1}^m \lambda_i h_{\fL_i}$ is  $\Lambda$-PL by~\secref{sec:PL.remarks}(5).  
  We can choose $U$ sufficiently small because $U$ can be taken arbitrarily small in Proposition~\ref{prop:Lpl.analytic.nbhd}.
\end{proof}

We will use the following lemma in the proof of Proposition~\ref{prop:Lpl.analytic.nbhd}; we will use it again several times.

\begin{lem}\label{lem:lambda.assume.li}
  Suppose that $\Lambda$ is a divisible additive subgroup of $\R$.  Let $P$ be an abelian group, and let $x_1,\ldots,x_r\in\Lambda\tensor_\Z P$ be a finite collection of elements.  Then there is a $\Q$-linearly independent subset $\{\lambda_1,\ldots,\lambda_n\}\subset\Lambda$ such that each $x_i$ has the form $\sum_{j=1}^n \lambda_i\tensor x_{ij}$ for $x_{ij}\in P$.
\end{lem}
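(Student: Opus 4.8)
The plan is to exploit that a torsion-free divisible abelian group is a $\Q$-vector space, reducing the statement to elementary linear algebra, with one twist: since the tensor product is taken over $\Z$, rational scalars cannot be moved across $\otimes$ directly, so I will clear denominators using the divisibility of $\Lambda$.

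First I would write each $x_i\in\Lambda\tensor_\Z P$ as a finite sum of simple tensors, say $x_i = \sum_k \mu_{ik}\tensor p_{ik}$ with $\mu_{ik}\in\Lambda$ and $p_{ik}\in P$. As $\Lambda\subset\R$ is torsion-free and divisible, it is naturally a $\Q$-vector space; let $V\subset\Lambda$ be the finite-dimensional $\Q$-subspace spanned by the finitely many elements $\mu_{ik}$, and choose a $\Q$-basis $\lambda_1,\dots,\lambda_n$ of $V$. Write $\mu_{ik} = \sum_j c_{ikj}\lambda_j$ with $c_{ikj}\in\Q$.

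Next I would clear denominators. Let $b$ be a positive common denominator of all the rationals $c_{ikj}$, so $c_{ikj} = a_{ikj}/b$ with $a_{ikj}\in\Z$, and set $\lambda_j' \coloneqq \lambda_j/b\in\Lambda$, using divisibility of $\Lambda$. The set $\{\lambda_1',\dots,\lambda_n'\}$ is still $\Q$-linearly independent, and now $\mu_{ik} = \sum_j a_{ikj}\lambda_j'$ has integer coefficients. Hence $\mu_{ik}\tensor p_{ik} = \sum_j (a_{ikj}\lambda_j')\tensor p_{ik} = \sum_j \lambda_j'\tensor(a_{ikj}p_{ik})$, and summing over $k$ gives $x_i = \sum_j \lambda_j'\tensor x_{ij}$ with $x_{ij}\coloneqq\sum_k a_{ikj}p_{ik}\in P$. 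Relabeling $\lambda_j'$ as $\lambda_j$ finishes the proof.

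The only real subtlety — the step I would be careful about — is precisely this denominator-clearing: without it, the ``identity'' $\mu_{ik}\tensor p_{ik} = \sum_j \lambda_j\tensor(c_{ikj}p_{ik})$ is meaningless, since $c_{ikj}p_{ik}$ is undefined in $P$ when $c_{ikj}\notin\Z$. Divisibility of $\Lambda$ is exactly what allows the common denominator to be absorbed into the basis vectors rather than into $P$.
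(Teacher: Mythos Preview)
Your proof is correct and follows essentially the same approach as the paper's: write each $x_i$ as a sum of simple tensors, take a $\Q$-basis of the finite-dimensional $\Q$-span of the $\Lambda$-coefficients, and use divisibility of $\Lambda$ to clear denominators so that the original coefficients become $\Z$-linear combinations of the basis. The paper compresses the denominator-clearing step into the single phrase ``since $\Lambda$ is divisible, we may assume that every $\lambda_{ij}$ is a $\Z$-linear combination of $\lambda_1,\ldots,\lambda_n$,'' whereas you have spelled out this step explicitly.
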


\begin{proof}
  Write $x_i = \sum_{j=1}^{m_i}\lambda_{ij}\tensor y_{ij}$.  Let $\lambda_1,\ldots,\lambda_n$ be a basis of the $\Q$-subspace of $\Lambda$ generated
  by the $\lambda_{ij}$.  Since $\Lambda$ is divisible, we may assume that every $\lambda_{ij}$ is a $\Z$-linear combination of $\lambda_1,\ldots,\lambda_n$.  Expanding $\lambda_{ij}$ in the sum $\sum_{j=1}^{m_i}\lambda_{ij}\tensor y_{ij}$ and rearranging proves the lemma.
\end{proof}

\begin{proof}[Proof of Proposition~\ref{prop:Lpl.analytic.nbhd}]
 The ``if'' direction is trivial because a covering by open sets is a $\rG$-covering.  Suppose then that $h$ is $\Lambda$-PL.  If $\Lambda=\Z$ then $h$ is itself PL, so we may assume $\Lambda$ is divisible.  Fix $x\in X$, and replace $X$ by a compact strictly analytic neighbourhood of $x$ to assume $X$ is compact.  Choose a $\rG$-covering $\{X_i\}_{i\in I}$ of $X$ by compact strictly analytic domains such that $h|_{X_i} = \sum_{j=1}^{m_i}\lambda_{ij}\log|f_{ij}|$ for finitely many $\lambda_{ij}\in\Lambda$ and  $f_{ij} \in \sO(X_i)^\times$. We may assume that every $X_i$ contains $x$ and that $I$ is a finite set.   By Lemma~\ref{lem:lambda.assume.li}, there are $\Q$-linearly independent elements $\lambda_1, \dots, \lambda_m\in\Lambda$ such that for every $i \in I$, we have $h|_{X_i}=\sum_{k=1}^m \lambda_k h_{ik}$ with $h_{ik}=\log|g_{ik}|$ for   $g_{ik}\in \sO(X_i)^\times$.

 We claim that there is a neighbourhood $U$ of $x$ such that for all $i,j \in I$ and all $y \in U \cap X_i \cap X_j$ we have
 \begin{equation} \label{functions extend up to constants}
 h_{ik}(y)-h_{ik}(x)=h_{jk}(y)-h_{jk}(x).
 \end{equation}
 To prove this, we pick $i,j \in I$, set $X_{ij} \coloneqq X_i \cap X_j$ and consider the smooth tropicalization map $H_{ij}\colon X_{ij} \to \R^{2m}$ given by $y \mapsto (h_{ik}(y),h_{jk}(y))_{k=1,\dots,m}$.
 By~\secref{sec:trop.is.polyhedral} and~\secref{sec:ducros.germs}, the tropical variety of the germ $(X_{ij},x)$ is the germ at $H_{ij}(x)$ of a finite union of $(\Z,\Gamma)$-polytopes $\Delta$. Choosing them sufficiently small, we may assume every $\Delta$ is  contained in $H_{ij}(X_{ij})$. It is enough to show~\eqref{functions extend up to constants}
 for $y \in U_{ij} \coloneqq \bigcup_{\Delta} H_{ij}^{-1}(\Delta)$ as $U_{ij}$ is a neighbourhood of $x$ in $X_{ij}$, hence has the form $U\cap X_{ij}$ for a neighbourhood $U$ of $x$ in $X$.

 We fix $i,j \in I$ and  pick a polytope $\Delta$ as above. Since $\Delta$ is a $\Z$-polytope, the set $(H_{ij}(x) + \Q^{2m})\cap\Delta$ is dense in $\Delta$.
 By construction, there are coordinate functions $\ell_{ik}, \ell_{jk}$ of $\R^{2m}$ such that $h_{ik}=\ell_{ik}\circ H_{ij}$ and $h_{jk}=\ell_{jk}\circ H_{ij}$.  Hence it is enough to show~\eqref{functions extend up to constants} for all $y \in X_{ij}$ with $H_{ij}(y)\in \Delta$ and with $\omega=H_{ij}(y)-H_{ij}(x)\in \Q^{2m}$. Using
 \begin{equation*}
 \begin{split}
 0&= \sum_k \lambda_k h_{ik}(y) - \sum_k \lambda_k h_{jk}(y) - \sum_k \lambda_k h_{ik}(x) + \sum_k \lambda_k h_{jk}(x)\\
 &= \sum_k \lambda_k \left(h_{ik}(y)-  h_{ik}(x) - h_{jk}(y) +h_{jk}(x) \right)
 \end{split}
 \end{equation*}
and $h_{ik}(y)-  h_{ik}(x) - h_{jk}(y) +h_{jk}(x)=\ell_{ik}(\omega)-\ell_{jk}(\omega)\in \Q$, the $\Q$-linear independence of $\lambda_1,\dots,\lambda_m$ yields~\eqref{functions extend up to constants} for all such $y$. This finishes the proof of the claim.

Let us fix $i \in I$. We deduce from \eqref{functions extend up to constants} that there are constants $c_{ijk}$ such that $h_{ik}=h_{jk}+c_{ijk}$ on $U \cap X_{ij}$ for all $j
\in I$ and $k=1,\dots,m$. Evaluating at a rig-point of $U \cap X_{ij}$, we see that $c_{ijk}\in\sqrt\Gamma$.  Replacing $h$ by an integer multiple, we may assume $c_{ijk}=-v(\gamma_{ijk}) \in \Gamma$ for some $\gamma_{ijk} \in K^\times$. We conclude that  $\gamma_{ijk} g_{jk}$ is an invertible function on $X_j$. We claim that
there is a unique piecewise linear function $h_k\colon U \to \R$ that agrees with $\log|\gamma_{ijk} g_{jk}|=c_{ijk}+h_{jk}$ on $U \cap X_j$ for every $j \in I$. To see that the functions $c_{ijk}+h_{jk}$ indeed glue, we know from \eqref{functions extend up to constants} that they agree on intersections up to constants and the claim follows as $c_{ijk}+h_{jk}(x)=h_{ik}(x)$ does not depend on $j \in I$.
We note that
$$\sum_k \lambda_k h_{jk}(x)= h(x)= \sum_k \lambda_k h_{ik}(x)= \sum_k \lambda_k \left(h_{jk}(x)+c_{ijk} \right)$$
and hence $\sum_k \lambda_k c_{ijk}=0$. For $y \in U \cap X_j$, we deduce that
$$h(y)= \sum_k \lambda_k h_{jk}(y)= \sum_k \lambda_k h_{jk}(y) + \sum_k \lambda_k c_{ijk}= \sum \lambda_k h_k(y)$$
and hence $h= \sum \lambda_k h_k$ on $U$. This proves the proposition.
\end{proof}

With these foundations in place, we can use formal models to show that piecewise $\Lambda$-linearity can sometimes be checked after pull-back, at least when $\Lambda$ is divisible.

\begin{prop}\label{prop:pl.finite.flat}
  Let $f\colon X'\to X$ be a finite, flat, surjective morphism of good strictly $K$-analytic spaces.  Suppose that $\Lambda$ is divisible. If $h\colon X\to\R$ is a function and $h' = h\circ f$, then $h\in\PL_\Lambda(X)$ if and only if $h'\in\PL_\Lambda(X')$.
\end{prop}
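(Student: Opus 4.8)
The ``only if'' direction is immediate from Lemma~\ref{lem:pl.basic.props}(\ref{item:pl.prop.pullback}). For the converse, assume $h'=h\circ f\in\PL_\Lambda(X')$. Since $f$ is finite, preimages of strictly affinoid domains are strictly affinoid and $f$ restricts to a finite flat surjective morphism over each such domain; as $\PL_\Lambda$ is a sheaf for the $\rG$-topology (\secref{sec:PL.remarks}(4)) we may therefore assume $X=\sM(\cA)$ and $X'=\sM(\cA')$ are strictly affinoid, so that $X'$ is Hausdorff and $\cA\to\cA'$ is finite and faithfully flat. As $\PL_\Lambda$ is also a sheaf for the analytic topology, it now suffices to fix $x\in X$ and find a connected strictly affinoid neighbourhood $U$ of $x$ with $h|_U\in\PL_\Lambda(U)$; write $U'=f\inv(U)$, a strictly affinoid space on which $f$ induces a finite flat surjective morphism $U'\to U$ of some degree $n$.

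The fibre $f\inv(x)=\{x_1',\dots,x_r'\}$ is finite and $X'$ is Hausdorff, so by Corollary~\ref{cor:hL.is.pl} (using that the neighbourhood there can be taken arbitrarily small) we may choose \emph{pairwise disjoint} strictly affinoid neighbourhoods $W_i'\ni x_i'$, formal $\kcirc$-models $\fW_i'$ of $W_i'$, and $\fL_i'\in M(\fW_i')_\Lambda$ with $h'|_{W_i'}=h_{\fL_i'}$. As $f$ is proper there is a connected strictly affinoid neighbourhood $U$ of $x$ with $f\inv(U)\subset\bigsqcup_i W_i'$; then $U'=\bigsqcup_i U_i'$ where $U_i'\coloneq U'\cap W_i'$ is an open and closed strictly affinoid domain in $X'$ contained in $W_i'$. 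Choosing for each $i$ a formal $\kcirc$-model $\fU_i'$ of $U_i'$ with a morphism $\rho_i\colon\fU_i'\to\fW_i'$ inducing the inclusion $U_i'\hookrightarrow W_i'$ (Raynaud, see~\secref{Formal geometry and models}), pulling back $\fL_i'$, and forming the disjoint union $\fU'\coloneq\bigsqcup_i\fU_i'$, the compatibility $h_{\rho^*\fL}=h_\fL\circ\rho_\eta$ of~\secref{sec:equiv.pl.setup} produces a formal $\kcirc$-model $\fU'$ of $U'$ and an element $\fM'\in M(\fU')_\Lambda$ with $h_{\fM'}=h'|_{U'}=(h|_U)\circ f$.

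Next, after replacing $\fU'$ by a dominating model and $\fM'$ by its pull-back, choose a formal $\kcirc$-model $\fU$ of $U$ and a \emph{finite flat} morphism $\tilde f\colon\fU'\to\fU$ with $\tilde f_\eta=f$: a finite morphism of models lifting $f$ exists by Raynaud's theorems together with a normalization argument, and it can be made flat after an admissible formal blowing-up of $\fU$ by formal flattening (see~\cite{Bosch_lectures}), replacing $\fU'$ and $\fM'$ accordingly. Let $\fM\coloneq N_{\fU'/\fU}(\fM')\in M(\fU)_\Lambda$ be the $\Lambda$-linear extension of the norm functor of the finite flat morphism $\tilde f$; this lands in $M(\fU)_\Lambda$ because $N$ commutes with passage to generic fibres, the norm of the trivial bundle is trivial, and norms are multiplicative. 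The norm of a formal metric is the formal metric of the norm bundle, so for every $y\in U$ the pointwise formula for norms of metrics gives
\[
  h_\fM(y)\;=\;\sum_{f(y')=y} m(y',y)\,h_{\fM'}(y'),\qquad \sum_{f(y')=y} m(y',y)=n,
\]
with $m(y',y)=\dim_{\kappa(y)}\Ocal_{f\inv(y),y'}$. Since $h_{\fM'}=(h|_U)\circ f$, each term $h_{\fM'}(y')$ equals $(h|_U)(y)$, hence $h_\fM=n\cdot h|_U$. As $\Lambda$ is divisible and torsion-free, $\tfrac1n\fM\in M(\fU)_\Lambda$ is well defined and $h|_U=h_{\frac1n\fM}$, so $h|_U\in\PL_\Lambda(U)$ by~\secref{sec:equiv.pl.setup}. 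This completes the proof.

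I expect the main obstacle to be the construction of the finite flat morphism $\tilde f\colon\fU'\to\fU$ of formal models lifting $f$: the finiteness requires an integral-closure/normalization step, and, more seriously, flatness forces one to invoke formal flattening and re-choose $\fU'$; once this is available, the norm functor and the elementary identity $\sum_{f(y')=y}m(y',y)=n$ finish things routinely. A secondary point needing care is the behaviour of the norm functor on the model functions $h_{(-)}$, i.e.\ its compatibility with generic fibres and $\Lambda$-coefficients together with the pointwise multiplicativity of the associated metrics used above.
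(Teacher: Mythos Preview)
Your overall strategy---pass to formal models, build a finite flat morphism $\tilde f\colon\fU'\to\fU$ lifting $f$, apply the norm functor, and use the pointwise multiplicity formula to see $h_{N(\fM')}=n\cdot h|_U$---is exactly the paper's approach. Two points deserve comment.

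\textbf{Localization.} Your reduction, covering the \emph{entire} fibre $f^{-1}(x)$ by disjoint affinoid neighbourhoods and assembling a global model $\fU'$ of $U'=\bigsqcup U_i'$, works but is unnecessarily laborious. The paper exploits that a finite flat surjective morphism is \emph{open}: one picks a single $x'\in f^{-1}(x)$, uses Proposition~\ref{prop:Lpl.analytic.nbhd} to write $h'=\sum\lambda_i h_i'$ on a neighbourhood $U'$ of $x'$, shrinks $U'$ so that $f|_{U'}\colon U'\to f(U')$ is still finite flat, and then simply replaces $(X',X)$ by $(U',f(U'))$. This avoids the disjoint-neighbourhood bookkeeping entirely.

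\textbf{Constructing the finite flat $\tilde f$.} Here your sketch is imprecise in a way that matters. You propose to obtain a \emph{finite} morphism of models first ``by Raynaud's theorems together with a normalization argument'' and then flatten by an admissible blowup of $\fU$. But an admissible blowup of the target need not preserve finiteness of the (strict transform of the) morphism, so the order of operations is wrong; and the ``normalization argument'' is not a standard move in this setting. The paper proceeds differently: Raynaud's theorem yields \emph{some} morphism $\fX'\to\fX$ extending $f$; formal flattening (\cite[Theorem~5.2, Corollary~5.3(b)]{bosch_lutkeboh93:formal_rigid_geometry_II}) then makes it flat and \emph{quasi}-finite after admissible blowups of source and target; finally, Temkin's result~\cite[Corollary~4.4]{Temkin00} gives that the special-fibre map is proper, hence finite, and Nakayama promotes this to finiteness of $\tilde f$. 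This is the missing step you anticipated.

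The pointwise identity $\|1(x)\|_{N(\fL')}=\prod_{y\mapsto x}\|1(y)\|_{\fL'}^{m_y}$ that you invoke is not automatic and is proved explicitly in the paper via a local computation with frames and the factorization $N_{R_j/F}=N_{K_j/F}^{[R_j:F]/[K_j:F]}$ for the Artinian local pieces of the fibre.
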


\begin{proof}
  By Lemma~\ref{lem:pl.basic.props}(\ref{item:pl.prop.pullback}), we only need to show that if $h'$ is $\Lambda$-PL then so is $h$.  By~\cite[Proposition~3.2.7]{Berkovichetale}, the map $f$ is open.  Let $x\in X$, choose $x'\in X'$ mapping to $x$, and let $U'$ be an open neighbourhood of $x'$ such that $h'|_{U'} = \sum_{i=1}^m\lambda_i h_i'$ for $\lambda_1,\ldots,\lambda_m\in\Lambda$ and  $h_1',\ldots,h_m'\in\PL(U')$.  Such a neighbourhood exists by Proposition~\ref{prop:Lpl.analytic.nbhd}.  By~\cite[Lemma~3.1.2, Definition~3.2.5]{Berkovichetale}, after shrinking $U'$, the morphism $f|_{U'}\colon U'\to f(U')$ is finite and flat.  Thus we may replace $X'$ by $U'$ and $X$ by $f(U')$ to assume $h' = \sum_{i=1}^m\lambda_i h_i'$ on $X'$.  Shrinking further, we may assume $X$ and $X'$ are compact.  By~\secref{model theorem}, each PL function $h_i'$ is induced by a line bundle on a formal model of $X'$.  Passing to a dominating formal model, we may assume that there is a formal $K^\circ$-model $\fX'$ of $X'$ and $\fL_i'\in M(\fX')$ such that $h_i' = h_{\fL_i'}$ for each $i$.

  Using Raynaud's theorem \cite[Theorem~8.4.3, Lemma~8.4.4]{bosch14:lectures_formal_rigid_geometry}, after passing to an admissible formal blowup, we can extend $f$ to a morphism $f\colon\fX'\to\fX$ for some formal $K^\circ$-model $\fX$ of $X$.  By~\cite[Theorem~5.2, Corollary~5.3(b)]{bosch_lutkeboh93:formal_rigid_geometry_II}, after passing to admissible formal blowups of $\fX$ and $\fX'$, we may assume $f\colon\fX'\to\fX$ is flat and quasi-finite.  By~\cite[Corollary~4.4]{Temkin00} the morphism $f\colon\fX'_s\to\fX_s$ is proper, so it is finite; by Nakayama's lemma, the morphism $f\colon\fX'\to\fX$ is finite.  Working locally, we may thus assume $\fX = \Spf(A)$ and $\fX' = \Spf(A')$ are formal affines, with $\fX$ connected.  Then $A'$ is a finite, flat $A$-module, which is locally of finite presentation by~\cite[Theorem~7.3.4]{Bosch_lectures}, so it is a locally free $A$-module of some constant rank $n$ by~\cite[\href{https://stacks.math.columbia.edu/tag/02KB}{Lemma 02KB}]{stacks-project}.  Thus we can define the \emph{norm} $N(\fL')$ of a line bundle $\fL'$ on $\fX'$ as in~\cite[6.5]{egaII}.  This is a line bundle on $\fX$ and a model of $\sO_X^{\tensor n} = \sO_X$, so we get a homomorphism $N\colon M(\fX')\to M(\fX)$.  Tensoring with $\Lambda$, we have an induced homomorphism $N\colon M(\fX')_\Lambda\to M(\fX)_\Lambda$.

  Now we can transfer the considerations in \cite[8.5]{boucksom_eriksson21} to the analytic setting.  For $\fL'\in M(\fX')$ and $x\in X$, we claim that
\begin{equation} \label{multiplicity formula for norm bundle}
 \|1(x)\|_{N(\fL')} = \prod_{y\mapsto x} \|1(y)\|^{m_{y}}_{\fL'},
\end{equation}
where $m_{y} = \dim_{\sH(x)}(\sO_{f\inv(x),y})$ is the multiplicity of $y$ in $f\inv(x)$.  With this in place, letting $\fL = N(\fL')$, it follows that
\[ h_\fL(x) = \sum_{y\mapsto x} m_{y}\,h_{\fL'}(y). \]
Extending by linearity, the above equation holds for $\fL'\in M(\fX')_\Lambda$ and $\fL = N(\fL')\in M(\fX)_\Lambda$.  Letting $\fL' = \sum_{i=1}^m \lambda_i \fL_i'$ and $\fL = N(\fL')$, we have $h' = h_{\fL'}$, and for $x\in X$ we have
\[ h_\fL(x) = \sum_{y\mapsto x} m_{y}\,h_{\fL'}(y)
  = \sum_{y\mapsto x} m_{y}\, h\circ f(y)
  = h(x) \sum_{y\mapsto x} m_{y} = n\, h(x). \]
It follows that $h = h_{n\inv\fL}$ is $\Lambda$-PL.

It remains to prove~\eqref{multiplicity formula for norm bundle}.  Let $x \in X$, and let $y_1, \dots, y_r$ be the points of $f^{-1}(x)$.  Let $R_j\coloneq\sO_{f\inv(x),y_j}$, let $F \coloneqq \sH(x)$, and let $m_j\coloneq\dim_F(R_j) = m_{y_j}$.  We have the decomposition  $R \coloneqq \sO(f^{-1}(x))= \prod_j R_j$ into local Artinian rings.   Working locally on $\fX=\Spf(A)$, by~\cite[\href{https://stacks.math.columbia.edu/tag/0BUT}{Lemma 0BUT}]{stacks-project} we may assume that $\fL'$ is trivial over $\fX' = \Spf(A')$.  Choose a frame $s'$ for $\fL'$ and let $\gamma = 1/s'\in (A'\tensor_{K^\circ}K)^\times$, so $\|1\|_{\fL'} = |\gamma|$ on $X'$.  The norm $s \coloneq N(s')$ is a frame of $N(\fL)$, and $1/s = N_{A'/A}(\gamma)\in (A\tensor_{K^\circ} K)^\times$, so
$$\|1(x)\|_{N(\fL')}=|N_{A'/A}(\gamma)(x)|=|N_{R/F}(\gamma)|.$$
Let $K_j$ be the residue field of $R_j$, let $\gamma_j$ be the image of $\gamma$ in $R_j$, and let $\overline \gamma_j$ be the residue class in $K_j$. A Jordan--H\"older argument (see~\cite[2.1c, p.9]{serre97:mordell_weil}) shows that
$N_{R_j/F}(\gamma_j)=N_{K_j/F}(\overline \gamma_j)^{[R_j:F]/[K_j:F]}$.  Since the canonical valuation on $\sH(y_j)$ extends the canonical valuation on the complete non-archimedean field $F=\sH(x)$, we have
$$|\gamma(y_j)|^{[K_j : F]}=|\overline\gamma_j|^{[K_j:F]}=|N_{K_j/F}(\overline \gamma_j))|=|N_{R_j/F}(\gamma_j)|^{[K_j:F]/[R_j:F]}.$$
Putting all these together, we deduce
$$\|1(x)\|_{N(\fL')} = |N_{R/F}(\gamma)| = \prod_j |N_{R_j/F}(\gamma_j)| = \prod_j |\gamma(y_j)|^{m_j} = \prod_j \|1(y_j)\|^{m_{j}}_{\fL'}.
$$
This proves the claim in \eqref{multiplicity formula for norm bundle} and hence the proposition.
\end{proof}

Proposition~\ref{prop:pl.finite.flat} gives the following variant of Lemma~\ref{lem:pl.basic.props}(\ref{item:pl.prop.basechange}).

\begin{cor}\label{cor:pl.basechange}
  Let $X$ be a good strictly $K$-analytic space, let $K'$ be the completion of an algebraic field extension of $K$, let $X' = X\hat\tensor_K K'$, and let $\pi\colon X'\to X$ be the structure morphism.  Suppose that $\Lambda$ is divisible.  If $h\colon X\to\R$ is a function and $h' = h\circ \pi$, then $h\in\PL_\Lambda(X)$ if and only if $h'\in\PL_\Lambda(X')$.
\end{cor}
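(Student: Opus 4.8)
The ``only if'' direction is immediate from Lemma~\ref{lem:pl.basic.props}(\ref{item:pl.prop.basechange}), so I would concentrate on showing that $h'\in\PL_\Lambda(X')$ forces $h\in\PL_\Lambda(X)$. Since piecewise $\Lambda$-linearity can be checked locally in the analytic topology (\secref{sec:PL.remarks}(4)), I would first shrink $X$ to assume $X=\sM(\cA)$ is strictly affinoid, hence compact; then $X'=\sM(\cA\hat\tensor_K K')$ is compact as well. Writing $K'$ as the completion of $L=\bigcup_i K_i$, the filtered union of the finite subextensions $K_i/K$, I set $X_i=X\hat\tensor_K K_i$ and let $q_i\colon X'\to X_i$ and $\pi_i\colon X_i\to X$ be the structure maps, so that $\pi=\pi_i\circ q_i$. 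Each $q_i$ is surjective (base change along the isometric extension $K'/K_i$), and each $\pi_i$ is finite, flat and surjective because $\cA\hat\tensor_K K_i$ is a finite free $\cA$-module; we may regard the $X_i$ as good strictly $K$-analytic spaces.

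The heart of the proof is the descent claim: \emph{for some index $i$ there exists $h_i\in\PL_\Lambda(X_i)$ with $h_i\circ q_i=h'$.} Granting this, the equality $h\circ\pi_i\circ q_i=h\circ\pi=h'=h_i\circ q_i$ together with the surjectivity of $q_i$ gives $h\circ\pi_i=h_i$, and then Proposition~\ref{prop:pl.finite.flat}, applied to the finite flat surjective morphism $\pi_i$, yields $h\in\PL_\Lambda(X)$. The hypothesis that $\Lambda$ is divisible enters only through this appeal to Proposition~\ref{prop:pl.finite.flat}.

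To prove the descent claim I would use compactness of $X'$ and $h'\in\PL_\Lambda(X')$ to fix a finite $\rG$-covering $X'=\bigcup_{a=1}^N W_a'$ by strictly affinoid domains with $h'|_{W_a'}=\sum_b\lambda_{ab}\log|f_{ab}'|$ for finitely many $\lambda_{ab}\in\Lambda$ and $f_{ab}'\in\sO(W_a')^\times$. The main obstacle is to realize all of this data at one finite level. Here I would invoke the fact that $X'$ is the filtered inverse limit of the $X_i$: concretely, $\sO(X')=\sO(X_i)\hat\tensor_{K_i}K'$ is the completion of $\varinjlim_{j\ge i}\sO(X_j)$, and every strictly affinoid domain of $X'$ is the $q_i$-preimage of a strictly affinoid domain of $X_i$ once $i$ is large. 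After passing to a common index $i$ we may thus assume $W_a'=q_i^{-1}(W_{i,a})$ for strictly affinoid domains $W_{i,a}\subset X_i$, which form a $\rG$-covering of $X_i$ since $q_i$ is surjective. This reduction to a finite level is the one step that is not purely formal; it is a standard feature of ground-field extension along the completion of an algebraic extension (one may reduce to rational subdomains by Gerritzen--Grauert, or use spreading-out of finitely presented formal schemes over the adic ring $\varinjlim_i K_i^\circ$), but pinning down a clean citation or giving a self-contained argument is where I expect the real work to be.

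It then remains to descend the units and glue. As $f_{ab}'\in\sO(W_a')^\times$ and $W_a'$ is compact, $c_{ab}\coloneqq\inf_{W_a'}|f_{ab}'|>0$; by density of $\varinjlim_{j\ge i}\sO(W_{j,a})$ in $\sO(W_a')$, after enlarging $i$ I can choose $f_{i,ab}\in\sO(W_{i,a})$ with $\|f_{i,ab}\circ q_i-f_{ab}'\|<c_{ab}$ in the supremum norm. The ultrametric inequality then forces $|f_{i,ab}\circ q_i|=|f_{ab}'|$ pointwise on $W_a'$, so $f_{i,ab}\in\sO(W_{i,a})^\times$ and $(\log|f_{i,ab}|)\circ q_i=\log|f_{ab}'|$ exactly. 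Setting $h_i|_{W_{i,a}}\coloneqq\sum_b\lambda_{ab}\log|f_{i,ab}|$ gives $\Lambda$-PL functions which, on each overlap $W_{i,a}\cap W_{i,a'}$, have the same pullback $h'$ under the surjection $q_i$ and hence agree; as $\PL_\Lambda$ is a $\rG$-sheaf (\secref{sec:PL.remarks}(4)) they glue to $h_i\in\PL_\Lambda(X_i)$ with $h_i\circ q_i=h'$, proving the descent claim and with it the corollary. The only genuinely nontrivial ingredient in the whole argument is the descent of the affinoid covering to a finite level.
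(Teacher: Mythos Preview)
Your overall strategy---descend to a finite subextension and then invoke Proposition~\ref{prop:pl.finite.flat}---is exactly the paper's, and your argument is essentially correct once the descent step is granted.  The difference lies in how the descent is carried out.  You try to descend the strictly affinoid domains of a $\rG$-cover (which you rightly flag as the nontrivial step), whereas the paper sidesteps this entirely.  Instead of a $\rG$-cover, the paper uses Proposition~\ref{prop:Lpl.analytic.nbhd} to get a finite cover of $X'$ by interiors $V_i'$ of compact domains $U_i'$ on which $h'=\sum_j\lambda_{ij}h_{ij}'$ with $h_{ij}'\in\PL(U_i')$; it then \emph{extends} each $h_{ij}'$ to a PL function on all of $X'$ (using~\cite[Propositions~2.7 and~2.10]{gubler-martin}) and descends these global PL functions to some finite $X_F$ via~\cite[Proposition~2.18(b)]{gubler-martin}.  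With the $\Lambda$-PL functions $h_i=\sum_j\lambda_{ij}h_{ij}$ now living on $X_F$, openness and surjectivity of $p\colon X'\to X_F$ give that the $p(V_i')$ cover $X_F$, and on each such open set the pull-back of $h$ agrees with $h_i$.  This last comparison step requires open sets, not $\rG$-open sets, which is why the paper starts from Proposition~\ref{prop:Lpl.analytic.nbhd} rather than from the $\rG$-local definition you use.  In short: your route works but needs a spreading-out argument for affinoid domains; the paper's route trades that for extension of PL functions plus a clean citation for descent of global PL data.
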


\begin{proof}
  By Lemma~\ref{lem:pl.basic.props}(\ref{item:pl.prop.basechange}), we only need to show that if $h'$ is $\Lambda$-PL then so is $h$.  When $K'$ is a \emph{finite} extension of $K$ then this is a special case of Proposition~\ref{prop:pl.finite.flat}.  We reduce to this case as follows.

  Since piecewise $\Lambda$-linearity is a local property in the analytic topology, we may assume that $X$ is compact.
  Then $X'$ is also compact, so by Proposition~\ref{prop:Lpl.analytic.nbhd}
  there are finitely many compact strictly analytic domains $U_i'$ such that the  interiors $V_i'$ of the $U_i'$ cover $X'$ and such that $h_i' \coloneqq h'|_{U_i'}$ has the form  $h_i' = \sum_j \lambda_{ij} h_{ij}'$ for finitely many $h_{ij}'\in\PL(U_i')$ and $\lambda_{ij} \in \Lambda$.   By~\cite[Propositions~2.7 and~2.10]{gubler_martin19:zhangs_metrics}, the  functions $h_{ij}'$ extend to PL functions on $X'$. We denote the extensions also by $h_{ij}'$ leading to an extension $\sum_j \lambda_{ij} h_{ij}'$ of $h_i'$ which we denote also by $h_i'$.
  By~\cite[Proposition~2.18(b)]{gubler_martin19:zhangs_metrics}, there is a finite subextension $F/K$ of $K'/K$ such that for all $i,j$, there are PL functions $h_{ij}$ on $X_F \coloneqq X \otimes_K F$ whose pull-backs to $X'$ agree with $h_{ij}$. Then the pull-back of the $\Lambda$-PL function $h_i \coloneqq \sum_j \lambda_{ij} h_{ij}$ agrees with $h_i'$ on $X'$.
  As seen at the beginning, it is enough to show that the pull-back of $h$ to $X_F$ is $\Lambda$-PL, so we may replace $K$ by $F$ and $X$ by $X_F$.  Piecewise $\Lambda$-linearity is a local property, so we show it in a neighbourhood of any $y \in X$.
  It follows from \cite[Proposition~1.3.5, Corollary~1.3.6]{berkovic90:analytic_geometry} that the structure map $p:X' \to X$ is open and surjective, and hence the sets $p(V_i')$ form an open covering of $X$. For $y \in X$, we choose $i$ such that $y \in p(V_i')$. For all $x'\in V_i'$ and $x = p(x')$, we have
  $h(x)=h'(x')= h_i'(x')= h_i(x)$,
  proving that $h$ is $\Lambda$-PL on the neighbourhood $p(V_i')$ of $y$.
\end{proof}

\begin{cor} \label{pl infinite Galois extensions}
  Suppose that $\Lambda$ is divisible.  Let $L/K$ be a (potentially infinite) normal extension with automorphism group $G \coloneqq \Aut(L/K)$. Let $K'$ be the completion of $L$ and let $X' \coloneqq  X\hat\tensor_K K'$, with structure morphism $\pi\colon X'\to X$. Then $h\mapsto h\circ\pi$ induces an isomorphism $\PL_\Lambda(X)\isom\PL_\Lambda(X')^G$, where $\PL_\Lambda(X')^G$ is the space of $G$-invariant $\Lambda$-PL functions on $X'$.
\end{cor}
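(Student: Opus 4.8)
The plan is to prove that $\pi^{*}\colon h\mapsto h\circ\pi$ is injective with image exactly $\PL_\Lambda(X')^{G}$, the only real content being surjectivity onto the $G$-invariants, which I will deduce from Corollary~\ref{cor:pl.basechange} once I know that $G$ acts transitively on each fibre of $\pi$. Injectivity is clear since $\pi$ is surjective (by \cite[Proposition~1.3.5, Corollary~1.3.6]{berkovic90:analytic_geometry}). That the image lies in $\PL_\Lambda(X')^{G}$ is also immediate: if $h\in\PL_\Lambda(X)$ then $h\circ\pi\in\PL_\Lambda(X')$ by Lemma~\ref{lem:pl.basic.props}(\ref{item:pl.prop.basechange}), and each $g\in G$ is an isometric $K$-automorphism of $L$, hence of its completion $K'$, hence of $X'$ over $X$, so $\pi\circ g=\pi$ and $(h\circ\pi)\circ g=h\circ\pi$.

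For surjectivity, let $h'\in\PL_\Lambda(X')^{G}$, and suppose we have shown that $G$ acts transitively on $\pi^{-1}(x)$ for every $x\in X$. Then $h'$ is constant on the fibres of $\pi$, so the function $h\colon X\to\R$ defined by $h(x)\coloneqq h'(x')$ for any $x'\in\pi^{-1}(x)$ satisfies $h'=h\circ\pi$, and Corollary~\ref{cor:pl.basechange} — using that $\Lambda$ is divisible and that $K'$ is the completion of the algebraic extension $L/K$ — gives $h\in\PL_\Lambda(X)$; in particular $h$ is continuous and $\pi^{*}h=h'$. Thus everything reduces to the transitivity of $G$ on the fibres of $\pi$.

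\emph{Transitivity on fibres.} Write $L=\bigcup_\alpha F_\alpha$ as the directed union of its finite subextensions; replacing each $F_\alpha$ by its normal closure inside $L$ (again finite), we may assume every $F_\alpha/K$ is finite and normal, and then $G=\varprojlim_\alpha\Aut(F_\alpha/K)$ with each $\Aut(F_\alpha/K)$ finite. Let $p_\alpha\colon X'\to X_\alpha\coloneqq X\hat\tensor_K F_\alpha$ be the projections. I use two standard facts. (a) For finite normal $F/K$, the fibres of $X\hat\tensor_K F\to X$ are exactly the $\Aut(F/K)$-orbits: when $F/K$ is separable this is Galois descent for affinoid algebras (locally $\cA=(\cA\hat\tensor_K F)^{\Aut(F/K)}$, and $\Aut(F/K)$ permutes transitively the bounded seminorms lying over a given one), and the general case follows by splitting off $F_0\coloneqq F^{\Aut(F/K)}$, which is purely inseparable over $K$, so that $X\hat\tensor_K F_0\to X$ is a homeomorphism (a radicial finite base change) while $\Aut(F/K)=\Aut(F/F_0)$ acts over $X\hat\tensor_K F_0$ through its Galois action. (b) A point of $X'$ is determined by its images in all the $X_\alpha$: locally $X'=\sM(\cA\hat\tensor_K K')$ and $X_\alpha=\sM(\cA\hat\tensor_K F_\alpha)$, and $\bigcup_\alpha(\cA\hat\tensor_K F_\alpha)\supset\cA\tensor_K L$ is dense in $\cA\hat\tensor_K K'$, so a bounded seminorm on the latter is determined by its restrictions to the $\cA\hat\tensor_K F_\alpha$. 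Now fix $x\in X$ and $x',x''\in\pi^{-1}(x)$. For each $\alpha$ the set $S_\alpha\coloneqq\{g\in\Aut(F_\alpha/K)\mid g\cdot p_\alpha(x')=p_\alpha(x'')\}$ is non-empty by (a) — since $p_\alpha(x')$ and $p_\alpha(x'')$ lie over $x$ — and finite, and the $S_\alpha$ form an inverse system under restriction of automorphisms, so $\varprojlim_\alpha S_\alpha\neq\emptyset$. Any $g\in\varprojlim_\alpha S_\alpha\subset G$ satisfies $p_\alpha(g\cdot x')=p_\alpha(x'')$ for all $\alpha$, hence $g\cdot x'=x''$ by (b). This proves the transitivity, and with it the corollary.

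\emph{Main obstacle.} Once transitivity of $G$ on the fibres of $\pi$ is established the rest is formal: $G$-invariance forces $h'$ to factor through $\pi$, and Corollary~\ref{cor:pl.basechange} promotes the quotient function to a $\Lambda$-PL function. The heart of the matter is therefore fact (a) — finite Galois descent for affinoid algebras, together with the observation (needed only when $L/K$ is inseparable) that purely inseparable base change induces a homeomorphism — and I expect the write-up to consist largely of making (a) precise and citing the appropriate references (e.g.\ \cite{berkovic90:analytic_geometry,berkovic93:etale_cohomology,ducros18:families}), together with the harmless compactness argument that an inverse limit of non-empty finite sets is non-empty.
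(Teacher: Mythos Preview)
Your proof is correct and follows the same route as the paper: reduce to the statement that $G$ acts transitively on the fibres of $\pi$ (equivalently, $X=X'/G$), then invoke Corollary~\ref{cor:pl.basechange}. The paper simply cites \cite[Proposition~1.3.5, Corollary~1.3.6]{berkovic90:analytic_geometry} for the quotient identification $X=X'/G$, whereas you reprove this fact from scratch via finite normal subextensions and an inverse-limit argument; your expanded argument is sound, just longer than necessary given the available reference.
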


\begin{proof}
By~\cite[Proposition~1.3.5, Corollary~1.3.6]{berkovic90:analytic_geometry}, we have $X=X'/G$, so the claim follows from Corollary~\ref{cor:pl.basechange}.
\end{proof}

\section{Reduction of Germs and residues of PL Functions}\label{sec:reduction-germs}

We will characterize semipositive and harmonic functions in~\secref{section: pl and harmonic functions} in the language of Temkin's theory of reductions of germs~\cite{Temkin00}, using residues of line bundles with piecewise linear metrics from~\cite[Section~6]{CLD}.  In this section, we recall the theory following quite closely~\cite[6.1]{CLD}, and prove then the preliminary results we need.

By a \defi{variety} over a field $k$ we mean an integral, separated $k$-scheme $V$ of finite type.  The field of rational functions on $V$ is denoted $\cR(V)$.

\begin{art}[Riemann--Zariski spaces]\label{sec:rz.spaces}
  Let $k$ be a field, let $F/k$ be a field extension, and let $\bP_{F/k}$ be the set of  valuations on $F$ that are trivial on $k$, up to equivalence.  For $\nu\in\bP_{F/k}$ we let $\sO_\nu\subset F$ denote the corresponding valuation ring.  For a subset $A\subset F$ we set $\bP_{F/k}\{A\} = \{\nu\in\bP_{F/k}\mid \nu(f)\geq0\text{ for all }f\in A\}$.  Subsets of the form $\bP_{F/k}\{f_1,\ldots,f_n\}$ are said to be \defi{affine}.  Note that $\bP_{F/k} = \bP_{F/k}\{\emptyset\}$ is itself affine.
  We endow $\bP_{F/k}$ with the topology generated by affine subsets.  We equip $\bP_{F/k}$ with the sheaf of rings $\sO\colon U\mapsto \{f\mid \nu(f)\geq0 \text{ for all }\nu\in U\}$.  For $\nu\in\bP_{F/k}$ the localization of $\sO$ at $\nu$ is canonically identified with $\sO_\nu$, so $\bP_{F/k}$ is a locally ringed space.
  It is shown in~\cite[Corollary~1.3]{Temkin00} (see~\cite[Section~2]{temkin04:local_properties_II} for an erratum) that any affine open subset of $\bP_{F/k}$ is quasi-compact.
 \end{art}

\begin{art}[Models]\label{sec:rz.models}
  We keep the notation in~\secref{sec:rz.spaces}.  Let $V$ be a variety equipped with a $k$-algebra homomorphism from the function field $\cR(V)$ to $F$.  We let $\bP_{F/k}\{V\}$ denote the set of all valuations $\nu\in\bP_{F/k}$ such that $\Spec(F)\to V$ extends to a morphism $\Spec(\sO_\nu)\to V$.  Such an extension is unique if it exists, so there is a map of sets $\pi_V\colon \bP_{F/k}\{V\}\to V$ sending $\nu$ to the image of the closed point of $\Spec(\sO_\nu)\to V$.  It is easy to see that $\bP_{F/k}\{V\}$ is a finite union of affine open subsets, so it is quasi-compact.  The morphism $\pi_V\colon\bP_{F/k}\{V\}\to V$ is a surjective morphism of ringed spaces by~\cite[6.1.1]{CLD}.   Conversely, if $U$ is any quasi-compact open subset of $\bP_{F/k}$ then $U = \bP_{F/k}\{V\}$ for some such $V$ by~\cite[Proposition~1.4]{Temkin00}; we call $V$ a \defi{model} of $U$.%
  \footnote{In~\cite{CLD} these are called \emph{premodels}.}

  Let $V$ and $V'$ be varieties equipped with $k$-algebra homomorphisms $\cR(V)\to F$ and $\cR(V')\to F$.  A dominant morphism $V'\to V$, compatible with the morphisms $\cR(V)\to F$ and $\cR(V')\to F$, is unique if it exists, in which case $\bP_{F/k}\{V'\}\subset\bP_{F/k}\{V\}$.  In this case we say that $V'$ \defi{dominates} $V$.  We have equality $\bP_{F/k}\{V'\}=\bP_{F/k}\{V\}$ if and only if $V'\to V$ is proper by the valuative criterion of properness for integral schemes (see~\cite[Corollaire~7.3.10(ii)]{egaII} and~\cite[Lemma~3.2.1]{temkin11:riemann_zariski}).
\end{art}

\begin{art}[Functoriality]\label{sec:functoriality.RZ}
  Let $F'/k'$ be an extension of $F/k$: that is, $F'$ is an extension field of $F$ and $k'$ is a subfield of $F'$ containing $k$.  Restriction of valuations gives a canonical map $\phi\colon \bP_{F'/k'}\to\bP_{F/k}$.  The inverse image of $\bP_{F/k}\{f_1,\ldots,f_n\}$ is $\bP_{F'/k'}\{f_1,\ldots,f_n\}$, so $\phi$ is continuous.  One checks that $\phi$ induces a morphism of locally ringed spaces.  If $k'=k$ then $\phi$ is surjective, and if $F'=F$ then $\phi$ is injective.

  Let $V$ (resp.\ $V'$) be a $k$-variety (resp.\ $k'$-variety) equipped with a $k$-algebra homomorphism $\cR(V)\to F$ (resp.\ a $k'$-algebra homomorphism $\cR(V')\to F'$).  As in~\secref{sec:rz.models}, a dominant morphism $V'\to V$ of $k$-schemes, compatible with the morphisms $\cR(V)\to F$ and $\cR(V')\to F'$, is unique if it exists, in which case $\phi(\bP_{F'/k'}\{V'\})\subset\bP_{F/k}\{V\}$.  In this case we also say that $V'$ \defi{dominates} $V$.
\end{art}

\begin{lem}\label{lem:models.functoriality}
  Let $F'/k'$ be an extension of $F/k$, and let $\phi\colon\bP_{F'/k'}\to\bP_{F/k}$ be the canonical morphism.  Let $V$ be a $k$-variety equipped with a $k$-algebra homomorphism $\cR(V)\to F$, and let $V'$ be the closure of the image of the canonical $k'$-morphism $\Spec(F')\to V\tensor_k k'$.  Then $\bP_{F'/k'}\{V'\} = \phi\inv(\bP_{F/k}\{V\})$.
\end{lem}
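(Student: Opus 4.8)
The plan is to prove the two inclusions $\bP_{F'/k'}\{V'\}\subset\phi\inv(\bP_{F/k}\{V\})$ and $\phi\inv(\bP_{F/k}\{V\})\subset\bP_{F'/k'}\{V'\}$ separately. The first is a formal consequence of the functoriality recalled in~\secref{sec:functoriality.RZ}: the composite $V'\inject V\tensor_k k'\to V$ is a dominant morphism of $k$-schemes, and unwinding the construction of $V'$ one sees that the induced map $\cR(V)\to\cR(V')$ followed by $\cR(V')\inject F'$ is exactly the prescribed map $\cR(V)\to F\inject F'$, so that $V'\to V$ is compatible with $\cR(V)\to F$ and $\cR(V')\to F'$. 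Hence $\phi(\bP_{F'/k'}\{V'\})\subset\bP_{F/k}\{V\}$ by~\secref{sec:functoriality.RZ}, which is the first inclusion.

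For the reverse inclusion I would argue directly with valuation rings. Fix $\nu'\in\bP_{F'/k'}$ and put $\nu\coloneqq\phi(\nu')=\nu'|_F$, assuming $\nu\in\bP_{F/k}\{V\}$. One first records the elementary facts that $\sO_\nu=\sO_{\nu'}\cap F$ with $\fm_\nu=\fm_{\nu'}\cap\sO_\nu$, so $\sO_\nu\inject\sO_{\nu'}$ is a local homomorphism, and that $k'\subset\sO_{\nu'}$ since $\nu'$ is trivial on $k'$. By hypothesis the canonical morphism $\Spec(F)\to V$ extends to $\Spec(\sO_\nu)\to V$, which is the same datum as a point $y\in V$ together with a local homomorphism $\sO_{V,y}\to\sO_\nu$ inducing the prescribed embedding $\cR(V)\inject F$ on fraction fields. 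Composing with $\sO_\nu\inject\sO_{\nu'}$ and combining with $k'\subset\sO_{\nu'}$, one obtains a $k'$-morphism $h\colon\Spec(\sO_{\nu'})\to V\tensor_k k'$ whose restriction to the generic point $\Spec(F')$ is the canonical $k'$-morphism $\Spec(F')\to V\tensor_k k'$ used to define $V'$. Since $\sO_{\nu'}$ is a local domain, $\Spec(\sO_{\nu'})$ is irreducible with generic point mapping to the generic point $\xi$ of $V'$, so the image of $h$ lies in $\overline{\{\xi\}}=|V'|$; as $\Spec(\sO_{\nu'})$ is reduced, $h$ factors through the reduced closed subscheme $V'\inject V\tensor_k k'$. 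The resulting morphism $\Spec(\sO_{\nu'})\to V'$ extends $\Spec(F')\to V'$, because the factorization of $h|_{\Spec(F')}$ through the monomorphism $V'\inject V\tensor_k k'$ is unique and coincides with the canonical map $\Spec(F')\to V'$. Hence $\nu'\in\bP_{F'/k'}\{V'\}$, completing the reverse inclusion.

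I do not anticipate a serious obstacle: the whole argument amounts to unwinding the definitions of $V'$ and of the sets $\bP_{F/k}\{V\}$, $\bP_{F'/k'}\{V'\}$. The only point requiring real care is bookkeeping, namely checking at each stage that the various morphisms out of $\Spec(F')$, $\Spec(\sO_\nu)$ and $\Spec(\sO_{\nu'})$ restrict compatibly to the prescribed maps on function fields, so that the extensions produced genuinely extend the morphisms appearing in those definitions. The two substantive inputs, that $\sO_\nu=\sO_{\nu'}\cap F$ with $\fm_\nu=\fm_{\nu'}\cap\sO_\nu$ and that a morphism from a reduced scheme whose image lies in a closed subset factors through the associated reduced closed subscheme, are completely standard.
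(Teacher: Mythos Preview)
Your proof is correct and takes essentially the same approach as the paper's. The only cosmetic difference is that for the inclusion $\bP_{F'/k'}\{V'\}\subset\phi\inv(\bP_{F/k}\{V\})$ you invoke the functoriality statement from~\secref{sec:functoriality.RZ} (after checking that $V'\to V$ is dominant and compatible), whereas the paper argues directly by tracing the image of $\sO_{V,x}$ inside $\sO_{\nu'}\cap F=\sO_\nu$; the reverse inclusion is argued identically in both, with your version being slightly more explicit about why the morphism factors through the reduced closed subscheme $V'$.
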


\begin{proof}
  This is similar to~\cite[Lemma~3.2.1]{temkin11:riemann_zariski}.  Let $\nu'\in\bP_{F'/k'}$ and let $\nu = \phi(\nu') = \nu'|_F$.  If $\nu'\in\bP_{F'/k'}\{V'\}$ then we have a $k'$-morphism $\Spec(\sO_{\nu'})\to V'$ extending $\Spec(F')\to V'$.  Let $x\in V$ be the image of the closed point of $\Spec(\sO_{\nu'})$ under the composition $\Spec(\sO_{\nu'})\to V'\to V$.  Then $\Spec(\sO_{\nu'})\to V$ factors through $\Spec(\sO_{V,x})$, so we have a $k$-algebra homomorphism $\sO_{V,x}\to\sO_{\nu'}$.  It follows that the image of $\sO_{V,x}$ is contained in the intersection $\sO_\nu = \sO_{\nu'}\cap F$, so $\Spec(F)\to V$ extends to a morphism $\Spec(\sO_\nu)\to V$, and hence $\nu\in\bP_{F/k}\{V\}$.  Conversely, suppose that $\nu\in\bP_{F/k}\{V\}$.  Then $\Spec(\sO_{\nu'})$ maps to $V$ via $\Spec(\sO_{\nu'})\to\Spec(\sO_\nu)\to V$, so $\Spec(F')\to V\tensor_k k'$ extends to $\Spec(\sO_{\nu'})\to V\tensor_k k'$.  The image of this morphism is contained in $V'$, so $\nu'\in\bP_{F'/k'}\{V'\}$.
\end{proof}

As a special case of the next lemma, we see that the poset of models of a quasi-compact open subset $U\subset\bP_{F/k}$ is filtered.

\begin{lem}\label{lem:models.dominate}
  Let $F'/k'$ be an extension of $F/k$, and let $\phi\colon\bP_{F'/k'}\to\bP_{F/k}$ be the canonical morphism.  Let $U'\subset\bP_{F'/k'}$ and $U\subset\bP_{F/k}$ be quasi-compact open subsets such that $\phi(U')\subset U$.  Let $V'$ and $V$ be models of $U'$ and $U$, respectively.  Then there is a model $W'$ of $U'$ that dominates both $V$ and $V'$.
\end{lem}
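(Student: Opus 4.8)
The plan is to reduce, via a base-change step using Lemma~\ref{lem:models.functoriality}, to the situation where both varieties live over $k'$ with function fields inside $F'$, and then to build $W'$ as the reduced closure of a diagonal image in a fiber product. \emph{Step 1: base change.} Let $V_1'$ be the closure, with reduced subscheme structure, of the image of the canonical $k'$-morphism $\Spec(F')\to V\tensor_k k'$. By Lemma~\ref{lem:models.functoriality} we have $\bP_{F'/k'}\{V_1'\}=\phi\inv(\bP_{F/k}\{V\})=\phi\inv(U)\supset U'$. Moreover, the projection $V\tensor_k k'\to V$ restricts to a dominant $k$-morphism $V_1'\to V$ (the image of $\Spec(F')$ in $V$ is the generic point, whose closure is $V$), compatible with $\cR(V)\to F\to F'$ and $\cR(V_1')\inject F'$; so $V_1'$ dominates $V$ in the sense of~\secref{sec:functoriality.RZ}. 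Since a composition of dominant morphisms of schemes is dominant and the field compatibilities compose, it now suffices to find a model $W'$ of $U'$ dominating the two $k'$-varieties $V_1'$ and $V'$.

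\emph{Step 2: the common model.} The morphisms $\Spec(F')\to V_1'$ and $\Spec(F')\to V'$ combine into a $k'$-morphism $\Spec(F')\to V_1'\times_{k'}V'$; let $W'$ be the closure of its image, with reduced subscheme structure. Then $W'$ is integral (irreducible as the closure of a single point, reduced by fiat), and it is separated and of finite type over $k'$ because $V_1'\times_{k'}V'$ is; hence $W'$ is a $k'$-variety, and its function field (the residue field at the image point of $\Spec(F')$) carries an embedding $\cR(W')\inject F'$. The two projections restrict to dominant $k'$-morphisms $W'\to V_1'$ and $W'\to V'$ compatible with the embeddings into $F'$, so $W'$ dominates $V_1'$ and $V'$, and hence dominates $V$ and $V'$ by Step 1.

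\emph{Step 3: identifying $U'$.} Since $W'$ dominates $V'$, we get $\bP_{F'/k'}\{W'\}\subset\bP_{F'/k'}\{V'\}=U'$. Conversely, let $\nu'\in U'$. As $U'\subset\phi\inv(U)=\bP_{F'/k'}\{V_1'\}$, both $\Spec(F')\to V_1'$ and $\Spec(F')\to V'$ extend over $\Spec(\sO_{\nu'})$, yielding a morphism $\Spec(\sO_{\nu'})\to V_1'\times_{k'}V'$. The generic point of $\Spec(\sO_{\nu'})$ maps to the image point of $\Spec(F')$, which lies in the closed set $W'$; the closed point of $\Spec(\sO_{\nu'})$, being a specialization of the generic point, therefore also maps into $W'$, so the whole set-theoretic image lies in $W'$. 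Since $\sO_{\nu'}$ is reduced, the morphism factors through the reduced closed subscheme $W'$, whence $\nu'\in\bP_{F'/k'}\{W'\}$. Thus $W'$ is a model of $U'$ dominating both $V$ and $V'$, as required.

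The bookkeeping in Steps 1--2 (integrality, separatedness and finiteness of $W'$, and the compatibility of every morphism with the maps to $F$ and $F'$) is routine. The point that really needs care is the set-theoretic image argument in Step 3: ensuring that an extension $\Spec(\sO_{\nu'})\to V_1'\times_{k'}V'$ lands inside $W'$ and then factors through it, which uses that $W'$ is closed and reduced and that $\sO_{\nu'}$ is a domain.
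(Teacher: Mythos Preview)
Your proof is correct and follows essentially the same route as the paper's: reduce via Lemma~\ref{lem:models.functoriality} to two $k'$-varieties with function fields inside $F'$, then take the closure of the diagonal image in their fiber product, and check that the resulting variety is a model of $U'$ by extending valuations to both factors. Your exposition is in fact slightly more careful than the paper's in making the reduced structure explicit and explaining why the morphism $\Spec(\sO_{\nu'})\to V_1'\times_{k'}V'$ factors through $W'$.
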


\begin{proof}
  Let $W$ be the closure of the image of the canonical $k'$-morphism $\Spec(F')\to V\tensor_k k'$, as in Lemma~\ref{lem:models.functoriality}.  Then this lemma shows that $W\to V$ is dominant and $\phi\inv(U) = \bP_{F'/k'}\{W\}$, so we can replace $V$ by $W$ to assume $k=k'$ and $F=F'$.  The given morphisms $\Spec(F)\to V$ and $\Spec(F)\to V'$ determine a morphism $\Spec(F)\to V\times_k V'$; let $W'$ be the closure of the image of this morphism.  This is a variety.  The generic point of $W'$ maps to the generic points of $V$ and $V'$, so $W'\to V$ and $W'\to V'$ are dominant.  We must show that $\bP_{F/k}\{W'\} = U$.  For $\nu\in U$ we have unique morphisms $\Spec(\sO_v)\to V$ and $\Spec(\sO_v)\to V'$, which induces a morphism $p\colon \Spec(\sO_v)\to V\times_k V'$ extending $\Spec(F)\to V\times_k V'$.  The image of $p$ is contained in $W'$, so $\nu\in\bP_{F/k}\{W'\}$.
\end{proof}

\begin{art}[Invertible sheaves on Riemann--Zariski spaces]
  With the notation in~\secref{sec:rz.spaces}, let $U\subset\bP_{F/k}$ be a quasi-compact open subset.  Then $U$ is a ringed space, so it makes sense to discuss invertible sheaves on $U$.  We let $\Pic(U)$ denote the group of isomorphism classes of invertible sheaves, and for an additive subgroup $\Lambda\subset\R$ we set $\Pic(U)_\Lambda = \Pic(U)\tensor_\Z\Lambda$.

  Let $V$ be a model of $U$ as in~\secref{sec:rz.models}, with canonical morphism $\pi_V\colon U\to V$.  If $L_V$ is an invertible sheaf on $V$, then $L\coloneq\pi_V^*L_V$ is an invertible sheaf on $U$.  It is an important fact that every invertible sheaf on $U$ arises in this way. This is explained in~\cite[6.1.3]{CLD}. We call $(V,L_V)$ a \defi{model} of $(U,L)$.

  More generally, for an additive subgroup $\Lambda$ of $\R$  we have a group homomorphism $\Pic(V)_\Lambda\to\Pic(U)_\Lambda$.  Any element of $\Pic(U)_\Lambda$ can be expressed as a finite sum $L = \sum_{i=1}^n\lambda_i L_i$ for $\lambda_i\in\Lambda$ and $L_i\in\Pic(U)$.  Choosing a model for each $L_i$ and passing to a dominating model, we see that any $L\in\Pic(U)_\Lambda$ has the form $\pi_V^*L_V$ for a model $V$ of $U$ and $L_V\in\Pic(V)_\Lambda$.  We call such $(V,L_V)$ a \defi{model} of $(U,L)$ as above.
\end{art}

\begin{art}[Nef line bundles]\label{sec:nefness}
  Let $X$ be a separated $k$-scheme of finite type and let $L$ be a line bundle on $X$.  For an integral, proper closed curve $C\subset X$ we have a well-defined intersection number $c_1(L).C\coloneq\deg(L|_C)\in\Z$.  We say that $L$ is \defi{numerically effective} or \defi{nef} if $c_1(L).C\geq 0$ for all such curves $C$, and that $L$ is \defi{numerically trivial} if $c_1(L).C=0$ for all $C$ (equivalently, if $L$ and $L\inv$ are both nef).

  More generally, let $\Lambda\subset\R$ be an additive subgroup and let $C\subset X$ be an integral, proper closed curve.  We have an induced homomorphism $L\mapsto c_1(L).C\colon\Pic(X)_\Lambda\to\Lambda$, and we say that $L$ is \defi{nef} (resp.\ \defi{numerically trivial}) if $c_1(L).C\geq 0$ (resp.\ $c_1(L).C=0$) for all such~$C$.

  The set of nef line bundles in $\Pic(X)_\Lambda$ forms a monoid (resp.\ a cone if $\Lambda=\R$), and the set of numerically trivial line bundles form a subgroup (resp.\ an $\R$-subspace).
\end{art}

Here we list some classical properties of nefness.  We sketch some proofs because in the literature, the scheme $X$ is generally assumed to be proper.

\begin{lem}\label{lem:nef.properties}
  Let $X$ be a separated $k$-scheme of finite type and let $L\in\Pic(X)_\Lambda$.
  \begin{enumerate}
  \item The line bundle $L$ is nef if and only if its image in $\Pic(X)_\R$ is nef.
  \item Let $k'/k$ be a field extension and let $X' = X\tensor_k k'$, with structure morphism $\pi\colon X'\to X$.  Then $L$ is nef if and only if $\pi^*L$ is nef.
  \item Let $f\colon X'\to X$ be a morphism of separated, finite-type $k$-schemes.  If $L$ is nef then $f^*L$ is nef, and the converse holds if $f$ is proper and surjective.
  \item Let $\iota\colon X_{\red}\to X$ be the reduction map.  Then $L$ is nef if and only if $\iota^*L$ is nef.
  \end{enumerate}
  The same statements hold with ``nef'' replaced by ``numerically trivial.''
\end{lem}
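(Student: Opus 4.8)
The plan is to prove the statement for ``nef'' only; the ``numerically trivial'' version then follows by applying the nef statement to both $L$ and $-L$ in $\Pic(X)_\Lambda$, since $L$ is numerically trivial precisely when $L$ and $-L$ are both nef. In all four parts it suffices to argue with the $\Lambda$-linear intersection pairing $L\mapsto c_1(L).C$ of~\secref{sec:nefness}. Part~(1) is then immediate: for an integral proper curve $C\subset X$ the real number $c_1(L).C$ is unchanged when $L$ is replaced by its image in $\Pic(X)_\R$, so the two nefness conditions coincide verbatim. Part~(4) is equally formal: an integral closed subscheme of $X$ is reduced, hence is a closed subscheme of $X_{\red}$, so $X$ and $X_{\red}$ have the same integral proper closed curves $C$, and $L|_C=(\iota^*L)|_C$ for each of them; the two systems of defining inequalities are therefore identical.

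For part~(3) the key algebraic input is the degree formula $\deg(g^*N)=[\kappa(C'):\kappa(C)]\cdot\deg(N)$ for a finite surjective morphism $g\colon C'\to C$ of integral proper curves over $k$ and $N\in\Pic(C)_\Lambda$; this comes from the projection formula $\chi(C',g^*N)=\chi(C,N\tensor g_*\sO_{C'})$ together with the fact that $g_*\sO_{C'}$ is torsion free of generic rank $[\kappa(C'):\kappa(C)]$. Granting this, the forward implication runs as follows: for an integral proper curve $C'\subset X'$, the restriction of $f$ is a proper morphism $C'\to X$ (as $C'$ is proper over $k$ and $X$ is separated), so its image is an integral closed subscheme $C\subset X$ with $\dim C\le 1$; if $\dim C=0$ then $C$ is a closed point and $f^*L|_{C'}$ is trivial, while if $\dim C=1$ then $C'\to C$ is finite surjective and $c_1(f^*L).C'=[\kappa(C'):\kappa(C)]\,c_1(L).C\ge 0$. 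For the converse, with $f$ proper and surjective and $C\subset X$ an integral proper curve, $f^{-1}(C)$ is proper over $k$ and surjects onto $C$; taking the closure of a closed point of the generic fibre of a component of $f^{-1}(C)$ dominating $C$ produces an integral proper curve $D'\subset f^{-1}(C)$ with $D'\to C$ finite surjective, so $c_1(L).C=[\kappa(D'):\kappa(C)]^{-1}\,c_1(f^*L).D'\ge 0$.

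For part~(2) the case of a finite extension $k'/k$ is already contained in~(3), because then $\pi\colon X'=X\tensor_k k'\to X$ is a finite surjective morphism of finite-type $k$-schemes. The ``if'' direction for an arbitrary extension is also easy: for an integral proper curve $C\subset X$ the base change $C\tensor_k k'$ is a proper curve over $k'$, and decomposing it into its integral components with their multiplicities and using that Euler characteristics commute with the flat base change $k'/k$ exhibits $c_1(L).C$ as a non-negative combination of the numbers $c_1(\pi^*L).D_i\ge 0$. The substantive point, and the main obstacle, is the forward direction for a transcendental extension. Here I would first reduce to $k'/k$ finitely generated: the closed immersion $C'\hookrightarrow X'$ is finitely presented, hence descends to a closed subscheme $C_\alpha$ of $X\tensor_k k_\alpha$ over a finitely generated subextension $k_\alpha/k$, which after enlarging $k_\alpha$ is again an integral proper curve with $C'=C_\alpha\tensor_{k_\alpha}k'$, and $c_1(\pi^*L).C'$ equals the corresponding intersection number on $C_\alpha$ by flat base change. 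Writing $k_\alpha$ as the function field of an integral $k$-variety $U$, I would then spread $C_\alpha$ out to a flat proper family $\cC\to U_0$ over a dense open $U_0\subseteq U$ with integral total space and one-dimensional fibres, sitting inside $X\times_k U_0$. By constancy of the Euler characteristic in a flat proper family over the connected base $U_0$, the number $c_1(L_{k_\alpha}).C_\alpha$ equals the corresponding intersection number on the fibre $\cC_u$ over a closed point $u\in U_0$; since $\kappa(u)$ is finite over $k$, the fibre $\cC_u$ is a proper curve over $\kappa(u)$, hence over $k$, mapping finitely to $X$, and breaking $\cC_u$ into integral components---each mapping either to a closed point of $X$ or finitely onto an integral proper curve in $X$---the nefness of $L$ on $X$ and the degree formula of~(3) show that this number is non-negative. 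Essentially all of the work sits in this last step: the bookkeeping with reducible and non-reduced fibres, the normalisation of degrees over the various residue fields, and the limit and spreading-out arguments (standard descent and spreading-out of finite presentation and properness, and constancy of Euler characteristics in flat proper families). The conceptual content, by contrast, is only the dichotomy ``the image is a point'' versus ``the image is a curve'' already met in~(3).
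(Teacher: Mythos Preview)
Your proof is correct and follows essentially the same approach as the paper: parts~(1), (3), (4), and the numerically trivial reduction are handled identically, and part~(2) uses the same spreading-out and constancy-of-degree argument. The only minor difference is that the paper first treats algebraic extensions and then reduces to $k$ algebraically closed before spreading out (so the closed fibre sits directly inside $X$), whereas you skip this reduction and instead handle the closed fibre over a finite residue field by reapplying~(3); both variants work.
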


\begin{proof}
  The last assertion holds because $L$ is numerically trivial if and only if $L$ and $L\inv$ are nef.
  Assertion~(1) is obvious because nefness is a numerical property, and
  (4) is clear because $X$ and $X_{\red}$ have the same integral subcurves.

  (2)~\ Since intersection numbers are compatible with base extension, it is clear that if $\pi^*L$ is nef, then also $L$ is nef.
   To prove the converse, we assume that $L$ is nef. First suppose that $k'/k$ is algebraic. Since every integral proper subcurve $C'$ of $X'$ is defined over a finite subextension, we may assume that $k'/k$ is a finite extension. Then we may view $X'$ as a scheme of finite type over $k$ and $\pi$ as a finite morphism over $k$. Then $\pi(C')$ is a proper subcurve of $X$ and we get $c_1(\pi^*L).C'=c_1(L).\pi_*(C') \geq0$ by the projection formula. This proves the algebraic case of (2), so we may assume $k$ is algebraically closed to prove the claim in general. Let $C'$ again be an integral proper subcurve of $X'$. Since $C'$ is defined over a finitely generated subextension, we may assume that $k'$ is a finitely generated field extension of $k$. Then $k'$ is the function field of a variety $V$ over $k$. Let $\pi_V\colon  X \times_k V \to V$ and $\pi_X\colon X\times_k V\to X$ be the structure morphisms. The morphism $\pi \colon X' \to X$ is the generic fiber of $\pi_V$.  Since $C'$ is projective and flat over $k'$, the spreading out property shows that there is a non-empty open subset $U$ of $V$ such that the   closure $\cC'$ of $C'$ in $\pi_V^{-1}(U)$ is flat and projective over $U$. By~\cite[Proposition~VI.2.9]{kollar96},
   the degree of the fiber $\cC_u'$ with respect to  $\pi_X^*(L)$ is independent of the choice of $u\in U$. Let $\eta$ be the generic point of $U$ and let $u$ be a closed point of $U$. Since $\cC_u' \cong C$ for a proper closed subcurve $C$ of $X$, the claim follows from
   $$c_1(\pi^*L).C'=\deg_{\pi_V^*(L)}(\cC_\eta')=\deg_{\pi_V^*(L)}(\cC_u')=\deg_L(C)=c_1(L).C \geq 0.$$

   (3)~\ If $L$ is nef  then $f^*L$ is nef because if $C'\subset X'$ is an integral proper $k$-curve then $f(C')\subset X$ is integral and proper of dimension  $\leq 1$, so the claim follows from the projection formula.  Conversely, if $f^*L$ is nef and if $f$ is a proper surjective map, then $L$ is nef because every integral proper $k$-curve $C\subset X$ is the image of an integral proper $k$-curve $C'\subset X'$ (take the closure of a closed point in the fiber over the generic point of $C$).
\end{proof}

\begin{defn}
  Let $U$ be a quasi-compact open subset of $\bP_{F/k}$, let $\Lambda\subset\R$ be an additive subgroup, and let $L\in\Pic(U)_\Lambda$.  We say that $L$ is \defi{nef} if there exists a model $(V,L_V)$ of $(U,L)$ with $L_V$ nef.  We say that $L$ is \defi{numerically trivial} if both $L$ and $L\inv$ are nef.
\end{defn}

\begin{art}
  The set of nef elements of $\Pic(U)_\Lambda$ forms a monoid (pass to a dominating model and use Lemma~\ref{lem:nef.properties}(3)), resp.\ a cone if $\Lambda=\R$; the set of numerically trivial elements forms a group (resp.\ an $\R$-subspace).  An element of $\Pic(U)_\Lambda$ is nef if and only if its image in $\Pic(U)_\R$ is nef.
\end{art}

The following lemma shows that nefness can be checked on any model.

\begin{lem}\label{lem:nef.all.models}
  Let $U$ be a quasi-compact open subset of $\bP_{F/k}$ and let $L\in\Pic(U)_\Lambda$ be nef.  If $(V,L_V)$ is a model of $L$, then $L_V$ is nef.
\end{lem}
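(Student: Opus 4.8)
The plan is to upgrade the existential condition in the definition of nefness to a universal one by passing to a common dominating model. By hypothesis there is a model $(V_0,L_0)$ of $(U,L)$ with $L_0\in\Pic(V_0)_\Lambda$ nef; let $(V,L_V)$ be the arbitrary model in the statement. First I would apply Lemma~\ref{lem:models.dominate} (with $F'=F$, $k'=k$ and $U'=U$) to obtain a model $W$ of $U$ dominating both $V$ and $V_0$, with the unique dominant morphisms $f\colon W\to V$ and $f_0\colon W\to V_0$. Since $W$, $V$ and $V_0$ are all models of $U$, the morphisms $f$ and $f_0$ satisfy $\bP_{F/k}\{W\}=\bP_{F/k}\{V\}=\bP_{F/k}\{V_0\}$, hence are proper by the valuative criterion recalled in~\secref{sec:rz.models}; being closed and dominant between integral schemes, they are also surjective. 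Directly from the construction of the canonical maps in~\secref{sec:rz.models} one has $f\circ\pi_W=\pi_V$ and $f_0\circ\pi_W=\pi_{V_0}$, so $\pi_W^*(f^*L_V)=\pi_V^*L_V=L=\pi_{V_0}^*L_0=\pi_W^*(f_0^*L_0)$ in $\Pic(U)_\Lambda$. In other words $(W,f^*L_V)$ and $(W,f_0^*L_0)$ are two models of $(U,L)$ with the same underlying variety.

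The key step is to deduce that $f^*L_V$ and $f_0^*L_0$ become isomorphic on a further model. For this I would use that $\Pic(U)=\varinjlim_{W'}\Pic(W')$, the filtered colimit running over all models $W'$ of $U$ with the pullback maps as transition maps; this is~\cite[6.1.3]{CLD} together with Lemma~\ref{lem:models.dominate}, which supplies the filteredness. Since $-\tensor_\Z\Lambda$ commutes with filtered colimits we also get $\Pic(U)_\Lambda=\varinjlim_{W'}\Pic(W')_\Lambda$. As $f^*L_V$ and $f_0^*L_0$ have the same image $L$ in this colimit, there is a model $W'$ of $U$ and a dominant morphism $g\colon W'\to W$ such that $g^*f^*L_V=g^*f_0^*L_0$ in $\Pic(W')_\Lambda$; as above, $g$ is proper and surjective.

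Finally I would propagate nefness back down the tower. Since $L_0$ is nef, $f_0^*L_0$ is nef by Lemma~\ref{lem:nef.properties}(3), hence so is $g^*f_0^*L_0=g^*f^*L_V$. Because $g\colon W'\to W$ is proper and surjective, the converse direction of Lemma~\ref{lem:nef.properties}(3) gives that $f^*L_V$ is nef; because $f\colon W\to V$ is proper and surjective, the same lemma gives that $L_V$ is nef, as claimed. (If $L$ is even numerically trivial, apply the above to $L$ and to $L\inv$.)

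I expect the only real obstacle to be the identification $\Pic(U)_\Lambda=\varinjlim_{W'}\Pic(W')_\Lambda$ used in the second step, and the bookkeeping around it: one must check that the system of models of $U$ is filtered (this is exactly Lemma~\ref{lem:models.dominate}), that two line bundles on a fixed model $W$ agreeing in $\Pic(U)_\Lambda$ really agree on some model dominating $W$ after tensoring with $\Lambda$, and that the relevant maps between models are proper and surjective. Everything else is formal: the behaviour of nefness under proper surjective pullback comes from Lemma~\ref{lem:nef.properties}(3), and properness of morphisms between models from the valuative criterion in~\secref{sec:rz.models}.
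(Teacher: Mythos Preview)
Your proof is correct and follows essentially the same approach as the paper: pass to a common dominating model via Lemma~\ref{lem:models.dominate}, use $\Pic(U)_\Lambda=\varinjlim\Pic(W')_\Lambda$ to make the two pullbacks agree on a further model, and then transport nefness down using Lemma~\ref{lem:nef.properties}(3). You are in fact slightly more careful than the paper in spelling out that the dominating morphisms are proper \emph{and surjective} (needed for the converse direction of Lemma~\ref{lem:nef.properties}(3)), and in keeping the extra model $W'$ explicit rather than silently replacing $W$.
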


\begin{proof}
  By definition, there is a model $(V',L_{V'})$ of $L$ such that $L_{V'}$ is nef.  Let $W$ be a model dominating both $V$ and $V'$ as in Lemma~\ref{lem:models.dominate}, and let $L_W$ and $L_W'$ be the pullbacks of $L_V$ and $L_{V'}$ to $W$, respectively.
  It is shown in~\cite[6.1.3]{CLD} that $\Pic(U)$ is the inductive limit of the groups $\Pic(V)$ over all models $V$ of $U$.  Since tensor products commute with inductive limits, we have $\Pic(U)_\Lambda = \varinjlim\Pic(V)_\Lambda$.  Hence we can replace $W$ by a larger model such that $L_W = L_W'$ in $\Pic(W)$.  Then $L_W$ is nef, so $L_{V'}$ is nef because $W\to V'$ is proper, using Lemma~\ref{lem:nef.properties}(3) twice.
\end{proof}

The next lemma is an analogue of Lemma~\ref{lem:nef.properties}.

\begin{lem}\label{lem:nef.properties.RZ}
  Let $F'/k'$ be an extension of $F/k$, and let $\phi\colon\bP_{F'/k'}\to\bP_{F/k}$ be the canonical morphism.  Let $U'\subset\bP_{F'/k'}$ and $U\subset\bP_{F/k}$ be quasi-compact open subsets such that $\phi(U')\subset U$.  If $L\in\Pic(U)_\Lambda$ is nef, then $\phi^*L\in\Pic(U')_\Lambda$ is nef.  The converse is true if $U'=\phi\inv(U)$, and if one of the following conditions holds:
  \begin{enumerate}
  \item $k'/k$ is algebraic, or
  \item $\trdeg(F/k)<\infty$ and $\trdeg(k'F/k')=\trdeg(F/k)$.
  \end{enumerate}
  The same statements hold with ``nef'' replaced by ``numerically trivial.''
\end{lem}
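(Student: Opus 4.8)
The plan is to transfer both implications to the scheme-theoretic statements in Lemma~\ref{lem:nef.properties} by choosing compatible models over $k$ and over $k'$, the bridge between them being Lemma~\ref{lem:models.functoriality}. As in Lemma~\ref{lem:nef.properties}, I would first reduce the ``numerically trivial'' assertions to the ``nef'' ones applied to $L$ and $L\inv$, and note that nefness of an element of $\Pic(U)_\Lambda$ only depends on its image in $\Pic(U)_\R$, so the coefficient group plays no role.

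For the forward implication I would assume $L$ nef, pick a model $(V,L_V)$ of $(U,L)$ (so $L_V$ is nef by Lemma~\ref{lem:nef.all.models}), and let $V'\subset V\tensor_k k'$ be the closure of the image of the canonical $k'$-morphism $\Spec(F')\to V\tensor_k k'$, so that $\bP_{F'/k'}\{V'\}=\phi\inv(U)$ by Lemma~\ref{lem:models.functoriality}. Writing $L_{V'}$ for the pull-back of $L_V$ along $V'\hookrightarrow V\tensor_k k'\to V$, Lemma~\ref{lem:nef.properties}(2) makes the pull-back of $L_V$ to $V\tensor_k k'$ nef, and then Lemma~\ref{lem:nef.properties}(3), applied to the closed immersion $V'\hookrightarrow V\tensor_k k'$, makes $L_{V'}$ nef. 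Since $U'\subset\phi\inv(U)=\bP_{F'/k'}\{V'\}$, Lemma~\ref{lem:models.dominate} (used with the identity map $\bP_{F'/k'}\to\bP_{F'/k'}$) furnishes a model $W'$ of $U'$ dominating $V'$; pulling $L_{V'}$ back to $W'$ gives a model of $(U',\phi^*L)$ with nef line bundle, again by Lemma~\ref{lem:nef.properties}(3), so $\phi^*L$ is nef.

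For the converse I would assume $U'=\phi\inv(U)$ and $\phi^*L$ nef, fix a model $(V,L_V)$ of $(U,L)$, and form $V'$ and $L_{V'}$ as before; now $(V',L_{V'})$ is a model of $(U',\phi^*L)$, so $L_{V'}$ is nef by Lemma~\ref{lem:nef.all.models}, and it remains to deduce that $L_V$ is nef. By Lemma~\ref{lem:nef.properties}(3) it suffices to produce a proper surjective morphism onto $V$ from a finite-type $k$-scheme carrying a nef pull-back of $L_V$. When $k'/k$ is algebraic I would descend $V'$ and $L_{V'}$ to a finite subextension $k_1/k$: there are an integral closed subscheme $V_1\subset V\tensor_k k_1$ dominating $V$ and $L_{V_1}\in\Pic(V_1)_\Lambda$ with $V'=V_1\tensor_{k_1}k'$ and $L_{V'}$ the pull-back of $L_{V_1}$; then $V_1\to V$ is the composite of a closed immersion with the finite morphism $V\tensor_k k_1\to V$, hence proper, and surjective (being dominant and proper), while $L_{V_1}$ is nef because its base change $L_{V'}$ is nef by Lemma~\ref{lem:nef.properties}(2), so Lemma~\ref{lem:nef.properties}(3) finishes this case. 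When instead $\trdeg(F/k)<\infty$ and $\trdeg(k'F/k')=\trdeg(F/k)$, the point is that $k'$ and $F$, hence $k'$ and $\cR(V)$, are algebraically independent over $k$, so $\dim V'=\trdeg(k'\cR(V)/k')=\trdeg(\cR(V)/k)=\dim V$; I would then replace $k'$ by a finitely generated subextension $\cR(B)$ for a $k$-variety $B$, spread $V'$ and $L_{V'}$ out to a closed integral $\cV\subset V\tensor_k B$ and $\cL\in\Pic(\cV)_\Lambda$ with generic fibre $(V',L_{V'})$, and shrink $B$ so that $\cV\to B$ is flat with $d$-dimensional fibres, $\cV\to V$ is dominant, and $\cL$ is the pull-back of $L_V$. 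For a closed point $b\in B$ the field $\kappa(b)$ is finite over $k$, so $\cV_b\to V$ is proper and, after a further shrinking of $B$, surjective, while $\cL_b=\cL|_{\cV_b}$ is nef because the degree of $\cL$ on a spread-out family of curves in $\cV$ is locally constant over $B$ and non-negative on the generic fibre $V'$, exactly as in the proof of Lemma~\ref{lem:nef.properties}(2); Lemma~\ref{lem:nef.properties}(3) applied to $\cV_b\to V$ then gives $L_V$ nef. In both cases numerical triviality follows by running the argument for $L$ and $L\inv$.

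I expect the converse to be the main obstacle: in general $V'\to V$ is only dominant, not proper or surjective, so nefness of $L_{V'}$ cannot be pushed down directly, and indeed the converse fails without hypothesis (1) or (2). These hypotheses are precisely what allow a descent to a finite, respectively a finitely generated, subextension — immediate from the finiteness of $V\tensor_k k_1\to V$ in the algebraic case, but requiring the family argument of Lemma~\ref{lem:nef.properties}(2) in the finite-transcendence case, whose crucial input is the equality $\dim V'=\dim V$ forced by $\trdeg(k'F/k')=\trdeg(F/k)$, which ensures that the fibres of the spread-out family dominate $V$.
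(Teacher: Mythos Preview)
Your forward direction and your treatment of case~(1) of the converse are correct. For~(1) you take a more direct route than the paper, which instead factors $\phi$ as $\bP_{F'/k'}\to\bP_{F'/k}\to\bP_{F/k}$ and reduces to the case $F=F'$; your descent of $V'$ to a finite subextension $k_1/k$ followed by Lemma~\ref{lem:nef.properties}(3) for the proper surjection $V_1\to V$ works (a small imprecision: one cannot always arrange $V_1$ integral \emph{and} $V'=V_1\otimes_{k_1}k'$ simultaneously, but integrality of $V_1$ is not needed).

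Case~(2) has a genuine gap. You want to deduce that $\cL_b$ is nef on the closed fibre $\cV_b$ from nefness of $L_{V'}$ on the generic fibre, ``exactly as in the proof of Lemma~\ref{lem:nef.properties}(2)''. But that proof runs the other way: given a curve in the \emph{generic} fibre, it spreads the curve to a flat projective family and specialises to a closed fibre where nefness is already known. You would need to lift a proper curve $C\subset\cV_b$ to a flat family whose generic fibre is a proper curve in $V'$, and such lifts need not exist. Note also that since $\cL$ is pulled back from $L_V$ and $\cV_b\to V$ is finite surjective, nefness of $\cL_b$ is \emph{equivalent} to nefness of $L_V$ by Lemma~\ref{lem:nef.properties}(3); so the route through $\cV_b$ cannot succeed without an independent argument for $\cL_b$, and the family-of-curves argument is the only candidate.

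The paper circumvents this by first invoking case~(1) to reduce to $k$ algebraically closed. Then $V\otimes_k k'$ is integral, and the hypothesis $\trdeg(k'F/k')=\trdeg(F/k)$ (applied with a model $V$ of dimension $d=\trdeg(F/k)$) forces the image of $\Spec(F')$ in $V\otimes_k k'$ to be the generic point, so $V'=V\otimes_k k'$. Now $L_{V'}=L_V\otimes_k k'$ is nef by Lemma~\ref{lem:nef.all.models}, and Lemma~\ref{lem:nef.properties}(2) gives $L_V$ nef directly --- no spreading out is needed. Your observation $\dim V'=\dim V$ is the right consequence of the transcendence-degree hypothesis, but it only yields $V'=V\otimes_k k'$ once the latter is known to be irreducible.
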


Here $k'F$ denotes the smallest subfield of $F'$ containing both $k'$ and $F$.
It is always true that $\trdeg(k'F/k')\leq\trdeg(F/k)$.

\begin{proof}
  Suppose that $L$ is nef.  Let $(V,L_V)$ be a model of $(U,L)$, and let $W$ be the closure of the image of the canonical $k'$-morphism $\Spec(F')\to V\tensor_k k'$, as in Lemma~\ref{lem:models.functoriality}.  The pullback of $L$ to $W$ is nef by Lemma~\ref{lem:nef.properties}(2,3), so we may replace $V$ by $W$ to assume $k=k'$.  By Lemma~\ref{lem:models.dominate}, there is a model $V'$ of $U'$ dominating $V$; let $f\colon V'\to V$ be the dominating morphism.  Then $f^*L_V$ is nef by Lemma~\ref{lem:nef.properties}(3), so $\phi^*L$ is nef.

  Now suppose that $\phi^*L$ is nef and that $U' = \phi\inv(U)$.  First we treat the case when $k'$ is algebraic over $k$.  If $k'=k$ then any model $(V,L_V)$ of $(U,L)$ is also model of $(U',\phi^*L)$, so $L_V$ is nef by Lemma~\ref{lem:nef.all.models}.  In general, the map $\bP_{F'/k'}\to\bP_{F/k}$ factors as $\bP_{F'/k'}\to\bP_{F'/k}\to\bP_{F/k}$, so by the previous sentence we may assume $F=F'$.  Note that $\bP_{F/k}=\bP_{F/k'}$ and $U=U'$.  Let $(V,L_V)$ be a model of $L$ and let $V'$ be a model of $U'$.  We may assume that $V'$ dominates $V$ by Lemma~\ref{lem:models.dominate}; let $f\colon V'\to V$ be the dominating morphism.  Then $(V',f^*L_V)$ is a model of $(U',\phi^*L)$, so $f^*L_V$ is nef.  The model $V'$ is defined over a finite extension of $k$: that is, there exists a finite extension $k''/k$ contained in $k'$ and a variety $V''$ over $k''$ such that $V' = V''\tensor_{k''}k'$.  After enlarging $k''$, we may assume that $V''$ also dominates $V$; let $f'\colon V''\to V$ be the dominating morphism.  By Lemma~\ref{lem:nef.properties}(2), the line bundle $f'^*L$ is nef.  Replacing $k'$ with $k''$ and $V'$ with $V''$, we may assume that $[k':k]<\infty$.  In this case, the variety $V'$ is also a model of $U$.  Any proper $k$-curve in $V'$ is also a proper $k'$-curve, so $f^*L_V$ is nef when we consider $V'$ as a $k$-variety.  It follows from Lemma~\ref{lem:nef.all.models} that $L$ is nef.

  Now assume that $d = \trdeg(F/k)<\infty$, and that $\trdeg(k'F/k')=d$.  First we reduce to the case when $k$ is algebraically closed.  Let $F''$ be a field containing $F'$ and an algebraic closure $\bar k$ of $k$.  Pulling back via $\bP_{F''/k'}\to\bP_{F'/k'}$, we may replace $F'$ with $F''$ to assume $\bar k\subset F'$.  We have $\trdeg(\bar kF/\bar k) = \trdeg(F/k) = d$, and
  \[ \trdeg( k'\bar k F/k' \bar k) = \trdeg(k'\bar kF/k') = \trdeg(Fk'/ k') = d. \]
  Applying case~(1) to the extension $\bar kF/\bar k$ of $F/k$ and using the above displayed equation, we may replace $k$ by $\bar k$, $F$ by $\bar kF$, and $k'$ by $k'\bar k$ to assume $k=\bar k$.

  Choose a model $(V,L_V)$ of $(U,L)$ with $\dim(V) = d$.  Since $k$ is algebraically closed, the scheme $V'\coloneq V\tensor_k k'$ is a variety.  As $F/\cR(V)$ is algebraic, we have $\trdeg(k'\cR(V)/k') = \trdeg(k'F/k') = d$.  Let $x$ be the image of $\Spec(F')\to V'$, so $x$ maps to the generic point of $V$.  The residue field $\kappa(x)$ contains both $k'$ and $\cR(V)$, so $\trdeg(\kappa(x)/k') = d$.  It follows that $x$ is the generic point of $V'$.  By Lemma~\ref{lem:models.functoriality}, the variety $V'$ is a model of $U'$, so $\phi^*L_V$ is nef by Lemma~\ref{lem:nef.all.models}, hence $L_V$ is nef by Lemma~\ref{lem:nef.properties}(3).
\end{proof}

\begin{art}[Temkin's reductions of germs]\label{sec:temkin.reduction.germs}
  Let $X$ be a good strictly $K$-analytic space and let $x\in X$.  Temkin's reduction $\red(X,x)$ of the germ $(X,x)$ is an affine open subset of $\bP_{\td\sH(x)/\td k}$, where $\td\sH(x)$ is the reduction of the completed residue field $\sH(x)$ at $x$ as in~\secref{Non-Archimedean geometry}.  The reduction $\red(X,x)$ can be constructed as follows.%
  \footnote{Reductions of germs are much more subtle when $X$ is not good or strictly analytic, and a major motivation of Temkin's work is to understand goodness and strictness by studying such reductions.  In our case, the construction is relatively simple.}
  Let $U$ be a compact strictly analytic neighbourhood of $x$ in $X$ and let $\fU$ be a formal $K^\circ$-model of $U$.  The closure $V_{\fU,x}$ of the reduction $\red_\fU(x)$ is variety over $\td k$ by~\cite[Proposition~4.7]{bosch_lutkeboh93:formal_rigid_geometry_I}, and there is a canonical homomorphism  $\cR(V_{\fU,x})\to\td\sH(x)$ by~\cite[2.4]{berkovic90:analytic_geometry}.  We define $\red(X,x)$ to be $\bP_{\td\sH(x)/\td k}\{V_{\fU,x}\}$.  Then $\red(X,x)$ is an \emph{affine} (open) subset of $\bP_{\td\sH(x)/\td k}$ by~\cite[Lemma~2.2]{Temkin00}.  Let $\pi_{\fU,x}\colon\red(X,x)\to V_{\fU,x}$ be the canonical morphism of~\secref{sec:rz.models}

  Reduction of germs $(X,x)\mapsto\red(X,x)$ is functorial in $(X,x)$.   It also respects extension of the ground field, in the following sense.  Let $K'/K$ be a non-Archimedean extension field, let $X' = X\hat\tensor_K K'$ with structure morphism $\pi\colon X'\to X$, and let $x'\in X'$ and $x = \pi(x')$.  Let $\phi\colon\bP_{\td\sH(x')/\td K'}\to\bP_{\td\sH(x)/\td K}$ be the natural morphism.  Then $\phi\inv(\red(X,x)) = \red(X',x')$.  See~\cite[Lemma~4.2]{Temkin00} (and~\cite[Section~5]{temkin04:local_properties_II} for a correction).
\end{art}

We will need the fact that models of the form $V_{\fU,x}$ as in~\secref{sec:temkin.reduction.germs} are cofinal in the system of all models of $\red(X,x)$.

\begin{lem}\label{lem:formal.model.cofinal}
  Let $X$ be a good strictly $K$-analytic space, let $x\in X$, and let $V$ be a model of $\red(X,x)$.  Then there exists a compact strictly analytic neighbourhood $U$ of $x$ and a formal $K^\circ$-model $\fU$ of $U$ such that $V_{\fU,x}$ dominates $V$.
\end{lem}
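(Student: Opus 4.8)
The plan is to use that $\red(X,x)$ depends only on the germ $(X,x)$ (\secref{sec:temkin.reduction.germs}), so we may replace $X$ by a strictly affinoid neighbourhood $\sM(\cA)$ of $x$; fix a formal $\kcirc$-model $\fU_0$ of $X$, so that $\red(X,x) = \bP_{\td\sH(x)/\td k}\{V_{\fU_0,x}\}$. By Lemma~\ref{lem:models.dominate} the models of $\red(X,x)$ form a filtered poset, and for every formal $\kcirc$-model $\fU$ of a strictly analytic neighbourhood of $x$ the variety $V_{\fU,x}$ is such a model, since $\red(X,x)$ is independent of these choices (\secref{sec:temkin.reduction.germs}). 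Hence it suffices to produce \emph{some} such $\fU$, with neighbourhood $U$, so that $V_{\fU,x}$ dominates the given model $V$. I would do this in two stages: first enlarge the residue field so that it contains $\cR(V)$, and then realize the resulting birational modification by an admissible formal blowup.

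For the first stage I would use that the function fields $\cR(V_{\fU,x}) = \kappa(\red_\fU(x))$ exhaust $\td\sH(x)$, i.e.\ $\td\sH(x) = \varinjlim_\fU \cR(V_{\fU,x})$ as $\fU$ ranges over formal $\kcirc$-models of strictly analytic neighbourhoods of $x$; this follows from Temkin's description of the reduction of a germ \cite{Temkin00}, concretely from $\sH(x)^\circ = \varinjlim_\fU \sO_{\fU_s, \red_\fU(x)}$. Since $\cR(V)$ is finitely generated over $\td k$, its finitely many generators lie in a single $\cR(V_{\fU_1,x})$, so $\cR(V) \subset \cR(V_{\fU_1,x})$ as subfields of $\td\sH(x)$. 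Applying Lemma~\ref{lem:models.dominate} to the two models $V$ and $V_{\fU_1,x}$ of $\red(X,x)$ gives a model $W$ of $\red(X,x)$ dominating both; then $\cR(W) = \cR(V_{\fU_1,x})$, so the dominant morphism $W \to V_{\fU_1,x}$ is birational, and it is proper because $\bP_{\td\sH(x)/\td k}\{W\} = \bP_{\td\sH(x)/\td k}\{V_{\fU_1,x}\}$ (\secref{sec:rz.models}). As $W$ dominates $V$, it now suffices to realize $W$ as some $V_{\fU,x}$.

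For the second stage, Chow's lemma applied to the proper morphism $W \to V_{\fU_1,x}$ lets me dominate $W$ by a $\td k$-variety $W'$ with $W' \to V_{\fU_1,x}$ projective and birational; then $W'$ still dominates $V$, so it suffices to realize $W'$. A projective birational morphism onto the noetherian integral scheme $V_{\fU_1,x}$ is the blowup of a coherent ideal sheaf, say $W' = \mathrm{Bl}_{\bar\fJ}(V_{\fU_1,x})$. I would take $\bar\fI$ to be the preimage of $\bar\fJ$ under $\sO_{(\fU_1)_s} \to \sO_{V_{\fU_1,x}}$, a coherent ideal on $(\fU_1)_s$ with $\bar\fI \cdot \sO_{V_{\fU_1,x}} = \bar\fJ$, then lift $\bar\fI$ to a coherent open ideal $\fI$ on $\fU_1$, and let $\fU$ be the admissible formal blowup of $\fU_1$ along $\fI$. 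Then $(\fU)_\eta = (\fU_1)_\eta = U_1$ is still a neighbourhood of $x$, and the strict transform of $V_{\fU_1,x}$ under the induced morphism $(\fU)_s \to (\fU_1)_s$ is $\mathrm{Bl}_{\bar\fJ}(V_{\fU_1,x}) = W'$. Because $W' \to V_{\fU_1,x}$ is birational, $\bar\fJ$, hence $\bar\fI$, hence $\fI$, is the unit ideal in a neighbourhood of the generic point $\red_{\fU_1}(x)$ of $V_{\fU_1,x}$ — here one uses that the ideal of the reduced integral scheme $V_{\fU_1,x}$ at its generic point is the maximal ideal of $\sO_{(\fU_1)_s, \red_{\fU_1}(x)}$, together with a formal Nakayama argument for the lift $\fI$. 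Thus $\fU \to \fU_1$ is an isomorphism over that neighbourhood, so $\red_\fU(x)$ is the unique point of $(\fU)_s$ over $\red_{\fU_1}(x)$, which is the generic point of the strict transform $W'$; therefore $V_{\fU,x} = W'$, which dominates $V$, completing the argument with $U = U_1$.

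The hardest part will be the colimit statement $\sH(x)^\circ = \varinjlim_\fU \sO_{\fU_s, \red_\fU(x)}$ used in the second paragraph — equivalently, that every finitely generated subextension of $\td\sH(x)/\td k$ arises as the function field of some $V_{\fU,x}$. This is precisely where Temkin's analysis of reductions of germs is needed; everything else is standard formal and birational geometry (Chow's lemma, the realization of projective birational morphisms onto noetherian integral schemes as blowups, and the behaviour of strict transforms under admissible formal blowups).
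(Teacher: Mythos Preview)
Your overall strategy is sound and does lead to a proof, but it takes a different route from the paper's and carries one imprecision worth flagging.

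The paper avoids your second stage entirely. Rather than first obtaining a rational map $V_{\fU_1,x}\dashrightarrow V$ and then resolving it by a blowup, the paper builds the formal model so that the morphism is already defined. Concretely: choose a finite affine open cover $V=\bigcup V_i$ with $V_i=\Spec(\td K[\td f_{ij}])$, lift the generators to $f_{ij}\in\sO_{X,x}$, and set $U_i=\{|f_{ij}|\leq 1\}$ inside a small affinoid $U$. Temkin's description of $\red(U_i,x)$ as $\bP\{\td f_{ij}\}\cap\red(X,x)$ shows the $U_i$ cover a neighbourhood of $x$; one then glues formal models $\fU_i$ of the $U_i$ with $f_{ij}\in\sO(\fU_i)$ via Raynaud's theory. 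For the resulting $\fU$, the inclusion $\sO(V_i)\subset\sO((\fU_i)_s\cap V_{\fU,x})$ holds because the $f_{ij}$ are integral on $\fU_i$, so $V_{\fU,x}\to V$ is a morphism on each affine piece. This is more elementary: no Chow's lemma, no blowups, no analysis of special fibers of formal blowups.

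Your approach works too, but the assertion ``therefore $V_{\fU,x}=W'$'' is not justified: $W'$ lives in $\mathrm{Bl}_{\bar\fI}((\fU_1)_s)$, whereas $V_{\fU,x}$ lives in $\fU_s$, and the special fiber of an admissible formal blowup is \emph{not} in general the blowup of the special fiber. What is true, and suffices, is that $V_{\fU,x}$ \emph{dominates} $W'$: since $\fI\sO_\fU$ is invertible, the ideal $\bar\fJ\sO_{V_{\fU,x}}$ is locally principal, and it is nonzero at the generic point (where you showed $\fI$ is the unit ideal), hence invertible on the integral scheme $V_{\fU,x}$; the universal property of $W'=\mathrm{Bl}_{\bar\fJ}(V_{\fU_1,x})$ then gives the factorization $V_{\fU,x}\to W'\to V$. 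With this correction your argument goes through. Your colimit step is not really the hard part---it is exactly the lifting of generators $\td f_{ij}\mapsto f_{ij}\in\sO_{X,x}$ that the paper also performs; the difference lies entirely in how one passes from a dominant rational map to a morphism.
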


\begin{proof}
  Choose a finite affine open covering $V = \bigcup V_i$.  Then $V_i = \Spec(\ktilde[\td f_{ij}:j\in J_i])$ for finitely many $\td f_{ij}\in\td\sH(x)$.  We pick lifts $f_{ij}\in\sO_{X,x}$.  Let $U$ be a strictly affinoid neighbourhood of $x$ with $f_{ij}\in \sO(U)$, and let $U_i \coloneqq \{x \in U \mid |f_{ij}(x)| \leq 1, \, j \in J_i\}$.  By~\cite[Proposition~2.3(ii,iii)]{temkin00:local_properties}, the reductions $\red(U_i,x)$ cover $\red(X,x)$, so $(\bigcup U_i,x) = (X,x)$.  We may shrink $U$ to assume that $U = \bigcup U_i$ is a (strictly affinoid) neighbourhood of~$x$.

  We claim that we can find a formal $K^\circ$-model $\fU$ of $U$ that is covered by formal opens $\fU_i$ with $(\fU_i)_\eta = U_i$ and $f_{ij}\in\sO(\fU_i)$.  Write $U_i = \sM(\sA_i)$ for a strictly affinoid algebra $\sA_i$.  Choose a surjection from a Tate algebra $T$ onto $\sA_i$ such that each $f_{ij}$ is the image of a generator.  The image $A_i$ of $T^\circ$ in $\sA_i$ is a flat $K^\circ$-algebra topologically of finite presentation containing $f_{ij}$; hence $\fU_i\coloneq\Spf(A_i)$ is an admissible formal model of $U_i$ with $f_{ij}\in\sO(\fU_i)$.  By~\cite[Lemma~8.4.5]{Bosch_lectures}, there exists an admissible formal $K^\circ$-model $\fU$ of $U$ and a covering by formal open subschemes $\fU_i'\subset\fU$ such that $(\fU_i')_\eta = U_i$.  By Lemma~8.4.4(d) of \textit{loc.\ cit.}, there is a formal $K^\circ$-model $\fU_i''$ of $U_i$ that is an admissible formal blowup of both $\fU_i$ and $\fU_i'$.  By Proposition~8.2.13 of \textit{loc.\ cit.}, there is an admissible formal blowup $\phi\colon\fU'\to\fU$ such that $\phi\inv(\fU_i')$ dominates $\fU_i''$.  Replacing $\fU$ by $\fU'$ and $\fU_i$ by $\phi\inv(\fU_i')$ proves the claim.

  Consider the closure $V' \coloneq V_{\fU,x}$ of $\red_\fU(x)$ in $\fU_s$.  By construction the function field $\cR(V')$ contains all $\td f_{ij}$, so $\cR(V)\subset\cR(V')$ and hence there is a dominant rational map $V'\to V$.  Moreover, for each $i$ we have $\sO(V_i)\subset\sO((\fU_i)_s\cap V')$ because $f_{ij}\in\sO(\fU_i)$, so $(\fU_i)_s\cap V' \to V_i$ is a morphism, and hence $V'\to V$ is a morphism.
\end{proof}

We will also use the following theorem of Temkin.

\begin{thm}[{\cite[Theorem~4.1]{Temkin00}}]\label{thm:reduction.functor}
  Let $f\colon X'\to X$ be a morphism of good strictly $K$-analytic spaces, let $x'\in X'$, and let $x = f(x')$.  Let $\phi\colon\bP_{\td\sH(x')/\td K}\to\bP_{\td\sH(x)/\td K}$ be the natural morphism.  Then $x\in\Int(X'/X)$ if and only if $\phi\inv(\red(X,x))=\red(X',x')$.
\end{thm}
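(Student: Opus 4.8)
The strategy, following Temkin, is as follows. Since both $\Int(X'/X)$ and the reduction of germs are local on the source and on the target, one reduces to the case where $X = \sM(\cB)$ and $X' = \sM(\cA)$ are strictly $K$-affinoid and $f$ is induced by a bounded homomorphism $\cB \to \cA$; then, after admissible formal blowups (Raynaud's theorem, as in the proof of Proposition~\ref{prop:pl.finite.flat}), one may choose formal $\kcirc$-models $\fU$ of $X$ and $\fU'$ of $X'$ fitting into a morphism $\fU' \to \fU$ extending $f$. Write $V = V_{\fU,x}$ and $V' = V_{\fU',x'}$ as in~\secref{sec:temkin.reduction.germs}. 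Functoriality of the reduction map gives a dominant morphism of $\ktilde$-varieties $V' \to V$ compatible with the inclusions $\cR(V) \subset \td\sH(x) \subset \td\sH(x')$ and $\cR(V') \subset \td\sH(x')$; by~\secref{sec:functoriality.RZ} this already yields the inclusion $\red(X',x') \subset \phi\inv(\red(X,x))$, which thus holds for \emph{every} morphism $f$. Moreover, combining~\secref{sec:functoriality.RZ} with Lemma~\ref{lem:models.functoriality} (with $k = k' = \ktilde$, since $\Spec\td\sH(x')\to V$ has dense image) one identifies $\phi\inv(\red(X,x))$ with $\bP_{\td\sH(x')/\ktilde}\{V\}$, regarding $V$ as a model via $\cR(V) \subset \td\sH(x')$. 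By the valuative criterion of properness recalled in~\secref{sec:rz.models} it follows that
\[
  \red(X',x') = \phi\inv(\red(X,x)) \quad\Longleftrightarrow\quad V' \to V \text{ is proper},
\]
a condition one checks to be independent of the chosen compatible models. So the theorem is equivalent to: $x' \in \Int(X'/X)$ if and only if $V' \to V$ is proper.

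For the forward implication I would invoke Berkovich's description of the relative interior~\cite[2.5.8]{BerkovichSpectral}: if $x' \in \Int(X'/X)$ then, after shrinking $X'$, there is a closed immersion of $X'$ into a relative polydisc $\sM(\cB\{\rho_1\inv T_1, \dots, \rho_n\inv T_n\})$ over $X$ with $|T_i(x')| < \rho_i$ for all $i$. Passing to the graded residue fields $\td\sH(x)^\bullet$ and $\td\sH(x')^\bullet$ over $\td K^\bullet$ — which is exactly why graded reductions are needed, the polyradius $\rho$ being a priori outside $\sqrt{|K^\times|}$ — the inequalities $|T_i(x')| < \rho_i$ say that the graded reductions of the $T_i$ vanish at $x'$. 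Choosing $\fU'$ adapted to this presentation, the morphism $V' \to V$ factors through a closed immersion $V' \hookrightarrow V \times_{\ktilde} \bA^n_{\ktilde}$ whose image lies in $V \times \{0\}$; it is therefore a closed immersion followed by an isomorphism, hence proper, and $\red(X',x') = \phi\inv(\red(X,x))$.

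For the reverse implication (equivalently, its contrapositive) I would suppose $x' \in \partial(X'/X)$, so that $X'$ is not finite over any relative polydisc over $X$ near $x'$; concretely, Berkovich's characterization of the relative boundary produces an analytic function $g$ on $X'$ whose modulus at $x'$ realizes a relative maximum over the fibre of $f$ through $x'$ without being dominated by it. After reduction, the graded element $\td g \in \td\sH(x')^\bullet$ fails to be integral over $\cR(V)$ at the relevant valuations, and one extends a suitable valuation of $\cR(V)$ to a valuation $\nu'$ of $\td\sH(x')$ trivial on $\ktilde$ with $\nu'|_{\td\sH(x)} \in \red(X,x)$ but $\nu'(\td g) < 0$; then $\nu' \in \phi\inv(\red(X,x)) \setminus \red(X',x')$, i.e.\ $\nu'$ witnesses the failure of the valuative criterion for $V' \to V$. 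I expect this step — making precise, in the $\R$-graded setting, the dictionary between ``$x'$ lies in the relative boundary'' and ``$V' \to V$ is not proper'' — to be the main obstacle; it is exactly where Temkin's theory of graded reductions is indispensable, and for the details I would simply refer to~\cite[Theorem~4.1]{Temkin00} together with the correction in~\cite[Section~5]{temkin04:local_properties_II}.
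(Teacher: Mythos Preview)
The paper does not give its own proof of this statement: it is stated as a theorem of Temkin, with the attribution \cite[Theorem~4.1]{Temkin00} in the header, and no proof environment follows. So there is nothing in the paper to compare your proposal against beyond that citation. Your sketch is more than the paper itself provides, and it does follow the general shape of Temkin's argument (reduction to the affinoid case, interpretation via models of $\red(X,x)$ and $\red(X',x')$, and the valuative criterion for properness), together with the same caveat you already include at the end: that the delicate direction ultimately rests on Temkin's analysis, with the correction in \cite[Section~5]{temkin04:local_properties_II}.

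One remark on your sketch: in the forward implication you argue that the strict inequalities $|T_i(x')|<\rho_i$ force the graded reductions of the $T_i$ to vanish and hence that $V'\to V$ factors through $V\times\{0\}$; this is too quick. What one actually obtains is that the images of the $T_i$ in $\td\sH(x')^\bullet$ lie in degree strictly less than $\rho_i$, so the induced morphism on reductions realizes $V'$ as closed in an affine space over $V$ in such a way that the valuative criterion holds---but not literally as a closed immersion into $V\times\{0\}$. This is exactly the point where the graded formalism (or, in the strict case, a careful analysis of the reduction of $\cB\{T_1,\ldots,T_n\}$) is needed, and it is precisely what Temkin carries out. Since the paper is content to cite the result, your final deferral to \cite{Temkin00,temkin04:local_properties_II} is entirely in keeping with how the paper treats it.
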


\begin{art}[Residues of PL functions]\label{sec:reduction.pl.metric}
  Let $h\colon X\to\R$ be a PL function on a good strictly $K$-analytic space $X$.  For $x\in X$, Chambert--Loir and Ducros define a canonical invertible sheaf $L_h(x)$ on the ringed space $\red(X,x)$, which is called the \defi{residue} of $h$ at $x$.  It can be described as follows, using~\cite[Exemple~6.6.8]{CLD}.  Let $U$ be a compact strictly analytic neighbourhood of $x$.  By~\secref{model theorem}, there exists a formal $K^\circ$-model $(\fU,\fL)$ of $(U,\sO_X|_U)$ such that $h = -\log\|1\|_\fL$ on $U$.  This $\fL$ restricts to an invertible sheaf $\fL_{\fU,x}$ on $V_{\fU,x}$, and we get an invertible sheaf $L_h(x) = \pi_{\fU,x}^*\fL_{\fU,x}$ on $\red(X,x)$.

  Let $\PL(X,x)$ denote the group of germs of PL functions on $X$ at $x$; this is intrinsic to the germ $(X,x)$.  Applying the above to germs of PL functions at $x$, we obtain a homomorphism $L\colon\PL(X,x)\to\Pic(\red(X,x))$, denoted $h\mapsto L_h(x)$.
\end{art}

\begin{lem}\label{lem:model.lb.trivial}
  Let $(X,x)$ be a good strictly $K$-analytic germ.  If $h\in\PL(X,x)$ has the form $\log|f|$ for an invertible function $f$, then $L_h(x) = 0$ in $\Pic(\red(X,x))$.
\end{lem}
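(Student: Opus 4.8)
The plan is to reduce to a convenient local situation and then to exhibit an explicit formal model on which the line bundle computing the residue is trivial. First I would replace $X$ by a representative of the germ: since $X$ is good and strictly analytic, after shrinking I may assume that there is a strictly affinoid neighbourhood $U=\sM(\cA)$ of $x$ together with $f\in\sO(U)^\times$ such that $h|_U=\log|f|$ (invertibility of $f$ is inherited by smaller strictly affinoid domains, so this costs nothing). Such a $U$ is compact, hence paracompact, so \secref{model theorem} applies; recall from \secref{sec:reduction.pl.metric} that $L_h(x)$ is computed from \emph{any} formal $\kcirc$-model $(\fU,\fL)$ of $(U,\sO_X|_U)$ with $h=-\log\|1\|_\fL$, as $L_h(x)=\pi_{\fU,x}^*\fL_{\fU,x}$ with $\fL_{\fU,x}=\fL|_{V_{\fU,x}}$, the construction being independent of the auxiliary choices.

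The heart of the argument is to produce such a model with $\fL$ trivial, using that $f$ is globally invertible on $U$. I would pick any formal $\kcirc$-model $\fU$ of $U$ (these exist by Raynaud's theorem), set $\fL=\sO_\fU$, and take the identification $\fL|_U=\sO_X|_U$ to be given by multiplication by $f$ (an automorphism of $\sO_U$, since $f\in\sO(U)^\times$). Then the canonical global frame $1$ of $\fL=\sO_\fU$ corresponds under this identification to $f\in\sO(U)$, so $\|f\|_\fL\equiv 1$ on $U$; hence $\|1\|_\fL=|f|^{-1}\|f\|_\fL=|f|^{-1}$, and therefore $-\log\|1\|_\fL=\log|f|=h$. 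Thus $(\fU,\fL)$ is a model of exactly the type used in \secref{sec:reduction.pl.metric} to define $L_h(x)$.

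To conclude, since $\fL=\sO_\fU$ is trivial, its restriction $\fL_{\fU,x}$ to the closure $V_{\fU,x}$ of $\red_\fU(x)$ is the trivial invertible sheaf, so $L_h(x)=\pi_{\fU,x}^*\fL_{\fU,x}=0$ in $\Pic(\red(X,x))$. I do not expect a genuine obstacle here: the routine points are the reduction to a strictly affinoid neighbourhood, the bookkeeping of the norm of the frame under the twisted gluing, and the (already recorded) independence of the residue from the chosen model; the only real idea is that a PL function of the form $\log|f|$ with $f$ invertible on a neighbourhood is realized by the \emph{trivial} formal line bundle equipped with a twisted identification of generic fibres, which immediately forces the residue to vanish.
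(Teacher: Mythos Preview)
Your proof is correct and follows essentially the same approach as the paper's: both shrink to a compact representative with $f$ invertible, choose a formal model, and equip it with a trivial line bundle whose twisted identification with $\sO_X$ realizes $h=-\log\|1\|_\fL$, forcing $L_h(x)=0$. The only cosmetic difference is that the paper writes the trivial bundle as $\fL=\sO_\fX\cdot f$ viewed as a subsheaf of the pushforward of $\sO_X$, whereas you write it as $\fL=\sO_\fU$ with the gluing $1\mapsto f$; these are the same object.
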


\begin{proof}
  Choose a compact strictly analytic space $X$ and a point $x\in X$ representing $(X,x)$ such that there exists an invertible function $f$ on $X$ with $h=\log|f|$.  Since $X$ is compact, there exists a formal $\kcirc$-model $\fX$ of $X$.  Let $\fL = \sO_\fX\cdot f$, regarded as a subsheaf of the sheaf $\fU\mapsto\Gamma(\fU_\eta,\sO_X)$ on $\fX$.  Then $\|1\|_\fL=|f|\inv$, and hence $h=-\log\|1\|_\fL$. Since $\fL$ is trivial, so is $\fL_{\fX,x}$, so we conclude that $L_h(x) = 0$.
\end{proof}

  Now we want to extend~\secref{sec:reduction.pl.metric} to $\Lambda$-PL functions.  Let $\PL_\Lambda(X,x)$ denote the group of germs of $\Lambda$-PL functions on $(X,x)$.
  We call $h \in \PL_\Lambda(X,x)$ \emph{$\Lambda$-linear} if there are $f_1, \dots, f_r \in \sO_{X,x}^\times$ and $\lambda_1, \dots, \lambda_r \in \Lambda$ such that $h=\sum_{j=1}^r \lambda_j \log |f_j|$ in a neighbourhood of $x$.

\begin{lem}\label{lem:reduction.depends.function}
  Suppose that $\Lambda=\Z$ or that $\Lambda$ is divisible. Then there is a unique homomorphism
  \[ h\mapsto L_h(x)\colon\PL_\Lambda(X,x)\To\Pic(\red(X,x))_\Lambda \]
  such that, if $h = \sum_{i=1}^m\lambda_i h_i$ with $\lambda_i\in\Lambda$ and $h_i\in\PL(X,x)$, then $L_h(x) = \sum_{i=1}^m\lambda_i L_{h_i}(x)$. 
  The kernel of this homomorphism consists precisely of the $\Lambda$-linear functions in $\PL_\Lambda(X,x)$.
\end{lem}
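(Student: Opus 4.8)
The plan is to build the extension on generators using the homomorphism of~\secref{sec:reduction.pl.metric}, and then to check the single nontrivial point, namely well-definedness. When $\Lambda=\Z$ there is nothing to prove, since $\PL_\Lambda(X,x)=\PL(X,x)$ and $\Pic(\red(X,x))_\Lambda=\Pic(\red(X,x))$; the map $h\mapsto L_h(x)$ of~\secref{sec:reduction.pl.metric} does the job and is clearly unique. So assume from now on that $\Lambda$ is divisible, hence, being torsion-free, a $\Q$-vector space.

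Let $\ell\colon\PL(X,x)\otimes_\Z\Lambda\to\Pic(\red(X,x))_\Lambda$ be the homomorphism induced by $h\mapsto L_h(x)$. By Proposition~\ref{prop:Lpl.analytic.nbhd} applied to germs, the multiplication map $\mu\colon\PL(X,x)\otimes_\Z\Lambda\to\PL_\Lambda(X,x)$, $\lambda\otimes h\mapsto\lambda h$, is surjective; write $\mathfrak K=\ker(\mu)$. It suffices to prove that $\ell$ vanishes on $\mathfrak K$: then $\ell$ descends to a homomorphism $\PL_\Lambda(X,x)\to\Pic(\red(X,x))_\Lambda$ satisfying the required formula $L_h(x)=\sum_i\lambda_iL_{h_i}(x)$ whenever $h=\sum_i\lambda_ih_i$ with $\lambda_i\in\Lambda$ and $h_i\in\PL(X,x)$, and this homomorphism is unique because its values are forced on the generators $\lambda\otimes h$.

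Fix $\xi\in\mathfrak K$. By Lemma~\ref{lem:lambda.assume.li} (with $P=\PL(X,x)$) we may write $\xi=\sum_{k=1}^n\nu_k\otimes p_k$ with $\nu_1,\dots,\nu_n\in\Lambda$ that are $\Q$-linearly independent and $p_k\in\PL(X,x)$; applying $\mu$ shows $\sum_k\nu_kp_k=0$ as a germ at $x$, while $\ell(\xi)=\sum_k\nu_kL_{p_k}(x)$ is unchanged by the rewriting. The key claim is that each $p_k$ is constant near $x$. Shrink to a compact strictly analytic neighbourhood $W$ of $x$ on which $\sum_k\nu_kp_k=0$. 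By Proposition~\ref{sheaf of piecewise linear functions}(\ref{tropical}) (equivalently, arguing as in the proof of Proposition~\ref{prop:Lpl.analytic.nbhd} with~\secref{sec:trop.is.polyhedral} and~\secref{sec:ducros.germs}), after shrinking $W$ there is a single smooth tropicalization map $F\colon W\to\R^N$ and piecewise $\z$-linear functions $\rho_k$ on the polytopal set $S\coloneqq F(W)$ with $p_k|_W=\rho_k\circ F$; since $F$ surjects onto $S$ we get $\sum_k\nu_k\rho_k=0$ on $S$. For each polyhedron $\Delta$ of $S$ containing $F(x)$, the restriction $\rho_k|_\Delta$ is affine with integral linear part, an element of the free $\Z$-module $\Hom(N_\Delta,\Z)$ with $N_\Delta=N\cap\bL_\Delta$; equating linear parts in $\sum_k\nu_k\rho_k|_\Delta=0$ and using the $\Q$-linear independence of the $\nu_k$ forces each of these linear parts to vanish. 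Hence $\rho_k$ is constant on every such $\Delta$, equal to $\rho_k(F(x))$; since the union of the polyhedra of $S$ containing $F(x)$ is a connected neighbourhood of $F(x)$ in $S$, the function $\rho_k$ is constant near $F(x)$, so $p_k\equiv c_k$ near $x$ with $c_k=p_k(x)$.

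To finish, evaluating $p_k$ at a rigid point near $x$ shows $c_k\in\sqrt\Gamma$, so $N_kc_k=v(a_k)$ for some integer $N_k\geq1$ and some $a_k\in K^\times$, whence $N_kp_k=\log|a_k^{-1}|$ near $x$ and Lemma~\ref{lem:model.lb.trivial} yields $N_kL_{p_k}(x)=L_{N_kp_k}(x)=0$. Thus $L_{p_k}(x)$ is torsion in $\Pic(\red(X,x))$, and since $\Lambda$ is divisible we may write $\nu_k=N_k\mu_k$ with $\mu_k\in\Lambda$, so $\nu_k\otimes L_{p_k}(x)=\mu_k\otimes N_kL_{p_k}(x)=0$ in $\Pic(\red(X,x))_\Lambda$; summing over $k$ gives $\ell(\xi)=0$, as required. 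The main obstacle is the step in the third paragraph: passing from the single identity $\sum_k\nu_kp_k=0$ of $\Lambda$-linear combinations of PL functions to the conclusion that each $p_k$ is germ-locally a constant lying in $\sqrt\Gamma$. This is exactly where one must combine the integrality of the slopes of PL functions, accessed through a smooth tropicalization, with the reduction to $\Q$-linearly independent coefficients.
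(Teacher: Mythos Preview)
Your proof is correct and follows essentially the same approach as the paper: reduce to $\Q$-linearly independent coefficients via Lemma~\ref{lem:lambda.assume.li}, show each $p_k$ is constant near $x$ by integrality of slopes under a smooth tropicalization, then conclude using Lemma~\ref{lem:model.lb.trivial} and divisibility of $\Lambda$. The only cosmetic difference is that the paper, after passing to a $\rG$-local piece where $h_i=-\log|f_i|$, uses $(h_1,\dots,h_m)$ itself as the smooth tropicalization map and argues that its image lies in the irrational hyperplane $\sum\lambda_i x_i=0$, whereas you factor all $p_k$ through an auxiliary $F$ and piecewise $\z$-linear $\rho_k$; both are the same integrality argument in slightly different packaging.
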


\begin{proof}
We have to show that  $L_h(x) \coloneqq \sum_{i=1}^m\lambda_i L_{h_i}(x)$ is well-defined.  If $\sum_{i=1}^m\lambda_i h_i=0$ for $\lambda_i\in\Lambda$ and $h_i\in\PL(X,x)$, then we need $\sum_{i=1}^m\lambda_i L_{h_i}(x)=0$. By Lemma~\ref{lem:lambda.assume.li}, we may assume $\{\lambda_1,\ldots,\lambda_m\}$ is linearly independent over $\Q$.  Choose a compact strictly analytic space $X$ and a point $x\in X$ representing $(X,x)$ such that each $h_i$ extends to a PL function $h_i\colon X\to\R$.  Shrinking $X$, we may assume $X$ is connected and  $\sum_{i=1}^m\lambda_ih_i = 0$ on~$X$.

  We claim that each $h_i$ is constant on $X$.   Using \cite[Lemma~1.6.2]{berkovic93:etale_cohomology}, we see that $X$ is $\rG$-connected and hence the claim is $\rG$-local on $X$. It follows from~\cite[Proposition~9.1.4/8]{bosch_guntzer_remmert84:non_archimed_analysis} that $X$ is $\rG$-locally connected, so we may assume that $h_i = -\log|f_i|$ for an invertible analytic function $f_i$ on a connected strictly analytic domain $U\subset X$.  Let $\bT = \bGm^m$ and let $\phi = (f_1,\ldots,f_m)\colon U\to\bT^\an$, so $\phi_{\trop} = (h_1,\ldots,h_m)$.  Since $\sum_{i=1}^m\lambda_i h_i=0$, the tropical variety $\phi_{\trop}(U)$ is contained in the hyperplane $H = \{(x_1,\ldots,x_m)\in\R^m\mid\lambda_1x_1+\cdots+\lambda_mx_m = 0\}$.  If $\phi_{\trop}$ is not constant, then $\phi_{\trop}(U)$ has positive dimension because $U$ is connected.  It follows from~\secref{sec:trop.is.polyhedral} that $\phi_{\trop}(U)$ contains a $(\Z,\Gamma)$-polytope of dimension one, which is a line segment with rational slopes.  The tangent space of  this segment contains  a non-zero vector in $H$ of the form $(a_1,\ldots,a_n)\in\Q^n$, which contradicts the linear independence of $\{\lambda_1,\ldots,\lambda_n\}$ over $\Q$.  This proves the claim; moreover, since $\phi_{\trop}(U)$ is a $(\Z,\Gamma)$-point in $\R^n$, this shows that $h_i$ is a constant function $\gamma_i$ for $\gamma_i\in\sqrt\Gamma$.

  Choose a positive integer $N$ such that $N\gamma_i\in\Gamma$ for all $i$, so that $N\gamma_i = -\log|\beta_i|$ for $\beta_i\in K^\times$.  Then $Nh_i = -N\log|\beta_i|$, so $L_{Nh_i}(x) = 0$ by Lemma~\ref{lem:model.lb.trivial}.  Thus $NL_h(x) = \sum_{i=1}^m \lambda_i L_{Nh_i}(x) = 0$, so $L_h(x) = 0$ because $\Lambda$ is divisible.
  
  To prove the last claim, it is obvious from the definition of the residue $L_h(x)$ that $\Lambda$-linear functions $h \in \PL_\Lambda(X,x)$ are in the kernel of the homomorphism. Conversely, let $h \in \PL_\Lambda(X,x)$ with $L_h(x)=0$. We note first that for $\Lambda =\Z$, the claim follows from \cite[6.6.2]{chambert_ducros12:forms_courants}. Indeed, Chambert-Loir and Ducros show that $h$ induces a metric on $\sO_{X}$ which is isometric to the trivial metric on a neighbourhood $U$ of $x$ and hence there is $f \in \sO(U)^\times$ such that $h=\log |f|$ on $U$. It is clear that the case $\Lambda=\Z$ readily yields the case $\Lambda=\Q$ by clearing denominators.
  
  For $\Lambda$ divisible, we reduce to $\Lambda=\Q$ as follows.
  There are $h_1, \dots, h_r \in \PL_\Q(X,x)$  and $\lambda_1,\dots, \lambda_r \in \Lambda$ with $h=\sum_{i=1}^r \lambda_i h_i$. By Lemma~\ref{lem:lambda.assume.li} again, we may assume that $\lambda_1,\dots, \lambda_r$ are $\Q$-linearly independent. It is enough to prove that $L_{h_i}(x)=0$ for all $i \in \{1,\dots,r\}$ as then the special case $\Lambda=\Q$ above shows that $h_i$ is $\Q$-linear, and hence that $h$ is $\Lambda$-linear. We argue by contradiction.  Then there is a $\Q$-linear form $\ell\colon\Pic(\red(X,x))_\Q\to\Q$ with $\ell(L_{h_i}(x))\neq 0$ for at least one $i\in \{1,\dots,r\}$.   This extends by linearity to a homomorphism $\ell\colon\Pic(\red(X,x))_\Lambda\to\R$, and 
  $$0=\ell(L_h(x))=\sum_{i=1}^r \lambda_i \ell(L_{h_i}(x))$$
  is a non-trivial $\Q$-linear combination of $\lambda_1,\dots,\lambda_r$ which is a contradiction.
\end{proof}

\begin{art}[Functoriality of residues]\label{sec:reduction.function.functoriality}
  Let $f\colon(X',x')\to(X,x)$ be a morphism of good strictly $K$-analytic germs. For  $h\in\PL_\Lambda(X,x)$, we get $h' = h\circ f\in\PL_\Lambda(X',x')$.  We have an induced morphism $\td f\colon\red(X',x')\to\red(X,x)$, and $L_{h'}(x') = \td f^*L_h(x)$ by~\cite[6.6.7]{CLD}.  A similar statement holds for extension of scalars.
\end{art}

\section{Semipositivity and harmonic functions} \label{section: pl and harmonic functions}

In this section, we study positivity conditions on piecewise linear functions.  We will introduce the crucial notion of a \defi{harmonic} function, on which our theory of differential forms rests.  We consider a good strictly $K$-analytic space $X$ and an additive subgroup $\Lambda\subset\R$.  \emph{We will always assume that $\Lambda=\Z$ or that $\Lambda$ is divisible.}

We can characterize semipositivity either in terms of formal geometry or residues of $\Lambda$-PL functions, as elaborated in~\secref{sec:reduction-germs}.  A line bundle $\fL$ on an admissible formal scheme $\fX$ is called \defi{nef} (resp.\ \defi{numerically trivial}) if its restriction to $\fX_s$ has the same property.  In other words, the line bundle $\fL$ is nef if the degree of $\fL|_C$ is nonnegative for every closed integral curve $C\subset\fX_s$ which is proper over $\td K$.%
\footnote{If $\sX$ is a proper $K^\circ$-model of a proper variety $X$ over $K$ and $\sL$ is a line bundle on $\sX$ with $L=\sL|_X$ such that the restriction of $\sL$ to the special fiber of $\sX$ is nef, then $L$ is a nef line bundle on $X$ by~\cite[4.8]{gubler_martin19:zhangs_metrics}.}

Let $\fX$ be a formal $K^\circ$-model of a paracompact good strictly $K$-analytic space $X$.  Recall from~\secref{model function} that $M(\fX)$ denotes the group of models of $\sO_X$, and that we have defined a homomorphism $h\mapsto h_\fL\colon M(\fX)_\Lambda\to\PL(X)_\Lambda$. 
We will also use the residue $L_h(x)$ of $h$ at $x \in X$ from~\secref{sec:reduction.pl.metric}.

\begin{prop}\label{prop:def.semipositive}
  Let $h\colon X\to\R$ be a $\Lambda$-PL function and  $x\in X$.  The following are equivalent:
  \begin{enumerate}
  \item There exists a paracompact neighbourhood $U$ of $x$ in $X$, a formal $K^\circ$-model $\fU$ of $U$, and a nef (resp.\ numerically trivial) element $\fL\in M(\fU)_\Lambda$, such that $h|_U = h_\fL$.
  \item The residue $L_h(x)\in\Pic(\red(X,x))_\Lambda$ is nef (resp.\ numerically trivial).
  \end{enumerate}
\end{prop}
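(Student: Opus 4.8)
The plan is to recast both conditions in terms of a line bundle $\fL$ on a formal model and then compare them. Recall from \secref{sec:reduction.pl.metric} (and Lemma~\ref{lem:reduction.depends.function} for $\Lambda$-coefficients) that whenever $(\fU,\fL)$ is a formal $K^\circ$-model of $(U,\sO_X|_U)$ with $\fL\in M(\fU)_\Lambda$ and $h|_U=h_\fL$, the residue is $L_h(x)=\pi_{\fU,x}^*\fL_{\fU,x}$, where $\fL_{\fU,x}$ denotes the restriction of $\fL$ to the closed subvariety $V_{\fU,x}\subseteq\fU_s$. So the equivalence amounts to passing between nefness of $\fL$ on the whole special fibre $\fU_s$ and nefness of its restriction to $V_{\fU,x}$. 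I only discuss ``nef'', since ``numerically trivial'' follows by applying the nef case to $\fL$ and to $\fL\inv$.

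For the implication $(1)\Rightarrow(2)$ I would argue directly. Given $U$, $\fU$ and a nef $\fL\in M(\fU)_\Lambda$ with $h|_U=h_\fL$, the closed immersion $V_{\fU,x}\hookrightarrow\fU_s$ is in particular a morphism of separated finite-type $\td K$-schemes, so $\fL_{\fU,x}$ is nef by Lemma~\ref{lem:nef.properties}(3). Thus $(V_{\fU,x},\fL_{\fU,x})$ is a model of $(\red(X,x),L_h(x))$ with nef line bundle, and $L_h(x)$ is nef by definition.

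For $(2)\Rightarrow(1)$ I would begin with any model. Since $h$ is $\Lambda$-PL, \secref{model theorem} together with Corollary~\ref{cor:hL.is.pl} (using $\Lambda=\Z$ or $\Lambda$ divisible) provides a compact strictly analytic neighbourhood $U$ of $x$ and a model $(\fU,\fL)$ with $\fL\in M(\fU)_\Lambda$ and $h|_U=h_\fL$. Then $L_h(x)=\pi_{\fU,x}^*\fL_{\fU,x}$, so $\fL_{\fU,x}$ is nef on $V_{\fU,x}$ by Lemma~\ref{lem:nef.all.models}. The crucial remaining step is to shrink $U$ to a smaller strictly analytic neighbourhood $U'$ of $x$ and pass to a new formal model $\fU'$ of $U'$, equipped with a morphism $\fU'\to\fU$ inducing the inclusion $U'\hookrightarrow U$ on generic fibres, such that every closed integral curve of $\fU'_s$ which is proper over $\td K$ is contained in $V_{\fU',x}$. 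Granting this, put $\fL'$ equal to the pull-back of $\fL$ to $\fU'$: by the functoriality in \secref{sec:equiv.pl.setup} we still have $h_{\fL'}=h|_{U'}$ with $\fL'\in M(\fU')_\Lambda$, and $\fL'_{\fU',x}$ is the pull-back of $\fL_{\fU,x}$ along the dominant morphism $V_{\fU',x}\to V_{\fU,x}$, hence nef by Lemma~\ref{lem:nef.properties}(3); since all proper curves of $\fU'_s$ lie in $V_{\fU',x}$, the bundle $\fL'$ is nef on $\fU'_s$, i.e.\ $\fL'\in M(\fU')_\Lambda$ is nef, which (as $U'$ is paracompact, being the generic fibre of a separated formal model) is condition~(1).

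The hard part is precisely this shrinking: one must eliminate the ``phantom'' proper-over-$\td K$ curves of $\fU_s$ that lie away from $x$ (these arise either from superfluous admissible formal blow-ups or from the size of $U$ far from $x$, and are not controlled by the residue $L_h(x)$). Conceptually this is a localisation statement at $x$: the germ only remembers the pro-variety $(V_{\fU,x})$, and the cofinality of the models $V_{\fU,x}$ among all models of $\red(X,x)$ (Lemma~\ref{lem:formal.model.cofinal}), together with the compatibility of reductions of germs with admissible formal blow-ups, should let one realise a formal model whose special fibre has no complete part beyond $V_{\fU,x}$. Once such a model is produced, the remaining verifications --- that $h_{\fL'}=h|_{U'}$ and that nefness of $\fL_{\fU,x}$ is inherited by $\fL'_{\fU',x}$ --- are immediate from the functoriality recorded in \secref{sec:equiv.pl.setup} and from Lemma~\ref{lem:nef.properties}.
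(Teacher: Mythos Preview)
Your treatment of $(1)\Rightarrow(2)$ is fine and matches the paper. For $(2)\Rightarrow(1)$ you have correctly located the only nontrivial step---eliminating proper curves in the special fibre that are not controlled by $L_h(x)$---but you do not carry it out, and the sketch you give (cofinality via Lemma~\ref{lem:formal.model.cofinal}) is not the right lever. Cofinality tells you that the system of $V_{\fU,x}$'s is rich enough to compute $\Pic(\red(X,x))_\Lambda$; it does not by itself produce a formal model with no extraneous complete curves.

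The paper's construction is much more direct. Having chosen $(\fU,\fL)$ with $h|_U=h_\fL$ and noted that $\fL_{\fU,x}$ is nef on $V_{\fU,x}$ (Lemma~\ref{lem:nef.all.models}), take any compact strictly analytic neighbourhood $W$ of $x$ contained in the \emph{open} set $\red_\fU^{-1}(V_{\fU,x})$. By~\cite[Lemma~4.4]{bosch_lutkeboh93:formal_rigid_geometry_I} there is an admissible formal blowup $f\colon\fU'\to\fU$ and a formal open $\fW\subset\fU'$ with $\fW_\eta=W$. Now the point is not that every proper curve of $\fW_s$ lies in $V_{\fW,x}$ (your formulation), but that \emph{every point of $\fW_s$ maps under $f$ into $V_{\fU,x}$}: this is immediate from $W\subset\red_\fU^{-1}(V_{\fU,x})$ and functoriality of reduction. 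Hence any integral proper curve $C\subset\fW_s$ has $f(C)\subset V_{\fU,x}$ set-theoretically, and since $C$ is reduced, also scheme-theoretically. The projection formula then gives $c_1(f^*\fL).C = c_1(\fL_{\fU,x}).f_*C\geq 0$, so $f^*\fL|_\fW$ is nef and we are done. Notice this bypasses any need to compare $V_{\fW,x}$ with $V_{\fU,x}$ or to invoke Lemma~\ref{lem:nef.properties}(3) for a dominant morphism between them; the argument works one curve at a time via the map back to the original $\fU$.
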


\begin{proof}
  The first assertion implies the second by the description of $L_h(x)$ in~\secref{sec:reduction.pl.metric}.  We will prove the nef case, as the numerically trivial case is identical.  The following argument is similar to~\cite[Proposition~6.5]{gubler_kuenneman19:positivity}.  Suppose that $L_h(x)$ is nef.  By Corollary~\ref{cor:hL.is.pl}, there exists a compact strictly analytic neighbourhood $U$ of $x$, a formal $K^\circ$-model $\fU$ of $U$, and an element $\fL\in M(\fU)_\Lambda$, such that $h=h_\fL$.  As in~\secref{sec:reduction.pl.metric} we let $V_{\fU,x}$ be the closure of $\red_\fU(x)$ and we let $\fL_{\fU,x}$ be the restriction of $\fL$ to $V_{\fU,x}$.  Then $(V_{\fU,x},\fL_{\fU,x})$ is a model of $(\red(X,x),L_h(x))$, so $\fL_{\fU,x}$ is nef by Lemma~\ref{lem:nef.all.models}.  Let $W$ be a compact, strictly analytic neighbourhood of $x$ contained in the open set $\red_\fU\inv(V_{\fU,x})$.  By~\cite[Lemma~4.4]{bosch_lutkeboh93:formal_rigid_geometry_I}, there exists an admissible formal blowup $f\colon\fU'\to\fU$ such that $W = \fW_\eta$ for a formal open subset $\fW\subset\fU'$.  Note that $h|_W = h_{f^*\fL|_\fW}$. We claim that $f^*\fL|_\fW$ is nef.

  By functoriality of reductions, the set-theoretic image of $\fW$ under $f$ is contained in $V_{\fU,x}$.  In particular, if $C$ is any integral, proper subcurve of $\fW_s$, then $f(C)\subset V_{\fU,x}$.  Since $C$ is reduced, the schematic image of $C$ is contained in $V_{\fU,x}$, so we have a commutative square of morphisms of $\td k$-schemes:
  \[\xymatrix @=.25in{
      C \ar[r] \ar[d]_f & {\fW_s} \ar[d]^f \\
      {V_{\fU,x}} \ar[r] & {\fU_s}
    }\]
  Since $\fL_{\fU,x}$ is nef, we have $f^*\fL_{\fU,x}.C\geq 0$ by the projection formula.
\end{proof}

\begin{defn} \label{definition harmonic}
  A $\Lambda$-PL function $h\colon X\to\R$ is called \defi{semipositive} at a point $x\in X$ if it satisfies the equivalent conditions of Proposition~\ref{prop:def.semipositive}, and it is \defi{harmonic} at $x$ if both $h$ and $-h$ are semipositive at $x$.  We say that $h$ is \defi{semipositive} if it is semipositive at all points of $X$.

  A \defi{harmonic function on $X$} is an $\R$-PL function $h\colon X\to\R$ which is harmonic at all points of $X$.   A $\Lambda$-PL harmonic function is called \defi{$\Lambda$-harmonic.}
\end{defn}

\begin{art}\label{sec:harmonic.remarks}
  Some remarks:
  \begin{enumerate}
  \item\label{item:harm.rmk.numtrivial}
    A $\Lambda$-PL function $h$ is harmonic at $x\in X$ if and only if $L_h(x)$ is numerically trivial.  Equivalently, there exists a paracompact neighbourhood $U$ of $x$, a formal $K^\circ$-model $\fU$ of $U$, and a numerically trivial element $\fL\in M(\fU)_\Lambda$, such that $h=h_\fL$.
  \item\label{item:harm.rmk.R}
    Semipositivity and harmonicity are insensitive to the group $\Lambda$, in the following sense: if $h\in\PL_\Lambda(X)$, then $h$ is semipositive at a point $x$ if and only if its image in $\PL_\R(X)$ is semipositive at $x$.
  \item\label{item:harm.rmk.germ}
    Semipositivity and harmonicity of $h$ at $x$ is intrinsic to the germ of $h$ in $\PL_\Lambda(X,x)$.  In particular, the semipositive (resp.\ harmonic) functions on $X$ form a sheaf in the analytic topology. (They do \emph{not} form a sheaf in the $\rG$-topology).
  \item\label{item:harm.rmk.locus}
    It follows from Proposition~\ref{prop:def.semipositive}(1) that the locus of all points $x\in X$ at which $h$ is semipositive (resp.\ harmonic) is open.
  \item\label{item:harm.rmk.monoid}
    The set of semipositive functions at a point forms a monoid (resp.\ a cone if $\Lambda=\R$), and the set of harmonic functions at a point forms a group (resp.\ an $\R$-subspace).
  \item\label{item:harm.rmk.smooth}
    If $h = \log|f|$ for an invertible analytic function $f$ in a neighbourhood of $x$, then $h$ is harmonic at $x$ by Lemma~\ref{lem:model.lb.trivial} (the converse does not hold: see Remark~\ref{harmonic non-smooth functions}).
  \item\label{item:harm.rmk.analogy} Harmonic functions generalize smooth functions in the same way that numerical equivalence generalizes rational equivalence in algebraic geometry.
  \end{enumerate}
\end{art}

\begin{lem}\label{lem:harmonic.global}
  Suppose that $X$ is paracompact.  Let $\fX$ be a formal $K^\circ$-model of $X$ and let $\fL\in M(\fX)_\Lambda$.  Then $h_\fL$ is semipositive (resp.\ harmonic) if and only if $\fL$ is nef (resp.\ numerically trivial).
\end{lem}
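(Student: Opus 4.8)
The plan is to prove the two implications of the ``semipositive'' case separately, and then deduce the ``harmonic'' case by applying the semipositive case to both $h_\fL$ and $-h_\fL=h_{\fL^{-1}}$.

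\emph{If $\fL$ is nef.} Here I would argue directly from Proposition~\ref{prop:def.semipositive}: since $X$ is paracompact, for any $x\in X$ the data $(U,\fU,\fL)=(X,\fX,\fL)$ satisfy condition~(1) of that proposition, so $h_\fL$ is semipositive at every point, hence semipositive. If $\fL$ is even numerically trivial, then $\fL$ and $\fL^{-1}$ are both nef, so $h_\fL$ and $-h_\fL=h_{\fL^{-1}}$ are both semipositive, i.e. $h_\fL$ is harmonic by Definition~\ref{definition harmonic}.

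\emph{If $h_\fL$ is semipositive.} By the definition of nefness of a line bundle on a formal scheme recalled at the beginning of this section, it suffices to fix a closed integral curve $C\subset\fX_s$ that is proper over $\td K$ and prove $\deg(\fL|_C)\ge0$, i.e. that $\fL|_C\in\Pic(C)_\Lambda$ is nef. The idea is to exhibit $(C,\fL|_C)$ as a model of $(\red(X,x),L_{h_\fL}(x))$ for a suitable $x$, and to conclude with Proposition~\ref{prop:def.semipositive}(2) together with Lemma~\ref{lem:nef.all.models}. Concretely I would: (i) use surjectivity and anti-continuity of the reduction map $\red_\fX\colon X\to\fX_s$ to choose $x\in X$ with $\red_\fX(x)=\eta_C$ the generic point of $C$, noting that $\red_\fX^{-1}(C)$ is then an open neighbourhood of $x$ because $C$ is closed in $\fX_s$; (ii) cover $C$ by finitely many affine formal opens of $\fX$ and let $\fX'\subset\fX$ be their union, so that $\fX'$ is a quasi-compact open formal subscheme with $C\subset\fX'_s$, whence $U\coloneq\fX'_\eta$ is a compact strictly analytic neighbourhood of $x$ (it contains the open set $\red_\fX^{-1}(C)$) and $h_\fL|_U=h_{\fL|_{\fX'}}$; (iii) compute the residue using $(U,\fX',\fL|_{\fX'})$ in~\secref{sec:reduction.pl.metric}: since open immersions of formal schemes are compatible with reduction, $\red_{\fX'}(x)=\eta_C$, and so $V_{\fX',x}$ is the closure of $\{\eta_C\}$ in $\fX'_s$, which is $C$ (the scheme $C$ being integral, hence reduced); therefore $\red(X,x)=\bP_{\td\sH(x)/\td K}\{C\}$ and $L_{h_\fL}(x)=\pi_{\fX',x}^*(\fL|_C)$, so that $(C,\fL|_C)$ is a model of $(\red(X,x),L_{h_\fL}(x))$. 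Then $L_{h_\fL}(x)$ is nef by semipositivity and Proposition~\ref{prop:def.semipositive}(2), hence $\fL|_C$ is nef by Lemma~\ref{lem:nef.all.models}, which for the integral proper curve $C$ means $\deg(\fL|_C)\ge0$. For the numerically trivial statement, run this argument for $h_\fL$ and for $-h_\fL=h_{\fL^{-1}}$.

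The genuinely delicate point is step~(ii): the constructions of $\red(X,x)$ in~\secref{sec:temkin.reduction.germs} and of the residue $L_{h_\fL}(x)$ in~\secref{sec:reduction.pl.metric} require a \emph{compact} strictly analytic neighbourhood of $x$ carrying a formal model, whereas $X$ is only paracompact and $\fX$ need not be quasi-compact. Passing to $\fX'$ as above fixes this while keeping the reduction map — hence $V_{\fX',x}$ and the residue — compatible with $\fX$, so that $L_{h_\fL}(x)$ is literally computed by restricting $\fL$ to $C$. Everything else is a direct unwinding of the definitions of the residue of a PL function and of nefness on Riemann--Zariski spaces.
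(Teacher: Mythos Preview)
Your proposal is correct and follows essentially the same strategy as the paper's proof: for the nontrivial direction, pick a proper integral curve $C\subset\fX_s$, lift its generic point via surjectivity of $\red_\fX$ to some $x\in X$, observe that $(C,\fL|_C)$ is a model of $(\red(X,x),L_{h_\fL}(x))$, and invoke Lemma~\ref{lem:nef.all.models}. You are more explicit than the paper about the compactness issue in step~(ii)---the paper simply asserts that $(C,\fL|_C)$ is a model without spelling out the passage to a quasi-compact open $\fX'\subset\fX$---but otherwise the arguments coincide.
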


\begin{proof}
  The harmonic case follows from the semipositive case.  If $\fL$ is nef then $h_\fL$ is semipositive by definition.  Suppose then that $h\coloneq h_\fL$ is semipositive.  Let $C\subset\fX_s$ be an integral, proper closed curve.  By surjectivity of $\red_\fX$, there exists a point $x\in X$ reducing to the generic point of $C$.  Then $(C,\fL|_C)$ is a model of $(\red(X,x), L_h(x))$, so $\fL|_C$ is nef by Lemma~\ref{lem:nef.all.models}.  In particular, $c_1(\fL).C\geq 0$, so $\fL$ is nef.
\end{proof}

Combined with~\secref{model theorem}, we see that a PL function $h\colon X\to\R$ on a paracompact space $X$ is semipositive (resp.\ harmonic) if and only if there exists a formal $K^\circ$-model $\fX$ of $X$ and a nef (resp.\ numerically trivial) line bundle $\fL\in M(\fX)$ such that $h = h_\fL = -\log\|1\|_\fL$.

We have the following alternative characterization of harmonic functions.  
We do not know if the same characterization holds for semipositive functions.

\begin{prop}\label{prop: equivalence of harmonicity definitions}
  Let $h\colon X\to\R$ be a $\Lambda$-PL function and let $x\in X$.  Then $h$ is harmonic at $x$ if and only if there exists a strictly analytic neighbourhood $U$ of $x$ such that $h|_U = \sum_{i=1}^m\lambda_ih_i$ for 
  $\Z$-harmonic functions $h_1,\ldots,h_m$ on $U$
  and $\lambda_1,\ldots,\lambda_m\in\Lambda$.
\end{prop}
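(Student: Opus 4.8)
The plan is to translate harmonicity at $x$ into the language of residues. By~\secref{sec:harmonic.remarks}(\ref{item:harm.rmk.numtrivial}), a $\Lambda$-PL function $g$ is harmonic at $x$ if and only if its residue $L_g(x)\in\Pic(\red(X,x))_\Lambda$ (defined in~\secref{sec:reduction.pl.metric} and Lemma~\ref{lem:reduction.depends.function}) is numerically trivial, and the assignment $g\mapsto L_g(x)$ is $\Lambda$-linear in the germ of $g$. Granting this, the ``if'' direction is immediate: if $h|_U=\sum_{i=1}^m\lambda_i h_i$ with $\lambda_i\in\Lambda$ and each $h_i$ a $\Z$-harmonic function on $U$, then $L_h(x)=\sum_{i=1}^m\lambda_i L_{h_i}(x)$ is a $\Lambda$-combination of classes that are numerically trivial in $\Pic(\red(X,x))$; passing to $\Pic(\red(X,x))_\R$, where the numerically trivial classes form an $\R$-subspace, and using that numerical triviality of an element of $\Pic(\red(X,x))_\Lambda$ may be checked on its image in $\Pic(\red(X,x))_\R$ (the discussion preceding Lemma~\ref{lem:nef.all.models}), I conclude that $L_h(x)$ is numerically trivial, so $h$ is harmonic at $x$.

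The substance is the ``only if'' direction. When $\Lambda=\Z$ it is trivial: $h$ is then $\Z$-PL and harmonic at $x$, hence harmonic on a whole neighbourhood $U$ of $x$ by~\secref{sec:harmonic.remarks}(\ref{item:harm.rmk.locus}), so one takes $m=1$, $\lambda_1=1$, $h_1=h|_U$. So assume $\Lambda$ is divisible. First I would invoke Corollary~\ref{cor:hL.is.pl} to obtain a compact strictly analytic neighbourhood $U$ of $x$, a formal $K^\circ$-model $\fU$ of $U$, and $\fL\in M(\fU)_\Lambda$ with $h|_U=h_\fL$. Writing $\fL$ as a $\Lambda$-linear combination of elements of $M(\fU)$ and applying Lemma~\ref{lem:lambda.assume.li}, I may arrange $\fL=\sum_{i=1}^m\lambda_i\fL_i$ with $\fL_i\in M(\fU)$ and $\{\lambda_1,\dots,\lambda_m\}\subset\Lambda$ linearly independent over $\Q$; then $h|_U=\sum_{i=1}^m\lambda_i h_{\fL_i}$ with each $h_{\fL_i}\in\PL(U)$ by~\secref{model function}.

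The key step is to show that each $h_{\fL_i}$ is individually harmonic at $x$. For this I would work on the single model $V\coloneqq V_{\fU,x}$ from~\secref{sec:temkin.reduction.germs}, the closure of $\red_\fU(x)$, which is a variety over $\td K$: the restrictions $\fL_i|_V$ satisfy that $(V,\sum_i\lambda_i\fL_i|_V)$ is a model of $(\red(X,x),L_h(x))$ and each $(V,\fL_i|_V)$ is a model of $(\red(X,x),L_{h_{\fL_i}}(x))$. Since $h$ is harmonic at $x$, $L_h(x)$ is numerically trivial, so Lemma~\ref{lem:nef.all.models} (applied to $L_h(x)$ and to its inverse) forces $\sum_i\lambda_i\fL_i|_V$ to be numerically trivial on $V$; testing against an arbitrary integral proper closed curve $C\subset V$ yields $\sum_i\lambda_i\bigl(c_1(\fL_i|_V).C\bigr)=0$ with integer coefficients $c_1(\fL_i|_V).C$, and $\Q$-linear independence of the $\lambda_i$ then forces every $c_1(\fL_i|_V).C=0$. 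Hence each $\fL_i|_V$ is numerically trivial on $V$, so each $L_{h_{\fL_i}}(x)$ is numerically trivial and $h_{\fL_i}$ is harmonic at $x$. Finally I would use~\secref{sec:harmonic.remarks}(\ref{item:harm.rmk.locus}) to shrink to an open neighbourhood $U'$ of $x$ on which all of $h_{\fL_1},\dots,h_{\fL_m}$ are harmonic; then $h|_{U'}=\sum_{i=1}^m\lambda_i\bigl(h_{\fL_i}|_{U'}\bigr)$ is the desired decomposition, each $h_{\fL_i}|_{U'}$ being a $\Z$-PL function harmonic at every point of $U'$, i.e.\ $\Z$-harmonic.

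I expect the main obstacle to be precisely this separation step --- deducing numerical triviality of each summand $\fL_i|_V$ from numerical triviality of the $\Q$-linearly independent combination $\sum_i\lambda_i\fL_i|_V$. What makes it work is that intersection numbers of honest line bundles lie in $\Z$, so $\Q$-linear independence of the $\lambda_i$ can be exploited, together with the fact (Lemma~\ref{lem:nef.all.models}) that numerical triviality over the Riemann--Zariski space $\red(X,x)$ may be tested on the single model $V_{\fU,x}$. Everything else is bookkeeping with the $\Lambda$-linearity of the residue homomorphism from Lemma~\ref{lem:reduction.depends.function} and the openness of the harmonic locus.
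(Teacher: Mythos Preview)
Your proposal is correct and follows essentially the same approach as the paper: reduce to the divisible case, decompose $h$ locally as $\sum_i\lambda_ih_i$ with $\Q$-linearly independent $\lambda_i$ and $h_i\in\PL$, pass to a single model of $\red(X,x)$ on which all residues are represented, and use that intersection numbers of honest line bundles are integers to separate the summands. The only cosmetic difference is that you obtain the decomposition via Corollary~\ref{cor:hL.is.pl} and work on the explicit model $V_{\fU,x}$, whereas the paper invokes Proposition~\ref{prop:Lpl.analytic.nbhd} directly and chooses an unspecified common model; your explicit citation of Lemma~\ref{lem:nef.all.models} for testing numerical triviality on $V_{\fU,x}$ and your final shrinking step via~\secref{sec:harmonic.remarks}(\ref{item:harm.rmk.locus}) are left implicit in the paper's argument.
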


\begin{proof}
  The proposition is vacuous when $\Lambda=\Z$,  so assume $\Lambda$ is divisible.  The ``if'' direction is trivial.  Suppose then that $h$ is harmonic at $x$.  By Proposition~\ref{prop:Lpl.analytic.nbhd}, after shrinking $X$, we may assume that $h = \sum_{i=1}^m \lambda_i h_i$ for $\lambda_1,\ldots,\lambda_m\in\Lambda$ and $h_1,\ldots,h_m\in\PL(X)$.  By Lemma~\ref{lem:lambda.assume.li}, we may assume that $\lambda_1,\ldots,\lambda_m$ are $\Q$-linearly independent.  By Lemma~\ref{lem:reduction.depends.function}, the residue $L_h(x)$ is equal to $\sum_{i=1}^m \lambda_i L_{h_i}(x)$.  Let $V$ be a model for $\red(X,x)$ in the sense of~\secref{sec:rz.models} such that each $L_{h_i}(x)$ is the pullback of a line bundle $L_{V,i}$ on $V$.  Let $C\subset V$ be an integral, proper closed curve.  Since $L_h(x)$ is numerically trivial, we have $\sum_{i=1}^m \lambda_i\,c_1(L_{V,i}(x)).C = 0$.  But $c_1(L_{V,i}(x)).C\in\Z$ for each $i$, so by linear independence, we have $c_1(L_{V,i}(x)).C = 0$ for each $i$.  Thus $L_{V,i}$ is numerically trivial, so $h_i$ is harmonic.
\end{proof}

Now we gather some functoriality properties of semipositive and harmonic functions.

\begin{prop}\label{prop:harmonic.properties}
	Let $X$ be a good strictly $K$-analytic space and let $h \in \PL_\Lambda(X)$.

    Let $f\colon X'\to X$ be a morphism of good strictly $K$-analytic spaces and let $h' = h\circ f$.
    \begin{enumerate}
    	\item[(1)]\label{item:harm.prop.pullback}
    	For $x' \in X'$, let $x = f(x')$.   If $h$ is semipositive at $x$, then $h'$ is semipositive at $x'$.  The converse holds if $x'\in\Int(X'/X)$.
    	\item[($1^\prime$)]\label{item:harm.prop.pullback2}
    	If $h$ is semipositive, then $h'$ is semipositive.  The converse holds if $f$ is proper and surjective.
    \end{enumerate}
Let $K'/K$ be a non-Archimedean field extension, let $X' = X\hat\tensor_K K'$, let $\pi\colon X'\to X$ be the structure morphism, let $x'\in X'$, let $x = \pi(x')$, and let $h' = h\circ\pi$.
	\begin{enumerate}
		\item[(2)]\label{item:harm.prop.basechange}
	  If $h$ is semipositive at $x$, then $h'$ is semipositive at $x'$.  The converse holds if we have $\trdeg(\td K'\td\sH(x)/\td K') = \trdeg(\td\sH(x)/\td K)$, where $\td K'\td\sH(x)$ is the smallest subfield of $\td\sH(x')$ containing $\td\sH(x)$ and $\td K'$.
		\item[($2^\prime$)]\label{item:harm.prop.basechange2}
		The function $h$ is semipositive if and only if $h'$ is semipositive.
    \end{enumerate}
	Let $\iota\colon X_{\red}\to X$ be the inclusion of the reduction of $X$ and let $h' = h\circ\iota$.
	\begin{enumerate}
	\item[(3)]\label{item:harm.prop.reduction}
	  For $x' \in X_{\red}$, the function  $h$ is semipositive at $x =\iota(x')$ if and only if $h'$ is semipositive at $x'$.
		\item[($3^\prime$)]\label{item:harm.prop.reduction2}
		The function $h$ is semipositive if and only if $h'$ is semipositive.
	\end{enumerate}
	The  statements (1)--(3) and ($1^\prime$)--($3^\prime$) hold with ``semipositive'' replaced by ``harmonic.''
\end{prop}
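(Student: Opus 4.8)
The plan is to reduce every assertion to the residue characterization of semipositivity, Proposition~\ref{prop:def.semipositive}: a $\Lambda$-PL function $h$ is semipositive at $x$ if and only if the residue $L_h(x)\in\Pic(\red(X,x))_\Lambda$ is nef, and (since harmonicity means that $h$ and $-h$ are both semipositive) harmonic at $x$ if and only if $L_h(x)$ is numerically trivial. Each of (1)--(3) asserts an implication together with a partial converse; applying the semipositive version to both $h$ and $-h$ then yields the harmonic version for free. So it suffices to treat semipositivity. In each case $h'$ is again $\Lambda$-PL by Lemma~\ref{lem:pl.basic.props}, so ``semipositive at'' makes sense, and throughout we use the standing assumption that $\Lambda=\Z$ or $\Lambda$ is divisible, under which residues of $\Lambda$-PL functions are defined (Lemma~\ref{lem:reduction.depends.function}).

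For (1) and (1'): a morphism $f\colon(X',x')\to(X,x)$ of germs over $K$ induces a morphism $\td f\colon\red(X',x')\to\red(X,x)$, which is the restriction of the canonical map $\phi\colon\bP_{\td\sH(x')/\td K}\to\bP_{\td\sH(x)/\td K}$, and $L_{h'}(x')=\td f^*L_h(x)$ by~\secref{sec:reduction.function.functoriality}. If $L_h(x)$ is nef then $\phi^*L_h(x)$ is nef by the unconditional direction of Lemma~\ref{lem:nef.properties.RZ}, hence so is its restriction $L_{h'}(x')$; this gives the first implication. If moreover $x'\in\Int(X'/X)$, then $\red(X',x')=\phi^{-1}(\red(X,x))$ by Theorem~\ref{thm:reduction.functor}; since both base fields equal $\td K$, condition~(1) of Lemma~\ref{lem:nef.properties.RZ} (``$\td K/\td K$ algebraic'') holds trivially, and its converse direction shows that nefness of $L_{h'}(x')$ forces nefness of $L_h(x)$. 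Statement (1') is immediate: the forward direction applies at every point of $X'$, and for the converse, if $f$ is proper then $\partial(X'/X)=\emptyset$ so every point of $X'$ lies in $\Int(X'/X)$, while surjectivity supplies a preimage of each $x\in X$.

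For (2) and (2'): extension of the ground field respects reduction of germs (\secref{sec:temkin.reduction.germs}), giving $\red(X',x')=\phi^{-1}(\red(X,x))$ for $\phi\colon\bP_{\td\sH(x')/\td K'}\to\bP_{\td\sH(x)/\td K}$, and again $L_{h'}(x')=\phi^*L_h(x)$. The forward implication in (2) is the unconditional direction of Lemma~\ref{lem:nef.properties.RZ}. For the converse, $\trdeg(\td\sH(x)/\td K)\le\dim_x(X)<\infty$ by Remark~\ref{Abhyankar inequality}, so the hypothesis $\trdeg(\td K'\td\sH(x)/\td K')=\trdeg(\td\sH(x)/\td K)$ is exactly condition~(2) of Lemma~\ref{lem:nef.properties.RZ}, and its converse direction applies. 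Statement (2') cannot be obtained pointwise, because the transcendence-degree condition can fail at some $x'$ over $x$; instead, given $x\in X$ with $h'$ semipositive, use Corollary~\ref{cor:hL.is.pl} to write $h|_U=h_\fL$ on a compact neighbourhood $U$ of $x$ with a formal model $\fU$ and $\fL\in M(\fU)_\Lambda$. Then $h'|_{U'}=h_{\fL'}$ for the base-changed model $\fU'=\fU\hat\tensor_{\kcirc}{K'}^\circ$ of $U'=U\hat\tensor_K K'$ and $\fL'$ the pullback of $\fL$. By Lemma~\ref{lem:harmonic.global}, $\fL'$ is nef, i.e.\ $\fL'|_{(\fU')_s}$ is nef; since $(\fU')_s=\fU_s\tensor_{\td K}\td K'$ and $\fL'|_{(\fU')_s}$ is the pullback of $\fL|_{\fU_s}$, Lemma~\ref{lem:nef.properties}(2) shows $\fL$ is nef, so $h=h_\fL$ is semipositive on $U$ by Lemma~\ref{lem:harmonic.global}; the forward implication in (2') is just (2) applied pointwise.

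For (3) and (3'): the closed immersion $\iota\colon X_{\red}\to X$ induces an isometric isomorphism $\sH(x')\isom\sH(x)$, hence $\td\sH(x')=\td\sH(x)$ and the associated $\phi$ is the identity on $\bP_{\td\sH(x)/\td K}$; as $\iota$ is finite, $\partial(X_{\red}/X)=\emptyset$, so Theorem~\ref{thm:reduction.functor} gives $\red(X_{\red},x')=\red(X,x)$, and $L_{h'}(x')=L_h(x)$ under this identification. Thus $h$ is semipositive at $x$ iff $h'$ is semipositive at $x'$, and (3') follows because $\iota$ is a bijection on points. The part requiring the most care is not a single hard step but the matching of the hypotheses of the Riemann--Zariski nefness lemma (Lemma~\ref{lem:nef.properties.RZ}) to each of the three situations; the one place where that lemma is insufficient is the unconditional base-change statement (2'), handled above by descending nefness through a base-changed formal model via Lemma~\ref{lem:nef.properties}(2).
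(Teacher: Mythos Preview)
Your proof is correct and follows essentially the same route as the paper: reduce everything to the residue characterization (Proposition~\ref{prop:def.semipositive}), invoke functoriality of residues~(\secref{sec:reduction.function.functoriality}) together with Lemma~\ref{lem:nef.properties.RZ} and Theorem~\ref{thm:reduction.functor} for (1), (1'), (2), and handle the unconditional base-change statement (2') via a formal model and Lemma~\ref{lem:nef.properties}(2) with Lemma~\ref{lem:harmonic.global}. The only cosmetic difference is in (3) and (3'): the paper simply observes that $\iota\colon X_{\red}\to X$ is proper and surjective and cites (1) and (1'), whereas you unpack this by noting $\td\sH(x')=\td\sH(x)$ so that $\phi$ is the identity and $\red(X_{\red},x')=\red(X,x)$; your argument is just the content of (1) specialized to this case.
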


\begin{proof}
  In each case, harmonicity follows easily from semipositivity.  In all cases, the function  $h'$ is $\Lambda$-PL by Lemma~\ref{lem:pl.basic.props}.

  We prove first (1). If $h$ is semipositive at $x$  then $h'$ is semipositive at $x'$ by Lemma~\ref{lem:nef.properties.RZ} and functoriality of reduction.  The converse holds if $x\in\Int(X'/X)$ by Theorem~\ref{thm:reduction.functor} and Lemma~\ref{lem:nef.properties.RZ}(1), using functoriality of residues~\secref{sec:reduction.function.functoriality}.  Assertion~($1^\prime$) follows because a proper morphism has no boundary.

 To prove (2), let $\phi\colon\bP_{\td\sH(x')/\td K'}\to\bP_{\td\sH(x)/\td K}$ be the natural morphism.  We have seen in~\secref{sec:temkin.reduction.germs} that  $\phi\inv(\red(X,x)) = \red(X',x')$.  The assertions now follow from Lemma~\ref{lem:nef.properties.RZ} and functoriality of residues~\secref{sec:reduction.function.functoriality}.
 To prove ($2^\prime$), by Corollary~\ref{cor:hL.is.pl} we may shrink $X$ such that $X$ is compact with  formal model $\fX$  and such that there is $\fL \in M(\fX)_\Lambda$ with $h=h_\fL$. Then the claim follows from Lemma~\ref{lem:nef.properties}(2) and Lemma~\ref{lem:harmonic.global}.

The claims (3) and ($3^\prime$) follow from~(\ref{item:harm.prop.pullback},$1^\prime$) since $X_{\red}\to X$ is proper and surjective.
\end{proof}
The following proposition allows us to pass to Galois extensions when studying harmonic and semipositive functions.

\begin{prop} \label{infinite Galois extensions}
  Suppose that $\Lambda$ is divisible.  Let $L/K$ be a (potentially infinite) normal extension with automorphism group $G \coloneqq \Aut(L/K)$. Let $K'$ be the completion of $L$ and let $X' \coloneqq  X\hat\tensor_K K'$, with structure morphism $\pi\colon X'\to X$.  Then $h\mapsto h\circ\pi$ induces an isomorphism from the monoid of semipositive $\Lambda$-PL functions on $X$ (resp.\ the group of $\Lambda$-harmonic
  functions on $X$) to the monoid of $G$-invariant  semipositive $\Lambda$-PL functions on $X'$ (resp.\ the group of $G$-invariant $\Lambda$-harmonic
functions on $X'$).
\end{prop}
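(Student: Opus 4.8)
The plan is to deduce this from two facts already established: the Galois descent for piecewise $\Lambda$-linear functions (Corollary~\ref{pl infinite Galois extensions}) and the invariance of semipositivity under extension of the ground field (Proposition~\ref{prop:harmonic.properties}(2$'$)). First I would recall that $G$ acts on $X' = X\hat\tensor_K K'$ through the second factor, that $\pi\circ g = \pi$ for every $g\in G$, and hence that $h\circ\pi$ is $G$-invariant for every function $h$ on $X$. By Corollary~\ref{pl infinite Galois extensions}, pull-back $h\mapsto h\circ\pi$ is an isomorphism of $\Lambda$-modules $\PL_\Lambda(X)\isom\PL_\Lambda(X')^G$; in particular it is an additive bijection, so it is a homomorphism of monoids under addition, and it remains only to identify the semipositive (resp.\ harmonic) locus on each side.

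Now I would invoke Proposition~\ref{prop:harmonic.properties}(2$'$): a $\Lambda$-PL function $h$ on $X$ is semipositive if and only if $h' = h\circ\pi$ is semipositive on $X'$. Combined with the previous paragraph this gives both directions: if $h$ is semipositive then $h\circ\pi$ is a $G$-invariant semipositive $\Lambda$-PL function; conversely, given a $G$-invariant semipositive $\Lambda$-PL function $h'$ on $X'$, Corollary~\ref{pl infinite Galois extensions} produces the unique $h\in\PL_\Lambda(X)$ with $h\circ\pi = h'$, and Proposition~\ref{prop:harmonic.properties}(2$'$) then forces $h$ to be semipositive. For the harmonic case I would simply note that $h$ is harmonic if and only if $h$ and $-h$ are both semipositive, and that pull-back commutes with negation, so the harmonic statement reduces formally to the semipositive one. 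Since the semipositive $\Lambda$-PL functions form a monoid and the harmonic ones a group (\secref{sec:harmonic.remarks}(5)), and these structures are preserved by the additive map $h\mapsto h\circ\pi$, one obtains the asserted isomorphisms of monoids (resp.\ groups).

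I do not expect any real obstacle here: the statement is a formal combination of results proved earlier in the paper. The substantive inputs, which one would have to confront in a from-scratch proof, are (i) the descent of $\Lambda$-PL functions through a possibly infinite normal extension, which rests on the identification $X = X'/G$ via \cite[Proposition~1.3.5, Corollary~1.3.6]{berkovic90:analytic_geometry} together with Corollary~\ref{cor:pl.basechange}, and (ii) the base-change invariance of semipositivity, which via Corollary~\ref{cor:hL.is.pl} reduces to the fact that nefness of a line bundle on a formal special fibre is insensitive to extension of the residue field (Lemma~\ref{lem:nef.properties}(2)). Both are already in hand, so the argument here is short; the only point requiring (minor) care is the compatibility check that a $\Lambda$-PL function descended via Corollary~\ref{pl infinite Galois extensions} is automatically semipositive, which is exactly what Proposition~\ref{prop:harmonic.properties}(2$'$) supplies.
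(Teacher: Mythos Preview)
Your proposal is correct and matches the paper's proof exactly: the paper simply states that the result is an immediate consequence of Corollary~\ref{pl infinite Galois extensions} and Proposition~\ref{prop:harmonic.properties}(2$'$), which are precisely the two inputs you identify and combine. Your write-up just makes explicit the formal verification that the paper leaves to the reader.
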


\begin{proof}
This is an immediate consequence of Corollary~\ref{pl infinite Galois extensions} and Proposition~\ref{prop:harmonic.properties}($2^\prime$).
\end{proof}

Our next goal is to approximate locally semipositive metrics by local Fubini--Study metrics (Proposition~\ref{prop:approx.semipositive}).  This will be a crucial ingredient in proving the balancing condition in {Section}~\secref{section: balancing condition}.

Recall that a line bundle on a scheme $Y$ is called \defi{semiample} if some positive tensor power is globally generated.  {We say that $L \in \Pic(Y)_\R$ is \defi{ample} (resp.~\defi{semiample}) if there are ample (resp.~semiample) line bundles $H_1,\dots,H_r$ and $\lambda_1,\dots, \lambda_r \in \R_{\geq 0}$ such that $L=\sum_{j=1}^r \lambda_j H_j \in \Pic(Y)_\R$. 
An ample $L$ is semiample and a semiample $L$ is nef.}

\begin{defn}\label{def:lfs.func}
Let $h\colon X\to\R$ be an {$\R$}-PL function.  We say that $h$ is \defi{Fubini--Study} at a point $x\in X$ if $L_h(x)$ is semiample.  We say that $h$ is \defi{locally Fubini--Study} or \defi{LFS} if it is Fubini--Study at every point of $X$.
\end{defn}

Clearly if $h$ is Fubini--Study at $x$ then it is semipositive at $x$.

\begin{art}\label{lfs.is.max}
It follows from~\cite[Proposition~6.8.2]{CLD} that $h$ is Fubini--Study at $x$ if and only if
{there is an open neighbourhood $U$ of $x$ such that $h|_U$ is an $\R_{\geq 0}$-linear combination of functions of the form $\max\{-\log|f_1|,\ldots,-\log|f_n|\}$ for invertible analytic functions $f_1,\ldots,f_n$ on $U$.} 
\end{art}

The following proposition can be seen as a local analogue of the fact that one can approximate a nef $\Q$-line bundle by ample $\Q$-line bundles on a projective variety.

\begin{prop}\label{prop:approx.semipositive}
Let $h\colon X\to\R$ be an {$\R$}-PL function that is semipositive at a point $x\in\Int(X)$.  Then there exists a strictly analytic neighbourhood $U$ of $x$ and a sequence $h_1,h_2,\ldots$ of {$\R$}-PL LFS functions on $U$ such that $h_i\to h|_U$ uniformly.
\end{prop}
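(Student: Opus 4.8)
The idea is to translate the problem into the algebraic geometry of a formal model via the residue $L_h(x)$, use a standard approximation of nef line bundles by semiample $\Q$-line bundles on projective varieties, and then transport the result back to the analytic setting. First I would use Corollary~\ref{cor:hL.is.pl} to shrink $X$ to a compact strictly analytic neighbourhood $U$ of $x$ carrying a formal $\kcirc$-model $\fX$ and an element $\fL \in M(\fX)_\Q$ with $h|_U = h_\fL$. The function $h$ is semipositive at $x$, so by Proposition~\ref{prop:def.semipositive} the residue $L_h(x) = \pi_{\fX,x}^*\fL_{\fX,x}$ is nef in $\Pic(\red(X,x))_\Q$, and by Lemma~\ref{lem:nef.all.models} the line bundle $\fL_{\fX,x}$ on $V_{\fX,x}$ is nef. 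Since $x \in \Int(X)$, after possibly shrinking $U$ we may arrange (using~\secref{sec:temkin.reduction.germs} and the fact that interior points have no relative boundary over $\mathscr M(K)$, together with properness arguments for the reduction $\red_\fX$) that $V_{\fX,x}$ is a \emph{proper} $\td K$-variety; more precisely, one reduces to the case where the closure of $\red_\fX(x)$ in the special fiber is projective. This is the technical heart of the set-up step, and I would either invoke Temkin's results on reductions of germs to realize $V_{\fX,x}$ as the special fiber of a proper formal model of a suitable affinoid neighbourhood, or use an admissible formal blow-up to make the relevant component projective.

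Once $\fL_{\fX,x}$ is a nef $\Q$-line bundle on a projective $\td K$-variety $V = V_{\fX,x}$, I would invoke the classical fact that on a projective variety the nef cone is the closure of the semiample cone \emph{after allowing an ample perturbation}: concretely, if $A$ is an ample line bundle on $V$, then $\fL_{\fX,x} + \tfrac1N A$ is ample, hence semiample, for every positive integer $N$, and these converge to $\fL_{\fX,x}$ in $\Pic(V)_\R$ as $N \to \infty$. I would then spread this out: after a further admissible blow-up I may assume $\fX_s$ itself carries an ample line bundle $\fA$ restricting to $A$ on the component through $\red_\fX(x)$ (relative ampleness over $\Spf \kcirc$, applied locally around that component), and pick $\fA = \fL_{\fM}$ for a line bundle $\fM$ on a dominating formal model with $\fM \in M(\fX')_\Q$ after normalizing the generic fiber. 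Setting $h_N \coloneqq h_\fL + \tfrac1N h_{\fM}$, each $h_N$ is a $\Q$-PL function whose residue at $x$ is semiample, hence $h_N$ is Fubini--Study at $x$; shrinking $U$ around $x$ using the openness statement in~\secref{lfs.is.max} (equivalently the description via $\max$ of $-\log|f_i|$), we may assume each $h_N$ is LFS on $U$.

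Finally, uniform convergence $h_N \to h|_U$ on $U$ follows from $h_N - h = \tfrac1N h_{\fM}$ and the boundedness of the PL function $h_{\fM}$ on the compact space $U$: indeed $\|h_N - h\|_{\sup,U} = \tfrac1N\|h_{\fM}\|_{\sup,U} \to 0$. The main obstacle I anticipate is the set-up step: ensuring, using only that $x$ is an interior point, that the reduction $V_{\fX,x}$ (or an appropriate modification thereof) is projective so that Seshadri/Kleiman-type ampleness criteria and the ``nef $=$ limit of ample'' principle apply. If a clean projectivity statement is unavailable, the fallback is to work directly with the semiampleness criterion of~\cite[Proposition~6.8.2]{CLD} recalled in~\secref{lfs.is.max}: one approximates the nef residue by semiample residues on arbitrarily small models $V$, which may only be proper (not projective) over $\td K$, and for such $V$ one still has that nef plus a small semiample perturbation is semiample after passing to a projective alteration — but this requires more care and I would first try to reduce to the projective case outright.
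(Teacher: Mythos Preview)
Your overall strategy matches the paper's: perturb $h$ by a small rational multiple of an auxiliary $\Q$-PL function $h'$ whose residue at $x$ is ample, so that $h_\epsilon = h + \epsilon h'$ has semiample residue and is therefore LFS, while $h_\epsilon\to h$ uniformly.  You also correctly identify the crux, namely arranging projectivity of the relevant model of $\red(X,x)$.  The paper dispatches this in two steps you are missing or only gesture at: first, since $x\in\Int(X)$ the closure $V_{\fX,x}$ of $\red_\fX(x)$ in $\fX_s$ is \emph{proper} over $\td K$ (this is~\cite[Lemme~6.5.1]{chambert_ducros12:forms_courants}, with a gap filled in~\cite[Appendix~A]{Vilsmeier}); second, Chow's lemma produces a projective $V$ with a proper birational map $V\to V_{\fX,x}$, and one takes any ample $L'$ on $V$.

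The genuine gap in your proposal is the construction of the ample perturbation.  You want ``$\fX_s$ itself [to carry] an ample line bundle $\fA$'' after an admissible blow-up, but there is no reason for the special fiber of a formal model of an affinoid to be projective, and no blow-up will make it so in general; moreover, even a line bundle on $\fX_s$ need not lift to a formal line bundle on $\fX$.  The paper's way around this is the key step you are missing: by~\cite[6.6.5]{chambert_ducros12:forms_courants}, \emph{every} line bundle on a model of $\red(X,x)$ arises as the residue $L_{h'}(x)$ of some PL function $h'$ on the germ $(X,x)$.  So one takes the ample $L'$ on the projective $V$ and \emph{constructs} $h'$ from it, rather than trying to find an ample bundle on an existing formal model.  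One then has to align the models (Lemma~\ref{lem:formal.model.cofinal} is used twice) so that on a single formal model $\fX$ both $h = h_\fL$ and $h' = h_{\fL'}$ with $\fL|_{V'}$ and $\fL'|_{V'}$ pulled back from $L$ and $L'$ on $V$, where $V' = V_{\fX,x}$ dominates $V$.

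Finally, your ``shrink $U$ using openness of LFS'' would a priori give a different $U_N$ for each $N$, which is not enough.  The paper gets a uniform $U$ for free from the construction: with $U = \red_\fX\inv(V')$, any $y\in U$ has $\overline{\red_\fX(y)}\subset V'$, and since $L+\epsilon L'$ is ample on the projective $V$ its pullback to $V'$ is semiample, hence so is its restriction to $\overline{\red_\fX(y)}$; thus $h_\epsilon$ is Fubini--Study at every $y\in U$ for every $\epsilon>0$ simultaneously.
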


\begin{proof}
We may shrink $X$ to assume it is compact.  
{By Corollary \ref{cor:hL.is.pl}, we may assume that there exists a formal $K^\circ$-model $\fX$ of $X$ and a line bundle $\fL\in M(\fX)_\R$ with $h = h_\fL$,} 
using the notation of~\secref{sec:equiv.pl.setup}.  Since $x$ is inner, 
the closure $V_{\fX,x}$ of $\red_\fX(x)$ in $\fX_s$ is proper (with the notation of~\secref{sec:temkin.reduction.germs}): see \cite[Lemma~6.5.1]{chambert_ducros12:forms_courants} and~\cite[Appendix~A]{Vilsmeier}.  By Chow's lemma, there is a proper birational morphism $V\to V_{\fX,x}$ with $V$ projective over $\td K$. Let $L'$ be an ample line bundle on $V$.  By~\cite[6.6.5]{CLD}, there is a PL function $h'$ on the germ $(X,x)$ such that $(V,L')$ is a model of the residue $L_{h'}(x)$.  Let $U$ be a compact strictly analytic neighbourhood of $x$ on which $h'$ is defined.  Then there is a formal $K^\circ$-model $\fU$ of $U$ and a line bundle $\fL'$ on $\fU$
with $h' = h_{\fL'}$.  By Lemma~\ref{lem:formal.model.cofinal}, there is a compact strictly analytic neighbourhood $U'$ of $x$ and a formal $K^\circ$-model $\fU'$ of $U'$ such that the closure $V' \coloneq V_{\fU',x}$ of $\red_{\fU'}(x)$ dominates $V$.
We may assume that $U' \subset U$ and that this inclusion extends to a morphism $f\colon\fU' \to \fU$ by using Raynaud's theorem. Let $L'|_{V'}$ be the pull-back of $L'$ with respect to $V' \to V$. Then $L'|_{V'}$ and $f^*(\fL')|_{V'}$ are both models for $L_{h'}(x)$. We conclude that there is a dominating model $V''$ of $\red(X,x)$ such that the pull-backs of $L'|_{V'}$ and $f^*(\fL')|_{V'}$  to $V''$ agree. Using Lemma~\ref{lem:formal.model.cofinal} again and the arguments above, we may assume that $V''=V_{\fU'',x}$ for a formal $K^\circ$-model $\fU''$ of a compact strictly analytic neighbourhood $U''$ of $x$ contained in $U'$ such that the inclusion $U'' \subset U'$ extends to a morphism $\fU'' \to \fU'$. Moreover, we may assume that $U'' \subset X$ extends to a morphism $\fU'' \to \fX$. Replacing $\fX$ by $\fU''$ and  $\fL, \fL'$ by their pull-backs to $\fU''$,
we put ourselves in the following situation:
\begin{enumerate}
	\item The closure $V'$ of $\red_\fX(x)$ in $\fX_s$ dominates $V$.
	\item There is {$\fL \in M(\fX)_\R$ and $L \in \Pic(V)_\R$}
	such that $h = h_\fL$ and $\fL|_{V'}$ is the pullback of $L$ under $V'\to V$, and such that $(V,L)$ is a model of $L_h(x)$.
	\item There is a line bundle $\fL'$ on $\fX$ and an ample line bundle $L'$ on $V$ such that $h' = h_{\fL'}$ and $\fL'|_{V'}$ is the pullback of $L'$ under $V'\to V$.
\end{enumerate}
We recycle notation now and let $U \coloneq \red_\fX\inv(V')$.  {By anticontinuity of the reduction map, the set $U$} is an open neighbourhood of $x$.  For  $\epsilon\in\Q_{>0}$ let $L_\epsilon\coloneq L + \epsilon L'$, $\fL_\epsilon \coloneq \fL + \epsilon\fL'$, and $h_\epsilon \coloneq h_{\fL_\epsilon} = h + \epsilon h'$.  Then $h_\epsilon$ is an  {$\R$}-PL function on $X$ and $\fL_\epsilon|_{V'}$ is the pullback of $L_\epsilon$ under $V'\to V$.  Note that $h_\epsilon\to h$ uniformly as $\epsilon\to0$.  If $h$ is semipositive at $x$ then $L$ is nef by Lemma~\ref{lem:nef.all.models}, so $L_\epsilon$ is ample as a consequence of Kleiman's theorem, see \cite[Corollary 1.4.10]{Laz1}.   It follows that $\fL_\epsilon|_{V'}$ is semiample.  If $y\in U$ then $\red_\fX(y)\in V'$, so the closure $Y$ of $\red_\fX(y)$ in $\fX_s$ is contained in $V'$.  Since $h = h_\fL$ and $h' = h_{\fL'}$ on all of $X$, the pair $(Y,\fL_\epsilon|_Y)$ is a model of $L_{h_\epsilon}(y)$ by the construction in~\secref{sec:reduction.pl.metric}.  Since $\fL_\epsilon|_Y$ is again semiample, we see that $h_\epsilon$ is Fubini--Study at $y$.  Thus $h_\epsilon$ is LFS on $U$.
\end{proof}

\begin{rem}
Let $\sX$ be a proper scheme over $K$.  It is shown  in~\cite[Section~5]{gubler-martin} that
a pointwise limit of semipositive $\Q$-PL metrics is semipositive.
We expect the analogous result to hold in our situation.  In particular, we expect the converse of Proposition~\ref{prop:approx.semipositive} to hold true: namely, if an {$\R$}-PL function $h$ is a uniform limit of {$\R$}-PL LFS functions on a strictly analytic neighbourhood $U$ of an interior point $x$, then $h$ is semipositive at $x$.  This would imply that if $h$ and $h'$ are {$\R$}-PL functions that are semipositive at $x$, then so is $\max\{h,h'\}$.  Indeed, we can write $h = \lim h_i$ and $h' = \lim h_i'$ for {$\R$}-PL LFS functions $h_i,h_i'$ in a neighbourhood $U$ of $x$.  By~\secref{lfs.is.max}, the functions $\max\{h_i,h_i'\}$ are {$\R$}-PL LFS functions on $U$, and we have $\max\{h,h'\} = \lim\max\{h_i,h_i'\}$.
\end{rem}

Now we relate semipositivity to positive forms and currents. See~\secref{Smooth forms and currents} for the definitions of positivity based on the corresponding notions for Lagerberg forms and Lagerberg currents introduced in \secref{Lagerberg forms}.
The next result serves as a replacement for \cite[Lemmas~5.2.3 and~5.3.3]{chambert_ducros12:forms_courants} where the result is claimed for strongly positive forms; this is not correct, as explained in \cite[Example~2.3.6]{burgos-gubler-jell-kuennemann1}.

\begin{lem} \label{positive and symmetric forms}
Let $X$ be a Hausdorff analytic space over $K$ and let $\omega \in \cA_{{\rm sm},c}^{p,p}(X)$ be symmetric. Then there are positive forms $\omega_1,\omega_2 \in \cA_{{\rm sm},c}^{p,p}(X)$ such that $\omega = \omega_1- \omega_2$.
\end{lem}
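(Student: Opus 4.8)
The plan is to reduce the global statement to the known local decomposition of symmetric Lagerberg forms via a smooth partition of unity. First I would recall the situation: $\omega$ is a compactly supported symmetric smooth $(p,p)$-form on $X$. Around each point $x$ in the (compact) support of $\omega$, the definition of smooth forms (see~\secref{Smooth forms and currents}) gives a compact analytic domain neighbourhood $W_x$ admitting a moment map $\phi_x\colon W_x\to\bGm^{n_x,\an}$ such that $\omega|_{W_x}=\phi_{x,\trop}^*(\alpha_x)$ for some smooth Lagerberg form $\alpha_x$ on the tropical variety $S_x=\phi_{x,\trop}(W_x)$. A priori $\alpha_x$ need not be symmetric, but the pullback $\phi_{x,\trop}^*$ intertwines the Lagerberg involutions $J$ on $\cA_{S_x}$ and on $\cA_{\rm sm}$, so since $\omega|_{W_x}$ is symmetric we may replace $\alpha_x$ by $\tfrac12(\alpha_x+(-1)^pJ\alpha_x)$ and assume $\alpha_x$ is symmetric on $S_x$. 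Then by \cite[Corollary~2.2.5]{burgos-gubler-jell-kuennemann1} (invoked in~\secref{Lagerberg forms} for forms on a polyhedral complex), $\alpha_x$ is the difference of two positive smooth Lagerberg forms on $S_x$, say $\alpha_x=\alpha_x^+-\alpha_x^-$. Pulling back, $\omega|_{W_x}=\phi_{x,\trop}^*(\alpha_x^+)-\phi_{x,\trop}^*(\alpha_x^-)$ is a difference of two positive smooth forms on $W_x$ in the sense of~\secref{Smooth forms and currents}.

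Next I would globalize. Since $X$ is Hausdorff and $\operatorname{supp}(\omega)$ is compact, I cover a neighbourhood of the support by finitely many of the $W_{x_i}$ together with the open set $X\setminus\operatorname{supp}(\omega)$; on a good Hausdorff analytic space we may refine to an open cover and a subordinate smooth partition of unity $\{\rho_i\}$ exists — here I would note that if $X$ is not itself paracompact one restricts to a paracompact open neighbourhood of the compact set $\operatorname{supp}(\omega)$, which suffices because $\omega$ vanishes outside it, and then apply~\cite[Proposition~3.3.6]{chambert_ducros12:forms_courants}. The $\rho_i$ are smooth, hence $\rho_i\ge0$ is itself a (sum of) positive $(0,0)$-form, so $\rho_i\,\phi_{x_i,\trop}^*(\alpha_{x_i}^\pm)$ is again a positive smooth form on $W_{x_i}$ (positive forms are closed under multiplication by nonnegative smooth functions and under products, as recalled in~\secref{Smooth forms and currents}), and it extends by zero to a compactly supported positive smooth form on $X$. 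Setting $\omega_1=\sum_i\rho_i\,\phi_{x_i,\trop}^*(\alpha_{x_i}^+)$ and $\omega_2=\sum_i\rho_i\,\phi_{x_i,\trop}^*(\alpha_{x_i}^-)$, both are compactly supported positive smooth $(p,p)$-forms (sums of positive forms are positive), and $\omega_1-\omega_2=\sum_i\rho_i\,\omega|_{W_{x_i}}=\omega$ since $\sum_i\rho_i\equiv1$ on $\operatorname{supp}(\omega)$.

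The main obstacle is the symmetrization of the local Lagerberg models: one must be careful that the replacement $\tfrac12(\alpha_x+(-1)^pJ\alpha_x)$ genuinely still pulls back to $\omega|_{W_x}$, which requires the compatibility $\phi_{\trop}^*\circ J=J\circ\phi_{\trop}^*$ of the involutions — this is exactly the statement that $J$ is defined on $\cA_{\rm sm}$ in a way compatible with pullback, recorded in~\secref{Smooth forms and currents}. A secondary technical point, which is routine but should be stated, is the passage to a paracompact neighbourhood of the compact support so that smooth partitions of unity are available; everything else (positivity being preserved by $\phi_{\trop}^*$, by multiplication by $\rho_i\ge0$, and by finite sums, and extension by zero of a compactly supported form) is immediate from the definitions in~\secref{Smooth forms and currents} and~\secref{Lagerberg forms}.
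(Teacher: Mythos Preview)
Your proof is correct and follows essentially the same approach as the paper: reduce to a paracompact neighbourhood of the compact support, use smooth partitions of unity to localize, and on each chart apply the decomposition of symmetric Lagerberg forms from \cite[Corollary~2.2.5]{burgos-gubler-jell-kuennemann1} before multiplying by the nonnegative cutoff functions and summing. The only cosmetic difference is that the paper observes the local Lagerberg model $\alpha$ is automatically symmetric (by injectivity of $\phi_{\trop}^*$) rather than symmetrizing it by hand as you do, and it phrases the partition-of-unity step as reducing to a single chart with one cutoff $\psi$ rather than writing out the finite sum explicitly.
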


\begin{proof}
The support of $\omega$ is compact and hence has a paracompact open neighbourhood in $X$ by \cite[Lemme~2.1.6]{chambert_ducros12:forms_courants}. Replacing $X$ by this open neighbourhood, we may assume that $X$ is paracompact. Then we have smooth partitions of unity available \cite[Proposition~3.3.6]{chambert_ducros12:forms_courants}, so we may argue locally. Hence we may assume that there is an open subset $U$ of $X$ such that the support of $\omega$ is contained in $U$ and such that $W \coloneqq \bar U$ is a compact analytic domain in $X$ with  a moment map $\varphi \colon W \to \bGm^{n,\an}$ and with $\omega = \varphi_{\rm trop}^*(\alpha)$ for a symmetric form $\alpha \in \cA^{p,p}(\varphitrop(W))$.  It follows from~\secref{Lagerberg forms} that $\alpha=\alpha_1-\alpha_2$ for two smooth positive forms $\alpha_1,\alpha_2$ on $\varphitrop(W)$. As usual, the existence of smooth partitions of unity yield that there is a smooth non-negative function $\psi$ with support in $U$ such that $\psi \equiv 1$ on $\supp(\omega)$. Setting $\omega_i \coloneqq \psi \cdot \varphi_{\trop}^*(\alpha_i)$ for $i=1,2$, we get the claim.
\end{proof}

\begin{art}[Continuous Plurisubharmonic Functions] \label{continuous psh functions}
For simplicity, we suppose that $X$ is a boundaryless%
\footnote{Currents in analysis work well only in the boundaryless case, as otherwise there are disturbing residues along the boundary. Note that Chambert-Loir and Ducros also omit the boundary in their definition of  currents \cite[4.2.2]{chambert_ducros12:forms_courants}, as their currents act on smooth forms with compact support disjoint from the boundary.} good separated strictly analytic space to use currents on $X$ as in~\secref{Smooth forms and currents}. Using the theory of integration from \cite[3.7--3.10]{chambert_ducros12:forms_courants}, every continuous real function $h$ on $X$ has an \defi{associated current} $[h]$ given by
\[ [h] \colon \cA_{{\rm sm},c}(X) \longrightarrow \R \, , \quad \omega \mapsto \int_X h \cdot \omega \]
where the integral is defined by a limit process using that $h$ is a uniform limit of smooth functions on a neighbourhood of the support of $\omega$ (see~\cite[Corollaire~5.4.7, 5.4.8]{chambert_ducros12:forms_courants}). We say that $h$ is \defi{plurisubharmonic} (short: psh) if the current $\d'\d''[h]$ is positive. A partition of unity argument shows that plurisubharmonicity is a local notion on $X$ (see~\cite[Lemme~5.5.2]{chambert_ducros12:forms_courants}).

By \cite[Lemme~5.5.3]{chambert_ducros12:forms_courants}, a smooth function $h$ is psh if and only if every $x \in X$ has a compact strictly analytic neighbourhood $V$ with a moment map $\varphi\colon V \to \bGm^{r,\an}$ such that $h|_V= u \circ \varphitrop$ for a continuous function $u$ on $\varphitrop(V)$ which restricts to a convex function on each face. This characterization shows that smooth psh functions admit pull-backs with respect to morphisms.
\end{art}

\begin{thm} \label{semipositive implies psh}
Let $X$ be a good separated strictly analytic space over $K$ and let $h\colon X \to \R$ be a semipositive {$\R$}-PL function. Then $h$ restricts to a psh function on $X \setminus \partial X$.
\end{thm}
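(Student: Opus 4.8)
The plan is to reduce the statement to the local approximation result of Proposition~\ref{prop:approx.semipositive}, combined with the two facts recorded in~\secref{psh approachable functions}: a $\Q$-PL locally Fubini--Study function on a boundaryless good separated strictly analytic space is psh, and plurisubharmonicity persists under uniform limits.

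First I would pass to $\Int(X) = X\setminus\partial X$. This is an open subspace of $X$, hence again good, separated and strictly analytic, and it --- together with each of its open subspaces --- has empty boundary, so the current formalism of~\cite{chambert_ducros12:forms_courants} recalled in~\secref{Smooth forms and currents} and the notion of psh function from~\secref{continuous psh functions} are available there. Since semipositivity and piecewise $\Q$-linearity are intrinsic to germs, $h|_{\Int(X)}$ is still a semipositive $\Q$-PL function, in particular a continuous function, so the current $[h]$ is defined on $\Int(X)$. By~\cite[Lemme~5.5.2]{chambert_ducros12:forms_courants}, plurisubharmonicity is a local notion, so it suffices to exhibit, for each $x\in\Int(X)$, an open neighbourhood $W\subseteq\Int(X)$ of $x$ on which $h$ is psh.

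Next, fixing such an $x$, I would invoke Proposition~\ref{prop:approx.semipositive}: it gives a strictly analytic neighbourhood of $x$ carrying a sequence of $\Q$-PL LFS functions $h_i$ with $h_i\to h$ uniformly there. After shrinking, I may take this neighbourhood to be an open set $W\subseteq\Int(X)$ (restricting the $h_i$ keeps them $\Q$-PL LFS, since this is a local property). By~\secref{psh approachable functions}, each $h_i$, being $\Q$-PL LFS on the boundaryless good separated strictly analytic space $W$, is locally psh-approachable and hence psh; thus each $\d'\d''[h_i]$ is a positive current on $W$. Finally I would pass to the limit: uniform convergence $h_i\to h$ on the compact support of any test form yields $[h_i]\to[h]$ in the space of currents on $W$, and since $\d'\d''$ is continuous for the weak topology on currents we get $\langle\d'\d''[h_i],\eta\rangle\to\langle\d'\d''[h],\eta\rangle$ for every compactly supported smooth form $\eta$ of the relevant bidegree. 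For positive such $\eta$ the left-hand sides are $\geq 0$, so the limits are $\geq 0$, whence $\d'\d''[h]$ is positive on $W$, i.e.\ $h|_W$ is psh. Gluing over $x$ gives plurisubharmonicity of $h$ on $\Int(X)$.

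I do not expect a genuine obstacle here: essentially all the work is hidden in Proposition~\ref{prop:approx.semipositive} (which itself rests on the balancing condition and on the descent of residues of PL functions through formal models). The points that need care are the bookkeeping ones --- verifying that $\Int(X)$ and its open subspaces are boundaryless good separated strictly analytic spaces, so that both ``psh'' and the current formalism genuinely make sense there, and observing that the continuity of $\d'\d''$ on currents together with the closedness of the positivity cone under the weak topology makes the limiting step formal.
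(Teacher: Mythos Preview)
Your proof is correct and follows essentially the same route as the paper's: reduce to the boundaryless interior, apply Proposition~\ref{prop:approx.semipositive} locally to approximate $h$ uniformly by $\Q$-PL LFS functions, note these are psh via~\secref{psh approachable functions}, and pass to the limit using uniform convergence to conclude $\d'\d''[h]\geq 0$. One small correction to your closing aside: Proposition~\ref{prop:approx.semipositive} does not rest on the balancing condition---the dependency runs the other way, since the balancing theorem in~\secref{section: balancing condition} uses Corollary~\ref{Chern current of numerically trivial}, which is a consequence of the present theorem.
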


\begin{proof}
  By restriction to $X \setminus \partial X$, we may assume that $X$ is boundaryless. The claim is local in $X$ and so we may assume that $h$ is a uniform limit of {$\R$}-PL LFS functions $h_i$ on $X$ by using Proposition~\ref{prop:approx.semipositive}. 	It follows from~\secref{lfs.is.max} and from \cite[Corollaire~6.3.2]{chambert_ducros12:forms_courants} that an  $\R$-PL locally Fubini--Study function is locally a uniform limit of smooth psh functions and hence $h_i$ is psh.
  For any positive form $\omega \in \cA_{{\rm sm},c}(X)$, we deduce from uniform convergence that
$$ \langle \omega, \d'\d''[h] \rangle = \int_X h \cdot \d'\d'' \omega = \lim_i \int_X h_i \cdot \d'\d'' \omega \geq 0.$$
We conclude that $h$ is psh.
\end{proof}

 \begin{cor} \label{Chern current of numerically trivial}
	Let $h$ be a harmonic function on a good separated strictly analytic space $X$ over $K$.  Then the restriction of the current $\d'\d''[h]$ to $X \setminus \partial X$ is zero.
\end{cor}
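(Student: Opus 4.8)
The plan is to obtain the vanishing from Theorem~\ref{semipositive implies psh}, applied to both $h$ and $-h$, after first reducing to $\Z$-coefficients. Since the statement concerns only the restriction to $X\setminus\partial X$, and since vanishing of a current is local and compatible with restriction to open subsets, I would begin by replacing $X$ by a small open subset of $X\setminus\partial X$; as an open subset of a boundaryless good analytic space is again boundaryless, I may assume that $X$ is good, separated, strictly analytic and without boundary. A harmonic function is $\R$-PL, so by Proposition~\ref{prop: equivalence of harmonicity definitions} I may shrink $X$ further so that $h=\sum_{i=1}^m\lambda_i h_i$ for real numbers $\lambda_i$ and $\Z$-harmonic functions $h_i$ on $X$. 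Then $[h]=\sum_i\lambda_i[h_i]$ and hence $\d'\d''[h]=\sum_i\lambda_i\,\d'\d''[h_i]$, so it suffices to treat each $\Z$-harmonic $h_i$ separately.

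For a fixed $\Z$-harmonic function $g\coloneqq h_i$, I would argue as follows. Since $g$ is harmonic, both $g$ and $-g$ are semipositive, and they are $\Z$-PL, hence $\Q$-PL; because $X$ is boundaryless, Theorem~\ref{semipositive implies psh} applied to $g$ and to $-g$ shows that $\d'\d''[g]$ and $\d'\d''[-g]=-\d'\d''[g]$ are both positive currents on $X$. Moreover $\d'\d''[g]$ is a symmetric current, being a first Chern current of the trivial line bundle endowed with a suitable metric (see the discussion of $c_1(L,\metr)$ above and~\cite[6.4.1]{chambert_ducros12:forms_courants}). Thus I am reduced to the purely formal claim that a symmetric current $T\in\cDsm_{p,p}(X)$ for which both $T$ and $-T$ are positive must vanish.

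To prove that claim I would first note that for a positive form $\omega\in\cAsmc^{p,p}(X)$, positivity of $T$ and of $-T$ gives $T(\omega)\geq 0$ and $T(\omega)\leq 0$, so $T(\omega)=0$; by Lemma~\ref{positive and symmetric forms} every symmetric form in $\cAsmc^{p,p}(X)$ is a difference of two positive forms, so $T$ annihilates all symmetric forms. Finally, an arbitrary $\omega\in\cAsmc^{p,p}(X)$ splits as $\omega=\omega^++\omega^-$ with $\omega^{\pm}\coloneqq\tfrac12(\omega\pm(-1)^pJ\omega)$, where $\omega^+$ is symmetric and $J\omega^-=-(-1)^p\omega^-$; the relation $TJ=(-1)^pT$ then forces $(-1)^pT(\omega^-)=(TJ)(\omega^-)=T(J\omega^-)=-(-1)^pT(\omega^-)$, whence $T(\omega^-)=0$ and therefore $T(\omega)=0$. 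Hence $T=0$, and assembling the pieces gives $\d'\d''[h]|_{X\setminus\partial X}=0$.

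The steps I expect to require the most care are, first, the reduction from an $\R$-PL harmonic function to $\Z$-harmonic ones via Proposition~\ref{prop: equivalence of harmonicity definitions}, which is precisely what makes Theorem~\ref{semipositive implies psh} (available only for $\Q$-PL functions) applicable; and second, the last step, where passing from ``positive and negative'' to ``zero'' genuinely uses Lemma~\ref{positive and symmetric forms} --- the corrected replacement for the flawed statements of~\cite[Section~5]{chambert_ducros12:forms_courants} --- together with the symmetric/antisymmetric splitting of smooth forms. Everything else is routine bookkeeping with the linearity of $h\mapsto[h]$, the sheaf property of currents, and functoriality of $\d'\d''$.
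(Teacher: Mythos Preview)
Your proof is correct and follows essentially the same route as the paper: both arguments reduce to the boundaryless case, use that $h$ and $-h$ are psh via Theorem~\ref{semipositive implies psh}, and then exploit symmetry of $\d'\d''[h]$ together with Lemma~\ref{positive and symmetric forms} to conclude. Your extra step of reducing first to $\Z$-harmonic summands via Proposition~\ref{prop: equivalence of harmonicity definitions} is a genuine improvement in precision, since Theorem~\ref{semipositive implies psh} is stated only for $\Q$-PL functions; the paper applies that theorem directly to the $\R$-PL harmonic function $h$, leaving this reduction implicit.
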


\begin{proof}
	We may assume that $X$ is boundaryless. It follows from Theorem~\ref{semipositive implies psh} that $h$ and $-h$ are psh on $X$.
	By linearity in the irreducible components and by arguing locally, we may assume that $X$ is of pure dimension $d$. Let $\eta \in \cA_{{\rm sm},c}^{d-1,d-1}(X)$. Since $\d'\d''[h]$ is a symmetric current on $X$, we have
	$$\langle \eta, \d'\d''[h] \rangle = \frac{1}{2}\left(\langle \eta, \d'\d''[h] \rangle + (-1)^{d-1}\langle J\eta, \d'\d''[h] \rangle \right)=\frac 12\langle \omega, \d'\d''[h] \rangle $$
	for the symmetric form $\omega \coloneqq \eta + (-1)^{d-1}J\eta \in  \cA_{{\rm sm},c}^{d-1,d-1}(X)$.
	By Lemma~\ref{positive and symmetric forms}, there are positive forms $\omega_1,\omega_2 \in  \cA_{{\rm sm},c}^{d-1,d-1}(X)$ such that $\omega=\omega_1-\omega_2$. As $h$ and $-h$ are psh, we get
	$$2\langle \eta, \d'\d''[h] \rangle = \langle \omega_1, \d'\d''[h] \rangle - \langle \omega_2, \d'\d''[h] \rangle = 0-0=0,$$
	proving the claim.
	\end{proof}

\begin{rem} \label{formulation for metrics}
In this section, we have focused on the concept of semipositive and harmonic functions. In fact, all the results immediately transfer to $\Lambda$-PL metrics $\metr$ of a line bundle $L$ over a good strictly analytic space $X$. Indeed, the choice of a local frame $s$	gives rise to a $\Lambda$-PL function $h \coloneqq -\log \|s\|$, to which we can apply the above results. Then Theorem~\ref{semipositive implies psh} yields that on a good separated strictly analytic space $X$ every semipositive {$\R$}-PL metric of $L$ has a positive first Chern current $c_1(L,\metr)$ as defined in \cite[6.4.1]{chambert_ducros12:forms_courants}. The same holds for uniform limits of such metrics on $L$ which includes all the metrics considered by Zhang \cite{zhang95} over projective varieties.
\end{rem}

\section{Piecewise Linear and Harmonic Tropicalizations} \label{section: weights for pl tropicalizations}

In this section, we assume that $X$ is a compact good strictly $K$-analytic space of pure dimension $d$.  We will define a piecewise linear tropicalization map to be a tuple $h = (h_1,\ldots,h_n)$ of PL functions on $X$.  We will endow $h(X)\subset\R^n$ with the structure of a weighted $\z$-polytopal complex, which is canonical up to subdivision.  The difficult part is defining weights on $h(X)$, which we will do $\rG$-locally using the results of~\secref{sec: tropical multiplicities}.

\begin{defn}
	A \defi{piecewise linear (PL) tropicalization} is a map
	\begin{align*}
	h = (h_1,\dots,h_n) \colon X \To \R^n
	\end{align*}
	where $h_1,\dots,h_n$ are piecewise linear functions.
	It is called a \defi{harmonic tropicalization} if in addition all the $h_i$ are harmonic (which means that all of the $h_i$ are $\Z$-harmonic).  It is called a \defi{smooth tropicalization map} if $h_i = \log|f_i|$ for $f_i\in\Gamma(X,\sO_X^\times)$.
\end{defn}

\begin{art} \label{remark on tropicalization maps}
	Remarks:
	\begin{enumerate}
		\item A smooth tropicalization map has the form $\phi_{\trop}$ for a moment map $\phi\colon X\to\bGm^{n,\an}$.  These are the tropicalizations considered in~\cite{CLD}.
		\item A smooth tropicalization map is harmonic by Remark~\ref{sec:harmonic.remarks}(6).
		\item A piecewise linear tropicalization is $\rG$-locally a smooth tropicalization map.
		\item Piecewise linear, harmonic, and smooth tropicalization maps all pull back under morphisms of analytic spaces and arbitrary change of base: see Lemma~\ref{lem:pl.basic.props} and Proposition~\ref{prop:harmonic.properties}.
	\end{enumerate}
\end{art}

\begin{prop} \label{local description of piecewise linear}
	A function $h\colon X \to \R$ is piecewise linear if and only if each $x\in X$ has a strictly affinoid neighbourhood $U$ such that $h|_U = F \circ h'$ for a smooth tropicalization map $h'\colon U \to \R^n$ and a piecewise $\z$-linear function $F\colon h'(U)\to\R$.
\end{prop}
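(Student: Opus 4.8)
The plan is to treat the two implications separately. The ``if'' direction is a short formal computation, whereas the ``only if'' direction is the substantial one and requires a local analysis of a $\rG$-covering near the given point. For the ``if'' direction, suppose that on a strictly affinoid neighbourhood $U$ of $x$ we have $h|_U = F\circ h'$, where $h' = (\log|f_1|,\dots,\log|f_n|)$ is a smooth tropicalization map with $f_i\in\sO(U)^\times$ and $F$ is piecewise $\z$-linear on $h'(U)$, which is a $\z$-polytopal set by~\secref{sec:trop.is.polyhedral}. Fixing a $\z$-polytopal complex structure on $h'(U)$ on whose faces $F$ is affine $(\Z,\Gamma)$-linear, say $F$ restricts on a face $\Delta$ to $\omega\mapsto\langle u_\Delta,\omega\rangle + c_\Delta$ with $u_\Delta\in\Z^n$ and $c_\Delta = -\log|\gamma_\Delta|$ for some $\gamma_\Delta\in K^\times$ (possible because $c_\Delta\in\Gamma$), I note that the preimages $U_\Delta \coloneqq (h')\inv(\Delta)$ are strictly affinoid domains forming a $\rG$-covering of $U$, and that $h|_{U_\Delta} = \log|\gamma_\Delta^{-1}\prod_i f_i^{u_{\Delta,i}}|$ is the logarithm of an invertible analytic function. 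Hence $h|_U$ is PL by Definition~\ref{piecewise linear function}, and since PL-ness is a local property (\secref{sec:PL.remarks}(4)) and such $U$ cover $X$, this gives $h\in\PL(X)$.

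For the ``only if'' direction, fix $x$ and a PL function $h$. I would first choose a finite $\rG$-covering $\{W_j\}_{j=1}^m$ of a compact strictly affinoid neighbourhood $W$ of $x$ by strictly affinoid domains, each containing $x$, with $h|_{W_j} = \log|f_j|$ for $f_j\in\sO(W_j)^\times$; by the Gerritzen--Grauert theorem, and after replacing $W$ by a rational neighbourhood of $x$ and discarding the members not containing $x$, I may assume each $W_j$ is a rational domain $W\{g_{j,\bullet}/g_{j,0}\}$. Since the defining functions have no common zero on $W$ we have $g_{j,0}(x)\neq 0$, so after shrinking $W$ to a small strictly affinoid neighbourhood $U$ of $x$ I may assume that $g_{j,0}$ and every ``relevant'' defining function (those with $|g_{j,l}(x)| = |g_{j,0}(x)|$) is invertible on $U$, while the remaining ones satisfy $|g_{j,l}| < |g_{j,0}|$ on $U$; thus near $x$ the covering is cut out from the smooth tropicalization map built from the invertible defining functions by $\z$-rational cones, $U\cap W_j$ being the preimage of one such cone. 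Approximating $f_j$ in the spectral norm on $W_j$ by a polynomial over $K$ in a fixed finite set of topological generators of $\sO(W)$ and in the $g_{j,\bullet}/g_{j,0}$, and discarding the monomials that involve a generator vanishing at $x$ (legitimate because $\inf_{W_j}|f_j|>0$, so these terms are negligible near $x$), I obtain finitely many invertible analytic functions $m_{j,\alpha}$ on $U$ with $h = \log|\sum_\alpha m_{j,\alpha}|$ on $U\cap W_j$.

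The main obstacle is to promote this to a genuine factorization through a smooth tropicalization map, since $|\sum_\alpha m_{j,\alpha}|$ need not equal the tropical sum $\max_\alpha|m_{j,\alpha}|$ and so $h|_{U\cap W_j}$ need not descend along $(\log|m_{j,\alpha}|)_\alpha$. Here I would argue as in the proof of Proposition~\ref{prop:Lpl.analytic.nbhd}: enlarging the tuple $(m_{j,\alpha})_\alpha$ by all partial sums $\sum_{\alpha\in S}m_{j,\alpha}$ whose absolute value drops strictly below the naive maximum, the tropical variety of the resulting smooth tropicalization map has, by~\secref{sec:trop.is.polyhedral} and~\secref{sec:ducros.germs}, a germ at the image of $x$ that is a finite union of $\z$-polyhedra on each of which $\log|\sum_\alpha m_{j,\alpha}|$ is the restriction of a $\z$-linear coordinate; after finitely many such enlargements, the process terminating by a dimension count on the tropical variety, this yields on a strictly affinoid neighbourhood of $x$ inside $U\cap W_j$ a smooth tropicalization map $h'_j$ and a piecewise $\z$-linear $F_j$ with $h = F_j\circ h'_j$ there. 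Finally I would assemble the coordinate functions of all the $h'_j$ into a single smooth tropicalization map $h'$ on a common strictly affinoid neighbourhood $U'$ of $x$, and verify that the $F_j$ glue to a well-defined piecewise $\z$-linear function $F$ on $h'(U')$ with $h|_{U'} = F\circ h'$: on $U'\cap W_i\cap W_j$ one has $\log|f_i| = \log|f_j|$, whence the two local descriptions of $h$ agree after projecting to $h'(U')$, using that a piecewise linear function on a $\z$-polytope is determined by its values on the dense subset of $(\Z,\Gamma)$-points. Thus I expect the gluing to be routine and the real work to lie in the ``resolution'' step invoking the structure of the tropical variety of the germ.
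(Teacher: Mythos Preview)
Your ``if'' direction is correct and matches the paper's argument.

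In the ``only if'' direction you start well (Gerritzen--Grauert, shrink so that $x$ lies in every member and so that the defining functions of the rational domains become units), but you then take an unnecessary detour that creates the very obstacle you describe. You expand each $f_j$ as a polynomial in invertible monomials $m_{j,\alpha}$ and then must contend with $|\sum_\alpha m_{j,\alpha}|\neq\max_\alpha|m_{j,\alpha}|$; your proposed ``resolution'' by adjoining partial sums is not clearly terminating (the dimension count is vague: adding coordinates increases the ambient dimension, and the tropical variety of the germ can grow accordingly), and the subsequent gluing of the $F_j$ into a single $F$ on $h'(U')$ is not justified --- you need membership in each $W_j$ to be detected by $h'$, but your separate resolutions on different $W_j$ do not obviously achieve this.

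The paper avoids all of this with one clean step you are missing. After reducing each $X_i$ to a Weierstrass domain $X(g_{ij})$ with all $g_{ij}\in\sO(X)^\times$ (adding a small scalar if $g_{ij}(x)=0$, then shrinking), use density of $\sO(X)$ in $\sO(X_i)$ to approximate $f_i\in\sO(X_i)^\times$ by some $a_i\in\sO(X)$ with $|f_i-a_i|<\inf_{X_i}|f_i|$; then $|a_i|=|f_i|$ on $X_i$, and since $|a_i(x)|>0$ one shrinks $X$ to make $a_i\in\sO(X)^\times$. Now take $h'=\phi_{\trop}$ for $\phi=(a_i,g_{ij})_{i,j}$: on $X_i$ one has $h=p_i\circ h'$ with $p_i$ the coordinate projection to the $a_i$-slot, and well-definedness of $F$ on $h'(X)$ is immediate because $h'(y)=h'(y')$ forces $|g_{ij}(y)|=|g_{ij}(y')|$, hence $y'\in X_i$ whenever $y\in X_i$. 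No resolution step, no partial sums, no density-of-points argument is needed.
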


\begin{proof}
	Suppose that $h$ is piecewise linear.  Fix $x\in X$.  As $X$ is a good analytic space, we may assume that it is strictly affinoid.  There is a finite $\rG$-covering $X = \bigcup_{i \in I} X_i$ by strictly affinoid domains $X_i$ on which $h = -\log|f_i|$ for an invertible function $f_i$ on $X_i$.  Shrinking $X$, we may assume that $x\in\bigcap X_i$.  By the Gerritzen--Grauert theorem, we may assume that each $X_i$ is a rational domain in $X$, i.e.\ $X_i = X\bigl(\frac{g_{i}}{u_i}\bigr)$ for finitely many analytic functions $(g_{ij})_{j \in J_i},u_i$ without common zeros on $X$, where $g_i \coloneqq (g_{ij})_{j \in J_i}$.  Since $x\in X_i$ we have $u_i(x)\neq 0$; shrinking $X$ again, we can assume all $u_i$ are units on $X$.  Then we may replace $g_{i}$ by $g_{i}/u_i$ to assume $X_i = X(g_{i})$ is a Weierstrass domain in $X$.  If some $g_{ij}(x) = 0$ then we replace $g_{ij}$ by $g_{ij}+\lambda$ for any scalar $\lambda\in K^\times$ with $|\lambda|<1$, noting that $|g_{ij}(y)|\leq 1$ if and only if $|g_{ij}(y)+\lambda|\leq 1$.  Shrinking $X$, we assume that all $g_{ij}$ are invertible on $X$.

	Since $f_i$ is invertible on $X_i$, there is $\varepsilon>0$ such that $|f_i| \geq \varepsilon$ on $X_i$.  Since $X_i$ is a Weierstrass domain, the ring $\sO(X)$ is dense in $\sO(X_i)$, so there exists $a_i \in \Ocal(X)$ such that $|f_i-a_i| < \varepsilon$ on $X_i$. Then the ultrametric triangle inequality shows that we may replace $f_i$ by $a_i$.  Using that $|f_i(x)|\geq\epsilon>0$, we may shrink $X$ to assume that all $f_i \in \sO(X)^\times$.

	Define a moment map $\varphi\colon X \to \bGm^{N,\an}$ by $\phi = (f_i,g_{ij})_{i \in I, j \in J_i}$, and let $h' \coloneq \phi_{\trop}$ be the induced smooth tropicalization map.  By construction, if $p_i\colon\R^N\to\R$ is the projection onto the factor of $\R$ corresponding to $f_i$, then $h|_{X_i} = p_i \circ h'$.  Define $F\colon h'(X)\to\R$ by $F|_{h'(X_i)} = p_i$.  This is well-defined because if $y\in X_i$ and $y'\in X$ satisfy $h'(y)=h'(y')$, then $y'\in X_i$ as well because $|g_{ij}(y')|=|g_{ij}(y)|\leq 1$ for all $j\in J_i$.  Thus we get a piecewise $\z$-linear function $F\colon h'(X)\to\R$ such that $h = F\circ h'$.   This proves one direction.

	For the converse, we may assume $X = U$ is strictly affinoid and that $h = F\circ h'$, where $h' = \phi_{\trop}$ for a moment map $\phi\colon X\to\bGm^{n,\an}$ and $F\colon \phi_{\trop}(X)\to\R$ is piecewise $\z$-linear.  Suppose that $F$ is $\z$-linear on a closed $\z$-polytope $\sigma\subset \phi_{\trop}(U)$.  Then there exists an affine homomorphism (homomorphism followed by translation) $q\colon\bGm^n\to\bGm$ such that $h = (q\circ\phi)_{\trop}$ on $\phi_{\trop}\inv(\sigma)$.  Thus $h$ has the form $\log|f|$ on $\phi_{\trop}\inv(\sigma)$ for some $f \in \sO(\phi_{\trop}\inv(\sigma))^\times$.  Since $X$ has a $\rG$-covering by
	such strictly analytic domains $\phi_{\trop}\inv(\sigma)$, this concludes the proof.
\end{proof}

\begin{art}[Covering by a Smooth Tropicalization Map] \label{smooth tropicalization covering}
	Let $h\colon X \to \R^n$ be a piecewise linear tropicalization map. Applying Proposition~\ref{local description of piecewise linear} to the coordinate functions $h_1, \dots, h_n$, we get locally on $X$ a smooth tropicalization map $h'\colon U \to \R^m$ and a piecewise $\z$-linear map $F\colon h'(U) \to \R^n$ with $h=F \circ h'$ on $U$.
\end{art}

\begin{defn} \label{smooth covering tropicalization}
	A piecewise linear tropicalization $h\colon X \to \R^n$ is \emph{covered by a smooth tropicalization map $h'\colon U \to \R^m$} on a compact strictly analytic subdomain $U\subset X$ if there is a piecewise $\z$-linear function $F\colon h'(U) \to \R^n$ such that $h=F \circ h'$.
\end{defn}

It follows from~\secref{smooth tropicalization covering} that locally on $X$, a piecewise linear tropicalization is always covered by a smooth tropicalization map.

\begin{art}[Polytopal structure on PL tropicalizations] \label{choice of polytopal structure}
	Let $h\colon X\to\R^n$ be a PL tropicalization map.  There is a $\z$-polytopal complex $\Pi$ of dimension at most $d$ with support $h(X)$ such that $h(\partial X)$ is contained in a subcomplex of dimension at most $d-1$.  This follows from~\secref{sec:trop.is.polyhedral} by restricting $h$ to any  $\rG$-open piece where $h$ is a smooth tropicalization map.  Explicitly, there is a finite $\rG$-covering $(X_j)_{j \in J}$ of $X$ by compact strictly analytic subdomains such that $h|_{X_j}$ is the smooth tropicalization map associated to a moment map $\varphi_j\colon X_j \to \bGm^{n,\an}$.  Then $(\phi_j)_{\trop}(X_j)$ is a finite union of $\z$-polytopes, and we have $\del X\subset\bigcup_{j \in J}\del X_j$.
\end{art}

\begin{defn}\label{def:pl.multiplicities}
	Let $h\colon X\to\R^n$ be a PL tropicalization.  We say that $h$ is \defi{smooth at a point $x\in\Int(X)$} if there exists a compact strictly analytic neighbourhood $U$ of $x$ such that $h|_U$ is a smooth tropicalization map, i.e., if $h|_U = \phi_{\trop}$ for a moment map $\phi\colon U\to\bGm^{n,\an}$.  Note that $x\in\Int(U) = \Int(U/X)\cap\Int(X)$.  If $h$ is smooth at $x\in\Int(X)$, the \defi{tropical multiplicity of $x$ with respect to $h$} is the number $m_h(X,x) \coloneq m_\phi(X,x)$ defined in Definition~\ref{def:trop.mult}.

	We say that $h$ is \defi{smooth over a point $\omega\in h(X)$} if $h\inv(\omega)$ can be covered by compact strictly analytic domains $U$ such that $h|_U$ is a smooth tropicalization map and $\omega\notin h(\del U)$.

	If $h$ is smooth over $\omega$, the \defi{tropical multiplicity of $\omega$ with respect to $h$} is the number $m_h(X,\omega)\coloneq\sum_{h(x)=\omega}m_h(X,x)$.

\end{defn}

\begin{art}\label{rem:pl.multiplicities}
	Some remarks:
	\begin{enumerate}
		\item By Proposition~\ref{item:trop.mult.1units}, the quantity $m_h(X,x)$ does not depend on the choice of $U$ or $\phi$.
		\item Suppose that $h$ is smooth over $\omega\in h(X)$.
		Let $U\subset X$ be a compact strictly analytic domain such that $\omega\notin h(\del U)$.  Then $h\inv(\omega)\cap U = h\inv(\omega)\cap\Int(U)$ is open and closed in $h\inv(\omega)$.  Since $h\inv(\omega)$ can be covered by finitely many such $U$,
		it follows from Remark~\ref{rem: tropical multiplicities}(\ref{tropmult 7}) that $m_h(X,\omega)$ is a nonnegative integer.
		\item By~\secref{choice of polytopal structure}, there is a $\z$-polytopal complex $\Pi$ of dimension at most $d$ with support $h(X)$ such that $h$ is smooth over the complement of a  subcomplex of dimension at most $d-1$.  (Choose $\Pi$ such that each $h(\del X_i)$ is contained in such a subcomplex.)
		\item If $h$ is smooth over $\omega\in h(X)$ and $h(X)$ has dimension $d$ at $\omega$, then $m_h(X,\omega) > 0$ by Remark~\ref{rem: tropical multiplicities}(\ref{tropmult 7}).
		\item If $h$ is itself a smooth tropicalization map, i.e., if $h = \phi_{\trop}$ for a moment map $\phi\colon X\to\bGm^{n,\an}$, then $m_h(X,x) = m_\phi(X,x)$ and $m_h(X,\omega) = m_\phi(X,\omega)$ for all $x\in\Int(X)$ and $\omega\in h(X)\setminus h(\del X)$.
	\end{enumerate}
\end{art}

In order to pass from points to faces, we need the following consequence of Proposition~\ref{prop: trop.mult.constant}.

\begin{prop}\label{prop:pl.trop.mult.constant}
	Let $h\colon X\to\R^n$ be a PL tropicalization, and choose a $\z$-polytopal complex structure $\Pi$ on $h(X)$ such that $h$ is smooth over the complement of a subcomplex of dimension at most $d-1$, as in Remark~\ref{rem:pl.multiplicities}(3).  Let $\sigma$ be a $d$-dimensional face of $\Pi$.  Then the tropical multiplicity $m_h(X,\omega)$ is constant in $\omega\in\relint(\sigma)$.
\end{prop}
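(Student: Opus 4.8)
The plan is to reduce the statement to Proposition~\ref{prop: trop.mult.constant} by working $\rG$-locally, where a PL tropicalization is a smooth tropicalization map. First I would fix a point $\omega \in \relint(\sigma)$. By Definition~\ref{def:pl.multiplicities}, since $\sigma$ is a $d$-dimensional face of $\Pi$ and $h$ is smooth over the complement of the $(d-1)$-skeleton, $h$ is smooth over $\omega$; so there are finitely many compact strictly analytic subdomains $U_1,\dots,U_r$ covering $h^{-1}(\omega)$ such that $h|_{U_k} = (\varphi_k)_{\trop}$ for moment maps $\varphi_k\colon U_k \to \bGm^{n,\an}$ and $\omega \notin h(\partial U_k)$. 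The first task is to find a single neighbourhood $\Omega$ of $\omega$ in $\relint(\sigma)$ on which all relevant data behave well: shrinking $\Omega$, we may assume $\Omega \subset \relint(\sigma)$, that $\Omega$ is disjoint from $h(\partial U_k)$ for all $k$ (possible since $h(\partial U_k)$ is closed and does not contain $\omega$), and — as in the proof of Corollary~\ref{cor:tropical.skeleton} — that $h^{-1}(\Omega)$ is contained in $\bigcup_{k=1}^r \Int(U_k)$. Indeed, $h^{-1}(\omega) \subset \bigcup_k \Int(U_k)$, and $X \setminus \bigcup_k \Int(U_k)$ is closed with closed image under the proper map $h$ not containing $\omega$, so its complement in $h(X)$ is a neighbourhood of $\omega$.

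Next I would compare $m_h(X,\cdot)$ with the smooth tropical multiplicities on the pieces. For $\omega' \in \Omega$, every point $x$ with $h(x) = \omega'$ lies in some $\Int(U_k)$, hence in $\Int(U_k/X) \cap \Int(X)$, so in particular $x \in \Int(X)$ and $h$ is smooth at $x$. By the definition of $m_h(X,x)$ (and Proposition~\ref{item:trop.mult.1units}, which guarantees independence of the chosen covering piece and moment map, as recorded in Remark~\ref{rem:pl.multiplicities}(1)), for $x \in \Int(U_k)$ with $h(x) = \omega'$ we have $m_h(X,x) = m_{\varphi_k}(U_k,x)$. Using Proposition~\ref{prop:tropical.skeleton}(\ref{item:trop.skeleton.subdomain}) and the fact that tropical multiplicities are local on the germ (Remark~\ref{rem: tropical multiplicities}(\ref{tropmult 1})), together with the observation that $m_{\varphi_k}(U_k,x) = 0$ for $x \in \partial U_k$ since $\omega \notin h(\partial U_k)$ and $\partial U_k \cap h^{-1}(\Omega) = \emptyset$, summing over all $x$ with $h(x) = \omega'$ gives $m_h(X,\omega') = \sum_{k=1}^r {}^{\flat} m_{\varphi_k}(U_k,\omega')$, where $\sum^{\flat}$ is a suitable correction for overlaps — actually, to avoid overcounting where the $U_k$ overlap, I would instead refine: replace the $U_k$ by a disjoint-on-fibers arrangement, or more cleanly apply the inclusion–exclusion bookkeeping only to the set $h^{-1}(\Omega)$. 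The cleanest route is: partition $h^{-1}(\Omega)$ into the locally closed pieces cut out by the $\Int(U_k)$ via inclusion–exclusion, and note each piece is an open subset of some $\Int(U_k)$, so Proposition~\ref{prop:tropical.skeleton}(\ref{item:trop.skeleton.subdomain}) applies on each.

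Then I would invoke Proposition~\ref{prop: trop.mult.constant}: for each $k$, the face $\sigma$ (or rather the face of the polytopal structure on $(\varphi_k)_{\trop}(U_k)$ containing $\omega$ in its relative interior, after passing to a common subdivision) has relative interior meeting $\Omega$, and $\relint(\sigma) \cap \Omega$ is disjoint from $(\varphi_k)_{\trop}(\partial U_k)$; hence $\omega' \mapsto m_{\varphi_k}(U_k,\omega')$ is locally constant on $\Omega \cap \relint(\sigma)$. Since a finite sum of locally constant functions is locally constant, $m_h(X,\cdot)$ is locally constant on $\relint(\sigma)$, and since $\relint(\sigma)$ is connected, it is constant. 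The main obstacle is the combinatorial bookkeeping of the overlapping covering $U_1,\dots,U_r$ — making sure the inclusion–exclusion of the locally closed strata correctly recovers $\sum_{h(x)=\omega'} m_h(X,x)$ without double-counting, and that on each stratum the relevant polytopal subdivisions of the images $(\varphi_k)_{\trop}(U_k)$ can be taken compatible with a fixed subdivision of $\sigma$; this is routine but requires care. Everything else reduces directly to results already established, chiefly Proposition~\ref{prop: trop.mult.constant} and Proposition~\ref{item:trop.mult.1units}.
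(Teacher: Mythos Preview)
Your proposal is correct and follows essentially the same approach as the paper: cover $h^{-1}(\omega)$ by finitely many $U_k$ on which $h$ is smooth, find a neighbourhood $\Omega\subset\relint(\sigma)$ disjoint from all $h(\partial U_k)$ with $h^{-1}(\Omega)\subset\bigcup_k\Int(U_k)$, and then reduce to Proposition~\ref{prop: trop.mult.constant}. The ``bookkeeping obstacle'' you flag is handled exactly by inclusion--exclusion, using the observation $\partial(U_i\cap U_j)\subset\partial U_i\cup\partial U_j$ so that each intersection $U_I=\bigcap_{k\in I}U_k$ also satisfies $\Omega\cap h(\partial U_I)=\emptyset$ and hence $m_h(U_I,\omega')=m_{\phi_k}(U_I,\omega')$ is locally constant on $\Omega$ by Proposition~\ref{prop: trop.mult.constant}.
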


\begin{proof}
	Fix $\omega\in\relint(\sigma)$, and cover $h\inv(\omega)$ by finitely many compact strictly analytic domains $U_i$ such that $\omega\notin h(\del U_i)$ and such that  $h|_{U_i} = (\phi_i)_{\trop}$ for moment maps $\phi_i\colon U_i\to\bGm^{n,\an}$.   Let $Z$ be the complement in $X$ of $\bigcup\Int(U_i)$.  Then $Z$ is a closed subset, so it is compact, and hence $h(Z)$ is a closed subset of $\R^n$ not containing $\omega$.  Thus there exists an open neighbourhood $\Omega$ of $\omega$ in $\relint(\sigma)$ such that $h\inv(\Omega)$ is contained in $\bigcup\Int(U_i)$.  Shrinking $\Omega$, we may assume that it is disjoint from $\bigcup h(\del U_i)$.  For $\omega'\in\Omega$ we have $m_h(U_i,\omega') = m_{\phi_i}(U_i,\omega')$, and the latter quantity is locally constant in $\omega'\in\Omega$ by Proposition~\ref{prop: trop.mult.constant}.  We have  $\del(U_i\cap U_j)\subset\del(U_i)\cup\del(U_j)$, so $h(\del(U_i\cap U_j))$ is disjoint from $\Omega$, and thus $m_h(U_i\cap U_j,\omega')$ is locally constant in $\omega'\in\Omega$ as well.  The same holds for triple intersections and so on, so the proposition is true by the inclusion-exclusion principle.
\end{proof}

We can now make a definition analogous to Definition~\ref{def:trop.mult.face}.

\begin{defn}[Weights on PL tropicalizations]\label{def:pl.trop.mult.face}
	Let $h\colon X\to\R^n$ be a PL tropicalization.  Choose a $(\Z,\Gamma)$-polytopal complex $\Pi$ with support $h(X)$ such that $h$ is smooth over the complement of a subcomplex of dimension at most $d-1$, as in Remark~\ref{rem:pl.multiplicities}(3).  Let $\sigma$ be a $d$-dimensional face of $\Pi$.  Then we define the \defi{tropical weight}  of $\sigma$ by  $m_\sigma \coloneqq m_h(X,\sigma) \coloneq m_h(X,\omega)$ for any $\omega\in\relint(\sigma)$. This is well-defined by Proposition~\ref{prop:pl.trop.mult.constant}.

Let $\Pi_d \subset \Pi$ be the set of $d$-dimensional faces of $\Pi$. We consider the $d$-dimensional part $h(X)_d$ of $h(X)$ as the weighted $(\Z,\Gamma)$-polytopal complex $(\Pi_d,m)$, well-defined up to subdivision.
\end{defn}

Tropical weights are a ``geometric'' notion, in the following sense.

\begin{prop} \label{base change and tropical multiplicities}
	Let $K'/K$ be an extension of non-Archimedean fields, let $X' = X\hat\tensor_K K'$ with structure morphism $\pi\colon X'\to X$, let $h\colon X\to\R^n$ be a PL tropicalization, and let $h' = h\circ\pi$.  Then $h'(X')_d = h(X)_d$ as weighted $\z$-polytopal complexes.
\end{prop}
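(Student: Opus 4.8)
The plan is to reduce everything to Proposition~\ref{item:trop.mult.basechange}, the corresponding base change statement for tropical multiplicities of moment maps, by working $\rG$-locally, where $h$ becomes a smooth tropicalization map.

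First I would dispose of the underlying sets: since the structure morphism $\pi\colon X'\to X$ is surjective by~\cite[Corollary~1.3.6]{berkovic90:analytic_geometry} and $h' = h\circ\pi$, we have $h'(X') = h(X)$ as subsets of $\R^n$. Also $X'$ is again compact of pure dimension $d$, and $h'$ is a PL tropicalization by Lemma~\ref{lem:pl.basic.props}(\ref{item:pl.prop.basechange}), so the statement makes sense. Now choose $\z$-polytopal complex structures as in Remark~\ref{rem:pl.multiplicities}(3) for $h$ and for $h'$, and pass to a common subdivision; this gives a single $\z$-polytopal complex $\Pi$ with $|\Pi| = h(X) = h'(X')$ and a subcomplex $A\subset\Pi$ of dimension $\le d-1$ such that both $h$ and $h'$ are smooth over every point of $|\Pi|\setminus|A|$. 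Since the equivalence class of a weighted $\z$-polytopal complex is determined by the weights attached to its $d$-dimensional faces, and by Definition~\ref{def:pl.trop.mult.face} and Proposition~\ref{prop:pl.trop.mult.constant} those weights equal $m_h(X,\omega)$ (resp.\ $m_{h'}(X',\omega)$) for any point $\omega$ in the relative interior of the face --- necessarily a point of $|\Pi|\setminus|A|$ --- it suffices to prove $m_{h'}(X',\omega) = m_h(X,\omega)$ for all such $\omega$.

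To do so I would fix $\omega\in|\Pi|\setminus|A|$ and cover the compact set $h^{-1}(\omega)$ by finitely many compact strictly analytic domains $U_i\subset X$ with $h|_{U_i} = (\phi_i)_{\trop}$ for moment maps $\phi_i\colon U_i\to\bGm^{n,\an}$ and $\omega\notin h(\partial U_i)$, which is possible because $h$ is smooth over $\omega$. Setting $U_i' = \pi^{-1}(U_i) = U_i\hat\tensor_K K'$ and $\phi_i' = \phi_i\hat\tensor_K K'$, one has $h'|_{U_i'} = (\phi_i')_{\trop}$, the $U_i'$ cover $h'^{-1}(\omega)$, and $\partial U_i'\subset\pi^{-1}(\partial U_i)$ by~\cite[Proposition~1.5.5]{berkovic93:etale_cohomology}, whence $h'(\partial U_i')\subset h(\partial U_i)$ and $\omega\notin h'(\partial U_i')$. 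Thus $h'$ is smooth over $\omega$. For $x'\in h'^{-1}(\omega)$ put $x = \pi(x')\in h^{-1}(\omega)$ and choose $i$ with $x\in U_i$; since tropical multiplicities depend only on the germ (Remark~\ref{rem: tropical multiplicities}(\ref{tropmult 1})), $m_h(X,x) = m_{\phi_i}(U_i,x)$ and $m_{h'}(X',x') = m_{\phi_i'}(U_i',x')$.

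Finally I would assemble the sums. Proposition~\ref{item:trop.mult.basechange} applied to $\phi_i\colon U_i\to\bGm^{n,\an}$ and the extension $K'/K$ gives $m_{\phi_i}(U_i,x) = \sum_{\pi(x')=x}m_{\phi_i'}(U_i',x')$ for every $x\in U_i$; and since $\{x'\in X' : h'(x')=\omega\}$ is the disjoint union of the fibres $\pi^{-1}(x)$ over the points $x$ with $h(x)=\omega$, combining this with the identifications of the previous paragraph (and using that all the sums involved are finite, as the tropical weights are) yields
\[ m_h(X,\omega) = \sum_{h(x)=\omega} m_h(X,x) = \sum_{h(x)=\omega}\ \sum_{\pi(x')=x} m_{h'}(X',x') = \sum_{h'(x')=\omega} m_{h'}(X',x') = m_{h'}(X',\omega). \]
This gives equality of weights on every $d$-dimensional face of $\Pi$, hence $h'(X')_d = h(X)_d$. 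The one point requiring care is the transfer of smoothness under base change in the third paragraph --- that $\partial U_i'$ acquires no new points over $\omega$, which is exactly why \cite[Proposition~1.5.5]{berkovic93:etale_cohomology} enters --- together with the compatibility of the germ-local invariant $m_h(X,x)$ with the base-changed moment map; granting these, the identity is a formal consequence of Proposition~\ref{item:trop.mult.basechange}.
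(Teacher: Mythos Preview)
Your proof is correct and follows exactly the approach the paper intends: the paper's own proof is the single sentence ``This is an immediate consequence of Proposition~\ref{item:trop.mult.basechange},'' and what you have written is precisely a careful unpacking of that consequence, reducing to the moment-map case via a $\rG$-local cover and handling the boundary compatibility under base change.
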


\begin{proof}
	This is an immediate consequence of Proposition~\ref{item:trop.mult.basechange}.
\end{proof}

\begin{art}[Change of tropicalization] \label{linear maps and d-skeleton}
	Let $h\colon X\to\R^n$ be a PL tropicalization and let $L\colon \R^n \to \R^m$ be a piecewise $\z$-linear map.  Then $h'\coloneq L \circ h\colon X \to \R^m$ is a PL tropicalization.  Indeed, choosing a $\z$-polytopal complex structure $\Pi$ on $h(X)$ such that $L$ is $\z$-linear on every face of $\Pi$, we see that $\rG$-locally on $X$, each coordinate of $h'$ is a $\Z$-linear combination of the coordinates of $h$ plus a scalar in $v(K^\times)$.
\end{art}

In the following, we will use the results of Ducros and of~\secref{sec: tropical multiplicities} to introduce an intrinsic closed subset $S(X,h)_d$ of $X$ with a piecewise $\z$-linear structure which covers the tropical variety $h(X)_d$, and we will endow $S(X,h)_d$ with canonical weights.

\begin{art}[The $d$-skeleton of a PL tropicalization]\label{rem:pl.d-skeleton}
	Let $h\colon X\to\R^n$ be a PL tropicalization map.  The \defi{$d$-skeleton of $h$} is the subset
	\[ S_h(X)_d \coloneq \bigl\{ x\in X\mid \dim h(X,x)=d \bigr\}. \]
	We claim that $S_h(X)_d$ is a $\q$-skeleton of $X$ in the sense of Remark~\ref{rem: piecewise linear structure on tropical skeleton}.  Let $X = \bigcup_{j\in J}X_j$ be a $\rG$-covering of $X$ by compact strictly analytic subdomains such that $h|_{X_j} = (\phi_j)_{\trop}$ for a moment map $\phi_j\colon X_j\to\bGm^{n,\an}$, as in~\secref{choice of polytopal structure}.  Then $S_h(X)_d = \bigcup_j S_{\phi_j}(X_j)_d$, and the latter is a $\q$-skeleton of $X$ by Remark~\ref{rem:d-skeleton} and~\cite[Theorem~5.1 and Proposition~4.9]{ducros12:squelettes_modeles}.  As in Remark~\ref{rem: piecewise linear structure on tropical skeleton}, there is a unique way to endow $S_h(X)_d$ with a $\z$-linear structure (depending on $h$) such that $h$ restricts to a $\z$-linear isomorphism on every face.  The map $S_h(X)_d\to h(X)$ has finite fibers and surjects onto the $d$-dimensional part $h(X)_d$ of $h(X)$. The constructions are independent of the choice of $X_j$ and $\varphi_j$. By Proposition~\ref{prop:tropical.skeleton}(\ref{item:trop.skeleton.abhyankar}), all elements of $S_h(X)_d$ are Abhyankar points.
\end{art}

\begin{lem} \label{harmonic d-skeleton and generic projection}
	Let $Q \colon \Z^n \to \Z^m$ be a homomorphism such that $Q_\R$ is injective on every $d$-dimensional face of $h(X)$. Then we have $S_h(X)_d=S_{Q_\R\circ h}(X)_d$.
\end{lem}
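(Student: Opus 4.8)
The plan is to use the description of the two $d$-skeletons from~\secref{rem:pl.d-skeleton}, namely $S_h(X)_d=\{x\in X\mid \dim h(X,x)=d\}$ and, analogously, $S_{Q_\R\circ h}(X)_d=\{x\in X\mid \dim (Q_\R\circ h)(X,x)=d\}$; the latter makes sense because $Q_\R\circ h$ is again a PL tropicalization by~\secref{linear maps and d-skeleton} (apply it with the $\Z$-linear map $L=Q_\R$). If $\dim h(X)<d$ both sets are empty, so I would assume $\dim h(X)=d$, and then prove the two inclusions separately.

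The inclusion $S_{Q_\R\circ h}(X)_d\subseteq S_h(X)_d$ rests on the pointwise inequality $\dim(Q_\R\circ h)(X,x)\le\dim h(X,x)$, which does not use the hypothesis on $Q$. To prove it, fix $x$ and choose a compact strictly analytic neighbourhood $V$ of $x$ stabilizing the germs of both tropical varieties at $x$ in the sense of~\secref{sec:ducros.germs}; any smaller such neighbourhood has the same property, so, using continuity of $h$ and the finiteness of a polyhedral structure on $h(V)$, I would shrink $V$ so that $h(V)$ is covered by those faces of $h(V)$ that pass through $h(x)$. Each such face has dimension at most $\dim h(X,x)$ because its germ at $h(x)$ lies inside $h(X,x)$; hence $\dim h(V)=\dim h(X,x)$. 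Since $(Q_\R\circ h)(V)=Q_\R(h(V))$ has dimension at most $\dim h(V)$, passing to germs at $(Q_\R\circ h)(x)$ yields $\dim(Q_\R\circ h)(X,x)\le\dim h(X,x)$. In particular, $\dim(Q_\R\circ h)(X,x)=d$ forces $\dim h(X,x)=d$, i.e.\ $x\in S_h(X)_d$.

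For the reverse inclusion, let $x\in S_h(X)_d$, so $h(X,x)$ is $d$-dimensional. With $V$ as above, $h(X,x)$ is the germ of $h(V)$ at $h(x)$, hence the union of the germs at $h(x)$ of the faces of $h(V)$ containing $h(x)$; one such face $\tau$ is therefore $d$-dimensional, with $h(x)\in\tau$ and $\tau\subseteq h(V)\subseteq h(X)$. Fix a $\z$-polytopal complex structure on $h(X)$; since $\dim h(X)=d$ and $\relint(\tau)\subseteq h(X)$ is $d$-dimensional, a Baire argument shows that some $d$-dimensional face $F$ of this complex meets $\relint(\tau)$ in a set with nonempty interior in the affine span of $\tau$. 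By hypothesis $Q_\R$ is injective on $F$, hence on this set; as an affine-linear map injective on a full-dimensional subset of the $d$-polytope $\tau$ is injective on $\tau$, it follows that $Q_\R(\tau)$ is a $d$-dimensional polytope, which contains $(Q_\R\circ h)(x)=Q_\R(h(x))$ and lies in $(Q_\R\circ h)(V)$. Passing to germs gives $\dim(Q_\R\circ h)(X,x)\ge d$, and together with the inequality of the previous paragraph this yields $\dim(Q_\R\circ h)(X,x)=d$, i.e.\ $x\in S_{Q_\R\circ h}(X)_d$.

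The only point requiring care is the germ bookkeeping in the second paragraph, specifically the reduction to a neighbourhood $V$ with $\dim h(V)=\dim h(X,x)$, which is what turns the naive observation ``$Q_\R$ cannot increase dimension'' into the clean inequality $\dim(Q_\R\circ h)(X,x)\le\dim h(X,x)$. Everything else is elementary polyhedral geometry together with the single use of the hypothesis, namely that $Q_\R$ does not collapse any $d$-dimensional face of $h(X)$.
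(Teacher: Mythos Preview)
Your proof is correct and follows the same strategy as the paper's: both invoke the description $S_h(X)_d=\{x\in X\mid \dim h(X,x)=d\}$ from~\secref{rem:pl.d-skeleton} and compare the dimensions of the two germs. The paper's argument is a one-liner asserting $\dim h(X,x)=\dim (Q_\R\circ h)(X,x)$ for every $x$, whereas you prove only the equivalence $\dim h(X,x)=d\Longleftrightarrow\dim(Q_\R\circ h)(X,x)=d$, carefully separating the trivial inequality $\dim(Q_\R\circ h)(X,x)\le\dim h(X,x)$ from the place where the hypothesis on $Q$ is actually used. Your Baire/pigeonhole step (any $d$-dimensional polytope inside $h(X)$ shares its affine span with some $d$-face of the chosen complex, so $Q_\R$ is injective on it) is exactly the content the paper suppresses; it is the right justification, and in fact your formulation is slightly more precise, since the paper's stronger equality of germ dimensions can fail for $x$ with $\dim h(X,x)<d$ when $Q_\R$ collapses a lower-dimensional face.
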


\begin{proof}
	Setting $L \coloneqq Q_\R$, we have seen in~\secref{linear maps and d-skeleton} that $h' \coloneqq L \circ h$ is a PL tropicalization. By definition, the $d$-skeleton $S_h(X)_d$ is the set of $x \in X$ such that $\dim h(X,x)=d$ and similarly for $h'$. Since $Q_\R$ is injective on every $d$-dimensional face of $h(X)$, we have $\dim h(X,x)=\dim h'(X,x)$ for every $x \in X$. This proves $S_h(X)_d = S_{h'}(X)_d$.
\end{proof}

\begin{art}[Weights on the $d$-skeleton]\label{rem:pl.mult.d-skeleton}
	We continue with the notation in Remark~\ref{rem:pl.d-skeleton}.
	After subdivision of the source and the target, we can choose
	an abstract $(\Z,\Gamma)$-polytopal complex with support $S_h(X)_d$
	and a $\z$-polytopal complex $\Pi$ with support $h(X)$ such that
	\begin{enumerate}
		\item Every $S_{\varphi_j}(X_j)_d=X_j \cap S_h(X)_d$ is a subcomplex of $S_h(X)_d$.
		\item The restriction of $h$ to any face  of $S_h(X)_d$ is a $(\Z,\Gamma)$-linear isomorphism onto a face of $\Pi$.
		\item Every $h(\partial X_j)$ is contained in subcomplex of $\Pi$ of dimension at most $d-1$.
	\end{enumerate}
	For a $d$-dimensional face $\Delta$ of $S_h(X)$, there is $j$ such that $\Delta$ is a face of $S_{\varphi_j}(X_j)$. For $x \in \relint(\Delta)$, we conclude that $m_h(X,x)= m_{\varphi_j}(X,x)$ is the weight $m_{\varphi_j}(X_j,\Delta)$ of the $d$-dimensional face $\Delta$ of $S_{\varphi_j}(X_j)$ considered in  Remark~\ref{rem: tropical multiplicities constant on faces of skeleton} and hence does not depend on the choice of $x \in \relint(\Delta)$. Defining $m_\Delta\coloneq m_h(X,\Delta) \coloneqq m_h(X,x)$,  this makes $S_h(X)_d$ into a weighted abstract $\z$-polytopal complex. The weighted polytopal complex $h(X)_d$ is the push-forward of the weighted polytopal complex $S_h(X)_d$, i.e. for any $d$-dimensional face $\sigma$ of $\Pi$, we have
	\begin{equation} \label{tropical multiplicities as push-forward}
	m_\sigma = \sum_{h(\Delta)=\sigma} m_\Delta.
	\end{equation}
\end{art}

\begin{lem}[Functoriality of the $d$-skeleton] \label{morphisms and d-skeleton}
	Let $f\colon X' \to X$ be a morphism of compact good strictly analytic spaces of pure dimension $d$ and let $h\colon X \to \R^n$ be a PL tropicalization. Then  $h' \coloneqq h \circ f$ is a PL tropicalization with $f(S(X',h')_d)  \subset S(X,h)_d$ inducing a piecewise $\z$-linear map $S_{h'}(X')_d \to S_h(X)$ which is finite to one.
\end{lem}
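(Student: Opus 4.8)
The plan is to reduce the statement to the case of smooth tropicalization maps treated in Section~\ref{sec: tropical skeletons}, working $\rG$-locally on $X$. That $h'=h\circ f$ is a PL tropicalization is immediate from Lemma~\ref{lem:pl.basic.props}(\ref{item:pl.prop.pullback}). For the rest, I would choose, as in~\secref{choice of polytopal structure}, a finite $\rG$-covering $(X_j)_{j\in J}$ of $X$ by compact strictly analytic subdomains with $h|_{X_j}=(\phi_j)_{\trop}$ for moment maps $\phi_j\colon X_j\to\bGm^{n,\an}$. Then each $X_j'\coloneq f^{-1}(X_j)$ is a compact strictly analytic subdomain of $X'$ of pure dimension $d$, the family $(X_j')_{j\in J}$ is a $\rG$-covering of $X'$, and $f_j\coloneq f|_{X_j'}\colon X_j'\to X_j$ is a morphism of compact strictly $K$-analytic spaces of pure dimension $d$ with $h'|_{X_j'}=(\phi_j')_{\trop}$ for $\phi_j'\coloneq\phi_j\circ f_j$. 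Since $X$ and $X'$ are good, Remark~\ref{rem:pl.d-skeleton} applies to $(X,h)$ and $(X',h')$ and gives $S_h(X)_d=\bigcup_j S_{\phi_j}(X_j)_d$ and $S_{h'}(X')_d=\bigcup_j S_{\phi_j'}(X_j')_d$, so it suffices to prove $f_j\bigl(S_{\phi_j'}(X_j')_d\bigr)\subset S_{\phi_j}(X_j)_d$ for every $j$, which then gives $f\bigl(S_{h'}(X')_d\bigr)\subset S_h(X)_d$.

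To prove this inclusion, fix $j$, take $x'\in S_{\phi_j'}(X_j')_d$, and set $x=f_j(x')$. Applying Proposition~\ref{prop:tropical.skeleton}(\ref{item:trop.skeleton.morphism}) to $f_j$ gives $S_{\phi_j'}(X_j')=f_j^{-1}\bigl(S_{\phi_j}(X_j)\bigr)$, so $x\in S_{\phi_j}(X_j)$ and $d_{\phi_j}(X_j,x)=d$. Since $(\phi_j')_{\trop}=\phi_{j,\trop}\circ f_j$ and $f_j$ is continuous, shrinking a neighbourhood of $x'$ as in~\secref{sec:ducros.germs} so that its image lands in a neighbourhood of $x$ of the type in~\secref{sec:ducros.germs} shows that the germ $(\phi_j')_{\trop}(X_j',x')$ at $\phi_{j,\trop}(x)$ is contained in $\phi_{j,\trop}(X_j,x)$; together with Theorem~\ref{thm:trop.dim.reduction} this yields
\[ d=\dim (\phi_j')_{\trop}(X_j',x')\le\dim\phi_{j,\trop}(X_j,x)\le d_{\phi_j}(X_j,x)=d, \]
so $\dim\phi_{j,\trop}(X_j,x)=d$ and hence $x\in S_{\phi_j}(X_j)_d$, as desired.

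Finally, let $\bar f\colon S_{h'}(X')_d\to S_h(X)_d$ be the map induced by $f$. It is finite to one, because $\bar f(x_1')=\bar f(x_2')$ forces $h'(x_1')=h(\bar f(x_1'))=h(\bar f(x_2'))=h'(x_2')$, so each fiber of $\bar f$ lies in a fiber of $h'|_{S_{h'}(X')_d}$, which is finite by Remark~\ref{rem:pl.d-skeleton}. For piecewise $\z$-linearity, recall from Remark~\ref{rem:pl.d-skeleton} that $S_h(X)_d$ and $S_{h'}(X')_d$ carry $\z$-linear structures on which $h$, resp.\ $h'$, restricts to a $\z$-linear isomorphism of each face onto a face of $h(X)_d$, resp.\ of $h'(X')_d\subset h(X)_d$. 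After passing to compatible subdivisions one arranges that for every face $\Delta'$ of $S_{h'}(X')_d$ the polytope $\tau\coloneq h'(\Delta')$ is a face of $h(X)_d$ and $\bar f(\Delta')$ is contained in a single face $\Delta$ of $S_h(X)_d$: the latter holds since $\bar f(\relint(\Delta'))$ is connected and contained in the preimage under $h$ of $\relint(\tau)$, which inside $S_h(X)_d$ is a disjoint union of relative interiors of faces. On such a $\Delta'$ one then has $\bar f|_{\Delta'}=(h|_\Delta)^{-1}\circ(h'|_{\Delta'})$, a composition of $\z$-linear isomorphisms, hence $\z$-linear. The step I anticipate as the main obstacle is precisely this last one — setting up the simultaneous subdivisions of $h(X)_d$, $h'(X')_d$, $S_h(X)_d$ and $S_{h'}(X')_d$ so that $\bar f$ is linear on faces, and the bookkeeping needed to pass from the $\rG$-local descriptions of the two $d$-skeletons to the global statement.
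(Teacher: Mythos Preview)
Your proof is correct, and the core ideas are the same as the paper's: the inclusion $f(S_{h'}(X')_d)\subset S_h(X)_d$ comes from the germ inclusion $h'(X',x')\subset h(X,f(x'))$, and the piecewise $\z$-linear, finite-to-one property comes from the factorization $h'|_{S_{h'}(X')_d}=h|_{S_h(X)_d}\circ\bar f$ together with the fact that the $\z$-linear structures on the two $d$-skeletons are \emph{defined} so that $h$ and $h'$ are face-by-face $\z$-linear isomorphisms onto polytopes in $\R^n$.

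The difference is that you reduce first to smooth tropicalization maps via a $\rG$-covering and then invoke Proposition~\ref{prop:tropical.skeleton}(\ref{item:trop.skeleton.morphism}) and Theorem~\ref{thm:trop.dim.reduction} on each piece, whereas the paper works directly with the definition $S_h(X)_d=\{x\in X\mid\dim h(X,x)=d\}$ from Remark~\ref{rem:pl.d-skeleton}, which already makes sense for PL tropicalizations. This lets the paper dispose of the inclusion in one line and the PL/finiteness assertion in another: since $h'=h\circ\bar f$ on the skeletons and both $h,h'$ are finite-to-one PL maps that \emph{determine} the respective $\z$-linear structures, $\bar f$ is automatically piecewise $\z$-linear and finite-to-one. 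Your detour through Proposition~\ref{prop:tropical.skeleton}(\ref{item:trop.skeleton.morphism}) to first land in $S_{\phi_j}(X_j)$ before arguing about the germ dimension is harmless but unnecessary; the germ inclusion alone already gives $\dim h(X,f(x'))\ge d$. Likewise, your careful subdivision argument for piecewise $\z$-linearity is correct, but the paper's one-sentence justification via the factorization is exactly what your argument unwinds.
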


\begin{proof}
	Using the characterization of the $d$-skeleton by germs of tropicalizations, it is clear that $f(S_{h'}(X')_d)\subset S_h(X)_d$. Since $h'\colon S_{h'}(X')_d \to \R^n$ factorizes through $h\colon S_{h}(X)_d \to \R^n$ and both are finite-to-one PL maps which determine the PL-structure on the respective $d$-skeletons, we deduce the claim.
\end{proof}

\begin{rem} \label{preimage of the d-skeleton}
We have seen in Proposition~\ref{prop:tropical.skeleton}(\ref{item:trop.skeleton.morphism}) that the preimage of the tropical skeleton is the tropical skeleton. However, in the setting of Lemma~\ref{morphisms and d-skeleton}, such an identity
\begin{equation} \label{preimage d-skeleton}
S_{h'}(X')_d = f\inv(S(X,h)_d)
\end{equation}
is not always true. A counterexample is when $X$ is the closed annulus $\{\frac12\leq|x|\leq1\}$ in $\bGm^\an$ and  $f\colon X'\subset X$ is the inclusion of the annulus $X'\coloneqq \{|x|=1\}$.  For the smooth tropicalizatio map $h\colon X\to\R$, given by $x \mapsto -\log|x|$, we have $S(X',h')_d=\emptyset$ but $f\inv(S(X,h)_d)\neq\emptyset$.

However, if $f$ is open (in particular, this applies if $f$ is finite and flat by \cite[Proposition~3.2.7]{berkovic93:etale_cohomology}, then \eqref{preimage d-skeleton} is true. This follows from the definition of the $d$-skeleton in terms of germs of tropicalizations.
\end{rem}

\begin{art}[Compatible face structures] \label{compatible face structures}
	We continue with the setup of~\secref{morphisms and d-skeleton} and we use the above $\rG$-covering $X = \bigcup_{j\in J}X_j$ again. Then the compact strictly analytic subdomains $X_j'$ form a $\rG$-covering of $X'$. We choose a face structure on $S_h(X)_d$ (resp.~on $S_{h'}(X')$) as in~\secref{rem:pl.mult.d-skeleton}. After subdivisions, we may assume that the same $\z$-polytopal complex $\Pi$ in~\secref{rem:pl.mult.d-skeleton} works for $S_h(X)_d$ and $S_{h'}(X')_d$, and that the restriction of $f$ to each face of $S_{h'}(X')_d$ is a $\z$-linear isomorphism onto a face of $S_h(X)_d$.
\end{art}

\begin{prop} \label{functoriality of weights}
	Let $f\colon X' \to X$ be a morphism of compact good strictly analytic spaces of pure dimension $d$, let $h\colon X \to \R^n$ be a PL tropicalization and let $h' \coloneqq h \circ f$. We choose compatible face structures on $S_{h'}(X')_d$ and $S_h(X)_d$ as in~\secref{compatible face structures}. Let $\Delta'$ be a $d$-dimensional face of $S_{h'}(X')_d$ and let $x' \in \relint{\Delta'}$. Then $\Delta \coloneqq f(\Delta)$ is a $d$-dimensional face of $S_{h'}(X')_d$. For $x \coloneqq f(x')$, we have $x \in \relint(\Delta)$, the local ring $\sO_{X',x'}$ is a finite module over the artinian ring $\sO_{X,x}$ and
	\begin{equation} \label{local functorial for multiplicities}
	m_{\Delta'} \cdot \length_{\sO_{X,x}}(\sO_{X,x})= m_\Delta \cdot \length_{\sO_{X,x}}(\sO_{X',x'}).
	\end{equation}
\end{prop}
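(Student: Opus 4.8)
The plan is to reduce to the situation in which $h$ is an honest smooth tropicalization map and then to feed both $(X,h)$ and $(X',h')$ into the projection formula for tropical multiplicities, Proposition~\ref{item:trop.mult.projection}. First I would record the geometry. Since $x'$ lies in the relative interior of the $d$-dimensional face $\Delta'$ of $S_{h'}(X')_d$, its image $h'(x')$ lies in the relative interior of a $d$-dimensional (hence maximal) face $\sigma$ of a polytopal complex with support $h'(X')$; because $h'(\partial X')$ is contained in a subcomplex of dimension at most $d-1$, this forces $x'\in\Int(X')$, and symmetrically $x=f(x')\in\Int(X)$. By the compatible face structures of~\secref{compatible face structures}, $\Delta:=f(\Delta')$ is a $d$-dimensional face of $S_h(X)_d$ with $x\in\relint(\Delta)$ and $h(\Delta)=h'(\Delta')=\sigma$. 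As $m_\Delta$ and $m_{\Delta'}$ depend only on germs, I would then replace $X$ by a compact strictly analytic neighbourhood $U$ of $x$ with $h|_U=\varphi_{\trop}$ for a moment map $\varphi\colon U\to\bGm^{n,\an}$ (such $U$ exists by~\secref{rem:pl.mult.d-skeleton}), and replace $X'$ by a compact strictly analytic neighbourhood $U'$ of $x'$ with $f(U')\subset U$; then $h'|_{U'}=(\varphi\circ f|_{U'})_{\trop}$ is the smooth tropicalization map of $\varphi':=\varphi\circ f|_{U'}$. After renaming $U,U'$ back to $X,X'$ we have $m_\Delta=m_\varphi(X,x)$ and $m_{\Delta'}=m_{\varphi'}(X',x')$, with $\varphi'=\varphi\circ f$.

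Next I would identify the germs of the tropical varieties. Since $x\in\Int(X)$ and $x\in S_\varphi(X)_d$, Theorem~\ref{thm:trop.dim.reduction} shows $\varphi_{\trop}(X,x)$ is pure of dimension $d$; being contained in the germ of $h(X)$ at $h(x)$, which (as $h(x)\in\relint(\sigma)$ and $\sigma$ is maximal) is the germ at $h(x)$ of the $d$-dimensional plane $P$ spanned by $\sigma$, it equals the germ of $P$. The identical argument, using $h'(X')\subset h(X)$, shows $\varphi'_{\trop}(X',x')$ is also the germ of $P$. Now I would choose a homomorphism $q\colon\bGm^n\to\bGm^d$ whose induced map $Q\colon\Z^n\to\Z^d$ on cocharacter lattices is injective on $\bL_\sigma$ (the direction of $P$); such $q$ is generic with respect to $P$ in the sense of Definition~\ref{def:generic.quotient}, hence for both germs. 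Applying Proposition~\ref{item:trop.mult.projection} to $(X,\varphi,x)$ and to $(X',\varphi',x')$ with this common $q$, and writing $\psi=q\circ\varphi$, $\psi'=q\circ\varphi'=\psi\circ f$, $c=[N':Q(N_\sigma)]$, and $\xi=\psi(x)=\psi'(x')$ — a point of the canonical skeleton $S(\bGm^d)$ at which $\sO_{\bGm^{d,\an},\xi}=\kappa(\xi)$ is a field (Proposition~\ref{item:trop.mult.samedim}) — I obtain
\[
c\cdot m_\varphi(X,x)=\dim_{\kappa(\xi)}\bigl(\sO_{X,x}\bigr),\qquad
c\cdot m_{\varphi'}(X',x')=\dim_{\kappa(\xi)}\bigl(\sO_{X',x'}\bigr).
\]

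Finally I would translate dimensions into lengths. The local homomorphism $f^{\#}\colon\sO_{X,x}\to\sO_{X',x'}$ is $\kappa(\xi)$-linear because $\psi'=\psi\circ f$, and both rings have finite dimension over $\kappa(\xi)$; hence $\sO_{X,x}$ is Artinian and $\sO_{X',x'}$ is a finite $\sO_{X,x}$-module, which is the finiteness assertion of the proposition. By the length--dimension identity~\cite[Lemma~A.1.3]{fulton98:intersec_theory}, $\dim_{\kappa(\xi)}(\sO_{X,x})=\length(\sO_{X,x})\,[\kappa(x):\kappa(\xi)]$ and $\dim_{\kappa(\xi)}(\sO_{X',x'})=\length(\sO_{X',x'})\,[\kappa(x'):\kappa(\xi)]$, while a composition series of $\sO_{X',x'}$ over itself, read as an $\sO_{X,x}$-module, gives $\length_{\sO_{X,x}}(\sO_{X',x'})=\length(\sO_{X',x'})\,[\kappa(x'):\kappa(x)]$. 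Dividing the two displayed equalities to cancel $c$, and using $[\kappa(x'):\kappa(\xi)]=[\kappa(x'):\kappa(x)]\,[\kappa(x):\kappa(\xi)]$, yields $m_\Delta\cdot\length_{\sO_{X,x}}(\sO_{X',x'})=m_{\Delta'}\cdot\length(\sO_{X,x})$, which is~\eqref{local functorial for multiplicities}.

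The step I expect to be the main obstacle is the first: the careful bookkeeping that $x$ and $x'$ are interior points, that the faces and their images behave as claimed under the chosen compatible face structures, and, above all, that $\varphi_{\trop}(X,x)$ and $\varphi'_{\trop}(X',x')$ are germs of the \emph{same} $d$-dimensional plane $P$, which is what permits a single generic projection $q$ to serve for both invocations of Proposition~\ref{item:trop.mult.projection}. Once this is in place, the remainder is routine commutative algebra.
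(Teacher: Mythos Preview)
Your proposal is correct and follows essentially the same route as the paper: reduce to a smooth tropicalization via the $\rG$-covering pieces from~\secref{rem:pl.mult.d-skeleton}, choose a generic projection $q\colon\bGm^n\to\bGm^d$ so that Proposition~\ref{item:trop.mult.projection} applies simultaneously to $(X,x)$ and $(X',x')$, obtain $m_{\Delta'}\,\dim_{\kappa(\xi)}(\sO_{X,x})=m_\Delta\,\dim_{\kappa(\xi)}(\sO_{X',x'})$, and convert to lengths via~\cite[Lemma~A.1.3]{fulton98:intersec_theory}. The only cosmetic difference is that the paper takes $Q$ generic with respect to all of $h(X)$ and invokes Lemma~\ref{harmonic d-skeleton and generic projection}, whereas you argue directly that both germs $\varphi_{\trop}(X,x)$ and $\varphi'_{\trop}(X',x')$ equal the germ of the affine span of $\sigma$; both arrive at the same place.
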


\begin{proof}
	By the choice of the face structures,  $\Delta \coloneqq f(\Delta)$ is a $d$-dimensional face of $S_{h'}(X')_d$ and $\sigma \coloneqq h(\Delta)=h'(\Delta')$  is a $d$-dimensional face of $\Pi$.
	Then $\Delta$ is a face of some subcomplex  $S_{\varphi_j}(X_j)_d$ associated to the moment map $\phi_j\colon X_j\to\bGm^{n,\an}$. Let us choose a  homomorphism $Q\colon \Z^n \to \Z^d$ which is generic with respect to $h(X)$ and hence generic with respect to $h'(X')\subset h(X)$. Let $\bT = \bGm^d$ and let $q\colon \bGm^n \to \bT$ be the homomorphism of tori associated to $Q$. We set $\psi_j \coloneqq q \circ \varphi_j$ and $\psi_j' \coloneqq \psi_j \circ f$.

	Let us choose $x' \in \relint(\Delta')$. Then $x \coloneqq f(x')\in \relint(\Delta)$ and $\omega \coloneq h(x) \in \relint(\sigma)$. By Lemma~\ref{harmonic d-skeleton and generic projection}, we have  $x \in S_{\psi_j}(X_j)_d$ and hence  $\xi \coloneqq \psi_j(x) \in S(\bT)$ by Proposition~\ref{prop:tropical.skeleton}(4).
	In the following, we denote the residue fields of the local rings $\sO_{\bT^\an,\xi},$ $\sO_{X,x},$ and $\sO_{X',x'}$ by $\kappa(\xi),\kappa(x),$ and $\kappa(x')$, as in~\secref{Non-Archimedean geometry}.
	By the choice of the face structures, we have $\relint(\Delta') \subset \Int(X')$ and $\relint(\Delta)\subset \Int(X)$. By Proposition~\ref{item:trop.mult.samedim}, we conclude that $\sO_{\bT^\an,\xi}$ is the residue field $\kappa(\xi)$ and that the local rings $\sO_{X,x}$ and $\sO_{X',x'}$ are finite-dimensional $\kappa(\xi)$-algebras. We deduce from Proposition~\ref{item:trop.mult.projection} that
	\begin{equation} \label{local functorial for multiplicities over xi}
	m_{\Delta'} \cdot \dim_{\kappa(\xi)}(\sO_{X,x})= m_\Delta \cdot \dim_{\kappa(\xi)}(\sO_{X',x'}).
	\end{equation}
	Finite-dimensionality over $\kappa(\xi)$ yields that $\sO_{X,x}$ is an artinian ring and that $\sO_{X',x'}$ is a finite module over $\sO_{X,x}$. By \cite[Lemma~A.1.3]{fulton98:intersec_theory}, we have
	$$\length_{\sO_{X,x}}(\sO_{X',x'})= \length_{\sO_{X',x'}}(\sO_{X',x'}) \cdot [\kappa(x'): \kappa(x)].$$
	Similarly, we can express $\dim_{\kappa(\xi)}(\sO_{X,x})$ and $\dim_{\kappa(\xi)}(\sO_{X',x'})$ in terms of the multiplicity of the local rings and the residue degrees over $\kappa(\xi)$. This and \eqref{local functorial for multiplicities over xi} yield \eqref{local functorial for multiplicities}.
\end{proof}

\begin{rem} \label{functoriality in reduced case}
	In the above setting, suppose that $\sO_{X,x}$ is reduced. Since $\sO_{X,x}$ is an artinian local ring, we conclude that it is equal to its residue field $\kappa(x)$, and  \eqref{local functorial for multiplicities} yields
	$$ m_{\Delta'} = m_\Delta \cdot \dim_{\kappa(x)}(\sO_{X',x'}).$$
\end{rem}

The following result will be used to define weights on germs of PL tropicalizations.
\begin{lem}\label{lem:pl.trop.germ.weight}
	Let $h\colon X\to\R^n$ be a PL tropicalization. For $x\in X$,
	there is a compact strictly analytic neighbourhood $V(x)$ of $x$ such that for any compact strictly analytic neighbourhood $W \subset V(x)$ of $x$, the weighted germs  of $h(V(x))_d$ and $h(W)_d$ at $h(x)$ agree.
\end{lem}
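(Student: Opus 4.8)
The plan is to reduce the statement to the analogous property for smooth tropicalization maps, which is known from Section~\ref{sec: tropical skeletons}. First I would use~\secref{smooth tropicalization covering}: there is a compact strictly analytic neighbourhood $U_0$ of $x$ and a smooth tropicalization map $h'\colon U_0 \to \R^m$ together with a piecewise $\z$-linear map $F\colon h'(U_0) \to \R^n$ with $h|_{U_0} = F \circ h'$. After subdividing $h'(U_0)$ so that $F$ is $\z$-linear on each face, I would replace $X$ by $U_0$, so that we may assume $h = F \circ h'$ globally with $h'$ the smooth tropicalization map associated to a moment map $\varphi\colon U_0 \to \bGm^{m,\an}$.

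Next I would invoke~\secref{sec:ducros.germs} (Ducros' germ-stability result): there is a compact strictly analytic neighbourhood $V(x) \subset U_0$ of $x$ such that for all compact strictly analytic neighbourhoods $W \subset V(x)$ of $x$, the germs of the tropical varieties $h'(W)$ and $h'(V(x))$ agree at $h'(x)$. I would then argue that this $V(x)$ already works for the weighted statement, in two stages. For the \emph{underlying sets}: since $h = F\circ h'$ is a composition with the fixed PL map $F$, the germ of $h(W)$ at $h(x)$ equals $F$ applied to the germ of $h'(W)$ at $h'(x)$, which is independent of $W$; the $d$-dimensional parts then also agree because the $d$-skeleton is characterized through germs of tropicalizations (see~\secref{rem:pl.d-skeleton}) — concretely $S_h(W)_d = S_{h'}(W)_d$ by Lemma~\ref{harmonic d-skeleton and generic projection} applied to a homomorphism generic with respect to $h'(V(x))$, and this is a germ-local condition. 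For the \emph{weights}: by Definition~\ref{def:pl.trop.mult.face} the weight on a $d$-dimensional face $\sigma$ near $h(x)$ is $m_h(W,\omega)$ for $\omega \in \relint(\sigma)$, and by the smoothness of $h'$ over such $\omega$ (after shrinking $V(x)$ so that $h'$ is smooth over the relative interiors of the relevant faces, using~\secref{choice of polytopal structure}) this is computed $\rG$-locally by $m_\varphi(W,x'')$ at the finitely many $x'' \in W$ over $\omega$; Remark~\ref{rem: tropical multiplicities}(\ref{tropmult 1}) says $m_\varphi(W,x'')$ depends only on the germ $(W,x'')$, so it is the same for $W$ and $V(x)$ once $x''$ lies in $\Int(W)$.

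The main obstacle is handling points $\omega$ near $h(x)$ that are \emph{not} equal to $h(x)$, i.e.\ making sure that for all sufficiently small $W$ the preimages of such $\omega$ and the germs at those preimages stabilize, not merely at $x$ itself. Here I would use a compactness argument as in the proof of Corollary~\ref{cor:tropical.skeleton}: for $\omega$ close enough to $h(x)$, every point of $(h')^{-1}(\omega) \cap V(x)$ lies in the interior of $V(x)$, so that $(h')^{-1}(\omega)\cap W = (h')^{-1}(\omega)\cap \Int(W)$ for $W$ a neighbourhood of $x$, and the germ of $h'(W)$ agrees with that of $h'(V(x))$ in a whole neighbourhood of $h'(x)$, not just at $h'(x)$ (this is exactly what~\secref{sec:ducros.germs} gives after intersecting finitely many $V(x_i)$ as in Corollary~\ref{cor:tropical.skeleton}). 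Combining this with the face-wise constancy of $m_h$ from Proposition~\ref{prop:pl.trop.mult.constant}, one concludes that the weighted polytopal complexes $h(W)_d$ and $h(V(x))_d$ have the same germ at $h(x)$, as weighted polytopal complexes, which is the assertion.
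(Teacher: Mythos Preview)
Your reduction to a covering smooth tropicalization $h = F\circ h'$ is a reasonable instinct, but the execution has a gap and misses a simpler route that the paper takes.

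The concrete gap is your use of Lemma~\ref{harmonic d-skeleton and generic projection} to conclude $S_h(W)_d = S_{h'}(W)_d$. That lemma applies to a \emph{linear} homomorphism $Q$ injective on the $d$-faces, not to a piecewise $\z$-linear map $F$; in general $F$ may collapse some $d$-dimensional faces of $h'(W)$, so one only has $S_h(W)_d \subset S_{h'}(W)_d$. More seriously, the germ of $h(W)=F(h'(W))$ at $h(x)$ is controlled by $h'(W)$ near \emph{every} $F$-preimage of $h(x)$ lying in $h'(W)$, not just near $h'(x)$; Ducros' stability in~\secref{sec:ducros.germs} only pins down the germ of $h'(W)$ at $h'(x)$. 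Your compactness paragraph gestures at this, but does not actually show that the other preimages and the weights over nearby $\omega$ stabilize uniformly in $W$.

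The paper avoids all of this by working directly with the $d$-skeleton $S_h(X)_d$ and its face structure set up in~\secref{rem:pl.d-skeleton} and~\secref{rem:pl.mult.d-skeleton}. One takes $A$ to be the (closed) union of faces of $S_h(X)_d$ not containing $x$ and chooses any compact strictly analytic neighbourhood $V(x)$ disjoint from $A$. Since $h\colon S_h(X)_d \to h(X)_d$ is a finite piecewise $\z$-linear surjection, the germ of $h(V)_d$ at $h(x)$ is exactly the germ of $\bigcup_{x\in\Delta} h(\Delta)$, and by~\eqref{tropical multiplicities as push-forward} the weight of a face $\sigma$ is $\sum_{h(\Delta)=\sigma,\,x\in\Delta} m_\Delta$. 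None of this depends on the particular $V$, so any smaller $W\subset V(x)$ gives the same weighted germ. This is both shorter and sidesteps the need to track preimages of $\omega$ under $F$ or to invoke Ducros' result again.
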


\begin{proof}
	We choose compatible face structures on $S_h(X)_d$ and on $h(X)_d$ as in~\secref{rem:pl.mult.d-skeleton}. The faces of $S_h(X)_d$ not containing $x$ form a closed subset $A$ of $X$. Let $V$ be any compact strictly analytic neighbourhood of $x$ with $V \cap A = \emptyset$. We have seen in~\secref{rem:pl.d-skeleton} that the map $f$ restricts to a surjective piecewise $\z$-linear map $S_h(X)_d \to h(X)_d$ which is finite to one. We conclude that the germ of $h(V)_d$ at $h(x)$ agrees with the germ $\bigcup h(\Delta)$ at $h(x)$ where $\Delta$ ranges over all $d$-dimensional faces of $S_h(X)_d$ containing $x$. Moreover, it follows from \eqref{tropical multiplicities as push-forward} that the tropical multiplicity $m_\sigma$ in a $d$-dimensional face of $h(V)_d$ containing $h(x)$ is given by $m_\sigma = \sum_\Delta m_\Delta$, where $\Delta$ ranges over all $d$-dimensional faces of $S_h(X)_d$ with $x \in \Delta$ and $h(\Delta)=\sigma$. All this does not depend on the particular choice of $V$.  We pick for $V(x)$	any compact strictly analytic neighbourhood of $x$ with $V(x) \cap A = \emptyset$. Then the claim follows by using this independence for $W \subset V(x)$.
\end{proof}

\begin{art}[Tropical variety of a germ] \label{germ of piecewise linear tropicalization}  \label{tropical cycle of a germ}
	Let $h\colon X\to\R^n$ be a PL tropicalization, let $x\in X$, and let $V(x)$ be a compact strictly analytic neighbourhood of $x$ as in Lemma~\ref{lem:pl.trop.germ.weight}.
	We define the \defi{tropical variety $h(X,x)$ of the germ $(X,x)$} as the germ of $h(V(x))$ at $h(x)$.  It is the germ of a fan centered at $h(x)$.
	We use the weights on $h(V(x))$ to endow the
	$d$-dimensional part $h(X,x)_d$ of the
	germ $h(X,x)$ with weights on its $d$-dimensional faces.  This makes $h(X,x)_d$ into a weighted fan.
\end{art}

\begin{rem} \label{local vs global weights}
	Let $h\colon X\to\R^n$ be a PL tropicalization.  It follows from the definitions that for any $\omega \in \R^n$, we have the identity
	$$(h(X)_d,\omega)= \sum_{\substack{x \in X\\ h(x)=\omega}} h(X,x)_d$$
	of weighted fans in $\omega$, well-defined up to subdivision.
\end{rem}

\begin{rem}[Weights on germs via the $d$-skeleton]\label{rem:pl.d-skeleton.germ.weights}
	Let $x\in X$.
	If $x\notin S_h(X)_d$ then $\dim h(X,x) < d$, so $h(X,x)_d=0$ as a weighted fan.  If $x\in S_h(X)_d$, then it follows from \eqref{tropical multiplicities as push-forward}  that $h(X,x)_d$ is the push-forward of the weighted germ of  $S_h(X)_d$ at $x$.
\end{rem}

For a piecewise $\z$-linear map $L$ of piecewise $\z$-linear spaces, we have a push-forward $L_*$ of weighted $\z$-polyhedral complexes, well-defined up to subdivision (see \secref{Polyhedral geometry}). Tropical varieties satisfy the following functoriality property with respect to such $L$.

\begin{prop}\label{functoriality from the right}
	Let $h\colon X\to\R^n$ be a PL tropicalization, let $L\colon \R^n \to \R^m$ be a piecewise $\z$-linear map, and let $h'\coloneq L \circ h\colon X \to \R^m$.  Then for every $x\in X$, we have the equality
	\begin{equation} \label{Sturmfels-Tevelev from right}
	L_*(h(X,x)_d)= h'(X,x)_d
	\end{equation}
	of weighted fans (up to subdivision).  Moreover, we have $L_*(h(X)_d) = h'(X)_d$ as weighted $\z$-polytopal complexes (up to subdivision).
\end{prop}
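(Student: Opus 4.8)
The plan is to deduce the global statement from the local statement, and to prove the local statement by reduction to the case of a smooth tropicalization map. Let me sketch both reductions.

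\medskip

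First I would treat the local equation \eqref{Sturmfels-Tevelev from right}. By Definition~\secref{smooth covering tropicalization} and~\secref{smooth tropicalization covering}, we may pass to a compact strictly analytic neighbourhood $U$ of $x$ on which $h$ is covered by a smooth tropicalization map: there is a moment map $\phi\colon U\to\bGm^{N,\an}$ with associated smooth tropicalization $g\coloneqq\phi_{\trop}\colon U\to\R^N$ and a piecewise $\z$-linear map $F\colon g(U)\to\R^n$ with $h|_U=F\circ g$. Shrinking $U$ using Lemma~\ref{lem:pl.trop.germ.weight} (applied to $g$, $h$ and $h'$ simultaneously), we may assume the weighted germs stabilize, so that $h(X,x)_d$ (resp.\ $h'(X,x)_d$, $g(X,x)_d$) is computed on $U$. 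Then $h=F\circ g$ and $h'=(L\circ F)\circ g$ on $U$, so if we know the result for the \emph{smooth} tropicalization $g$ and for arbitrary piecewise $\z$-linear post-compositions, functoriality of push-forward, $(L\circ F)_*=L_*\circ F_*$, reduces \eqref{Sturmfels-Tevelev from right} to the equality $F_*(g(X,x)_d)=h(X,x)_d$ together with $(L\circ F)_*(g(X,x)_d)=h'(X,x)_d$. Thus the entire local statement follows once we establish: for a smooth tropicalization map $g=\phi_{\trop}$ and a piecewise $\z$-linear map $F$, one has $F_*(g(X,x)_d)=(F\circ g)(X,x)_d$ as weighted fans.

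\medskip

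This last identity I would prove by a direct computation with tropical multiplicities, which is essentially the classical Sturmfels--Tevelev formula in the analytic setting. Since everything is a germ at $x$, by Remark~\secref{rem:pl.d-skeleton.germ.weights} we may describe $g(X,x)_d$ via the push-forward of the weighted germ of the $d$-skeleton $S_g(U)_d$ at $x$; and $S_{F\circ g}(U)_d=S_g(U)_d$ when $F$ is generic, or in general by choosing the face structure compatibly (\secref{rem:pl.mult.d-skeleton}, \secref{compatible face structures}). Fix a $d$-dimensional face $\tau$ of $h'(X,x)_d=(F\circ g)(X,x)_d$ and a point $\omega'\in\relint(\tau)$. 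The weight $m_{\tau}$ equals $m_{F\circ g}(X,\omega')=\sum_{x''}m_{F\circ g}(X,x'')$ over the finitely many $x''$ in the appropriate $d$-skeleton mapping to $\omega'$; each such $x''$ lies over a unique $\omega\in\relint(\sigma)$ for a $d$-face $\sigma$ of $g(X,x)_d$ with $F_\R(\sigma)\supset\tau$, and the tropical multiplicity $m_g(X,x'')$ is unchanged by Proposition~\ref{item:trop.mult.1units}-type invariance (smoothness is local, and $h'|_U$ factors through $g$). Comparing with the definition of push-forward of weighted $\z$-polyhedral complexes in~\secref{Polyhedral geometry}, where the pushed-forward weight on $\tau$ is $\sum_\sigma [\Z^m_\tau : F_\R(\Z^N_\sigma)]\,m_\sigma$ summed over $d$-faces $\sigma$ with $F_\R(\sigma)\supseteq\tau$, the required equality $m_\tau=\sum_\sigma [\Z^m_\tau:F_\R(\Z^N_\sigma)]\,m_\sigma$ becomes a lattice-index bookkeeping identity, which follows from Proposition~\ref{item:trop.mult.projection} applied to a generic quotient that is injective on the relevant faces (this is exactly what relates $m_g(X,x'')$ for the ambient torus to the one for the quotient torus, and introduces the factors $[\Z^m_\tau:F_\R(\Z^N_\sigma)]$). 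I expect this index-counting step — matching the combinatorial definition of push-forward with the sum of analytic tropical multiplicities over fibres — to be the main technical obstacle; it is where one must carefully use that $h'|_U=(L\circ F)\circ g$ is smooth over generic points and invoke the projection formula in Proposition~\ref{item:trop.mult.projection}.

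\medskip

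Finally, the global statement $L_*(h(X)_d)=h'(X)_d$ follows from the local one by Remark~\ref{local vs global weights}: for any $\omega'\in\R^m$ we have $(h'(X)_d,\omega')=\sum_{h'(x)=\omega'}h'(X,x)_d$ and similarly, before applying $L_*$, for $h$ fibrewise over points of $L_\R^{-1}(\omega')$. Applying $L_*$ to $(h(X)_d,\omega)$ and summing over $\omega\in L_\R^{-1}(\omega')$ in the support of $h(X)_d$, which is a finite set of germs, interchanging the (finite) sums and using \eqref{Sturmfels-Tevelev from right} at each $x$ with $h(x)\in L_\R^{-1}(\omega')$, gives $L_*(h(X)_d)$ and $h'(X)_d$ the same weighted germ at every $\omega'$; hence they agree as weighted $\z$-polytopal complexes up to subdivision. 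This completes the proof.
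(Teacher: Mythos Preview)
Your proposal is essentially correct and rests on the same key ingredient as the paper --- the lattice-index identity coming from Proposition~\ref{item:trop.mult.projection} applied to a generic quotient --- but the organization is inverted and carries an extra reduction layer.

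The paper proceeds in the opposite direction: it observes that the \emph{local} statement is a special case of the \emph{global} one (by Lemma~\ref{lem:pl.trop.germ.weight}, both germs are computed on a small enough neighbourhood $V(x)$), so it suffices to prove $L_*(h(X)_d)=h'(X)_d$. For this, one subdivides so that $L$ is $\z$-linear on each $d$-face $\sigma$ and $h$ is smooth over $\relint(\sigma)$; then at any $x$ over such a point, $h|_U=\phi_{\trop}$ and $h'|_U=(\ell\circ\phi)_{\trop}$ for a torus homomorphism $\ell$ lifting $L|_\sigma$, and Proposition~\ref{item:trop.mult.projection} (applied with a single generic quotient $q'$ of the target torus and $q=q'\circ\ell$) gives directly $m_{h'}(X,x)/m_h(X,x)=[N'_{\sigma'}:L(N_\sigma)]$, which is precisely the push-forward factor.

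Your route instead introduces a smooth covering $g$ of $h$ at an arbitrary point $x$, proves the identity for $g$ and any piecewise $\z$-linear post-composition, and then uses functoriality $(L\circ F)_*=L_*\circ F_*$. This works, but the detour through $g$ is unnecessary: since weights on $h(X)_d$ are determined on relative interiors of $d$-faces, where $h$ is already smooth by Remark~\ref{rem:pl.multiplicities}(3), one never needs to handle a non-generic $x$. Your local~$\Rightarrow$~global deduction via Remark~\ref{local vs global weights} is fine once you note (as you implicitly do) that for $\omega'$ in the relative interior of a $d$-face of $h'(X)$, the relevant preimage in $h(X)_d$ is finite. Both approaches arrive at the same index computation; the paper's is shorter because it avoids the covering layer and treats the germ statement as a corollary rather than a stepping stone.
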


\begin{proof}
	The first statement is a special case of the second: by Lemma~\ref{lem:pl.trop.germ.weight}, the weighted fans $h(X,x)_d$ and $h'(X,x)_d$ can be computed by restricting to a small enough neighbourhood $V(x)$ of $x$.  After subdividing the source and target, we can assume that if $\sigma'$ is a $d$-dimensional face of $h'(X)$, then the inverse image in $h(X)$ of its relative interior consists of the relative interiors of finitely many $d$-dimensional faces $\sigma$ of $h(X)$.  Moreover, we can assume that $h$ is smooth over each $\relint(\sigma)$ and that $L$ is $\z$-linear on $\sigma$.  Translating on the source, we can even assume that $L$ is $\Z$-linear on $\sigma$.

	Let $x\in h\inv(\relint(\sigma)) \subset h'^{-1}(\relint(\sigma'))$ and let $U$ be a compact strictly analytic neighbourhood of $x$ such that $h|_U = \phi_{\trop}$ for a moment map $\phi\colon U\to\bGm^{n,\an}$.  Then $h'|_U = \phi'_{\trop}$ for the moment map $\phi'\colon U\to\bGm^{m,\an}$ obtained by composing with the homomorphism $\ell\colon \bGm^{n,\an}\to\bGm^{m,\an}$ lifting $L|_\sigma$.  Choose a generic quotient $Q'\colon\Z^m\to\Z^d$ with respect to $\sigma'$ in the sense of Definition~\ref{def:generic.quotient}, and let $q'\colon\bGm^{m,\an}\to\bGm^{d,\an}$ be the associated homomorphism of tori.  Let $q = q'\circ \ell\colon \bGm^{n,\an}\to\bGm^{d,\an}$, let $Q\colon\Z^n\to\Z^d$ be the map on cocharacter lattices, and let $\psi = q'\circ\phi' = q\circ\phi$.  Then Proposition~\ref{item:trop.mult.projection} implies
	\[ m_h(X,x) = \frac 1{[\Z^d:Q(N_\sigma)]}m_\psi(X,x) \qquad
	m_{h'}(X,x) = \frac 1{[\Z^d:Q'(N'_{\sigma'})]}m_\psi(X,x),
	\]
	where $N_\sigma$ (resp.\ $N'_{\sigma'}$) is the sublattice of $\Z^n$ (resp.\ $\Z^m$) underlying the linear span of $\sigma$ (resp.\ $\sigma'$).  Taking the quotient, we have
	\[ \frac{m_{h'}(X,x)}{m_h(X,x)} = \frac{[\Z^d:Q(N_\sigma)]}{[\Z^d:Q'(N'_{\sigma'})]}
	= [Q'(N'_{\sigma'}):Q(N_\sigma)] = [N'_{\sigma'}:L(N_\sigma)].
	\]
	We deduce that $m_{h'}(X,\sigma') = [N'_{\sigma'}:L(N_\sigma)]\,m_h(X,\sigma)$. Summing over all $d$-dimensional faces $\sigma$ of $h(X)$ mapping onto $\sigma'$, the lemma follows from the definition of the push-forward.
\end{proof}

\begin{art}[Tropicalization and Pushforward of Cycles] \label{Sturmfels-Tevelev}
	Recall from~\secref{sec:fund.cycle} that we have defined the fundamental cycle $[X]$ of an analytic space $X$.  More generally, a \defi{cycle} on $X$ is a finite sum of irreducible Zariski-closed subsets of $X$.  Let $h\colon X\to\R^n$ be a PL tropicalization.  By linearity, we define a weighted $\z$-polytopal complex $h(Z)$ for all cycles $Z$ on $X$ using Definition~\ref{def:pl.trop.mult.face} on prime cycles.  It follows from Lemma~\ref{lem:mult.fund.cycle} that $h([X]) = h(X)_d$.

	Given a proper morphism $f\colon X'\to X$ of strictly analytic spaces,  a proper push-forward of cycles was defined in~\cite[2.6]{gubler-crelle}. By linearity and by restriction to components, it is enough to define $f_*([X'])$ for a proper surjective morphism of irreducible and reduced compact strictly analytic spaces. If $\dim(X)<\dim(X')$, then we set  $f_*([X'])=0$.  Otherwise, we have $\dim(X)=\dim(X')$ and $f$ is finite over the complement of a Zariski closed subset of $X$ of dimension $<d$ and hence has a a well-defined degree $\delta$. Then we set  $f_*([X'])=\delta \, [X]$.
\end{art}

In the following, we discuss generalizations of the Sturmfels--Tevelev formula in tropical geometry. First, we give an analogue for $d$-skeletons.

\begin{prop} \label{rem:sturmfels.tevelev.d-skeleton}
	Let $f\colon X' \to X$ be a surjective proper morphism of $d$-dimensional compact, irreducible, reduced, good strictly analytic spaces.  Let  $\delta$ be the degree of $f$. Let $h \colon X \to \R^n$ be a PL tropicalization and let $h' \coloneqq h \circ f$. Recall from~\secref{morphisms and d-skeleton} that the restriction of $f$ gives a piecewise $\z$-linear map $F\colon S_{h'}(X')_d \to S_h(X)_d$. Then we have the identity
	\begin{equation} \label{Sturmfels-Tevelev for skeletons}
	F_*(S_{h'}(X')_d)= \delta \cdot S_h(X)_d
	\end{equation}
	of weighted abstract $\z$-polytopal complexes up to subdivision.
\end{prop}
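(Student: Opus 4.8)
The plan is to reduce the identity \eqref{Sturmfels-Tevelev for skeletons} to the local functoriality statement of Proposition~\ref{functoriality of weights}, applied face-by-face. First I would fix compatible face structures on $S_{h'}(X')_d$ and $S_h(X)_d$ as in~\secref{compatible face structures}, so that $F$ restricts to a $\z$-linear isomorphism from each face $\Delta'$ of $S_{h'}(X')_d$ onto a face $F(\Delta')$ of $S_h(X)_d$. By the definition of the push-forward of weighted abstract $\z$-polytopal complexes in~\secref{Polyhedral geometry}, the claim is equivalent to showing that for every $d$-dimensional face $\Delta$ of $S_h(X)_d$, one has
\begin{equation*}
\sum_{F(\Delta')=\Delta} m_{\Delta'} = \delta \cdot m_\Delta,
\end{equation*}
the sum ranging over the $d$-dimensional faces $\Delta'$ of $S_{h'}(X')_d$ with $F(\Delta')=\Delta$.

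For the key step I would pick a point $x\in\relint(\Delta)$ and analyze the finite fiber $f^{-1}(x)$. Since $f$ is finite over the complement of a Zariski-closed subset of dimension $<d$ and $\Delta$ is $d$-dimensional, I may shrink around $x$ (using Proposition~\ref{prop:tropical.skeleton}(\ref{item:trop.skeleton.subdomain}) and Lemma~\ref{lem:pl.trop.germ.weight}) so that $f$ is finite and flat near the points of $f^{-1}(x)$ lying on $S_{h'}(X')_d$; those points are exactly the $x'\in\relint(\Delta')$ with $F(\Delta')=\Delta$ by the compatibility of face structures and Remark~\ref{preimage of the d-skeleton} (openness of a finite flat morphism). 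For each such $x'$, Proposition~\ref{functoriality of weights} gives
\begin{equation*}
m_{\Delta'}\cdot\length_{\sO_{X,x}}(\sO_{X,x}) = m_\Delta\cdot\length_{\sO_{X,x}}(\sO_{X',x'}).
\end{equation*}
Since $X$ and $X'$ are reduced, $\sO_{X,x}=\kappa(x)$ is a field, so this reads $m_{\Delta'} = m_\Delta\cdot\dim_{\kappa(x)}(\sO_{X',x'})$ as in Remark~\ref{functoriality in reduced case}. Summing over the $x'\in f^{-1}(x)$ that lie on $S_{h'}(X')_d$ yields
\begin{equation*}
\sum_{F(\Delta')=\Delta} m_{\Delta'} = m_\Delta\cdot\sum_{x'} \dim_{\kappa(x)}(\sO_{X',x'}),
\end{equation*}
so it remains to identify $\sum_{x'}\dim_{\kappa(x)}(\sO_{X',x'})$ with the degree $\delta$ of $f$.

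To finish, I would argue that $\sum_{x'\in f^{-1}(x)}\dim_{\kappa(x)}(\sO_{X',x'})$ equals $\delta$: after passing to a neighbourhood where $f$ is finite and flat, this sum is the (locally constant) rank of $\sO_{X'}$ as an $\sO_X$-module, hence equal to the generic degree $\delta$ of $f$ defined in~\secref{Sturmfels-Tevelev}; this is essentially \cite[3.2.5, 3.2.6]{ducros14:structur_des_courbes_analytiq} combined with the fact that all points $x'$ of $f^{-1}(x)$ over such a generic $x\in\relint(\Delta)$ automatically lie on $S_{h'}(X')_d$ when $x\in S_h(X)_d$ (the fiber of $f$ over a point of $X$ specializing from a $d$-dimensional locus stays in the $d$-skeleton, by the germ characterization of $S_{h'}(X')_d$ in~\secref{rem:pl.d-skeleton} and dimension-preservation under the finite map $f$). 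Plugging this in gives $\sum_{F(\Delta')=\Delta} m_{\Delta'} = \delta\cdot m_\Delta$, which is \eqref{Sturmfels-Tevelev for skeletons}.

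The main obstacle I anticipate is the bookkeeping needed to ensure that \emph{every} point of $f^{-1}(x)$ for generic $x\in\relint(\Delta)$ actually lies on $S_{h'}(X')_d$ — i.e. that there is no ``extra'' mass in the fiber off the $d$-skeleton — since otherwise the sum $\sum_{x'}\dim_{\kappa(x)}(\sO_{X',x'})$ would exceed $\sum_{F(\Delta')=\Delta}m_{\Delta'}/m_\Delta$ and fail to compute $\delta$. Handling this requires carefully combining the germ description of the $d$-skeleton (Theorem~\ref{thm:trop.dim.reduction}, Remark~\ref{rem:pl.d-skeleton}) with properness and surjectivity of $f$ and flatness after shrinking, so that all fiber points sit over a $d$-dimensional part of $h'(X')$. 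Everything else is a direct combination of Proposition~\ref{functoriality of weights} with the already-established compatibility of face structures and the definition of degree.
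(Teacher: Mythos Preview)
Your proposal is correct and follows essentially the same route as the paper: fix compatible face structures, pick $x\in\relint(\Delta)$, reduce to a finite flat restriction of $f$, apply Remark~\ref{functoriality in reduced case} to each preimage, and sum using \cite[3.2.5]{ducros14:structur_des_courbes_analytiq} to recover~$\delta$. The paper dispatches the obstacle you flag more directly than your sketch: since $\sO_{X,x}$ is a field it invokes \cite[2.4.3]{CLD} to obtain a neighbourhood $U$ of $x$ with $f^{-1}(U)\to U$ finite and flat, and then Remark~\ref{preimage of the d-skeleton} (openness of finite flat maps) gives $f^{-1}(S_h(X)_d)=S_{h'}(X')_d$ outright, so every point of $f^{-1}(x)$ lies on the $d$-skeleton without further germ analysis.
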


\begin{proof}
	We may prove the statement locally in $x \in X$. Clearly, we may asume that $x \in \relint(\Delta)$ for a $d$-dimensional face $\Delta$ of $S_h(X)_d$.
	Since $X$ is reduced, we have seen in Remark~\ref{functoriality in reduced case} that the local ring $\sO_{X,x}$ is the field $\kappa(x)$ and it follows as in~\cite[2.4.3]{CLD} that $x$ has a compact strictly analytic neighbourhood $U$ so that $f\inv(U)\to U$ is finite and flat. Replacing $f$ by this restriction, we may assume that $f$ is a finite flat morphism. It follows from Remark~\ref{preimage of the d-skeleton} that $f^{-1}(S_h(X)_d)=S_{h'}(X')_d$.
	There are finitely many preimages $y_1,\dots,y_r \in X'$ of $x$ with respect to $f$ and we conclude that all $y_j \in S_{h'}(X')_d$.

	We choose compatible face structures on $S_{h'}(X')_d$, $S_h(X)_d$ and $h(X) = h'(X')$ as in~\secref{functoriality of weights}. We may assume that $\Delta$ is such a face of $S_h(X)_d$. It follows from the above that for any $y_j$ there is a unique face $\Delta_j$ of $S_{h'}(X')_d$ such that $f(\Delta_j)=\Delta$ and
	$y_j \in \relint(\Delta_j)$.
	For the degree $\delta$ of $f$,  using~\cite[3.2.5]{ducros14:structur_des_courbes_analytiq}, we deduce
	$\delta = \sum_{j=1}^r \dim_{\kappa(x)}(\sO_{X',y_j})$.
	Now we apply Remark~\ref{functoriality in reduced case} to get
	$$\delta \cdot m_\Delta  = \sum_{j=1}^r \dim_{\kappa(x)}(\sO_{X',y_j})\,m_\Delta = \sum_{j=1}^r m_{\Delta_j}$$
	and by definition of push-forward with respect to $F$ we get the claim.
\end{proof}

The following proposition generalizes the original formula given in~\cite{StTe} for trivial valuations and in~\cite{BPR} for a general non-archimededean field, both of which treat closed subschemes of a torus.

\begin{prop}[Sturmfels--Tevelev Formula]\label{prop:sturmfels.tevelev}
	Let $f\colon X'\to X$ be a proper morphism of compact, good, strictly $K$-analytic spaces.  Assume we are given a commutative square
	\[ \xymatrix @=.25in{
		{X'} \ar[r]^{h'} \ar[d]_f & {\R^m} \ar[d]^L \\
		X \ar[r]_h & {\R^n}
	} \]
	where $h$ and $h'$ are PL tropicalizations and $L$ is piecewise $\z$-linear.  Then for any cycle $Z'$ on $X'$, we have
	\begin{equation} \label{Sturmfels-Tevelev formula}
	h(f_*(Z')_d) = L_*(h'(Z')_d)
	\end{equation}
	as weighted $\z$-polytopal complexes (up to subdivision).
\end{prop}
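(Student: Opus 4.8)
The idea is to reduce the Sturmfels--Tevelev formula \eqref{Sturmfels-Tevelev formula} to the two functoriality statements already at our disposal: Proposition~\ref{rem:sturmfels.tevelev.d-skeleton} (for $d$-skeletons under a finite-type morphism) and Proposition~\ref{functoriality from the right} (for pushforward of weighted polytopal complexes under a piecewise $\z$-linear map $L$). First I would observe that both sides of \eqref{Sturmfels-Tevelev formula} are additive in the cycle $Z'$, and that restricting to a Zariski-closed subset commutes with $f_*$ and with forming the $d$-skeleton of a tropicalization; hence we may assume $Z' = [X']$ with $X'$ irreducible and reduced. If $\dim(X') < d$ or if $f$ is not generically finite onto its image, both sides vanish by the conventions in~\secref{Sturmfels-Tevelev}, so we may further assume $\dim(X)=\dim(X')=d$, $X$ irreducible and reduced (replace $X$ by the reduced structure on the image of $f$, which does not change $h(X)_d$ by Lemma~\ref{lem:mult.fund.cycle} and Lemma~\ref{lem:pl.basic.props}), and $f$ surjective with a well-defined degree $\delta$, so that $f_*([X']) = \delta\,[X]$.

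\textbf{Key steps.} With these reductions, the left side of \eqref{Sturmfels-Tevelev formula} becomes $h(\delta\,[X]) = \delta\,h([X])=\delta\,h(X)_d$ using $h([X])=h(X)_d$ from~\secref{Sturmfels-Tevelev}. For the right side, $h'([X'])=h'(X')_d$, and $h'=L\circ h$. The commutative square gives $h' = h\circ f$ as well (both equal the map $X'\to\R^n$ appearing as the composite), so I can apply Proposition~\ref{rem:sturmfels.tevelev.d-skeleton} with the PL tropicalization $h$ on $X$: it yields $F_*(S_{h'}(X')_d) = \delta\cdot S_h(X)_d$ where $F\colon S_{h'}(X')_d\to S_h(X)_d$ is the piecewise $\z$-linear map induced by $f$ from Lemma~\ref{morphisms and d-skeleton}. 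Now push forward along the PL map $h\colon S_h(X)_d\to\R^n$: by the second assertion of Proposition~\ref{functoriality from the right} (applied to the $d$-skeleton as the piecewise linear space and $h$ restricted to it as the tropicalization), pushforward of weighted complexes commutes with the composite, so
\[
h'(X')_d = (h\circ F)_*(S_{h'}(X')_d) = h_*\bigl(F_*(S_{h'}(X')_d)\bigr) = h_*(\delta\cdot S_h(X)_d) = \delta\cdot h(X)_d,
\]
and then $L_*(h'(X')_d) = L_*(\delta\, h(X)_d) = \delta\, L_*(h(X)_d)$. Finally, again by Proposition~\ref{functoriality from the right} applied to the PL tropicalization $h$ on $X$ and the piecewise $\z$-linear map $L$, we have $L_*(h(X)_d) = (L\circ h)(X)_d = h'(X)_d$; but comparing with the identity just derived, $\delta\,h(X)_d = h'(X')_d$ and $h'(X')_d = \delta\,h(X)_d$ — wait, I must be careful here: $h'(X)_d$ is the tropicalization of $X$ under $L\circ h$, which is \emph{not} the same as $h'(X')_d$. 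The correct chain is: left side $= h(f_*Z')_d = \delta\,h(X)_d$, and $L_*(h'(X')_d) = \delta\,L_*(h(X)_d) = \delta\,(L\circ h)(X)_d$; so it remains to identify $h(X)_d$ with $(L\circ h)(X)_d$ pushed the right way — but these are generally different complexes, so the clean route is instead to compare left side and right side directly via the two propositions, which is exactly what the displayed chain above does once we write $h'(X')_d = \delta\,h(X)_d$ and $L_*(h'(X')_d) = L_*\bigl(\text{pushforward of }S_{h'}(X')_d\text{ under }h'\bigr)$ and reassociate $h' = L\circ h = h\circ f$ consistently.

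\textbf{Main obstacle.} The genuine technical point is the bookkeeping of which tropicalization and which skeleton each weighted complex lives on, and making sure that pushforward of weighted polytopal complexes is \emph{functorial} — that is, $(G\circ F)_* = G_*\circ F_*$ — in the generality of piecewise $\z$-linear maps between piecewise $\z$-linear spaces (not merely linear maps of vector spaces). This is where I expect to spend the most care: I would either cite the relevant functoriality from~\cite[Section~7]{allermann_rau10:tropical_intersection_thy} as recalled in~\secref{Polyhedral geometry}, or reprove it by passing to a common subdivision on which all the maps in sight are $\Z$-linear on faces, reducing the multiplicativity of pushforward weights to multiplicativity of lattice indices $[N'':G(N')]=[N'':G(F(N))]$ along a chain of sublattices, exactly as in the index computation at the end of the proof of Proposition~\ref{functoriality from the right}. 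Once functoriality of pushforward is in hand, the proof is the three-line reassociation $L_*(h'(X')_d) = L_*(h_* F_* S_{h'}(X')_d) = (L\circ h)_*(\delta\,S_h(X)_d)$ versus $h(f_*Z')_d = h_*(\delta\, S_h(X)_d)$, together with the observation that $L\circ h$ and $h$ restricted to $S_h(X)_d$ push $S_h(X)_d$ forward to $h'(X)_d$ and $h(X)_d$ respectively, giving $L_*(h(X)_d) = h_*(S_h(X)_d)$ composed with $L$ equals the one we want; the commutative square guarantees all these composites agree, and the factor $\delta$ matches on both sides.
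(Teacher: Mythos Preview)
Your overall strategy and the two key inputs (Propositions~\ref{functoriality from the right} and~\ref{rem:sturmfels.tevelev.d-skeleton}) are exactly right, and the reductions to $Z'=[X']$ with $X'$ reduced irreducible, then to $f$ surjective of degree $\delta$ between equidimensional reduced spaces, match the paper. The problem is that from that point on you repeatedly misread the commutative square. You write ``$h'=L\circ h$'' and ``$h'=h\circ f$'', but neither composition even type-checks: $h'\colon X'\to\R^m$, $h\colon X\to\R^n$, $L\colon\R^m\to\R^n$, so the square says $L\circ h' = h\circ f$, nothing more. In particular your displayed chain concludes $h'(X')_d=\delta\cdot h(X)_d$, which equates a weighted complex in $\R^m$ with one in $\R^n$; and your invocation of Proposition~\ref{rem:sturmfels.tevelev.d-skeleton} is illegitimate as stated, since that proposition requires the two tropicalizations to share a target (it is formulated for $h'=h\circ f$).

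The fix is the one move the paper makes that you did not: use Proposition~\ref{functoriality from the right} \emph{first} to eliminate $L$. Applying it to $h'$ and $L$ gives $L_*(h'(Z')_d)=(L\circ h')(Z')_d$, and $L\circ h'=h\circ f$ by the square. So it suffices to prove the formula with $m=n$, $L=\id$, and $h'=h\circ f$. In that situation Proposition~\ref{rem:sturmfels.tevelev.d-skeleton} applies on the nose, and your intended chain becomes correct:
\[
h'(X')_d = h'_*\bigl(S_{h'}(X')_d\bigr) = h_*F_*\bigl(S_{h'}(X')_d\bigr) = h_*\bigl(\delta\cdot S_h(X)_d\bigr) = \delta\cdot h(X)_d = h\bigl(f_*[X']\bigr)_d,
\]
using~\eqref{tropical multiplicities as push-forward} (equivalently Remark~\ref{rem:pl.d-skeleton.germ.weights}) for the outer equalities. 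The ``main obstacle'' you flag---functoriality of pushforward of weighted complexes---is not where the difficulty lies; that is standard from~\secref{Polyhedral geometry}. The actual obstacle was simply disentangling the maps, and the reduction to $L=\id$ does that in one line.
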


\begin{proof}
	We may assume that $L$ is the identity map on $\R^n$ by Proposition~\ref{functoriality from the right}.  By linearity we may assume that $Z'=X'$, so that $X'$ is reduced and irreducible.  Let $d = \dim(X')$.  We may replace $X$ by $f(X')$ to assume $f$ is surjective and $X$ is reduced.  If $\dim(X) < \dim(X')$ then $f_*[X'] = 0$ and $\dim(h'(X')) < d$, so that $h'(X')_d = 0$ and the claim follows. 
	So we may suppose then that $\dim(X) = \dim(X')$ and we denote by $\delta$ the degree of $f$.
	Using the restriction $F\colon S_{h'}(X')_d \to S_h(X)_d$, Proposition~\ref{Sturmfels-Tevelev for skeletons} and  Remark~\ref{rem:pl.d-skeleton.germ.weights} twice, we get
	$$h'(X')_d= h_*'(S_{h'}(X')_d)=h_*\circ F_*(S_{h'}(X')_d)=h_*(\delta \cdot S_h(X)_d)=\delta \cdot h_*(S_h(X)_d)= \delta \cdot h_*(X)_d.$$
	Since the right hand side agrees with $h_*(f_*[X'])$, this proves   \eqref{Sturmfels-Tevelev formula}.
\end{proof}

Finally, we present a trick for writing a PL tropicalization as a telescoping sum.  This will be used later to reduce to the case of smooth tropicalization maps.

\begin{prop} \label{tropical decomposition}
	Let $X = \bigcup_{j\in J} X_j$ be a finite $\rG$-covering by compact strictly analytic subdomains $X_j$.  For any $I \subset J$, we get a strictly analytic subdomain $X_I \coloneqq \bigcap_{i \in I} X_i$.  Let $h\colon X\to\R^n$ be a PL tropicalization.  Then we have the following identity of weighted $\z$-polytopal complexes in $\R^n$ up to subdivision:
	$$\sum_{I \subset J} (-1)^{|I|}\, h(X_I)_d = 0.$$
\end{prop}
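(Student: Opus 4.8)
The plan is to verify the identity $\sum_{I\subset J}(-1)^{|I|}\,h(X_I)_d=0$ by checking it locally at each point $\omega\in\R^n$, using Remark~\ref{local vs global weights} to reduce to a statement about weighted fans of germs. First I would fix $\omega\in\R^n$ and note that, by Remark~\ref{local vs global weights}, the germ of $h(X_I)_d$ at $\omega$ is $\sum_{x\in X_I,\, h(x)=\omega} h(X_I,x)_d$. Since the fibre $h^{-1}(\omega)$ is compact and the $X_I$ are finitely many, only finitely many points $x$ occur. Swapping the order of summation, the identity to prove becomes
\[ \sum_{\substack{x\in X\\ h(x)=\omega}} \;\sum_{\substack{I\subset J\\ x\in X_I}} (-1)^{|I|}\, h(X_I,x)_d \;=\; 0, \]
so it suffices to show that for each individual $x$ with $h(x)=\omega$ we have $\sum_{I\subset J,\, x\in X_I}(-1)^{|I|}\,h(X_I,x)_d=0$ as a weighted fan up to subdivision.

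Next I would fix such a point $x$ and let $J_x\coloneqq\{j\in J\mid x\in X_j\}$, which is nonempty since $\{X_j\}$ covers $X$; for $I\subset J$ we have $x\in X_I$ exactly when $I\subset J_x$ (and $X_\emptyset=X$ always contains $x$). Thus the sum over $I$ with $x\in X_I$ is $\sum_{I\subset J_x}(-1)^{|I|}\,h(X_I,x)_d$. The key observation is that the germ $h(X_I,x)_d$ depends only on the germ $(X_I,x)$, and since $x$ lies in the interior of each $X_i$ for $i\in J_x$ — more precisely, since $X_i$ is a neighbourhood of $x$ whenever $i\in J_x$ — the germ $(X_I,x)$ coincides with the germ $(X,x)$ for \emph{every} $I\subset J_x$. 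Here I would invoke the fact, used repeatedly in~\secref{sec: tropical multiplicities} and~\secref{rem:pl.d-skeleton}, that tropical multiplicities and hence the weighted fan $h(X_I,x)_d$ are intrinsic to the germ (Remark~\ref{rem: tropical multiplicities}(\ref{tropmult 1}), Lemma~\ref{lem:pl.trop.germ.weight}, Proposition~\ref{prop:tropical.skeleton}(\ref{item:trop.skeleton.subdomain})): passing to a compact strictly analytic domain containing $x$ in its interior does not change the germ of the tropicalization, nor the weights. Consequently $h(X_I,x)_d=h(X,x)_d$ for all $I\subset J_x$.

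Therefore $\sum_{I\subset J_x}(-1)^{|I|}\,h(X_I,x)_d = \bigl(\sum_{I\subset J_x}(-1)^{|I|}\bigr)\,h(X,x)_d$, and since $J_x\neq\emptyset$ the alternating sum $\sum_{I\subset J_x}(-1)^{|I|}=\sum_{k=0}^{|J_x|}\binom{|J_x|}{k}(-1)^k=(1-1)^{|J_x|}=0$. This gives the vanishing at $x$, hence at $\omega$, hence globally. The main obstacle is the bookkeeping needed to make the reduction to germs fully rigorous: one must be careful that the equality $h(X_I,x)_d=h(X,x)_d$ is an identity of \emph{weighted} fans up to common subdivision, which requires citing that the face structures and weights on the $d$-skeleton (and hence on the germ via Remark~\ref{rem:pl.d-skeleton.germ.weights}) are compatible under restriction to subdomains containing $x$ in their interior — this is where Proposition~\ref{prop:tropical.skeleton}(\ref{item:trop.skeleton.subdomain}), Remark~\ref{rem: tropical multiplicities}(\ref{tropmult 1}) and the construction in~\secref{rem:pl.mult.d-skeleton} do the work. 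Once that compatibility is in hand, the combinatorial identity $(1-1)^{|J_x|}=0$ finishes the proof.
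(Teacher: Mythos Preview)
Your overall strategy---localize at a point $\omega$, use Remark~\ref{local vs global weights} to rewrite each germ as a sum over fibre points, then collapse the alternating sum via $(1-1)^{|J_x|}=0$---is exactly the paper's approach. But there is a genuine gap in the step where you assert that ``$X_i$ is a neighbourhood of $x$ whenever $i\in J_x$'', and hence that the germ $(X_I,x)$ coincides with $(X,x)$ for every $I\subset J_x$. This is false in general: a $\rG$-covering by compact strictly analytic subdomains is \emph{not} an open covering, and a point $x\in X_i$ may well lie in $\partial X_i$. When that happens the germ $(X_i,x)$ is strictly smaller than $(X,x)$, and the weighted fan $h(X_i,x)_d$ can differ from $h(X,x)_d$.

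The paper repairs this by restricting attention to the right set of test points $\omega$. One first chooses a $\z$-polytopal complex $\Pi$ with support $h(X)$ fine enough that every $h(\partial X_I)$ is contained in a subcomplex of dimension at most $d-1$; this is possible by~\secref{choice of polytopal structure}. Now take $\omega$ in the relative interior of a $d$-dimensional face of $\Pi$. For such $\omega$, any $x\in X_I$ with $h(x)=\omega$ automatically satisfies $x\in\Int(X_I)$, and \emph{then} the germ identity $h(X_I,x)_d=h(X,x)_d$ is justified. Since the claimed equality is an identity of weighted $d$-dimensional polytopal complexes, checking it at such $\omega$ suffices. With this one modification your argument is complete and matches the paper's.
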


\begin{proof}
	There is a $\z$-polytopal complex $\Pi$ with support $h(X)$ such that $\partial X_I$ is contained in a $(d-1)$-dimensional subcomplex of $\Pi$ for all $I \subset J$. Let $\omega$ be in the relative interior of a $d$-dimensional face of $\Pi$.   By construction, if $x\in X_I$ and $h(x)\in\omega$ then $x \in \Int(X_I)$, so $h(X_I,x) = h(X,x)$. By Remark~\ref{local vs global weights}, for all $I \subset J$ we have the identity
	$$(h(X_I)_d,\omega) = \sum_{\substack{x \in X_I\\h(x)=\omega}} h(X_I,x)_d = \sum_{\substack{x \in X_I\\h(x)=\omega}} h(X,x)_d$$
	of germs of weighted $\z$-polytopal complexes in $\omega$. The inclusion-exclusion principle then gives
	$$\sum_{I \subset J} (-1)^{|I|}\, (h(X_I)_d,\omega) = 0.$$
\end{proof}

\section{The balancing condition} \label{section: balancing condition}

In this section, we prove the balancing condition for harmonic tropicalizations of compact good strictly analytic spaces over $K$.
We start with some polyhedral geometry using the notions from~\secref{Polyhedral geometry}. We fix a  subgroup $\Gamma$ of $\R$ which will be later the value group of $K$.

\begin{art}[The Balancing Condition] \label{recall balancing condition}
	We first recall the \emph{balancing condition} for a weighted $\z$-polyhedral complex $\Pi$ of dimension $d$ in $\R^m$, with weights $m_\sigma \in \Z$ for each $d$-dimensional face $\sigma\in \Pi$. For any face $\sigma \in \Pi$, let $\Linear_\sigma$ be the linear subspace of $\R^m$ generated by $\{\omega-\rho\mid \omega,\rho \in \sigma\}$. The lattice $N \coloneqq \Z^m$ induces a canonical lattice $N_\sigma \coloneqq \Linear_\sigma \cap N$ in $\Linear_\sigma$. For each face $\tau$ of $\sigma$ of codimension $1$, we choose  $\omega_{\sigma,\tau} \in N_\sigma$ such that $\omega_{\sigma,\tau}+N_\tau$ generates the lattice $N_\sigma/N_\tau$ of rank $1$. Note that the generator $\omega_{\sigma,\tau}+N_\tau$ is determined up to sign; we fix the sign by requiring that $\omega_{\sigma,\tau}$ points inwards at the boundary $\tau$ of $\sigma$. Then we say that $\Pi$ is \defi{balanced} at a face $\tau \in \Pi$ of dimension $d-1$ if we have
	$$\sum_{\sigma \succ \tau} m_\sigma \omega_{\sigma,\tau} \in \Linear_\tau,$$
	where $\sigma$ ranges over all $d$-dimensional faces of $\Pi$ containing $\tau$. We say that $\Pi$ is \defi{balanced} if it is balanced at every $(d-1)$-dimensional face $\tau \in \Pi$.
	In this case, the weighted polyhedral complex is called a \emph{tropical cycle of dimension $d$.}

\end{art}

In the following, we consider  a weighted $\z$-polyhedral complex $\Pi$ in $\R^m$ of dimension $d$.
Let $\phi \colon |\Pi| \to \R$ be a piecewise $\z$-linear function with tropical Weil divisor (corner locus)  $\divisor(\phi)$ (see~\cite[Section~6]{AllermannRau}). By subdividing $\Pi$, we may assume that the restriction of $\phi$ to every face $\sigma \in \Pi$ is $\z$-linear.
Let ${\Pi'} := \Gamma_{\Pi}(\phi)$ be the graph of $\phi$ with the same weights as $\Pi$.
We denote by $\iota \colon \Pi \to {\Pi'}$ the canonical isomorphism.

\begin{prop} \label{purely tropical statement}
	The graph ${\Pi'}$ is balanced if and only if $\Pi$ is balanced and $\divisor(\phi) = 0$.
\end{prop}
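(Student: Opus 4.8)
\textbf{Proof plan for Proposition~\ref{purely tropical statement}.}

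The plan is to verify the balancing condition face-by-face, comparing the local data of $\Pi$ at a codimension-one face $\tau$ with the local data of the graph $\Pi' = \Gamma_\Pi(\phi)$ at the corresponding face $\iota(\tau)$. Fix a $(d-1)$-dimensional face $\tau \in \Pi$. Since $\phi$ is $\z$-linear on each face after subdivision, the isomorphism $\iota$ identifies $\bL_\tau$ with $\bL_{\iota(\tau)}$ and each $d$-dimensional $\sigma \succ \tau$ with $\iota(\sigma) \succ \iota(\tau)$, and it is compatible with the weights. The first step is to record how the primitive normal generators transform: writing a point of $\R^{m+1}$ as $(\omega, t)$, the face $\iota(\sigma)$ lies in the graph, so its primitive generator $\omega_{\iota(\sigma),\iota(\tau)}$ over $\iota(\tau)$ has the form $(\omega_{\sigma,\tau} + n_\tau, \ell_\tau(\omega_{\sigma,\tau}) + c_{\sigma,\tau})$ for a suitable $n_\tau \in N_\tau$ and a constant correction; here $\ell_\tau$ denotes the linear form through which $\phi$ factors on $\tau$-adjacent faces after translating $\tau$ to the origin, and $c_{\sigma,\tau} \in \Z$ is the ``slope jump'' of $\phi$ across $\tau$ in the direction $\omega_{\sigma,\tau}$. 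The key point is that $\bL_{\iota(\sigma)}$ is the graph of $\phi|_\sigma$, so its lattice $N_{\iota(\sigma)}$ is the graph of $\phi|_\sigma$ restricted to $N_\sigma$, whence the primitive generator of $N_{\iota(\sigma)}/N_{\iota(\tau)}$ is exactly the image of $\omega_{\sigma,\tau}$ under the graph embedding $N_\sigma \hookrightarrow N_\sigma \oplus \Z$, $v \mapsto (v, \mathrm{slope}_\sigma(v))$, where $\mathrm{slope}_\sigma$ is the $\Z$-linear form defining $\phi$ on $\sigma$. This computation is routine but is the one piece that must be done carefully.

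Granting that, the second step is the decomposition. Summing over $\sigma \succ \tau$ gives
\[
\sum_{\sigma \succ \tau} m_\sigma\, \omega_{\iota(\sigma),\iota(\tau)}
= \Bigl( \sum_{\sigma \succ \tau} m_\sigma\, \omega_{\sigma,\tau},\ \sum_{\sigma \succ \tau} m_\sigma\, \mathrm{slope}_\sigma(\omega_{\sigma,\tau}) \Bigr)
\]
in $N_\sigma$-coordinates near $\tau$. Now $\Pi'$ is balanced at $\iota(\tau)$ iff this vector lies in $\bL_{\iota(\tau)}$, which (since $\bL_{\iota(\tau)}$ is the graph of $\ell_\tau$ on $\bL_\tau$) happens iff two conditions hold simultaneously: (i) the first coordinate $\sum_\sigma m_\sigma \omega_{\sigma,\tau}$ lies in $\bL_\tau$ — this is precisely balancing of $\Pi$ at $\tau$ — and (ii) the second coordinate equals $\ell_\tau$ applied to the first coordinate, i.e.\ $\sum_\sigma m_\sigma\bigl(\mathrm{slope}_\sigma(\omega_{\sigma,\tau}) - \ell_\tau(\omega_{\sigma,\tau})\bigr) = 0$. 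One then recognizes the summand $\mathrm{slope}_\sigma(\omega_{\sigma,\tau}) - \ell_\tau(\omega_{\sigma,\tau})$ as exactly the local weight that $\divisor(\phi)$ assigns to the contribution of $\sigma$ at $\tau$, following the definition of the tropical Weil divisor in~\cite[Section~6]{AllermannRau}: the weight of the corner locus at $\tau$ is $\sum_{\sigma \succ \tau} m_\sigma \,\langle \mathrm{slope}_\sigma, \omega_{\sigma,\tau}\rangle - \langle \ell_\tau, \sum_{\sigma \succ \tau} m_\sigma \omega_{\sigma,\tau}\rangle$, and under condition (i) the subtracted term is well-defined and the whole expression is the multiplicity $m_{\mathrm{div}(\phi),\tau}$.

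The third step closes the logic. If $\Pi'$ is balanced at every $(d-1)$-face, then (i) holds for all $\tau$, so $\Pi$ is balanced; and (ii) then says each multiplicity of $\divisor(\phi)$ vanishes, so $\divisor(\phi) = 0$. Conversely, if $\Pi$ is balanced and $\divisor(\phi) = 0$, then (i) holds for all $\tau$, and vanishing of $\divisor(\phi)$ gives (ii), so $\Pi'$ is balanced. I would also note the trivial case $d = 0$ (no codimension-one faces, everything vacuously balanced and $\divisor(\phi) = 0$) separately, and remark that the statement is manifestly invariant under common $\z$-subdivision of $\Pi$ and $\Pi'$, which justifies the initial reduction to $\phi$ being $\z$-linear on each face. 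The main obstacle is Step~1: getting the primitive lattice generators of the graph faces exactly right — in particular tracking that the graph embedding $N_\sigma \hookrightarrow N_\sigma \oplus \Z$ is saturated onto $N_{\iota(\sigma)}$ so that primitivity is preserved and no spurious index factors appear — and matching the resulting sum with Allermann--Rau's sign and normalization conventions for $\divisor(\phi)$.
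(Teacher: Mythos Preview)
Your proposal is correct and follows essentially the same route as the paper's proof: both compute the primitive generator $\omega_{\iota(\sigma),\iota(\tau)}$ as the image of $\omega_{\sigma,\tau}$ under the graph embedding (the paper writes this as $(\omega_{\sigma,\tau},\,\partial\phi/\partial\omega_{\sigma,\tau})$ after translating so that $\phi$ is linear on $\tau$), then split the balancing condition for $\Pi'$ at $\iota(\tau)$ into the pair ``first coordinate in $\bL_\tau$'' and ``second coordinate equals $\phi|_\tau$ of the first,'' identifying these with balancing of $\Pi$ at $\tau$ and vanishing of the Allermann--Rau weight of $\divisor(\phi)$ at $\tau$, respectively. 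Your extra care about saturation of the graph lattice is the justification the paper leaves implicit in asserting the formula for $\omega_{\sigma',\tau'}$.
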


\begin{proof}
	We always denote by $\tau' \in \Pi'$ the face $\iota(\tau)$ corresponding to $\tau \in \Pi$ and vice versa.
	Let $\tau'$ be a codimension-$1$ face of $\Pi'$.
	By translation, we may assume that $\phi$ restricts to a linear function on $\tau$, i.e., that both $\tau'$ and $\tau$ contain the origin and $\phi(0) = 0$.
	Then ${\Pi'}$ is balanced at $\tau'$ if and only if
	\begin{equation} \label{eq balancing I}
	\sum_{\sigma' \succ \tau' } m_{\sigma'} \omega_{\sigma', \tau'} \in \Linear_{\tau'},
	\end{equation}
	where $\sigma'$ ranges over all $d$-dimensional faces of $\Pi'$ containing $\tau'$. Noting that $\omega_{\sigma',\tau'}= \left(\omega_{\sigma, \tau}, \frac{\del \phi}{\del \omega_{\sigma, \tau}}\right)$, equation~\eqref{eq balancing I} holds if and only if
	\begin{align*}
	\left( \sum_{\sigma \succ \tau} m_\sigma \omega_{\sigma, \tau},\; \sum_{\sigma\succ \tau} m_\sigma \frac{\del \phi} {\del \omega_{\sigma, \tau}} \right) \in \Linear_{\tau'},
	\end{align*}
	where $\sigma$ ranges over all $d$-dimensional faces of $\Pi$ containing $\tau$. We have  $(v, w) \in \Linear_{\tau'}$ if and only if $v \in \Linear_{\tau}$ and $w = \frac{\del \phi}{\del v}$.
	Thus we find that \eqref{eq balancing I} holds if and only if
	\begin{equation} \label{eq balancing II}
	\sum_{\sigma \succ \tau} m_\sigma \omega_{\sigma, \tau} \in \Linear_{\tau}
	\end{equation}
	and
	\begin{equation} \label{eq balancing III}
	\sum_{\sigma \succ \tau} m_\sigma \frac{\del \phi} {\del \omega_{\sigma, \tau}} = \frac {\del \phi}{\del(\sum_{\sigma \succ \tau} m_\sigma \omega_{\sigma, \tau})}.
	\end{equation}
        Now we observe that, by the definitions, equation~\eqref{eq balancing I} holds for all $\tau'$ if and only if ${\Pi'}$ is balanced,
	whereas~\eqref{eq balancing II} holds for all $\tau$ if and only if $\Pi$ is balanced, and~\eqref{eq balancing III} holds for all $\tau$ if and only if $\divisor(\phi) = 0$.
\end{proof}

\begin{lem} \label{compatibility of corner locus}
	Let $\Pi$ be a tropical cycle in $\R^m$ and let $\phi, \psi \colon \Pi \to \R$ be piecewise $\z$-linear functions. 
	Let ${\Pi'} = \Gamma_\Pi (\phi)$ be the graph of $\phi$ and let $\pi\colon {\Pi'} \to \Pi$ be  the canonical map.
	If the tropical Weil divisor $\divisor(\psi)$ is equal to zero, then $\divisor(\pi^*(\psi))=0$ on ${\Pi'}$.
\end{lem}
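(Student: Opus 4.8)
The statement to prove is Lemma~\ref{compatibility of corner locus}: if $\Pi$ is a tropical cycle in $\R^m$, $\phi,\psi\colon\Pi\to\R$ are piecewise $\z$-linear, and $\divisor(\psi)=0$, then $\divisor(\pi^*\psi)=0$ on the graph $\Pi' = \Gamma_\Pi(\phi)$. The key observation is that $\divisor(\cdot)$ is entirely a \emph{local} computation: balancing at a codimension-one face only involves the slopes of the function along the maximal faces incident to that face, together with the weights and the primitive generators $\omega_{\sigma,\tau}$. So the plan is to reduce the statement to a face-by-face check and then use the compatibility of the projection $\pi\colon\Pi'\to\Pi$ with the combinatorics of codimension-one faces.

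\textbf{Step 1: Identify the condition $\divisor(\psi)=0$ in slope terms.} After a common $\z$-subdivision of $\Pi$ (which does not change $\divisor(\psi)$ up to equivalence), I may assume $\phi$ and $\psi$ are $\z$-linear on every face $\sigma\in\Pi$. For a codimension-one face $\tau\prec\sigma$ with inward primitive generator $\omega_{\sigma,\tau}$, write $\psi_{\sigma,\tau} \coloneqq \frac{\partial\psi}{\partial\omega_{\sigma,\tau}}$ for the directional derivative. Since $\Pi$ is balanced, $\sum_{\sigma\succ\tau} m_\sigma\,\omega_{\sigma,\tau}\in\Linear_\tau$, and exactly as in the proof of Proposition~\ref{purely tropical statement} (comparing equations~\eqref{eq balancing II} and~\eqref{eq balancing III}), the vanishing $\divisor(\psi)=0$ is equivalent to the identity
\[
\sum_{\sigma\succ\tau} m_\sigma\,\psi_{\sigma,\tau} = \frac{\partial\psi}{\partial\bigl(\sum_{\sigma\succ\tau} m_\sigma\,\omega_{\sigma,\tau}\bigr)}
\]
for every codimension-one face $\tau\in\Pi$, where the right-hand side makes sense because the argument lies in $\Linear_\tau$ and $\psi$ is linear there.

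\textbf{Step 2: Transport via the graph map.} The canonical isomorphism $\iota\colon\Pi\to\Pi'$ of piecewise linear spaces sends each face $\sigma$ to $\sigma'=\iota(\sigma)$ with the same weight $m_{\sigma'}=m_\sigma$, and $\pi=\iota^{-1}$ restricted to $\Pi'$. As noted in the proof of Proposition~\ref{purely tropical statement}, the primitive generators transform as $\omega_{\sigma',\tau'}=\bigl(\omega_{\sigma,\tau},\,\frac{\partial\phi}{\partial\omega_{\sigma,\tau}}\bigr)$, and $\Linear_{\tau'}=\{(v,w)\mid v\in\Linear_\tau,\ w=\tfrac{\partial\phi}{\partial v}\}$. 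Since $\pi^*\psi=\psi\circ\pi$ and $\pi$ is just the projection forgetting the last coordinate, the directional derivative of $\pi^*\psi$ along $\omega_{\sigma',\tau'}$ equals the directional derivative of $\psi$ along the projection of $\omega_{\sigma',\tau'}$, namely $\omega_{\sigma,\tau}$: that is, $(\pi^*\psi)_{\sigma',\tau'}=\psi_{\sigma,\tau}$. Because $\Pi'$ is the graph of $\phi$ and $\Pi$ is balanced, $\Pi'$ is balanced at every codimension-one face by Proposition~\ref{purely tropical statement} applied with $\divisor(\phi)$ \emph{not} assumed zero --- wait; here I instead just use directly that $\sum_{\sigma'\succ\tau'} m_{\sigma'}\omega_{\sigma',\tau'}$ has projection $\sum_\sigma m_\sigma\omega_{\sigma,\tau}\in\Linear_\tau$ and last coordinate determined by $\phi$, hence lies in $\Linear_{\tau'}$, so $\Pi'$ is balanced (this is the ``if'' part of Proposition~\ref{purely tropical statement} in the case $\divisor(\phi)=0$ is needed, but only the $\Linear_{\tau'}$-membership of the sum of generators is required, which follows from $\Pi$ balanced alone). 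Then $\divisor(\pi^*\psi)=0$ at $\tau'$ is the identity $\sum_{\sigma'\succ\tau'} m_{\sigma'}(\pi^*\psi)_{\sigma',\tau'} = \frac{\partial(\pi^*\psi)}{\partial(\sum_{\sigma'\succ\tau'} m_{\sigma'}\omega_{\sigma',\tau'})}$; by the above this reads $\sum_\sigma m_\sigma\psi_{\sigma,\tau}$ on the left, and on the right the derivative of $\pi^*\psi$ along a vector in $\Linear_{\tau'}$ projecting to $\sum_\sigma m_\sigma\omega_{\sigma,\tau}$, which equals $\frac{\partial\psi}{\partial(\sum_\sigma m_\sigma\omega_{\sigma,\tau})}$. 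These two sides agree precisely by the Step~1 identity, which holds since $\divisor(\psi)=0$.

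\textbf{Main obstacle.} The only real subtlety is bookkeeping with the primitive generators: $\omega_{\sigma',\tau'}$ is the primitive generator of $N_{\sigma'}/N_{\tau'}$, and one must check it really is $(\omega_{\sigma,\tau},\partial_{\omega_{\sigma,\tau}}\phi)$ --- i.e.\ that this vector is primitive in the lattice $N_{\sigma'}=\Linear_{\sigma'}\cap\Z^{m+1}$. This is true because the projection $\Z^{m+1}\to\Z^m$ restricts to an isomorphism $N_{\sigma'}\isom N_\sigma$ (the graph of a $\z$-linear function over $\sigma$), carrying $N_{\tau'}$ to $N_\tau$, so primitive generators correspond. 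Once this lattice-theoretic point is pinned down, everything else is the linear-algebra manipulation already carried out in Proposition~\ref{purely tropical statement}, and the proof is short. I would write the argument by citing the slope computations from that proof rather than repeating them.
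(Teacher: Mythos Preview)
Your approach is the same as the paper's: compute the multiplicity of $\divisor(\pi^*\psi)$ at a codimension-one face $\tau'$ directly from the formula, use $\omega_{\sigma',\tau'}=(\omega_{\sigma,\tau},\partial_{\omega_{\sigma,\tau}}\phi)$ to rewrite both sides in terms of $\psi$ on $\Pi$, and conclude from $\divisor(\psi)=0$. The paper's proof is a single displayed equation doing exactly this.

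There is one incorrect digression in your Step~2 that you should delete. You try to argue that $\Pi'$ is balanced, claiming that $\sum_{\sigma'\succ\tau'} m_{\sigma'}\omega_{\sigma',\tau'}\in\Linear_{\tau'}$ follows from $\Pi$ balanced alone. This is false: its last coordinate is $\sum_\sigma m_\sigma\,\partial_{\omega_{\sigma,\tau}}\phi$, which equals $\partial_{\sum m_\sigma\omega_{\sigma,\tau}}\phi$ (the condition for membership in $\Linear_{\tau'}$) only when $\divisor(\phi)=0$ at $\tau$ --- precisely what Proposition~\ref{purely tropical statement} says. The lemma does \emph{not} assume $\divisor(\phi)=0$, so $\Pi'$ need not be balanced.

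Fortunately this balancing is irrelevant. The term $\frac{\partial(\pi^*\psi)}{\partial(\sum m_{\sigma'}\omega_{\sigma',\tau'})}$ is well-defined regardless, because $\pi^*\psi=\psi\circ\pi$ depends only on the first $m$ coordinates: the directional derivative along any vector $v\in\R^{m+1}$ equals $\frac{\partial\psi}{\partial\pi(v)}$, and $\pi(\sum m_{\sigma'}\omega_{\sigma',\tau'})=\sum m_\sigma\omega_{\sigma,\tau}\in\Linear_\tau$ since $\Pi$ is balanced. The paper simply writes the equality without comment; you should drop the attempt to verify balancing of $\Pi'$ and instead note this observation about $\pi^*\psi$ if you want to be explicit. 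Your lattice check for primitivity of $\omega_{\sigma',\tau'}$ is correct and a nice point to include.
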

\begin{proof}
	Let $\tau'$ be a face of ${\Pi'}$ of codimension $1$, corresponding to a
	$\z$-polyhedron $\tau$ of $\Pi$.
	By definition of a tropical Weil divisor, the multiplicity of $\tau'$ in $\divisor(\pi^*\psi)$ is
	\begin{align*}
	\sum_{\sigma' \succ \tau'} m_{\sigma'} \frac{\del \pi^* \psi} {\del \omega_{\sigma', \tau'}}  -  \frac {\del\pi^* \psi}{\del(\sum_{\sigma' \succ \tau'} m_{\sigma'} \omega_{\sigma', \tau'})} =
	\sum_{\sigma \succ \tau} m_{\sigma} \frac{\del \psi} {\del \omega_{\sigma, \tau}} - \frac {\del \psi}{\del(\sum_{\sigma \succ \tau} m_\sigma \omega_{\sigma, \tau})}      =0
	\end{align*}
	where $\sigma'$ (resp.\ $\sigma$) is ranging over all $d$-dimensional faces of $\Pi'$ (resp.\ $\Pi$) containing $\tau'$ (resp.\ $\tau$). This proves the claim.
\end{proof}

\begin{cor} \label{linear covering of tropical cycle}
	Let $\Pi$ be a tropical cycle in $\R^m$ of  dimension $d$ and let $L\colon \Pi \to \R^n$ be a piecewise $\z$-linear map.  Suppose that $L(x)=(\phi_1(x),\dots,\phi_n(x))$ for piecewise $\z$-linear functions $\phi_i$ on $\Pi$ with $\divisor(\phi_i)=0$. Then the graph ${\Pi'} \coloneqq \Gamma_\Pi(L)$ is a tropical cycle in $\R^m \times \R^n$ of  dimension $d$, endowed with the weights given by transport of structure via the canonical isomorphism $\iota\colon \Pi \to {\Pi'}$. Moreover, we have $L=p_2 \circ \iota$ for the second projection $p_2\colon \R^m \times \R^n \to \R^n$.
\end{cor}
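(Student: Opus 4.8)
The plan is to iterate Proposition~\ref{purely tropical statement} one coordinate at a time. The corollary is essentially an $n$-fold application of that proposition, combined with Lemma~\ref{compatibility of corner locus} to keep track of the fact that the ``$\divisor(\phi_i)=0$'' hypotheses are preserved as we pull the remaining linear functions back to the successive graphs.

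\begin{proof}
We argue by induction on $n$. For $n=0$ there is nothing to prove, and the case $n=1$ is exactly Proposition~\ref{purely tropical statement}: since $\Pi$ is a tropical cycle (i.e.\ balanced) and $\divisor(\phi_1)=0$, the graph $\Gamma_\Pi(\phi_1)$ is balanced, hence a tropical cycle, and by construction $\phi_1 = p_2 \circ \iota$ where now $p_2\colon \R^m\times\R\to\R$.

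For the inductive step, write $L=(L',\phi_n)$ with $L'=(\phi_1,\dots,\phi_{n-1})\colon \Pi\to\R^{n-1}$. By the induction hypothesis, $\Pi'' \coloneqq \Gamma_\Pi(L')$ is a tropical cycle in $\R^m\times\R^{n-1}$ of dimension $d$, with weights transported via the canonical isomorphism $\iota'\colon \Pi\to\Pi''$, and $L'=p_2''\circ\iota'$ for the projection $p_2''\colon\R^m\times\R^{n-1}\to\R^{n-1}$. Let $\pi''\colon\Pi''\to\Pi$ be the canonical projection (the inverse of $\iota'$ on underlying spaces, composed with the inclusion of the graph). Consider the piecewise $\z$-linear function $\tilde\phi_n \coloneqq \pi''^*(\phi_n)$ on $\Pi''$. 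Since $\divisor(\phi_n)=0$ on $\Pi$, Lemma~\ref{compatibility of corner locus} (applied with the graph $\Pi''=\Gamma_\Pi(L')$, the function $\phi_n$ in the role of $\psi$, and any one coordinate of $L'$, or more directly the same argument iterated over the $n-1$ coordinates) gives $\divisor(\tilde\phi_n)=0$ on $\Pi''$. Now apply Proposition~\ref{purely tropical statement} to the tropical cycle $\Pi''$ and the function $\tilde\phi_n$: the graph $\Gamma_{\Pi''}(\tilde\phi_n)$ is balanced, hence a tropical cycle of dimension $d$ in $\R^m\times\R^{n-1}\times\R$.

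It remains to identify $\Gamma_{\Pi''}(\tilde\phi_n)$ with $\Pi'=\Gamma_\Pi(L)$ as weighted polyhedral complexes and to check the final assertion. The canonical isomorphism $\iota'\colon\Pi\to\Pi''$ sends $x\mapsto(x,L'(x))$, and $\tilde\phi_n(x,L'(x))=\phi_n(x)$, so the graph of $\tilde\phi_n$ over $\Pi''$ is $\{(x,L'(x),\phi_n(x))\}=\Gamma_\Pi(L)$ under the obvious identification $\R^m\times\R^{n-1}\times\R\cong\R^m\times\R^n$, and the weights, transported first via $\iota'$ and then via the graph map of $\tilde\phi_n$, agree with those transported directly via $\iota\colon\Pi\to\Pi'$. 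Thus $\Pi'$ is a tropical cycle of dimension $d$. Finally, under this identification the last $n$ coordinates of a point $\iota(x)\in\Pi'$ are $(L'(x),\phi_n(x))=L(x)$, so $L=p_2\circ\iota$ for the second projection $p_2\colon\R^m\times\R^n\to\R^n$.
\end{proof}

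The only mild subtlety is bookkeeping: making sure that at each stage the hypothesis $\divisor(\cdot)=0$ survives pullback, which is precisely what Lemma~\ref{compatibility of corner locus} delivers, and that the two ways of transporting weights (stepwise versus all at once via $\iota$) coincide — this is immediate from the definition of the graph construction, since each step's canonical isomorphism is compatible with the previous one. So the main work is entirely formal once Proposition~\ref{purely tropical statement} and Lemma~\ref{compatibility of corner locus} are in hand; there is no real obstacle.
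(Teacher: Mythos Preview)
Your proof is correct and follows exactly the approach the paper indicates: an inductive application of Proposition~\ref{purely tropical statement} to build the graph one coordinate at a time, using Lemma~\ref{compatibility of corner locus} at each step to ensure the pulled-back function still has vanishing tropical Weil divisor. The paper's proof is a single sentence stating precisely this; you have simply written out the details.
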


\begin{proof}
	This follows from an inductive application of Proposition~\ref{purely tropical statement} and Lemma~\ref{compatibility of corner locus}.
\end{proof}

Now we apply the above considerations to tropicalizations over a non-trivially valued non-Archimedean field $K$ with additive value group $\Gamma \subset \R$. In the following, we assume that $X$ is a compact good strictly analytic space of pure dimension $d$.

For a smooth tropicalization map $F\colon X \to \R^m$,
Chambert-Loir and Ducros have shown that that the $d$-dimensional part $F(X)_d$ of the tropical variety $F(X)$ is balanced outside $F(\partial X)$. We have here the following immediate consequence of their result:

\begin{lem} \label{balancing for smooth germs}
	Let $F\colon X \to \R^m$ be a smooth tropicalization map. For $x \in X \setminus \partial X$, the germ $F(X,x)_d$ from~\secref{germ of piecewise linear tropicalization}  is a balanced fan.
\end{lem}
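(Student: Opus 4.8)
The statement asserts a purely local consequence of the global balancing theorem of Chambert-Loir and Ducros. The plan is to extract the germ-level balancing from the global balancing by choosing a suitable compact neighbourhood and restricting. First I would invoke \secref{germ of piecewise linear tropicalization}: choose a compact strictly analytic neighbourhood $V = V(x)$ of $x$ as in Lemma~\ref{lem:pl.trop.germ.weight}, so that the weighted germ $F(X,x)_d$ is by definition the germ of the weighted polytopal complex $F(V)_d$ at $F(x)$. Since $x \in X \setminus \partial X = \Int(X)$, and the boundary is local (more precisely $\partial V \cap \Int(X)$ can be made to avoid $x$), I would shrink $V$ further so that $x \notin \partial V$; equivalently $x \in \Int(V)$. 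This uses that $\Int(X)$ is open and that we may take $V$ inside it.

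\textbf{Key steps.} With $V$ chosen so that $x \in \Int(V)$, I would apply the theorem of Chambert-Loir and Ducros (\cite[Theorem~3.6.1]{CLD}) to the smooth tropicalization map $F|_V \colon V \to \R^m$: the $d$-dimensional part $F(V)_d$ is balanced at every $(d-1)$-dimensional face not contained in $F(\partial V)$. Now I take the germ at $\omega \coloneqq F(x)$. A $(d-1)$-dimensional face $\tau$ of the germ $F(X,x)_d = (F(V)_d, \omega)$ corresponds to a $(d-1)$-dimensional face of $F(V)_d$ whose relative interior meets a small neighbourhood of $\omega$; I would argue that such a face is \emph{not} contained in $F(\partial V)$. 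This is the crux: the points of $X$ mapping near $\omega$ all lie in $\Int(V)$ (by the choice of $V$ and since $\partial V$ is closed and $x \notin \partial V$, we have $F(\partial V)$ closed and not containing $\omega$, hence a neighbourhood of $\omega$ is disjoint from $F(\partial V)$). Therefore every face of the germ $F(X,x)_d$ avoids $F(\partial V)$, and the global balancing at that face descends to balancing of the germ at $\tau$. Since balancing of a fan is checked at each codimension-one face, and the germ $F(X,x)_d$ is a germ of a fan centered at $\omega$ (by \secref{germ of piecewise linear tropicalization}), this gives that $F(X,x)_d$ is balanced.

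\textbf{Main obstacle.} The only subtlety is the bookkeeping around the boundary: one must be careful that "$F(\partial V)$" does not accidentally meet the germ, and that shrinking $V$ does not change the weighted germ $F(X,x)_d$. The first is handled by compactness of $\partial V$ (so $F(\partial V)$ is closed) together with $x \in \Int(V)$ (so $F(x) \notin F(\partial V)$), yielding a neighbourhood of $\omega$ disjoint from $F(\partial V)$; the second is exactly the content of Lemma~\ref{lem:pl.trop.germ.weight}, which guarantees that the weighted germ is independent of the choice of sufficiently small $V$. Everything else is a direct transcription of \cite[Theorem~3.6.1]{CLD} from the global to the germ setting, so I expect no genuine difficulty beyond making these neighbourhood choices precise.
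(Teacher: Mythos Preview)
Your approach is essentially identical to the paper's: invoke Lemma~\ref{lem:pl.trop.germ.weight} to identify $F(X,x)_d$ with the weighted germ of $F(V)_d$ at $F(x)$ for a small compact neighbourhood $V$, then cite the balancing result of Chambert-Loir and Ducros. The paper's proof is in fact terser than yours and does not spell out the boundary bookkeeping at all.

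One small point: the implication ``$x\in\Int(V)$, so $F(x)\notin F(\partial V)$'' is not automatic as written, since there could be other points $y\in\partial V$ with $F(y)=F(x)$. This is easily handled (and implicitly assumed in the paper) by using the full freedom of Lemma~\ref{lem:pl.trop.germ.weight}: the weighted germ is unchanged for \emph{any} sufficiently small $W\subset V(x)$, so one may replace $V$ by a neighbourhood whose tropical boundary avoids $F(x)$ (e.g.\ via~\cite[Proposition~3.4.4]{CLD}), or simply note that the balancing statement in~\cite{CLD} is already formulated at the level of germs at interior points.
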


\begin{proof}
	By Lemma~\ref{lem:pl.trop.germ.weight}, there is a compact strictly analytic neighbourhood $V(x)$ of $x$ such that the weighted germ $F(X,x)_d$ at $x$ agrees with the weighted germ of $F(V(x))_d$ at $F(x)$. By \cite{chambert_ducros12:forms_courants}, the latter is balanced, proving the claim.
\end{proof}

\begin{prop} \label{harmonic functions and smooth trop}
	Let $F\colon X \to \R^m$ be a smooth tropicalization map, let $x \in \Int(X)$,
	and let $\phi$ be a piecewise $\z$-linear function on $F(X,x)$ such that $h \coloneqq \phi \circ F$ is harmonic in a neighbourhood of $x$.
	Then the tropical Weil divisor $\divisor(\phi,x)$ of $\phi$ on the fan $F(X,x)_d$ is equal to zero.
\end{prop}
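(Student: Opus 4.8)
The plan is to connect the tropical Weil divisor $\divisor(\phi,x)$ on the fan $F(X,x)_d$ to the first Chern current of the harmonic function $h = \phi\circ F$ via the tropical Poincar\'e--Lelong formula, and then invoke Corollary~\ref{Chern current of numerically trivial}. First I would shrink $X$ to a compact strictly analytic neighbourhood $V(x)$ of $x$ (contained in $\Int(X)$, possible since $x\in\Int(X)$) on which $h$ is harmonic and such that, by Lemma~\ref{lem:pl.trop.germ.weight}, the weighted germ $F(X,x)_d$ is computed by the weighted polytopal complex $F(V(x))_d$; replacing $X$ by $V(x)$, we may assume $h$ is harmonic on all of $X$, that $X$ is boundaryless near the points over $F(x)$, and that $F(X)_d$ represents the germ at $\omega := F(x)$. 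After a subdivision we may assume $\phi$ is $\z$-linear on each face of the chosen $\z$-polytopal complex $\Pi$ with support $F(X)$, and that $F$ is smooth over the complement of a $(d-1)$-skeleton of $\Pi$.

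Next I would compare the tropical Weil divisor $\divisor(\phi)$ on the weighted polytopal complex $F(X)_d$ with the Chern current of $h$. The key input is that $F_*[X] = F(X)_d$ as weighted $\z$-polytopal complexes (this is the content of~\secref{Sturmfels-Tevelev}, combined with Lemma~\ref{lem:mult.fund.cycle}), together with the fact that the smooth tropicalization map $F = \phi_{\trop}$ has associated calibration/weights exactly matching $F(X)_d$ on its $d$-dimensional part, as in Remark~\ref{rem: multiplicity of the Antoines}. Under $F^*$, the Lagerberg current $[\phi]$ on $F(X)_d$ pulls back to the current $[h] = [\phi\circ F]$ on $X$ in the sense of~\secref{continuous psh functions}. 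By the tropical Poincar\'e--Lelong formula (quoted at the end of~\secref{Lagerberg forms}, from~\cite{GK}), $\d'\d''[\phi]$ on $F(X)_d$ is the current of integration over the tropical Weil divisor $\divisor(\phi)$; and $F^*$ intertwines $\d'\d''$ on Lagerberg currents with $\d'\d''$ on currents on $X$, so $\d'\d''[h] = F^*\bigl(\d'\d''[\phi]\bigr)$ is the current of integration over $\divisor(\phi)$ pulled back through $F$. On the other hand, since $h$ is harmonic on $X$ (with empty boundary), Corollary~\ref{Chern current of numerically trivial} gives $\d'\d''[h] = 0$ on $X$. Because $F$ restricted to the $d$-skeleton $S_F(X)_d$ is a finite-to-one piecewise $\z$-linear map onto $F(X)_d$ with positive weights, and the current of integration over a weighted polyhedral complex of positive weight pushes forward to a nonzero current unless the complex is empty, we conclude $\divisor(\phi) = 0$ as a weighted $\z$-polytopal complex, in particular at the germ at $\omega$, so $\divisor(\phi,x) = 0$ on $F(X,x)_d$.

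More carefully, to avoid the pushforward subtlety I would argue locally at $\omega = F(x)$ directly on $F(X)_d$: choose a $(d-1)$-dimensional face $\tau$ of the corner locus through $\omega$, test $\d'\d''[h]$ against a compactly supported symmetric smooth Lagerberg form of type $(d-1,d-1)$ supported near a point of $\relint(\tau)$ over which $F$ is smooth, pull it back via $F$ using Proposition~\ref{harmonic lift to weakly smooth forms}-style lifts (or directly the smooth pull-back $F^* = \phi_{\trop}^*$ of~\secref{Smooth forms and currents}), and use the change-of-variables/pushforward formula for integration of smooth forms along $F$ restricted to $S_F(X)_d$ (as in the integration theory of~\cite{CLD} together with the fact that $F$ has finite fibers on $S_F(X)_d$). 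This identifies the value of $\d'\d''[h]$ against such a test form with the weight of $\tau$ in $\divisor(\phi)$ times a nonzero integral, which forces that weight to be zero; since $\tau$ was arbitrary, $\divisor(\phi,x) = 0$.

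The main obstacle I anticipate is making the compatibility between $F^*$ (pull-back of currents/forms to $X$) and the tropical Poincar\'e--Lelong formula fully rigorous when $F = \phi_{\trop}$ is only a smooth tropicalization map rather than a proper torus tropicalization: one must match the weights that appear on $F(X)_d$ from Definition~\ref{def:pl.trop.mult.face} (via tropical multiplicities of fibers) with the weights implicit in the Poincar\'e--Lelong formula on a tropical cycle, and check that $\d'\d''$ commutes with $F^*$ at the level of currents on a possibly non-proper, non-algebraic $X$. This is exactly where Theorem~\ref{semipositive implies psh} and Corollary~\ref{Chern current of numerically trivial} do the heavy lifting — they supply $\d'\d''[h] = 0$ intrinsically — so the real work is the bookkeeping identification of $\d'\d''[h]$ with the current of integration over $\divisor(\phi,x)$, which reduces to the already-established fact that $F(X)_d = F_*[X]$ with the correct multiplicities and the erratum-corrected integration theory of~\cite{CLD,GK}.
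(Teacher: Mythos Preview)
Your overall strategy is the same as the paper's: use Corollary~\ref{Chern current of numerically trivial} to get $\d'\d''[h]=0$, then invoke the tropical Poincar\'e--Lelong formula to conclude $\divisor(\phi,x)=0$. However, there is a genuine gap in the step where you ``replace $X$ by $V(x)$'' and assert that ``$X$ is boundaryless near the points over $F(x)$''. Once you pass to a compact strictly analytic neighbourhood $V(x)$, this neighbourhood acquires its own boundary $\partial V(x)$, and there is no reason that $F(\partial V(x))$ avoids $F(x)$. Consequently, for a Lagerberg test form $\omega$ supported near $F(x)$, the pullback $F^*(\omega)$ need not have compact support in $\Int(V(x))$, so you cannot pair it with $\d'\d''[h]$ to exploit the vanishing. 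The same obstruction prevents you from knowing that the weighted germ $F(V(x))_d$ is balanced at $F(x)$, which is required to apply the tropical Poincar\'e--Lelong formula there.

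The paper resolves this by an auxiliary construction that you do not mention: since $x$ is an Abhyankar point, \cite[Proposition~3.4.4]{CLD} furnishes a strictly affinoid neighbourhood $U$ and a smooth tropicalization $G_0\colon U\to\R^{n_0}$ with $G_0(\partial U)\cap\{G_0(x)\}=\emptyset$. One then works with the refined map $G=(F,G_0)\colon U\to\R^{m+n_0}$, which satisfies $G(x)\notin G(\partial U)$. On the open set $\Omega=G(U)\setminus G(\partial U)$ the pullback $G^*(\omega)$ of any compactly supported form has compact support in $\Int(U)$, so the identity $\langle\omega,\d'\d''[\psi]\rangle=\langle G^*(\omega),\d'\d''[h]\rangle=0$ is legitimate and yields $\divisor(\psi,x)=0$ for $\psi=\phi\circ L$ with $L$ the projection onto $\R^m$. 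Finally one descends back to $F$ via $L_*(G(U,x)_d)=F(X,x)_d$ (Proposition~\ref{functoriality from the right}) and the projection formula for tropical Weil divisors, obtaining $\divisor(\phi,x)=L_*(\divisor(\psi,x))=0$. Your write-up would become correct if you inserted this refinement-and-pushforward step; without it, the boundary issue is a real obstruction.
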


\begin{proof}
  By Corollary~\ref{Chern current of numerically trivial}, after potentially shrinking $X$, we have $\d'\d''[h]=0$ on $\Int(X)=X \setminus \partial X$. Since $\partial X$ is closed in $X$, it follows from Lemma~\ref{lem:pl.trop.germ.weight}
	that there is a strictly affinoid neighbourhood $W$ of $X$ contained in $\Int(X)$ such that the weighted germ of $F(W)_d$ in $F(x)$ agrees with $F(V,x)_d$. Clearly, we may also assume that $\phi$ is defined on $F(W)$ and that $h$ is harmonic on $W$.  Note that  $\d'\d''[h]=0$ on  $W \setminus \partial W$.
	We may assume that $x \in S(X,x)_d$; otherwise the weighted fan $F(X,x)_d=F(W,x)_d$ is $0$ (see Remark~\ref{rem:pl.d-skeleton.germ.weights}).
	Then $x$ is an Abhyankar point of $X$ as seen in~\secref{rem:pl.d-skeleton}.
	By \cite[Proposition~3.4.4]{CLD}, there is a strictly affinoid neighbourhood $U$ of $x$ contained in $\Int(W) \subset \Int(X)$ and a smooth tropicalization map $G_0\colon U \to \R^{n_0}$ with $G_0(\partial U)\cap G_0(\{x\})= \emptyset$.  Setting $n = m+n_0$ and $G = (F,G_0)\colon U\to\R^n$, we have $G(\partial U)\cap G(\{x\})= \emptyset$ and $F|_U=L \circ G$, where $L\colon\R^m\times\R^{n_0}\to\R^m$ is the projection onto the first factor.
	The piecewise $\z$-linear function $\psi \coloneqq \phi \circ L$ on $G(U)$ satisfies $\psi \circ G = h|_U$. For any Lagerberg form $\omega \in \cA^{d,d}(G(U))$ with compact support in the open neighbourhood $\Omega \coloneqq G(U)\setminus G(\partial U)$ of $G(x)$ in $G(U)$,
	we have the identity
	$$\label{current identity} \langle \omega, \d'\d''[\psi] \rangle = \langle G^*(\omega), \d'\d''[h] \rangle =0,$$
	and hence $\d'\d''[\psi|_\Omega] = 0$.
	Using that $x \not\in \partial U$,
	Theorem~\ref{thm:trop.dim.reduction} shows that the germ $G(U,x)$ is a fan of pure dimension $d$, so
	we may shrink the open neighbourhood $\Omega$ of $G(x)$ such that $\Omega$ generates the fan $G(U,x)$.
	Since $x \not\in \partial U$,  Lemma~\ref{balancing for smooth germs} yields that the fan $G(U,x)_d$ is balanced.
	The tropical Poincar\'e--Lelong formula (see~\secref{Lagerberg forms})   shows that $\d'\d''[\psi|_\Omega]$ is the current of integration  over the tropical Weil divisor $\divisor(\psi|_\Omega)$.  Using $\d'\d''[\psi|_\Omega] = 0$, we deduce that $\divisor(\psi|_\Omega)=0$, and hence the tropical divisor $\divisor(\psi,x)$ on the fan $G(U,x)=G(U,x)_d$ is zero.
	By Proposition~\ref{functoriality from the right}, we have $L_*(G(U,x)_d)=F(X,x)_d$ and hence the projection formula \cite[Proposition~7.7]{AllermannRau} gives
	$\divisor(\phi,x)= L_*(\divisor(\psi,x))=0$,
	which proves the claim.
\end{proof}

We will generalize the balancing result of Chambert-Loir and Ducros in Lemma~\ref{balancing for smooth germs} to a harmonic tropicalization map $h\colon X \to \R^n$. We recall from~\secref{tropical cycle of a germ} that  a germ $(X,x)$ gives rise to a  canonical weighted rational fan  $h(X,x)_d$ of pure dimension $d$. In Definition~\ref{def:pl.trop.mult.face}, we have seen that $h(X)_d$ is canonically a weighted $\z$-polytopal complex.

\begin{thm} \label{thm balancing}
	Let $X$ be a good compact strictly analytic space of pure dimension $d$, and let
        $h\colon X \to \R^n$ be a harmonic tropicalization map. Then we have the following properties:
	\begin{enumerate}
		\item \label{first condition in balancing}
		For any $x \in \Int(X)$, the weighted fan $h(X,x)_d$ of the germ $(X,x)$ is balanced.
		\item  \label{second condition in balancing}
		The weighted polytopal complex $h(X)_d$ satisfies the balancing condition in every $(d-1)$-dimensional face not contained in $h({S_h(X)_d\cap} \partial X)$.
	\end{enumerate}
\end{thm}
Note here that balanced might mean that there simply are no $d$-dimensional faces. {In a previous version, we required in 
\eqref{second condition in balancing} only that the $(d-1)$-dimensional face omits $h(\partial X)$. An argument of Andreas Mihatsch allows us to replace the latter by $h({S_h(X)_d\cap} \partial X)$ where $S_h(X)_d$ is the $d$-skeleton of $h$ from \ref{rem:pl.d-skeleton}.}

\begin{proof}
	Let $x \in \Int(X)$. By~\secref{smooth tropicalization covering}, there is a compact strictly analytic subdomain $W$ of $X$ which is a neighbourhood of $x$, a smooth tropicalization map $F\colon W \to \R^m$ and a piecewise $\z$-linear map $L\colon F(W) \to \R^n$ such that $h=L \circ F$.
	By Lemma~\ref{balancing for smooth germs}, the fan $F(W,x)_d$ is  balanced.

	Suppose that $L = (\phi_1,\ldots,\phi_n)$ for piecewise $\z$-linear functions $\phi_1,\ldots,\phi_n\colon F(W)\to\R$. Since $h$ is harmonic, Proposition~\ref{harmonic functions and smooth trop} shows that the tropical Weil divisor
	of $\phi_i$ is zero in a neighbourhood of $\omega \coloneqq F(x)$. We transfer $F(X,x)$ into $\R^m \times \R^n$ by the graph embedding $\iota$ of $L$. Then Corollary~\ref{linear covering of tropical cycle}
	shows that $\iota_*(F(X,x)_d)$ is a balanced fan. It follows from \eqref{Sturmfels-Tevelev from right} that
	$$(p_2)_*(\iota_*(F(X,x)_d)) = L_*(F(W,x)_d)=h(X,x)_d.$$
	Since $p_2$ is a \emph{linear} map, we conclude from~\cite[Proposition~2.25]{GKM} that $h(X,x)_d$ is balanced, proving the first claim.

	To prove the second claim, let $\tau$ be any $(d-1)$-dimensional face of $h(X)_d$ not contained in $h({S_h(X)_d\cap} \partial X)$.  {For $\omega \in \relint(\tau)$, we have to show that the weighted germ of $h(X)_d$ in $\omega$ is balanced. 
	By Remark~\ref{local vs global weights}, the weighted germ of $h(X)_d$ in $\omega$ is the sum of the weighted germs $h(X,x)_d$ with $x$ ranging over $h^{-1}(\omega)$. 
	Note that we choose always the face structure on $h(X)_d$ as in \ref{rem:pl.mult.d-skeleton} and hence $\omega\notin h(S_h(X)_d\cap\del X)$. If $x\in h\inv(\omega)$ then either $x\in\del X\setminus S_h(X)_d$, in which case $h(X,x)_d = 0$ by the definition of the $d$-skeleton in \ref{rem:pl.d-skeleton}, or $x\in S_h(X)_d\setminus\del X$, in which case $h(X,x)_d$ is balanced by the first claim.}
\end{proof}

\begin{rem} \label{thm: maximum principle for semipositive functions}
{In a first version of this paper, we have stated here that a harmonic $\R$-PL function satisfies the local maximum principle  on $ X \setminus \partial X$. We will shift this to a subsequent paper, where we prove this more generally for  semipositive $\R$-PL functions.}
	\end{rem}

\section{Differential forms} \label{section: differential forms}

In this section, we will generalize the smooth differential forms of Chambert-Loir and Ducros \cite{CLD} on a good strictly analytic space $X$ over the non-trivially valued non-Archimedean field $K$. The idea is to replace smooth tropicalization maps by harmonic tropicalization maps, thus yielding \emph{weakly smooth} forms. Many properties of smooth differential forms are also true for weakly smooth forms, but  we will see later that weakly smooth forms are better behaved for cohomological questions.

\begin{art}[Lagerberg forms] \label{smooth forms on polytopal set}
	Let $S$ be a piecewise $\Z$-linear space in $\R^n$, as in~\secref{Polyhedral geometry}. A typical example is the tropical variety of a compact strictly analytic space: see~\secref{sec:trop.is.polyhedral} in case of a smooth tropicalization map and~\secref{choice of polytopal structure} for the generalization to PL tropicalizations. In~\secref{Lagerberg forms}, we have recalled the sheaf $\cA^{\bullet,\bullet}$ of Lagerberg forms on $S$; they form a sheaf of differential bigraded $\R$-algebras with respect to differentials $\d'$ and $\d''$. For  a piecewise $\Z$-linear space $T$ in $\R^m$ and an affine $\Z$-linear map $F \colon \R^m \to \R^n$ with $F(T) \subset S$, there is a functorial pull-back $F^*\colon \cA_S \to F_*(\cA_T)$.
\end{art}

\begin{art}[Piecewise Smooth Forms] \label{piecewise smooth forms on polytopal set}
	Also important for us is the sheaf of \defi{piecewise smooth forms on the piecewise $\z$-linear space $S$}, which we denote by $\cAps$. Here, a piecewise smooth form $\alpha$ on an open subset $\Omega$ of $S$  is  given in a neighbourhood $\Omega_p$ of any point $p \in \Omega$ by a $\z$-polytopal complex $\Pi$ with support $S$ and $\alpha_\Delta \in \cA_\Delta(\Delta \cap \Omega_p)$ for $\Delta\in\Pi$ such that $\alpha_\Delta$ restricts to $\alpha_\tau$ for every face $\tau$ of $\Delta$. We identify two piecewise smooth forms $\alpha,\alpha'$ if for every $p \in \Omega$, we have $\alpha_\Delta=\alpha_{\Delta'}'$ on $\Omega_p \cap \Omega_{p}' \cap \Delta \cap \Delta'$ for every $\Delta \in \Pi$ and every $\Delta' \in \Pi'$ for local representations of $\alpha,\alpha'$ in $p$. We have natural differentials $\d_{\rm P}'$ and $\d_{\rm P}''$ on $\cAps$ called \defi{polyhedral derivatives}. They are induced by the differentials $\d',\d''$ on the smooth pieces of a piecewise smooth form.
	We get a bigraded sheaf $\cAps^{\bullet,\bullet}$ of differential $\R$-algebras on $S$ containing $\cA$ as a subsheaf. We refer to \cite[Definition~3.10]{gubler_kunneman:tropical_arakelov} for a similar construction using $\Z$-polyhedral complexes instead of $\z$-polytopal complexes.
	For a \emph{piecewise} $\z$-linear map $F \colon T \to S$ of $\z$-polytopal sets, there is a functorial pull-back $F^*\colon \cA_{S,\rm ps} \to F_*(\cA_{T,\rm ps})$.
\end{art}

\begin{defn}[Preforms] \label{piecewise smooth preforms}
	Let $U$ be an open subset of $X$ whose closure $\bar U$ is a compact strictly analytic domain.
	A \defi{weakly smooth preform of type $(p,q)$} on $U$ is given by a tuple $(h, \alpha)$, where $h \colon \bar U \to \R^n$ is a harmonic tropicalization map and $\alpha \in \cA_P^{p,q}(P)$ for $P \coloneqq h(\bar U)$.  Two tuples $(h, \alpha)$ and $(h', \alpha')$ are considered equal if $p_1^* \alpha$ and $p_2^* \alpha'$ agree on $(h \times h')(\bar U)$, where $p_1$ and $p_2$ are the projections of $\R^n \times \R^{n'}$ to the two factors.
\end{defn}

\begin{lem}\label{sec:ps.form.on.complex}
  For a weakly smooth preform $(h,\alpha)$ on $U$, we have $(h,\alpha) = 0$ if and only if $\alpha = 0$ in $\cA(h(\bar U))$.
\end{lem}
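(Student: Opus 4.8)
The nontrivial direction is: if $(h,\alpha)$ defines the zero preform, then $\alpha = 0$ in $\cA(h(\bar U))$. Write $S \coloneqq h(\bar U)$ and recall the definition of equality of preforms: $(h,\alpha) = 0$ means that $(h,\alpha)$ and $(h,0)$ agree, i.e.\ $p_1^*\alpha$ and $p_2^*0 = 0$ agree on $(h\times h)(\bar U)$, where $p_1,p_2\colon \R^n\times\R^n\to\R^n$ are the two projections. The plan is to transport this vanishing statement on the graph of $h\times h$ back to a vanishing statement on $S$ itself, using the fact that the diagonal map realizes $S$ as the image of $h$ composed with a linear isomorphism onto its image.

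\textbf{Key steps.} First I would observe that $(h\times h)(\bar U)$ is contained in the diagonal $\Delta_{\R^n}\subset \R^n\times\R^n$, since both factors of $h\times h$ are the same map $h$; in fact $(h\times h)(\bar U) = \delta(S)$, where $\delta\colon\R^n\to\R^n\times\R^n$ is the linear diagonal embedding $\omega\mapsto(\omega,\omega)$, and $p_1\circ\delta = p_2\circ\delta = \id_{\R^n}$. Next, pulling back the hypothesis $p_1^*\alpha = 0$ on $\delta(S)$ along the $\z$-linear isomorphism $\delta\colon S \xrightarrow{\sim} \delta(S)$ and using functoriality of pullback of Lagerberg forms (\secref{smooth forms on polytopal set}, \secref{piecewise smooth forms on polytopal set}), we get $\delta^*(p_1^*\alpha) = (p_1\circ\delta)^*\alpha = \alpha = 0$ in $\cA(S)$. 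This gives the forward implication. Conversely, if $\alpha = 0$ in $\cA(S)$, then $p_1^*\alpha = 0$ trivially, so $(h,\alpha) = (h,0) = 0$; this direction is immediate from functoriality of pullback (which sends $0$ to $0$).

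\textbf{Care points.} The one subtlety is that the definition of equality of preforms compares two preforms possibly using \emph{different} tropicalization maps $h$ and $h'$, whereas here both preforms use the same $h$; I would make explicit that $(h,0)$ is a legitimate preform (the zero form on $S=h(\bar U)$ is piecewise smooth) so that the equality relation applies, and that the comparison map $(h\times h)$ is the relevant one. I also need the (standard, already-invoked) fact that $\delta\colon S\to\delta(S)$ is an isomorphism of $\z$-polytopal sets, so that $\delta^*$ on $\cA$ is a well-defined ring isomorphism; this is part of the functoriality package in \secref{piecewise smooth forms on polytopal set} since $\delta$ is an injective affine $\z$-linear map. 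No genuine obstacle arises: the lemma is essentially an unwinding of the definition once one notes that the graph of $h\times h$ sits diagonally and the diagonal embedding has a linear retraction. The "hard part," such as it is, is purely bookkeeping: checking that the identifications of polytopal structures under $\delta$, $p_1$, and $p_2$ are compatible so that $(p_1\circ\delta)^*\alpha$ literally equals $\alpha$ and not merely a form agreeing with $\alpha$ after subdivision — but since all these maps are honest $\z$-linear isomorphisms onto their images, this is automatic.
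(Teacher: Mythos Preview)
Your argument has a gap at the very first step. You interpret ``$(h,\alpha)=0$'' as ``$(h,\alpha)$ is equal (in the sense of Definition~\ref{piecewise smooth preforms}) to the specific representative $(h,0)$'', and then everything collapses onto the diagonal. But the hypothesis only says that $(h,\alpha)$ lies in the zero class of preforms, i.e.\ that there exists \emph{some} harmonic tropicalization $h'\colon \bar U\to\R^{n'}$ with $(h,\alpha)\sim(h',0)$, which unfolds to $p_1^*\alpha=0$ on $(h\times h')(\bar U)$. Passing from this to $(h,\alpha)\sim(h,0)$ requires transitivity of the equality relation on preforms, which has not been established at this point---and whose proof needs exactly the argument you are skipping.

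The paper's proof confronts this directly: it takes an arbitrary $h'$, notes that $p_1$ maps $(h\times h')(\bar U)$ onto $h(\bar U)$, and uses that $h(\bar U)$ is covered by the $p_1$-images of polytopes in $(h\times h')(\bar U)$ to descend the vanishing $p_1^*\alpha=0$ to $\alpha=0$ (this is the content borrowed from \cite[Lemme~3.2.2]{CLD}). That covering step is the actual substance of the lemma; your diagonal trick bypasses it only by assuming the conclusion in disguise. If you first prove transitivity of the relation (which uses the same covering argument), your approach then becomes a valid---and pleasantly short---alternative, but as written it is circular.
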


\begin{proof}
	One direction is clear, so suppose that $(h,\alpha) = 0$.  This means that there is a harmonic tropicalization map $h'\colon \bar U\to\R^{n'}$ such that $p_1^*\alpha = 0$ on $(h\times h')(\bar U)$.  Since $p_1(h\times h')(\bar U) = h(\bar U)$, we can cover $h(\bar U)$ by the images of polytopes in $(h\times h')(\bar U)$, so $p_1^*\alpha = 0$ implies $\alpha=0$.  See the proof of~\cite[Lemme~3.2.2]{CLD} for more details.
\end{proof}

\begin{art}[Restriction of preforms] \label{restriction of piecewise smooth preforms}
	Let $U\subset U'$ be open subsets of $X$ whose closures $\bar U\subset\bar U{}'$ are compact strictly analytic domains.  The \defi{restriction} of a weakly smooth preform $(h, \alpha)$ on $U'$ is the weakly smooth preform given by $(h|_{\bar U}, \alpha|_{h(\bar U)})$.  The weakly smooth preforms thus form a presheaf of bigraded $\R$-algebras on the category of open subsets whose closure is a compact strictly analytic domain.  Note that such open sets form a base for the analytic topology of our good strictly analytic space $X$.
\end{art}

\begin{art}[Wedge product and differentials]\label{sec:wedge.product}
	Let $U\subset X$ be an open subset whose closure $\bar U$ is a compact strictly analytic domain. Let $(h,\alpha)$ and $(h',\alpha')$ be weakly smooth preforms on $U$ of type $(p,q)$ and $(p',q')$, respectively.
	Then $g\coloneq (h,h')\colon \bar U\to\R^{n+n'}$ is a harmonic tropicalization map, and one checks that $(g,p_1^*\alpha) = (h,\alpha)$ and $(g,p_2^*\alpha') = (h',\alpha')$.  We define the wedge product $(h,\alpha)\wedge(h',\alpha')$ to be the weakly smooth preform $(g,p_1^*\alpha\wedge p_2^*\alpha')$ of type $(p+p',q+q')$.  This satisfies $(h',\alpha')\wedge(h,\alpha) = (-1)^{(p+p')(q+q')}(h,\alpha)\wedge(h',\alpha')$.

	We define differentials of preforms by $\d'(h,\alpha) = (h,\d'\alpha)$ and $\d''(h,\alpha) = (h,\d''\alpha)$.  The differentials satisfy the Leibniz rule
	\[ \d'\bigl((h,\alpha)\wedge(h',\alpha')\bigr) = \d'(h,\alpha)\wedge(h',\alpha') + (-1)^{(p+q)}(h,\alpha)\wedge\d''(h',\alpha'), \]
	and likewise for $\d''$.
	We conclude that the weakly smooth preforms form a presheaf of bigraded differential $\R$-algebras with respect to the differentials $\d'$ and $\d''$.
\end{art}

\begin{defn}[Weakly smooth forms] \label{piecewise smooth forms}
	The presheaf of weakly smooth preforms is defined on the open subsets of $X$ whose closure is a compact strictly analytic domain. We denote the associated sheaf on the underlying topological space of $X$ by $\cA_X$ or simply by $\cA$.   It is a sheaf of bigraded differential $\R$-algebras $\cA= \bigoplus_{p,q}\cA^{p,q}$.  It is also a sheaf of $\cA^{0,0}$-algebras.  The elements of $\cA$ are called \defi{weakly smooth forms}.  The elements
	$\cA^{0,0}$ are well-defined real functions which we call \defi{weakly smooth functions}.
\end{defn}

\begin{art}[Pullback of Forms] \label{functoriality}
	For a morphism $f\colon X'\to X$ of good strictly analytic spaces, there is a \defi{pull-back} homomorphism $f^*\colon\cA_X \to f_*\cA_{X'}$ of sheaves of bigraded differential $\R$-algebras, defined on preforms as follows: if $U\subset X$ and $U'\subset f\inv(U)$ are open subsets whose closures are compact strictly analytic domains, and if $(h,\alpha)$ is a weakly smooth preform on $U$, then $f^*(h,\alpha) \coloneq (h\circ f,\alpha)$.
\end{art}

Let $(h,\alpha)$ be a weakly smooth preform on a compact good strictly analytic space $X$ over $K$. We denote the associated weakly smooth form by $h^*(\alpha)$.

\begin{prop} \label{harmonic lift to weakly smooth forms}
Let $X$ be a compact good strictly analytic space $X$ over $K$ and let $h\colon X\to\R^n$ be a harmonic tropicalization map.
\begin{enumerate}
	\item \label{preforms to forms}
	The natural map from weakly smooth preforms to weakly smooth forms is injective.
	\item \label{injectivity of lift}
	The pullback $h^*\colon \cA^{\bullet,\bullet}(h(X))\to \cA^{\bullet,\bullet}(X)$ is an injective homomorphism of bigraded differential $\R$-algebras.
	\item \label{compatibility of pull-backs}
	If $f \colon X' \to X$ is a morphism of good strictly analytic spaces, then $(h \circ f)^*=f^* \circ h^*$.
\end{enumerate}
\end{prop}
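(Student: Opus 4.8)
The plan is to prove the three assertions in order, reducing everything to facts already established about preforms, weakly smooth forms, and the Lagerberg form sheaves $\cA^{\bullet,\bullet}$ on piecewise linear spaces.

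\textbf{Part \eqref{preforms to forms}.} First I would recall that the weakly smooth preforms form a presheaf on the base of open sets $U$ with $\bar U$ a compact strictly analytic domain (see~\secref{restriction of piecewise smooth preforms}), and $\cA$ is by definition the associated sheaf. So the claim is that the presheaf-to-sheafification map is injective on this base, equivalently that the presheaf is \emph{separated}. Fix a preform $(h,\alpha)$ on $U$ that becomes zero in $\cA(U)$; this means there is an open covering $\{U_i\}$ of $U$ by sets with compact strictly analytic closures such that the restriction $(h|_{\bar U_i},\alpha|_{h(\bar U_i)})$ is the zero preform for each $i$. By Lemma~\ref{sec:ps.form.on.complex}, this means $\alpha|_{h(\bar U_i)}=0$ in $\cA(h(\bar U_i))$. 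Since $\bar U_i$ is a compact strictly analytic domain and the $U_i$ cover $U$, the images $h(\bar U_i)$ cover $h(\bar U)$ (indeed they cover $h(U)$, which is dense in $h(\bar U)$, and by properness/continuity of $h$ one upgrades this to a covering of $h(\bar U)$ by the compact pieces $h(\bar U_i)$). As $\cA^{\bullet,\bullet}$ is a sheaf on the piecewise linear space $h(\bar U)$ (see~\secref{Lagerberg forms}), vanishing on a covering forces $\alpha=0$, hence $(h,\alpha)=0$ as a preform. The one point that needs a little care is the covering statement for $h(\bar U)$; I expect this to be handled exactly as in~\cite[Lemme~3.2.2]{CLD}, using that $h$ is a harmonic tropicalization map, that $\bar U$ is compact, and the polytopal structure on $h(\bar U)$ from~\secref{choice of polytopal structure}.

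\textbf{Part \eqref{injectivity of lift}.} The map $h^*$ sends $\alpha\in\cA^{\bullet,\bullet}(h(X))$ to the weakly smooth form associated to the preform $(h,\alpha)$; it is a homomorphism of bigraded differential $\R$-algebras by the construction of wedge product and differentials on preforms in~\secref{sec:wedge.product}, which is visibly compatible with sheafification. Injectivity follows immediately: if $h^*(\alpha)=0$ in $\cA(X)$ then by Part~\eqref{preforms to forms} the preform $(h,\alpha)$ is zero, so $\alpha=0$ by Lemma~\ref{sec:ps.form.on.complex}.

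\textbf{Part \eqref{compatibility of pull-backs}.} This is essentially unwinding the definition of pull-back of forms in~\secref{functoriality}. For a preform $(h,\alpha)$ on $X$, the pull-back $f^*$ acts on preforms by $f^*(h,\alpha)=(h\circ f,\alpha)$. Now $h\circ f\colon X'\to\R^n$ is again a harmonic tropicalization map by Remark~\ref{remark on tropicalization maps}(4) (harmonic tropicalizations pull back under morphisms, via Proposition~\ref{prop:harmonic.properties}), and the preform $(h\circ f,\alpha)$ is precisely the one whose associated weakly smooth form is $(h\circ f)^*(\alpha)$. On the other hand $h^*(\alpha)$ is the form of $(h,\alpha)$, and applying $f^*$ gives the form of $(h\circ f,\alpha)$ as well; since the preform-to-form map is well-defined and $f^*$ is defined on preforms compatibly with sheafification, we get $f^*(h^*(\alpha))=(h\circ f)^*(\alpha)$. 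The only thing to check is that $\alpha$, which a priori lives on $h(X)$, also defines an element of $\cA(h(f(X')))=\cA((h\circ f)(X'))$; but $(h\circ f)(X')\subset h(X)$, so $\alpha$ restricts there, and this restriction is exactly what enters $(h\circ f)^*$. I do not anticipate a serious obstacle here; the main work is bookkeeping about which polytopal set each Lagerberg form lives on.

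The only genuinely substantive point is the covering argument in Part~\eqref{preforms to forms}, i.e.\ that an open cover of $U$ by sets with compact analytic closure yields a cover of the polytopal set $h(\bar U)$ by the compact pieces $h(\bar U_i)$; everything else is formal manipulation of the preform construction together with Lemma~\ref{sec:ps.form.on.complex} and the sheaf property of $\cA^{\bullet,\bullet}$ on piecewise linear spaces.
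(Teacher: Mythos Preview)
Your proof is correct and follows essentially the same route as the paper: use Lemma~\ref{sec:ps.form.on.complex} plus a compactness/covering argument to show that if the associated form vanishes then $\alpha$ vanishes on the full tropical image, and observe that~\eqref{compatibility of pull-backs} is immediate from the definition of pullback on preforms. The only cosmetic difference is the order: the paper proves~\eqref{injectivity of lift} directly (using that presheaf and sheaf have the same stalks, so each $x\in X$ has a compact strictly affinoid neighbourhood $W$ on which the preform vanishes, whence $\alpha|_{h(W)}=0$, and finitely many such $W$ cover the compact $X$), then remarks that~\eqref{preforms to forms} follows from~\eqref{injectivity of lift} and Lemma~\ref{sec:ps.form.on.complex}; you go the other way around.

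Your worry about the covering step is unfounded in the setting of the proposition. Since $X$ is compact you may take $U=X=\bar U$; then an open cover $\{U_i\}$ of $X$ automatically satisfies $\bigcup_i\bar U_i\supset\bigcup_i U_i=X$, so the compact pieces $h(\bar U_i)$ cover $h(X)$ without any further argument. The subtlety you flag would only arise for a non-compact $U\subsetneq\bar U$, which is not what is needed here.
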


\begin{proof}
It is clear that \eqref{preforms to forms} follows from \eqref{injectivity of lift} and Lemma~\ref{sec:ps.form.on.complex}, and \eqref{compatibility of pull-backs} is obvious from the definitions as $h\circ f$ is a harmonic tropicalization map by Remark~\ref{remark on tropicalization maps}. It remains to show \eqref{injectivity of lift}. Obviously, the map $h^*$ is a homomorphism of bigraded differential $\R$-algebras. To show the crucial injectivity, let us pick $\alpha \in  \cA^{\bullet,\bullet}(h(X))$ with $h^*(\alpha)=0$.
Using that the presheaf of weakly smooth forms and the sheaf of weakly smooth forms have the same stalks,
every $x \in X$ has a compact strictly affinoid neighbourhood $W$ such that the preform $(h|_W,\alpha|_{h(W)})$ is zero. By Lemma~\ref{sec:ps.form.on.complex}, we have $\alpha|_{h(W)}=0$. Using that $X$ is compact, we see that finitely many such $W$ cover $X$ and hence the corresponding $h(W)$ cover $h(X)$. We conclude that $\alpha=0$, proving injectivity in \eqref{injectivity of lift}.  
\end{proof}

\begin{rem}[Smooth forms] \label{smooth forms on X}
  In the above, if we replace harmonic tropicalization maps by smooth tropicalization maps, then we get the \defi{smooth differential forms} introduced by Chambert-Loir and Ducros (see below for a proof). They give rise to a sheaf $\cAsm=\bigoplus_{p,q}\cAsm^{p,q}$ of bigraded differential $\R$-algebras with respect to differentials $\d'$ and $\d''$. The elements of $\cAsm^{0,0}$ are called \defi{smooth functions}. Again, we have a functorial pull-back with respect to morphisms of analytic spaces.  

  Now we show that $\cAsm$ agrees with the sheaf $\cA_{\rm CD}$ of smooth forms defined in \cite{chambert_ducros12:forms_courants}. Let $U\subset X$ be an open set whose closure $\bar U$ is a compact strictly analytic domain, and let $(h,\alpha)$ be a smooth preform on $U$ as defined in~\secref{piecewise smooth preforms} with respect to a smooth tropicalization map $h \colon \bar U \to \R^n$.  Choose a moment map $\phi\colon\bar U\to\bGm^{n,\an}$ such that $h = \phi_{\trop}$.  Then $(\phi|_U, h(\bar U))$ is a tropical chart in the sense of~\cite[3.1.1]{CLD}, so $\alpha\in\cA(h(\bar U))$ defines an element of $\cA_{\rm CD}(U)$, which does not depend on the choice of~$\phi$ by Lemme~3.1.10 of \textit{loc.\ cit.}  Sheafifying, we get a homomorphism of sheaves $\cA_{\rm sm}\to\cA_{\rm CD}$.  This homomorphism is surjective by~\S3.1.9 of \textit{loc.\ cit.}, and it is injective by~\secref{harmonic lift to weakly smooth forms} and Lemme~3.2.2 of \textit{loc.\ cit.}
\end{rem}

\begin{rem}[Piecewise smooth forms] \label{piecewise smooth forms on X}
  In the above, if we replace harmonic tropicalization maps by smooth tropicalization maps and the sheaves $\cA_S$ of smooth Lagerberg forms on the piecewise $\Z$-linear spaces $S$  by the sheaves $\cA_{S,{\rm ps}}$ of piecewise smooth Lagerberg forms (see~\secref{piecewise smooth forms on polytopal set}), then we get a variant of the \defi{piecewise smooth differential forms} introduced in \cite[Section~9]{gubler_kunneman:tropical_arakelov}. They give rise to a sheaf $\cAps=\bigoplus_{p,q}\cAps^{p,q}$ of bigraded differential  $\R$-algebras with respect to differentials $\d_{\rm P}'$ and $\d_{\rm P}''$. The latter are called polyhedral derivatives; they do not coincide in general with the differentials of the associated currents (this is a consequence of the tropical Poincar\'e--Lelong formula~\cite[Theorem~0.1]{GK}). The elements of $\cAps^{0,0}$ are called \defi{piecewise smooth functions}. Again, we have a  functorial pull-back with respect to morphisms of analytic spaces.
\end{rem}

\begin{rem} \label{injectivity lemma for smooth and ps}
	Note that Lemma~\ref{sec:ps.form.on.complex} and Proposition~\ref{harmonic lift to weakly smooth forms} hold also for smooth and for piecewise smooth preforms. Indeed, the smooth case is just~\cite[Lemme~3.2.2]{CLD}, and one readily checks that the argument works also in the piecewise smooth case.
\end{rem}

\begin{art}[Relations between the forms] \label{relations between forms}
	By Remark~\ref{remark on tropicalization maps}, every smooth tropicalization map is harmonic, so we get a homomorphism $\cAsm \to \cA$ of sheaves of bigraded differential $\R$-algebras.  We also have a canonical homomorphism $\cA \to \cAps$   of sheaves of bigraded differential $\R$-algebras, defined as follows.  Let $(h,\alpha)$ be a weakly smooth preform on an open subset $U$ of $X$  whose closure $\bar U$ is a compact strictly analytic domain.  We proved in~\secref{smooth tropicalization covering} that for any $x \in U$, there is a open neighbourhood $U_x$ of $x$ whose closure is a compact strictly analytic domain contained in $U$, a smooth tropicalization map $h_x$ on $\bar U_x$, and a piecewise $\z$-linear map $F\colon h'(\bar U_x) \to h(\bar U)$ such that $h=F \circ h_x$ on $\bar U_x$. Since $\alpha$ is a smooth Lagerberg form on $h(\bar U)$, we conclude that $\alpha_x \coloneqq F^*(\alpha)$ is a piecewise smooth form on $h'(\bar U_x)$. One checks that the piecewise smooth preforms $(h_x,\alpha_x)$ glue to a well-defined piecewise smooth form on $U$, giving rise to the desired canonical homomorphism  $\cA \to \cAps$.
\end{art}

\begin{prop} \label{subsheaves of forms}
  The above canonical homomorphisms are injective: that is, we have inclusions of sheaves of bigraded differential $\R$-algebras
  \[ \cAsm \inject \cA \inject \cAps. \]
\end{prop}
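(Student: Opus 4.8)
The statement to prove is Proposition~\ref{subsheaves of forms}, the injectivity of the two canonical maps $\cAsm \to \cA$ and $\cA \to \cAps$. Since both are homomorphisms of sheaves of bigraded differential $\R$-algebras, it suffices to check injectivity on each stalk, and by the construction of $\cA$ (resp.~$\cAsm$, $\cAps$) as the sheafification of a presheaf of preforms, a stalk is computed by the presheaf of preforms. So the plan is: fix $x \in X$, take a germ of a (weakly smooth / smooth) preform at $x$ that maps to zero in $\cAps$ (resp.~in $\cA$), and show the germ of the original preform is zero.

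First I would treat the inclusion $\cA \inject \cAps$. Let $(h,\alpha)$ be a weakly smooth preform on an open $U$ whose closure $\bar U$ is a compact strictly analytic domain, and suppose its image in $\cAps$ vanishes near $x$. Using~\secref{smooth tropicalization covering}, shrink $U$ to a compact strictly analytic neighbourhood $\bar U_x$ on which $h = F\circ h_x$ for a smooth tropicalization map $h_x \colon \bar U_x \to \R^m$ and a piecewise $\z$-linear $F \colon h_x(\bar U_x) \to h(\bar U)$; after this reduction the image of $(h,\alpha)$ in $\cAps$ is represented by the piecewise smooth preform $(h_x, F^*\alpha)$. By Remark~\ref{injectivity lemma for smooth and ps} (the piecewise smooth analogue of Lemma~\ref{sec:ps.form.on.complex} and Proposition~\ref{harmonic lift to weakly smooth forms}), vanishing of this piecewise smooth preform forces $F^*\alpha = 0$ in $\cAps(h_x(\bar U_x))$. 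Now the key point is that $F$ is surjective onto $h(\bar U_x)$ (it is the tropicalization of a subdomain, so $F(h_x(\bar U_x))$ covers $h(\bar U_x)$), and a smooth Lagerberg form is determined by its restrictions to the relative interiors of the faces of a polytopal complex refining its domain; pulling back along a piecewise $\z$-linear surjection and then getting zero therefore forces $\alpha|_{h(\bar U_x)} = 0$. The cleanest way to phrase this is exactly as in the proof of Lemma~\ref{sec:ps.form.on.complex}: cover $h(\bar U_x)$ by images of polytopes of $h_x(\bar U_x)$ under $F$, and note that $\alpha$ pulls back to zero on each such polytope, hence vanishes. Thus $(h,\alpha) = 0$ as a weakly smooth preform near $x$ by Proposition~\ref{harmonic lift to weakly smooth forms}(\ref{preforms to forms}), so the germ is zero.

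For the inclusion $\cAsm \inject \cA$, let $(h,\alpha)$ now be a \emph{smooth} preform (so $h = \phi_{\trop}$ is a smooth tropicalization map), and suppose its image in $\cA$ vanishes near $x$. Passing to a small compact strictly affinoid neighbourhood $W$ of $x$, the weakly smooth preform $(h|_W, \alpha|_{h(W)})$ is zero, hence by Lemma~\ref{sec:ps.form.on.complex} we get $\alpha|_{h(W)} = 0$; running this over finitely many $W$ covering a compact neighbourhood and using that the corresponding $h(W)$ cover the relevant piece of $h(X)$ (exactly as in the proof of Proposition~\ref{harmonic lift to weakly smooth forms}(\ref{injectivity of lift})) shows the original smooth preform has zero germ. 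Since the smooth preform presheaf injects into the smooth form sheaf (Remark~\ref{injectivity lemma for smooth and ps}, i.e.~\cite[Lemme~3.2.2]{CLD}), the germ of the smooth form is zero.

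The only genuinely delicate step is the surjectivity/covering argument in the first part — verifying that $F^*\alpha = 0$ on all of $h_x(\bar U_x)$ really does force $\alpha = 0$ on $h(\bar U_x)$, given that $F$ is merely piecewise $\z$-linear and not injective. This is handled by the same face-by-face restriction argument that underlies Lemma~\ref{sec:ps.form.on.complex}: choose a polytopal complex $\Pi$ on $h_x(\bar U_x)$ refining $F$ (so $F$ is $\z$-linear on each face), and a compatible complex on $h(\bar U_x)$; every face of $h(\bar U_x)$ meeting a neighbourhood of $h(x)$ is the $F$-image of some face $\Delta \in \Pi$, and $F|_\Delta$ is an affine $\z$-linear surjection onto that face, along which $\alpha$ pulls back to $\alpha|_{F(\Delta)}$; since $F^*\alpha|_\Delta = 0$ and affine surjections induce injective pullbacks on smooth Lagerberg forms, $\alpha$ vanishes on $F(\Delta)$. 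As the $F(\Delta)$ cover the relevant part of $h(\bar U_x)$, we conclude $\alpha|_{h(\bar U_x)} = 0$. I would write this out in a sentence or two rather than belabor it, since it is the standard manipulation already used for Lemma~\ref{sec:ps.form.on.complex} and Remark~\ref{injectivity lemma for smooth and ps}.
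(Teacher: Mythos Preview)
Your proof is correct and essentially identical to the paper's for the main step $\cA \inject \cAps$: localize to a preform, cover the harmonic tropicalization by a smooth one via a piecewise $\z$-linear $F$, use injectivity of the lift (Remark~\ref{injectivity lemma for smooth and ps}) to get $F^*\alpha = 0$, then push down face by face as in Lemma~\ref{sec:ps.form.on.complex}. The only difference is in handling $\cAsm \inject \cA$: you prove it directly via Lemma~\ref{sec:ps.form.on.complex}, whereas the paper observes that the composite $\cAsm \to \cAps$ is obviously injective (a smooth preform is literally a piecewise smooth preform with the same data) and factors through $\cA \to \cAps$, so injectivity of the latter gives injectivity of $\cAsm \to \cA$ for free---a small shortcut that saves your second paragraph.
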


\begin{proof}
  The map $\cAsm \to \cAps$ is obviously injective and factors through $\cA \to \cAps$, so it is enough to show that the homomorphism $\cA \to \cAps$ is injective. Let $U$ be an open subset of $X$ and let $\omega$ be weakly smooth form on $U$ such that the associated piecewise smooth form constructed in~\secref{relations between forms} is zero. Arguing locally, we may assume that $\bar U$ is a compact strictly analytic domain and that $\omega$ is given on $U$ by a weakly smooth preform $(h,\alpha)$, where $h \colon \bar U \to \R^n$ is a harmonic tropicalization and $\alpha \in \cA(h(\bar U))$. Replacing $U$ by $U_x$ from~\secref{relations between forms}, we may assume that $h$ is covered by a smooth tropicalization map $h' \colon \bar U \to \R^m$, i.e.,~there is a piecewise $\z$-linear map $F \colon h'(\bar U) \to h(\bar U)$ with $h= F \circ h'$, and that the piecewise smooth form $(h')^*(F^*\alpha)$ on $\bar U$ is zero.  Since $\cAps(h'(\bar U))\to\cAps(\bar U)$ is injective, we have $F^*\alpha = 0.$
        We conclude that there is a finite covering of $h'(\bar U)$ by $\z$-polytopes $\Delta'$ maped $\z$-linearly by $F$ onto polytopes $\Delta$ covering $h(\bar U)$. Using $F^*\alpha=0$, we conclude as in the proof of Lemma~\ref{sec:ps.form.on.complex}  that $\alpha=0$ on $h(\bar U)$. This proves $\omega=0$, showing injectivity.
\end{proof}

\begin{rem} \label{piecewise smooth and G-topology}
  The above result shows that weakly smooth forms are examples of piecewise smooth forms. We introduced the latter as they form a natural class for defining integrals in the next section. As an example of a piecewise smooth function that is not smooth, take any piecewise smooth function $\varphi\colon \R \to \R$ that is not smooth; then $\varphi \circ \trop$ is piecewise smooth but not smooth on $\T^\an$. A more sophisticated example of a function which is harmonic but not smooth is given in Proposition \ref{non-smooth T_i}{\tiny }.
 
 The piecewise smooth forms on $X$ can be characterized  as follows.  A piecewise smooth form $\omega$ is defined by a $\rG$-covering $(V_i)_{i \in I}$ by compact strictly analytic domains $V_i$ and smooth forms $\omega_i$ on $V_i$ such that $\omega_i|_{V_i \cap V_j} = \omega_j|_{V_i \cap V_j}$ for all $i,j \in I$.  This is proved as for piecewise linear functions in Proposition~\ref{local description of piecewise linear}.
\end{rem}

\begin{art}[Supports] \label{support}
	The \emph{support} of $\omega	\in \cA^{p,q}(X)$ is the set of points of $X$  such that the germ of $\omega$ in $x$ is non-zero. This is a closed subset which we denote by $\supp(\omega)$. We denote by $\cA_c^{p,q}$ the cosheaf of weakly smooth $(p,q)$-forms with compact support on $X$.

	If $x\in X$ satisfies $\trdeg(\td\sH(x)^\bullet/\td K^\bullet)<\max(p,q)$, then by~\cite[Lemme~3.2.5]{CLD} we have $x\notin\supp(\omega)$ .  This applies in particular to any point $x$ contained in a Zariski-closed subset of dimension less than $\max(p,q)$.
\end{art}

Recall that the elements of $\cA^{0,0}$ are called weakly smooth functions. It is easy to see that they are continuous real functions.

\begin{prop}  \label{weakly smooth harmonic functions}
	Let $f\colon X \to \R$ be a  function.  The following are equivalent:
	\begin{enumerate}
		\item $f$ is weakly smooth and $\d'\d''f = 0$.
		\item $f$ is harmonic (and hence  $\R$-PL).
	\end{enumerate}
\end{prop}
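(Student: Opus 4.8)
The statement asserts the equivalence of two characterizations of $\R$-harmonic functions among weakly smooth functions on a good strictly analytic space $X$. The plan is to prove $(2)\Rightarrow(1)$ first, which is essentially a lifting statement, and then $(1)\Rightarrow(2)$, which is the substantive direction. Throughout, the key point is that both weak smoothness and harmonicity are local in the analytic topology (Remark~\ref{sec:harmonic.remarks}(\ref{item:harm.rmk.germ}) and the construction of $\cA$ in Definition~\ref{piecewise smooth forms}), and a harmonic function is by definition $\R$-PL, so Corollary~\ref{cor:hL.is.pl} applies.

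\textbf{The direction $(2)\Rightarrow(1)$.} Assume $f$ is harmonic. Since harmonicity is local, we may work in a compact strictly affinoid neighbourhood $W$ of an arbitrary point; then $f|_W$ is itself a harmonic tropicalization map $h = f\colon W \to \R^1$ in the sense of Section~\ref{section: weights for pl tropicalizations}. The tropical variety $h(W) = f(W) \subset \R$ is a piecewise $\z$-linear space, and the coordinate function $t$ on $\R$ restricts to an element of $\cA^{0,0}(h(W))$. By Proposition~\ref{harmonic lift to weakly smooth forms}(\ref{injectivity of lift}), we get $h^*(t) = f|_W \in \cA^{0,0}(W)$, so $f$ is weakly smooth. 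For the vanishing $\d'\d''f = 0$: one computes $\d'\d'' t$ on the one-dimensional tropical variety $h(W)_d$; by the tropical Poincar\'e--Lelong formula (see~\secref{Lagerberg forms}) applied to the Lagerberg current $[t]$, the current $\d'\d''[t]$ is the current of integration over the tropical Weil divisor $\divisor(t)$ of the coordinate function on the weighted polytopal complex $h(W)_d$. But Theorem~\ref{thm balancing} says $h(W)_d$ is balanced outside $h(\partial W)$, and the balancing condition at a $(d-1)$-dimensional face is exactly $\divisor(t) = 0$ there (this is the content of the computation in Proposition~\ref{purely tropical statement} / the definition of $\divisor$). Hence $\d'\d'' t = 0$ as a weakly smooth form on $W\setminus\partial W$; since we may shrink $W$ to avoid the boundary (working at interior points, or using that $\partial W$ is closed), and the statement is local on $X$ which has no distinguished boundary, we conclude $\d'\d''f = 0$ on $X$. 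Alternatively and more cleanly, one can invoke Proposition~\ref{harmonic functions and smooth trop}: cover $h$ locally by a smooth tropicalization map $F$, write $f = \phi\circ F$, and deduce $\divisor(\phi, x) = 0$ on $F(X,x)_d$ for all $x\in\Int(X)$, which pulls back to $\d'\d'' f = 0$.

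\textbf{The direction $(1)\Rightarrow(2)$.} Assume $f$ is weakly smooth with $\d'\d''f = 0$. Working locally, write $f = h^*(\alpha)$ for a harmonic tropicalization $h\colon W \to \R^n$ and $\alpha \in \cA^{0,0}(h(W))$. First, one must show $f$ is $\R$-PL: since $f = h^*(\alpha)$ with $\alpha$ a smooth Lagerberg function on the polytopal complex $h(W)$, and $h$ is a (harmonic, hence) PL tropicalization, $f$ is locally the composition of a smooth tropicalization map with a smooth function; but $\d'\d''f = 0$ forces $\alpha$, restricted to each face, to be affine linear, and then $f$ agrees locally with the composition of a smooth tropicalization map with a piecewise $\z$-linear (in fact affine $(\Z,\Gamma)$-linear after clearing denominators — here the issue of irrational slopes enters, so one only gets $\R$-PL) function. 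Concretely: cover $h$ by a smooth tropicalization map $F\colon W' \to \R^m$ with $h = L\circ F$, $L$ piecewise $\z$-linear; then $f|_{W'} = F^*(\psi)$ with $\psi = \alpha\circ L$, and $\d'\d''F^*(\psi) = 0$ means $\divisor(\psi, x) = 0$ on the balanced fan $F(W',x)_d$ for every $x$ — but since $F(W',x)_d$ is balanced (Theorem~\ref{thm balancing}), the vanishing of $\divisor(\psi,x)$ combined with $\psi$ being piecewise affine on this balanced fan forces $\psi$ to be \emph{globally affine} on each maximal cone, i.e.\ $\psi$ is affine $\R$-linear near $F(x)$. Then $f = F^*(\psi)$ is locally $\log|g| + c$ for an invertible $g$ and constant $c\in\R$ up to scalars, hence $\R$-PL, and one checks via Proposition~\ref{prop:def.semipositive} or Remark~\ref{sec:harmonic.remarks}(\ref{item:harm.rmk.smooth}) that such a function is harmonic (the residue line bundle $L_f(x)$ is numerically trivial, being a $\Q$-combination — or $\R$-combination — of residues $L_{\log|g_i|}(x) = 0$). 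One then concludes $f$ is harmonic at every point, hence harmonic.

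\textbf{Main obstacle.} The hard part is the step in $(1)\Rightarrow(2)$ where one deduces from $\d'\d''f = 0$ (equivalently $\divisor(\psi,x) = 0$ on a \emph{balanced} fan $F(W',x)_d$) that the underlying PL function $\psi$ is actually \emph{linear} on each maximal cone near the point — i.e.\ that the corner locus being empty on a balanced complex forces local linearity. This is a purely tropical statement about functions on balanced weighted fans: a piecewise linear function whose Weil divisor vanishes need not be globally linear on an arbitrary polyhedral complex, but the balancing condition on the ambient fan is what makes the argument go through via the projection-formula/Poincar\'e--Lelong machinery already set up in Section~\ref{section: balancing condition}. One must also be careful about the rational-vs-real slopes subtlety: the resulting $f$ is only $\R$-PL in general (not $\Z$-PL), which is exactly why condition~(2) says ``$\R$-harmonic'' and ``$\R$-PL''. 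Assembling the local pieces into a global statement is routine given that harmonicity and weak smoothness are both analytic-local.
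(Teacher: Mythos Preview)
Your proposal has the right overall shape but contains a real gap in $(2)\Rightarrow(1)$ and an overcomplicated detour in $(1)\Rightarrow(2)$ that misidentifies the main difficulty.

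For $(2)\Rightarrow(1)$: you use $f$ directly as a harmonic tropicalization $h=f\colon W\to\R$, but by definition a harmonic tropicalization requires $\Z$-harmonic coordinates, whereas $f$ is only $\R$-harmonic. The paper closes this gap by invoking Proposition~\ref{prop: equivalence of harmonicity definitions} to write $f$ locally as an $\R$-linear combination of $\Z$-harmonic functions, and then by linearity treats a single $\Z$-harmonic $f$. Once that reduction is made, the vanishing $\d'\d''f=0$ is immediate: $\d'\d''\id_\R=0$ in $\cA^{1,1}(\R)$ because $\id_\R$ is linear, and $\d'\d''f = f^*(\d'\d''\id_\R)=0$. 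Your argument via tropical Poincar\'e--Lelong and balancing is about the \emph{current} $\d'\d''[t]$, not the Lagerberg form $\d'\d''t$; these are different objects, and the form vanishes for the trivial reason just stated.

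For $(1)\Rightarrow(2)$: you correctly note that $\d'\d''f=0$ and injectivity of $h^*$ force the smooth function $g$ on $h(X)$ (with $f=h^*g$) to be affine on each face. The paper's argument stops essentially there: near $\nu=h(x)$ all faces of $h(X)$ contain $\nu$, so the first-order Taylor expansion $g(\nu)+\sum a_i\omega_i$ of $g$ at $\nu$ agrees with $g$ on every face in a neighbourhood of $\nu$. Hence $f=g(\nu)+\sum a_i h_i$ locally, an $\R$-linear combination of the harmonic coordinates $h_i$ plus a constant, and is therefore harmonic. Your ``main obstacle''---deducing linearity from vanishing corner locus on a balanced fan---is a red herring: no balancing argument and no covering by smooth tropicalizations is needed, because the faces already share the point $\nu$ and the Taylor expansion does all the work. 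Your detour through $\divisor(\psi,x)$ also conflates piecewise-linear and smooth data: $\psi=\alpha\circ L$ is only piecewise smooth, so the tropical Weil divisor is not defined for it as you use it.
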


\begin{proof}
  Assume first that $f$ is harmonic (hence $\R$-PL).  By Proposition~\ref{prop: equivalence of harmonicity definitions}, we may shrink $X$ to assume that $f$ is an $\R$-linear combination of $\Z$-harmonic functions.  By linearity, we may assume that $f$ is itself  $\Z$-harmonic.  Then we can use $f$ as a harmonic tropicalization. Using $f= \id_\R \circ f$, we deduce that $f \in \cA^{0,0}(X)$ and $\d'\d''f=f^*(\d'\d''\id_\R)=0$.

	Conversely, let $f$ be a weakly smooth function with $\d'\d''f=0$. We have to show that $f$ is a harmonic function. This can be checked locally at $x \in X$, so we may assume that $X$ is a strictly affinoid domain and that $f$ is the composition of a harmonic tropicalization $h = (h_1,\ldots,h_n)\colon X\to\R^n$ with a smooth function $g\colon\R^n\to\R$. We use the Taylor expansion in $\nu \coloneqq h(x)$ to write
	$$g(\omega)=g(\nu)+a_1\omega_1+ \dots + a_n \omega_n + O(\|\omega-\nu\|^2)$$
	for $\omega = (\omega_1,\ldots,\omega_n)$ in a neighbourhood of $\nu$. Using injectivity of $h^*$ from Proposition~\ref{harmonic lift to weakly smooth forms}, we deduce from $\d'\d''h=0$ that $\d'\d''(g|_{h(X)})=0$, and hence that $g$ is affine on every face of the tropical variety $h(X)$. Thus the restrictions of $g$ and of $g(\nu)+a_1\omega_1+ \dots + a_n \omega_n$ to $h(X)$ agree in a neighbourhood of $\nu$. We conclude that $f=g(\nu)+a_1h_1+ \cdots+ a_nh_n$ in a neighbourhood of $x$, and therefore $f$ is a harmonic function.
\end{proof}

\section{Integration}   \label{section: integration and currents}

In this section, $X$ is a good strictly analytic space of dimension $d$ over the non-trivially valued non-Archimedean field $K$.   First, we recall integration of a compactly supported form $\omega \in\cAps^{d,d}(X)$ over $X$ (resp.~the boundary integral of a compactly supported form $\eta$ in  $\cAps^{d-1,d}(X)$ or in $\cAps^{d,d-1}(X)$) from~\cite{CLD}.

\begin{prop} \label{integration of piecewise smooth forms}
For every good strictly analytic space $X$ of pure dimension $d$ over $K$, there is a unique linear functional $\int_X \colon \cA_{{\rm ps},c}^{d,d} \to \R$ with the following properties for $\omega \in \cA_{{\rm ps},c}^{d,d}(X)$:
\begin{enumerate}
	\item \label{restriction} If $\supp(\omega)\subset W$  for a strictly analytic  domain $W$ of $X$, then $\int_W \omega = \int_X \omega$.
	\item \label{inclusion exclusion} For  closed analytic domains $V,W$ of $X$, we have $\int_{V\cup W} \omega = \int_{V} \omega + \int_{W} \omega - \int_{V \cap W} \omega$.
	\item \label{integration of preform} If $X$ is compact with PL-tropicalization $h \colon X \to \R^n$ such that $\omega = h^*(\alpha)$ for $\alpha \in \cAps^{d,d}(h(X))$, then $\int_X \omega = \int_{h(X)_d} \alpha$.
\end{enumerate}
\end{prop}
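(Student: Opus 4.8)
The plan is to construct $\int_X$ using a partition of unity and the known integration theory for smooth forms from \cite{CLD}, then verify the three listed properties and uniqueness. First I would recall from~\secref{piecewise smooth forms on X} (see also Remark~\ref{piecewise smooth and G-topology}) that a piecewise smooth form $\omega \in \cAps^{d,d}(X)$ is given by a $\rG$-covering $(V_i)_{i \in I}$ by compact strictly analytic domains together with smooth forms $\omega_i \in \cAsm^{d,d}(V_i)$ agreeing on overlaps. Since $\supp(\omega)$ is compact, finitely many of the interiors $\Int(V_i)$, together with the open set $X \setminus \supp(\omega)$, cover a paracompact open neighbourhood of $\supp(\omega)$; using smooth partitions of unity \cite[Proposition~3.3.6]{chambert_ducros12:forms_courants} we write $\omega = \sum_i \rho_i \omega$ with $\rho_i$ smooth, $\supp(\rho_i \omega) \subset \Int(V_i)$ compact, and set $\int_X \omega \coloneqq \sum_i \int_{V_i} \rho_i \omega_i$, where $\int_{V_i}$ is the integral of compactly supported smooth $(d,d)$-forms from \cite[Section~3.8]{chambert_ducros12:forms_courants}.

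Next I would check this is well-defined, i.e.\ independent of the $\rG$-representation of $\omega$, of the finite subcovering, and of the partition of unity. The key input is that two smooth forms representing the same piecewise smooth form agree on the relevant overlaps as smooth forms, so the usual refinement argument for partitions of unity applies verbatim (this is exactly the additivity of the smooth integral under subdivision of the domain together with the fact that $\rho_i \rho_j \omega$ is smooth and supported in $V_i \cap V_j$). Then properties~\eqref{restriction} and~\eqref{inclusion exclusion} follow from the corresponding properties of the smooth integral: \eqref{restriction} because one may choose the $V_i$ inside $W$, and \eqref{inclusion exclusion} by the inclusion--exclusion formula for compactly supported smooth forms, applied after refining the covering so that each $V_i$ lies in $V$, in $W$, or in $V \cap W$. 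For~\eqref{integration of preform}, when $X$ is compact with a PL tropicalization $h\colon X \to \R^n$ and $\omega = h^*(\alpha)$, I would cover $X$ by compact strictly analytic domains $U_j$ on which $h|_{U_j}$ is a smooth tropicalization map $\phi_{j,\trop}$ (possible by~\secref{smooth tropicalization covering}), reducing via partitions of unity to the smooth case; there the identity $\int_{U_j} \phi_{j,\trop}^*(\beta) = \int_{\phi_{j,\trop}(U_j)_d} \beta$ is \cite[Section~3.8]{chambert_ducros12:forms_courants} (using the calibration/weights of Remark~\ref{rem: multiplicity of the Antoines}), and the weighted polytopal complexes $h(U_j)_d$ assemble to $h(X)_d$ by the inclusion--exclusion identity of Proposition~\ref{tropical decomposition}, so the tropical integrals add up correctly.

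Uniqueness is the cleanest part: any linear functional satisfying \eqref{restriction} and \eqref{inclusion exclusion} is determined on forms supported in a single $\Int(V_i)$ with $V_i$ a domain on which $\omega$ is smooth, because on such a domain, after a further $\rG$-refinement into smooth tropicalization charts, property~\eqref{integration of preform} pins down the value; then an arbitrary $\omega$ is reduced to this case by a partition of unity together with \eqref{inclusion exclusion}. The main obstacle I anticipate is the bookkeeping in well-definedness: one must be careful that the piecewise smooth structure is only given up to the equivalence of~\secref{piecewise smooth forms on polytopal set}, so the smooth pieces $\omega_i$ are not canonical, only their restrictions to the relevant polytopal faces are, and one needs the compatibility of the smooth integral with passing between different smooth representatives of the same piecewise smooth germ. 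This is handled by noting that $\rho_i\omega$ is genuinely smooth (being a smooth multiple of a form that is smooth near $\supp \rho_i$), so no piecewise phenomena actually enter the integral itself, only the combinatorics of choosing the charts. Everything else is a routine transcription of the smooth theory, so I would keep the write-up short and lean heavily on \cite{CLD}.
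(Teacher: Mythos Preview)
Your partition-of-unity construction has a genuine gap. You assume that the interiors $\Int(V_i)$ of the domains in the $\rG$-covering cover $\supp(\omega)$, so that you can find a smooth partition of unity subordinate to them. But this fails in general: a $\rG$-covering by compact strictly analytic domains need not have the property that the topological interiors cover, and more to the point, a genuinely piecewise smooth form is \emph{not} smooth on any open neighbourhood of certain points. For instance, take a smooth tropicalization $\phi_{\trop}\colon X\to\R$ and a piecewise linear function $F$ with a kink at $0$; then $\omega = F\circ\phi_{\trop}$ is piecewise smooth, the natural $\rG$-covering is $V_1=\phi_{\trop}^{-1}((-\infty,0])$, $V_2=\phi_{\trop}^{-1}([0,\infty))$, and a skeleton point $x$ with $\phi_{\trop}(x)=0$ lies in neither interior, nor is $\omega$ smooth on any open neighbourhood of $x$. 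So there is no $V_i$ with $x\in\Int(V_i)$ and $\omega|_{V_i}$ smooth, and your $\rho_i\omega$ cannot be made smooth while still summing to $\omega$.

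The paper sidesteps this by avoiding partitions of unity entirely and using inclusion--exclusion on the $\rG$-covering directly. For existence, one takes a finite $\rG$-covering $(X_i)_{i\in I}$ of $\supp(\omega)$ with each $\omega|_{X_i}$ smooth (Remark~\ref{piecewise smooth and G-topology}) and \emph{defines}
\[
\int_X\omega \;=\; \sum_{\emptyset\neq J\subset I}(-1)^{|J|+1}\int_{X_J}\omega,
\qquad X_J=\bigcap_{j\in J}X_j,
\]
where each $\int_{X_J}\omega$ is the smooth integral from~\cite{CLD} (since $\omega|_{X_J}$ is the restriction of the smooth form $\omega_i$). Well-definedness and properties \eqref{restriction}--\eqref{integration of preform} then reduce to the smooth case. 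Uniqueness follows from the same inclusion--exclusion formula, which is forced by \eqref{restriction} and \eqref{inclusion exclusion}, together with \eqref{integration of preform} to compute each $\int_{X_J}\omega$. The moral is that for forms that are only $\rG$-locally smooth, the right decomposition tool is inclusion--exclusion over closed domains, not a partition of unity over open sets.
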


In \eqref{integration of preform}, we see $h(X)_d$ as a weighted $\z$-polytopal complex as in Definition~\ref{def:pl.trop.mult.face} and use integration   for weighted $\z$-polytopal complexes~\cite[Sections~2--3]{Gubler}. For supports, we have  $h(\supp \omega)= \supp(\alpha)$  \cite[Corollaire~3.2.3]{CLD} and hence $\supp(\alpha)$ is compact.

\begin{proof}
We prove first uniqueness. Since $\supp(\omega)$ is compact, there is a finite covering $(X_i)_{i \in I}$ of $\supp(\omega)$ by compact strictly analytic domains $X_i$ of $X$ such that $\omega|_{X_i}= h_i^*(\alpha_i)$ for some $\alpha_i \in \cA_{{\rm ps}}^{d,d}(h(X_i))$ with respect  to a PL-tropicalization $h_i \colon X_i \to \R^{n_i}$. From \eqref{restriction} and \eqref{inclusion exclusion}, we deduce the inclusion-exclusion formula
\begin{equation} \label{general inclusion exclusion formula}
\int_X \omega = \sum_{ \emptyset \not = J \subset I} (-1)^{|J|+1} \int_{X_J} \omega
\end{equation}
for $X_J \coloneqq \bigcap_{j \in J} X_j$. Using \eqref{integration of preform} to compute $\int_{X_J} \omega$, we deduce uniqueness.

For existence, we note first that such an integration theory with properties \eqref{restriction}--\eqref{integration of preform} was set up for smooth forms and smooth tropicalization maps by Chambert-Loir and Ducros \cite[3.7--3.11]{CLD}. We deduce from it an integration theory for piecewise smooth forms and PL tropicalization maps as follows: For $\omega \in \cA_{{\rm ps},c}^{d,d}(X)$, there is a finite covering  $(X_i)_{i \in I}$ of $\supp(\omega)$ by compact strictly analytic domains $X_i$ of $X$ such that $\omega|_{X_i}$ is a smooth form for all $i \in I$ (see Remark~\ref{piecewise smooth and G-topology}). Then we use the inclusion exclusion formula \eqref{general inclusion exclusion formula} to define $\int_X \omega$  based on the smooth case. It is elementary to deduce from the smooth case that this is well-defined and that properties \eqref{restriction}--\eqref{integration of preform} hold in general.
\end{proof}

\begin{rem} \label{boundary integrals of piecewise smooth forms}
Similarly, we can define boundary integrals $\int_{\partial X} \eta$ for compactly supported $\eta$ in $\cAps^{d-1,d}(X)$ or in $\cAps^{d,d-1}(X)$. We have to replace all the integrals in Proposition~\ref{integration of piecewise smooth forms} by boundary integrals, $\omega$ by $\eta$ and $\alpha$ by a suitable form $\beta$ in $\cAps^{d-1,d}(h(X))$ or in $\cAps^{d,d-1}(h(X))$, respectively. We leave the details to the reader.
\end{rem}

For the \emph{theorem of Stokes}, we have the following 
analogue of \cite[3.12]{CLD}.  Note that for the last claim, we need the Balancing Theorem~\ref{thm balancing}.%
\footnote{Using that piecewise smooth forms are given $\rG$-locally by smooth forms (see Remark \ref{piecewise smooth and G-topology}) and using  Proposition \ref{integration of piecewise smooth forms}(\ref{inclusion exclusion}), the first part of the theorem is true more generally for piecewise smooth forms using polyhedral deivatives, but then the last claim is no longer true in the boundaryless case, see Example \ref{illustration for contributions outside the boundary}.}  

\begin{thm} \label{theorem of Stokes}
	Let $\omega \in \cA^{d-1,d}_c(X)$ and $\eta \in \cA^{d,d-1}_c(X)$.
	Then we have
	\begin{align*}
	\int_{X} \d' \omega = \int_{\partial X} \omega \quad \text{and} \quad \int_{X} \d'' \eta = \int_{\partial X} \eta.
	\end{align*}
In particular, if $X$ has no boundary, then the above integrals are zero.
\end{thm}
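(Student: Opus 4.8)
\emph{Proof plan.} The two formulas are equivalent under the Lagerberg involution $J$: it extends to the sheaf $\cA$ of weakly smooth forms via $h^*(\alpha)\mapsto h^*(J\alpha)$ (well defined since pull-back commutes with $J$), it interchanges $\cA^{d-1,d}$ and $\cA^{d,d-1}$, it satisfies $J\d'=\d''J$, and both the integral and the boundary integral of~\secref{section: integration and currents} are $J$-compatible because this already holds on weighted $\z$-polytopal complexes. Hence it suffices to prove $\int_X\d'\omega=\int_{\partial X}\omega$ for $\omega\in\cA^{d-1,d}_c(X)$; the $\d''$-statement then follows, or is proved verbatim with $\d''$ and $J\alpha$ in place of $\d'$ and $\alpha$.

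Next the plan is to reduce to a single harmonic tropicalization. Since $\supp(\omega)$ is compact, by the construction of weakly smooth forms (\secref{piecewise smooth forms}) it is covered by finitely many compact strictly analytic domains $X_i$ of $X$ such that $\omega|_{X_i}=h_i^*(\alpha_i)$ for a harmonic tropicalization $h_i\colon X_i\to\R^{n_i}$ and $\alpha_i\in\cA^{d-1,d}(h_i(X_i))$. The inclusion--exclusion formula~\eqref{general inclusion exclusion formula} of Proposition~\ref{integration of piecewise smooth forms}, and its analogue for boundary integrals from Remark~\ref{boundary integrals of piecewise smooth forms}, then reduce the claim to the equality $\int_{X_J}\d'\omega=\int_{\partial X_J}\omega$ for each nonempty $J\subseteq I$, where $X_J=\bigcap_{j\in J}X_j$ is a compact strictly analytic domain on which $\omega$ restricts to the preform $(h_i|_{X_J},\alpha_i|_{h_i(X_J)})$ for any $i\in J$ (see~\secref{restriction of piecewise smooth preforms}). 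So I may assume $X$ is compact and $\omega=h^*(\alpha)$ for a harmonic tropicalization $h\colon X\to\R^n$, with $\alpha\in\cA^{d-1,d}(h(X))$ of compact support since $h(\supp\omega)=\supp\alpha$ by~\cite[Corollaire~3.2.3]{CLD}.

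In this situation I would pass to the tropical side. Choosing, as in~\secref{choice of polytopal structure}, a structure of weighted $\z$-polytopal complex on $h(X)_d$ (Definition~\ref{def:pl.trop.mult.face}) with $h(\partial X)$ contained in a subcomplex of dimension $\le d-1$, and using $\d'\omega=h^*(\d'\alpha)$ with $\d'\alpha\in\cA^{d,d}(h(X))$, Proposition~\ref{integration of piecewise smooth forms}(\ref{integration of preform}) gives $\int_X\d'\omega=\int_{h(X)_d}\d'\alpha$, and the boundary analogue in Remark~\ref{boundary integrals of piecewise smooth forms} gives $\int_{\partial X}\omega=\int_{\partial(h(X)_d)}\alpha$, where $\partial(h(X)_d)$ is the boundary of this weighted $\z$-polytopal complex (\secref{Lagerberg forms}, \cite[Section~3]{Gubler}). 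The first two formulas of the theorem then reduce to Stokes' theorem for Lagerberg forms on a weighted $\z$-polytopal complex, namely $\int_{h(X)_d}\d'\alpha=\int_{\partial(h(X)_d)}\alpha$, which is recalled in~\secref{Lagerberg forms} for $\d''$ and follows for $\d'$ by applying that statement to $J\alpha$. Note that no balancing is needed so far: the boundary integral $\int_{\partial X}$ is \emph{defined} through the tropical boundary complex, and the identity is purely combinatorial.

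It remains to treat the last assertion; this is the one place where the Balancing Theorem~\ref{thm balancing} is genuinely used. Suppose $\partial X=\emptyset$. If $X$ is moreover compact and $\omega=h^*(\alpha)$, then $\int_{\partial X}\omega=\int_{\partial(h(X)_d)}\alpha$, and I would show $\partial(h(X)_d)=0$: by Remark~\ref{local vs global weights} the weighted complex $h(X)_d$ near any point is the sum of the weighted germs $h(X,x)_d$ over the points $x$ lying above it, and by Theorem~\ref{thm balancing}(1) each such germ is a balanced fan since every $x\in\Int(X)$; hence $h(X)_d$ is balanced and its boundary complex vanishes. Thus $\int_{\partial X}\omega=0$, and then $\int_X\d'\omega=0$ by the identity already established. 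In general $\partial X=\emptyset$ does not survive the reduction to the auxiliary domains $X_J$ (which individually may have nonempty boundary), so the subtle point — which I expect to be the main obstacle — is to check that the boundary integral of a weakly smooth form, reconstructed from the tropical boundary complexes of local preforms via inclusion--exclusion, is controlled by $h(\partial X)$ rather than by the relative boundaries introduced in the covering; the Balancing Theorem~\ref{thm balancing} is exactly the input that makes this work and that forces the vanishing when $\partial X=\emptyset$.
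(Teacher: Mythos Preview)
Your proposal is correct and follows essentially the same route as the paper: reduce via inclusion--exclusion to a compact $X$ carrying a single harmonic tropicalization $h$ with $\omega=h^*(\alpha)$, apply the tropical Stokes theorem on $h(X)_d$, and for the final assertion invoke the Balancing Theorem~\ref{thm balancing} together with~\cite[Proposition~3.8]{Gubler}. Your explicit use of the Lagerberg involution to pass between the $\d'$ and $\d''$ statements is fine; the paper simply says ``the second is similar.''

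Your last paragraph overstates the difficulty. You worry that $\partial X=\emptyset$ does not survive the reduction to the pieces $X_J$, and that one must therefore control how the tropical boundary contributions from the relative boundaries $\partial X_J$ cancel in the inclusion--exclusion sum. But this is not a separate obstacle once the theorem is set up as the paper does. The identity $\int_X\d'\omega=\int_{\partial X}\omega$ is established for the \emph{original} $X$ by applying inclusion--exclusion to both sides; the right-hand side is by definition the functional characterized in Remark~\ref{boundary integrals of piecewise smooth forms}. The point of the Balancing Theorem is then only that on each harmonic chart $(W,h)$, the tropical complex $h(W)_d$ is balanced away from $h(\partial W)$, so that property~(iii) of the boundary integral is consistent with the CLD definition via smooth charts and $\int_{\partial X}$ is genuinely supported on $\partial X$. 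In particular, in the compact-chart case with $\partial X=\emptyset$ one gets $\int_{h(X)_d}\d'\alpha=0$ directly from~\cite[Proposition~3.8]{Gubler}, exactly as the paper writes; no further cancellation argument across charts is needed. Your invocation of Remark~\ref{local vs global weights} plus Theorem~\ref{thm balancing}(1) on germs is equivalent to, but slightly more roundabout than, simply citing Theorem~\ref{thm balancing}(2) as the paper does.
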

\begin{proof}
	By the way integration is set up,
	it is sufficient to show this when $X$ is compact with a harmonic tropicalization
	map $h \colon X \to \R^n$ and with $\omega = h^*(\alpha)$ for some $\alpha \in \cA^{d-1,d}(h(X))$.
	By the tropical theorem of Stokes in \cite[Proposition~3.5]{Gubler}, we get
	\begin{align*}
	\int_X \omega   =  \int_{h(X)_d} {\d'} \alpha = \int_{\partial h(X)_d} 
	{\alpha} = \int_{\partial X} \omega
	\end{align*}
	 proving the first formula; the second is similar. Assume that $\partial X = \emptyset$.  Then $h(X)_d$ is balanced by Theorem~\ref{thm balancing}, and hence $\int_{h(X)_d}\d'\alpha =0$ by \cite[Proposition~3.8]{Gubler}.
\end{proof}

\begin{art}[Lagerberg Involution] \label{Lagerberg involution}
	The \emph{Lagerberg involution} is the unique involution $J$ of the sheaf  $\cA$ of $\R$-algebras which leaves the smooth functions fixed and which satisfies $J\d'=\d''J$. It maps $\cA^{p,q}$ isomorphically onto $\cA^{q,p}$. The Lagerberg involution is the reason that the differentials $\d'$ and $\d''$ is play a completely symmetric role in non-Archimedean geometry. We call $\omega \in \cA^{p,p}(X)$ \defi{symmetric} if $J\omega=(-1)^{p}\omega$.
\end{art}

\begin{thm}[Green's formula] \label{theorem of Green}
	For symmetric forms $\omega \in \cA_c^{p,p}(X)$ and $\eta \in \cA_c^{q,q}(X)$ with $p+q=d-1$,  we have
	$$\int_X \bigl( \omega \wedge \d'\d'' \eta - \d'\d'' \omega \wedge \eta \bigr) = \int_{\partial X} \bigl( \omega \wedge \d''\eta - \d''\omega \wedge \eta \bigr).$$
\end{thm}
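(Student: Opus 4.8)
The plan is to deduce Green's formula from the theorem of Stokes (Theorem~\ref{theorem of Stokes}) by the same algebraic manipulation that is standard in complex geometry. First I would observe that since $\omega \in \cA_c^{p,p}(X)$ and $\eta \in \cA_c^{q,q}(X)$ with $p+q = d-1$, the form $\omega \wedge \d''\eta$ has type $(p+q, p+q+1) = (d-1,d)$ and is compactly supported, so by Stokes we have $\int_X \d'(\omega \wedge \d''\eta) = \int_{\partial X} \omega \wedge \d''\eta$; similarly $\d''\omega \wedge \eta$ has type $(d,d-1)$ and $\int_X \d''(\d''\omega \wedge \eta) = \int_{\partial X}\d''\omega\wedge\eta$ (note $\d''(\d''\omega\wedge\eta)$ also makes sense as a top form of type $(d,d)$). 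Then I would apply the Leibniz rule from~\secref{sec:wedge.product}, which holds for weakly smooth forms: expanding $\d'(\omega \wedge \d''\eta)$ and $\d''(\d''\omega \wedge \eta)$, subtracting, and using $\d'\d'' = -\d''\d'$ together with $\d'\d'\omega = 0$, the ``cross terms'' $\d'\omega \wedge \d''\eta$ and $\d''\omega\wedge\d'\eta$ are arranged to cancel, leaving exactly $\omega \wedge \d'\d''\eta - \d'\d''\omega \wedge \eta$ inside the $X$-integral.

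More precisely, the key step is the identity of weakly smooth $(d,d)$-forms
\begin{equation*}
\omega \wedge \d'\d''\eta - \d'\d''\omega \wedge \eta = \d'\bigl(\omega\wedge\d''\eta\bigr) + (-1)^{p+q}\,\d''\bigl(\d'\omega\wedge\eta\bigr)
\end{equation*}
(up to the sign bookkeeping coming from the bidegrees $p+q = d-1$), which follows by two applications of the Leibniz rule and the relations $\d'\d' = \d''\d'' = 0$ and $\d'\d'' = -\d''\d'$. One has to be slightly careful about the symmetry hypothesis: a priori Stokes as stated applies to $\d'$ on $(d-1,d)$-forms and $\d''$ on $(d,d-1)$-forms, so I would rewrite the right-hand side so that the $\d'$ lands on a $(d-1,d)$-form and the $\d''$ lands on a $(d,d-1)$-form, which is automatic from the degrees of $\omega$ and $\eta$. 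Applying Theorem~\ref{theorem of Stokes} to each of the two exact terms then produces the boundary integrals $\int_{\partial X}\omega\wedge\d''\eta$ and $\int_{\partial X}\d'\omega\wedge\eta$. Finally I would use the Lagerberg involution $J$ (see~\secref{Lagerberg involution}): since $\omega$ and $\eta$ are symmetric, $J\omega = (-1)^p\omega$ and $J\eta = (-1)^q\eta$, and applying $J$ to the boundary relation interchanges $\d'$ and $\d''$; combining the $J$-transformed identity with the original one (and using that $J$ is an isometry for the boundary integral, up to the known signs) lets me replace $\d'$ by $\d''$ throughout and symmetrize the boundary term into the required form $\int_{\partial X}(\omega\wedge\d''\eta - \d''\omega\wedge\eta)$.

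I expect the main obstacle to be purely a matter of getting the signs right, together with checking that the intermediate forms indeed lie in the spaces to which Stokes applies — in particular that the compact support of $\omega$ and $\eta$ is inherited by all the wedge products and derivatives involved (clear from~\secref{support}), and that the Leibniz rule of~\secref{sec:wedge.product} is being invoked with the correct degree-dependent sign. There is no deeper input needed: the balancing theorem enters only through Theorem~\ref{theorem of Stokes}, which we are allowed to cite, and the boundaryless case follows immediately since then both sides have vanishing boundary integrals. If one prefers to avoid the Lagerberg involution at the end, one can instead symmetrize from the start: replace $\omega$ and $\eta$ by their symmetric parts (they are already symmetric) and note that for symmetric forms the ``Stokes exact'' decomposition can be written symmetrically in $\d'$ and $\d''$, so that the two boundary contributions $\int_{\partial X}\omega\wedge\d''\eta$ and $\int_{\partial X}\d''\omega\wedge\eta$ appear directly with the correct relative sign. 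Either route reduces Green's formula to a mechanical verification on top of Stokes.
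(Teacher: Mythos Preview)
Your approach is correct and is exactly what the paper does: it simply says the formula ``follows from the theorem of Stokes above and the Leibniz formula'' as in \cite[Lemme~1.3.8]{CLD}, and your key identity $\omega\wedge\d'\d''\eta-\d'\d''\omega\wedge\eta=\d'(\omega\wedge\d''\eta)+\d''(\d'\omega\wedge\eta)$ together with Stokes and the symmetry/$J$-step is precisely that argument spelled out. One small slip: early on you wrote that $\d''\omega\wedge\eta$ has type $(d,d-1)$, but it is $\d'\omega\wedge\eta$ that has type $(d,d-1)$ (as you in fact use in your key identity), so just fix that typo.
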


\begin{proof}
Similarly as in \cite[Lemme~1.3.8]{CLD}, this follows from the theorem of Stokes above and the Leibniz formula.
\end{proof}

\begin{art}[Forms as Currents] \label{forms as currents}
	Assume that $X$ is  Hausdorff and that $\partial X =\emptyset$. Then the space $\cDsm(X)$ of currents on $X$ is defined and studied in \cite[Section~4]{CLD}. For $\omega \in \cA_{\rm ps}(X)$, there is an \emph{associated current} $[\omega] \in \cDsm(X)$ defined by
	$$\langle \eta, [\omega] \rangle \coloneqq \int_X \omega \wedge \eta$$
	for any  $\eta \in \cA_{{\rm sm},c}(X)$. We claim that the induced linear map $\cA_{\rm ps}(X) \to \cDsm(X)$ is injective. Indeed, it is elementary to check that the corresponding map from the space of piecewise smooth Lagerberg forms to the space of Lagerberg currents on a piecewise $\z$-linear space in $\R^n$ is injective. Then the  claim follows from Remark~\ref{injectivity lemma for smooth and ps}.

	The \emph{$\d'$-residue of $\omega$} is defined as $\d'[\omega]-[\d'\omega]$ and similarly we define the \emph{$\d''$-residue of $\omega$}. The theorem of Stokes shows that the $\d'$- and $\d''$-residues of $\omega \in \cA^{d-1,d-1}(X)$ are zero. In this sense, weakly smooth forms behave like smooth differential forms.
\end{art}

We illustrate the theory of integration in a simple example explaining why it is natural to consider integration of piecewise smooth forms.  
Another reason to work in the piecewise smooth setting is that harmonic functions and harmonic  tropicalizations are piecewise smooth, but not necessarily smooth as we will see in Remark~\ref{harmonic non-smooth functions}.

\begin{eg} \label{illustration for integration}
  Let us consider the split torus $\T=\bGm^r$ with character lattice $M=\Z^r$. 
  We denote by $u_1,\dots, u_r$ the standard basis of $M$. 
  We choose a $(\Z,\Gamma)$-polytope $\Delta$ in $N_\R=\R^r$, so that $X \coloneqq \trop^{-1}(\Delta)$ is a compact strictly analytic domain in $\T^\an$.  Let $\phi\colon X\inject\T^\an$ be the inclusion.  We want to describe 
  integration of $\omega = \phi_{\trop}^*(d'u_1\wedge d''u_1\wedge \dots \wedge d'u_r\wedge d''u_r) \in \cA_{{\rm sm}}^{r,r}(X)$. It follows from \cite[Proposition 2.3.6]{bgjk2021} that $\omega$ is supported in the skeleton $S(\T)$. We identify the latter with $\R^r$ using the canonical section of $\trop$ sending $u \in \R^r$ to the weighted Gauss point with weight $u$. By \eqref{integration of preform} in Proposition \ref{integration of piecewise smooth forms}, we have
  $$ \int_X \omega = \int_\Delta \d'u_1\wedge \d''u_1\wedge \dots \wedge \d'u_r\wedge \d''u_r = \int_\Delta \d u_1 \wedge \dots \wedge \d u_r,$$
  where on the right we use the usual exterior derivative and integral from analysis. More generally, let $g \colon \R^n \to \R$ be a smooth function and let $f \coloneqq g \circ \phi_{\trop}$.  Then $f \omega \in \cA_{{\rm sm}}^{r,r}(X)$, and the same argument gives 
  \begin{equation} \label{integral formula for illustration}
    \int_X f\omega = \int_\Delta g \, \d'u_1\wedge \d''u_1\wedge \dots \wedge \d'u_r\wedge \d''u_r = \int_\Delta g \,\d u_1 \wedge \dots \wedge \d u_r.
  \end{equation}

  Most smooth functions $f\colon X \to \R$ do not arise in this way.  In general, the restriction of a smooth function $f$ to $S(\T)\cap X = \Delta$ is only piecewise smooth.  Indeed, we have seen in Remark~\ref{piecewise smooth and G-topology} that there is a $\rG$-covering of $X$ by compact strictly analytic domains $V_i$ such that $f|_{V_i}$ is the pullback of a smooth function under a moment map.  Ducros' theory of skeletons shows that every $V_i \cap S(\T)$ is a piecewise $\q$-linear subspace of $S(\T)$---see Remark \ref{rem: piecewise linear structure on tropical skeleton}---which readily yields that $f|_{\Delta}$ is a piecewise smooth function.  Even if a smooth function $f\colon X\to\R$ factorizes through  $\phi_{\trop}\colon X \to \R^r$, which means that there is a function $g\colon \Delta \to \R$ with $f=g \circ \trop$, then $g$ is not necessarily smooth; it is only piecewise smooth, as we have seen above. An example of such a function is $f \coloneqq g \circ \varphitrop$ for $g\coloneqq \min(u_1,0)$; see~\cite[Example~2.3.5]{bgjk2021}. 

  In any case, if $f$ is any piecewise smooth function on $X$, then let $g \coloneqq f|_{\Delta}$; 
  we claim that $\int_X f\omega = \int_\Delta g\,\d u_1\wedge\cdots\wedge\d u_r$, as in~\eqref{integral formula for illustration}.  
  Indeed, there are finitely many compact strictly affinoid domains $V_i$ of $X$ such that $f|_{V_i}$ is smooth (see Remark \ref{piecewise smooth and G-topology}). We may assume that there is a smooth tropicalization map $\psi_{\trop}\colon V_i \to \R^m$ and a smooth function $F\colon \R^{m} \to \R$ with $f|_{V_i} = F \circ \psi_{\trop}$.   By \cite[Example 4.7, (4.4.2)]{ducros12:squelettes_modeles}, we have that $V_i\cap S(\T)$ is a piecewise $\q$-linear subspace of $S(\T)$ and $\psi_{\trop}$ induces a piecewise $\q$-linear map  $V_i\cap S(\T) \to \R^m$. We conclude that $\Delta$ can be subdivided into $\z$-polytopes $\Delta_i$ such that $g|_{\Delta_i}=f|_{\Delta_i}$ is a smooth function on $\Delta_i$ and hence the formula~\eqref{integral formula for illustration} follows from the smooth case, using that $\omega$ is supported in $S(\T)$.

  Now assume again that $f$ is a smooth function on $X$ such that $f=g \circ \varphitrop$ for a  function $g\colon \Delta \to \R$. We want to describe the boundary integral $\int_{\partial X} f \eta$, where $\eta=\phi_{\trop}^*(\alpha)$ for any $\alpha \in \cA^{r-1,r}(N_\R)$. We showed above that $g$ is piecewise smooth on $\Delta$, i.e., that 
  there is a subdivision of $\Delta$ into $\z$-polyhedra $\Delta_i$ such that $g|_{\Delta_i}$ is smooth. 
  There is a unique form $\alpha' \in \cA^{r-1,0}(N_\R)$ such that $\alpha=(-1)^{r(r-1)/2} \alpha'\wedge d''u_1\wedge \dots \wedge d''u_r$. The boundary integral is defined such that
  $$\int_{\partial X} f\eta = \sum_i\int_{\partial \Delta_i} g \alpha = \sum_i\int_{\partial \Delta_i} g\alpha'$$
  where on the right, we have the usual boundary integral of an $(r-1)$-form from analysis. In this way, the theorem of Stokes follows from the usual Stokes' formula by
  \begin{equation*}
    \begin{split}
      \int_X \d'(f\eta)=\int_X \phi_{\trop}^*(\d'(g \alpha))=(-1)^{\frac{r(r-1)}{2}}\sum_i\int_{\Delta_i} \d'(g \alpha)\wedge \d''u_1 \wedge \dots \wedge \d''u_r\\
      =  \sum_i\int_{\Delta_i} \d'(g \alpha')
      =  \sum_i \int_{\partial \Delta_i}g \alpha'= \int_{\partial X} f\eta, 
    \end{split}
  \end{equation*}
  where we identify $\alpha'\in \cA^{r-1,0}(N_\R)$ in the second line with a usual real $(r-1)$-form and $\d'$ with the usual exterior derivative $\d$. 

  Finally, it is important to note that the boundary integral cannot be interpreted as an integral over the boundary $\del X$.  It is not hard to see that $\del X$ is contained in $\phi_{\trop}\inv(\del\Delta)$.  The boundary  $\del X$ can be seen as an $(r-1)$-dimensional analytic subvariety over a huge base extension (the completed residue field over some Gauss point, see \cite[Lemme 3.1]{ducros12:squelettes_modeles}), but the boundary integral $\int_{\partial X} f \eta$ does not make sense as an integral over this  $(r-1)$-dimensional analytic subvariety as the form $f \eta$ has bidegree $(r-1,r)$ instead of $(r-1,r-1)$. 

\end{eg}

\begin{eg} \label{illustration for contributions outside the boundary}
   Consider the split torus $\T=\bGm^2$, and let $u_1,u_2$ be the standard basis of the character lattice $M=\Z^2$. Let $g$ be any compactly supported smooth function on $N_\R=\R^2$, let $\alpha \in \cA_{\rm ps}^{1,2}(N_\R)$ be given by $\d'u_2 \wedge \d''u_1 \wedge \d''u_2$ on $\{u_1 \geq 0\}$ and by $0$ on $\{u_1 \leq 0\}$, and let $\eta \coloneqq \trop^*(g \alpha) \in \cA_{{\rm ps},c}^{1,2}(\T^\an)$. We compute as in Example \ref{illustration for integration} that
  $$\int_{\partial \T^\an} \eta = \int_{\{u_1 = 0\}} g \,\d'u_2 \wedge \d''u_1 \wedge \d''u_2= -\int_{\{u_1 = 0\}} g\, \d u_2,$$
  where we have the usual boundary integral of a $1$-form on the right. This boundary integral is usually non-zero even though $\del\bT^\an=\emptyset$.  Note that  $\int_{\partial \T^\an} \eta=0$ if $\eta$ is smooth, however, by Stokes' Theorem~\ref{theorem of Stokes}.
\end{eg}

\section{Strong Currents and the Poincar\'e--Lelong Theorem} \label{section: strong currents}

In this section, we denote by $X$ a Hausdorff strictly analytic space $X$ of pure dimension $d$ with no boundary. Note that  $X$ is a good analytic space which is separated over $K$. We have already seen that the sheaf $\cA^{p,q}$ of weakly smooth forms on $X$ has many properties analogous to its subsheaf $\cA_{\rm sm}^{p,q}$ of smooth forms.  We now turn to the notion of currents, following~\cite[Section~4]{CLD}.  Since $\cAsmc^{p,q}\subset\cA_c^{p,q}$, 
we call the dual notion of currents on $\cA_c^{p,q}$ ``strong currents'' to distinguish them from the currents on $\cAsmc^{p,q}$ in the sense of Chambert-Loir and Ducros. Note that the restriction of a strong current to $\cAsmc^{p,q}$ is a current, hence strong currents can be seen as a generalization of currents.

\begin{art}[Topology on the Space of Forms] \label{topology on forms}
  As in~\cite[4.1.1]{CLD}, replacing smooth tropicalization maps by harmonic tropicalization maps, we define a (locally convex) topology on the space $\cA_c^{p,q}(X)$ of compactly supported weakly smooth $(p,q)$-forms on $X$. Roughly speaking, convergence means that the supports are covered by finitely many compact strictly analytic subdomains $V_i$ and the strictly analytic subdomains tropicalize the forms such that all the derivatives of the corresponding Lagerberg forms converge uniformly.
	Then the topology on the subspace of compactly supported smooth forms on $X$ as defined in \cite[4.1.1]{CLD} is just the subspace topology induced by the inlusion $\cAsmc^{p,q}(X)\subset\cA_c^{p,q}(X)$.
\end{art}

\begin{art}[Strong Currents] \label{currents}
	A \emph{strong current $S$ on $X$ of bidegree $(p,q)$} is defined as a continuous  linear functional  $S\colon \cA_c^{p,q}(X)\to \R$.  As usual we write $\angles{\omega,S} = S(\omega)$ for $\omega\in\cA_c^{p,q}(X)$. We denote by $\cD_{p,q}(X)$ the space of strong currents on $X$. Note that the restriction of $S$ to $\cA_{{\rm sm},c}^{p,q}(X)$ is a current in the sense of \cite[4.2]{CLD}, i.e.~we have a canonical linear map $\cD_{p,q}(X) \to \cDsm_{p,q}(X)$.  

	We set $\cD(X) \coloneqq \bigoplus_{p,q} \cD_{p,q}(X)$.
	As in \cite[4.2]{CLD}, one shows that $\cD(X)$ is an $\cA(X)$-module and that there is a direct image of currents with respect to proper morphisms.
	The presheaf $U\mapsto \cD(U)$ of strong currents on open subsets $U$ of $X$ is a sheaf on $X$.
\end{art}

\begin{eg}[Current of Integration] \label{current of integration}
	The \emph{current of integration}  $\delta_X \in \cD_{d,d}(X)$ is defined by
	$$\langle \eta , \delta_X \rangle  \coloneqq \int_X \eta.$$
	It is shown in \cite[4.5]{CLD} that the irreducible components $X_i$ of $X$ come with natural multiplicities $m_i$ defined similarly as in algebraic geometry, and that
	$\delta_X = \sum_i m_i \delta_{V_i}$.

	More generally, a cycle $Z$ on $X$ of dimension $r$ is a locally finite formal sum $\sum_i m_i Z_i$ of irreducible Zariski closed subsets $Z_i$ of dimension $r$ and we define
	$$\langle \eta , \delta_Z \rangle  \coloneqq \sum_i m_i \int_{Z_i} \eta,$$
	leading to a strong current $\delta_Z \in \cD_{r,r}(X)$.
\end{eg}

\begin{art}[Forms as Currents] \label{current associated to a form}
	For any $\omega \in \cAps^{p,q}(X)$, the \emph{associated current} $[\omega]\in \cD_{d-p,d-q}(X)$ is defined by
	$$\langle \eta, [\omega] \rangle  \coloneqq \langle \eta,\omega \wedge \delta_X \rangle \coloneqq \int_X \omega \wedge \eta.$$
	By~\secref{forms as currents}, the linear map $\cAps^{p,q}(X) \to \cD_{d-p,d-q}(X)$, $\omega \to [\omega]$ is injective.
\end{art}

\begin{art}[Differentials on Currents] \label{differential on currents}
  As in \cite[4.4]{CLD}, we define differentials $\d'\colon \cD_{p,q}(X)\to \cD_{p-1,q}(X)$ and $\d''\colon \cD_{p,q}(X)\to \cD_{p,q-1}(X)$ by
	$$\langle \omega, \d'T \rangle = (-1)^{p+q+1} \langle \d'\omega, T \rangle \quad \text{and} \quad
	\langle \eta, \d''T \rangle = (-1)^{p+q+1} \langle \d''\eta, T \rangle $$
	for $T \in \cD_{p,q}(X),\omega \in \cA_c^{p-1,q}(X),$ and $\eta \in \cA_c^{p,q-1}(X)$.
	The sign is chosen such that the homomorphism $\cA \to \cD$ defined in~\secref{current associated to a form} respects the differentials $\d'$ and $\d''$; this uses the Theorem of Stokes~\ref{theorem of Stokes}.
\end{art}

\begin{art}[Current of a Meromorphic Function] \label{current of meromorphic function}
  Let $f$ be any regular meromorphic function on $X$ (recall that \defi{regular} means here that $f$ is a nonzero quotient of analytic functions that restrict to non-zero divisors on the stalks). We denote by $\divisor(f)$  the associated Weil divisor on $X$. Note that the support $|\div(f)|$ is a Zariski closed subset of $X$ of codimension at least $1$, as it is also the support of a principal Cartier divisor. By~\secref{support}, we have $\supp(\omega)\cap|\div(f)| = \emptyset$ for any $\omega \in A^{p,q}(X)$ if $p+q \geq 2d-1$. It follows that
	$$\langle \omega, [\log|f|] \rangle \coloneqq \int_X \log|f| \,\omega$$
	is well-defined for any $\omega \in A_c^{d,d}(X)$, since $\log|f|$ is smooth on $\supp(\omega)$.  One shows as in \cite[Lemma~4.6.1]{CLD} that $ [\log|f|] \in \cD_{d,d}(X)$.
\end{art}

We have the following strengthening of the \emph{Poincar\'e--Lelong formula}, as proved by Chambert-Loir and Ducros for the space of currents $\cDsm(X)$.

\begin{thm}[Poincar\'e--Lelong Formula] \label{Poincare-Lelong}
Let $f$ be a regular meromorphic function on $X$. Then we have the identity $\d'\d''[\log|f|]= \delta_{\divisor(f)}$ of strong currents in $\cD_{d-1,d-1}(X)$.
\end{thm}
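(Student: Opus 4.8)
The statement is a strengthening of the Poincar\'e--Lelong formula of Chambert-Loir and Ducros to the setting of strong currents. The key point is that the smooth version already gives the identity after restricting to smooth forms, so we only need to upgrade the test forms from smooth to weakly smooth. The plan is as follows. First I would reduce to the local case: both sides of the claimed identity are strong currents (the left by~\secref{differential on currents} applied to $[\log|f|]\in\cD_{d,d}(X)$ from~\secref{current of meromorphic function}, the right by Example~\ref{current of integration}), and the statement is local on $X$ since a strong current is determined by its restrictions to an open covering (the presheaf $U\mapsto\cD(U)$ is a sheaf by~\secref{currents}). Thus it suffices to check the identity on compactly supported weakly smooth forms $\eta\in\cA_c^{d-1,d-1}(U)$ for $U$ ranging over a base of opens whose closure is a strictly affinoid domain.

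Second, by definition of weakly smooth forms, after shrinking we may assume $\eta=h^*(\beta)$ for a harmonic tropicalization $h\colon\bar U\to\R^N$ and a compactly supported Lagerberg form $\beta\in\cA_c^{d-1,d-1}(h(\bar U))$; moreover we may arrange that $\log|f|$ is pulled back along the \emph{same} tropicalization map, by enlarging $N$ and adjoining the coordinate $-\log|f|$ to $h$ (this is legitimate since $-\log|f|$ is harmonic by Remark~\ref{sec:harmonic.remarks}(6), hence $h$ together with this extra coordinate is still a harmonic tropicalization map by~\secref{sec:wedge.product}). So $\log|f| = \varphi\circ h$ for a piecewise $\z$-linear function $\varphi$ on $h(\bar U)_d$, namely the projection to the last coordinate. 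Now I would apply the definition of the integration pairing together with Proposition~\ref{integration of piecewise smooth forms}(\ref{integration of preform}):
\[
\langle\eta,\d'\d''[\log|f|]\rangle = \int_X \log|f|\cdot \d'\d''\eta = \int_{h(X)_d} \varphi\cdot\d'\d''\beta = \langle \d'\d''\beta, [\varphi]\rangle_{h(X)_d},
\]
where $[\varphi]$ is the Lagerberg current on the weighted $\z$-polytopal complex $h(X)_d$ associated to $\varphi$ as recalled in~\secref{Lagerberg forms}. Here the middle equality uses that $h(X)_d$ is a balanced weighted polytopal complex by the Balancing Theorem~\ref{thm balancing} (since $X$ has no boundary), so integration over it is well-defined and $\d'\d''$ of a Lagerberg current makes sense; and the sign conventions of~\secref{differential on currents} must be matched against the tropical conventions.

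Third, I would invoke the \emph{tropical Poincar\'e--Lelong formula} of~\cite[Theorem~0.1]{GK}, recalled in~\secref{Lagerberg forms}, which states that $\d'\d''[\varphi]$ is the current of integration over the tropical Weil divisor $\divisor(\varphi)$ inside a subdivision of $h(X)_d$. This gives
\[
\langle\eta,\d'\d''[\log|f|]\rangle = \int_{\divisor(\varphi)} \beta.
\]
It then remains to identify $\divisor(\varphi)$, with its canonical tropical weights, with the tropicalization $h(\divisor(f))$ of the Weil divisor of $f$, equipped with the weights of Definition~\ref{def:pl.trop.mult.face}; once this is known, the right-hand side equals $\langle\eta,\delta_{\divisor(f)}\rangle$ by the definition of $\delta_Z$ in Example~\ref{current of integration} together with the compatibility $h([X])=h(X)_d$ from~\secref{Sturmfels-Tevelev}. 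I expect \textbf{this last identification to be the main obstacle}: the equality $h(\divisor(f))_{d-1} = \divisor(\varphi)$ as weighted polytopal complexes encodes exactly the compatibility between the algebraic multiplicities of the components of $\divisor(f)$ (via the tropical weights of~\secref{sec: tropical multiplicities}) and the combinatorial corner-locus weights of the PL function $\varphi$. This is already implicit in the smooth case of~\cite{CLD} (where $h$ is a moment map and the statement reduces to the classical fact that the tropical cycle of a principal divisor is the corner locus of the tropicalized function), and I would deduce the general case by the covering trick of Proposition~\ref{tropical decomposition}: $\rG$-locally $h$ is a smooth tropicalization map, where the identity holds by~\cite[4.6]{CLD}, and the two sides patch by inclusion-exclusion since both $h(\divisor(f))_{d-1}$ and $\divisor(\varphi)$ are additive over the covering. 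Finally, to pass from test forms of the special shape $h^*(\beta)$ to arbitrary compactly supported weakly smooth $(d-1,d-1)$-forms, I would use a partition of unity (available on the paracompact — indeed compact, after localizing — strictly analytic space, cf.~\cite[Proposition~3.3.6]{chambert_ducros12:forms_courants} combined with~\secref{locally given by lifts}), reducing a general $\eta$ to a finite sum of forms of the above shape; alternatively one observes that both sides, viewed as strong currents, agree because they agree on the subsheaf generated by such pullbacks, which is all of $\cA_c^{d-1,d-1}$.
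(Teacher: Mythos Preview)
Your reduction has a genuine gap at the step ``adjoin the coordinate $-\log|f|$ to $h$.'' The function $-\log|f|$ is harmonic only where $f$ is invertible; on a neighbourhood $\bar U$ that meets $|\divisor(f)|$ it is not even defined, so $(h,-\log|f|)$ is not a harmonic tropicalization of the compact domain $\bar U$, and its image in $\R^{N+1}$ is not a polytopal set but an unbounded polyhedral set. Since the test form $\eta\in\cA_c^{d-1,d-1}(X)$ may (and must, for the formula to have content) have support meeting $|\divisor(f)|$, you cannot simply shrink $\bar U$ to avoid the divisor. Consequently the chain
\[
\int_X \log|f|\cdot \d'\d''\eta \;=\; \int_{h(\bar U)_d}\varphi\cdot\d'\d''\beta
\]
is not justified by Proposition~\ref{integration of piecewise smooth forms}(\ref{integration of preform}), which requires a compact domain and a PL tropicalization. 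Your subsequent identification $h(\divisor(f))_{d-1}=\divisor(\varphi)$ is likewise not a formality: it is exactly the substantive analytic input that the paper isolates.

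The paper proceeds differently. It keeps the harmonic tropicalization $g$ fixed (no attempt to adjoin $-\log|f|$ globally) and instead adapts the \emph{stability of fibers} result \cite[Proposition~4.6.6]{CLD}: for $r$ small, one has $g(W_t)_{d-1}=g(\divisor(f|_W))_{d-1}$ for all $t\in B_r$, and $(F,g)(W_I)_d=(-\log I)\times g(\divisor(f|_W))_{d-1}$ for closed $I\subset(0,r]$, as equalities of weighted polytopal complexes. These two claims are reduced from harmonic to smooth tropicalizations via the telescoping sum of Proposition~\ref{tropical decomposition} (your inclusion--exclusion idea, applied at this earlier stage rather than to a corner-locus identity). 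With these in hand, the rest of the Chambert--Loir--Ducros argument goes through verbatim with weakly smooth test forms, because the integrals involved are over the compact strips $W_I$ where $f$ is invertible, and one passes to the limit $r\to 0$. In effect, the ``main obstacle'' you flag is handled by this stability statement, and the unbounded-coordinate issue is circumvented by working on $W_I$ rather than on all of $\bar U$.
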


\begin{proof}
We note that the Poincar\'e--Lelong formula in \cite[Th\'eor\`eme~4.6.5]{CLD} shows that this identity holds if we restrict to $[\log|f|]$ to $\cAsm^{d,d}(X)\subset\cA^{d,d}(X)$, so we have to show that it still holds for weakly smooth forms. The arguments are the same as in the proof of \cite[Th\'eor\`eme~4.6.5]{CLD}; we just indicate the small adjustments.

The result is local in $X$ and linear in $f$, so we may assume that $f \in \Ocal(X)$. We denote the corresponding morphism $X \to \A^{1,\an}$ also by $f$. As in the proof of \textit{ibid}, we may assume that $X$ satisfies the property  $S_1$ of Serre and that the morphism $f\colon X \to \A^{1,\an}$ is purely of relative dimension $d-1$.
Let $F\coloneqq -\log|f|$ be the corresponding smooth tropicalization map $X \setminus \{f=0\} \to \R$.

Crucial for the proof is the following result given in \cite[Proposition~4.6.6]{CLD}, which we adapt to our situation as follows. Let $W$ be a strictly affinoid domain in $X$ with a harmonic tropicalization $g \colon W \to \R^n$. Then $h \coloneqq (F,g)\colon (W\setminus\{f=0\}) \to \R \times \R^n$ is also a harmonic tropicalization.
For $t \in \A^{1,\an}$, we define $W_t \coloneqq f^{-1}(t)\cap W$ of $W$. For a closed interval $I \subset (0,\infty)$, we consider the analytic subdomain $W_I \coloneqq |f|^{-1}(I)$ of $W$.  By \cite[Th\'eor\`eme~3.2]{Ducros}, we get $(\Z,\R)$-polytopal sets $g(W_t)$ and $h(W_I)$ of dimension at most $d-1$ and $d$ respectively.

Let $B_r$ be the closed ball of radius $r$ in $\A^{1,\an}$ centered at the origin. We claim that there is $r >0$
with the following equalities of weighted polytopal complexes:
\begin{enumerate}
\item \label{claim A}
For all $t \in B_r$, we have $g(W_t)_{d-1} = g(\divisor(f))_{d-1}$.
\item \label{claim B}
For all closed intervals $I \subset (0,r]$ and  using the Cartesian product of weighted polytopal complexes, we have $(-\log(I),1) \times g(\divisor(f|_W))_{d-1}=h(W_I)_d$.
\end{enumerate}
This is shown in \cite[Proposition~4.6.6]{CLD} in case of a smooth tropicalization map $g$.
Since a harmonic tropicalization is piecewise linear, there is
a finite $\rG$-covering $(W_j)_{j \in J}$ of $W$ by compact strictly analytic subdomains such that $g|_{W_j}$ is a smooth tropicalization map. Using the telescoping sum from Proposition~\ref{tropical decomposition}, we see that the general case follows from the smooth case.

The remaining part of the proof of the Poincar\'e--Lelong formula works as in the proof of \cite[Theorem~4.6.5]{CLD} replacing smooth tropicalization maps by harmonic tropicalization maps.
\end{proof}

\section{Dolbeault Cohomology} \label{section: Dolbeault cohomology}

Let $X$ be a good strictly analytic space of dimension $d$ over a non-trivially valued non-Archimedean field $K$. To introduce Dolbeault cohomology groups, the following result is crucial. Its proof  will be completely analogous to the $\d'$-Poincar\'e Lemma  for smooth forms on Berkovich spaces given in \cite[Theorem~4.5]{Jell}; both follow easily from the second author's tropical Poincar\'e lemma. Note that by applying the Lagerberg involution $J$, every result for $\d''$ has a natural counterpart for $\d'$ after switching the role of $p$ and $q$.

\begin{thm} \label{Poincare Lemma in the stalks}
  For all $p\geq 0$, the differential operator $\d''$ on the sheaf $\AS$ induces a complex
	\begin{align*}
	0 \to \AS^{p,0} \to \dots \to \AS^{p,d} \to 0,
	\end{align*}
	of  sheaves on $X$ which is exact at $\AS^{p,q}$ for all  $q > 0$.
\end{thm}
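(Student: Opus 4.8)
The plan is to reduce the exactness of the weakly smooth Dolbeault complex at $\AS^{p,q}$ for $q>0$ to the corresponding statement for smooth Lagerberg forms on polytopal sets, exactly as in the proof of the $\d'$-Poincar\'e Lemma in~\cite{jell16:poincare_lemma}. Since exactness of a complex of sheaves can be checked on stalks, I would fix a point $x\in X$ and a germ $\omega\in\AS^{p,q}_x$ with $\d''\omega=0$, and seek a germ $\eta\in\AS^{p,q-1}_x$ with $\d''\eta=\omega$. By the very definition of weakly smooth forms (Definition~\ref{piecewise smooth forms}) and the fact that the presheaf of preforms and its sheafification have the same stalks (used in the proof of Proposition~\ref{harmonic lift to weakly smooth forms}), the germ $\omega$ is represented on a strictly affinoid neighbourhood $W$ of $x$ by a preform $(h,\alpha)$, where $h\colon W\to\R^n$ is a harmonic tropicalization map and $\alpha\in\cA^{p,q}(h(W))$ is a smooth Lagerberg form on the polytopal set $h(W)$. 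Using injectivity of $h^*$ from Proposition~\ref{harmonic lift to weakly smooth forms}\eqref{injectivity of lift} and the compatibility of $h^*$ with $\d''$, the condition $\d''\omega=0$ translates into $\d''\alpha=0$ in $\cA^{p,q}(h(W))$, at least after shrinking $W$ so that $h(W)$ is the germ at $h(x)$ of a fan (using~\secref{sec:ducros.germs}).

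The key step is then to invoke the tropical Poincar\'e Lemma for smooth Lagerberg forms on polytopal sets. Concretely, I would cite the second author's tropical Poincar\'e lemma (the polytopal $\d''$-Poincar\'e Lemma underlying~\cite[Theorem~4.5]{jell16:poincare_lemma}; see also~\cite[Section~4]{jell16:poincare_lemma} and~\cite{jell19:tropical_hodge_numbers}) which asserts that, locally on a polytopal set $S$, a $\d''$-closed smooth Lagerberg form of type $(p,q)$ with $q>0$ is $\d''$-exact. Applying this to $\alpha$ on a small enough neighbourhood of $h(x)$ in $h(W)$, we obtain $\beta\in\cA^{p,q-1}(h(W'))$ with $\d''\beta=\alpha$ on $h(W')$ for some strictly affinoid neighbourhood $W'\subset W$ of $x$. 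Pulling back via $h$, the preform $(h,\beta)$ defines a germ $\eta\in\AS^{p,q-1}_x$, and the compatibility of $h^*$ with $\d''$ gives $\d''\eta=h^*(\d''\beta)=h^*(\alpha)=\omega$ as germs at $x$. This proves exactness at $\AS^{p,q}$ for $q>0$. That $\d''\circ\d''=0$ and that the displayed sequence is a complex is immediate from the fact that $\AS$ is a bigraded differential sheaf (Theorem~\ref{intro: characterization weakly smooth}), and the sequence terminates at $\AS^{p,d}$ because the tropical variety $h(W)$ has dimension at most $d$, so there are no weakly smooth forms of type $(p,q)$ with $q>d$ by~\secref{support}.

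The main obstacle I anticipate is bookkeeping the shrinking of neighbourhoods so that all the tropical-side statements apply simultaneously: one must choose $W$ small enough that $h(W)$ agrees with the germ of a fan at $h(x)$ (so that the local tropical Poincar\'e lemma is available in the form needed), and then possibly shrink again so that the primitive $\beta$ exists on the whole of $h(W')$ rather than merely on a germ. Since harmonic tropicalization maps are $\rG$-locally smooth tropicalization maps (Remark~\ref{remark on tropicalization maps}) and the tropical Poincar\'e lemma is purely a statement about polytopal complexes, no genuinely new input beyond~\cite{jell16:poincare_lemma} is required here; the only subtlety is that we work with harmonic rather than smooth tropicalization maps, but this affects neither the target polytopal set $h(W)$ nor the smooth Lagerberg forms living on it, so the argument of~\cite[Theorem~4.5]{jell16:poincare_lemma} carries over verbatim once the reduction to preforms is in place. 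I would end the proof by remarking, as the excerpt already does, that applying the Lagerberg involution $J$ yields the analogous exact complex for $\d'$.
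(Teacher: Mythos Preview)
Your proposal is correct and follows essentially the same route as the paper's own proof: represent the germ by a preform $(h,\alpha)$ with $h$ a harmonic tropicalization, use injectivity of $h^*$ to transfer $\d''$-closedness to $\alpha$, apply the tropical Poincar\'e lemma from~\cite{jell16:poincare_lemma}, and pull back the primitive. The only refinement the paper makes explicit, and which you gesture at in your ``obstacle'' paragraph, is that rather than shrinking $W$ so that $h(W)$ becomes a fan (which it never quite does, since $h(W)$ stays a compact polytopal set), one takes the open star $\Omega$ of $h(x)$ in a polytopal complex structure on $h(W)$, checks via~\cite[Lemmas~3.5, 3.7]{jell16:poincare_lemma} that $\Omega$ is polyhedrally star-shaped, and applies~\cite[Theorem~2.18]{jell16:poincare_lemma} there; the pullback then lives on the open neighbourhood $h^{-1}(\Omega)\cap U$ of $x$.
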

\begin{proof}
	By definition of $\d''$, this is a complex. To prove exactness, we pick a $\d''$-closed element in the stalk of $\AS^{p,q}$ at $x \in X$ represented by a  weakly smooth form $\omega \in\cA^{p,q}(U)$ for an open neighbourhood $U$ of $x$. Shrinking $U$ and using the definition of weakly smooth forms, we may assume that $W \coloneqq \overline{U}$ is a compact strictly analytic domain of $X$,
	that $\omega \in\cA^{p,q}(W)$ with $\d''\omega=0$,
	and that there is a harmonic tropicalization $h \colon W \to \R^n$ and a Lagerberg form $\alpha \in\cA^{p,q}(h(W))$ with $h^*(\alpha)= \omega$. We have seen in Proposition~\ref{harmonic lift to weakly smooth forms} that $h^*$ is injective, so $\d''\alpha=0$. Since $h(W)$ is the support of a $\z$-polytopal complex $\Pi$, the union $\Omega$ of the relative interiors of all $\sigma \in \Pi$ with $h(x) \in \sigma$ is a star shaped open neighbourhood of $h(x)$ in $h(W)$. By \cite[Lemmas~3.5 and~3.7]{Jell}, $\Omega$ is a polyhedrally star shaped open subset of   $h(W)$. The tropical Poincar\'e lemma  \cite[Theorem~2.18]{Jell} shows that there is $\beta \in\cA^{p,q-1}(\Omega)$ with $\alpha=\d''\beta$ on $\Omega$. We conclude that $\omega= h^*(\alpha)=\d''h^*(\beta)$ on $h^{-1}(\Omega) \cap U$, proving exactness at  $\AS_x^{p,q}$.
\end{proof}

\begin{defn} \label{definition Dolbeault cohomology}
	Applying the global section functor to the complex in Theorem~\ref{Poincare Lemma in the stalks}, we get the \emph{Dolbeault complex}
	\begin{equation} \label{Dolbeault complex}
	0 \to \AS^{p,0}(X) \to \dots \to \AS^{p,d}(X) \to 0.
	\end{equation}
	The resulting cohomology is called the \emph{Dolbeault cohomology} and the cohomology groups are denoted by $H^{p,q}(X)$ for any $p,q \in \N$.
\end{defn}

The following result and its proof are now analogous to \cite[Corollary~4.6]{Jell}

\begin{cor} \label{identification with singular cohomology}
	Assume that $X$ is a good paracompact  strictly analytic space over $K$. For $p \in N$, let $\Lcal^p$ be the kernel of $\d''\colon\AS^{p,0}\to \AS^{p,1}$. Then $H^{p,q}(X)$ is naturally isomorphic to the sheaf cohomology $H^q(X,\Lcal^p)$. For $p=0$, this means that $H^{0,q}(X)$ is naturally isomorphic to the singular cohomology $H_{\rm sing}^q(X,\R)$.
\end{cor}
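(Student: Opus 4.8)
The statement is the Dolbeault-type theorem identifying $H^{p,q}(X)$ with sheaf cohomology of the kernel sheaf $\Lcal^p$, and in the case $p=0$ with singular cohomology. The plan is to run the standard abstract de Rham / Dolbeault argument: exhibit the Dolbeault complex of sheaves as a resolution of $\Lcal^p$ by acyclic sheaves, then invoke the abstract de Rham theorem.

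\emph{Step 1: The Dolbeault complex resolves $\Lcal^p$.} By Theorem~\ref{Poincare Lemma in the stalks}, for each fixed $p$ the complex of sheaves $0 \to \AS^{p,0} \to \AS^{p,1} \to \cdots \to \AS^{p,d} \to 0$ is exact at $\AS^{p,q}$ for $q>0$, with differential $\d''$. By definition $\Lcal^p = \ker(\d''\colon \AS^{p,0}\to \AS^{p,1})$, so the augmented complex
\[ 0 \to \Lcal^p \to \AS^{p,0} \to \AS^{p,1} \to \cdots \to \AS^{p,d} \to 0 \]
is an exact sequence of sheaves on $X$; that is, the Dolbeault complex is a resolution of $\Lcal^p$.

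\emph{Step 2: The sheaves $\AS^{p,q}$ are acyclic.} This is where paracompactness is used. The sheaves $\AS^{p,q}$ are sheaves of modules over the sheaf of weakly smooth functions $\AS^{0,0}$, and one shows $\AS^{0,0}$ admits partitions of unity subordinate to any open cover: indeed, smooth partitions of unity exist on a good paracompact analytic space by~\cite[Proposition~3.3.6]{chambert_ducros12:forms_courants}, and smooth functions are weakly smooth by Proposition~\ref{subsheaves of forms}. Hence $\AS^{p,q}$ is a fine sheaf, and fine sheaves on a paracompact Hausdorff space are acyclic (and soft, hence $\Gamma$-acyclic). Applying the abstract de Rham theorem to the acyclic resolution of Step 1, the cohomology of the complex of global sections $0 \to \AS^{p,0}(X) \to \cdots \to \AS^{p,d}(X) \to 0$ computes $H^q(X,\Lcal^p)$; but this cohomology is by definition $H^{p,q}(X)$. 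This gives the natural isomorphism $H^{p,q}(X)\cong H^q(X,\Lcal^p)$.

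\emph{Step 3: The case $p=0$.} It remains to identify $\Lcal^0$ with the constant sheaf $\R_X$. An element of $\AS^{0,0}(U)$ is a weakly smooth function, and by Proposition~\ref{weakly smooth harmonic functions} a weakly smooth function $f$ with $\d'\d''f = 0$ is exactly an $\R$-harmonic function; but the kernel of $\d''$ on $\AS^{0,0}$ already forces $\d'\d''f=\d'(0)$, so $\ker(\d''\colon\AS^{0,0}\to\AS^{0,1})$ consists precisely of the locally constant functions --- one checks that a weakly smooth $f$ with $\d'' f = 0$ is, locally, the pullback $h^*(\alpha)$ of a Lagerberg function $\alpha$ with $\d''\alpha = 0$ on a tropical variety, and such $\alpha$ is constant on each face; combined with connectedness of the tropical variety of a small enough neighbourhood (a polyhedrally star-shaped set as in the proof of Theorem~\ref{Poincare Lemma in the stalks}), $f$ is locally constant. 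Thus $\Lcal^0 = \R_X$, and $H^{0,q}(X) \cong H^q(X,\R_X) = H^q_{\rm sing}(X,\R)$, the last equality because sheaf cohomology with constant coefficients on a paracompact Hausdorff space (in particular a good paracompact analytic space, which is locally contractible) agrees with singular cohomology.

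\emph{Main obstacle.} The essential points are already supplied by the excerpt: Theorem~\ref{Poincare Lemma in the stalks} gives the local exactness, and the partition-of-unity statement gives fineness. The one step requiring a little care is the acyclicity/softness of $\AS^{p,q}$ on a paracompact space and the precise form of the abstract de Rham theorem being invoked; and for $p=0$, verifying $\ker(\d''|_{\AS^{0,0}}) = \R_X$ and that singular cohomology coincides with sheaf cohomology of $\R_X$, which uses local contractibility of Berkovich analytic spaces. None of these is deep, but they are the places where one must be explicit about hypotheses.
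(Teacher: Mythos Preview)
Your proposal is correct and follows essentially the same approach as the paper: use Theorem~\ref{Poincare Lemma in the stalks} to get a resolution of $\Lcal^p$, use smooth partitions of unity from~\cite[Proposition~3.3.6]{chambert_ducros12:forms_courants} to show the $\AS^{p,q}$ are fine (hence acyclic), and conclude by the abstract de Rham theorem; for $p=0$ identify $\Lcal^0$ with the constant sheaf $\underline\R$.

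One small remark on Step~3: the invocation of Proposition~\ref{weakly smooth harmonic functions} is a red herring. Knowing that $\d''f=0$ implies $\d'\d''f=0$ only tells you $f$ is harmonic, and harmonic functions are certainly not locally constant in general, so that sentence does not justify the claim. Fortunately the argument you give after the dash is the correct one and stands on its own: locally $f=h^*(\alpha)$ with $\d''\alpha=0$, and for a smooth function $\alpha$ on a polytopal set this forces all partial derivatives to vanish on each face, hence $\alpha$ (and so $f$) is locally constant. You can simply drop the reference to Proposition~\ref{weakly smooth harmonic functions}. The paper itself just asserts $\Lcal^0=\underline\R$ without spelling this out, so your added detail (and the remark on local contractibility for the comparison with singular cohomology) is a welcome elaboration.
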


\begin{proof}
	Since $X$ admits smooth partitions of unity \cite[Propostion~3.3.6]{CLD}, we conclude that the sheaves $\AS^{p,q}$ in the complex in Theorem~\ref{Poincare Lemma in the stalks} are fine and hence acyclic. This proves the first claim. For $p=0$, we have $\Lcal^p = \underline{\R}$ and hence we get the last claim.
\end{proof}

\begin{art} \label{comparison of Dolbeault cohomologies}
	Replacing weakly smooth forms by smooth forms in \eqref{Dolbeault complex}, we obtain a Dolbeault complex for smooth differential forms on $X$; we denote the corresponding cohomology by $H_{\rm sm}^{p,q}$. The  inclusion $\cA_{\rm sm}^{p,q} \to \cA^{p,q}$ induces a homomorphism between the corresponding Dolbeault complexes, so we get a linear map $H_{\rm sm}^{p,q}(X) \to H^{p,q}(X)$ for every $p,q \in \N$.
\end{art}

\begin{rem} \label{Liu's cycle map}
	Using~\secref{comparison of Dolbeault cohomologies}, it is clear that we also have \emph{Liu's cycle class map} for the Dolbeault cohomology based on weakly smooth forms. The details are as follows.

	Let $Y$ be a smooth separated scheme of finite type over $K$. We denote by $Y^\an$ the Berkovich space of $Y$ obtained by analytification \cite[3.4]{BerkovichSpectral} and let $\CH^p(Y)$ be the Chow group of $Y$ graded by codimension. In \cite[Section~3]{Liu}, Liu constructed a tropical cycle class map ${\rm cl}_{\trop}\colon\CH^p(Y)\to H_{\rm sm}^{p,p}(Y^\an).$ Composing with the linear map between the Dolbeault cohomologies from~\secref{comparison of Dolbeault cohomologies}, we get the desired \emph{cycle class map}
	$${\rm cl}\colon\CH^p(Y) \stackrel{{\rm cl}_{\trop}}{\longrightarrow} H_{\rm sm}^{p,p}(Y^\an) \longrightarrow  H^{p,p}(Y^\an).$$
	Note that ${\rm cl}_{\trop}\colon \CH^\bullet(Y) \to H_{\rm sm}^{\bullet,\bullet}(Y^\an)$ is a homomorphism of graded rings and hence the same is true for the cycle class map
	${\rm cl}\colon\CH^\bullet(Y) {\longrightarrow}H^{\bullet,\bullet}(Y^\an)$.
\end{rem}

We have the following generalization of  \cite[Theorem~3.7]{Liu} to weakly smooth forms.

\begin{thm} \label{Liu's cycle class theorm}
	Let $Y$ be a separated smooth scheme of pure dimension $d$. Let $Z$ be a cycle of codimension $p$ in $Y$ and let $\omega \in A_c^{n-p,n-p}(Y^\an)$ with $\d''\omega=0$. Then we have
	\begin{equation} \label{Liu integration}
	\int_{Z^\an} \omega = \int_{Y^\an} {\rm cl}(Z) \wedge \omega.
	\end{equation}
\end{thm}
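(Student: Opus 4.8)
\textbf{Proof proposal for Theorem~\ref{Liu's cycle class theorm}.}
The plan is to reduce the statement to the corresponding result of Liu for smooth forms, namely \cite[Theorem~3.7]{Liu}, by exploiting the fact that every weakly smooth form is $\rG$-locally a smooth form together with the compatibility of the cycle class maps established in Remark~\ref{Liu's cycle map}. First I would note that both sides of \eqref{Liu integration} make sense: the left-hand side is an integral of a compactly supported weakly smooth $(n-p,n-p)$-form over the analytic space $Z^\an$ of pure dimension $n-p$, which is defined by the integration theory of Section~\ref{section: integration and currents}, and the right-hand side involves the weakly smooth form ${\rm cl}(Z)\wedge\omega$, which is compactly supported of type $(d,d)$ on $Y^\an$ (here $d=n$). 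By linearity in $Z$ we may assume $Z$ is a prime cycle, i.e.\ an irreducible closed subvariety of codimension $p$.

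The key observation is that Liu's cycle class ${\rm cl}_{\trop}(Z)\in H_{\rm sm}^{p,p}(Y^\an)$ is represented by a \emph{smooth} closed form $\eta_Z$, and by Remark~\ref{Liu's cycle map} the class ${\rm cl}(Z)\in H^{p,p}(Y^\an)$ is the image of ${\rm cl}_{\trop}(Z)$ under the natural map $H_{\rm sm}^{p,p}(Y^\an)\to H^{p,p}(Y^\an)$, so it is represented by the \emph{same} form $\eta_Z$ viewed now as a weakly smooth form. Thus $\int_{Y^\an}{\rm cl}(Z)\wedge\omega=\int_{Y^\an}\eta_Z\wedge\omega$, and the right-hand side only involves the smooth form $\eta_Z$ wedged with the weakly smooth form $\omega$. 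Next I would handle the weakly smooth form $\omega$: since $\supp(\omega)$ is compact, there is a finite $\rG$-covering $(V_i)_{i\in I}$ of a neighbourhood of $\supp(\omega)$ by compact strictly analytic domains such that $\omega|_{V_i}$ is a smooth form (Remark~\ref{piecewise smooth and G-topology}); note $\d''\omega=0$ implies $\d''(\omega|_{V_i})=0$. Using the inclusion-exclusion formula \eqref{general inclusion exclusion formula} from Proposition~\ref{integration of piecewise smooth forms} applied on $Y^\an$ (for $\int_{Y^\an}\eta_Z\wedge\omega$) and the analogous inclusion-exclusion on $Z^\an$ (for $\int_{Z^\an}\omega$, using the restrictions $V_J\cap Z^\an$), we reduce to proving
\[
\int_{V_J\cap Z^\an}\omega|_{V_J} = \int_{V_J}\eta_Z|_{V_J}\wedge\omega|_{V_J}
\]
for each $\emptyset\neq J\subset I$, where both sides now involve only smooth forms. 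Here one must check that Liu's cycle class is compatible with restriction to analytic subdomains, i.e.\ that $\eta_Z|_{V_J}$ represents the restriction of ${\rm cl}_{\trop}(Z)$ and that the cycle $Z$ restricts to $V_J\cap Z^\an$ appropriately; this follows from the functoriality properties of ${\rm cl}_{\trop}$ established in \cite{Liu}, but it is the point requiring care since $V_J$ is in general only an analytic domain and not itself the analytification of a scheme.

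The main obstacle I anticipate is precisely this last reduction step: \cite[Theorem~3.7]{Liu} is stated for $Y=Y^\an$ the analytification of a smooth scheme and for smooth compactly supported closed forms, so one must verify that the statement localizes correctly to strictly analytic subdomains $V_J$ — both that the cycle class map and Liu's integration formula are compatible with such restrictions, and that the intersection $V_J\cap Z^\an$ is well-behaved (of pure dimension $n-p$ with the expected fundamental cycle, using that $Z^\an$ is pure-dimensional and $V_J$ is an analytic domain). Once the localization is in place, the smooth case \cite[Theorem~3.7]{Liu} gives the equality on each $V_J$ and the inclusion-exclusion identities assemble these into \eqref{Liu integration}. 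An alternative, perhaps cleaner route that avoids restricting to domains: since $\omega$ is $\d''$-closed and weakly smooth, one could instead argue at the level of currents, showing that $[\omega]$ and its smooth approximations define the same class and invoking the current-theoretic formulation of Liu's theorem together with the Poincaré--Lelong type results of Section~\ref{section: strong currents}; but I expect the $\rG$-local reduction above to be the shortest path given what is already available in the excerpt.
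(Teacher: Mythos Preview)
Your inclusion--exclusion reduction has a genuine gap, and you in fact identified it yourself. After restricting to a compact strictly analytic domain $V_J$, the form $\omega|_{V_J}$ is smooth and $\d''$-closed, but it is \emph{not} compactly supported in $\Int(V_J)$, and $V_J$ has nonempty boundary and is not the analytification of a smooth scheme. Liu's theorem~\cite[Theorem~3.7]{Liu} and his cycle class map are only available on $Y^{\an}$ for a smooth separated scheme $Y$, with compactly supported test forms; there is no version for analytic subdomains with boundary, and the proof of Liu's theorem uses Stokes' formula on a boundaryless space in an essential way. So the displayed identity on $V_J$ is neither a case of Liu's theorem nor does it follow from its proof. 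The functoriality of $\cl_{\trop}$ you appeal to is for morphisms of schemes, not for restriction to analytic domains.

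The paper takes precisely the ``alternative route'' you sketch in your last paragraph. One does not reduce to Liu's result; instead one observes that Liu's \emph{proof} only uses the Dolbeault complex, Stokes' theorem, and the Poincar\'e--Lelong formula, all of which have been established in this paper for weakly smooth forms and strong currents (Theorems~\ref{theorem of Stokes} and~\ref{Poincare-Lelong}). Replacing the sheaf $\cD^{\rm sm}$ of currents by the sheaf $\cD$ of strong currents, Liu's argument then goes through \textit{mutatis mutandis} for $\omega\in\cA_c^{n-p,n-p}(Y^{\an})$. This is why the paper invested in proving Stokes and Poincar\'e--Lelong in the weakly smooth setting: they are the actual inputs, and no $\rG$-local decomposition of $\omega$ is needed.
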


\begin{proof}
	Liu proved this for smooth forms $\omega$ in  \cite[Theorem~3.7]{Liu}. 	We show here how his proof has to be adjusted to get the identity for weakly smooth $\omega$. We replace the sheaf of currents $\cDsm$ by the sheaf $\cD$ of strong currents on $Y^\an$. We have seen that all of the tools used in in the proof of~\cite[Theorem~3.7]{Liu} are available also for weakly smooth forms: most importantly, Stokes's Theorem~\ref{theorem of Stokes} and the Poincar\'e--Lelong Formula~\ref{Poincare-Lelong}.  Liu's proof in \textit{ibid.}\ thus works \textit{mutatis mutandis}.
\end{proof}

\begin{art}[Failure of the K\"unneth Formula] \label{Liu and degree}
	If $Y$ is a smooth proper scheme over $K$ of pure dimension $d$ and $Z$ is a $0$-cycle in $Y$, then applying \eqref{Liu integration} to the constant function $\omega$, we get
	\begin{equation} \label{Liu degree}
	\deg(Z)= \int_{Y^\an}{\rm cl}(Z)
	\end{equation}
	as in \cite[Corollary~3.10]{Liu}.
	Let ${\rm NS}^{p}(Y)$ be the quotient of $\CH^p(Y)$ modulo numerical equivalence. Then we have
	\begin{equation} \label{Neron-Severi group lower bound}
	\dim_\R(H^{p,p}(Y^\an)) \geq \dim_\Q({\rm NS}^{p}(Y)_\Q).
	\end{equation}
	Liu has shown this for the tropical Dolbeault cohomology \cite[Corollory~3.11]{Liu}, and the same arguments using \eqref{Liu degree} give \eqref{Neron-Severi group lower bound}. As in~\cite[Example~3.12]{Liu}, this gives a counterexample for the K\"unneth formula in Dolbeault cohomology by considering $Y=C \times C$ for an irreducible smooth projective  curve $C$ of genus $g \geq 1$, using Theorem~\ref{intro: Hodge diamond}.
\end{art}

\section{Galois action on forms} \label{section: Galois action}

We consider a good strictly analytic space $X$ over a non-trivially valued field $K$.
Let $K'/K$ be a finite Galois extension with Galois group $G$, and let $X'$ be the base change of $X$ to $K'$.

\begin{art}[Galois Action on Forms] \label{action on forms}
	We first explain how $G$ operates on weakly smooth forms $\omega' \in \cA^{p,q}(X')$. Recall that $\omega'$ is given by an open covering $\{U'\}_{U' \in I}$ of $X'$  and by a family $(h_{U'},\alpha_{U'})_{U' \in I}$ where each $\overline{U'}$ is a compact  strictly analytic domain with a harmonic tropicalization  $h_{U'}\colon \overline{U'} \to \R^{n_{U'}}$ and where $\alpha_{U'} \in \cA^{p,q}(\R^{n_{U'}})$. We require compatibility for the tuples $(h_{U'},\alpha_{U'})$ on intersections.

	For $\sigma \in G$, we define $\sigma^*(\omega') \in \cA^{p,q}(X')$ by the open covering $\{\sigma^{-1}(U')\}_{U' \in I}$ of $X'$ and by the family $(h_{U'} \circ \sigma ,\alpha_{U'})_{U' \in I}$. Obviously, the compatibility conditions are again satisfied and one checks that this gives a well-defined weakly smooth form.
\end{art}

\begin{prop} \label{Galois invariant forms}
  The above $G$-action and pullback give $\cA^{p,q}(X)\cong \cA^{p,q}(X')^G $.
\end{prop}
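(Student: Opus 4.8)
The plan is to prove both statements of the proposition: that the pullback $\pi^*\colon\cA^{p,q}(X)\to\cA^{p,q}(X')$ lands in the $G$-invariants and that this map is a bijection onto $\cA^{p,q}(X')^G$. Since $K'/K$ is a finite \emph{Galois} extension, we have $X = X'/G$ at the level of topological spaces and $G$ acts on $X'$ over $X$, so the notation and constructions of~\secref{action on forms} apply. First I would check that $\pi^*$ is well-defined into the invariants: if $\omega\in\cA^{p,q}(X)$ is locally given by a preform $(h,\alpha)$ on $U$, then $\pi^*\omega$ is given on $\pi^{-1}(U)$ by the preform $(h\circ\pi,\alpha)$; for $\sigma\in G$ we have $h\circ\pi\circ\sigma = h\circ\pi$ since $\pi\circ\sigma = \pi$, and hence $\sigma^*(\pi^*\omega) = \pi^*\omega$ directly from the definition of the $G$-action in~\secref{action on forms}. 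So $\pi^*$ factors through $\cA^{p,q}(X')^G$.

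Next I would prove injectivity of $\pi^*$. Since $\pi$ is surjective (indeed $X = X'/G$), any point $x\in X$ is the image of some $x'\in X'$, and the natural map on residue fields $\sH(x)\to\sH(x')$ is a finite extension, in particular nonzero; thus the induced map on germs is faithful enough that a weakly smooth form vanishing after pullback must have vanished to begin with. Concretely, if $\pi^*\omega = 0$ then locally $(h\circ\pi,\alpha) = 0$ as a preform on $X'$, so by Lemma~\ref{sec:ps.form.on.complex} we get $\alpha = 0$ on $(h\circ\pi)(\overline{U'}) = h(\overline{U})$ (here I use that $\pi$ is surjective onto $U$, so the tropicalization image is unchanged), whence $(h,\alpha) = 0$ and $\omega = 0$ by Proposition~\ref{harmonic lift to weakly smooth forms}\eqref{preforms to forms}.

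The substantive part is surjectivity: given $\omega'\in\cA^{p,q}(X')^G$, I must produce $\omega\in\cA^{p,q}(X)$ with $\pi^*\omega = \omega'$. The strategy is descent. Working locally on $X$, choose $x\in X$ and a point $x'\in X'$ above it; after shrinking, $\omega'$ is represented near $x'$ by a preform $(h',\alpha')$ with $h'\colon\overline{W'}\to\R^{n}$ a harmonic tropicalization. Replacing $h'$ by the ``$G$-symmetrized'' tuple $(h'\circ\sigma_1,\ldots,h'\circ\sigma_{|G|})$ — which is again harmonic by Remark~\ref{remark on tropicalization maps}, since each $h'\circ\sigma_i$ is a composition of $h'$ with the morphism $\sigma_i$ and harmonic tropicalizations pull back under morphisms (Proposition~\ref{prop:harmonic.properties}) — and adjusting $\alpha'$ accordingly, I may assume $h'$ is $G$-invariant, i.e. $h'\circ\sigma = h'$ for all $\sigma\in G$. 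Then each coordinate $h'_i\colon\overline{W'}\to\R$ is a $G$-invariant $\Z$-harmonic (in particular $\Z$-PL) function, so by Corollary~\ref{pl infinite Galois extensions} (applied with $\Lambda = \Z$; note $\Z$ is allowed there) together with Proposition~\ref{infinite Galois extensions}, it descends to a $\Z$-harmonic function $h_i$ on a compact strictly analytic neighbourhood $\overline{W}$ of $x$ in $X$, with $h_i\circ\pi = h'_i$. This gives a harmonic tropicalization $h\colon\overline{W}\to\R^n$ with $h\circ\pi = h'$. Now $G$-invariance of $\omega'$ forces $\alpha'$ to be a $G$-invariant Lagerberg form on $h'(\overline{W'}) = h(\overline{W})$; but $G$ acts trivially on this tropical variety (the maps $\sigma$ act trivially after tropicalization precisely because $h' = h'\circ\sigma$), so $\alpha'$ is already a Lagerberg form on $h(\overline{W})$ with no further condition, and $(h,\alpha')$ is a preform on $W$ pulling back to $(h',\alpha')$. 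By the gluing/uniqueness in Proposition~\ref{harmonic lift to weakly smooth forms} these local descents are compatible on overlaps (two descents of the same $\omega'$ agree after pullback by the injective $\pi^*$, hence agree), so they patch to a global $\omega\in\cA^{p,q}(X)$ with $\pi^*\omega = \omega'$. The main obstacle I anticipate is the bookkeeping in the symmetrization step — ensuring that after replacing $h'$ by a $G$-invariant refinement, the form $\alpha'$ transforms correctly and that the local descents are canonical enough to glue — but this is routine given the injectivity of $\pi^*$ and the descent results for PL and harmonic functions already established.
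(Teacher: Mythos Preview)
Your overall strategy matches the paper's, but there are two genuine gaps.

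First, your claim that Corollary~\ref{pl infinite Galois extensions} and Proposition~\ref{infinite Galois extensions} apply with $\Lambda=\Z$ is false: both results explicitly assume $\Lambda$ is \emph{divisible} (the proof goes through Proposition~\ref{prop:pl.finite.flat}, which uses the norm of a line bundle and divides by the degree of the cover). So a $G$-invariant $\Z$-harmonic function on $X'$ descends a priori only to a $\Q$-harmonic function on $X$. The paper fixes this by descending the symmetrized tropicalization to a $\Q$-harmonic map $h$, then choosing a nonzero integer $m$ such that $mh$ is $\Z$-harmonic and replacing each $(h_\rho,\alpha'_\rho)$ by $(mh_\rho,[\tfrac1m]^*\alpha'_\rho)$ so that the preform is unchanged but the tropicalization is now integral.

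Second, your symmetrization $(h'\circ\sigma_1,\ldots,h'\circ\sigma_{|G|})$ is only defined on $\bigcap_i\sigma_i^{-1}(\overline{W'})$, and this intersection contains $x'$ only if the entire $G$-orbit of $x'$ lies in $\overline{W'}$. When the fiber $\pi^{-1}(x)$ has more than one point (equivalently, the stabilizer $H$ of $x'$ is a proper subgroup of $G$), a small neighbourhood $W'$ of $x'$ on which $\omega'$ is represented by a single preform will not contain the other orbit points, so your tuple is not defined near $x'$. The paper handles this by choosing pairwise disjoint strictly affinoid neighbourhoods $\bar U_\rho$ of the orbit points $\rho(x')$ for $\rho$ ranging over a set $R$ of representatives of $G/H$, presenting $\omega'$ by a preform $(h_\rho,\alpha'_\rho)$ on each, using $G$-invariance of $\omega'$ to relate the $\alpha'_\rho$, and then defining on the disjoint union $\bigcup_{\rho\in R}\bar U_\rho$ the map whose restriction to $\bar U_\rho$ is $\prod_{\mu\in R}h_\mu\circ\mu\circ\rho^{-1}$; this map is $G$-invariant by construction and hence descends. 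Your sketch is essentially correct in the special case $H=G$, but it does not address the bookkeeping needed when the orbit is nontrivial.
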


\begin{proof}
	Let $\pi\colon X' \to X$ the structure map. Base change of forms gives a natural homomorphism $\pi^* \colon \AS_X \to (\pi_*(\AS_{X'}))^G$ which is clearly a monomorphism. We claim that
	\begin{equation} \label{sheaf invariance}
	\AS_{X} \cong (\pi_*(\AS_{X'}))^G.
	\end{equation}
	We have to show that the homomorphism between the stalks in $x$ is bijective.  This is local in $x$ and so we may assume that $X$ is strictly affinoid.
	By \cite[Proposition~1.3.5]{BerkovichSpectral}, we have $X \cong X'/G$.
	Injectivity on stalks in $x$ follows from Proposition~\ref{harmonic lift to weakly smooth forms}.

	It remains to prove surjectivity.
	We pick a $(p,q)$-form $\omega'$ in the stalk of $(\pi_*(\AS_{X'}))^G$ at $x$. We have to show that $\omega'$ is  given by a weakly smooth form in a neighbourhood of $x$.
	Choose $x' \in \pi^{-1}(x)$, let $H$ be the stabilizer of $x'$, and let $R$ be a system of representatives of $G/H$ in $G$.
        There is a family of open neighbourhoods $U_\rho$ of $\rho(x')$ for $\rho\in R$ whose closures $\bar U_\rho$ are pairwise disjoint strictly affinoid domains, such that $\omega'|_{U_\rho}$ is given by a preform $(h_\rho,\alpha_\rho')$ for harmonic tropicalizations $h_\rho\colon\bar U_\rho\to\R^{n_\rho}$ and forms $\alpha_\rho'\in\cA^{p,q}(\R^{n_\rho})$.  Passing to smaller $U_\rho$'s, we may assume that $\sigma(U_\rho)= U_{\sigma \rho}$ for all $\sigma \in G$ and all $\rho \in R$. Moreover, $G$-invariance of $\omega'$ means that we may assume
	\begin{equation} \label{consequence of G-invariance}
	h_{\sigma \rho}^*(\alpha_{\sigma \rho}')=(h_\rho \circ \sigma^{-1})^*(\alpha_\rho')
	\end{equation}
	for all $\sigma \in G$ and all $\rho \in R$. We now define a harmonic tropicalization
	$$h'\colon \bigcup_{\rho \in R} {\bar U_\rho} \longrightarrow \prod_{\mu \in R} \R^{n_\mu}$$
	given on ${\bar U_\rho}$ by $\prod_{\mu \in R} h_\mu \circ \mu \circ \rho^{-1}$.
	Since $\bigcup_{\rho \in R} {U_\rho}$ is $G$-invariant, there is an open subset $U$ of $X$ with $\pi^{-1}(U) =\bigcup_{\rho \in R} {U_\rho}$.
	Passing to a smaller affinoid neighbourhood of $x$, we may assume that the closure $\bar U$ of $U$ is strictly affinoid.
	By construction, the harmonic tropicalization $h'$ is $G$-invariant, so Proposition~\ref{infinite Galois extensions}  shows that there is a $\Q$-harmonic map 
	$h\colon \bar U \to \prod_{\mu \in R} \R^{n_\mu}$
	with $h'=h \circ \pi$. 
	Replacing all $(h_\rho,\alpha_\rho')$ by $(mh_\rho,[\frac{1}{m}]^*\alpha_\mu')$ for a suitable non-zero integer $m$, we may assume that $h$ is $\Z$-harmonic and is hence a harmonic tropicalization map. 
	Let $p_\mu\colon \prod_{\mu \in R} \R^{n_\mu} \to \R^{n_\mu}$ be the projection to the $\mu$-th factor. It follows from \eqref{consequence of G-invariance} that the restriction of $p_\mu^*(\alpha_\mu')$ to $h'(\bigcup_{\rho \in R} {\bar U_\rho})$ is independent of the choice of $\mu$. For any $\mu \in R$, this shows that
	the weakly smooth form $\omega \coloneqq h^*(p_\mu^*(\alpha_\mu'))$ on $U$ satisfies $\pi^*(\omega)=\omega'$, proving surjectivity and hence the Proposition.
      \end{proof}

      The Galois action on forms respects the differential graded algebra structure on $\cA(X)$, so it induces an action on the Dolbeault cohomology groups $H^{p,q}(X)$.

\begin{cor} \label{Galois action on cohomology}
	We have $H^{p,q}(X) \cong H^{p,q}(X')^G$.
\end{cor}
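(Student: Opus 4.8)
The plan is to deduce the statement directly from Proposition~\ref{Galois invariant forms} by a standard argument relating the cohomology of a complex of $G$-modules to the cohomology of the invariants, using that we are in characteristic zero (so taking $G$-invariants is exact on $\R[G]$-modules). First I would recall that by Proposition~\ref{Galois invariant forms}, pullback along the structure morphism $\pi\colon X'\to X$ identifies the Dolbeault complex $\cA^{p,\bullet}(X)$ with the subcomplex $\cA^{p,\bullet}(X')^G$ of $G$-invariant forms; this is an isomorphism of complexes because the $G$-action on $\cA(X')$ commutes with $\d''$ (the $G$-action respects the differential graded algebra structure, as noted just before the corollary) and because $\pi^*$ is a morphism of differential bigraded $\R$-algebras. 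Hence $H^{p,q}(X) = H^q\bigl(\cA^{p,\bullet}(X')^G\bigr)$.

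It therefore remains to show that $H^q\bigl(\cA^{p,\bullet}(X')^G\bigr) = H^q\bigl(\cA^{p,\bullet}(X')\bigr)^G$. This is purely homological: for any complex $(C^\bullet,\d)$ of $\R$-vector spaces equipped with a linear action of the finite group $G$ commuting with $\d$, the functor $M\mapsto M^G$ on $\R[G]$-modules is exact, since $|G|$ is invertible in $\R$ and the averaging operator $e = \frac{1}{|G|}\sum_{\sigma\in G}\sigma$ is a $G$-equivariant projector onto $M^G$; equivalently, every short exact sequence of $\R[G]$-modules splits. Concretely, $e$ commutes with $\d$, so it induces a projector on cocycles and on coboundaries, giving canonical identifications $\ker(\d\colon C^q\to C^{q+1})^G = \ker(\d\colon (C^q)^G\to (C^{q+1})^G)$ and $\image(\d\colon C^{q-1}\to C^q)^G = \image(\d\colon (C^{q-1})^G\to (C^q)^G)$; the latter uses that if $\d c$ is $G$-invariant then $\d c = \d(ec)$ with $ec\in (C^{q-1})^G$. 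Taking quotients yields $H^q(C^\bullet)^G = H^q(C^{\bullet G})$, and applying this with $C^\bullet = \cA^{p,\bullet}(X')$ finishes the proof.

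I do not expect any serious obstacle here: the entire content has already been carried by Proposition~\ref{Galois invariant forms}, whose proof is the delicate step (it uses Proposition~\ref{infinite Galois extensions} to descend the $G$-invariant harmonic tropicalization and the injectivity of $h^*$ from Proposition~\ref{harmonic lift to weakly smooth forms}). The only point to be careful about is to verify, at the level of forms and not merely stalks, that $\pi^*$ and the $G$-action are compatible with $\d''$ so that Proposition~\ref{Galois invariant forms} is genuinely an isomorphism of complexes; this is immediate from the definitions in~\secref{action on forms} and~\secref{functoriality}, since both $\pi^*$ and each $\sigma^*$ are defined on preforms by precomposition of the harmonic tropicalization with a morphism of analytic spaces, leaving the Lagerberg form $\alpha$ untouched, and $\d''$ acts only on $\alpha$. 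Thus the argument is a short formal one built on the substantive Proposition~\ref{Galois invariant forms}.
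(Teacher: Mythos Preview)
Your proof is correct and follows the same route as the paper: use Proposition~\ref{Galois invariant forms} to identify the Dolbeault complex of $X$ with the $G$-invariants of that of $X'$, and then invoke exactness of $(-)^G$ on $\R[G]$-modules. The only cosmetic difference is that the paper justifies exactness by noting that higher group cohomology of a finite group is torsion, whereas you use the averaging projector directly.
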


\begin{proof}
	Since higher cohomology groups of a finite group are torsion~\cite[Proposition~VII.6]{serre68:corps_locaux}, it is clear that $W \to W^G$ is an exact functor on the category of $\R$-vector spaces with $G$-action. We conclude that the fixed point sets of the Dolbeault complex \eqref{Dolbeault complex} for $X'$ compute  $H^{p,q}(X')^G$, so the claim follows from Proposition~\ref{Galois invariant forms}.
\end{proof}

\begin{art}[Galois Action on Piecewise Smooth Forms]  \label{action on ps forms}
	In the same way as~\secref{action on forms}, we can define the $G$-action on the sheaf $\cAps^{\bullet,\bullet}$ of piecewise smooth forms on
	$X$. It is clear that Proposition~\ref{Galois invariant forms} holds also for piecewise smooth forms.
	By construction,
	this $G$-action is compatible with the $G$-action on the subsheaf $\cA^{\bullet,\bullet}$ of weakly smooth forms.
\end{art}

\begin{prop} \label{integration and conjugation}
	Let $X$ be a good $d$-dimensional strictly analytic space $X$ over $K$, let $K'/K$ be a finite Galois extension with Galois group $G$, and let $X'$ be the base change of $X$ to $K'$.
	\begin{enumerate}
		\item \label{first claim conjugation}
		For $\omega \in \cA_{{\rm ps},c}^{d,d}(X)$, we have $\int_{X'} \pi^*(\omega)= \int_X \omega$.
		\item \label{second claim conjugation}
		For $\omega' \in \cA_{{\rm ps},c}^{d,d}(X')$ and $\sigma \in G$, we have $\int_{X'} \sigma^*(\omega')= \int_{X'} \omega'$.
	\end{enumerate}

\end{prop}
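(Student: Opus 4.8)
Both claims are invariance statements for the integral, and the natural approach is to reduce them to the smooth case, where they are already known from~\cite{CLD}, via the inclusion-exclusion description of integration from Proposition~\ref{integration of piecewise smooth forms}. First I would observe that it suffices to treat the case where $X$ is pure-dimensional (by linearity in the irreducible components and by working $\rG$-locally), and where the support of the form is contained in a compact strictly analytic domain. The plan for part~\eqref{first claim conjugation} is: pick a finite $\rG$-covering $(X_i)_{i \in I}$ of $\supp(\omega)$ by compact strictly analytic domains such that $\omega|_{X_i}$ is a \emph{smooth} form (possible by Remark~\ref{piecewise smooth and G-topology}). The pull-back $\pi^*(\omega)$ is then supported in $\pi^{-1}(\supp\omega) = \bigcup_i \pi^{-1}(X_i)$, and $\pi^*(\omega)|_{\pi^{-1}(X_i)}$ is again smooth. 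Applying the inclusion-exclusion formula~\eqref{general inclusion exclusion formula} to both $\int_X \omega$ and $\int_{X'} \pi^*(\omega)$, and noting that $\pi^{-1}(X_J) = \bigcap_{j\in J}\pi^{-1}(X_j)$ for $X_J = \bigcap_{j \in J} X_j$, the problem reduces to showing $\int_{X_J'} \pi^*(\omega) = \int_{X_J}\omega$ for smooth $\omega$ on a compact $X_J$, where $X_J' = X_J \hat\otimes_K K'$. This last equality follows from the corresponding statement for smooth forms in~\cite{CLD}, or can be deduced directly from Proposition~\ref{integration of piecewise smooth forms}\eqref{integration of preform} together with Proposition~\ref{base change and tropical multiplicities}: if $\omega|_{X_J} = h^*(\alpha)$ for a smooth tropicalization map $h$, then $\pi^*(\omega)|_{X_J'} = (h\circ\pi)^*(\alpha)$, and $(h\circ\pi)(X_J')_d = h(X_J)_d$ as weighted $\z$-polytopal complexes by Proposition~\ref{base change and tropical multiplicities}, so both integrals equal $\int_{h(X_J)_d}\alpha$.

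\textbf{Part~\eqref{second claim conjugation}.} For the Galois action, the key point is that $\sigma \colon X' \to X'$ is an automorphism of $K'$-analytic spaces only after twisting by $\sigma$ on the base field; concretely, $\sigma$ is an isomorphism of $K$-analytic spaces covering $\Spec(\sigma)\colon \Spec K' \to \Spec K'$. By the definition of the $G$-action on forms in~\secref{action on forms} (and its piecewise smooth variant in~\secref{action on ps forms}), if $\omega'|_{U'} = (h_{U'}, \alpha_{U'})$ then $\sigma^*(\omega')|_{\sigma^{-1}(U')} = (h_{U'}\circ\sigma, \alpha_{U'})$. I would again reduce via inclusion-exclusion to the case where $\omega'$ is smooth and supported in a compact strictly analytic domain $W'$; then $\sigma^*(\omega')$ is supported in $\sigma^{-1}(W')$, and it is again smooth (a smooth tropicalization map pre-composed with $\sigma$ is still a smooth tropicalization map, since $\sigma$ preserves invertible analytic functions up to the $\sigma$-semilinear action on coefficients, which does not change absolute values). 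Writing $\omega'|_{W'} = h^*(\alpha)$, we get $\sigma^*(\omega')|_{\sigma^{-1}(W')} = (h\circ\sigma)^*(\alpha)$, and by Proposition~\ref{integration of piecewise smooth forms}\eqref{integration of preform} both $\int_{W'}\omega'$ and $\int_{\sigma^{-1}(W')}\sigma^*(\omega')$ equal $\int_{h(W')_d}\alpha$, because $\sigma\colon \sigma^{-1}(W') \to W'$ is an isomorphism of analytic spaces and hence $(h\circ\sigma)(\sigma^{-1}(W'))_d = h(W')_d$ as weighted polytopal complexes. Alternatively, part~\eqref{second claim conjugation} follows directly from part~\eqref{first claim conjugation} together with the fact that $\pi \circ \sigma = \pi$: for any $\omega' \in \cA_{\mathrm{ps},c}^{d,d}(X')$, we do \emph{not} in general have $\omega' = \pi^*(\text{something})$, so this shortcut only works on the $G$-invariant part; hence I expect the direct argument via smooth reduction to be the cleaner route.

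\textbf{Main obstacle.} The routine part is the inclusion-exclusion bookkeeping; the genuine content is the invariance of the weighted polytopal complex $h(W)_d$ (with its canonical tropical weights) under base change and under the Galois twist. For base change this is exactly Proposition~\ref{base change and tropical multiplicities}, which rests on Proposition~\ref{item:trop.mult.basechange}. For the Galois twist, the needed statement is that if $h'\colon W' \to \R^n$ is a smooth tropicalization map and $\sigma \in G$, then $h'$ and $h'\circ\sigma$ tropicalize $W'$ to the same weighted complex; this follows because $\sigma\colon W' \to W'$ is an isomorphism of compact strictly $K$-analytic spaces, so it preserves the tropical skeleton and all local rings, and hence preserves the tropical multiplicities $m_\varphi(W',x)$ of Definition~\ref{def:trop.mult} up to the relabeling $x \mapsto \sigma(x)$. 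Making this compatibility precise — that the $G$-action on forms is set up so that integration is literally computed by the same tropical integral before and after applying $\sigma$ — is the only subtle point, and it is handled by unwinding the definition in~\secref{action on forms}. I would spell out this identification carefully and then let the smooth case from~\cite{CLD} do the rest.
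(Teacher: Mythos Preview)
Your proposal is correct and follows essentially the same route as the paper: reduce via inclusion-exclusion (Proposition~\ref{integration of piecewise smooth forms}) to the case of a compact domain with a preform $h^*(\alpha)$, and then invoke Proposition~\ref{base change and tropical multiplicities} for~\eqref{first claim conjugation} and the fact that $\sigma$ is an isomorphism of $K$-analytic spaces for~\eqref{second claim conjugation}, so that the tropical integral $\int_{h(W)_d}\alpha$ is unchanged. The only minor difference is that the paper works directly with PL tropicalizations throughout (since Proposition~\ref{integration of piecewise smooth forms}\eqref{integration of preform} already applies to those), so your extra reduction to \emph{smooth} tropicalization maps via Remark~\ref{piecewise smooth and G-topology} is harmless but unnecessary.
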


\begin{proof}
	Suppose first that $X$ is compact and that $\omega$ is a piecewise smooth preform given by a PL tropicalization $h \colon X \to \R^n$ and a Lagerberg form $\alpha \in \cA^{p,q}(h(X))$ with $h^*(\alpha)=\omega$. Then $\pi^*(\omega)$ is a piecewise smooth preform on $X'$ given by the PL tropicalization $h' \coloneqq h \circ \pi \colon X' \to \R^n$.  Using that $h'(X')_d=h(X)_d$ as weighted polytopal complexes by Proposition \ref{base change and tropical multiplicities} and that $(h')^*(\alpha)=\pi^*(h^*(\alpha))= \pi^*(\omega)$, it is clear that both integrals in \eqref{first claim conjugation} coincide with $\int_{h(X)_d} \alpha$. Proposition~\ref{integration of piecewise smooth forms} then yields~\eqref{first claim conjugation} in general.

	To prove \eqref{second claim conjugation}, we choose a finite covering $(X_i') _{i \in I}$ of $\supp(\omega')$ by compact strictly analytic domains $X_i'$ such that  $\omega'|_{X_i'}= h_i^*(\alpha_i')$ for some $\alpha_i' \in \cA_{{\rm ps}}^{d,d}(h(X_i'))$ and a piecewise linear tropicalization $h_i \colon X_i' \to \R^{n_i}$. Then the compact strictly analytic domains $\sigma^{-1}(X_i')$ cover the support of $\sigma^*(\omega')$, and the restriction of $\sigma^*(\omega')$ to $\sigma^{-1}(X_i')$ is given by $(\sigma \circ h_i)^*(\alpha_i')$. By Proposition~\ref{integration of piecewise smooth forms}(\ref{integration of preform}), we have
	$$\int_{\sigma^{-1}(X_i')} \sigma^*(\omega)=\int_{h_i(X_i')} \alpha_i' = \int_{X_i'} \omega.$$
	The integrals in \eqref{second claim conjugation} can be computed by the inclusion-exclusion formula \eqref{general inclusion exclusion formula} as in the proof of Proposition~\ref{integration of piecewise smooth forms}, so~\eqref{second claim conjugation} follows.
\end{proof}

\section{Harmonic functions on curves} \label{section: results for curves}

Let $K$ be a non-Archimedean field with nontrivial value group $\Gamma\subset\R$, and let $\sqrt{\Gamma} \coloneqq \{t \in \R \mid \text{$\exists k \in \N_{>0}$ such that $kt \in \Gamma$}\}$ be the divisible hull.
We will show that our notion of harmonic functions introduced in Section~\ref{section: pl and harmonic functions} agrees in the case of a rig-smooth curve with the harmonic functions in Thuillier's thesis \cite{Thuillier}.  This will be important in the companion paper~\cite{GJR2}.

First, we recall some notions from the theory of graphs and their relationship to the structure of non-Archimedean analytic curves. For more details, we refer to~\cite{GJR2}.

\begin{art}[Metric Graphs with Boundary] \label{recall metric graphs}
	We  consider a weighted metric graph $(\Sigma,\partial \Sigma)$ with boundary.  This is a finite multigraph with no loop edges endowed with positive edge lengths assumed to be in $\Gamma$.
	The boundary $\partial \Sigma$ is any subset of the vertices of $\Sigma$. Every edge is endowed with the additional datum of a \defi{weight} $w(e)\in \Z_{>0}$.
	If we ignore the boundary, then we get a compact piecewise $\z$-linear space of  dimension at most $1$ as introduced in~\secref{Polyhedral geometry}.  (Note that isolated vertices are allowed.)

	A function $h\colon \Sigma \to \R$ is called \emph{harmonic} if the restriction of $h$ to any edge is affine $\R$-linear, and if for any vertex $x$ of $\Sigma \setminus \partial \Sigma$, we have $\sum_{e^-=x} w(e) d_e(h)=0$, where $e$ ranges over all edges with tail vertex $x$ and  $d_e(h)$ is the outgoing slope of $h$ at $x$ in the direction of $e$.

	For a subgroup $\Lambda$ of $\R$ containing $\Gamma$, we let $\Sigma(\Lambda)$ denote the subset of $\Sigma$ consisting of all points $x$ such that  $f(x)\in \Lambda$ for all $\z$-linear functions $f$ on an edge $e$ containing $x$. Let $U\subset\Sigma$ be a subgraph with vertices contained in $\Sigma(\sqrt\Gamma)$.
        A function $h\colon U \to \R$ is called \emph{piecewise $\z$-linear} if there is a subdivision of the subgraph $U$ such that  the vertices of the subdivision are in $\Sigma(\sqrt{\Gamma})$ and such that the restriction of $h$ to any edge $e$ of the subdivision extends to a $\z$-linear function on the edge of $\Sigma$ containing $e$.
\end{art}

\begin{art}[Skeletons of Curves] \label{strictly semistable model and skeleton}
	Metric graphs with boundary arise in non-Archimedean geometry as follows. Let $X$ be a compact  rig-smooth curve over $K$, i.e., a compact rig-smooth strictly analytic space of pure dimension $1$. We assume that $X$ has a strictly semistable model $\XF$ over the valuation ring $\kcirc$. This is a formal $\kcirc$-model of $X$ (see~\secref{Formal geometry and models}) whose special fiber $\XF_s$ is geometrically reduced with smooth irreducible components and at most double points as singularities.  This is equivalent to \cite[Definition~2.2.8]{Thuillier}, where such models are called \textit{simplement semi-stable}. To such a model is associated a weighted metric graph with boundary $\Sigma\coloneq S(\fX)$, called the \defi{skeleton} of $\fX$.  There is a canonical inclusion $\Sigma\inject X$.  The vertices of $\Sigma$ correspond to the irreducible components of $\fX_s$ and the edges correspond to the singular points of $\fX_s$, so that $\Sigma$ is the incidence graph of $\fX_s$.  The \defi{boundary} $\del\Sigma$ consists of the vertices of $\Sigma$ corresponding to non-proper irreducible components of $\fX_s$.  If $x\in\fX_s$ is a singular point with corresponding edge $e$, then the \defi{weight} of $e$ is defined to be $w(e) = [\td K(x):\td K]$.  There is an \'etale neighbourhood of $x$ of the form $\Spf(K\{X,Y\}/(XY-\varpi))$ for some $\varpi\in K^{\circ\circ}\setminus\{0\}$; the \defi{length} of $e$ is then defined to be $v(\varpi)$. There is a canonical deformation retraction map $\tau\colon X \to S(\XF)$. 
\end{art}

\begin{art}[Subdivisions of the Skeleton] \label{subdivisions of skeleton}
	Consider a subdivision $\Sigma'$ of the skeleton $\Sigma = S(\XF)$ such that all vertices of $\Sigma'$ are in $\Sigma(\sqrt{\Gamma})$. Vilsmeier \cite[Construction~2.6]{Vilsmeier} has defined a canonical formal scheme $\XF'$ with generic fiber $X$ and reduced special fiber, and a canonical morphism $\iota\colon \XF' \to \XF$ extending $\id_X$, such that for any edge $e'$ of $\Sigma'$, the analytic subdomain $(\tau)^{-1}(e')$ of $X$ is the generic fiber of an open subset of $\XF'$. Note that $\XF'$ is not necessarily an admissible formal scheme, i.e.~not necessarily of topologically finite type. By \cite[Propositions~6.7 and~6.9]{Gubler2}, the formal scheme $\XF'$ is admissible if the value group $\Gamma$ is discrete or if the vertices of $\Sigma'$ are contained in $\Sigma(\Gamma)$.
\end{art}

\begin{art}[Piecewise Affine Functions on the Skeleton] \label{piecewise affine functions}
	Let $\Sigma = S(\XF)$ as in~\secref{strictly semistable model and skeleton}, let $U$ be a subgraph of $\Sigma$ with vertices contained in $\Sigma(\sqrt\Gamma)$, and let $h\colon U \to \R$ be a piecewise $\z$-linear function in the sense of~\secref{recall metric graphs}.  Passing to a subdivision of $U$, we may assume that  $h$ is $\z$-linear on each edge of $U$. 
        The vertices of $U$ induce a subdivision $\Sigma'$ of $\Sigma$; we denote the associated formal $\kcirc$-model of $X$ by $\XF'$ as in~\secref{subdivisions of skeleton}, with canonical morphism $\iota \colon  \XF' \to \XF$. When $\Gamma$ dense in $\R$, we assume that  all vertices of $U$ are in $\Sigma(\Gamma)$; then $\fX'$ is an admissible formal scheme. 
        The subset $Y \coloneqq \tau^{-1}(U)$ is the generic fiber of a unique open formal subscheme $\YF'$ in $\XF'$. By the arguments used in the proof of \cite[Proposition~2.11]{Vilsmeier}, there is a canonical vertical Cartier divisor $D$ on $\YF'$ with associated formal metric on $\Ocal_Y$ determined by
	$-\log \|1\|_{\Ocal(D)} = h \circ \tau$.  See~\secref{model function}.
\end{art}

\begin{prop} \label{PL and strictly semistable}
  Let $X$ be a compact rig-smooth curve over $K$ and let $h\colon X \to \R$.
  \begin{enumerate}
  \item \label{factorization through the skeleton}
    If  $\fX$ is a strictly semistable model  of $X$ over $\kcirc$, then $h=h_\fL$ for a line bundle $\fL \in M(\fX)$ if and only if there is a function $f\colon S(\fX) \to \R$ which is $\z$-linear on each edge such that $h=f \circ \tau$.
  \item \label{boundary and harmonic}
    If $x \in \partial X$, then $h$ is harmonic at $x$ if and only if $h$ is $\R$-PL at $x$. 
  \end{enumerate}	
\end{prop}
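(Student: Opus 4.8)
The plan is to prove each of the two parts by reducing to the structure theory of strictly semistable models and their skeletons, following closely the framework already developed for piecewise linear and harmonic functions.

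For part \eqref{factorization through the skeleton}, I would argue as follows. If $h = f \circ \tau$ with $f\colon S(\fX) \to \R$ being $\z$-linear on each edge, then after a subdivision $\Sigma'$ of $\Sigma = S(\fX)$ all of whose vertices lie in $\Sigma(\Gamma)$ (enlarging the ground field momentarily or appealing to~\secref{subdivisions of skeleton} in the discrete case; in the dense case one first passes to vertices in $\Sigma(\Gamma)$ after scaling $h$), the construction of~\secref{piecewise affine functions} produces a canonical vertical Cartier divisor $D$ on an open formal subscheme whose associated formal metric on $\sO_X$ satisfies $-\log\|1\|_{\sO(D)} = h \circ \tau$. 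Globalizing over the finitely many pieces of $\Sigma'$ and passing to a dominating admissible formal model yields a line bundle $\fL \in M(\fX)$ with $h = h_\fL$. Conversely, if $h = h_\fL$ for $\fL \in M(\fX)$, then $h$ is PL by~\secref{model function}, and since $\fL$ is trivial on the generic fiber, $h$ is locally $\log|u|$ for units $u$; on the skeleton such functions factor through $\tau$ because $\tau$ is the deformation retraction and $|u|$ is constant on each leaf $\tau^{-1}(p)$ for $p$ in an edge (this is the standard fact that invertible functions have $|{\cdot}|$ factoring through the retraction on the part of $X$ retracting to a given edge). One then checks $\z$-linearity of the resulting $f$ on each edge from the local description of $\tau$ in terms of the coordinate $XY = \varpi$. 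The main technical point here is the passage between possibly non-admissible Vilsmeier models and genuine admissible models when $\Gamma$ is dense; I would handle this exactly as in~\secref{subdivisions of skeleton}, replacing $h$ by a suitable integer multiple so that the relevant vertices lie in $\Sigma(\Gamma)$, which does not affect whether $h$ factors through the skeleton.

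For part \eqref{boundary and harmonic}, let $x \in \partial X$. One direction is trivial: a harmonic function is $\R$-PL by Definition~\ref{definition harmonic}. For the converse, suppose $h$ is $\R$-PL at $x$; we must show $h$ is harmonic at $x$, i.e.\ that both $h$ and $-h$ are semipositive at $x$. The key observation is that for a boundary point $x$ of a curve, Temkin's reduction $\red(X,x)$ of the germ is particularly simple: since $x$ lies in $\partial X$, it is not an interior point, and the relevant variety $V_{\fU,x}$ over $\td k$ is at most a curve, but because $x \in \partial X$ the closure $V_{\fU,x}$ fails to be proper — indeed by the description in~\secref{strictly semistable model and skeleton} boundary vertices correspond to non-proper components. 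The upshot is that $\red(X,x)$ admits a model on which \emph{every} line bundle is numerically trivial, because there are no proper closed curves in a non-proper curve over $\td k$ other than points — more precisely $\dim V_{\fU,x} \le 1$ and when $x\in\partial X$ one can arrange (after shrinking $X$ and choosing a suitable formal model adapted to a strictly semistable model, using Proposition~\ref{prop:harmonic.properties}(3) to reduce to the reduced case and part~\eqref{factorization through the skeleton} above to describe PL functions) that the residue $L_h(x)$ lies in $\Pic$ of a scheme with no complete curves of positive dimension, hence is automatically numerically trivial. By \secref{sec:harmonic.remarks}\eqref{item:harm.rmk.numtrivial}, this means $h$ is harmonic at $x$. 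The main obstacle I anticipate is making precise and rigorous the claim that at a boundary point of a rig-smooth curve the reduction of the germ carries no proper positive-dimensional curves: this requires unwinding Temkin's construction together with the explicit local model $\Spf(K\{X,Y\}/(XY-\varpi))$ near double points and the fact that a boundary vertex sits on a non-proper component, so that locally near $x$ the relevant reduction is (an open subscheme of) $\bA^1_{\td K}$ or a node thereof, on which nef is equivalent to trivial. Once this geometric input is in place, the equivalence follows formally from the characterization of harmonicity via numerical triviality of the residue.
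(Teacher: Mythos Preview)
Your approach matches the paper's in spirit for both parts, but there are two places where your execution introduces unnecessary complications that, as written, leave small gaps.

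For part~(\ref{factorization through the skeleton}), ``if'' direction: you pass to a subdivision $\Sigma'$, enlarge the ground field, and then speak of ``passing to a dominating admissible formal model''. None of this is needed, and the last step is actually problematic: a line bundle on a dominating model $\fX'$ need not descend to $M(\fX)$, and you do not argue that it does. The point is that $f$ is \emph{already} $\z$-linear on each edge of $S(\fX)$, so the construction of~\secref{piecewise affine functions} applies with $U = S(\fX)$ and no subdivision whatsoever, yielding a vertical Cartier divisor directly on $\fX$. The paper's proof is just this one sentence. For the converse, the paper writes $\fL$ as a vertical Cartier divisor $D$ on $\fX$, covers $\fX$ by formal opens $\fU$ whose skeleton is a single edge or vertex and on which $D = \div(\gamma)$ for a unit $\gamma$, and then cites~\cite[Theorem~5.3 and Step~13 of its proof]{berkovic99:locally_contractible_I} to conclude that $-\log|\gamma|$ factors through a $\z$-linear function on $S(\fU)$. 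Your sketch (``$|u|$ is constant on each leaf $\tau^{-1}(p)$'') gestures at the same fact but would need that citation (or an equivalent argument) to be complete.

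For part~(\ref{boundary and harmonic}): your argument is exactly the paper's, but you route the key geometric input through strictly semistable models (``boundary vertices correspond to non-proper components''), and part~(\ref{boundary and harmonic}) does not assume such a model exists. The general fact you need is the contrapositive of what is used in the proof of Proposition~\ref{prop:approx.semipositive} (citing~\cite[Lemme~6.5.1]{chambert_ducros12:forms_courants} and~\cite[Appendix~A]{Vilsmeier}): $x \in \Int(X)$ if and only if the closure $V_{\fU,x}$ of $\red_\fU(x)$ is proper, for any formal model $\fU$ of a compact neighbourhood. Thus $x \in \partial X$ forces $V_{\fU,x}$ non-proper; since $\dim X = 1$ this model is an affine curve (or a point), contains no proper integral curves, and the nef condition in Proposition~\ref{prop:def.semipositive} is vacuous. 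The paper compresses this into one sentence.
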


\begin{proof}
The ``if'' part in (\ref{factorization through the skeleton}) follows from \secref{piecewise affine functions}. Conversely, assume that $h=h_\fL$ for  $\fL \in M(\fX)$. Using the identification $\fL|_X=\sO_X$, the meromorphic section $1$ of $\fL$ induces a vertical Cartier divisor $D$ on $\fX$. There is a covering of $\fX$ by  open subsets $\fU$ such that $S(\fU)$ is either an edge or a vertex of $S(\fX)$ and 	such that $D|_\fU = \div(\gamma)$ for some $\gamma \in \cO(U)^\times$, where $U=\fU_\eta$. It follows from \cite[Theorem 5.3 and Step 13 in its proof]{berkovic99:locally_contractible_I} that $h|_U=-\log|\gamma|$ factorizes through a $\z$-linear function on $S(\fU)$. This proves (\ref{factorization through the skeleton}).

Since $x \in \partial X$, the curve $\partial X$ is affine and hence the nef condition  in Proposition \ref{prop:def.semipositive} is vacuous.  This proves (\ref{boundary and harmonic}).
\end{proof}

\begin{art}[Thuillier's Harmonic Functions] \label{def Thuillier harmonic}
	We consider now an arbitrary rig-smooth curve $X$ over $K$.
	We call a function $h\colon X \to \R$ \emph{harmonic in the sense of Thuillier}  if every $x \in X$ has a strictly affinoid neighbourhood $V$,   a finite Galois extension $K'/K$, and a strictly semistable model $\VF'$ of $V' \coloneq V\tensor_K K'$,
	such that the pullback $h'$ of $h|_V$  factorizes through the skeleton $S(\VF')$ and such that $h'|_{S(\VF')}$ is a harmonic function on the skeleton $S(\VF')$ considered as a weighted metric graph with boundary as in~\secref{strictly semistable model and skeleton}.
	
        For $x \in \partial X$,  Proposition~\ref{PL and strictly semistable} and Corollary~\ref{cor:pl.basechange} show that the above harmonicity condition at $x$ is equivalent to piecewise $\R$-linearity at $x$, using that  the boundary of $S(\fX)$ is $\partial X$.  Note that Thuillier~\cite[Section~2.3]{Thuillier} used this definition only for strictly affinoid or boundaryfree $X$, so the above definition is slightly more general than his.
\end{art}

\begin{prop}\label{Thuillier's harmonic}  \label{comparison harmonic in general}
	Let $X$ be a rig-smooth curve. Then a function $h\colon X \to \R$ is harmonic in the sense of Definition~\ref{definition harmonic} if and only if $h$ is harmonic in the sense of Thuillier.
\end{prop}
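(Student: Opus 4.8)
The strategy is to reduce the statement to a purely local comparison at each point $x \in X$, using that both notions of harmonicity are local in the analytic topology (for Definition~\ref{definition harmonic} this is Remark~\ref{sec:harmonic.remarks}(\ref{item:harm.rmk.germ}); for Thuillier's notion it is built into Definition~\secref{def Thuillier harmonic}). So fix $x \in X$; we may assume $X$ is a compact rig-smooth curve and, after a finite base change handled by Corollary~\ref{cor:pl.basechange}, Proposition~\ref{prop:harmonic.properties}(2'), and the Galois descent in Proposition~\ref{infinite Galois extensions}, we may assume $X$ has a strictly semistable model $\fX$ over $\kcirc$. The boundary case $x \in \partial X$ is already settled in Proposition~\ref{PL and strictly semistable}(\ref{boundary and harmonic}) and the discussion in~\secref{def Thuillier harmonic}: both conditions reduce to $h$ being $\R$-PL at $x$. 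So the remaining work is the interior case $x \in \Int(X) = X \setminus \partial X$, i.e. $x$ corresponds to a point of the skeleton $\Sigma = S(\fX)$ or one of the open edges, and we must show: $h$ is harmonic at $x$ in the sense of Definition~\ref{definition harmonic} iff $h|_\Sigma$ is harmonic at $x$ in the graph-theoretic sense of~\secref{recall metric graphs}.

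First I would show that harmonicity at $x$ in the sense of Definition~\ref{definition harmonic} forces $h$ to factor through $\tau\colon X \to \Sigma$ near $x$ and to be $\z$-linear on edges. Indeed a harmonic function is in particular $\R$-PL, hence (after a finite base change) $\Z$-PL on a neighbourhood, so by Proposition~\ref{PL and strictly semistable}(\ref{factorization through the skeleton}) and~\secref{piecewise affine functions} it has the form $f \circ \tau$ for $f$ piecewise $\z$-linear on a subdivision of $\Sigma$. Next, for $x$ in the interior of an edge there is nothing to check on the graph side (the harmonicity condition is vacuous at non-vertex points), and on the analytic side $h = \log|\gamma|$ locally for an invertible analytic function $\gamma$, so it is automatically harmonic by Remark~\ref{sec:harmonic.remarks}(\ref{item:harm.rmk.smooth}); thus the two notions agree there. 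The crux is therefore a vertex $x$ of $\Sigma$ corresponding to an irreducible component $C$ of $\fX_s$, with $C$ proper since $x \in \Int(X)$. Using the vertical Cartier divisor $D$ with associated metric $-\log\|1\|_{\sO(D)} = h \circ \tau$ from~\secref{piecewise affine functions}, Proposition~\ref{prop:def.semipositive} and~\secref{sec:harmonic.remarks}(\ref{item:harm.rmk.numtrivial}) tell us that $h$ is harmonic at $x$ iff $\sO(D)$ restricts to a numerically trivial line bundle on $C$ (more precisely on the closure $V_{\fU,x}$, which for a strictly semistable model is just $C$). The degree of $\sO(D)|_C$ is computed by the intersection of $D$ with $C$, which is a sum over the singular points of $\fX_s$ lying on $C$ — i.e. over the edges $e$ incident to $x$ — of $w(e)$ times the outgoing slope $d_e(h)$ of $h$ at $x$. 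So $\sO(D)|_C$ is numerically trivial exactly when $\sum_{e^- = x} w(e)\,d_e(h) = 0$, which is precisely the harmonic balancing condition at the vertex $x$.

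The main obstacle I anticipate is making the intersection-theoretic computation $\deg(\sO(D)|_C) = \sum_{e^- = x} w(e)\, d_e(h)$ fully rigorous in the formal-geometric setting: one must check that the local model $\Spf(K\{X,Y\}/(XY-\varpi))$ at a node with $v(\varpi) = \ell(e)$ produces exactly the slope-times-weight contribution, that the weight $w(e) = [\td K(x) : \td K]$ enters correctly through the residue field of the node, and that only the proper components of $\fX_s$ carry a numerical obstruction (so that non-vertex points and boundary vertices behave as claimed). This is the content of the slope formula for vertical divisors on semistable models and is essentially classical; the care needed is in the bookkeeping of edge lengths, weights, and the identification $V_{\fU,x} = C$. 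A secondary point is to confirm that the finite Galois base changes used to pass to a strictly semistable model are compatible on both sides: Corollary~\ref{cor:pl.basechange} and Proposition~\ref{prop:harmonic.properties}(2') handle the Definition~\ref{definition harmonic} side, while on Thuillier's side the base change is part of the definition, and the two are reconciled by Proposition~\ref{infinite Galois extensions}. Once these compatibilities and the slope formula are in place, the equivalence follows by assembling the edge-interior, boundary, and proper-vertex cases.
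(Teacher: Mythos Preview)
Your core approach---reduce to a strictly semistable model, identify the vertex $x$ with a proper component $C$ of the special fiber, and equate harmonicity at $x$ with numerical triviality of $\sO(D)|_C$ via the slope formula $\deg(\sO(D)|_C)=\sum_{e^-=x}w(e)\,d_e(h)$---is exactly what the paper does, and your sketch of the boundary and edge-interior cases is fine.

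The genuine gap is your handling of $\R$-versus-$\Z$ slopes. A harmonic function in the sense of Definition~\ref{definition harmonic} is only $\R$-PL, and your assertion that ``after a finite base change'' it becomes $\Z$-PL is false: base change can move the translation constants into $\Gamma$, but it never changes the slopes, which live in $\Lambda=\R$. Consequently you cannot directly invoke Proposition~\ref{PL and strictly semistable}(\ref{factorization through the skeleton}) or the vertical-Cartier-divisor construction of~\secref{piecewise affine functions}, both of which are stated for $\Z$-PL data. The same issue arises in the converse direction: a Thuillier-harmonic function factors through a harmonic $f$ on the skeleton whose slopes are a priori real, so the divisor $D$ you want to build does not exist as a Cartier divisor on a formal model.

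The paper closes this gap by an explicit linearity reduction on each side. For the forward direction it invokes Proposition~\ref{prop: equivalence of harmonicity definitions} to write $h$ locally as an $\R$-linear combination of $\Z$-harmonic functions, thereby reducing to the $\Z$-PL case where your argument applies. For the converse, it works on the skeleton: after shrinking so that $S(\fX)$ is a star around $\tau(x)$, the harmonicity condition on $f$ is a single $\Q$-linear equation on the tuple of slopes, hence admits a $\Z$-basis of solutions $f_i$; each $f_i\circ\tau$ is $\Z$-PL and, by the slope-formula argument you outlined, $\Z$-harmonic, and $f$ is an $\R$-linear combination of the $f_i$. You should insert these two reductions in place of the incorrect base-change claim.
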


\begin{proof}
	The claim is local on $X$, so we may assume $X$ strictly affinoid. We first show that the harmonicity notions agree for a piecewise linear function $h\in\PL(X)$. By~\secref{model theorem}, we have that $h=-\log \|1\|_{\LS}$ for a model ${\LS}$ of $\OS_X$ on a formal $\kcirc$-model $\fX$ of $X$. Using
	base change, Proposition~\ref{prop:harmonic.properties} and \cite[Remarque~2.3.17]{Thuillier}, we are free to pass to a finite Galois extension of $K$.
	Arguing locally, the semistable reduction theorem (\cite[Th\'eor\`eme~2.3.8]{Thuillier} under these hypotheses; see also~\cite[Section~7]{BL} and~\cite[Theorem~5.1.14(iv), 6.1.3]{ducros14:structur_des_courbes_analytiq}) shows that we may assume that $\fX$ is a strictly semistable model.
	By Proposition \ref{PL and strictly semistable}, there is a function $f$ on the skeleton $S(\fX)$ which is $\z$-linear on each edge such that $h=f \circ \tau$.
	For $x \in  S(\fX)\setminus \partial X$ with corresponding proper irreducible component $C \coloneqq \overline{\red_{\fX}(x)}$ of $\fX_s$, it follows as in \cite[Theorem~2.6]{KRZ1} that the sum of outgoing slopes of $f$ at $x$ is equal to $\deg(\LS|_C)$.
	We conclude that $h$ is harmonic in the sense of Thuillier if and only  if ${\LS}$ is numerically equivalent to $0$. The latter is equivalent to harmonicity of $h$ in the sense of Definition~\ref{definition harmonic}, as we have shown in Lemma~\ref{lem:harmonic.global}.

	Now we prove the proposition for any function $h\colon X \to \R$.
	Assume that $h$ is a harmonic function on $X$ as in Definition~\ref{definition harmonic}. By Proposition~\ref{prop: equivalence of harmonicity definitions}, the function $h$ is locally an $\R$-linear combination of  $\Z$-harmonic functions, so we are reduced to the case handled above since the set of Thuillier-harmonic functions forms a real vector space.

	Conversely, let $h$ be a harmonic function in the sense of Thuillier.
	We have to check for any $x \in X$ that $h$ is harmonic at $x$.
        As above, we are free to pass to a finite Galois extension of $X$, so we may extend scalars and shrink $X$ to assume that $X$ has a strictly semistable model  $\fX$ and that $h$ factorizes through a harmonic function $f\colon S(\fX)\to\R$.	  It is enough to show that $f$ is an $\R$-linear combination of piecewise $\z$-linear harmonic functions $f_i$ on $S(\fX)$.  Indeed, by Proposition~\ref{PL and strictly semistable}, we have that $f_i\circ\tau$ is piecewise linear, so $f_i\circ\tau$ is harmonic by the the first paragraph of this proof, and hence $h$ is harmonic.

        Suppose first that $x\in\del X = \del S(\fX)$.  After shrinking $\fX$, we may assume that $S(\fX)$ is a star around the vertex $x$, so that $f$ is affine $\R$-linear on each edge.   Translating by a constant function, we may assume $f(x) = 0$.  Then $f$ is an $\R$-linear combination of piecewise $\Z$-linear functions $f_e$ on $S(\fX)$: namely, the functions $f_e$ that have slope $1$ on a given edge $e$ and slope $0$ on the others.  Since  $x\in\del S(\fX)$, Proposition \ref{PL and strictly semistable} yields that each $f_e$ is harmonic and hence $h$ is harmonic.

	If $x \in X \setminus \partial X$, then as above, we may assume that $S(\fX)$ is either an edge containing $\tau(x)$ in its interior, or that $S(\fX)$ is a star around the vertex $\tau(x)$.  In the first case, $f$ is an $\R$-multiple of a $\z$-linear function on the edge, so $h$ is harmonic. 	In the second case, we may assume that $f(\tau(x))=0$.  Then $f$ is harmonic  if and only if $f$ is linear on each edge and the weighted sum of the outgoing slopes at $\tau(x)$ is zero. This is a  linear equation for the slopes which is defined over $\Q$, so there is a basis of solutions defined over $\Q$ and even over $\Z$.	The elements $f_i$ of such a basis induce piecewise $\z$-linear functions $f_i$ which are harmonic on $S(\fX)$.  Since $f$ is an $\R$-linear combination of the $f_i$, we have that $h$ is harmonic.
\end{proof}

Note that $\Gamma$ is either dense in $\R$ or is a discrete subgroup. In the following, we set $\Lambda \coloneqq \Gamma$ if $\Gamma$ is dense and $\Lambda \coloneqq \sqrt{\Gamma}$ (the divisible hull in $\R$) if $\Gamma$ is discrete.  When $\Gamma$ is discrete, we may rescale the valuation to assume $\Gamma=\Z$.

\begin{art} \label{setup for simple neighbourhood proposition}
  Let $X$ be a compact rig-smooth curve over $K$ with strictly semistable model $\XF$. As in~\secref{strictly semistable model and skeleton}, the skeleton   $S(\XF)$ is  viewed as  a weighted metric graph with boundary.  
  Let $U$ be a subgraph of  $\Sigma \coloneqq S(\fX)$ with vertices contained in  $\Sigma(\Lambda)$, then $\tau\inv(U)$ is a compact strictly analytic domain in $X$.
\end{art}

\begin{prop} \label{simple neighbourhoods and harmonic functions on skeletons}
  With the notation above,  let $f\colon U \to \R$ be a piecewise $\z$-linear harmonic function as in~\secref{recall metric graphs}.  Then $h \coloneq f\circ\tau\colon \tau\inv(U)\to\R$ is $\Z$-harmonic.
\end{prop}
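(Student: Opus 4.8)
The statement is local on $U$, so by Proposition~\ref{prop:harmonic.properties}(\ref{item:harm.rmk.germ}) we only need to check that $h=f\circ\tau$ is $\Z$-harmonic at each point $x\in\tau\inv(U)$. The plan is first to reduce to the case where $x=\tau(x)$ lies on the skeleton, since for $x\notin\Sigma$ the retraction $\tau$ is constant near $x$, so $h$ is locally constant and harmonicity is clear. So assume $x\in U\subset\Sigma$. By subdividing $U$ (keeping vertices in $\Sigma(\Lambda)$, which is allowed by the hypothesis on $\Lambda$), we may assume $f$ is $\z$-linear on each edge of $U$ and that $x$ is a vertex of the subdivision. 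Shrinking $U$ further, we may take $U$ to be a star around $x$ (or a single edge containing $x$ in its interior).

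\textbf{Main step: construct a formal model realizing $h$.} Following~\secref{piecewise affine functions}, the subdivided $U$ induces a subdivision $\Sigma'$ of $\Sigma$, hence a formal $\kcirc$-model $\fX'$ of $X$ with canonical morphism $\iota\colon\fX'\to\fX$; since the vertices of $U$ are in $\Sigma(\Lambda)$, the scheme $\fX'$ is admissible by~\secref{subdivisions of skeleton} (here we use the convention on $\Lambda$: discrete $\Gamma$ forces the vertices into $\Sigma(\Gamma)$ after rescaling, dense $\Gamma=\Lambda$ is handled directly). The set $Y\coloneqq\tau\inv(U)$ is the generic fiber of a unique open formal subscheme $\fY'\subset\fX'$, and by~\secref{piecewise affine functions} there is a canonical vertical Cartier divisor $D$ on $\fY'$ whose associated formal metric $\metr_{\sO(D)}$ on $\sO_Y$ satisfies $-\log\|1\|_{\sO(D)}=f\circ\tau=h$ on $Y$. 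Thus $h=h_\fL$ for the line bundle $\fL\coloneq\sO(D)\in M(\fY')$, and in particular $h$ is $\Z$-PL on $Y$.

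\textbf{Harmonicity of $\fL$.} By Lemma~\ref{lem:harmonic.global} (applied after shrinking $Y$ to a paracompact, in fact compact, piece, which is harmless since harmonicity is local), it suffices to show that $\fL$ is numerically trivial, i.e.\ that $\deg(\fL|_C)=0$ for every integral proper curve $C\subset\fY'_s$. Such a $C$ is a proper irreducible component of the special fiber, so it corresponds to a vertex $y$ of $\Sigma'$ in the interior of $U$, equivalently a point $y\in S(\fX')\setminus\del X$ with $C=\overline{\red_{\fX'}(y)}$. As in the proof of Proposition~\ref{comparison harmonic in general} (which in turn uses the slope formula of~\cite[Theorem~2.6]{KRZ1}), the degree $\deg(\fL|_C)$ equals the sum of the outgoing slopes of $f$ at $y$ with respect to the edge weights $w(e)$, i.e.\ $\sum_{e^-=y}w(e)\,d_e(f)$. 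Since $f$ is harmonic on $U$ in the sense of~\secref{recall metric graphs} and $y$ is an interior vertex of $U$, this sum vanishes. Hence $\fL$ is numerically trivial, so $h=h_\fL$ is $\Z$-harmonic by Lemma~\ref{lem:harmonic.global}.

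\textbf{Expected obstacle.} The only delicate point is the bookkeeping needed to guarantee that $\fX'$ (hence $\fY'$) is an \emph{admissible} formal scheme, so that Lemma~\ref{lem:harmonic.global} and the slope–degree identity genuinely apply: this is exactly why the group $\Lambda$ is chosen to be $\Gamma$ when $\Gamma$ is dense and $\sqrt\Gamma$ when $\Gamma$ is discrete, and why one must be careful that the subdivision vertices land in the correct subset $\Sigma(\Gamma)$ versus $\Sigma(\sqrt\Gamma)$; see~\cite[Propositions~6.7 and~6.9]{Gubler2}. Everything else is a routine combination of~\secref{piecewise affine functions}, Lemma~\ref{lem:harmonic.global}, and the slope computation already carried out in the proof of Proposition~\ref{comparison harmonic in general}.
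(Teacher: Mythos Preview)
Your approach---constructing $\fL=\sO(D)$ on the Vilsmeier model $\fY'$ and then checking numerical triviality directly via the slope--degree formula---is more direct than the paper's. The paper instead observes that $h$ is PL (so $\Z$-harmonic iff harmonic), passes to a finite separable extension $K'/K$ over which $U$ becomes the skeleton of a genuine strictly semistable model of $Y$, notes that $h$ is then Thuillier-harmonic by definition, and concludes via Proposition~\ref{Thuillier's harmonic}. Both routes ultimately rest on the same slope computation; the paper simply packages it inside the already-proved comparison with Thuillier's theory rather than redoing it.

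There is, however, a gap in your argument in the discrete case. You correctly flag \emph{admissibility} of $\fX'$ as the delicate point, but the slope--degree identity you invoke (from~\cite[Theorem~2.6]{KRZ1}, as used in the proof of Proposition~\ref{comparison harmonic in general}) is stated for \emph{strictly semistable} models. When $\Gamma$ is discrete and a subdivision vertex lies in $\Sigma(\sqrt\Gamma)\setminus\Sigma(\Gamma)$, the model $\fX'$ is admissible but need not be strictly semistable over $K$, so neither the clean bijection between proper irreducible components of $\fY'_s$ and interior vertices of $U$, nor the exact formula $\deg(\fL|_C)=\sum_{e^-=y}w(e)\,d_e(f)$ with the original weights $w(e)$, is available as written. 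This is precisely why the paper base-changes first; your argument can be repaired the same way---run your degree computation on the strictly semistable model over $K'$, then descend harmonicity via Proposition~\ref{prop:harmonic.properties}(2$'$), using that $h$ is already PL over $K$ so that harmonicity upgrades to $\Z$-harmonicity.
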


\begin{proof}
  Replacing $U$ be a subdivision, we may assume that $f$ is affine $\z$-linear on each edge. The vertices of $U$ induce a subdivison of $S(\fX)$ and hence \secref{subdivisions of skeleton} yields a canonical formal $\kcirc$-model $\fX'$ of $X$ over $\fX$. 
  By~\secref{piecewise affine functions}, we get a canonical open formal subscheme $\YF'$ of $\XF'$ with generic fiber $Y=\tau^{-1}(U)$.  It follows from~\secref{model theorem} and~\secref{piecewise affine functions} that $h$ is piecewise linear, so by definition, it is $\Z$-harmonic if and only if it is harmonic.  After a finite, separable extension of the ground field, the graph $U$ becomes a skeleton of $Y$ with respect to some strictly semistable model (see Remark~\ref*{II-rem:non-rational-subgraph} in~\cite{GJR2}).  Thus $h$ is harmonic in the sense of Thuillier, so by Proposition~\ref{Thuillier's harmonic}, it is harmonic in the sense of Definition~\ref{definition harmonic}.
\end{proof}

\begin{rem} \label{harmonic non-smooth functions}
  Let $X$ be a \emph{smooth} curve (that is, a rig-smooth curve \emph{without boundary}), let $\sH$ be the sheaf of harmonic functions on $X$, and let $\sF$ be the subsheaf of $\R$-vector spaces generated by the germs $\log|f|$ for local invertible analytic functions $f$. Then
  Thuillier~\cite[2.3.4]{Thuillier} has shown that $\sF$ might be a strict subsheaf of $\sH$, with equality when $X$ is potentially locally isomorphic to $\P^{1,\an}$ or when $\ktilde$ is algebraic over a finite field.
	It was shown in \cite[5.2, 5.3]{wanner16:harmonic_functions} that $\sF$ is the kernel of $\d'\d'' \colon \cA_{\rm sm}^{0,0} \to \cA_{\rm sm}^{1,1}$ and that $\sF$ is the sheaf of smooth harmonic functions on $X$.  The latter follows also by Proposition~\ref{weakly smooth harmonic functions}. We conclude that there is a non-smooth harmonic function on an open subset of $X$ if and only if the inclusion $\sF \subset \sH$ is strict.
	If $\sF = \sH$, then every harmonic tropicalization map is a smooth tropicalization map, and $\cAsm=\cA$.
\end{rem}

Wanner's regularization theorem \cite[Corollary~5.4]{Wanner19}, obtained for Mumford curves or in case of an algebraic residue field over a finite field, can now be generalized as follows (compare Remark~\ref{harmonic non-smooth functions}).

\begin{prop} \label{regularization theorem}
  Let $X$ be a smooth curve (a rig-smooth curve without boundary) over a non-trivially valued non-Archimedean field $K$, and let $f$ be a subharmonic function on $X$ as defined in \cite[3.1.2]{Thuillier}. Then for any relatively compact open subset $U$ of $X$, there is a decreasing net of weakly smooth psh functions $f_i$ on $U$ converging pointwise to $f|_U$.
\end{prop}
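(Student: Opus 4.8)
The plan is to reduce the statement to the corresponding result of Thuillier on the skeleton of a strictly semistable model, using our comparison results between harmonic/subharmonic functions in our sense and in Thuillier's sense. First I would observe that the statement is local on $X$, and that subharmonicity in the sense of \cite[3.1.2]{Thuillier} is a local notion, so after shrinking we may assume that $X$ is a compact rig-smooth curve (by replacing $U$ by a relatively compact open subset of a compact strictly affinoid subdomain, then passing to a suitable strictly semistable model after a finite Galois base change, as in~\secref{def Thuillier harmonic} and Corollary~\ref{cor:pl.basechange}; note that passing to a finite Galois extension is harmless since we can descend the net by averaging over the Galois group, using Proposition~\ref{prop:harmonic.properties}(2') and the fact that psh is insensitive to $G$-invariant averaging). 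Thus we reduce to the situation of~\secref{setup for simple neighbourhood proposition}, with a strictly semistable model $\fX$ of $X$ and skeleton $\Sigma = S(\fX)$ together with the retraction $\tau\colon X \to \Sigma$.

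Next I would invoke Thuillier's regularization theorem for subharmonic functions on curves: for a subharmonic function $f$ on $X$ and a relatively compact open subset $U$, there is a decreasing net of functions $f_i$ on $U$ that are smooth harmonic-plus-model functions (more precisely, Thuillier shows $f$ is a decreasing limit of functions of the form $g \circ \tau$ for $g$ piecewise $\z$-linear on a subgraph, plus local $\log|\cdot|$-contributions) converging pointwise to $f|_U$. The key input from our theory is then that each such approximant $f_i$ is weakly smooth and psh in \emph{our} sense. For the piecewise-linear-on-the-skeleton pieces $g\circ\tau$ with $g$ convex on each edge, one shows they are weakly smooth psh: by Proposition~\ref{simple neighbourhoods and harmonic functions on skeletons}, if $g$ were harmonic it would be $\Z$-harmonic, hence weakly smooth with $\d'\d''(g\circ\tau)=0$ by Proposition~\ref{weakly smooth harmonic functions}; in general a convex-on-edges $g$ differs from a harmonic one by a function whose $\d'\d''$ is a positive combination of point masses, so $g\circ\tau$ is $\Q$-PL (or $\Lambda$-PL) and psh. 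Concretely, I would use that $g\circ\tau$ can be written $\rG$-locally as $\max$ of $\log|f_j|$-type functions along each edge where $g$ is convex, so by~\secref{lfs.is.max} and~\secref{psh approachable functions} it is locally psh-approachable, hence psh; combining with the characterization of~$\cA^{0,0}$ in Theorem~\ref{intro: characterization weakly smooth}\eqref{characterization of harmonic functions} and the fact that such functions are harmonic tropicalizations, they lie in $\cA^{0,0}(U)$.

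The main obstacle I expect is verifying carefully that Thuillier's approximating net consists of functions which are genuinely weakly smooth \emph{and} psh in our sense, and that the psh property (which for us is phrased via positivity of $\d'\d''[h]$ as a strong current, or equivalently via local psh-approachability by~\secref{psh approachable functions}) matches Thuillier's notion of subharmonicity on the curve. This requires combining: (i) the comparison of harmonic functions in Proposition~\ref{Thuillier's harmonic}; (ii) the fact that smooth psh functions in the sense of~\secref{continuous psh functions} restrict on curves to convex functions on the skeleton; and (iii) that convex-on-edges functions pulled back by $\tau$ are $\Lambda$-PL, which needs~\secref{piecewise affine functions} and the admissibility discussion in~\secref{subdivisions of skeleton} (here the case distinction between $\Gamma$ discrete and $\Gamma$ dense, handled via $\Lambda$ as set up before~\secref{setup for simple neighbourhood proposition}, is relevant). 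Once these pieces are in place, one simply takes $f_i$ to be Thuillier's net intersected with $U$; decreasingness and pointwise convergence are then inherited directly from Thuillier's theorem, and membership in the space of weakly smooth psh functions follows from the discussion above. A final routine check is that a pointwise decreasing limit argument is not actually needed for the conclusion — we only need existence of the net, not that $f$ itself is weakly smooth — so no additional regularity of $f$ must be established.
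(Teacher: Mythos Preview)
There is a genuine gap. Thuillier's regularization theorem~\cite[Th\'eor\`eme~3.4.2]{Thuillier} produces a decreasing net of \emph{piecewise $\R$-linear} (``lisse'') subharmonic functions, not ``smooth harmonic-plus-model functions'' as you describe. Such a function $g\circ\tau$ with $g$ piecewise affine and convex on the skeleton is $\R$-PL and psh, but it is \emph{not} weakly smooth at the corners of $g$ (the points of $\supp(\d\d^c f)$). Indeed, an element of $\cA^{0,0}$ is locally $\phi\circ h$ with $h$ a harmonic tropicalization and $\phi$ a \emph{smooth} function on $h(\bar U)$; at a corner of $g$ where the weighted sum of outgoing slopes is strictly positive, no such factorization exists, since $\d'\d''(\phi\circ h)=h^*(\d'\d''\phi)$ is a weakly smooth $(1,1)$-form, never a nonzero point mass. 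Your appeal to Proposition~\ref{weakly smooth harmonic functions} only covers the harmonic case $\d'\d''f=0$, and the LFS/$\max$ description in~\secref{lfs.is.max} gives $\Q$-PL psh functions, which again need not lie in $\cA^{0,0}$.

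The missing idea, which the paper imports from Wanner~\cite[Lemma~5.2]{Wanner19}, is to smooth these corners explicitly: first reduce via Thuillier to $f$ piecewise $\R$-linear, then at each point $x$ of the discrete support $S$ of $\d\d^c f$ choose a harmonic $G_x\leq F$ on the star $\Sigma_x$ with equality only at $x$, and replace $f$ near $x$ by the \emph{regularized maximum} $m_{\varepsilon/2}(G_x\circ\tau+\varepsilon,\,f)$. The regularized maximum of two affine functions on a segment is smooth and convex, so the resulting $f_\varepsilon$ is harmonic (hence weakly smooth) near $S$ and on the complement $U\cap V$, and is smooth psh over the interiors of edges; this is what makes $f_\varepsilon$ weakly smooth psh. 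One then extracts a decreasing pointwise-convergent sequence from the $f_\varepsilon$.
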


\begin{proof}
  By Thuillier's regularization theorem \cite[Th\'eor\`eme~3.4.2]{Thuillier}, there is a decreasing net of  piecewise $\R$-linear subharmonic functions on $U$ converging pointwise to $f$, so we may assume that $f$ is piecewise $\R$-linear. (Note that piecewise $\R$-linear functions are called \defi{lisse} in  \cite[Definition~3.2.1]{Thuillier}.)
	As in the proof of \cite[Theorem~3.4.2]{Thuillier}, we may assume that $U= Y \setminus \partial Y$ for a compact strictly analytic domain $Y$ of $X$. By \cite[Proposition~3.2.4]{Thuillier}, there is a finite Galois extension $K'/K$ such that $Y$ has a strictly semistable model $\YF$ over the valuation ring of $K'$ and such that $f=F \circ \tau$ for a piecewise $\R$-linear function $F$ on the skeleton $S(\YF)$, where $\tau\colon Y \to S(\YF)$ is the retraction. Since the subharmonic functions over $K$ are the Galois-invariant subharmonic functions over $K'$ and the same holds for weakly smooth functions, it is enough to prove the claim over $K'$, so we may assume $K=K'$.

	Now we can follow  Wanner's arguments~\cite[Lemma~5.2]{Wanner19}. 
	Note that the curvature $\d\d^c(f|_U)$ is a discrete measure on $U$; we denote by $S$ its support.  In what follows, we allow subdivisions of $\Sigma\coloneq S(\fY)$ at arbitrary points in $\Sigma$ (not just type-II points); we do not claim that the subdivision is the skeleton of a formal model of $U$.  By subdividing the skeleton, we may assume that $\Sigma$ contains $S$, that $F$ is affine on the edges of $\Sigma$, and that the points of $S$ are vertices of $\Sigma$. For a point $x \in S$, let $\Sigma_x$ be the union of all edges of $\Sigma$ containing $x$. By further subdividing $\Sigma$, we may assume that $\Sigma_x \cap S = \{x\}$.
	Since $x$ is in the support of $\d\d^c(f|_U)$, the weighted sum of outgoing slopes of $F$ in $x$ is positive. 
	Then there is a harmonic function $G_x \leq F$ on $\Sigma_x$, affine on each edge of $\Sigma_x$, and with $F(y)=G_x(y)$ if and only if $y=x$.  For $\varepsilon >0$ sufficiently small, the regularized maximum $m_\varepsilon$ as defined in \cite[4.6]{Wanner19} leads to a function
	$$f_\varepsilon \coloneqq m_{\varepsilon/2}(G_x \circ \tau + \varepsilon,f) = m_{\varepsilon/2}(G_x+\varepsilon,F)\circ \tau   $$
	on $\tau^{-1}(\Sigma_x)$ which agrees with $f$ over a neighbourhood  of the boundary of $\Sigma_x$ and agrees with $G_x \circ \tau + \varepsilon$ over a neighbourhood of $x$ in $\Sigma_x$. We conclude that the functions $f_\varepsilon$, for varying $x \in S$, can be glued to get a function $f_\varepsilon$ defined on $Y$ which agrees with $f$ on  $V \coloneqq Y \setminus \bigcup_{x \in S} \tau^{-1}(P_x)$ for suitable compact  neighbourhoods $P_x$ of $x$ in $\relint(\Sigma_x)$. Since $f$ is harmonic on $U \cap V$ by omitting $S$, we see that $f_\varepsilon$ is harmonic on $U \cap V$ and on a neighbourhood of $S$. Since the regularized maximum of two affine functions on a segment is a convex smooth function, we see that $f_\varepsilon$ is a smooth psh function over the interior of each edge of any $\Sigma_x$. Overall, it follows that $f_\varepsilon$ is a weakly smooth psh function on $U$. In the proof of \cite[Lemma~5.2]{Wanner19}, Wanner extracted from the functions $f_\varepsilon$ a  decreasing sequence which converges pointwise to $f$, which proves the proposition.
\end{proof}

\section{Canonical tropicalizations of abelian varieties} \label{section: canonical tropicalization of abelian varieties}

Let $A$ be an abelian variety over the non-Archimedean field $K$ with non-trivial valuation $v$. The uniformization theory of Bosch, L\"utkebohmert and Raynaud yields that after replacing $K$ by a finite separable extension, there is a canonical exact sequence
\[
0 \longrightarrow \torus \longrightarrow E
\stackrel{q}{\longrightarrow} B \longrightarrow 0
\]
of algebraic groups such that $A^{\rm an}= E^{\rm an}/\Lambda$ for a  discrete subgroup
$\Lambda$ of $E(K)$.
Here $B$ is the generic fiber of an abelian scheme $\sB$ over $\kcirc$
and $\torus$ is a split torus over $K$
(see \cite{bosch-luetkebohmert1991} and \cite[6.5]{BerkovichSpectral}).
Note that the quotient homomorphism  $p\colon E^{\rm an} \to A^{\rm an}$
is only an analytic morphism, but the \emph{Raynaud extension} $E$ is algebraic
by the GAGA principle, and the quotient homomorphism $q\colon E \to B$
is locally trivial.

\begin{art}[Canonical Tropicalizations] \label{canonical tropicalization}
	We denote the character lattice of the split torus $\T$ by $M$ and we
	set $N \coloneqq \Hom(M,\Z)$.
	For $u \in M$, the pushout of the Raynaud extension with respect to
	the character  $\chi_u\colon \torus \to \bGm$ gives rise to a
	translation invariant extension of $B$ by $\bGm$, and hence to
	a rigidified translation invariant line bundle $E_u$ on $B$ (see
	\cite{bosch-luetkebohmert1991} and \cite[3.2]{foster-rabinoff-shokrieh-soto}).
	Note that the line bundle $q^*(E_u)$ is trivial over $E$, and the above
	pushout construction gives a canonical frame $e_u\colon E \to q^*(E_u)$.
	Following~\cite[3.2]{foster-rabinoff-shokrieh-soto}), we define
	the \emph{canonical tropicalization map} as the unique map
	\[
	\trop\colon E^{\rm an} \longrightarrow N_\R
	\]
	satisfying
	\[
	\langle \trop(x), u \rangle = - \log q^*\|e_u(x)\|_{E_u},
	\]
	where $\|\phantom{a}\|_{E_u}$ is the canonical metric on the rigidified
	line bundle $E_u$ which is the formal metric associated to the unique (up to isomorphism) formal $\kcirc$-model of $E_u$ on the formal completion $\fB$  of $\sB$ \cite[3.1]{foster-rabinoff-shokrieh-soto}.
	For $x \in \T^{\rm an}$, we have
	$\langle \trop(x),u \rangle = v \circ \chi^u(x)$ and hence
	$\trop$ agrees with the usual tropicalization map on the split
	torus $\T^{\rm an}$.
	Moreover, $\trop$ maps $\Lambda$ onto a complete lattice in $N_\R$
	(see \cite[Theorem~1.2]{bosch-luetkebohmert1991}).
\end{art}

\begin{prop} \label{canonical tropicalization is harmonic}
	The canonical tropicalization map of an abelian variety is a harmonic tropicalization.
\end{prop}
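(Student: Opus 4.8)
The statement asserts that the canonical tropicalization map $\trop\colon A^{\an}\to N_\R/\trop(\Lambda)$ (or, more precisely, the one on $E^{\an}$ descending to $A^{\an}$) is a harmonic tropicalization in the sense of Section~\ref{section: differential forms}, i.e.\ that each coordinate function is $\Z$-harmonic. By the definition in~\secref{canonical tropicalization}, the coordinate of $\trop$ in the direction $u\in M$ is $x\mapsto -\log q^*\|e_u(x)\|_{E_u}$, where $\|\cdot\|_{E_u}$ is the formal metric of the canonical $\kcirc$-model of $E_u$ on the formal completion $\fB$ of the abelian scheme $\mathscr B$. The first thing I would do is reduce from $A^{\an}$ to $E^{\an}$ (harmonicity descends along the quotient by the discrete lattice $\Lambda$, since $p\colon E^{\an}\to A^{\an}$ is a local isomorphism of analytic spaces, and harmonicity is a local notion by Remark~\ref{sec:harmonic.remarks}(\ref{item:harm.rmk.germ})). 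Then I would also observe that it suffices to treat a single character $u$, since $\Z$-harmonic functions form a group (Remark~\ref{sec:harmonic.remarks}(\ref{item:harm.rmk.monoid})).

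The heart of the argument is then to produce, locally on $E^{\an}$, a formal $\kcirc$-model of $(\text{neighbourhood},\sO)$ whose associated line bundle restricts to a \emph{numerically trivial} line bundle on the special fibre, so that Proposition~\ref{into:harmonic functions}(\ref{integral harmonic}) (equivalently Proposition~\ref{prop:def.semipositive} together with Definition~\ref{definition harmonic}) applies. The natural candidate is the formal $\kcirc$-model of $E_u$ on $\fB$: it is the formal completion of a line bundle on the \emph{proper} abelian scheme $\mathscr B$ over $\kcirc$, and $E_u$ is a translation-invariant (algebraically trivial) line bundle on the abelian variety $B$. Its special fibre $E_{u,s}$ is therefore an algebraically trivial — in particular numerically trivial — line bundle on the abelian variety $\mathscr B_s$ over $\td K$. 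The key point is that the canonical formal metric is the one associated to \emph{this} model. Concretely: the torsor structure $q\colon E^{\an}\to B^{\an}$ is locally trivial, so over a suitable (formal-model-)affine open $\fU\subset\fB$ we may trivialize $q$, giving a compact strictly analytic domain $W=q^{-1}(\fU_\eta)\subset E^{\an}$ mapping to $\fU_\eta\subset B^{\an}$. Pulling back the formal model $(\fB,\mathscr E_u^{\circ})$ along the induced formal model of $W\to\fU_\eta$ realizes $h_u|_W\coloneqq -\log q^*\|e_u\|_{E_u}$ as $h_\fL$ for $\fL$ the pullback of $\mathscr E_u^{\circ}|_\fU$; by Lemma~\ref{lem:nef.properties}(3) (pull-back of numerically trivial is numerically trivial), $\fL$ is numerically trivial on the special fibre. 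Hence $h_u|_W$ is $\Z$-harmonic by Lemma~\ref{lem:harmonic.global}, and since such $W$ cover $E^{\an}$, the function $h_u$ is $\Z$-harmonic.

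A technical wrinkle is that the canonical frame $e_u\colon E\to q^*(E_u)$ is defined on the algebraic $E$, and $q^*(E_u)$ is \emph{analytically} trivial on $E^{\an}$; I need to be careful that the trivialization used to build the local formal model is compatible with $e_u$, so that $-\log q^*\|e_u(x)\|_{E_u}$ really is of the form $-\log|\gamma| + h_\fL$ with $h_\fL$ coming from the pulled-back model and $\gamma$ an invertible analytic function — the $\log|\gamma|$ term contributes a harmonic function by Remark~\ref{sec:harmonic.remarks}(\ref{item:harm.rmk.smooth}), so it does not disturb harmonicity. Alternatively, and perhaps more cleanly, I would note that on the split-torus part $\T^{\an}\subset E^{\an}$ the map $\trop$ restricts to the usual tropicalization, which is a smooth tropicalization map and hence harmonic by Remark~\ref{remark on tropicalization maps}(2); the content is purely the behaviour transverse to $\T$, governed by the abelian scheme $\mathscr B$.

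\textbf{Main obstacle.} The routine part is the numerical triviality of $E_{u,s}$ on $\mathscr B_s$ (it is algebraically trivial, being in the image of the dual abelian scheme's universal bundle restricted to the special fibre). The genuine work is the \emph{bookkeeping of models}: identifying the canonical metric $\|\cdot\|_{E_u}$ with $h_\fL$ for an explicitly pulled-back formal line bundle on a formal model of a local piece $W$ of $E^{\an}$, using local triviality of the Raynaud extension $q\colon E\to B$ over formal-affine opens of $\fB$ and Raynaud's theorem to extend analytic morphisms to formal ones. I expect that step — reconciling the analytic quotient presentation $E^{\an}$, the algebraic Raynaud extension $E$, and the formal model $\fB$ of $B$ — to be where all the care is needed; once it is in place, Lemma~\ref{lem:harmonic.global} and Lemma~\ref{lem:nef.properties}(3) finish the proof immediately.
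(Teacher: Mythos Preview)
Your proposal is correct and follows the same approach as the paper: the formal model $\ES_u$ of the translation-invariant line bundle $E_u$ on $\fB$ is numerically trivial, so its pullback along $q$ is too, and Lemma~\ref{lem:harmonic.global} gives that $h_u$ is $\Z$-harmonic. The paper avoids what you flag as the main obstacle by working globally rather than locally --- it cites \cite[2.2]{gubler-martin} to obtain directly a single formal $\kcirc$-model $\fX$ of the paracompact space $E^{\an}$ with a morphism $\fX\to\fB$ extending $q$, so that $q^*\|\cdot\|_{E_u}$ is the formal metric of $q^*(\ES_u)$ on $\fX$ and no local trivialization or frame-compatibility bookkeeping is needed.
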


\begin{proof}
	We have seen in~\secref{canonical tropicalization} that the canonical tropicalization map is induced by functions $h_u \coloneqq - \log q^*\|e_u(x)\|_{E_u}$ for $u \in M$.
	Since the rigidified line bundle $E_u$ on $B$ is translation invariant, the formal model $\ES_u$ of $E_u$ on $\fB$ inducing the canonical metric $\metr_{E_u}$ of $E_u$ is numerically trivial on $\fB$. Using the theory of formal models on the paracompact space $E^\an$ as in \cite[2.2]{gubler-martin}, we conclude that $q^*\|\phantom{a}\|_{E_u}$ is the formal metric on a formal model $\fX$ of $E^\an$ with a morphism $\fX \to \fB$ extending $q$. Denoting this extension also by $q$, the metric $q^*\|\phantom{a}\|_{E_u}$ is induced by the formal model $q^*(\ES_u )$ of
	$q^*(E_u)$. Note that $q^*(E_u)$ is trivial with canonical frame $e_u$, so we may identify it with $\OS_E$. Since $q^*(\ES_u )$ is numerically trivial on $\fX$, we conclude from Lemma~\ref{lem:harmonic.global} that $h_u$ is $\Z$-harmonic, proving the proposition.
\end{proof}

\begin{rem} \label{Berkovich open splittings}
	For questions related to smoothness in the sense of Chambert-Loir--Ducros, we have to consider smooth tropicalization maps (Remark~\ref{smooth forms on X}). As canonical tropicalizations of abelian varieties are defined on $\rG$-open subsets which are not open in the analytic topology, the canonical tropicalization is not necessarily smooth. Indeed, in Section~\ref{abelian threefold} we will give 
	an example of an abelian threefold over some non-Archimedean field for which the canonical tropicalization is \emph{not} smooth.
\end{rem}

\begin{rem} \label{canonical metrics and delta-metrics}
	It was claimed in \cite[Example~8.15]{GK}	that the canonical metric on any line bundle of an abelian variety is locally the tensor product of a smooth and a formal metric. This was used in \cite[Example~9.17]{GK} to prove that such canonical metrics are $\delta$-metrics. The latter means that locally there is a smooth tropicalization map such that the first Chern current is induced by a $(1,1)$-form with piecewise smooth coefficients on the tropical chart (see \cite[9.13]{GK}). However, this was done under the incorrect assumption that canonical tropicalization maps of abelian varieties are smooth.  Therefore, it remains an open question whether canonical metrics are $\delta$-metrics. As this paper clearly shows, it is better to consider a more general notion of $\delta$-metrics replacing smoothness by weak smoothness.
\end{rem}

\section{A non-smooth canonical tropicalization map}    \label{abelian threefold}

We fix a complete algebraically closed non-Archimedean field with nontrivial valuation $v\colon K\to\R\cup\{\infty\}$.  Let $\td K$ be the residue field of $K$. It follows from \cite[Lemma~3.4.1/4]{bosch_guntzer_remmert84:non_archimed_analysis} that $\td K$ is algebraically closed.   We assume that $\td K$ is not an algebraic closure of a finite field. As usual, we denote the value group by $\Gamma$.

\begin{art}[A Theta Graph] \label{theta graph}
We consider a metric $\Theta$-graph
with edge lengths $\ell(e)$ contained in the value group $\Gamma$, as illustrated on the left side of Figure~\ref{fig:a-j}. We denote this metric graph suggestively by $\Theta$. We label the vertices by $P,Q$, and we orient the edges $a,b,c$ to start at $P$ and end at $Q$. This makes $\Theta$ into an oriented metric graph. Recall that the \defi{edge length pairing} $\langle \cdot, \cdot \rangle$ on the free abelian group generated by the edges is defined such that the edges form an orthogonal basis with $\langle e, e \rangle = \ell(e)$ for each edge $e$. It induces a positive definite pairing $\angles{\cdot,\cdot}\colon H_1(\Theta,\Z)\times H_1(\Theta,\Z)\to\R$
and hence  an embedding $\iota\colon H_1(\Theta,\Z)\to H^1(\Theta,\R)$, as in~\cite[2.7]{BRab}.
\end{art}

\begin{art}[The Jacobian of the Theta Graph] \label{Jacobian of Theta}
As in~\cite[Section~8]{MikZharII}, we consider the  Jacobian of the $\Theta$-graph   defined by
$$\Jac(\Theta) = H^1(\Theta,\R)/\iota H_1(\Theta,\Z).$$
Then $\Jac(\Theta)$ is a two-dimensional real torus which has the following explicit description. Let $\gamma_1$ be the closed path starting at $P$ and traversing $c$ and $b$, and let $\gamma_2$ be the closed path starting at $P$ and traversing $c$ and $a$.  Then $\{\gamma_1,\gamma_2\}$ forms a basis of $H_1(\Theta,\Z)$.  Choosing the dual basis gives an isomorphism $H^1(\Theta,\R)\cong\R^2$, with respect to which we have $\iota(\gamma_1) = (\ell(a)+\ell(c),\,\ell(c))$ and $\iota(\gamma_2) = (\ell(c),\,\ell(b)+\ell(c))$.  The Jacobian is the quotient of $\R^2$ by the lattice generated by $\iota(\gamma_1)$ and $\iota(\gamma_2)$.  See Figure~\ref{fig:a-j}.

Let $\alpha_P\colon\Theta\to\Jac(\Theta)$ be the Abel--Jacobi map based at $P$.  Identifying $c$ with the segment $[0,\ell(c)]$, with $0$ identified with $P$, we have
$\alpha_P(x) = (x, x)$ for $x\in c$, modulo translation by $H_1(\Theta,\Z)$.
Identifying $a$ with $[0,\ell(a)]$, with $0$ identified with $P$, we have $\alpha_P(x) = (-x+\ell(a)+\ell(c),\ell(c))$ for $x\in a$; doing likewise for the edge $b$, we have $\alpha_P(x) = (\ell(c),-x+\ell(b)+\ell(c))$.
\end{art}

\begin{figure}[ht]\label{fig:a-j}
	\centering
	\begin{tikzpicture}
	\node[point, "$P$" below] (P) at (0, 0) {};
	\node[point, "$Q$" above] (Q) at (0, 2) {};
	\draw[thick] (P) edge["$c$" right] (Q);
	\draw[thick] (P) edge["$a$" right, bend left=95, looseness=2] (Q);
	\draw[thick] (P) edge["$b$" left, bend right=115, looseness=3] (Q);
	\draw[very thick] (-1.3,.2)
	edge[<-, "$\gamma_1$", bend left=40, looseness=1] (-1.3,1.8);
	\draw[very thick] (1.8,.2)
	edge[<-, "$\gamma_2$" right, bend right=30, looseness=1] (1.8,1.8);
	\node at (0, -2.7) {$\Theta$};
	\node at (7, -2.7) {$\Jac(\Theta)$};

	\draw[very thick] (3, 1) edge[->, bend left, "$\alpha_P$"] (5, 1);

	\begin{scope}[xshift=7cm, scale=.5]
	\begin{scope}[cm={3.5,1,1,3,(0,0)}]
	\draw[help lines] (-1, -1) grid (2,2);
	\end{scope}
	\node[point] at (0, 0) {};
	\node[point] at (1, 1) {};
	\draw[thick] (0,0) -- (1,1) (1,1) -- (3.5,1) (1,1) -- (1,3);
	\begin{scope}[xshift=1cm, yshift=3cm]
	\node[point] at (0, 0) {};
	\node[point] at (1, 1) {};
	\node[point] at (1, 3) {};
	\draw[thick] (0,0) -- (1,1) (1,1) -- (3.5,1) (1,1) -- (1,3);
	\end{scope}
	\begin{scope}[xshift=-1cm, yshift=-3cm]
	\node[point] at (0, 0) {};
	\node[point] at (1, 1) {};
	\draw[thick] (0,0) -- (1,1) (1,1) -- (3.5,1) (1,1) -- (1,3);
	\end{scope}
	\begin{scope}[xshift=3.5cm, yshift=1cm]
	\node[point] at (0, 0) {};
	\node[point] at (1, 1) {};
	\node[point] at (3.5, 1) {};
	\draw[thick] (0,0) -- (1,1) (1,1) -- (3.5,1) (1,1) -- (1,3);
	\end{scope}
	\begin{scope}[xshift=4.5cm, yshift=4cm]
	\node[point] at (0, 0) {};
	\node[point] at (1, 1) {};
	\node[point] at (3.5, 1) {};
	\node[point] at (1, 3) {};
	\draw[thick] (0,0) -- (1,1) (1,1) -- (3.5,1) (1,1) -- (1,3);
	\end{scope}
	\begin{scope}[xshift=2.5cm, yshift=-2cm]
	\node[point] at (0, 0) {};
	\node[point] at (1, 1) {};
	\node[point] at (3.5, 1) {};
	\draw[thick] (0,0) -- (1,1) (1,1) -- (3.5,1) (1,1) -- (1,3);
	\end{scope}
	\begin{scope}[xshift=-3.5cm, yshift=-1cm]
	\node[point] at (0, 0) {};
	\node[point] at (1, 1) {};
	\draw[thick] (0,0) -- (1,1) (1,1) -- (3.5,1) (1,1) -- (1,3);
	\end{scope}
	\begin{scope}[xshift=-2.5cm, yshift=2cm]
	\node[point] at (0, 0) {};
	\node[point] at (1, 1) {};
	\node[point] at (1, 3) {};
	\draw[thick] (0,0) -- (1,1) (1,1) -- (3.5,1) (1,1) -- (1,3);
	\end{scope}
	\begin{scope}[xshift=-4.5cm, yshift=-4cm]
	\node[point] at (0, 0) {};
	\node[point] at (1, 1) {};
	\draw[thick] (0,0) -- (1,1) (1,1) -- (3.5,1) (1,1) -- (1,3);
	\end{scope}
	\end{scope}

	\end{tikzpicture}

	\caption{The graph $\Theta$ and its Jacobian.}
\end{figure}

\begin{art}[A Metrized Complex of Curves]  \label{metrized complex and curve}
  We enrich $\Theta$ with the structure of a metrized complex of curves in the sense of~\cite[Definition~2.17]{ABBR}, as follows.  Choose an elliptic curve $E$ over $\td K$.  Let $z$ be the identity in the group law on $E(\td K)$, and pick distinct non-torsion points $x,y\in E(\td K)$.  We attach $E$ to $P$, identifying the adjacent edges $a,b,c$ with the points $x,y,z$, respectively.  We attach $\bP^1_{\td K}$ to $Q$, identifying the edges with $0,1,\infty$.  This forms a metrized complex of curves.  By~\cite[Theorem~3.24]{ABBR}, there exists a smooth, proper, connected $K$-curve $C$ with minimal skeleton identified with $\Theta$ (as a metrized complex of curves). Using \cite[4.18.1]{BPR2}, we deduce that $C$ has genus $g(C)=g(\Theta)+g(E)+g(\bP^1)=3$. By the stable reduction theorem,  $C$ has a unique stable  model $\fC$ over $K^\circ$ whose associated metrized complex is $\Theta$, and hence the irreducible components of $\fC_s$ are $E$ and $\bP_{\td K}^1$, meeting in the three double points corresponding to $x=0$, $y=1$ and $z=\infty$.  See \cite[4.16]{BPR2} for details.
\end{art}

Let $J$ be the Jacobian of $C$.  By~\cite[Theorem~2.9]{BRab}, the canonical skeleton $\Sigma$ of $J$ is naturally isomorphic to $\Jac(\Theta)$.  Moreover, choosing a point $p\in C(K)$ retracting to $P\in\Theta$ and letting $\alpha_P\colon C\to J$ be the Abel--Jacobi map based at $p$, we have a commutative square
\begin{equation*} \label{eq:intro.aj.commutes}\xymatrix @=.25in{
	{C^\an} \ar[r]^{\alpha_p} \ar[d]^\tau &
	{J^\an} \ar[d]^\tau \\
	{\Theta} \ar[r]_(.4){\alpha_P} & {\Sigma}
}\end{equation*}
under the identification $\Sigma\cong \Jac(\Theta)$, where the vertical maps are retraction onto the skeleton.  Let $T\colon C^\an\to\Sigma$ be either composition: this is the canonical tropicalization of $J$ restricted to $\alpha_p(C^\an)$.

\begin{prop} \label{non-smooth T_i}
	Choose a lift $\td T\colon C^\an\to H^1(\Theta,\R)$ of $T$ in a neighbourhood of $P\in\Theta = S(\fC)$.  Then the coordinate functions $\td T_i(\cdot) = \angles{\gamma_i, \td T(\cdot)}$ are not smooth at $P$, for $i=1,2$.
\end{prop}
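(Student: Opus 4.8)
The strategy is to show that near $P$, the tropicalization $T$ (equivalently the Abel--Jacobi map composed with retraction) is \emph{not} affine on enough directions at $P$, and then to invoke Proposition~\ref{weakly smooth harmonic functions} together with the characterization of smooth functions on curves. More precisely, a function on a curve is smooth at a point in the sense of Chambert-Loir--Ducros if and only if it is locally given by a smooth tropicalization map, which by Proposition~\ref{local description of piecewise linear} and~\secref{strictly semistable model and skeleton} means it factorizes, near $P$, through a function on the skeleton $\Theta = S(\fC)$ that is $\z$-linear (affine with rational slopes) on each edge adjacent to $P$; moreover the residue field $\td K$ being non-algebraic over a finite field is precisely the hypothesis making the sheaf $\sF$ of ``$\log|f|$''-functions a proper subsheaf of the sheaf of harmonic functions (Remark~\ref{harmonic non-smooth functions} and \cite[2.3.4]{Thuillier}). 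So the core of the argument is a direct computation with the Abel--Jacobi map on the metrized complex of curves.

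First I would make the local picture at $P$ completely explicit. By~\secref{Jacobian of Theta}, in the coordinates on $H^1(\Theta,\R)\cong\R^2$ dual to $\{\gamma_1,\gamma_2\}$, the Abel--Jacobi map $\alpha_P\colon\Theta\to\Jac(\Theta)$ sends the three edges $a,b,c$ emanating from $P$ to three line segments through the origin with \emph{distinct} rational directions (namely, roughly, direction $(1,1)$ along $c$, direction $(-1,0)$ along $a$, and $(0,-1)$ along $b$, up to sign and the lattice identification). Thus already the restriction of $T$ to the skeleton $\Theta$ near $P$ is genuinely piecewise linear: it is affine on each edge but the three slopes at $P$ do not fit together into a single affine function. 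This is exactly the statement that $T|_\Theta$, hence $\td T_i|_\Theta$, is not $\z$-linear (indeed not even $\R$-linear) in a neighbourhood of $P$ on $\Theta$.

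Next I would upgrade this from ``not linear on the skeleton'' to ``not smooth at $P$ on $C^\an$''. Suppose for contradiction $\td T_i$ were smooth at $P$. By the characterization of smooth functions on rig-smooth curves recalled in~\secref{relations between forms} and Proposition~\ref{local description of piecewise linear}, after passing to a neighbourhood $\td T_i = \log|f|$ for an invertible analytic function $f$, or more precisely a $\z$-linear combination of such; pulling back to a strictly semistable model whose skeleton contains $\Theta$ and using Proposition~\ref{PL and strictly semistable}(\ref{factorization through the skeleton}), such a function factors through a function on the skeleton that is $\z$-linear on each edge adjacent to $P$. But $\td T_i$ does factor through the skeleton (it is $\td T_i = \pi_i\circ\td T = \pi_i\circ(\text{lift of }\alpha_P)\circ\tau$), and we have just seen this factored function has three incompatible integral slopes at $P$ coming from $a,b,c$. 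A function $\Theta\to\R$ that is $\z$-linear on each of the three edges at $P$ and continuous at $P$ is necessarily affine in a neighbourhood only if the three slopes are compatible; here they are not. This contradiction shows $\td T_i$ is not smooth at $P$.

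\textbf{Main obstacle.} The delicate point is making rigorous the passage ``smooth at $P$ $\Rightarrow$ factors through a $\z$-linear function on each edge at $P$'': one must be careful that a smooth tropicalization map near $P$ involves invertible analytic \emph{functions} on a neighbourhood of $P$, and extract from these the induced linear functions on the edges of $\Theta$ via the theory of slopes on curves (this is exactly the content of \secref{piecewise affine functions}, Proposition~\ref{PL and strictly semistable}, and \cite[Theorem~5.3]{berkovic99:locally_contractible_I}). One also has to know that $\td T$ genuinely distinguishes the three directions at $P$ — i.e.\ that the three segments $\alpha_P(a),\alpha_P(b),\alpha_P(c)$ have pairwise distinct directions in $H^1(\Theta,\R)$ — which is where the explicit lattice computation of~\secref{Jacobian of Theta} is used, and where the generic (non-torsion, distinct) choice of points $x,y,z\in E(\td K)$ in~\secref{metrized complex and curve} enters to guarantee that $C$ really has the metrized complex $\Theta$ as its minimal skeleton with the stated edge lengths, so that $\Theta = S(\fC)$ and $T$ is indeed as described. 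Everything else is bookkeeping with the commutative square relating $\alpha_p$ on $C^\an$ and $\alpha_P$ on $\Theta$.
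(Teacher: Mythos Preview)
Your argument has a genuine gap: you have confused ``piecewise linear with several distinct slopes at $P$'' with ``not smooth at $P$'', but these are not the same thing on a curve.  Concretely, $\td T_1$ has outgoing slopes $(1,-1,0)$ along the edges $c,a,b$ at $P$, and these \emph{are} compatible with being of the form $\log|f|$ for $f\in\sO_{C^\an,P}^\times$: any unit $f$ gives a function $\log|f|$ that factors through a piecewise $\z$-linear function on the skeleton with possibly different slopes on the three edges, subject only to the harmonicity constraint that the slopes sum to zero --- which $(1,-1,0)$ does.  So your contradiction (``the three slopes are incompatible'') never materializes; there is simply no reason a smooth function should be \emph{affine} on the whole star of $P$.

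The paper's proof uses a finer invariant.  The slope formula attaches to a harmonic germ at $P$ a degree-zero divisor on the component $E$ of the special fibre through the reduction of $P$, namely $\sum_e (\text{slope along }e)\cdot[\text{reduction point of }e]$; for $\td T_1$ this is $[z]-[x]$.  Thuillier's result~\cite[Lemme~2.3.22]{Thuillier} identifies $\sH_P/\sF_P\cong\Pic^0(E)\tensor\R$ via exactly this construction, so $\td T_1$ is smooth if and only if $[z]-[x]$ is zero in $\Pic^0(E)\tensor\R$, i.e.\ if and only if $x$ is a torsion point of $E$.  The non-torsion hypothesis on $x,y$ in~\secref{metrized complex and curve} is therefore the \emph{heart} of the proof of non-smoothness, not merely a device to pin down the skeleton; your proposal misidentifies its role.
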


\begin{proof}
  Let $Q_a,Q_b,Q_c$ be the points in the middle of the edges $a,b,c$. We subdivide the graph $\Theta$ at $Q_a,Q_b,Q_c$ and we denote the resulting subdivision by $\Theta'$. The lift
  $\td T_1$ is defined on an open subset of $C^{\an}$ containing the edges of $\Theta'$ joining $P$ with $Q_a,Q_b,Q_c$.  According to the  description in~\secref{Jacobian of Theta}, the function $\td T_1$ has slope $1$ on the edge from $P$ to $Q_a'$, slope $-1$ along the edge from $P$ to $Q_b$, and slope $0$ along the edge from $P$ to $Q_c$. We extend this to a piecewise $\z$-linear function $h$ on $\Theta'$ which is affine on all the edges. By~\secref{subdivisions of skeleton}, the subdivision $\Theta'$ induces a canonical model $\fC'$ of $C$ over the minimal model $\fC$.  Using \cite[Proposition~B.17]{gubler-hertel}, we see that $\fC'$ is strictly semistable.
  By~\secref{piecewise affine functions}, there is a unique vertical Cartier divisor $D$ on $\fC'$ such that $h \circ \tau = -\log \|1\|_{\sO(D)}$.
  Since $h$ is piecewise $\z$-linear and harmonic at $P$, we conclude using Proposition~\ref{simple neighbourhoods and harmonic functions on skeletons} that $\td T_1$ is a $\Z$-harmonic function in a neighbourhood of $P$ in $C^\an$.
  Note that the reduction of $P$ is the generic point of an irreducible component $E'$ of $\fC_s'$. Since $E'$ is a proper smooth curve and the canonical morphism $E' \to E$ is birational, it is an isomorphism, so we can identify $E'$ with $E$.
	Using the above description of the slopes of $h$, the slope formula (\cite[Theorem~2.6]{KRZ1}, \cite[Theorem~5.15]{BPR2})
	shows that
	$\sO(D)|_E$ is induced by the divisor $[z]-[x]$. Since $x$ is a non-torsion point of $E$ and $z$ is the zero element of $E$, we conclude that the class of  $[z]-[x]$
	in $\Pic^0(E)\tensor\R$ is non-zero.

	Let $\sH$ be the sheaf of harmonic functions on $C^\an$ and let $\sF$ be the subsheaf of smooth harmonic functions, as in Remark~\ref{harmonic non-smooth functions}. Thuillier~\cite[Lemme~2.3.22]{Thuillier} has shown
	that  $\sH_P/\sF_P\cong \Pic^0(E)\tensor\R$. This isomorphism maps $\td T_1$ to the class of $[z]-[x]$ by the above construction, proving that $\td T_1$ is not smooth in a neighbourhood of $P$.
	The argument for $T_2$ proceeds similarly.
\end{proof}

\begin{cor} \label{non-smooth can trop}
	Let $J$ be the Jacobian of the curve $C$ constructed in Remark~\ref{metrized complex and curve}. Then the canonical tropicalization map of $J$ is not a smooth tropicalization map.
\end{cor}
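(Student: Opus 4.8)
The plan is to derive Corollary~\ref{non-smooth can trop} as an immediate consequence of Proposition~\ref{non-smooth T_i} together with the local characterization of smooth tropicalization maps. The key observation is that if the canonical tropicalization map $\trop\colon J^\an \to \Sigma \cong \Jac(\Theta)$ were a smooth tropicalization map, then so would be its restriction to any analytic subdomain, and its pullback under any morphism; in particular the composite $T = \trop \circ \alpha_p \colon C^\an \to \Sigma$ would be smooth in a neighbourhood of the point of $C^\an$ retracting to $P$. But a smooth tropicalization map is, locally in the analytic topology, given by coordinate functions of the form $\log|f_i|$ for invertible analytic functions $f_i$, hence these coordinate functions are smooth functions in the sense of~\secref{Smooth forms and currents}; this contradicts Proposition~\ref{non-smooth T_i}, which asserts that the coordinate functions $\td T_i$ of (a lift of) $T$ are not smooth at $P$.

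First I would recall that the canonical tropicalization map of $J$ is a harmonic tropicalization by Proposition~\ref{canonical tropicalization is harmonic}, so we are genuinely in the situation where it makes sense to ask whether it is the stronger notion of a smooth tropicalization map (Remark~\ref{Berkovich open splittings}). Next I would make precise the reduction to $C$: the Abel--Jacobi map $\alpha_p\colon C \to J$ is a morphism of $K$-varieties, so $\alpha_p^\an\colon C^\an \to J^\an$ is a morphism of good strictly analytic spaces. By Remark~\ref{remark on tropicalization maps}(4) and the functoriality of smooth tropicalization maps, if the coordinate functions of $\trop$ on the relevant $\rG$-open piece were of the form $\log|g_u|$ for invertible analytic $g_u$, then the pullbacks $\log|\alpha_p^*(g_u)|$ would be the coordinate functions of $T$ on the corresponding piece of $C^\an$; restricting further to a small enough analytic-topology neighbourhood of a point $p' \in C^\an$ with $\tau(p') = P$ and choosing the lift $\td T$ accordingly, we would conclude that each $\td T_i$ is locally of the form $\log|h_i|$ for an invertible analytic function $h_i$, hence a smooth function in a neighbourhood of $P$.

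I would then invoke Proposition~\ref{non-smooth T_i} directly: it states that $\td T_1$ and $\td T_2$ are not smooth at $P$, contradicting the conclusion of the previous paragraph. This forces the canonical tropicalization map of $J$ not to be a smooth tropicalization map. The only genuine subtlety to spell out is the compatibility of the chosen local lift $\td T$ of $T$ near $P$ with the pullback of a hypothetical smooth structure on $\trop$ near $T(P)\in\Sigma$; since lifts differ by an element of $H_1(\Theta,\Z)$ (a locally constant choice), and smoothness of a function is unaffected by adding a locally constant function, this causes no problem. No step here is a real obstacle — all the hard work has already been done in Proposition~\ref{non-smooth T_i} and Thuillier's computation $\sH_P/\sF_P \cong \Pic^0(E)\tensor\R$ used there; the corollary is simply the packaging of that computation into a statement about the canonical tropicalization. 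The main thing to be careful about is to phrase the contradiction at the level of the $\rG$-open piece on which the canonical tropicalization of $J$ is actually defined (Remark~\ref{Berkovich open splittings}), and to note that a smooth tropicalization map is defined on an analytic-topology-open set, so that restricting to the image $\alpha_p(C^\an)$ and then to a neighbourhood of $p'$ stays within its domain.
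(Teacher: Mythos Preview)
Your approach is essentially the same as the paper's: pull back the canonical tropicalization along the Abel--Jacobi map and invoke Proposition~\ref{non-smooth T_i}. One clarification is needed, though. The canonical tropicalization map of $J$ is by definition (\secref{canonical tropicalization}) a map $\trop\colon E^\an \to N_\R \cong H^1(\Theta,\R)$ on the Raynaud uniformization, not a map $J^\an \to \Sigma$ as you write; a tropicalization map must land in a vector space, not a torus. The paper handles this by first lifting $\alpha_p$ locally near $P$ to a morphism $\td\alpha_p\colon U \to E^\an$ through the covering $p\colon E^\an \to J^\an$, and then observing that $\trop\circ\td\alpha_p$ coincides with a lift $\td T$ as in Proposition~\ref{non-smooth T_i}. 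Your passage to the lift $\td T$ is effectively the same maneuver, and your remark that the ambiguity of the lift by a locally constant element of $H_1(\Theta,\Z)$ does not affect smoothness is exactly what makes this work; but you should phrase the hypothesis as ``if $\trop\colon E^\an\to H^1(\Theta,\R)$ were smooth'' rather than referring to a map out of $J^\an$ or to $\rG$-open pieces.
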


\begin{proof}
  Let $E$ be the Raynaud uniformization of $J$ with $J^\an\cong E^\an /\Lambda$ for a discrete subgroup $M$ of $E(K)$. It follows from \cite[6.5]{BerkovichSpectral} that the canonical tropicalization map $\trop$ of $J$ is the lift $E^\an \to H^1(\Theta,\R)$  of the retraction $J \to \Sigma$ to the universal covering spaces.
	We lift the Abel--Jacobi map $\alpha_p\colon C^\an \to J^\an$ in an analytic neighbourhood $U$ of $P$ in $C^\an$ to a morphism $\td \alpha_p\colon U\to E^\an$.  Then $\trop \circ \td \alpha_p $ is given
	locally at $P$ by  $\td T$. Since the pull-back of a smooth function is smooth, we deduce from Proposition~\ref{non-smooth T_i} that the canonical tropicalization map of $J$ is not a smooth tropicalization map.
\end{proof}

\bibliographystyle{egabibstyle}
\bibliography{papers}

\providecommand{\noopsort}[1]{}\def\cprime{$'$}
\providecommand{\bysame}{\leavevmode\hbox to3em{\hrulefill}\thinspace}
\providecommand{\MR}{\relax\ifhmode\unskip\space\fi MR }
\providecommand{\MRhref}[2]{%
  \href{http://www.ams.org/mathscinet-getitem?mr=#1}{#2}
}
\providecommand{\href}[2]{#2}
\begin{thebibliography}{BGJK21b}

\bibitem[AR10]{allermann_rau10:tropical_intersection_thy}
L.~Allermann and J.~Rau, \emph{First steps in tropical intersection theory},
  Math. Z. \textbf{264} (2010), no.~3, 633--670. \MR{2591823}

\bibitem[ABBR15]{abbr14:lifting_harmonic_morphism_I}
O.~Amini, M.~Baker, E.~Brugall\'e, and J.~Rabinoff, \emph{Lifting harmonic
  morphisms {I}: metrized complexes and {B}erkovich skeleta}, Res. Math. Sci.
  \textbf{2} (2015), Art. 7, 67. \MR{3375652}

\bibitem[BPR13]{baker_payne_rabinoff13:analytic_curves}
M.~Baker, S.~Payne, and J.~Rabinoff, \emph{On the structure of nonarchimedean
  analytic curves}, Tropical and {N}on-{A}rchimedean {G}eometry, Contemp.
  Math., vol. 605, Amer. Math. Soc., Providence, RI, 2013, pp.~93--121.

\bibitem[BPR16]{baker_payne_rabinoff16:tropical_curves}
\bysame, \emph{Nonarchimedean geometry, tropicalization, and metrics on
  curves}, Algebr. Geom. \textbf{3} (2016), no.~1, 63--105.

\bibitem[BR15]{baker_rabinoff14:skeleton_jacobian}
M.~Baker and J.~Rabinoff, \emph{The skeleton of the {J}acobian, the {J}acobian
  of the skeleton, and lifting meromorphic functions from tropical to algebraic
  curves}, Int. Math. Res. Not. IMRN (2015), no.~16, 7436--7472.

\bibitem[Ber90]{berkovic90:analytic_geometry}
V.~G. Berkovich, \emph{Spectral theory and analytic geometry over
  non-{A}rchimedean fields}, Mathematical Surveys and Monographs, vol.~33,
  American Mathematical Society, Providence, RI, 1990. \MR{1070709 (91k:32038)}

\bibitem[Ber93]{berkovic93:etale_cohomology}
\bysame, \emph{\'{E}tale cohomology for non-{A}rchimedean analytic spaces},
  Inst. Hautes \'Etudes Sci. Publ. Math. (1993), no.~78, 5--161 (1994).
  \MR{1259429 (95c:14017)}

\bibitem[Ber99]{berkovic99:locally_contractible_I}
\bysame, \emph{Smooth {$p$}-adic analytic spaces are locally contractible},
  Invent. Math. \textbf{137} (1999), no.~1, 1--84. \MR{1702143 (2000i:14028)}

\bibitem[Bos14]{bosch14:lectures_formal_rigid_geometry}
S.~Bosch, \emph{Lectures on formal and rigid geometry}, Lecture Notes in
  Mathematics, vol. 2105, Springer, Cham, 2014. \MR{3309387}

\bibitem[BGR84]{bosch_guntzer_remmert84:non_archimed_analysis}
S.~Bosch, U.~G{\"u}ntzer, and R.~Remmert, \emph{Non-{A}rchimedean analysis},
  Grundlehren der Mathematischen Wissenschaften, vol. 261, Springer-Verlag,
  Berlin, 1984. \MR{746961 (86b:32031)}

\bibitem[BL85]{bosch_lutkeboh85:stable_reduction_I}
S.~Bosch and W.~L{\"u}tkebohmert, \emph{Stable reduction and uniformization of
  abelian varieties. {I}}, Math. Ann. \textbf{270} (1985), no.~3, 349--379.
  \MR{774362 (86j:14040a)}

\bibitem[BL91]{bosch_lutkeboh91:degenerat_abelian_varieties}
\bysame, \emph{Degenerating abelian varieties}, Topology \textbf{30} (1991),
  no.~4, 653--698. \MR{1133878 (92i:14043)}

\bibitem[BL93a]{bosch_lutkeboh93:formal_rigid_geometry_I}
\bysame, \emph{Formal and rigid geometry. {I}. {R}igid spaces}, Math. Ann.
  \textbf{295} (1993), no.~2, 291--317. \MR{1202394 (94a:11090)}

\bibitem[BL93b]{bosch_lutkeboh93:formal_rigid_geometry_II}
\bysame, \emph{Formal and rigid geometry. {II}. {F}lattening techniques}, Math.
  Ann. \textbf{296} (1993), no.~3, 403--429. \MR{1225983 (94e:11070)}

\bibitem[BE21]{boucksom_eriksson21}
S.~Boucksom and D.~Eriksson, \emph{Spaces of norms, determinant of cohomology
  and {F}ekete points in non-{A}rchimedean geometry}, Adv. Math. \textbf{378}
  (2021), 107501, 124. \MR{4192993}

\bibitem[BGJK21a]{burgos-gubler-jell-kuennemann1}
J.~I. {Burgos Gil}, W.~Gubler, P.~Jell, and K.~K\"{u}nnemann, \emph{A
  comparison of positivity in complex and tropical toric geometry}, Math. Z.
  \textbf{299} (2021), no.~3-4, 1199--1255. \MR{4329248}

\bibitem[BGJK21b]{bgjk2021}
José~Ignacio {Burgos Gil}, Walter {Gubler}, Philipp {Jell}, and Klaus
  {K{ü}nnemann}, \emph{Pluripotential theory for tropical toric varieties and
  non-archimedean {M}onge-{A}mpére equations},
  \href{http://arxiv.org/abs/2102.07392}{ \tt arXiv:2102.07392}, 2021.

\bibitem[CD12]{chambert_ducros12:forms_courants}
A.~Chambert{--}Loir and A.~Ducros, \emph{Forms diff\'erentielles r\'eeles et
  courants sur les espaces de berkovich}, 2012, preprint available at
  \url{http://arxiv.org/abs/1204.6277}.

\bibitem[Duc12]{ducros12:squelettes_modeles}
A.~Ducros, \emph{Espaces de {B}erkovich, polytopes, squelettes et th\'eorie des
  mod\`eles}, Confluentes Math. \textbf{4} (2012), no.~4, 1250007, 57.
  \MR{3020334}

\bibitem[Duc18]{ducros18:families}
\bysame, \emph{Families of {B}erkovich spaces}, Ast\'{e}risque (2018), no.~400,
  vii+262. \MR{3826929}

\bibitem[Duc24]{ducros14:structur_des_courbes_analytiq}
\bysame, \emph{La structure des courbes analytiques}, 2024, in preparation.
  Available at \url{http://www.arxiv.org/abs/2405.10619v1}.

\bibitem[EGAII]{egaII}
A.~{\noopsort{EGA}}Grothendieck, \emph{\'{E}l\'ements de g\'eom\'etrie
  alg\'ebrique. {II}. \'{E}tude globale \'el\'ementaire de quelques classes de
  morphismes}, Inst. Hautes \'Etudes Sci. Publ. Math. (1961), no.~8, 222.
  \MR{0163909 (29 \#1208)}

\bibitem[FRSS18]{foster_rabinoff_etal18:tropical_theta_functions}
T.~Foster, J.~Rabinoff, F.~Shokrieh, and A.~Soto, \emph{Non-{A}rchimedean and
  tropical theta functions}, Math. Ann. \textbf{372} (2018), no.~3-4, 891--914.
  \MR{3880286}

\bibitem[Ful98]{fulton98:intersec_theory}
W.~Fulton, \emph{Intersection theory}, second ed., Results in Mathematics and
  Related Areas. 3rd Series. A Series of Modern Surveys in Mathematics, vol.~2,
  Springer-Verlag, Berlin, 1998. \MR{1644323 (99d:14003)}

\bibitem[GKM09]{gathmann_kerber_markwig09:tropical_fans}
A.~Gathmann, M.~Kerber, and H.~Markwig, \emph{Tropical fans and the moduli
  spaces of tropical curves}, Compos. Math. \textbf{145} (2009), no.~1,
  173--195. \MR{2480499}

\bibitem[Gub98]{gubler98:local_heights_subvariet}
W.~Gubler, \emph{Local heights of subvarieties over non-{A}rchimedean fields},
  J. Reine Angew. Math. \textbf{498} (1998), 61--113. \MR{1629925 (99j:14022)}

\bibitem[Gub13a]{gubler13:forms_currents_analytif_algebraic_variety}
\bysame, \emph{Forms and currents on the analytification of an algebraic
  variety (after {C}hambert--{L}oir and {D}ucros)}, 2013, available at
  \url{http://arxiv.org/abs/1303.7364v3}.

\bibitem[Gub13b]{gubler13:guide_tropical}
\bysame, \emph{A guide to tropicalizations}, Algebraic and combinatorial
  aspects of tropical geometry, Contemp. Math., vol. 589, Amer. Math. Soc.,
  Providence, RI, 2013, pp.~125--189. \MR{3088913}

\bibitem[Gub16]{gubler16:forms_currents}
\bysame, \emph{Forms and currents on the analytification of an algebraic
  variety (after {C}hambert-{L}oir and {D}ucros)}, Nonarchimedean and Tropical
  Geometry (Switzerland) (M.~Baker and S.~Payne, eds.), Simons Symposia,
  Springer, 2016, pp.~1--30.

\bibitem[GH17]{gubler-hertel}
W.~Gubler and J.~Hertel, \emph{Local heights of toric varieties over
  non-archimedean fields}, Actes de la {C}onf\'{e}rence ``{N}on-{A}rchimedean
  {A}nalytic {G}eometry: {T}heory and {P}ractice'', Publ. Math. Besan\c{c}on
  Alg\`ebre Th\'{e}orie Nr., vol. 2017/1, Presses Univ. Franche-Comt\'{e},
  Besan\c{c}on, 2017, pp.~5--77. \MR{3752487}

\bibitem[GJR21]{gubler_rabinoff:formsongraphs}
W.~Gubler, P.~Jell, and J.~Rabinoff, \emph{Dolbeault cohomology of graphs and
  {B}erkovich curves}, 2021, In preparation.

\bibitem[GK17]{gubler_kunneman:tropical_arakelov}
W.~Gubler and K.~K\"{u}nnemann, \emph{A tropical approach to nonarchimedean
  {A}rakelov geometry}, Algebra Number Theory \textbf{11} (2017), no.~1,
  77--180. \MR{3602767}

\bibitem[GK19]{gubler_kuenneman19:positivity}
\bysame, \emph{Positivity properties of metrics and delta-forms}, J. Reine
  Angew. Math. \textbf{752} (2019), 141--177. \MR{3975640}

\bibitem[GM19]{gubler_martin19:zhangs_metrics}
W.~Gubler and F.~Martin, \emph{On {Z}hang's semipositive metrics}, Doc. Math.
  \textbf{24} (2019), 331--372. \MR{3960125}

\bibitem[GRW16]{gubler_rabinoff_werner16:skeleton_tropical}
W.~Gubler, J.~Rabinoff, and A.~Werner, \emph{Skeletons and tropicalizations},
  Adv. Math. \textbf{294} (2016), 150--215.

\bibitem[GRW17]{gubler_rabinoff_werner:tropical_skeletons}
\bysame, \emph{Tropical skeletons}, Ann. Inst. Fourier (Grenoble) \textbf{67}
  (2017), no.~5, 1905--1961. \MR{3732680}

\bibitem[Jel16]{jell16:poincare_lemma}
P.~Jell, \emph{A {P}oincar\'{e} lemma for real-valued differential forms on
  {B}erkovich spaces}, Math. Z. \textbf{282} (2016), no.~3-4, 1149--1167.
  \MR{3473662}

\bibitem[Jel19a]{jell19:oberwolfach}
\bysame, \emph{Tropical {D}olbeault cohomology of non-{A}rchimedean curves and
  harmonic tropicalizations}, 2019, Oberwolfach Report 8/2019.

\bibitem[Jel19b]{jell19:tropical_hodge_numbers}
\bysame, \emph{Tropical {H}odge numbers of non-archimedean curves}, Israel J.
  Math. \textbf{229} (2019), no.~1, 287--305. \MR{3905606}

\bibitem[KRZB16]{krzb16:uniform_bounds}
E.~Katz, J.~Rabinoff, and D.~Zureick-Brown, \emph{Uniform bounds for the number
  of rational points on curves of small {M}ordell--{W}eil rank}, Duke Math. J.
  \textbf{165} (2016), no.~16, 3189--3240.

\bibitem[Kol96]{kollar96}
J.~Koll\'{a}r, \emph{Rational curves on algebraic varieties}, Ergebnisse der
  Mathematik und ihrer Grenzgebiete. 3. Folge. A Series of Modern Surveys in
  Mathematics [Results in Mathematics and Related Areas. 3rd Series. A Series
  of Modern Surveys in Mathematics], vol.~32, Springer-Verlag, Berlin, 1996.
  \MR{1440180}

\bibitem[Lag12]{lagerberg12:super_currents}
A.~Lagerberg, \emph{Super currents and tropical geometry}, Math. Z.
  \textbf{270} (2012), no.~3-4, 1011--1050. \MR{2892935}

\bibitem[Laz04]{Laz1}
Robert Lazarsfeld, \emph{Positivity in algebraic geometry. {I}. classical
  setting: line bundles and linear series}, Ergebnisse der Mathematik und ihrer
  Grenzgebiete. 3. Folge, vol.~48, Springer-Verlag, Berlin, 2004. \MR{2095471
  (2005k:14001a)}

\bibitem[Liu20]{liu20:tropical_cycle_classes}
Y.~Liu, \emph{Tropical cycle classes for non-{A}rchimedean spaces and weight
  decomposition of de {R}ham cohomology sheaves}, Ann. Sci. \'{E}c. Norm.
  Sup\'{e}r. (4) \textbf{53} (2020), no.~2, 291--352. \MR{4094560}

\bibitem[MZ08]{mikhalkin_zharkov08:tropical_curves}
G.~Mikhalkin and I.~Zharkov, \emph{Tropical curves, their {J}acobians and theta
  functions}, Curves and abelian varieties, Contemp. Math., vol. 465, Amer.
  Math. Soc., Providence, RI, 2008, pp.~203--230. \MR{2457739 (2011c:14163)}

\bibitem[Pre23]{prechtel23}
M.~Prechtel, \emph{Dolbeault cohomology of weakly smooth forms on
  non-archimedean abelian varieties}, 2023, PhD Thesis, available at
  10.5283/epub.54465 \url{https://epub.uni-regensburg.de/54465/}.

\bibitem[Ser68]{serre68:corps_locaux}
J.-P. Serre, \emph{Corps locaux}, Hermann, Paris, 1968, Deuxi{\`e}me
  {\'e}dition, Publications de l'Universit{\'e} de Nancago, No. VIII.
  \MR{0354618 (50 \#7096)}

\bibitem[Ser97]{serre97:mordell_weil}
\bysame, \emph{Lectures on the {M}ordell-{W}eil theorem}, third ed., Aspects of
  Mathematics, Friedr. Vieweg \& Sohn, Braunschweig, 1997, Translated from the
  French and edited by Martin Brown from notes by Michel Waldschmidt, With a
  foreword by Brown and Serre. \MR{1757192}

\bibitem[{Sta}21]{stacks-project}
The {Stacks project authors}, \emph{The stacks project},
  \url{https://stacks.math.columbia.edu}, 2021.

\bibitem[ST08]{sturmfel_tevelev08:tropical_eliminat}
B.~Sturmfels and J.~Tevelev, \emph{Elimination theory for tropical varieties},
  Math. Res. Lett. \textbf{15} (2008), no.~3, 543--562. \MR{2407231
  (2009f:14124)}

\bibitem[Tem00]{temkin00:local_properties}
M.~Temkin, \emph{On local properties of non-{A}rchimedean analytic spaces},
  Math. Ann. \textbf{318} (2000), no.~3, 585--607. \MR{1800770 (2001m:14037)}

\bibitem[Tem04]{temkin04:local_properties_II}
\bysame, \emph{On local properties of non-{A}rchimedean analytic spaces. {II}},
  Israel J. Math. \textbf{140} (2004), 1--27. \MR{2054837 (2005c:14030)}

\bibitem[Tem11]{temkin11:riemann_zariski}
\bysame, \emph{Relative {R}iemann-{Z}ariski spaces}, Israel J. Math.
  \textbf{185} (2011), 1--42. \MR{2837126}

\bibitem[Thu05]{thuillier05:thesis}
A.~Thuillier, \emph{Th{\'e}orie du potentiel sur les courbes en
  g{\'e}om{\'e}trie analytique non archim{\'e}dienne. {A}pplications {\`a} la
  th{\'e}orie d'{A}rakelov}, Ph.D. thesis, University of Rennes, 2005, preprint
  available at
  \url{http://tel.archives-ouvertes.fr/docs/00/04/87/50/PDF/tel-00010990.pdf}.

\bibitem[Vil21]{vilsmeier21:monge_ampere}
C.~Vilsmeier, \emph{A comparison of the real and non-archimedean
  {M}onge-{A}mp\`ere operator}, Math. Z. \textbf{297} (2021), no.~1-2,
  633--668. \MR{4204708}

\bibitem[Wan16]{wanner16:harmonic_functions}
V.~Wanner, \emph{Harmonic functions on the {B}erkovich projective line}, 2016,
  Master Thesis, available at
  \url{https://epub.uni-regensburg.de/35758/1/MA.pdf}.

\bibitem[Wan19]{wanner19:subharmonicity}
\bysame, \emph{Comparison of two notions of subharmonicity on non-archimedean
  curves}, Math. Z. \textbf{293} (2019), no.~1-2, 443--474. \MR{4002285}

\bibitem[Zha95]{zhang95}
S.~Zhang, \emph{Small points and adelic metrics}, J. Algebraic Geom. \textbf{4}
  (1995), no.~2, 281--300. \MR{1311351 (96e:14025)}

\end{thebibliography}

\end{document}